\newtheorem{defn}[subsubsection]{Definition}
\newtheorem{thm}[subsubsection]{Theorem}
\newtheorem{lem}[subsubsection]{Lemma}
\newtheorem{cor}[subsubsection]{Corollary}
\newtheorem{prop}[subsubsection]{Proposition}
\newtheorem*{MT}{Main Theorem}
\newcommand{\Aspace}{\mathcal{A}}
\newcommand{\Bspace}{\mathcal{B}}
\newcommand{\apply}{\vdash}
\newcommand{\ab}{[a,b]}
\newcommand{\abpow}{[[a,b]]}
\newcommand{\Wspace}{\mathcal{W}}
\newcommand{\What}{\widehat{\mathcal{W}}}
\newcommand{\Whatwedge}{\widehat{\mathcal{W}}_\wedge}
\newcommand{\WhatF}{\widehat{\mathcal{W}}_{\mathrm{\bf F}}}
\newcommand{\basebulltoF}{\text{B}_{\bullet\rightarrow\mathrm{F}}}
\newcommand{\intoop}{
\text{B}_{
\begin{subarray}{c}{\,\bullet\,\mapsto\partial_a} \\
{\perp\mapsto\partial_b}\end{subarray}}}
\newcommand{\applyhash}{\#}
\newcommand{\baselegtopartial}{
\text{B}_{l\mapsto\partial_a}}
\begin{document}
\title[Differential Operators and The Wheels Power Series]{Differential Operators \\ and the Wheels power series}
\author[A. Kricker]{Andrew Kricker}
\address{Division of Mathematical Sciences \\ School of
Mathematical and Physical Sciences \\ Nanyang Technological
University, Singapore, 637616} \email{ajkricker@ntu.edu.sg}
\thanks{The author thanks Dror Bar-Natan and Eckhard Meinrenken for their support at the University of Toronto during the most important part of
this work.}
\begin{abstract}
An earlier work of the author's showed that it was possible to adapt the Alekseev-Meinrenken Chern-Weil proof of the Duflo isomorphism to obtain a completely combinatorial proof of the Wheeling isomorphism. That work depended on a certain combinatorial identity, which said that a certain composition of elementary combinatorial operations arising from the proof was precisely the Wheeling operation. The identity can be summarized as follows: The Wheeling operation is just a graded averaging map in a space enlarging the space of Jacobi diagrams. The purpose of this paper is to present a detailed and self-contained proof of this identity. The proof broadly follows similar calculations in the Alekseev-Meinrenken theory, though the details here are somewhat different, as the algebraic manipulations in the original are replaced with arguments concerning the enumerative combinatorics of formal power series of graphs with graded legs.
\end{abstract}
 \maketitle

At first glance, the operation which appears in the statement of the Wheeling isomorphism -- $\chi_\Bspace\circ\partial_\Omega : \Bspace \rightarrow \Aspace$ -- does not
seem to be a particularly natural operation. The purpose of this paper is to provide a detailed proof of an identity which says that $\chi_\Bspace\circ \partial_\Omega$
can be factored into a sequence of elementary combinatorial operations. In summary: Wheeling is just a graded averaging map in a space which enlarges $\Aspace$.

\section{Recalling the identity and the spaces and maps involved}
First we'll state the identity in question. After that we'll recall the definitions of the various spaces and maps that are involved.
See \cite{K} for more detailed definitions, if required. Figure \ref{illustrfig} illustrates the first of the two compositions appearing in the main theorem.
\begin{MT}\label{maincombinatorialtheorem}
The composition of maps
\[
\xymatrix{ \mathcal{B} \ar[r]^{\Upsilon} & \mathcal{W} \ar[r]^{\chi_\Wspace} & \widetilde{\mathcal{W}} \ar[r]^{\pi} &
\widehat{\mathcal{W}} \ar[r]^{\basebulltoF} & \widehat{\mathcal{W}}_{\mathrm{F}}\ar[r]^\lambda  &  \widehat{\mathcal{W}}_{\wedge} }
\]
is equal to the composition
\[
\xymatrix{ \mathcal{B} \ar[r]^{\partial_\Omega} & \mathcal{B} \ar[r]^{\chi_{\mathcal{B}}} & \mathcal{A} \ar[r]^{\phi_{\mathcal{A}}} & \widehat{\mathcal{W}}_{\wedge}}.
\]
\end{MT}

Each of the spaces in the above theorem is defined as a $\mathbb{Q}$-vector space consisting of formal finite $\mathbb{Q}$-linear combinations of abstract graphs with vertices of degree 1 and degree 3, modulo certain relations which depend on the specific space. Some of the spaces above have two types of degree 1 vertex, others have only one, as will be described below. The edges are always unoriented. The trivalent vertices are always oriented, which means that at each trivalent vertex, the set of incoming edges is given a cyclic ordering.  Each of the above spaces includes the AS and IHX relations in its set of relations (if unfamiliar with these perennials, see \cite{BarNatan}). So from the viewpoint of the {\it internal} ($=$trivalent) vertices, all of the above spaces are the same.

The only place these spaces differ is in how they treat the vertices of degree 1. We will describe the differences in some detail.

The first thing to say is that in each of the above spaces, except $\mathcal{B}$, the set of degree 1 vertices of a diagram is totally ordered. The diagrams are drawn by ordering the degree 1 vertices along an {\it ordering line} at the bottom of the diagram. We'll remind the reader of which space a diagram is to be considered an element of by drawing the arrow on the ordering line with a different style for each space.

\subsection{The averaging map $\chi_\Bspace:\Bspace \rightarrow \Aspace$.}
The space $\Bspace$ only has one type of degree 1 vertex. As mentioned above, these vertices are not ordered. The diagrams which generate this space will be called {\it symmetric Jacobi diagrams} in this paper. The only relations in $\Bspace$ are the always-present IHX and AS relations.
The space $\Aspace$, on the other hand, is based on diagrams where the degree 1 vertices are ordered.
These diagrams will be called {\it ordered Jacobi diagrams} in this paper.
The space $\Aspace$ will be taken modulo AS, IHX, and STU relations.
The averaging map $\chi_\Bspace$ is the linear extension of the map which maps a symmetric Jacobi diagram
in $\Bspace$ to the average of all the possible ordered Jacobi diagrams obtained by ordering the degree 1-vertices. For example:
\[
\chi_\Bspace\left(
\raisebox{-5ex}{\scalebox{0.24}{\includegraphics{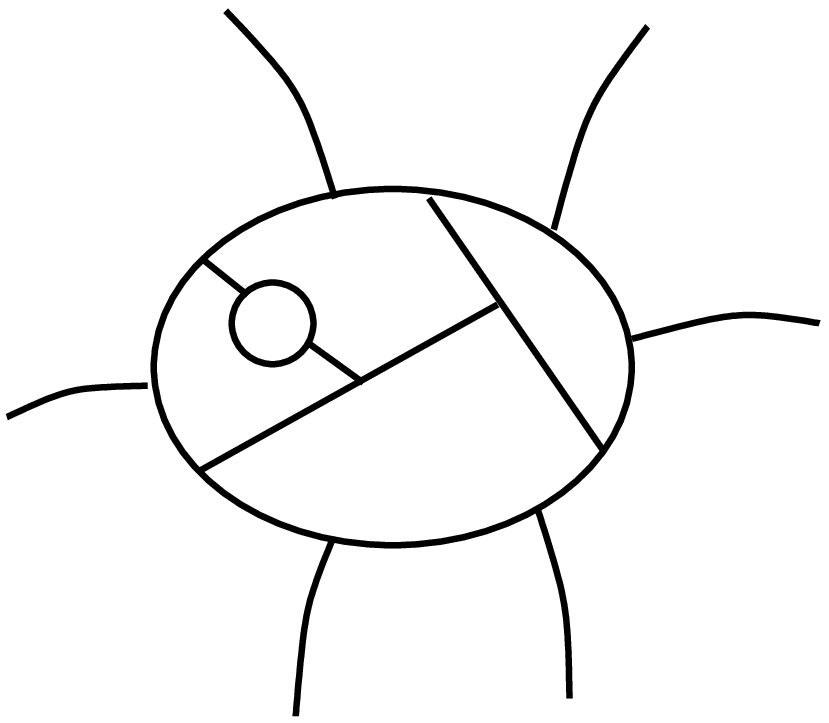}}}
\right) = \frac{1}{6!}\sum_{\sigma}
\raisebox{-6ex}{\scalebox{0.24}{\includegraphics{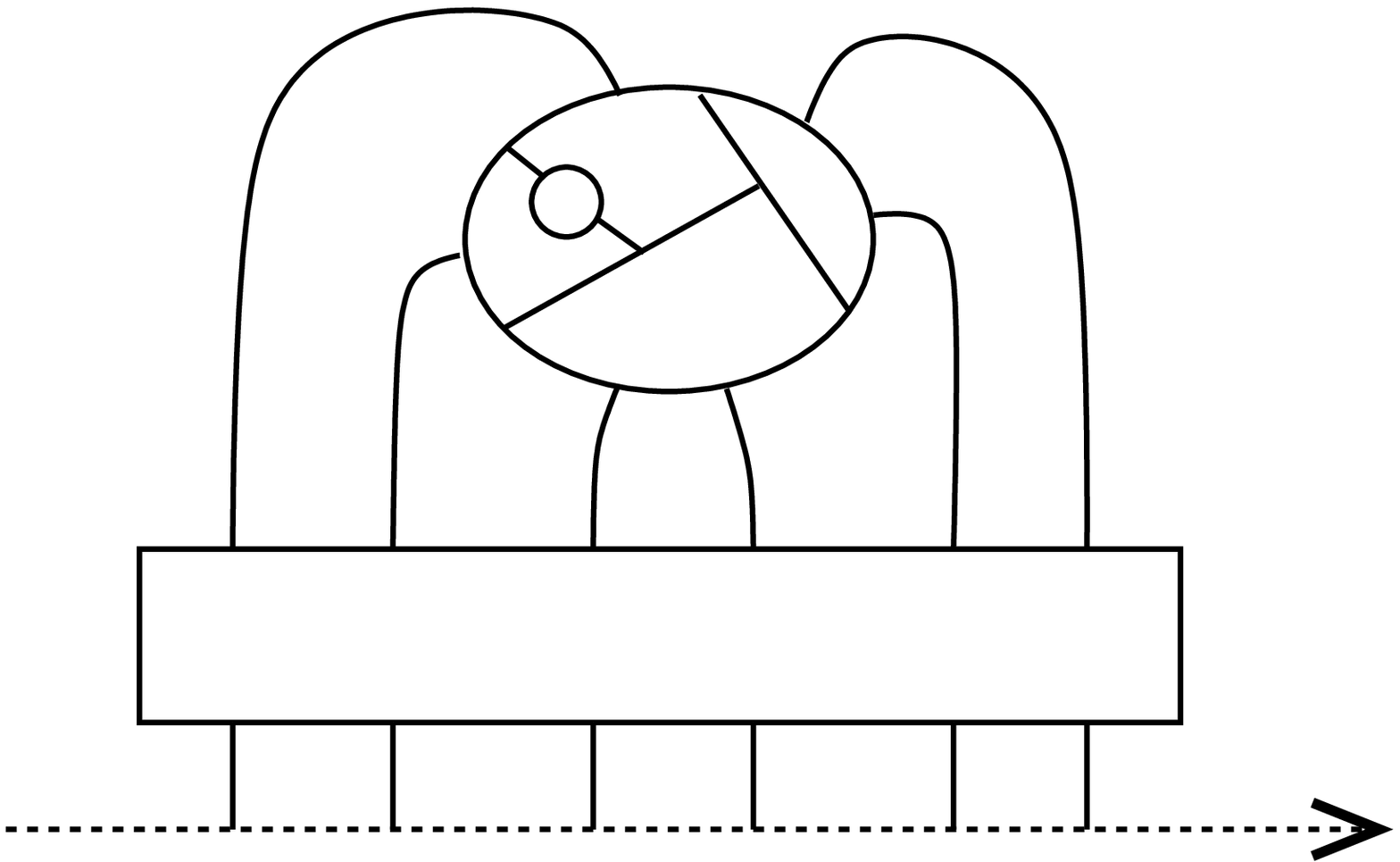}}}
\hspace{-2.25cm}
\raisebox{-2.5ex}{\scalebox{1.25}{$\sigma$}}\hspace{1.5cm}\ \ \in \Aspace.
\]

It turns out that while this map is an isomorphism of vector spaces (this is the formal Poincare-Birkhoff-Witt theorem described in \cite{BarNatan}), it is {\it not} an isomorphism of algebras. In other words, if $u$ and $v$ are elements of $\Bspace$ then it is not in general true that $\chi_\Bspace(u\sqcup v)=\chi_\Bspace(u)\# \chi_\Bspace(v)$, where $\sqcup$ is the ``disjoint-union" product on $\Bspace$, and $\#$ is the ``juxtaposition" product on $\Aspace$.

\subsection{The Wheeling map, $\partial_\Omega: \Bspace \rightarrow \Bspace$.}

The Wheeling operation $\partial_\Omega: \Bspace \rightarrow \Bspace$ is a linear isomorphism of $\Bspace$ which promotes $\chi_\Bspace$ to an algebra isomorphism:
\[
\left(\chi_\Bspace \circ \partial_\Omega\right)(u\sqcup v)=\left(\chi_\Bspace \circ \partial_\Omega\right)(u) \# \left(\chi_\Bspace \circ \partial_\Omega\right)(v)
\]
for all $u,v\in \Bspace$.

This identity - the Wheeling isomorphism - is a combinatorial strengthening of a Lie theoretic result from the 1970's known as the Duflo isomorphism \cite{D}.
It was conjectured by Bar-Natan, Garoufalidis, Rozansky, and Thurston \cite{BGRT}, who were mostly motivated by the theory of the Kontsevich integral (a topological invariant of framed tangles), and constructions related to it.
Bar-Natan, Le, and Thurston gave an elegant proof of this identity which directly employed the Kontsevich integral (``$1+1=2$"), \cite{BLT}.
Since then Wheeling has proved to be an indispensable tool in the study of the structure of the Kontsevich integral and related constructions, (see e.g. \cite{GK}).

The BLT proof shows that Wheeling is deeply bound up with the theory of the Kontsevich integral,
and such things as the theory of associators, the monodromy of the Knizhnik-Zamolodchikov equations, and the theory of quantum groups.
The aim of this paper, together with its companion \cite{K}, is to describe a completely combinatorial proof of the Wheeling isomorphism, with the goal of discovering new approaches to these topics. This proof derives from work of Alekseev and Meinrenken \cite{AleksMein,AleksMein2}, as is discussed in \cite{K}.

So, what is this map $\partial_\Omega$? First we must recall what is $\partial_X(Y)\in \Bspace$, the result of operating on a symmetric Jacobi diagram $Y$ with a symmetric Jacobi diagram $X$. The result is the sum of all the possible symmetric Jacobi diagrams that you obtain by gluing all of the legs of $X$ to some (possibly all) of the legs of $Y$. This is extended linearly to define $\partial_u(v)$, for abitrary $u,v\in \Bspace$.

In the case that $X$ has more legs than $Y$, $\partial_X(Y)$ will be zero. This means that it is meaningful to consider operations of the form $\partial_{\mathcal P}:\Bspace\rightarrow \Bspace$, where ${\mathcal P}$ is an infinite combination of symmetric Jacobi diagrams (a ``formal power series of diagrams"), as long as for each $b\in \mathbb{N}$, the piece of ${\mathcal P}$ consisting of the diagrams whose number of degree 1 vertices is less than $b$ is finite.
(These issues are carefully discussed, from a more general viewpoint, in Section \ref{diagoperators}.)

To recall $\Omega$, the power series appearing in Wheeling, we'll use the following convenient notation for generating a formal power series of symmetric Jacobi diagrams:
\begin{multline*}
\raisebox{-6ex}{\scalebox{0.21}{\includegraphics{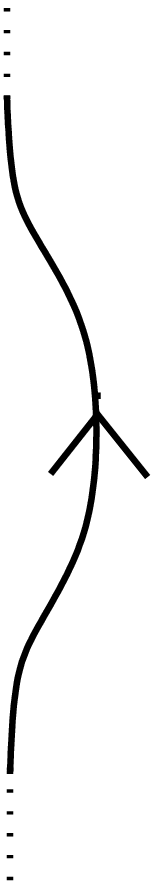}}} \
c_0 +  c_1a + c_2a^2 + c_3a^3 + \ldots  \\  \ \ \mbox{denotes}\ \
\ \ \ \ c_0\
\raisebox{-6ex}{\scalebox{0.21}{\includegraphics{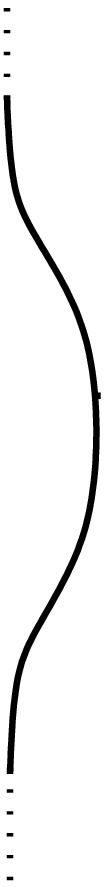}}}\
\ +\ \  c_1\
\raisebox{-6ex}{\scalebox{0.21}{\includegraphics{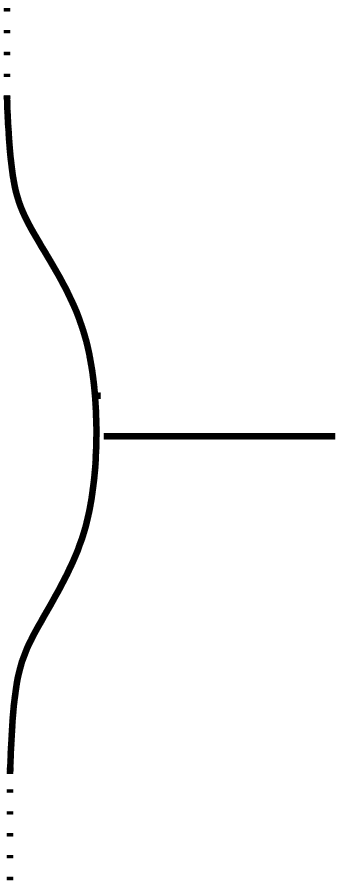}}} \
\ +\ \  c_2\
\raisebox{-6ex}{\scalebox{0.21}{\includegraphics{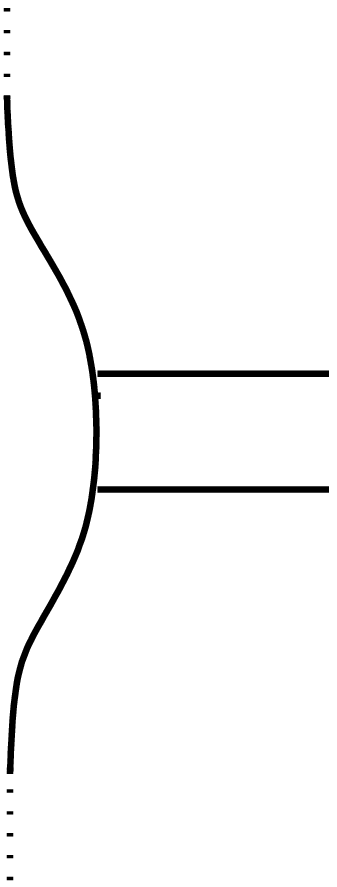}}} \
\ +\ \ c_3\
\raisebox{-6ex}{\scalebox{0.21}{\includegraphics{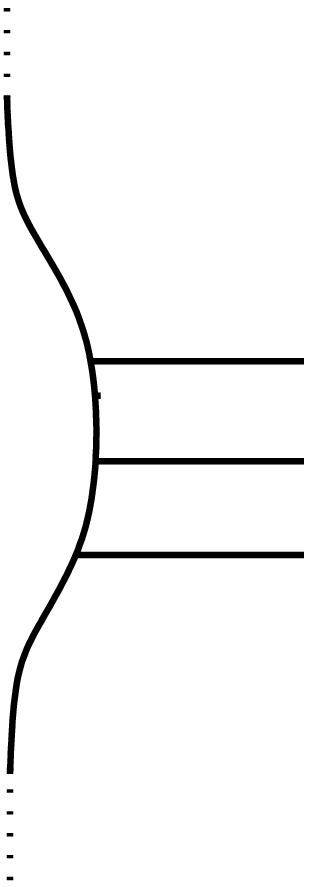}}} \
\ +\ \ldots\ \ \ \ .
\end{multline*}
The following precise statement uses $\Bspace^n$, which is the subspace of $\Bspace$ generated by diagrams with precisely $n$ degree 1 vertices.
\begin{defn}
The Wheels element, $\Omega$, is the formal power series of
symmetric Jacobi diagrams defined by the expression
\[
\Omega = \exp_{\sqcup}\left(\frac{1}{2}\
\raisebox{-3.4ex}{\scalebox{0.28}{\includegraphics{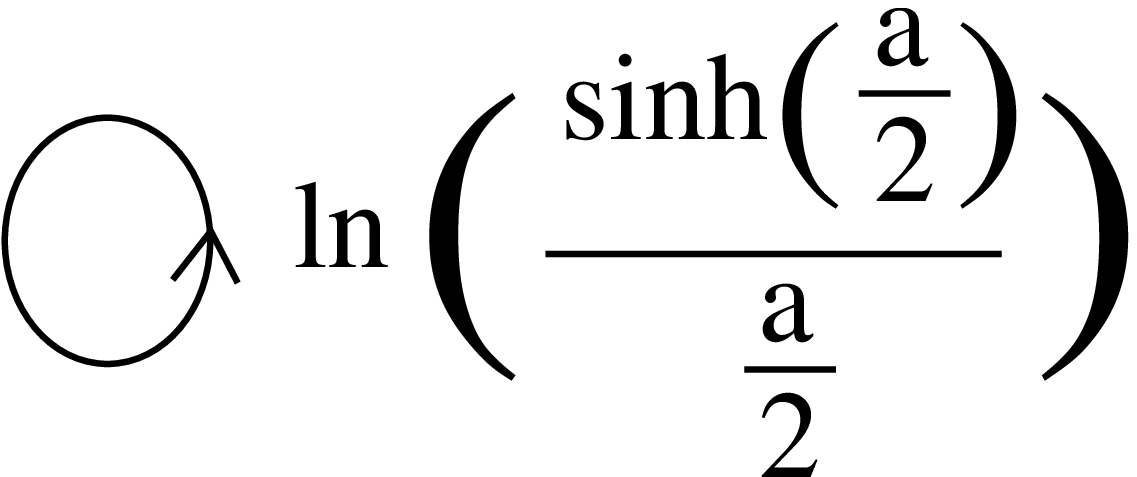}}}
\right)\ \in\ \prod_{n=0}^{\infty}\Bspace^n\ .
\]
\end{defn}

Now we turn our attention to the first sequence of compositions described in the main theorem: $\lambda\circ\basebulltoF\circ\pi\circ\chi_{\Wspace}\circ\Upsilon$. This is a sequence of elementary combinatorial operations which the main theorem claims has the same effect on a symmetric Jacobi diagram as operating with $\Omega$.

\subsection{The space $\Wspace$.}

In the earlier work \cite{K}, $\Wspace$ was introduced as an ``$\iota$-complex", which was a pair of cochain complexes equipped with a degree $-1$ map between them. In this work we have no need for all this extra structure,
and $\Wspace$ just denotes the vector space underlying the structure.

Recall: the diagrams which generate $\Wspace$, which will be called ``symmetric Weil diagrams" in this paper, have degree 1 vertices of two different types. There are ``leg-grade 1" vertices, which are drawn without any decoration, and ``leg-grade 2" vertices, which are drawn with a fat dot. The space $\Wspace$ consists of formal finite $\mathbb{Q}$-linear combinations of Weil diagrams, modulo AS and IHX relations, and also relations which say that when we transpose the position of two adjacent legs in the ordering, we pick up a sign $(-1)^{xy}$, where $x$ and $y$ are the leg-grades of the involved legs.

So, for example, the following equations hold in $\Wspace$:
\[
\raisebox{-3ex}{\scalebox{0.25}{\includegraphics{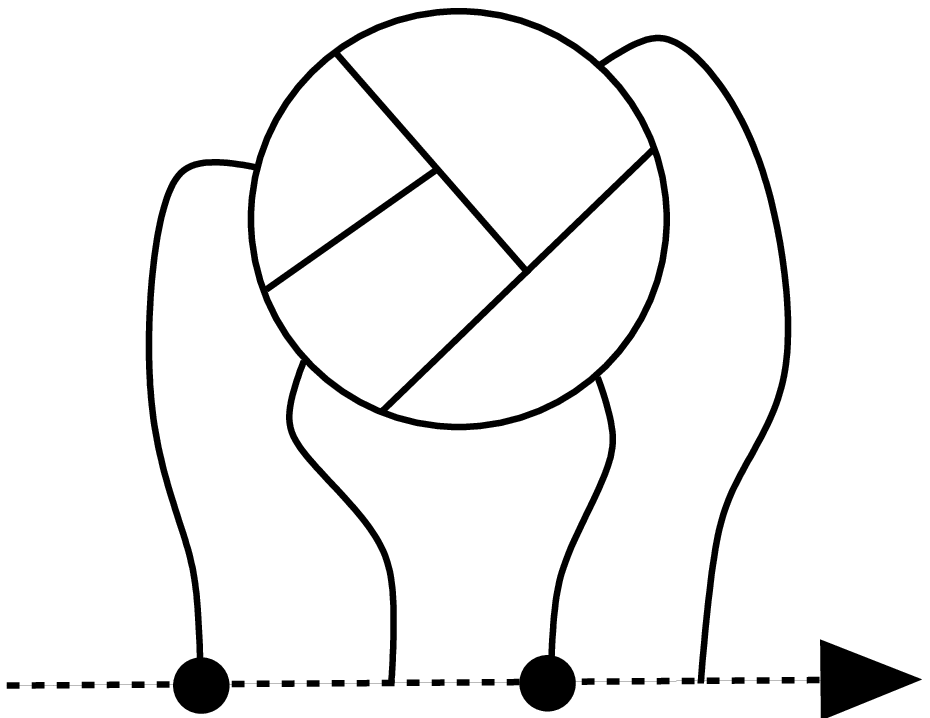}}} \ \
\ =\ \ \
\raisebox{-3ex}{\scalebox{0.25}{\includegraphics{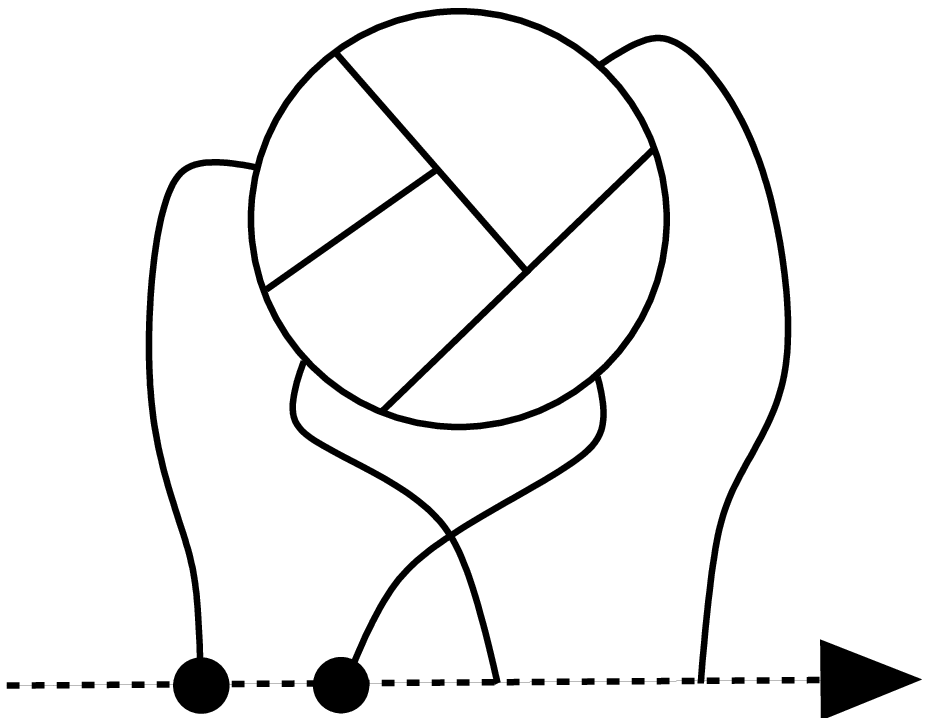}}} \ \
\ =\ \ \ -\
\raisebox{-3ex}{\scalebox{0.25}{\includegraphics{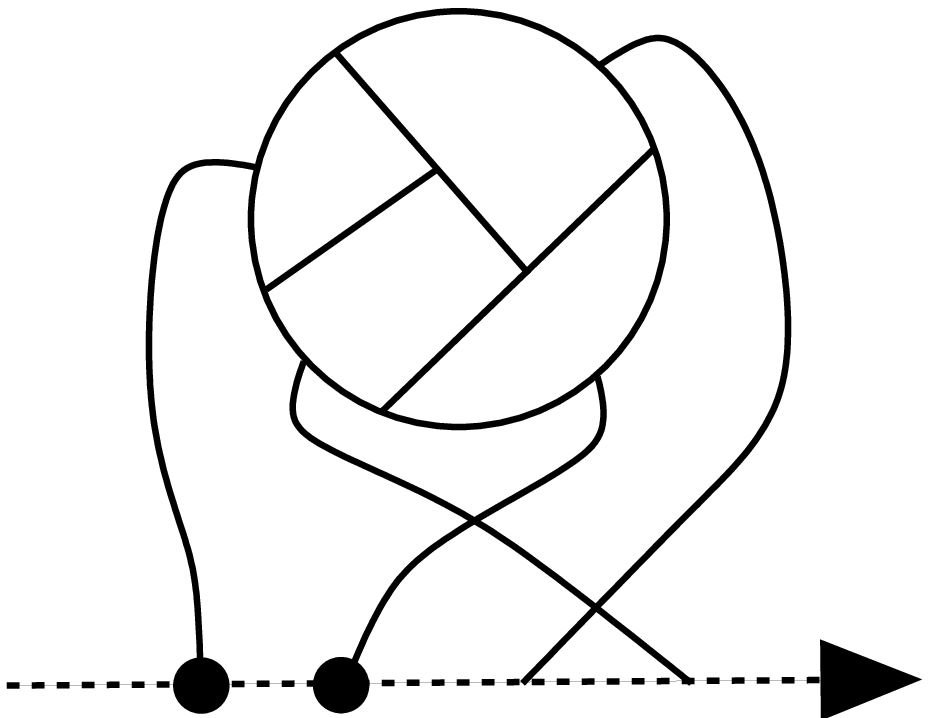}}}\ \
\ \ \ \mbox{in\ \ ${\mathcal{W}}$.}
\]\vspace{0.1cm}

Observe the arrow-head with which symmetric Weil diagrams are drawn.

\subsection{The Hair-splitting map, $\Upsilon: \Bspace\rightarrow\Wspace$.}
\label{hsd}

Now we'll recall the map $\Upsilon$ which embeds $\Bspace$ into the space $\Wspace$. On some symmetric Jacobi diagram $v$, the map is just to choose an ordering of the degree 1 vertices of $v$ (sometimes called the ``hair" of $v$), and then to replace each degree 1 vertex according to the rule:
\[
\begin{array}{ccccl}
\Upsilon :\ \ \ \ \
\raisebox{-2.6ex}{\scalebox{0.28}{\includegraphics{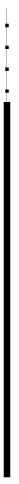}}}\ \ &
\mapsto &
\raisebox{-3ex}{\scalebox{0.27}{\includegraphics{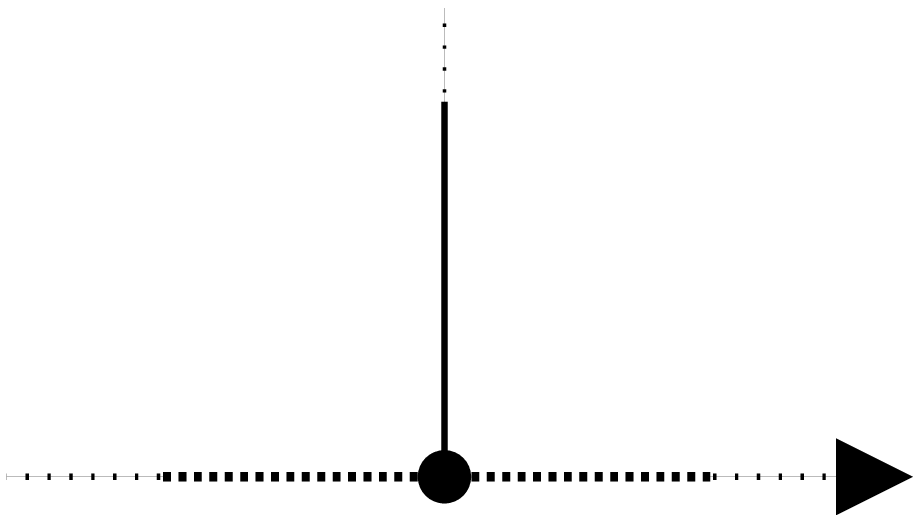}}}
-\frac{1}{2}
\raisebox{-3ex}{\scalebox{0.27}{\includegraphics{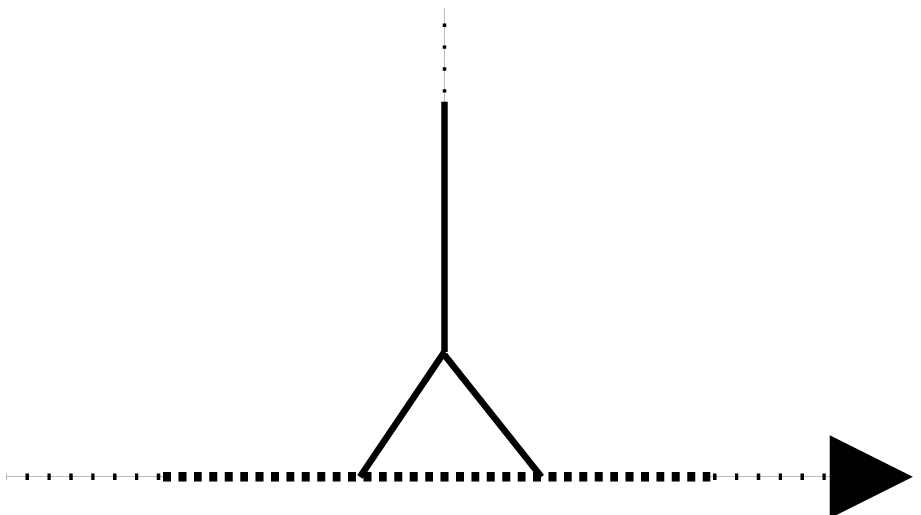}}}\ .
\end{array}
\]
So, for example:
\begin{eqnarray*}
\Upsilon\left(\,\raisebox{-3.5ex}{\scalebox{0.22}{\includegraphics{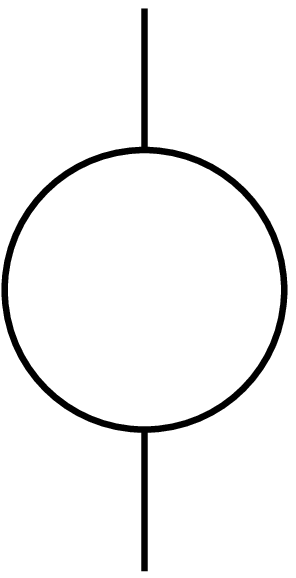}}}\,\right)
& = & \raisebox{-3ex}{\scalebox{0.24}{\includegraphics{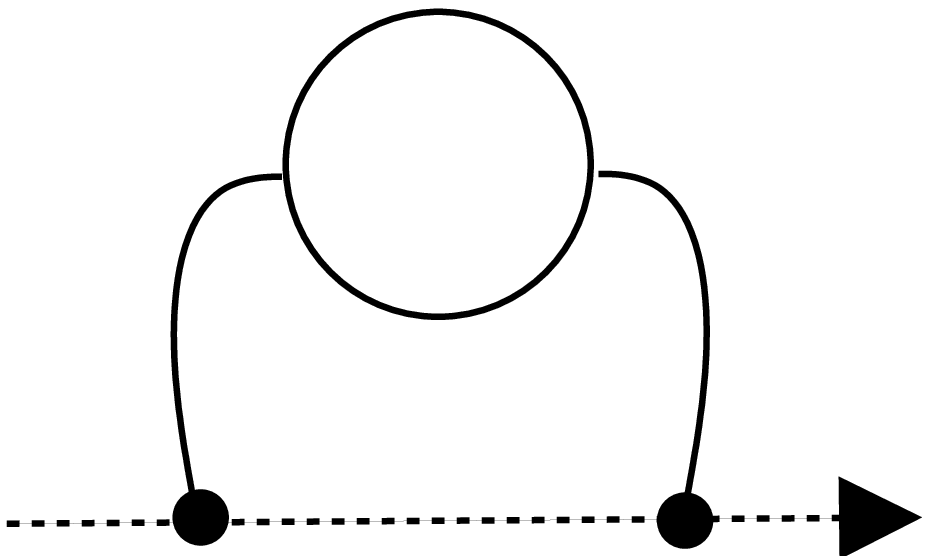}}}
-\frac{1}{2}
\raisebox{-3ex}{\scalebox{0.24}{\includegraphics{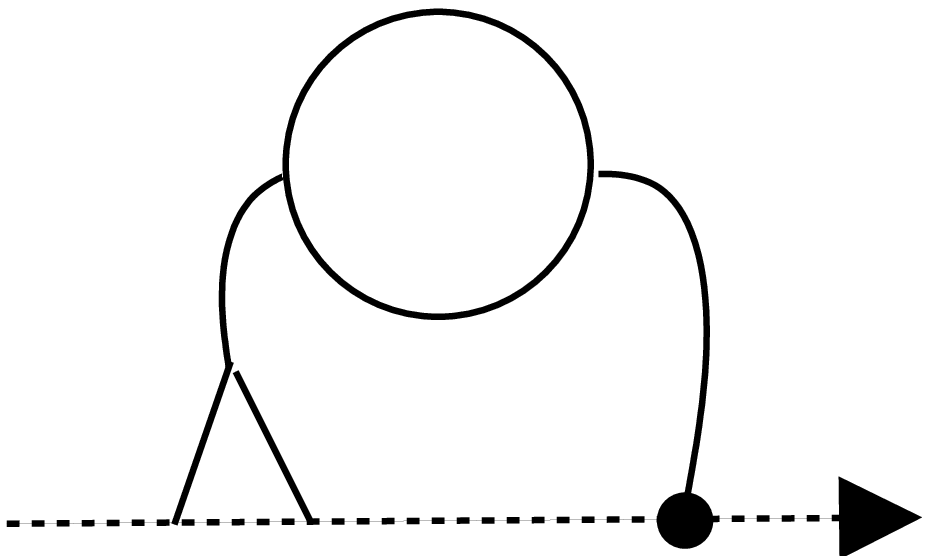}}} \\
& &  -\frac{1}{2}
\raisebox{-3ex}{\scalebox{0.24}{\includegraphics{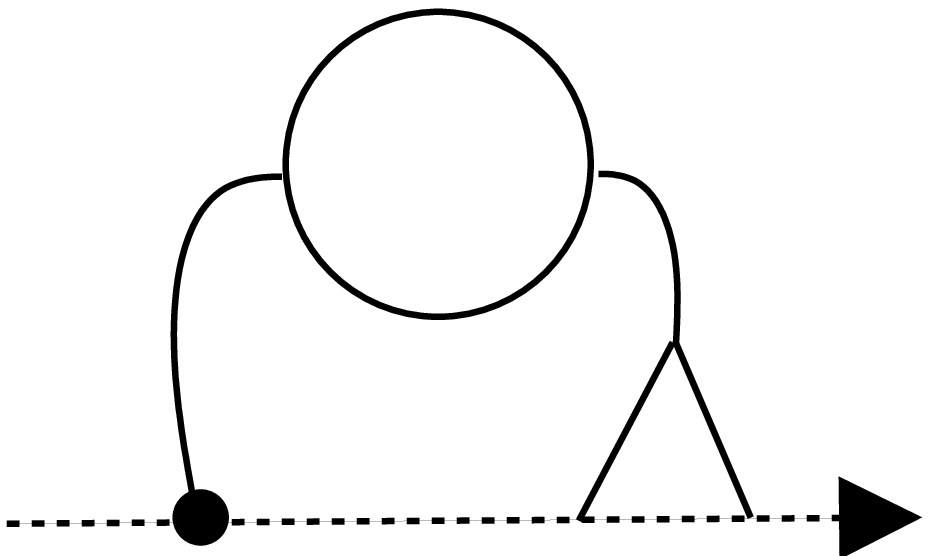}}} \
+\frac{1}{4}
\raisebox{-3ex}{\scalebox{0.24}{\includegraphics{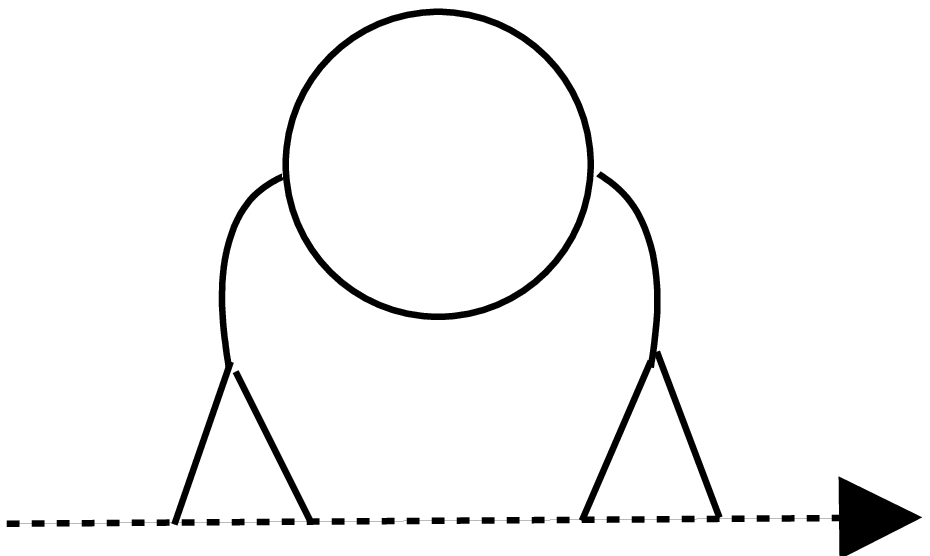}}}\ \in\
\Wspace\,.
\end{eqnarray*}\vspace{0.1cm}

We can think of this map intuitively as ``splitting hair in all possible ways", or ``gluing in forks in all possible ways".

\subsection{The graded averaging map, $\chi_\Wspace:\Wspace \rightarrow \widetilde{\Wspace}$.}

The space $\widetilde{\Wspace}$ is defined in exactly the same way as the space $\Wspace$, but without introducing the leg transposition relations. The diagrams which
generate this space will be called ``non-commutative Weil diagrams" in this paper. So
the relationship between $\Wspace$ and $\widetilde{\Wspace}$ is analogous to the relationship between the  symmetric algebra and tensor algebra on some vector space $V$.

We can embed $\Wspace$ into $\widetilde{\Wspace}$ by means of the graded averaging map $\chi_\Wspace:\Wspace \rightarrow \widetilde{\Wspace}$. This is the linear extension of the map
which takes a symmetric Weil diagram to the average of all possible rearrangements of the legs of the diagram, accompanied by the sign that arises when that permutation is performed in $\Wspace$.
For example:
\begin{eqnarray*}
\chi_\Wspace\left(
\raisebox{-3ex}{\scalebox{0.22}{\includegraphics{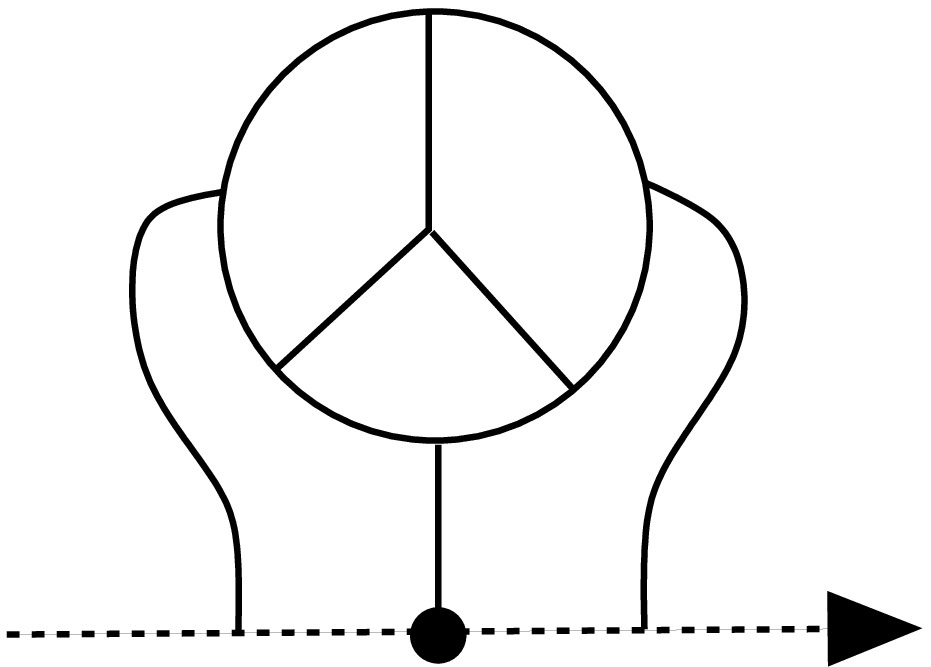}}}
\right) & = & \frac{1}{3!}\left(
\raisebox{-3ex}{\scalebox{0.22}{\includegraphics{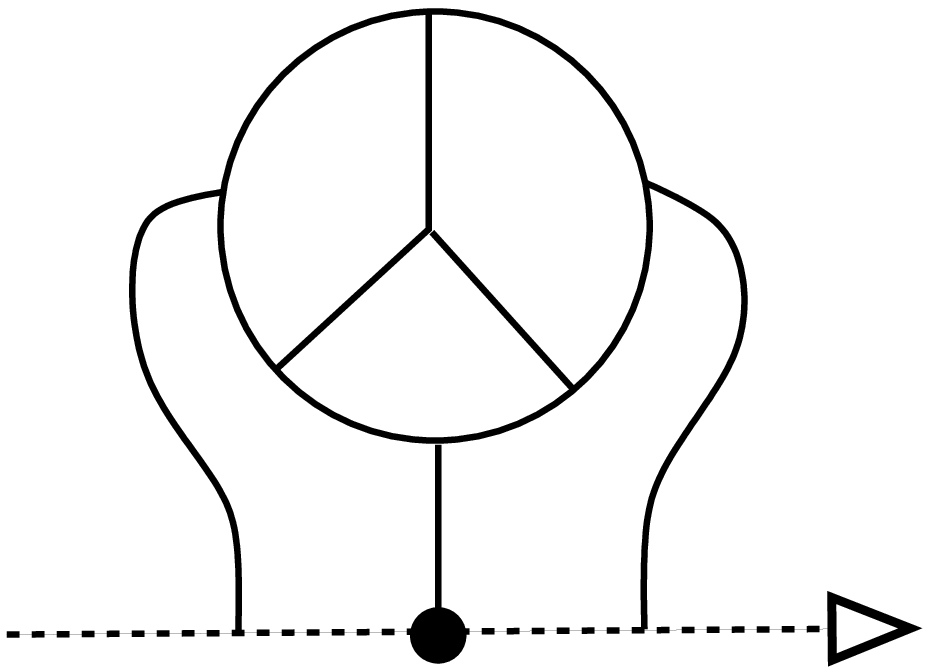}}}
+
\raisebox{-3ex}{\scalebox{0.22}{\includegraphics{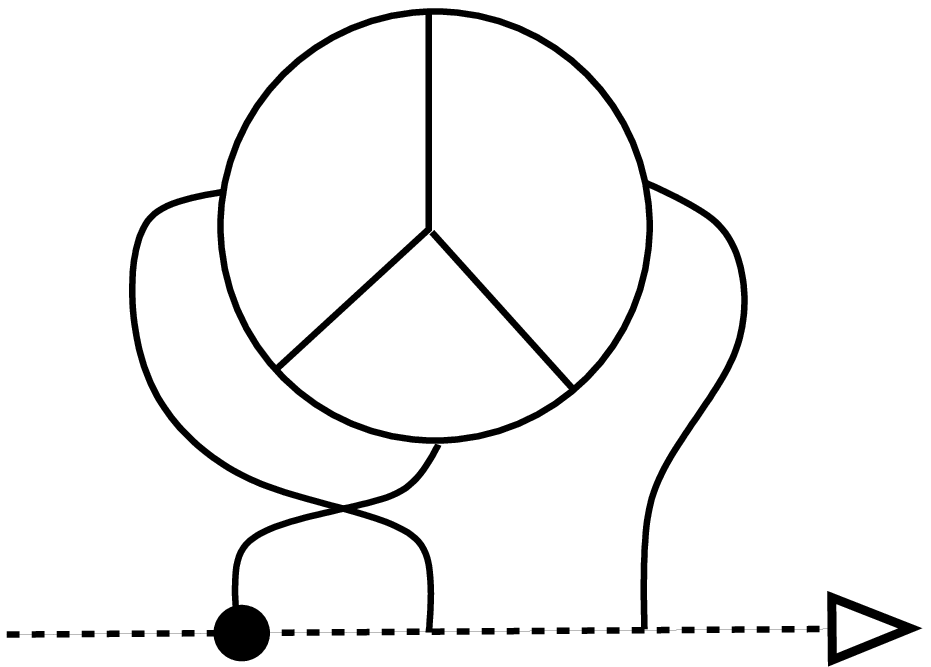}}}
-
\raisebox{-3ex}{\scalebox{0.22}{\includegraphics{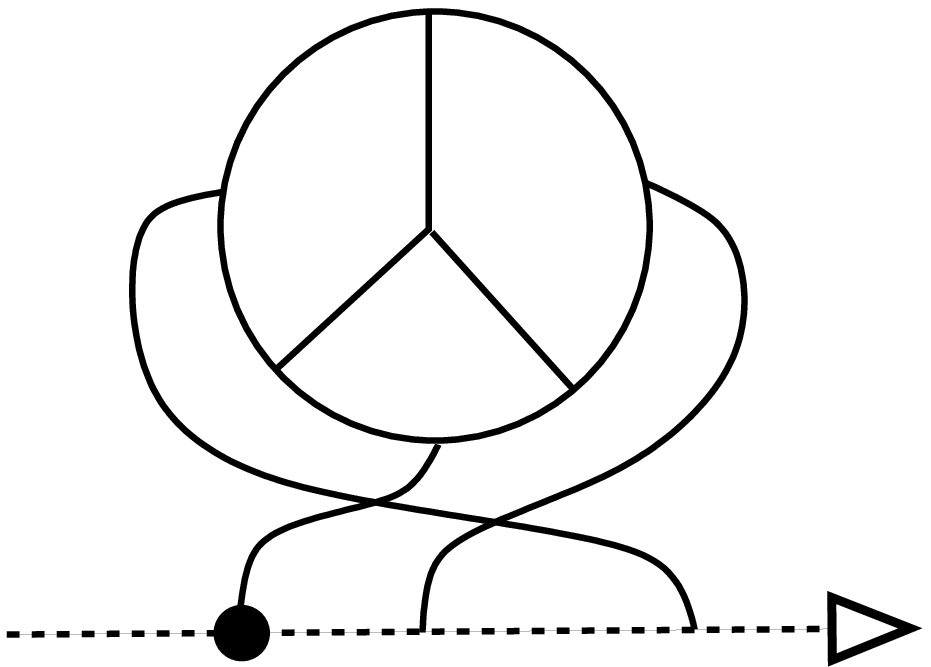}}}
\right.
\\ & & \left.
\ \ \ \,-
\raisebox{-3ex}{\scalebox{0.22}{\includegraphics{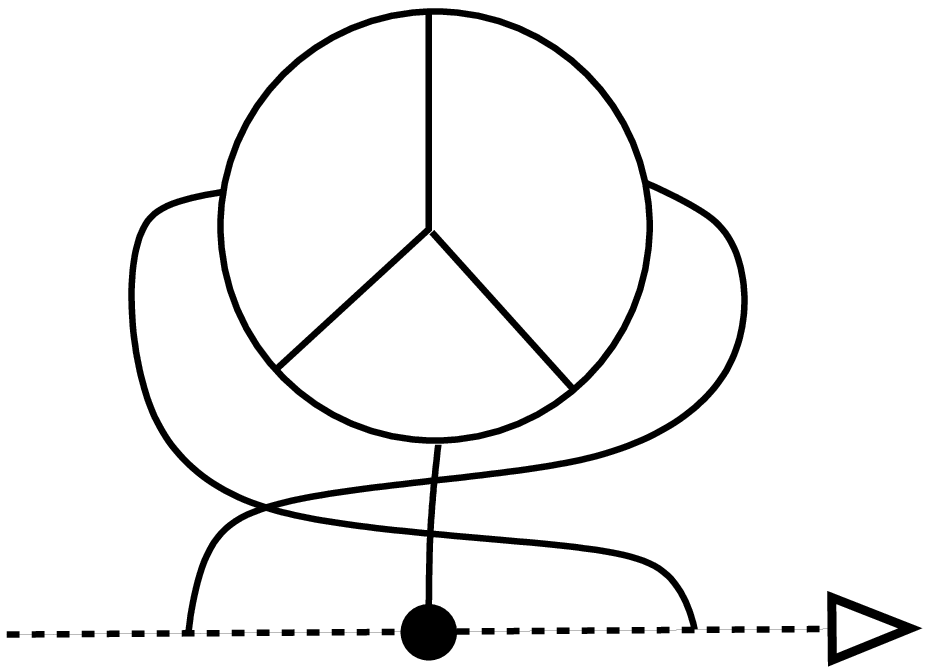}}}
-
\raisebox{-3ex}{\scalebox{0.22}{\includegraphics{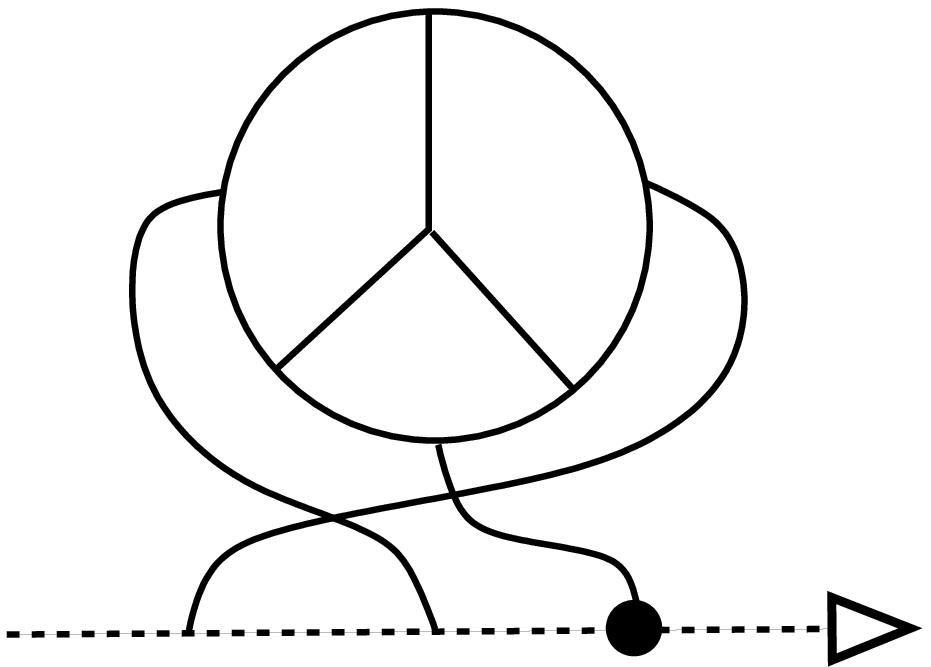}}}
+
\raisebox{-3ex}{\scalebox{0.22}{\includegraphics{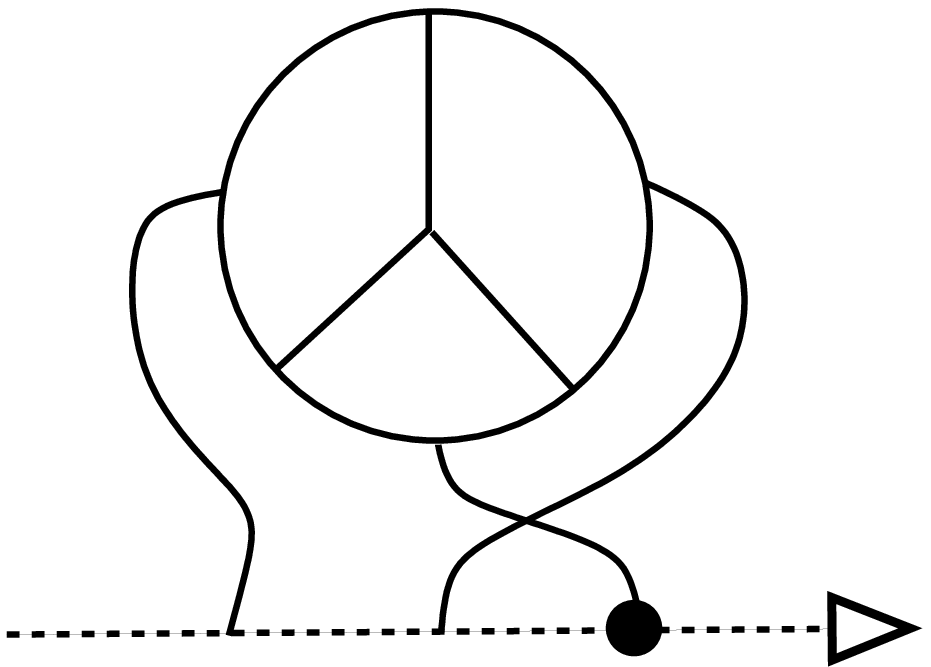}}}\right).
\end{eqnarray*}
The reader can check that this map respects leg transposition relations.

\subsection{The space $\widehat{\Wspace}$, and the map $\pi: \widetilde{\Wspace} \rightarrow \widehat{W}$.}

Non-commutative Weil diagrams have no relations that relate different orderings of their legs. But the space we are heading towards -- $\Aspace$ -- has STU
relations, so we had better introduce them. As discussed in \cite{K} (basically following results of Alekseev and Meinrenken), when we introduce STU relations
amongst the leg-grade 2 legs, there are some other classes of relations that we must introduce at the same time, so as to retain the algebraic structure of an
$\iota$-complex.

The complete set of relations that we introduce when we introduce STU is as follows:
\[
\begin{array}{ccccc}
\raisebox{-2ex}{\scalebox{0.22}{\includegraphics{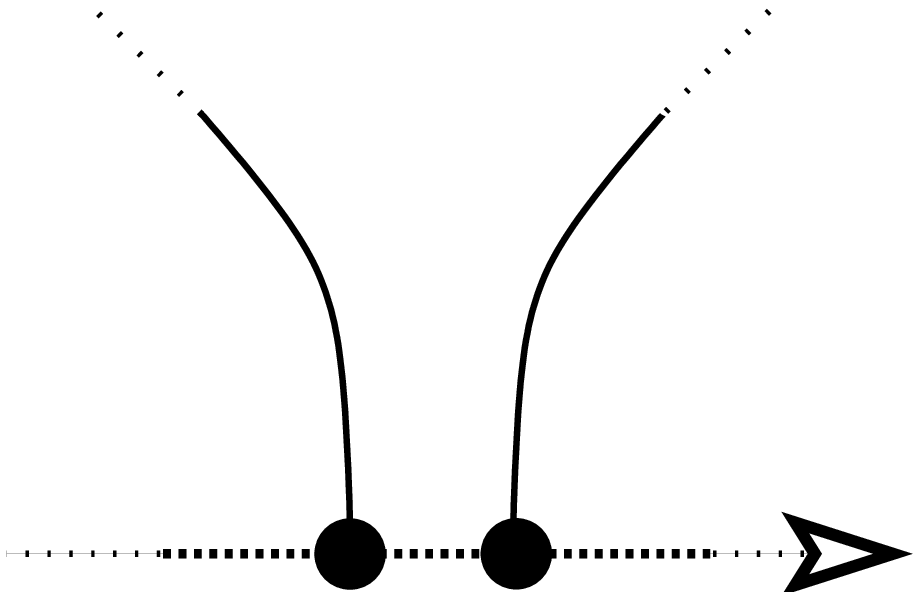}}} & -
& \raisebox{-2ex}{\scalebox{0.22}{\includegraphics{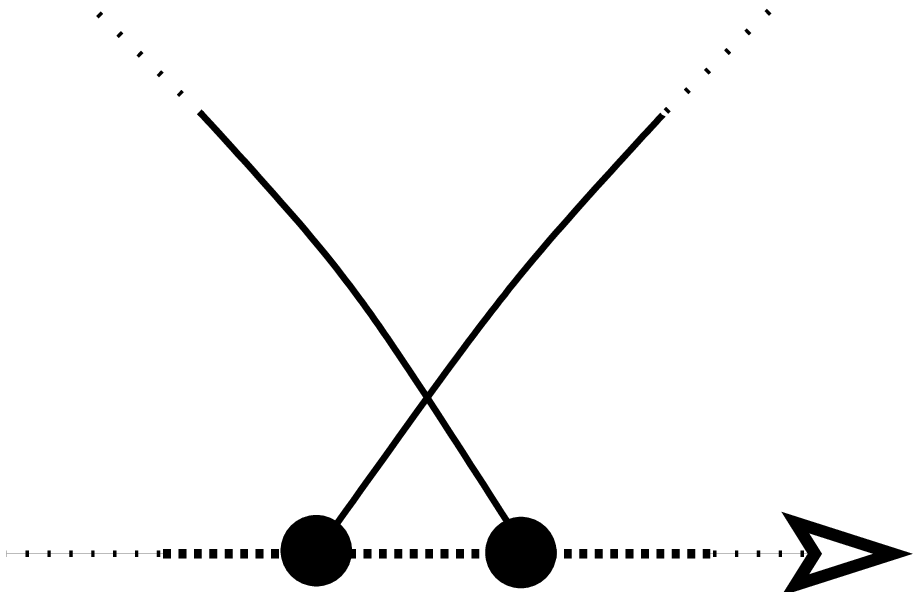}}}  &
= & \raisebox{-2ex}{\scalebox{0.22}{\includegraphics{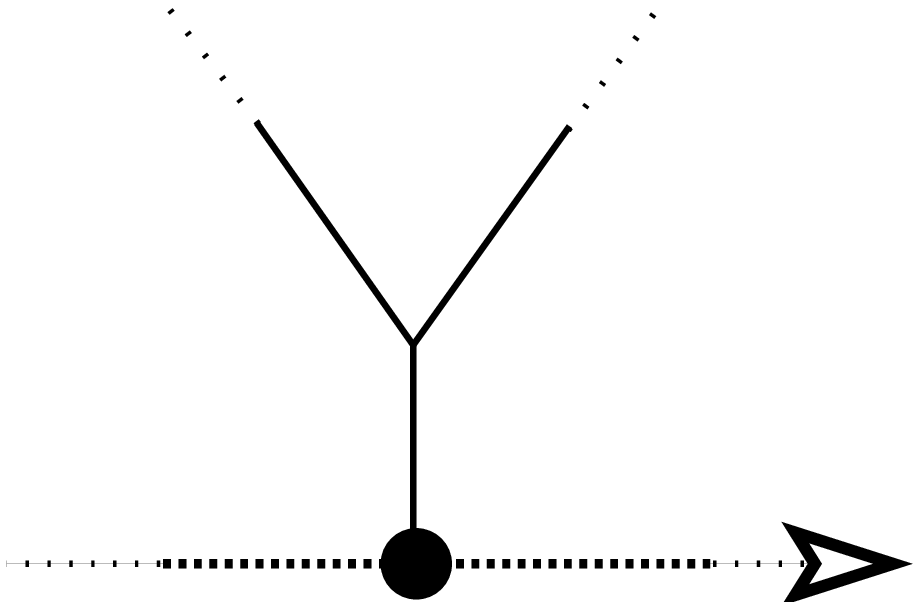}}}\
\ ,
\\[0.55cm]
\raisebox{-2ex}{\scalebox{0.22}{\includegraphics{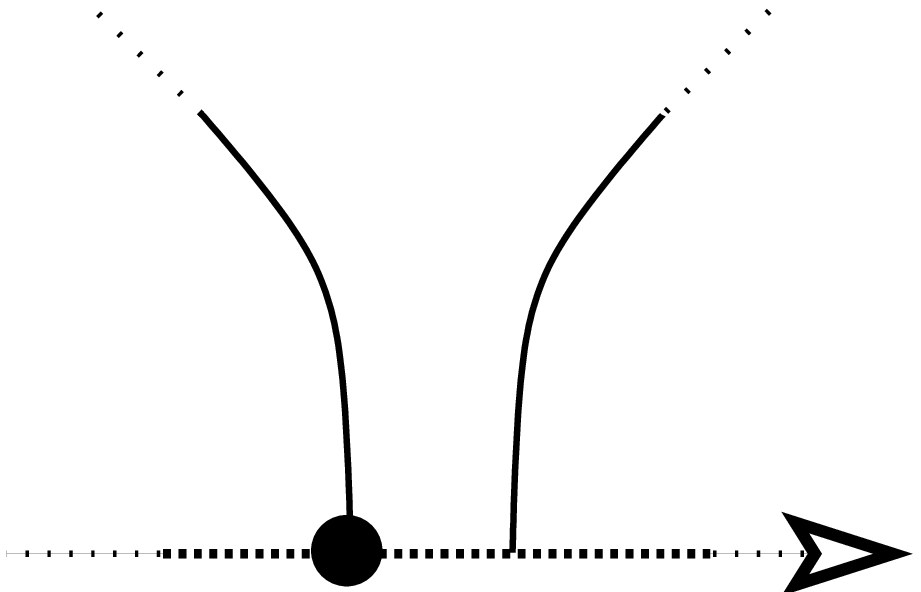}}} & -
& \raisebox{-2ex}{\scalebox{0.22}{\includegraphics{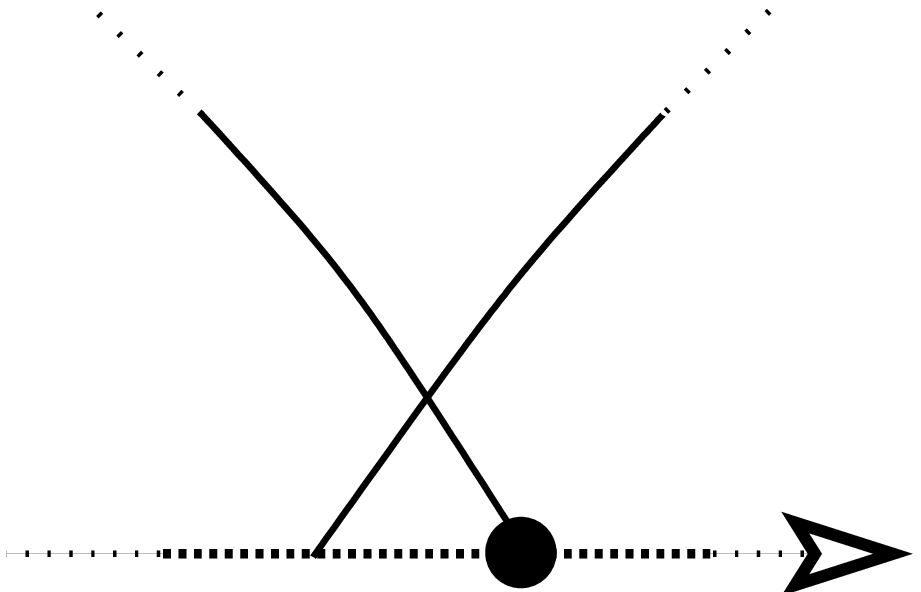}}} &
= &
\raisebox{-2ex}{\scalebox{0.22}{\includegraphics{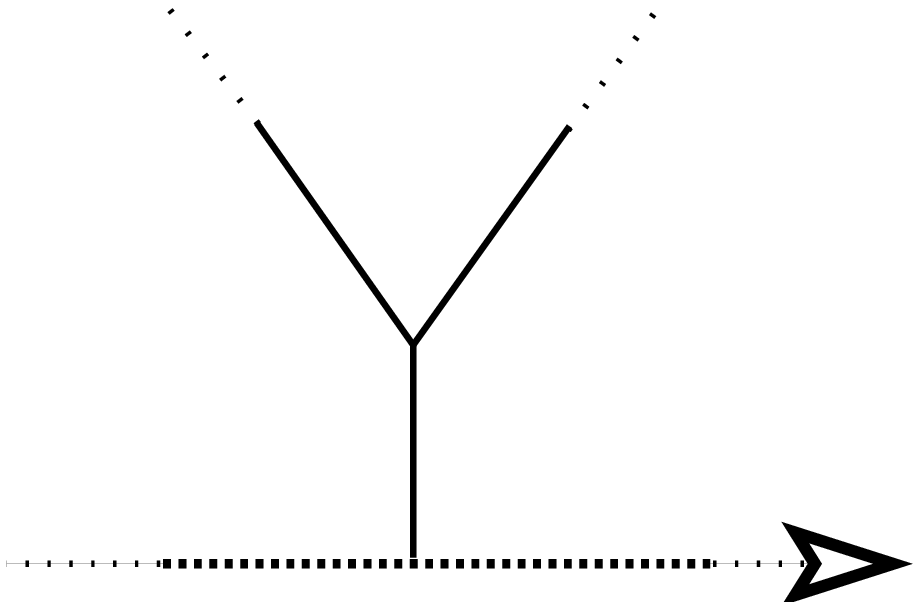}}}\ \
,
\\[0.55cm]
\raisebox{-2ex}{\scalebox{0.22}{\includegraphics{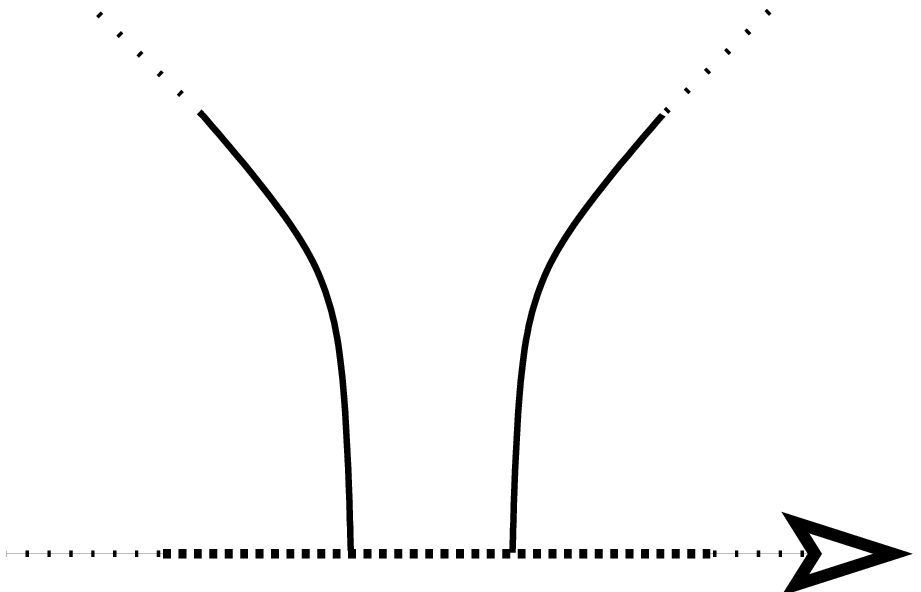}}} &
+ &
\raisebox{-2ex}{\scalebox{0.22}{\includegraphics{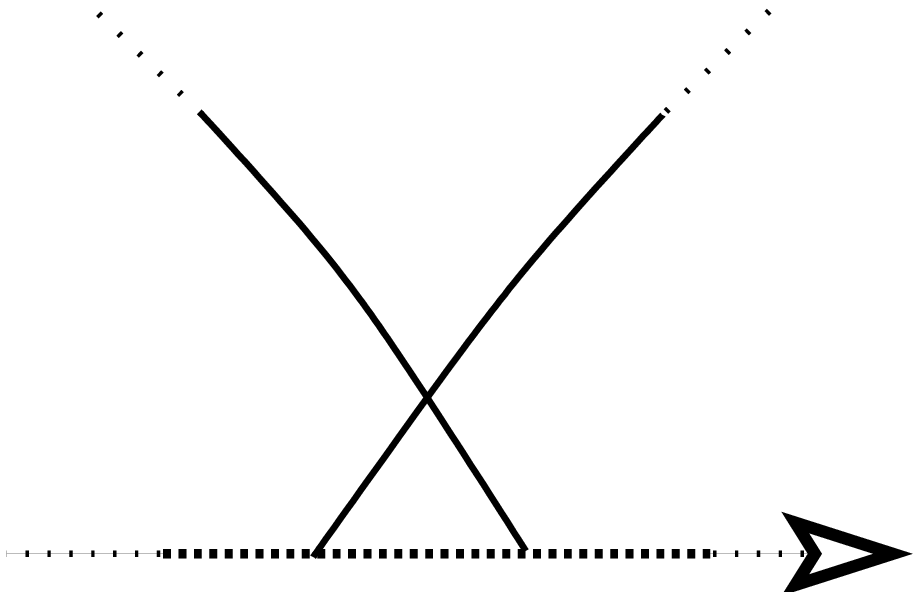}}} &
= &
\raisebox{-2ex}{\scalebox{0.22}{\includegraphics{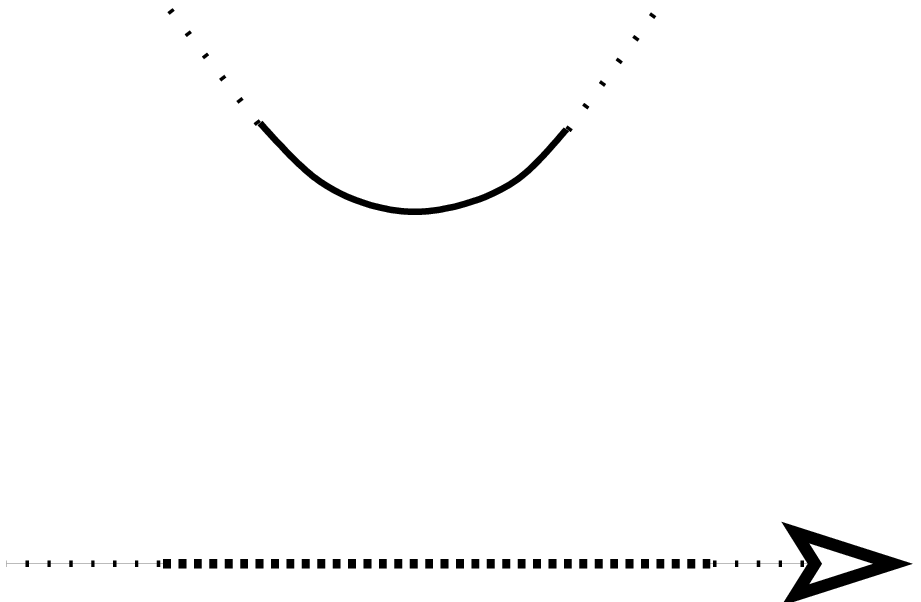}}}\ \
.
\end{array}
\]\vspace{0.1cm}

Observe that the third class presented above is some sort of formal analogue of the defining relation of a Clifford algebra.

The map $\pi: \widetilde{\Wspace} \rightarrow \widehat{W}$ is just to introduce these relations, with no other effect on a diagram.

\subsection{Curvature legs and the map $\basebulltoF : \widehat{\mathcal{W}} \rightarrow   \widehat{\mathcal{W}}_{\mathrm{F}}$.}
Instead of the usual leg-grade 2 legs that have appeared in the discussion up to this point (the legs drawn with a fat dot), it is possible to work
with a different choice of leg-grade 2 leg, which we'll call {\it curvature legs} in this work.
The relationship between the two choices can be expressed by the equation (with arrow-head appropriate to the space):
\[
\begin{array}{rccl}
\raisebox{-3ex}{\scalebox{0.25}{\includegraphics{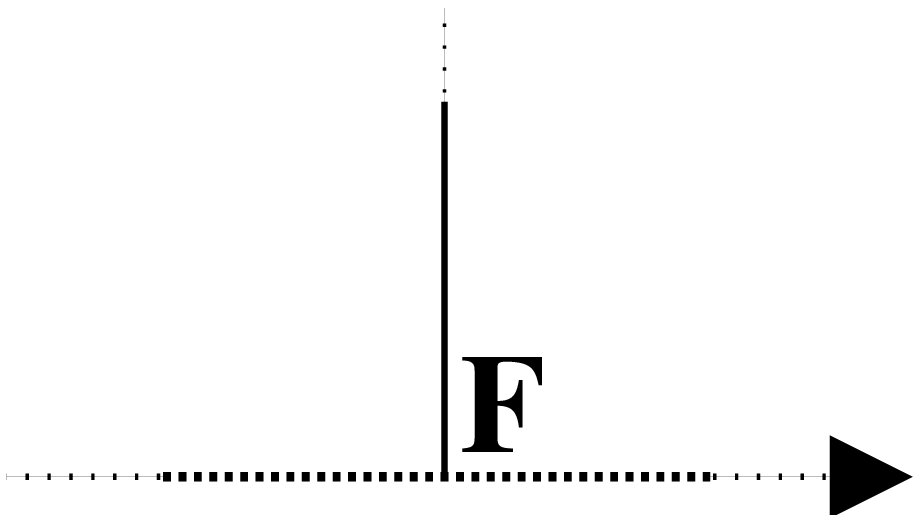}}} &
= &
\raisebox{-3ex}{\scalebox{0.25}{\includegraphics{curvB}}}
-\frac{1}{2}
\raisebox{-3ex}{\scalebox{0.25}{\includegraphics{curvC}}}\ .
\end{array}
\]
For example, the Hair-splitting map $\Upsilon$ discussed above in Section \ref{hsd} can alternatively be defined as follows: Choose some ordering for the degree 1 vertices
of the diagram, then make every leg a curvature leg.

In the algebraic theory, this is just a different choice of generators within a common algebra. In the current work, from the combinatorial point of view, it is clearer
to view diagrams that are based on curvature legs as generators of a different, though isomorphic, vector space, and the above equality should only be viewed heuristically.
Curvature legs are introduced into the theory to simplify the map $\iota$, though at the expense of a more complicated differential.

If we base the space $\widehat{\Wspace}$ on curvature legs, instead of the usual leg-grade 2 legs, we are led to the space $\widehat{\Wspace}_{\mathrm{F}}$, which has the following relations
\[
\begin{array}{ccccc}
\raisebox{-2ex}{\scalebox{0.23}{\includegraphics{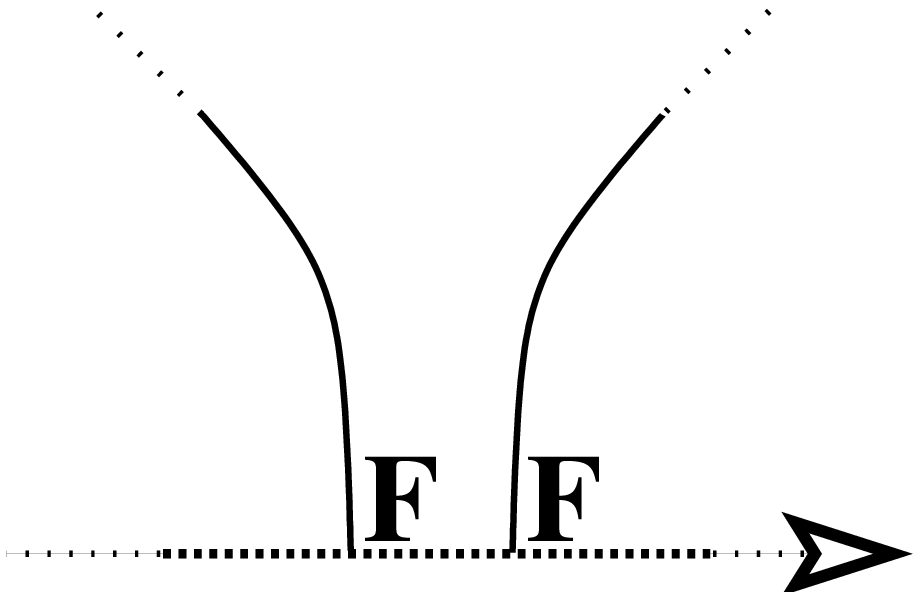}}} & - &
\raisebox{-2ex}{\scalebox{0.23}{\includegraphics{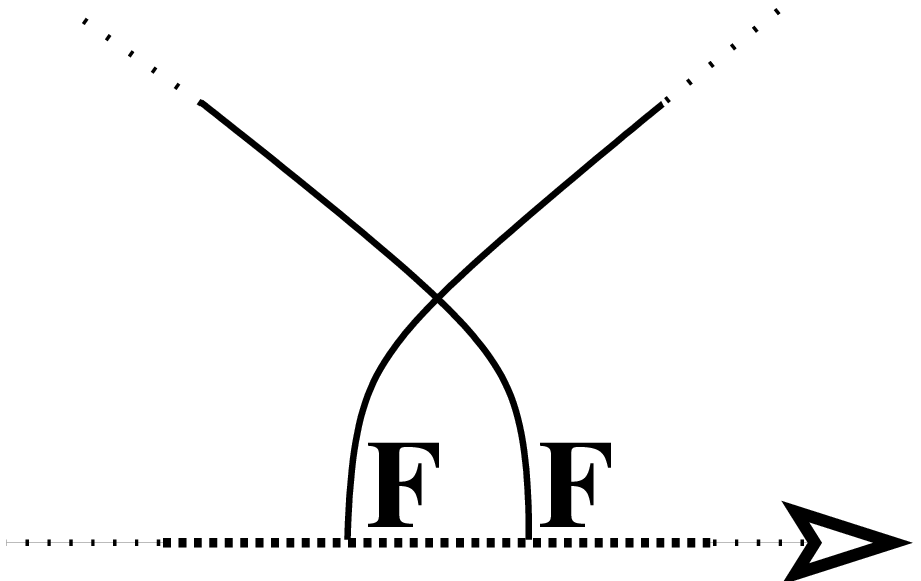}}} & = &
\raisebox{-2ex}{\scalebox{0.23}{\includegraphics{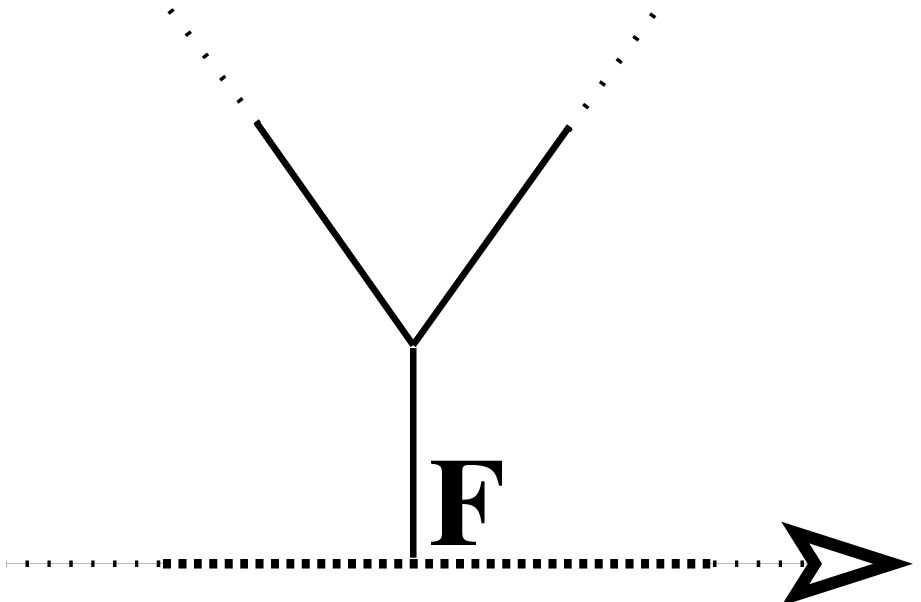}}}\ \ ,
\\[0.55cm]
\raisebox{-2ex}{\scalebox{0.23}{\includegraphics{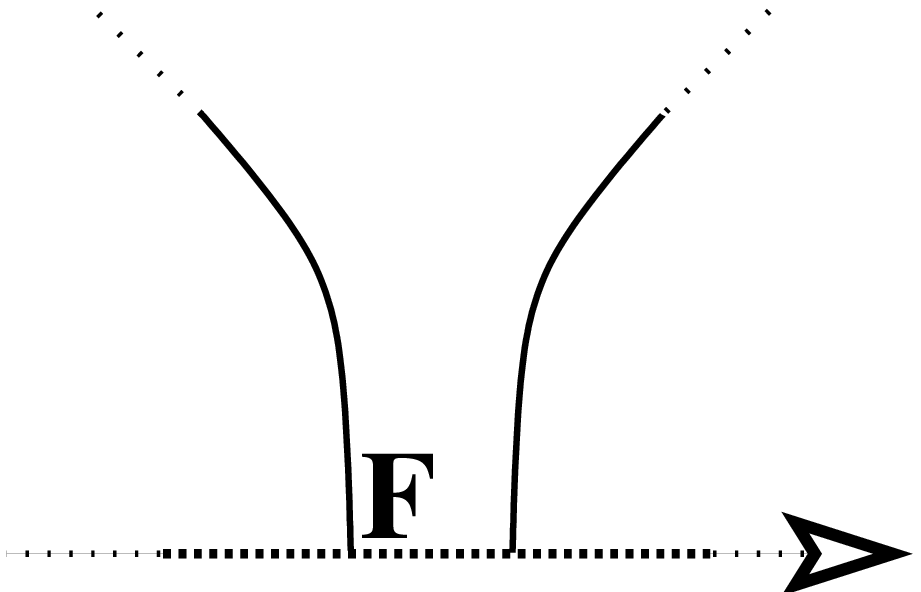}}} &  -
& \raisebox{-2ex}{\scalebox{0.23}{\includegraphics{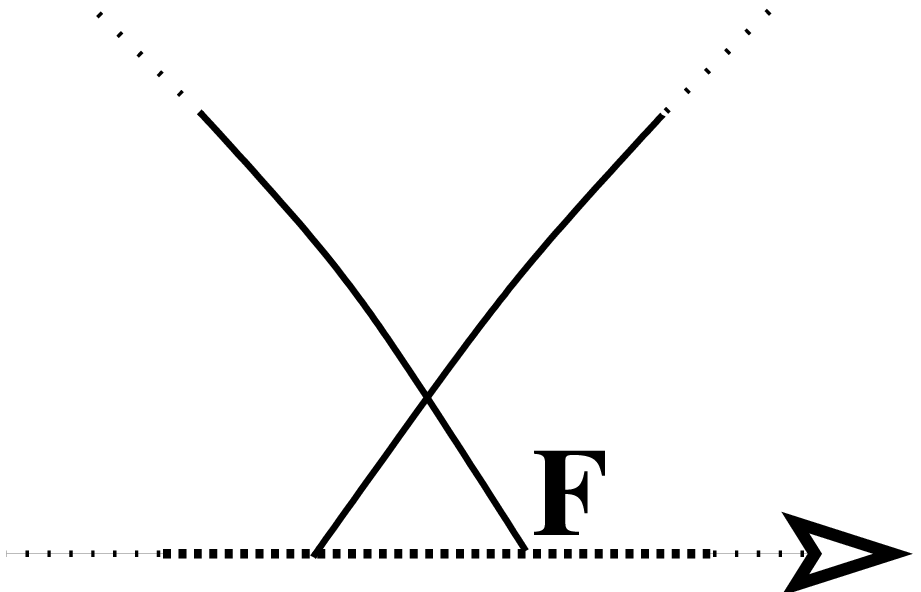}}} & =
& 0\ \ ,
\\[0.55cm]
\raisebox{-2ex}{\scalebox{0.23}{\includegraphics{whatrelnAPP}}} &
+ &
\raisebox{-2ex}{\scalebox{0.23}{\includegraphics{whatrelnBPP}}} &
= &
\raisebox{-2ex}{\scalebox{0.23}{\includegraphics{whatrelnCPP}}}\ \
.
\end{array}
\]
\vspace{0.1cm}

The ``change of basis" map, $\basebulltoF : \widehat{\mathcal{W}} \rightarrow   \widehat{\mathcal{W}}_{\mathrm{F}}$, is just to replace
every leg-grade 2 leg with a curvature leg, via the operation:
\[
\begin{array}{rccl}
\raisebox{-3ex}{\scalebox{0.25}{\includegraphics{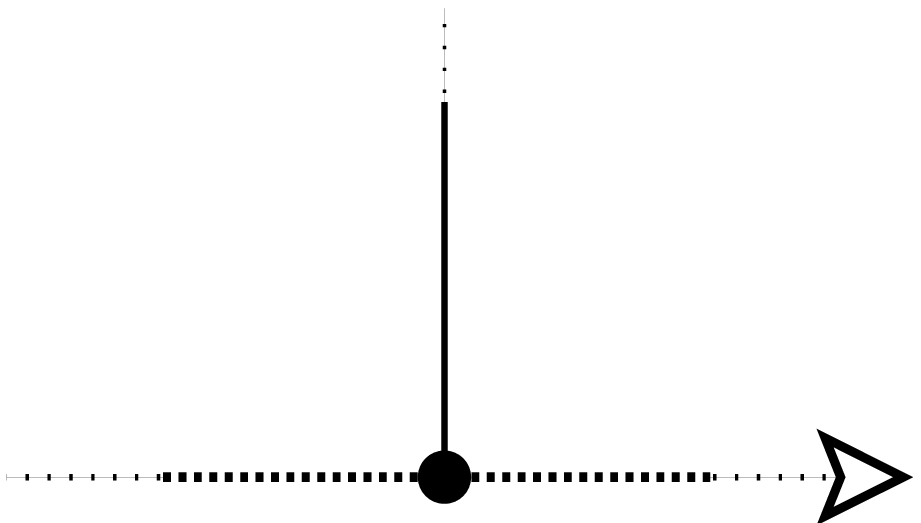}}} &
\mapsto &
\raisebox{-3ex}{\scalebox{0.25}{\includegraphics{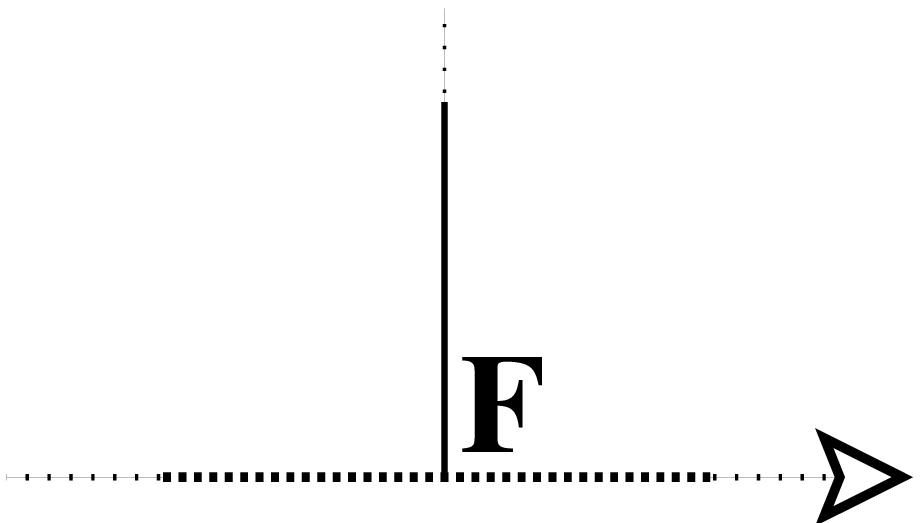}}}
+\frac{1}{2}
\raisebox{-3ex}{\scalebox{0.25}{\includegraphics{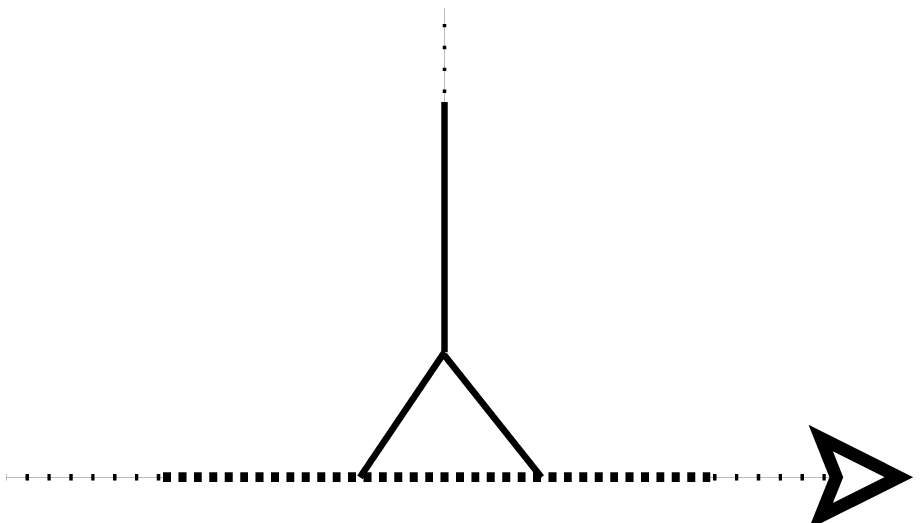}}}\ .
\end{array}
\]\vspace{0.2cm}

\subsection{The space $\widehat{\mathcal{W}}_\wedge$.}
The final space to recall is the space $\widehat{\mathcal{W}}_\wedge$. This space consists of formal finite $\mathbb{Q}$-linear combinations
of diagrams with leg-grade 1 legs and curvature legs (i.e. exactly the same diagrams as is used by $\widehat{\mathcal{W}}_{\mathrm{F}}$), taken modulo AS, IHX,
and the following three classes of relations:
\[
\begin{array}{ccccc}
\raisebox{-2ex}{\scalebox{0.23}{\includegraphics{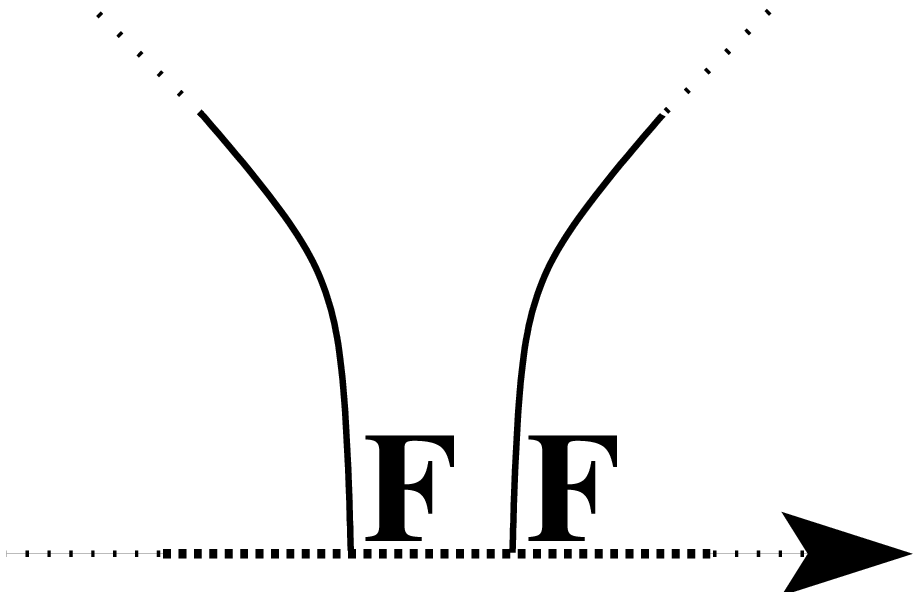}}} & -
& \raisebox{-2ex}{\scalebox{0.23}{\includegraphics{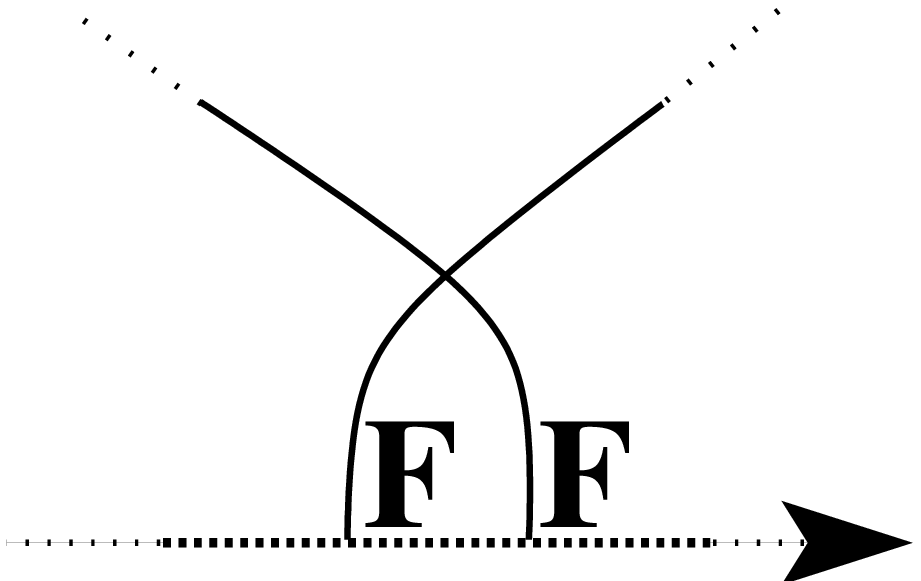}}} &
= &
\raisebox{-2ex}{\scalebox{0.23}{\includegraphics{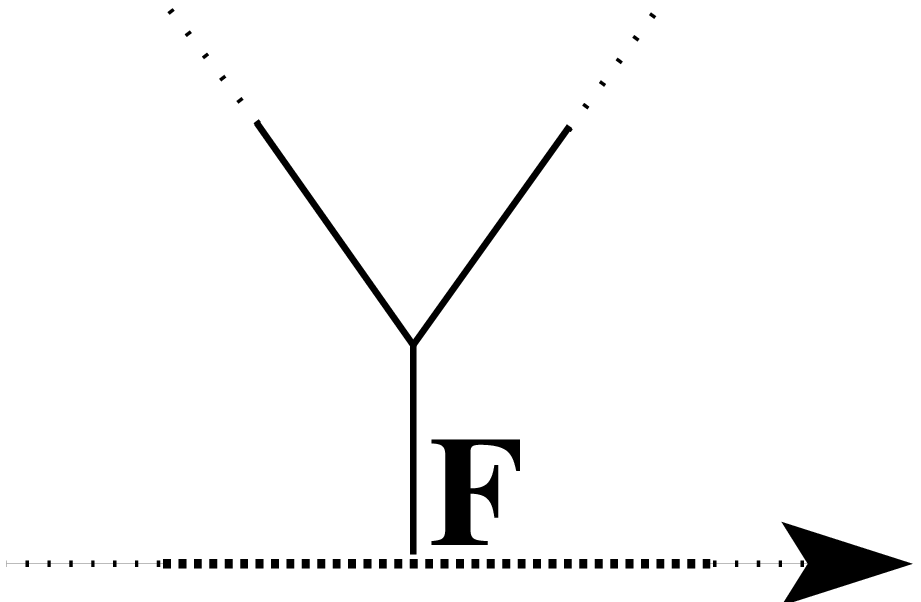}}}\ \
,
\\[0.5cm]
\raisebox{-2ex}{\scalebox{0.23}{\includegraphics{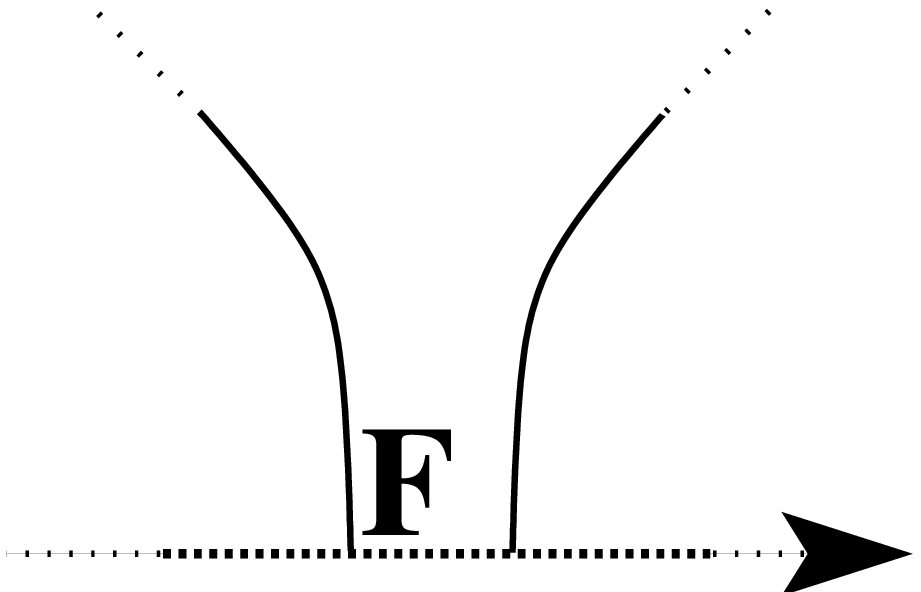}}} &
- &
\raisebox{-2ex}{\scalebox{0.23}{\includegraphics{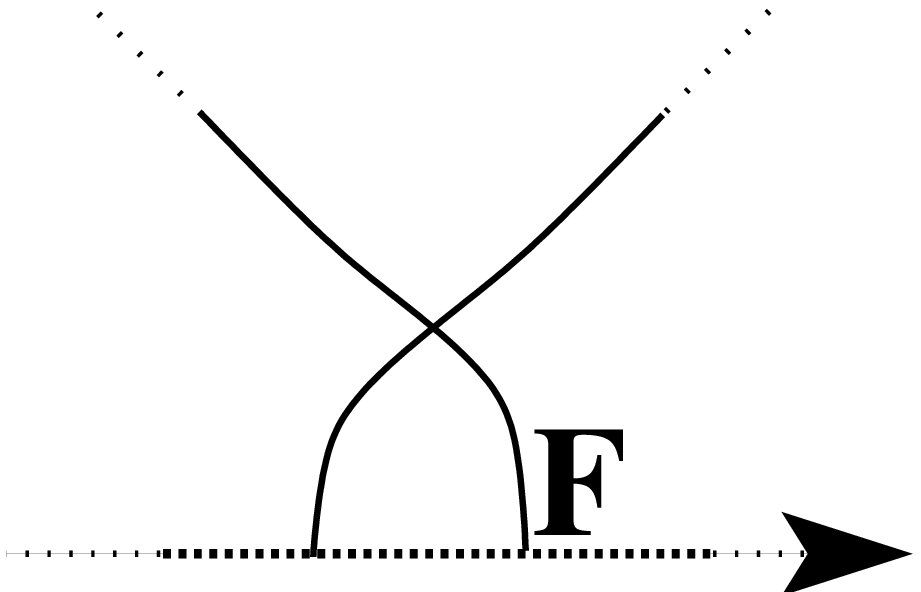}}} &
= & 0\ \ \ ,
\\[0.5cm]
\raisebox{-2ex}{\scalebox{0.23}{\includegraphics{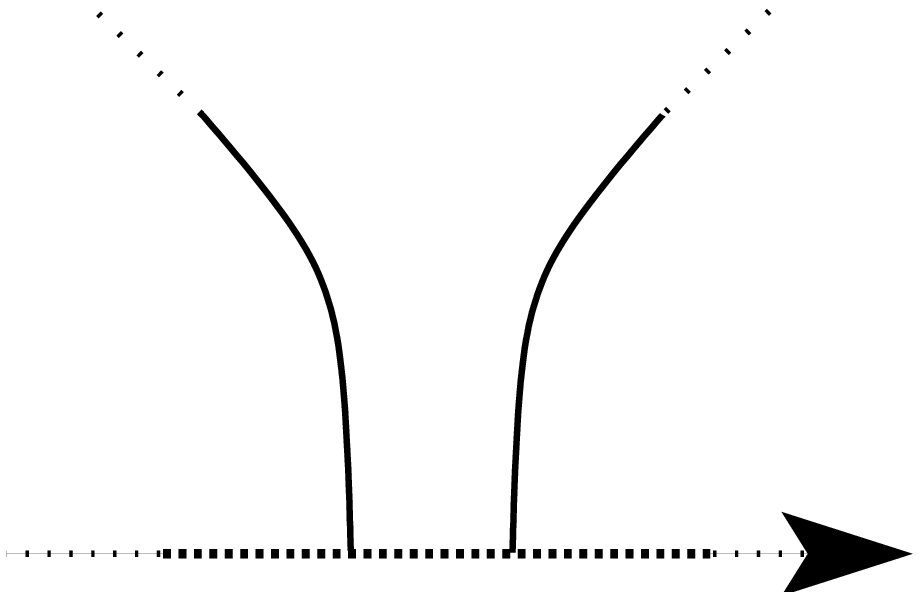}}} &
+ &
\raisebox{-2ex}{\scalebox{0.23}{\includegraphics{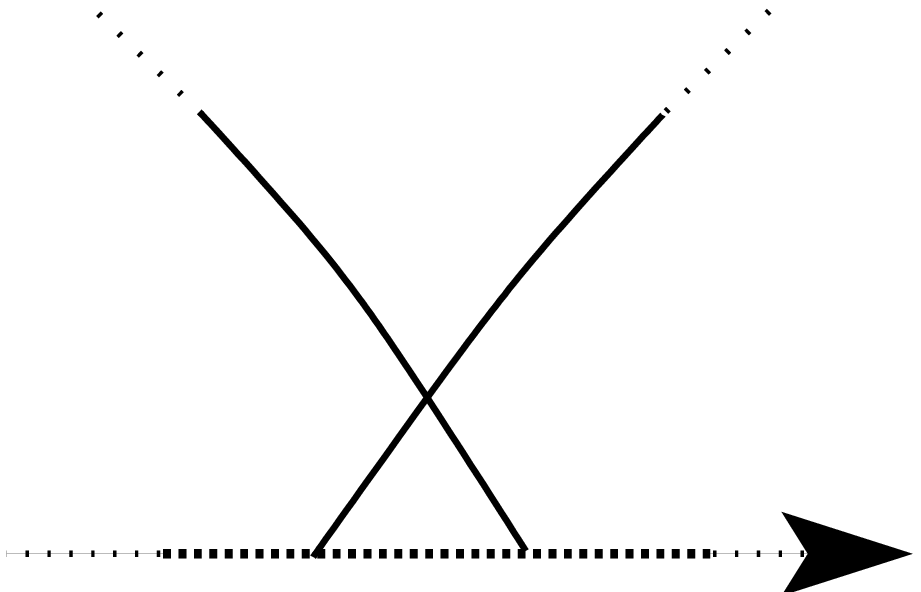}}} &
= & 0\ \ \ .
\end{array}
\]\vspace{0.1cm}

Observe that in this space the leg-grade 1 legs can be moved about freely, up to sign. In particular, this space is graded by the number of leg-grade 1 legs that a diagram has.

This space can be thought of as $\widehat{\mathcal{W}}_{\mathrm{F}}$ viewed with respect to generators in which the leg-grade 1 legs have been symmetrized.
To be precise: we have a well-defined map, $\chi_\wedge: \widehat{\mathcal{W}}_\wedge \rightarrow \widehat{\mathcal{W}}_{\mathrm{F}}$, which graded averages the leg-grade 1 legs.
For example:

\begin{eqnarray*}
\chi_\wedge\left(
\raisebox{-2.75ex}{\scalebox{0.28}{\includegraphics{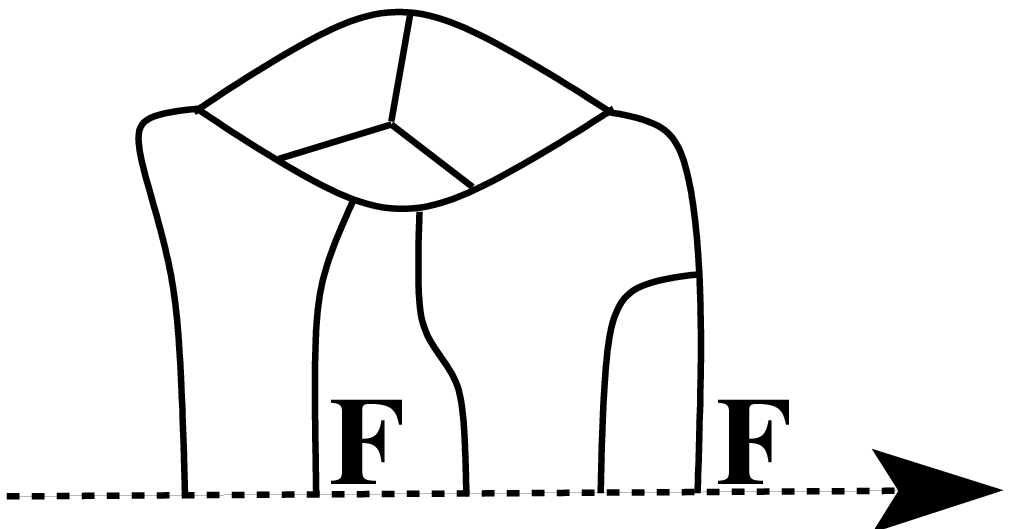}}}
\right) & = &\frac{1}{3!}\left(
\raisebox{-2.75ex}{\scalebox{0.28}{\includegraphics{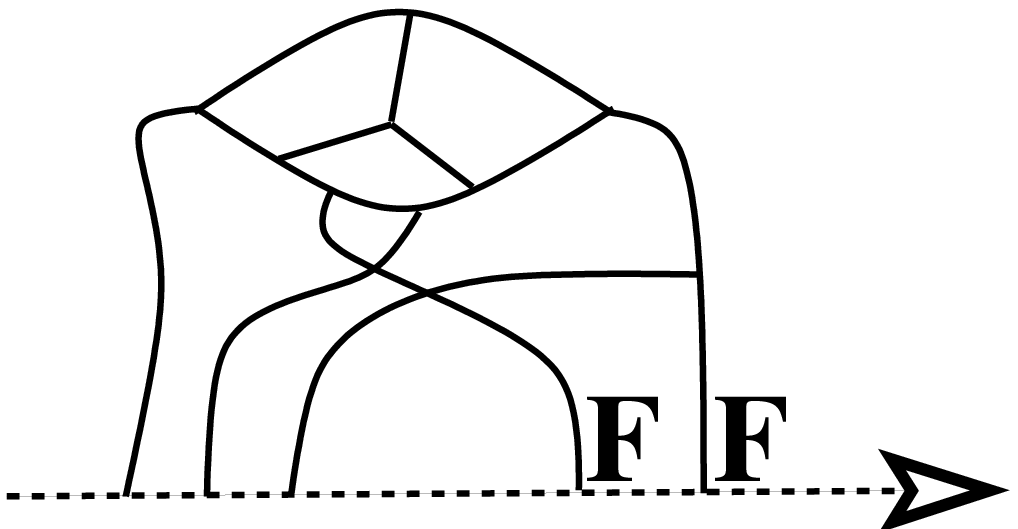}}}
\ -\
\raisebox{-2.75ex}{\scalebox{0.28}{\includegraphics{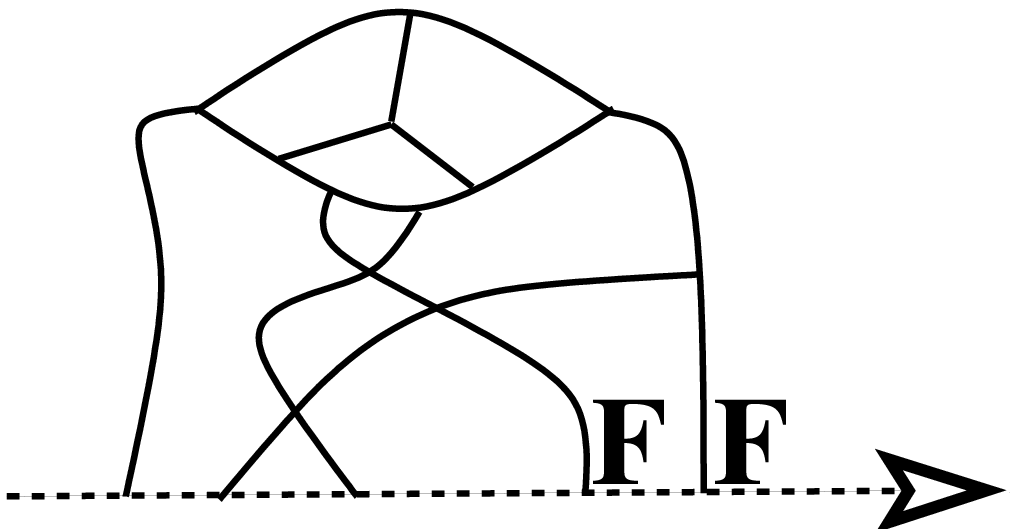}}}\right.
\\
& & \ +\
\raisebox{-2.75ex}{\scalebox{0.28}{\includegraphics{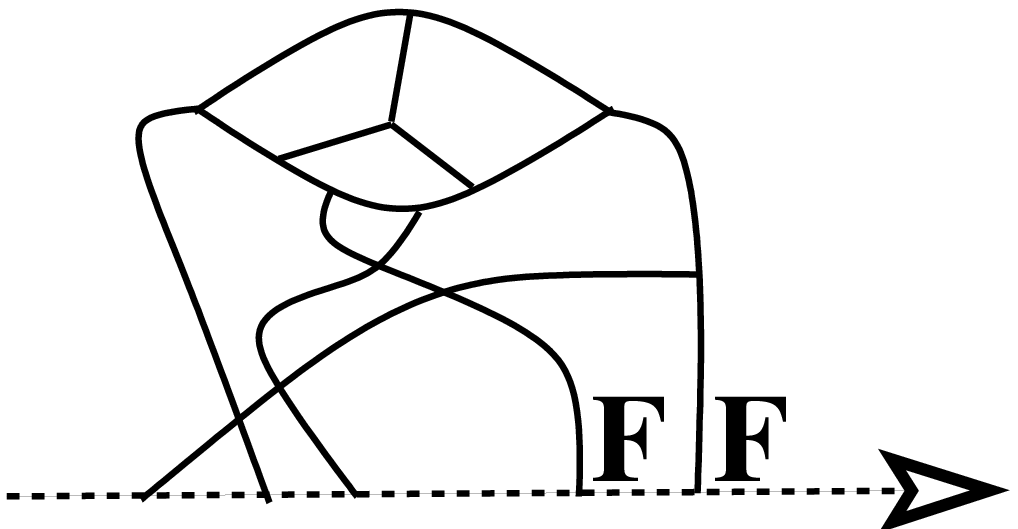}}}
\ -\
\raisebox{-2.75ex}{\scalebox{0.28}{\includegraphics{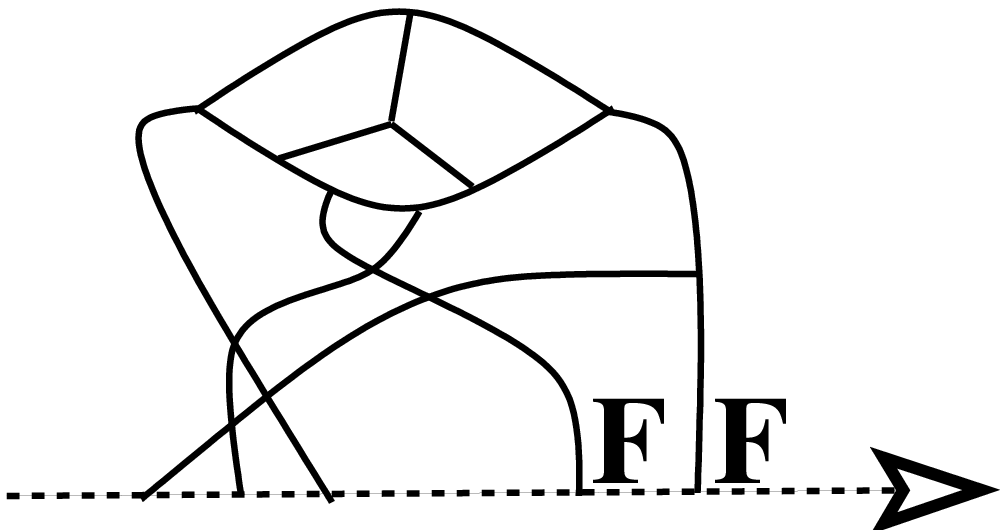}}}
\\[0.4cm]
& &\left. \ +\
\raisebox{-2.75ex}{\scalebox{0.28}{\includegraphics{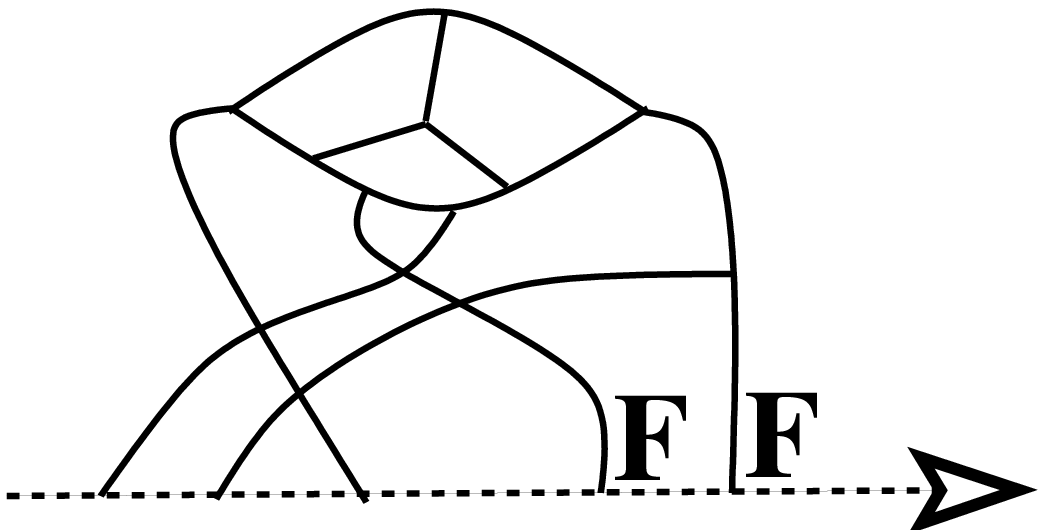}}}
\ -\
\raisebox{-2.75ex}{\scalebox{0.28}{\includegraphics{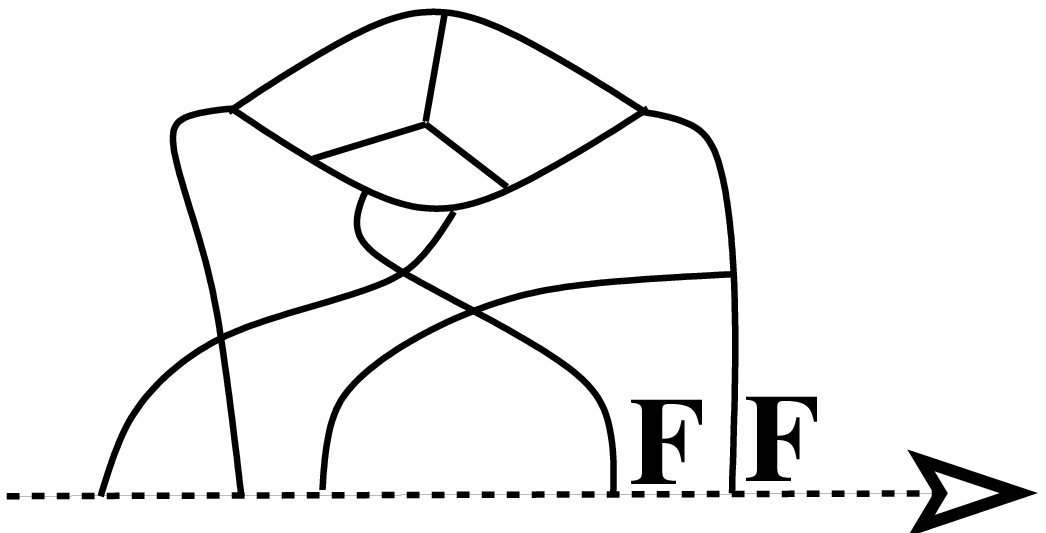}}}\right).
\end{eqnarray*}

Just like the case $\chi_\Bspace$, which is the formal PBW isomorphism considered in \cite{BarNatan}, the map $\chi_\wedge$ is a vector space isomorphism.
However, something is true in this case which is not true for $\chi_\Bspace$: the inverse map has an elementary construction.

\subsection{The map $\lambda: \widehat{\mathcal{W}}_{\mathrm{F}} \rightarrow \widehat{\mathcal{W}}_\wedge$.}
\label{lambdarecall}

Here we'll recall the definition of the map $\lambda$ which inverts $\chi_\wedge$. For detailed proofs that it
is well-defined, and actually inverts $\chi_\wedge$, see \cite{K}. In \cite{K} we described two approaches to $\lambda$, a ``combinatorial" definition
which was useful for proving facts about the construction, and a ``visual" definition which was more useful for doing calculations. We'll recall the second approach here.

The definition can be summarized in the following way: Glue the grade 1 legs to each other in all possible ways, with appropriate coefficients. To be precise,
recall that a {\it pairing} of the grade 1 legs of a diagram $w$ is a collection, possibly empty, of disjoint 2-element subsets of the set of grade 1 legs of $w$. Let $\mathcal{P}(w)$
denote the set of pairings of the diagram $w$. Then $\lambda$ is defined as the linear extension of the map which sends a diagram $w$ to a certain sum
\[
\lambda(w)=\sum_{\wp\in\mathcal{P}(w)}\mathcal{D}_\wp(w),
\]
where $\mathcal{D}_\wp(w)$ denotes $w$ with its grade 1 legs glued together according to the pairing $\wp$, equipped with an appropriate coefficient.
To recall the exact coefficient, we'll follow through the following example:
\[
w=\raisebox{-6ex}{\scalebox{0.25}{\includegraphics{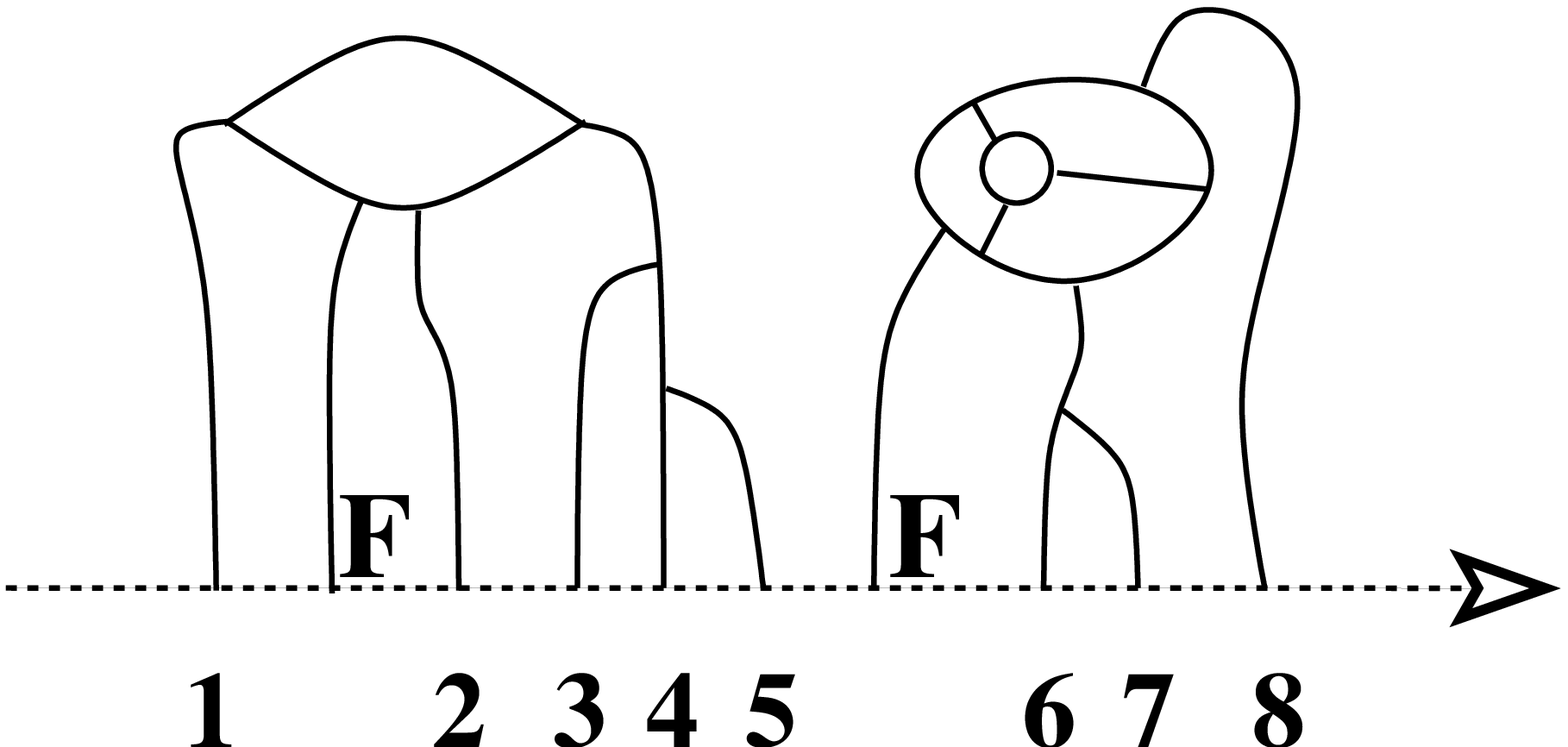}}}\
\ \text{and}\ \ \wp = \{\{1,3\},\{2,4\},\{5,7\}\}.
\]\vspace{0.1cm}

Begin by introducing a second
orienting line underneath the diagram, with a gap separating the
two orienting lines.
Then, for every pair of legs in the pairing $\wp$, add an arc,
using a full line, between the corresponding legs of the diagram
(such that the introduced arc has no self-intersections and stays
within the gap between the two orienting lines).
Finally, carry all the remaining legs straight down onto the
bottom orienting line, using a full line for the grade 1 legs and
a dashed line for the grade 2 legs:
\[
\raisebox{-4ex}{\scalebox{0.25}{\includegraphics{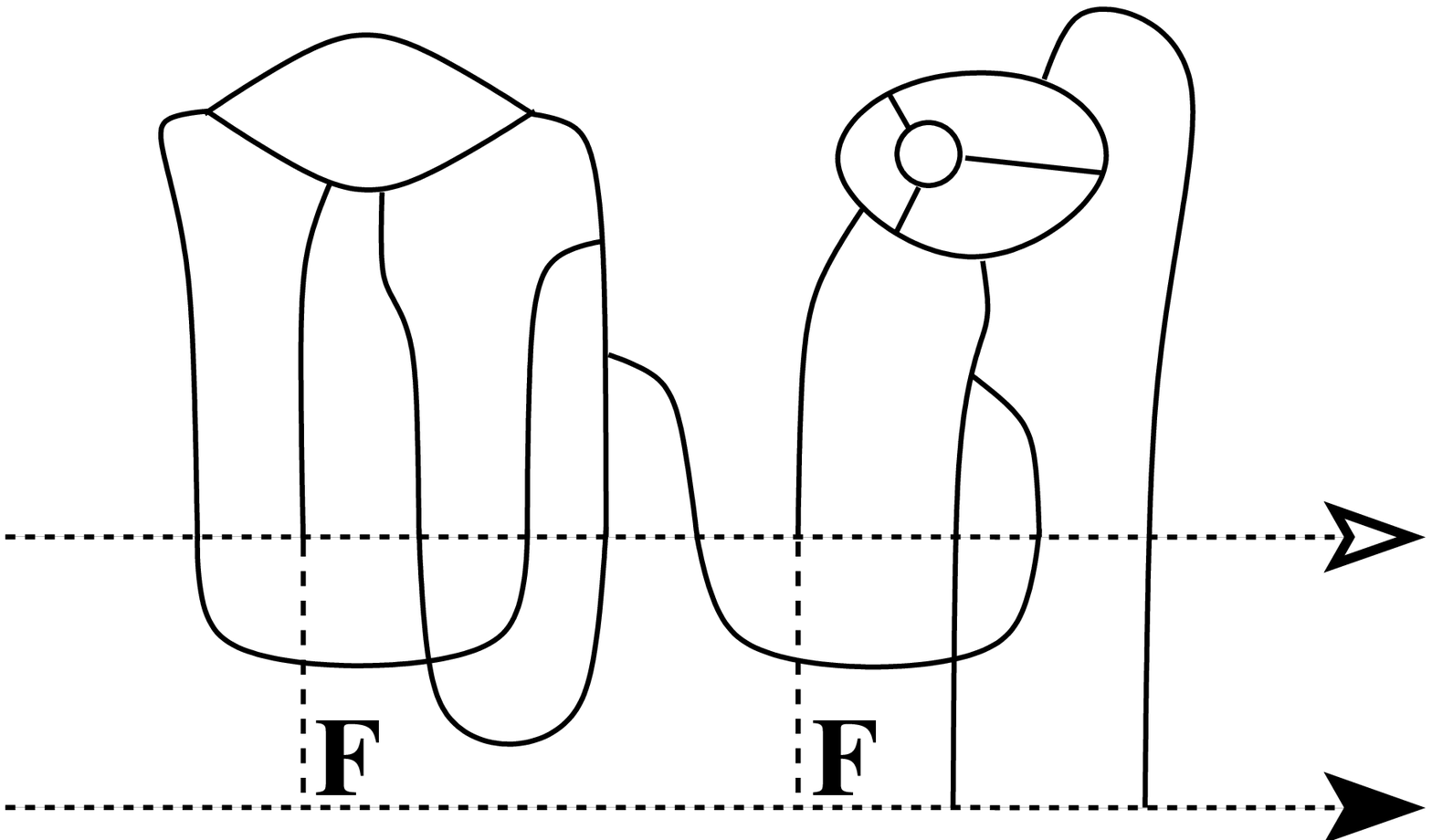}}}\
.
\]\vspace{0.1cm}

Let $x$ denote the number of intersections between full lines
displayed within the gap. The term $\mathcal{D}_\wp(w)$ is this
diagram (with the original orienting line forgotten and the dashed
lines filled in) multiplied by $(-1)^x$ and one factor of
$(\frac{1}{2})$ for every pair of legs glued together.

Thus, in the example at hand:
\[
\mathcal{D}_{\{\{1,3\},\{2,4\},\{5,7\}\}}
=(-1)^2\left(\frac{1}{2}\right)^3\raisebox{-3ex}{\scalebox{0.25}{\includegraphics{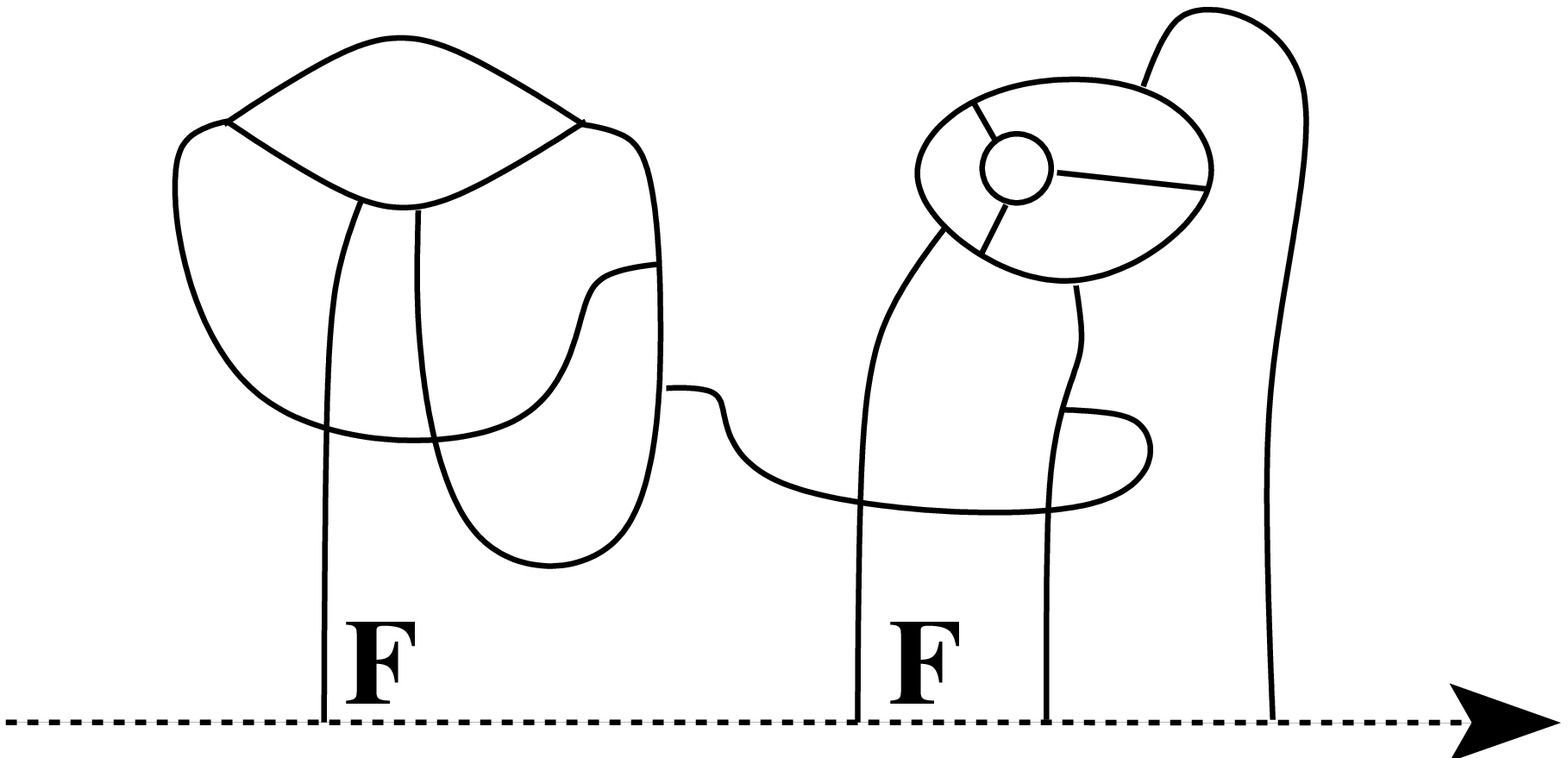}}}\
.
\]\vspace{0.1cm}

Figure \ref{lambdaexample} gives an example of the result of a full calculation of $\lambda$.

\begin{figure}\label{lambdaexample}
\caption{An example of $\lambda$.}
\begin{eqnarray*}
\lefteqn{\lambda\left(
\raisebox{-4ex}{\scalebox{0.25}{\includegraphics{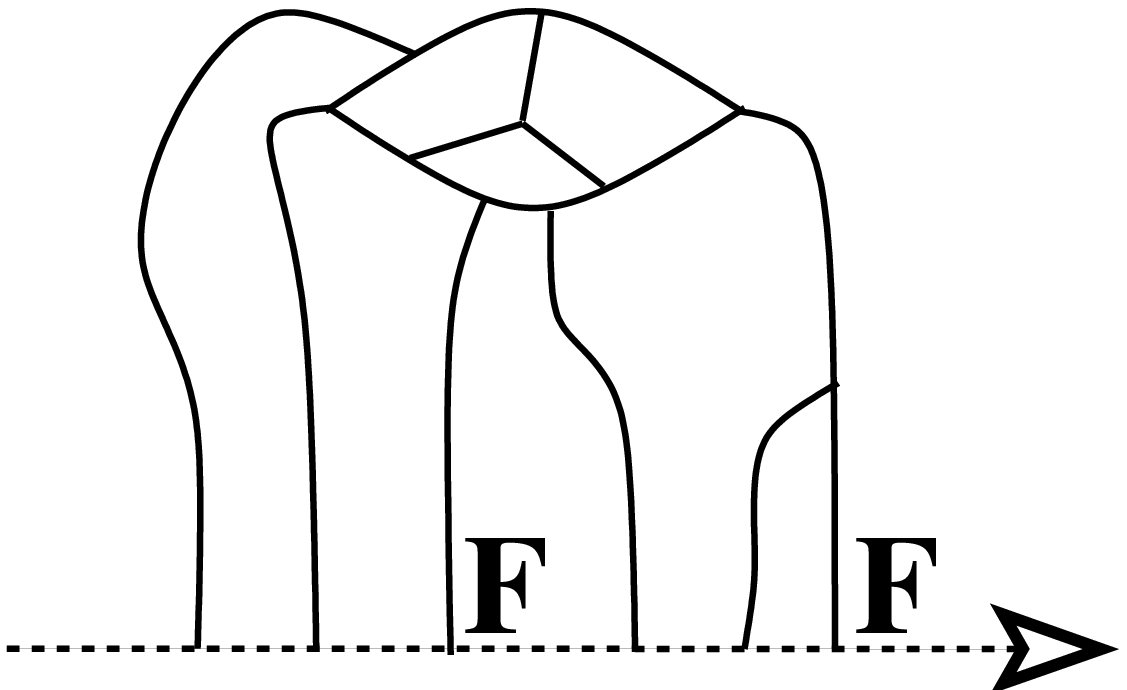}}}\right)\
\ =\ \
\raisebox{-4ex}{\scalebox{0.25}{\includegraphics{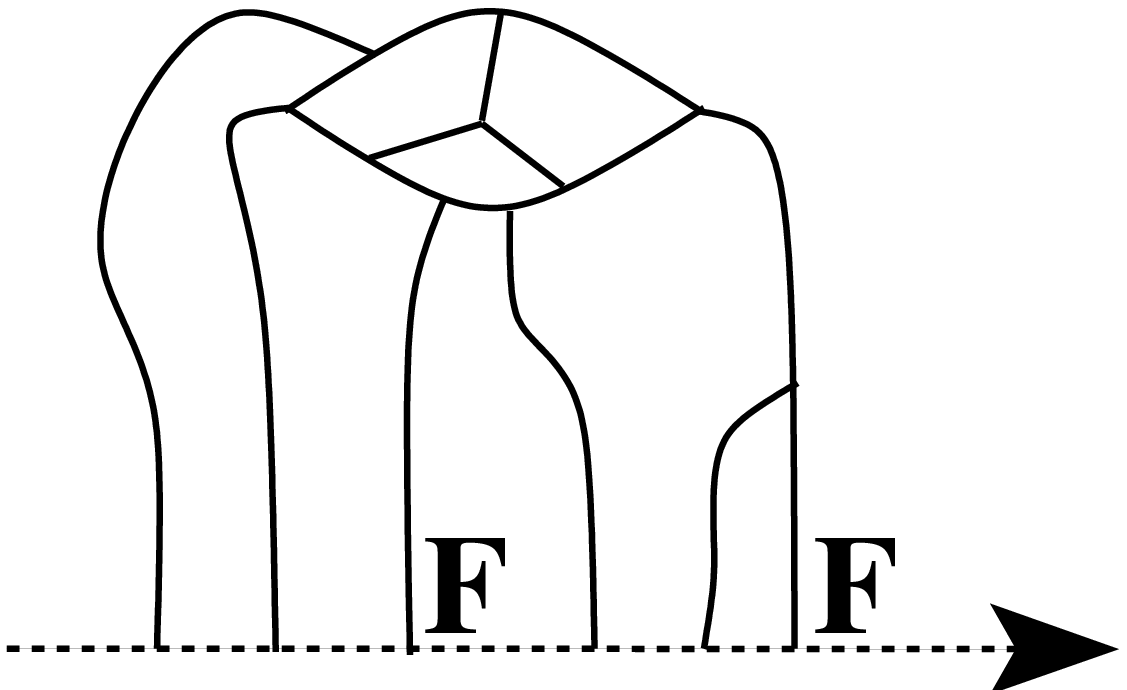}}}}\\[0cm]
& & + \frac{1}{2}\
\raisebox{-3ex}{\scalebox{0.25}{\includegraphics{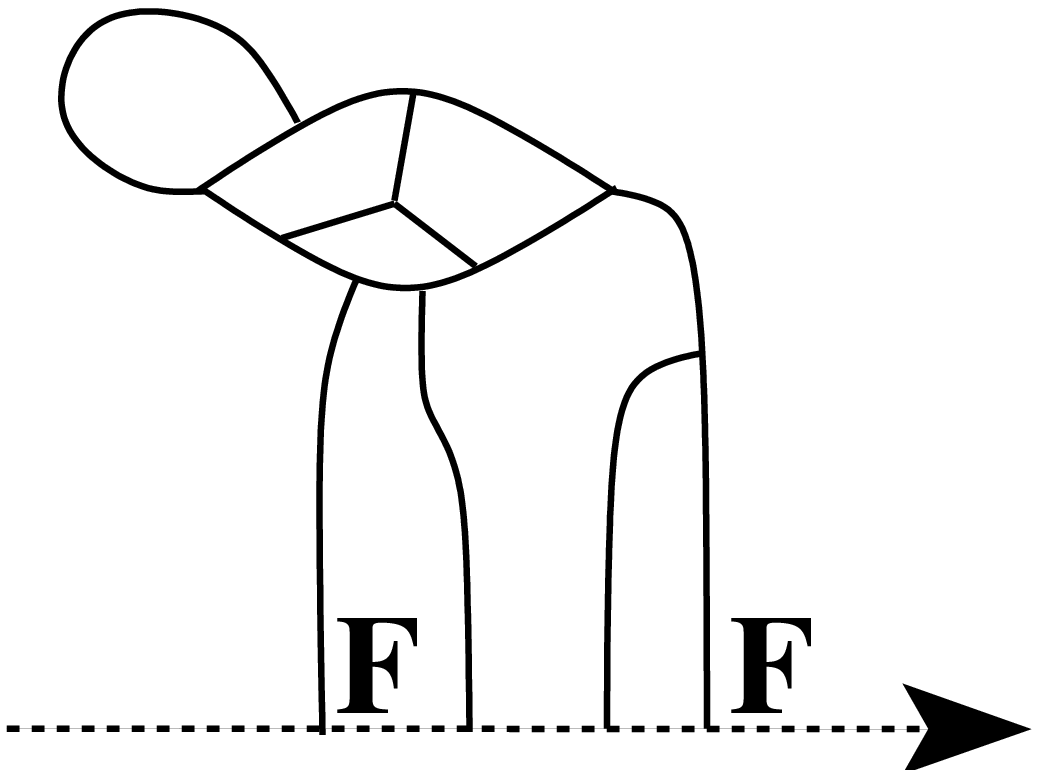}}}\
-\frac{1}{2}\
\raisebox{-3ex}{\scalebox{0.25}{\includegraphics{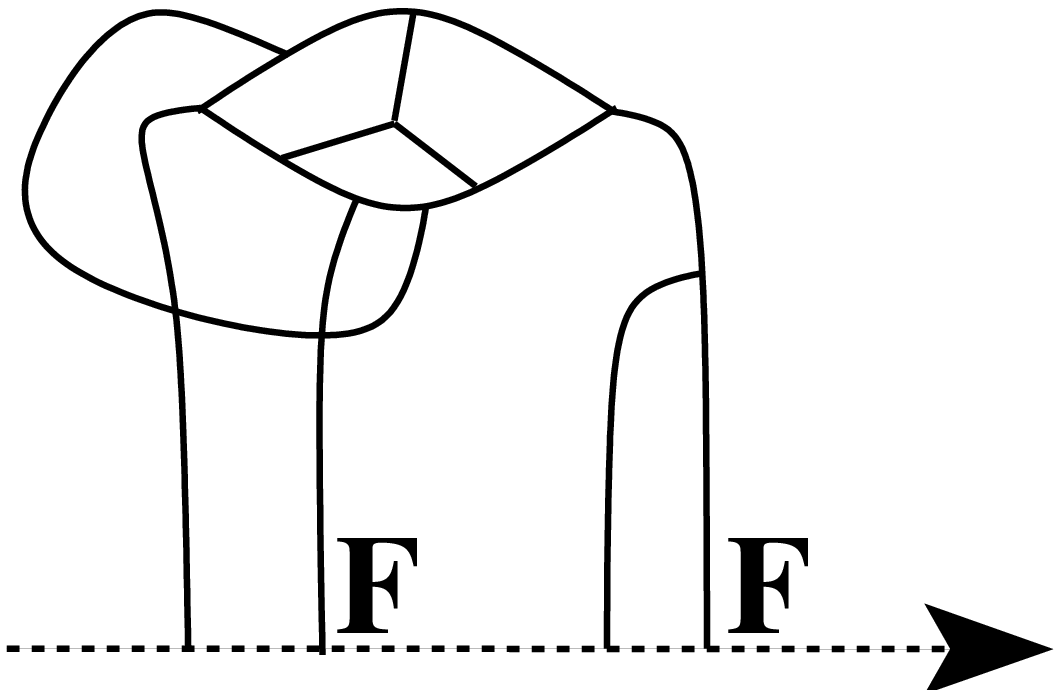}}}\
+\frac{1}{2}\
\raisebox{-3ex}{\scalebox{0.25}{\includegraphics{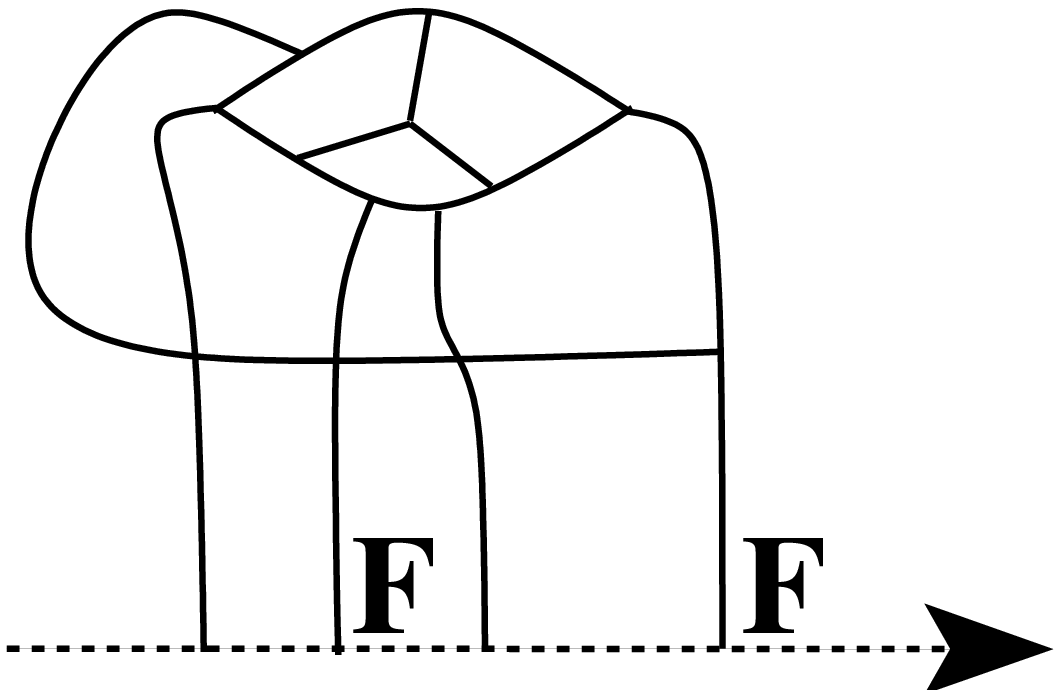}}}\
\\[0.15cm]
& & +\frac{1}{2}\
\raisebox{-3ex}{\scalebox{0.25}{\includegraphics{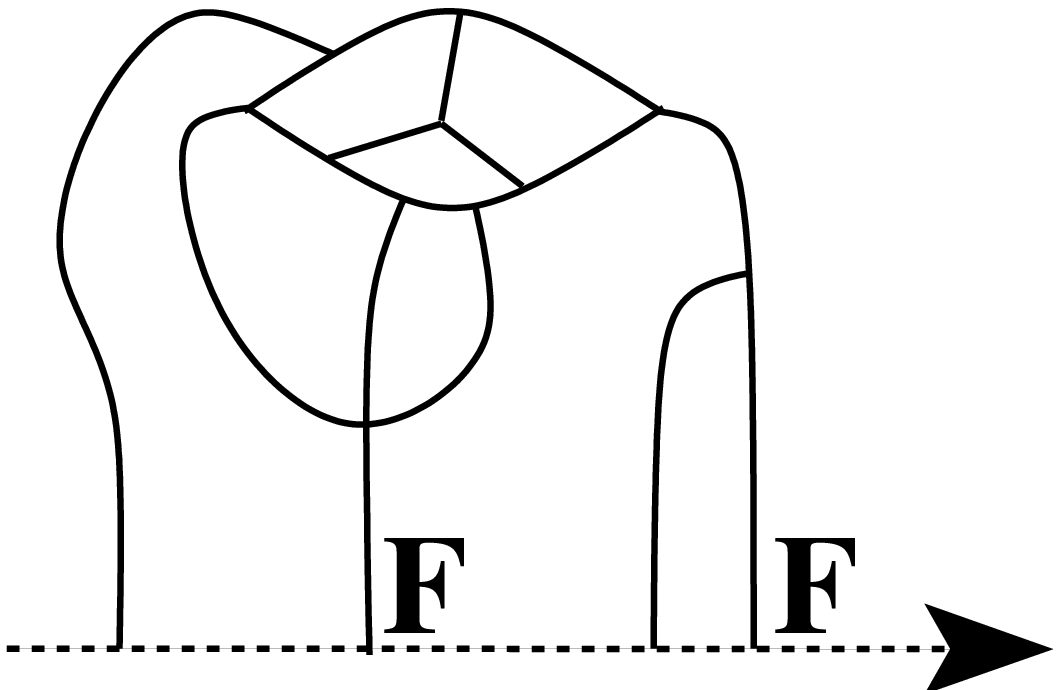}}}\
-\frac{1}{2}\
\raisebox{-3ex}{\scalebox{0.25}{\includegraphics{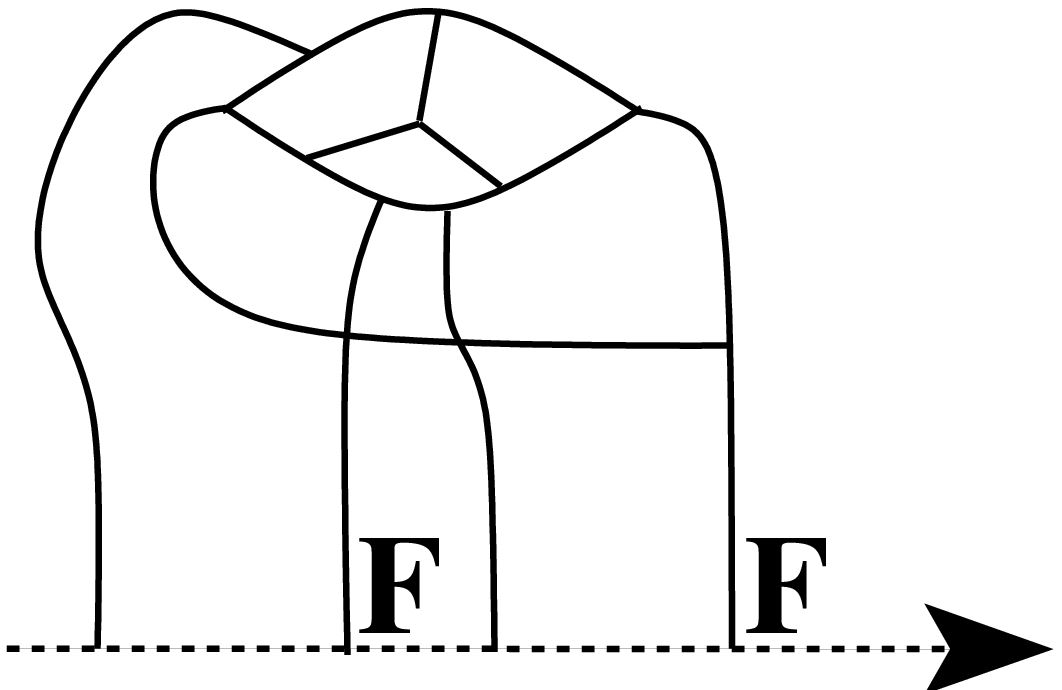}}}\
+\frac{1}{2}\
\raisebox{-3ex}{\scalebox{0.25}{\includegraphics{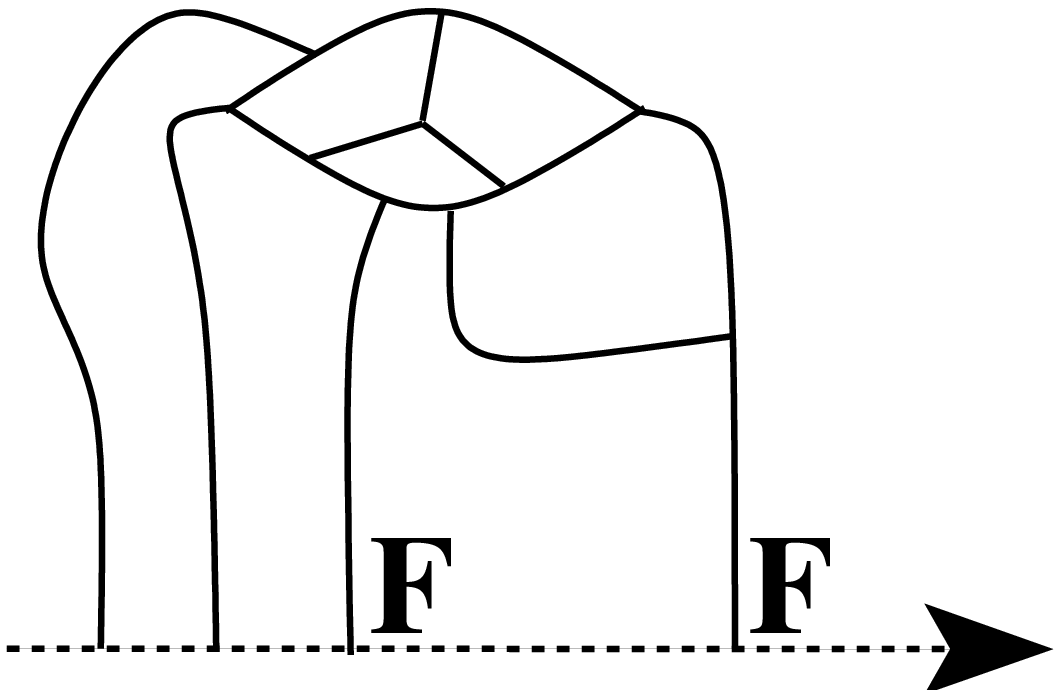}}}
\\[0.15cm]
& & +\frac{1}{4}\
\raisebox{-3ex}{\scalebox{0.25}{\includegraphics{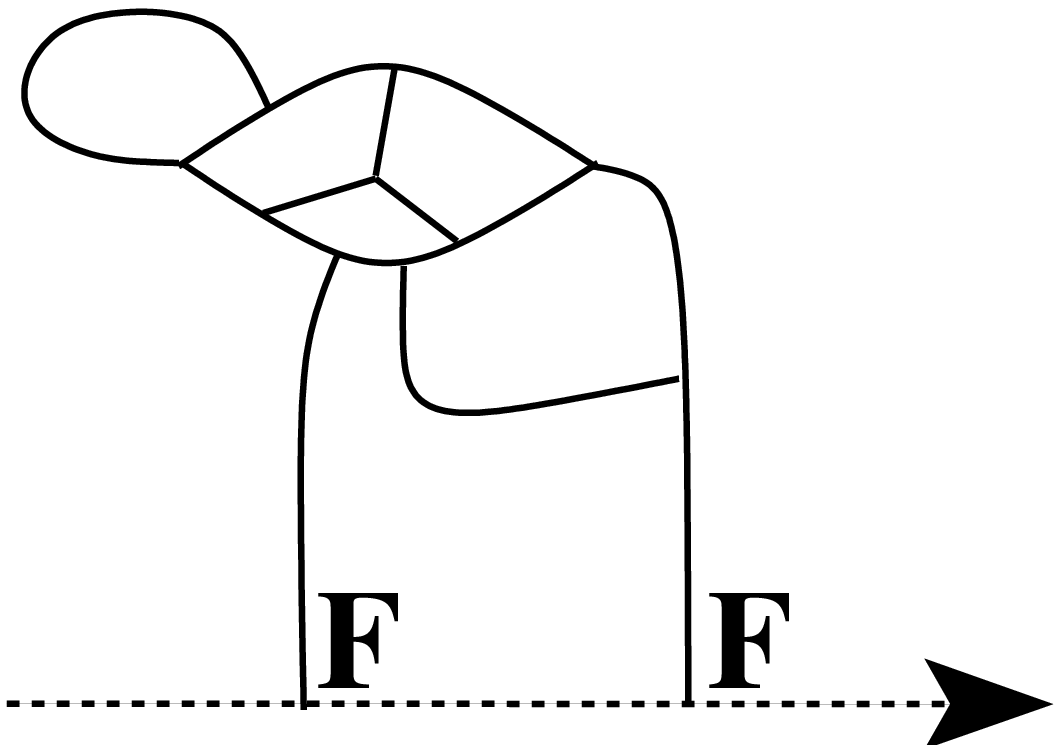}}}\
-\frac{1}{4}\
\raisebox{-3ex}{\scalebox{0.25}{\includegraphics{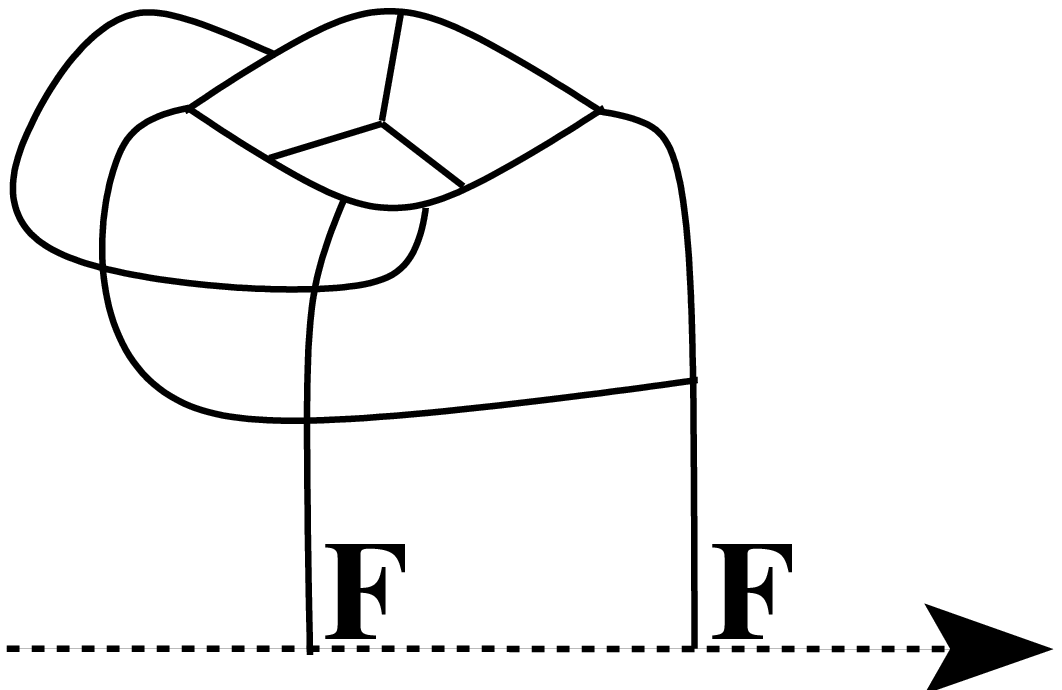}}}\
+\frac{1}{4}\
\raisebox{-3ex}{\scalebox{0.25}{\includegraphics{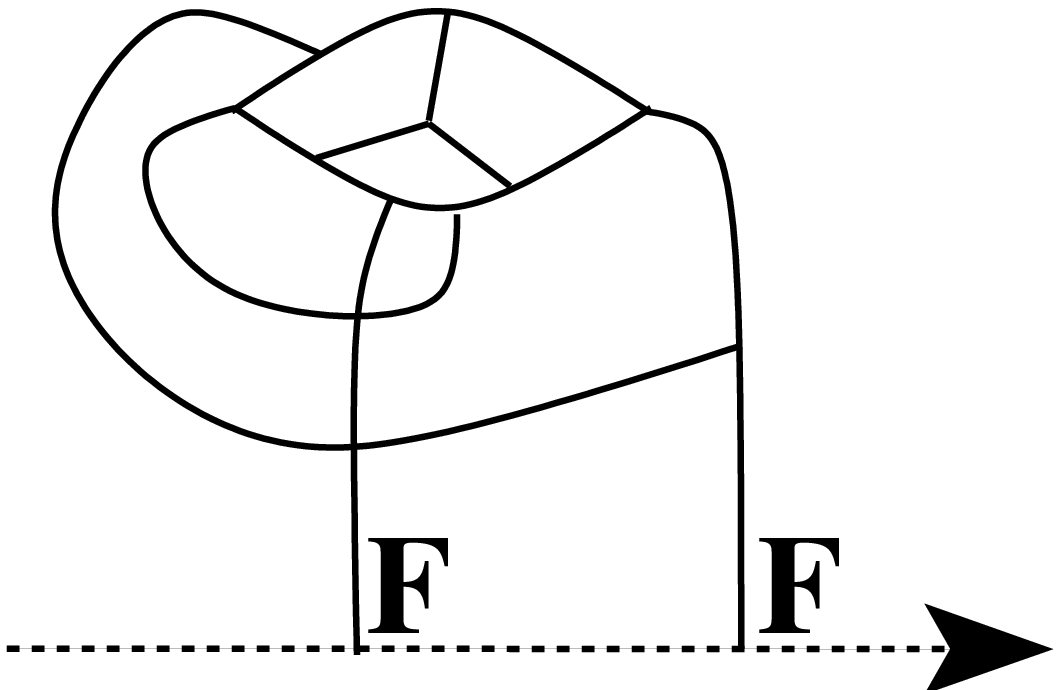}}}\
.\\
\end{eqnarray*}

\underline{\hspace{7cm}}
\end{figure}

\subsection{The map $\phi_\Aspace: \Aspace \rightarrow \widehat{\mathcal{W}}_\wedge$.}

Observe that the space $\widehat{\mathcal{W}}_\wedge$ has no relations which involve the leg-grade 1 legs, except relations which say that when
we transpose an adjacent pair of such legs then we pick up a minus sign. This means that $\widehat{\mathcal{W}}_\wedge$ is graded by the number of leg-grade 1
legs that a diagram has:
\[
\widehat{\mathcal{W}}_\wedge \simeq \bigoplus_{i=0} \widehat{\mathcal{W}}^i_\wedge\ ,
\]
where $\widehat{\mathcal{W}}^i_\wedge$ denotes the subspace  generated by diagrams with exactly $i$ leg-grade $1$ legs. The space $\widehat{\mathcal{W}}^0_\wedge$ is
clearly isomorphic to $\Aspace$, and $\phi_\Aspace:\Aspace \rightarrow \widehat{\mathcal{W}}_\wedge$ is the corresponding
embedding, whose action on an ordered Jacobi diagram
is simply to make every leg a curvature leg, e.g.:
\[
\phi_\Aspace\left(
\raisebox{-4.5ex}{\scalebox{0.25}{\includegraphics{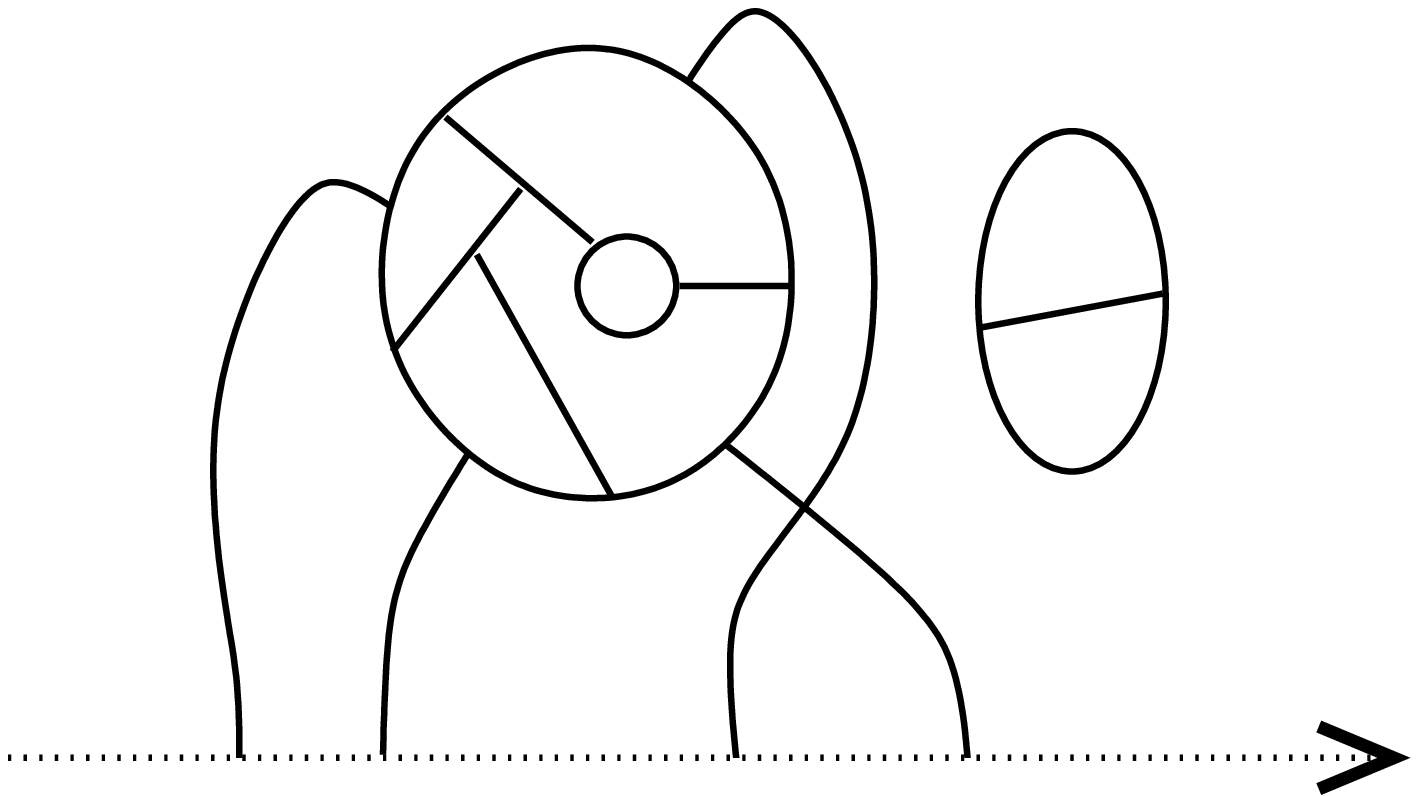}}}
\right) \ =\
\raisebox{-4.5ex}{\scalebox{0.25}{\includegraphics{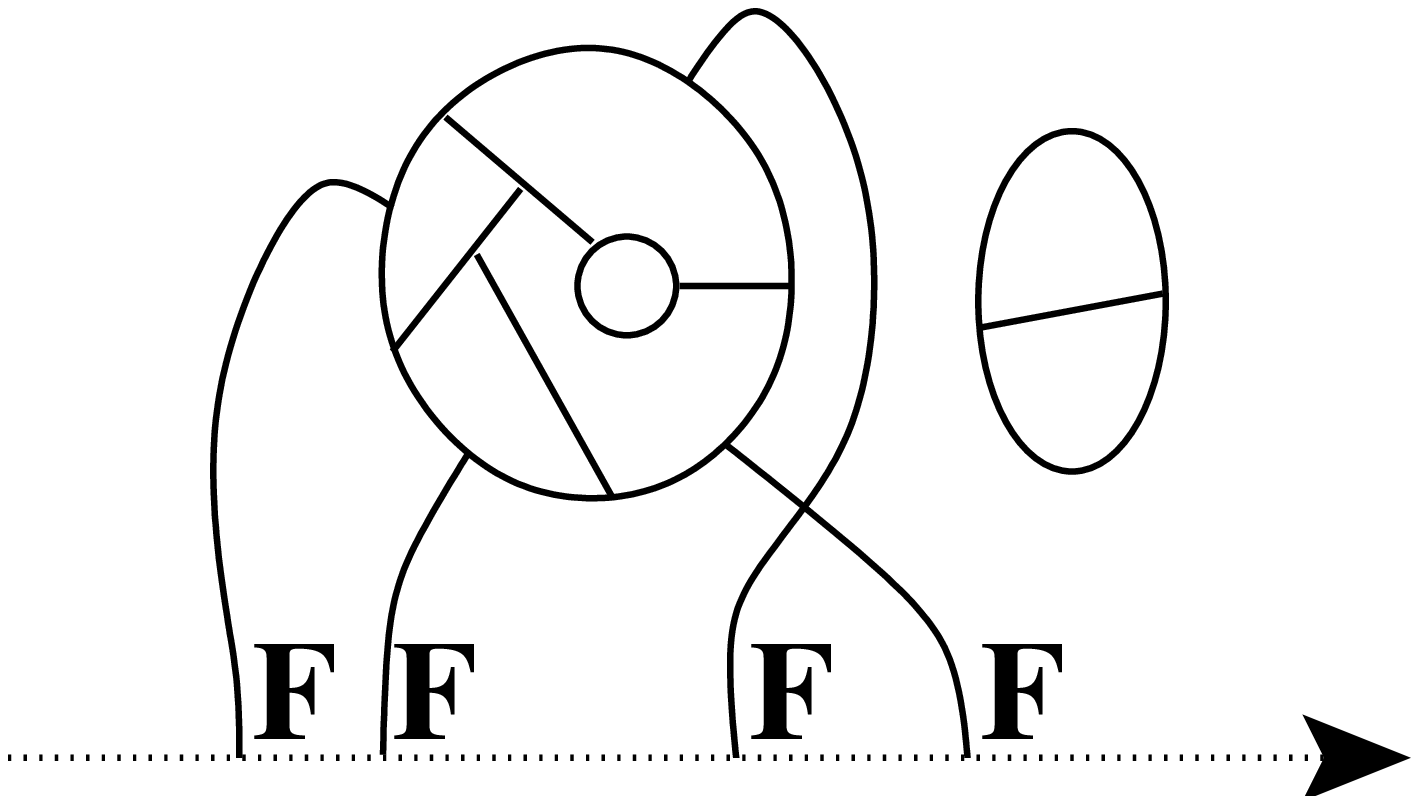}}}\ \ .
\]

That concludes our recollection of the spaces and maps that are involved in the identity that is the subject
of the main theorem:
\[
\lambda\circ\basebulltoF\circ\pi\circ\chi_{\Wspace}\circ\Upsilon = \phi_\Aspace \circ \chi_\Bspace \circ \partial_\Omega.
\]

\section{Outline of the contents of this paper}

The fact that this sequence of elementary combinatorial operations is just the
Wheeling operation $\phi_\Aspace\circ \chi_\Bspace \circ \partial_\Omega$ may be a little surprising. Just how do these maps lead to wheels being glued into legs?

In figure \ref{illustrfig} we illustrate the mechanism which produces wheels by taking a random symmetric Jacobi diagram
and following it as it maps through this composition. At each stage in the composition we have drawn a diagram that is
typical of the diagrams appearing in the sum at that point. In words: first we glue in a few forks (the map $\Upsilon$), then we rearrange the legs
according to some permutation (the map $\chi_\Wspace$), then we glue in some more forks (the map $\basebulltoF$), and then we join up some pairs of
leg-grade 1 legs (the map $\lambda$).

\begin{figure}
\caption{An illustration of how ``wheels" appear.}
\[
\label{illustrfig} \xymatrix{ *{\begin{array}{c} \ \\ \
\\ \end{array}\mathcal{B}\begin{array}{c} \ \\ \
\\ \end{array}} \ar[d]_{\Upsilon} & *{
\raisebox{-4ex}{\scalebox{0.24}{\includegraphics{illustrA}}}} \\
*{\begin{array}{c} \ \\ \
\\ \end{array}\mathcal{W} \begin{array}{c} \ \\ \
\\ \end{array}}\ar[d]_{\chi_{\mathcal{W}}} & *{
\raisebox{-4ex}{\scalebox{0.24}{\includegraphics{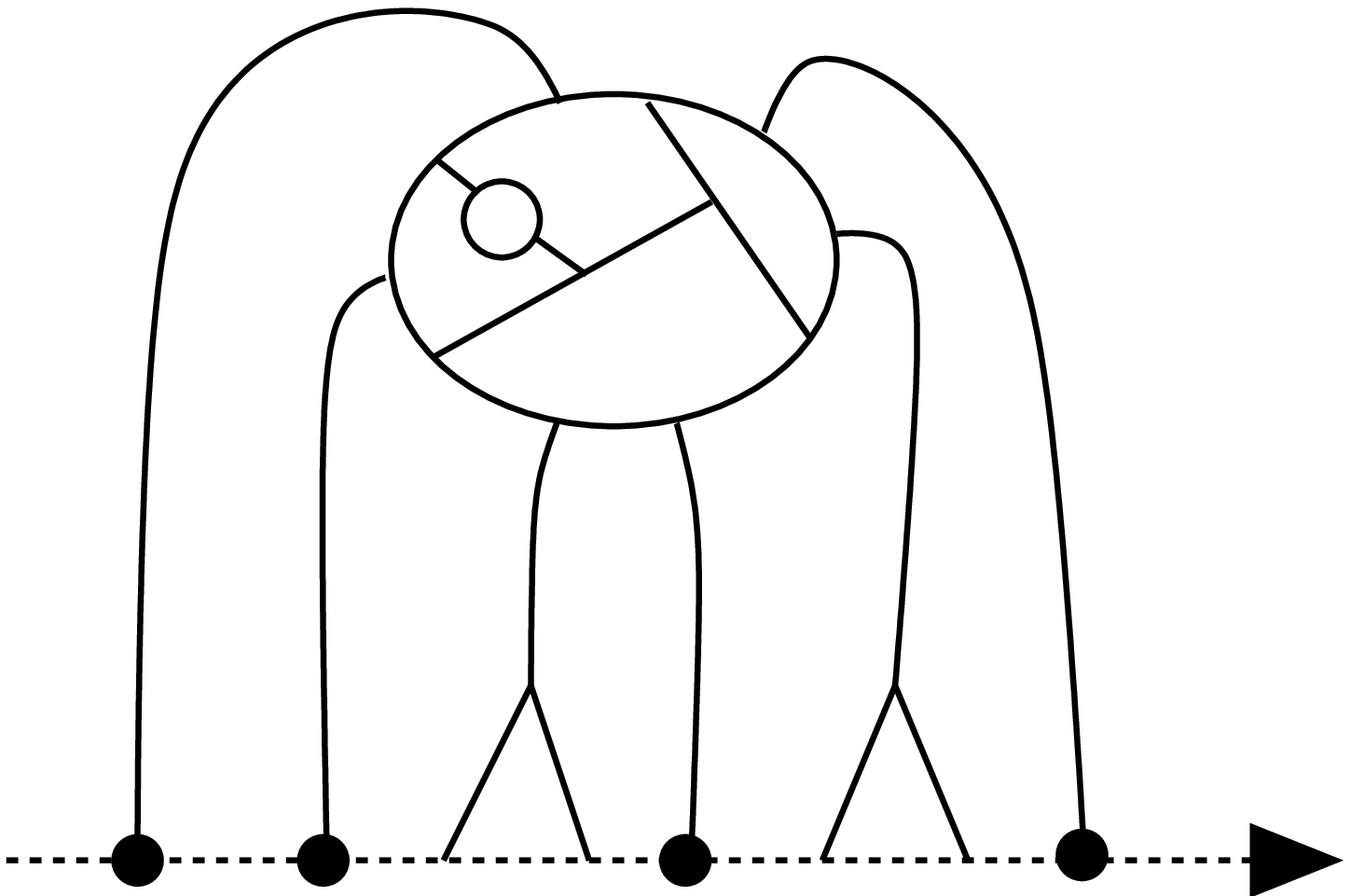}}}}
\\ *{\begin{array}{c} \ \\ \
\\ \end{array}\widetilde{\mathcal{W}}\begin{array}{c} \ \\ \
\\ \end{array}} \ar[d]_{(B_{\bullet\rightarrow\mathrm{F}})\circ\pi}& *{
\raisebox{-4ex}{\scalebox{0.24}{\includegraphics{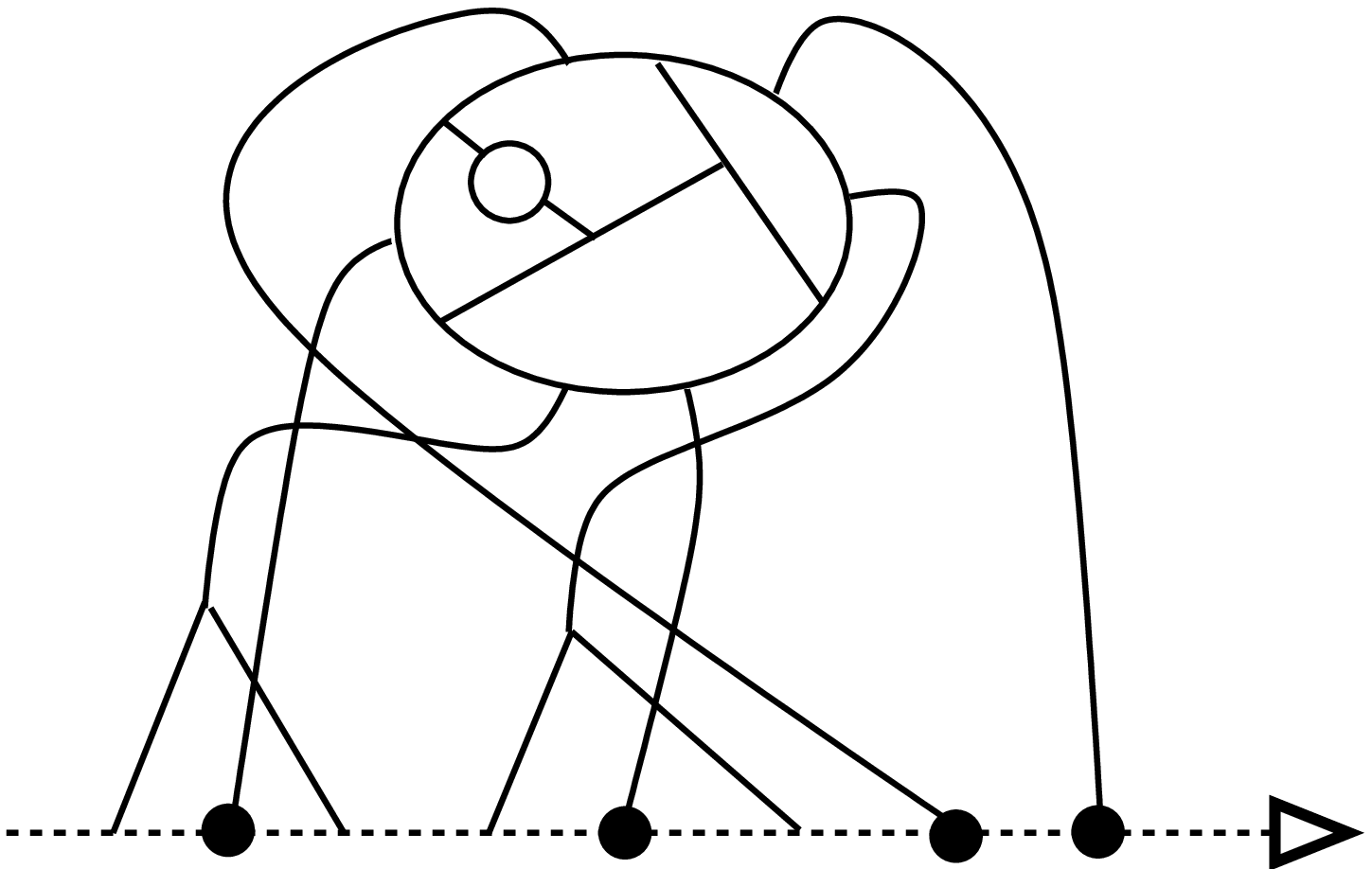}}}}
\\
*{\begin{array}{c} \ \\ \
\\ \end{array}\widehat{\mathcal{W}}_{\mathrm{F}}\begin{array}{c} \ \\ \
\\ \end{array} } \ar[d]_{\lambda} & *{
\raisebox{-4ex}{\scalebox{0.24}{\includegraphics{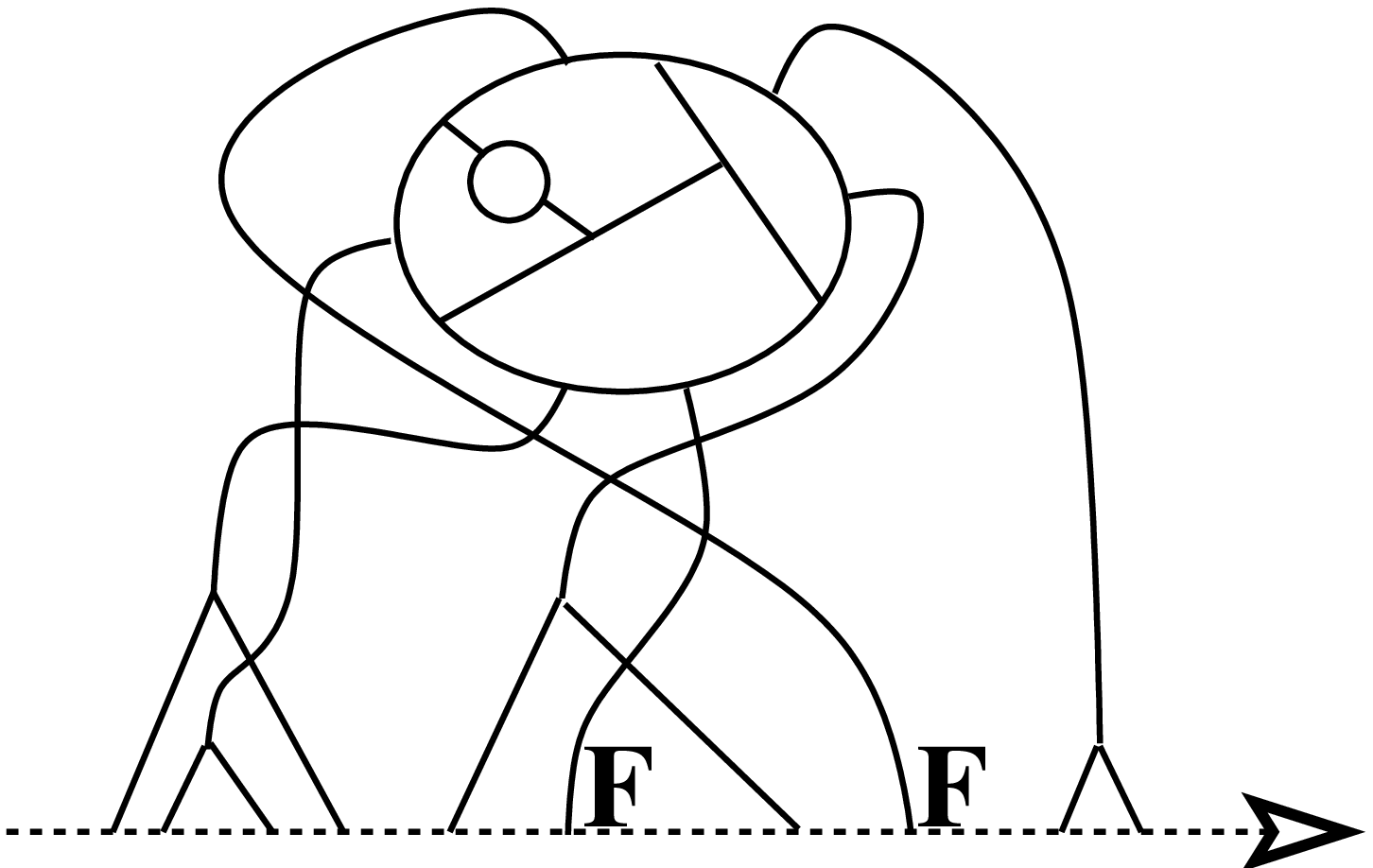}}}}
\\
*{\begin{array}{c} \ \\ \
\\ \end{array}\widehat{\mathcal{W}}_{\wedge}\begin{array}{c} \ \\ \
\\ \end{array}} & *{
\raisebox{-4ex}{\scalebox{0.24}{\includegraphics{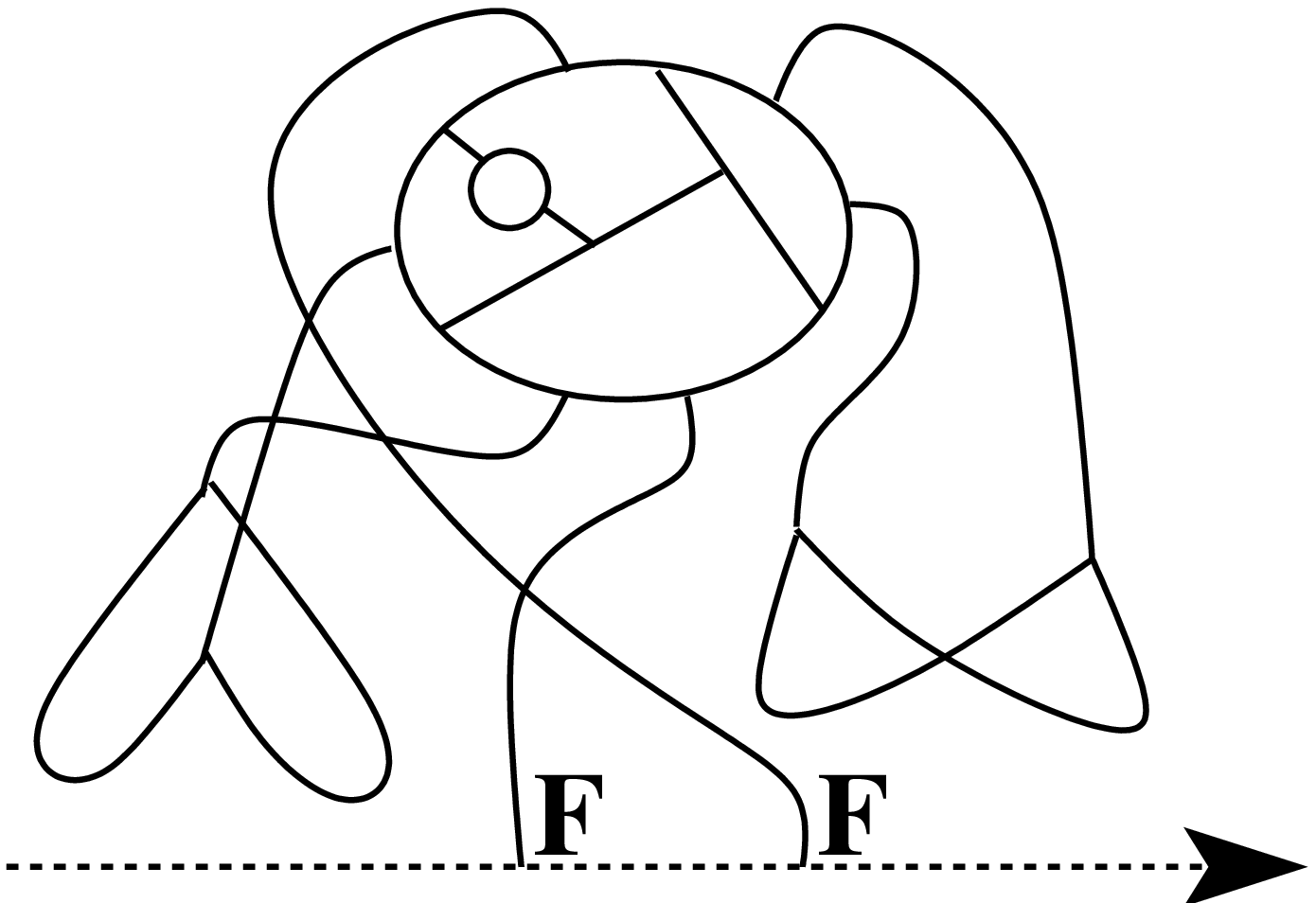}}}} }
\] \

\underline{\hspace{7cm}}
\end{figure}
Observe that the last diagram is equal to the image under
$\phi_\Aspace$ of
\[
\raisebox{-4ex}{\scalebox{0.24}{\includegraphics{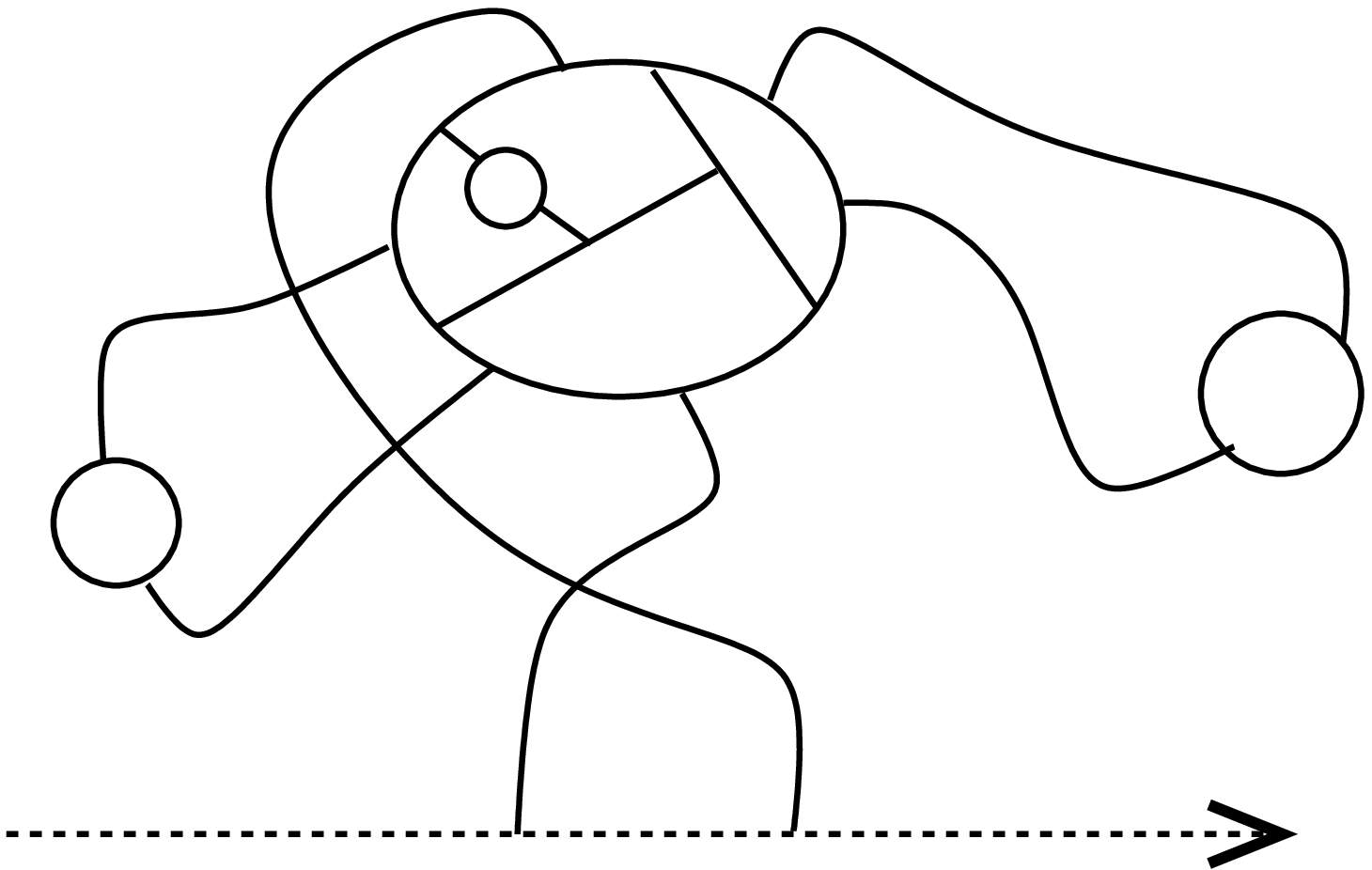}}}\ \ ,
\]
the original diagram with two wheels glued into its legs and its
remaining legs given some ordering.

This shows that it is immediate that the composition $\lambda\circ\basebulltoF\circ\pi\circ\chi_{\Wspace}\circ\Upsilon$
produces the same sort of diagrams as the Wheeling operation, $\phi_\Aspace \circ \chi_\Bspace \circ \partial_\Omega$, (as well
as some other diagrams).
Showing that the diagrams are produced with the correct coefficients, and that all other diagrams produced cancel,
is the difficulty of the proof.

The paper is organized in the following way:
\begin{itemize}
\item{In Section \ref{diagoperators} we explain how formal power series of diagrams of various sorts operate on each other.
In the case that the legs are ungraded this is a familiar
operation, being, for example, the calculus in which the {\AA}rhus 3-manifold invariant is constructed \cite{arhus}.}
\item{In Section \ref{compopexp} we develop a certain expression (Theorem \ref{importantexpression}) for the composition $\lambda\circ B_{\bullet\rightarrow\mathrm{\bf
F}} \circ \pi \circ \chi_\Wspace \circ \Upsilon$ in terms of these diagram operations. The key idea is that we can (graded) symmetrize a diagram by turning it into
a differential operator and then applying it to a suitable formal exponential (Proposition \ref{symmpro}).}
\item{The remaining two sections do a direct calculation of the expression developed in Theorem \ref{importantexpression}. The key idea in these computations
is that the result of an exponential of connected diagrams operating on another exponential of connected diagrams is the exponential of all the connected diagrams
you can construct from the logarithms of the pieces. This is a familiar story in the ungraded case - we provide detailed proofs that it holds in certain cases of the graded setting as well.
So all we need to do is calculate all the possible connected diagrams that can be constructed. This turns out to be a manageable combinatorial problem in the cases that arise.}
\end{itemize}

\section{Operating with diagrams on
diagrams.}\label{diagoperators}
\subsection{Operator Weil diagrams}
We'll now begin to operate on Weil diagrams with other Weil
diagrams.
%
To introduce this formalism we'll build a vector space
$\WhatF\abpow$. (The constructions to follow adapt in an
unambiguous way to build spaces like $\Whatwedge\abpow$, etc.)
Intuitively: we are taking the vector space $\WhatF$, adjoining a
formal grade $2$ variable $a$ and a formal grade $1$ variable $b$
and their corresponding differential operators, and then taking
power series with respect to those introduced symbols.

Formally: this space will be built from diagrams which may have
the usual legs for $\WhatF$, namely
\[
\mbox{grade $2$ legs}\ \
\raisebox{-3ex}{\scalebox{0.25}{\includegraphics{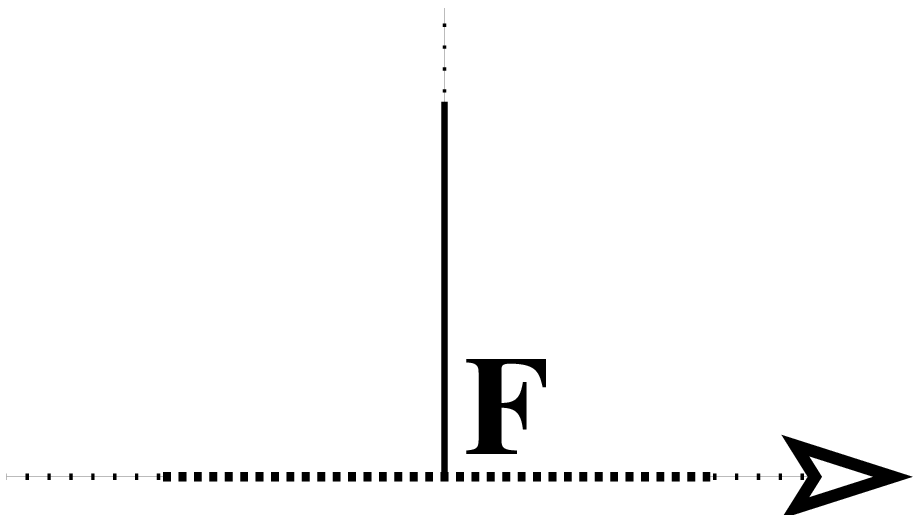}}}\ \
\mbox{and grade $1$ legs}\ \
\raisebox{-3ex}{\scalebox{0.25}{\includegraphics{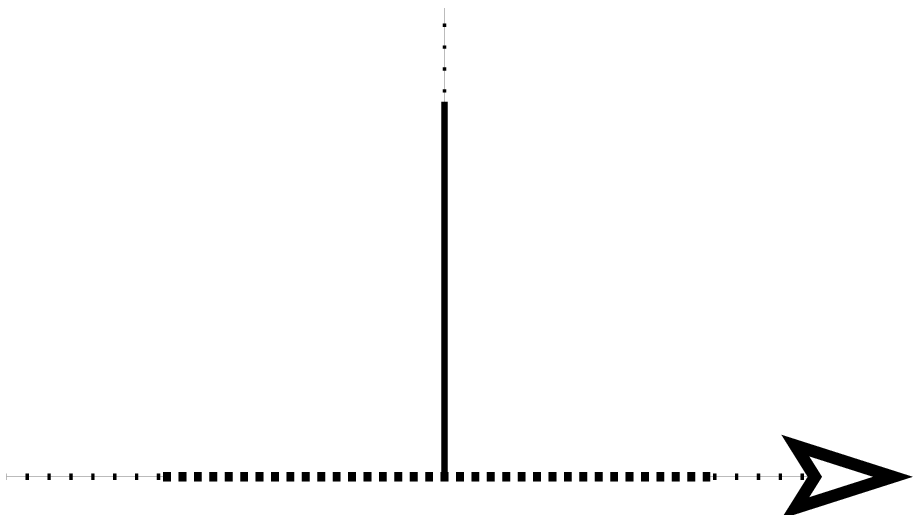}}}\ \
,\\
\]
but which may have, in addition, {\bf parameter legs}
\[
\raisebox{-5ex}{\scalebox{0.25}{\includegraphics{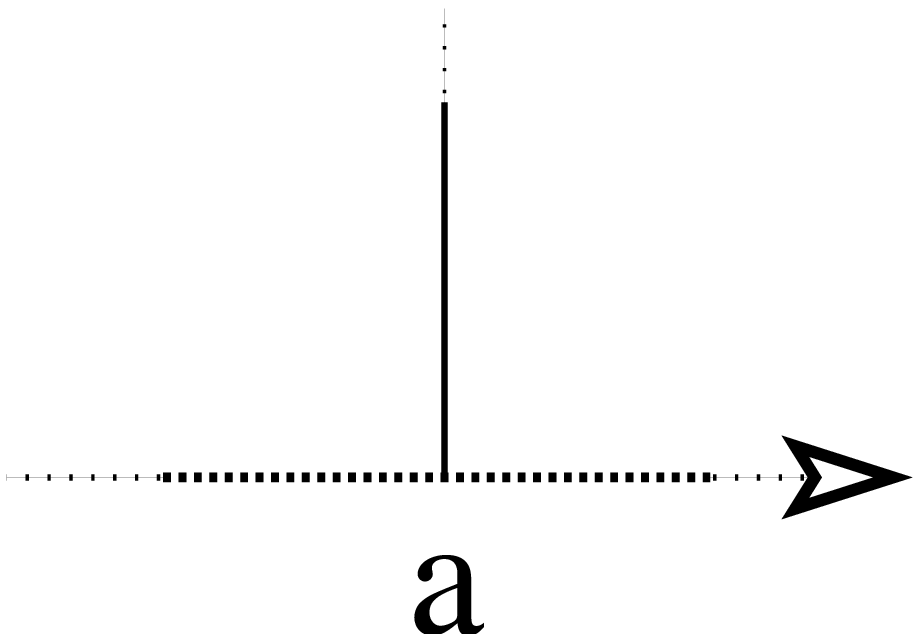}}}\ \
\text{and}\ \
\raisebox{-5ex}{\scalebox{0.25}{\includegraphics{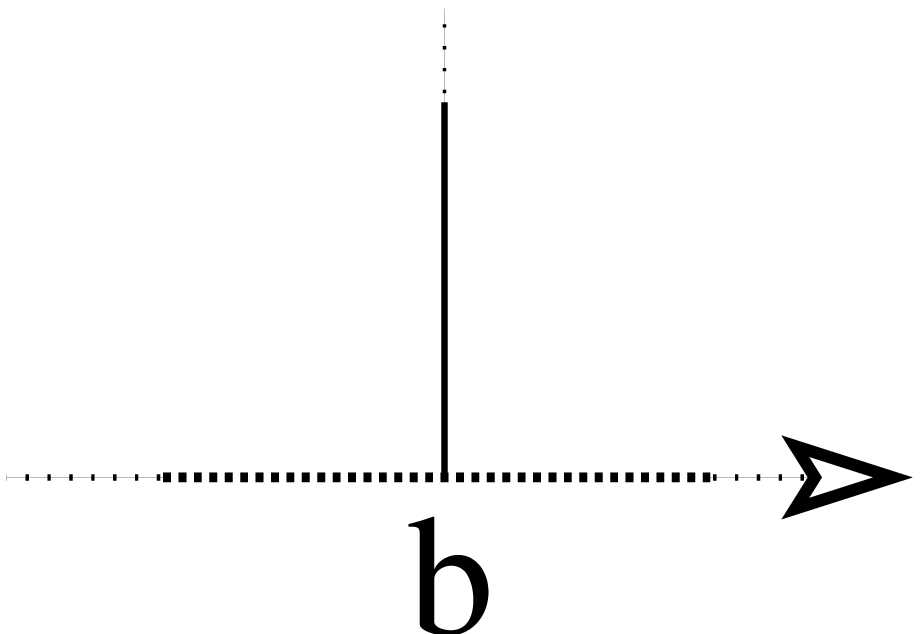}}}\ ,
\]
and their corresponding {\bf operator legs}
\[
\raisebox{-5.35ex}{\scalebox{0.25}{\includegraphics{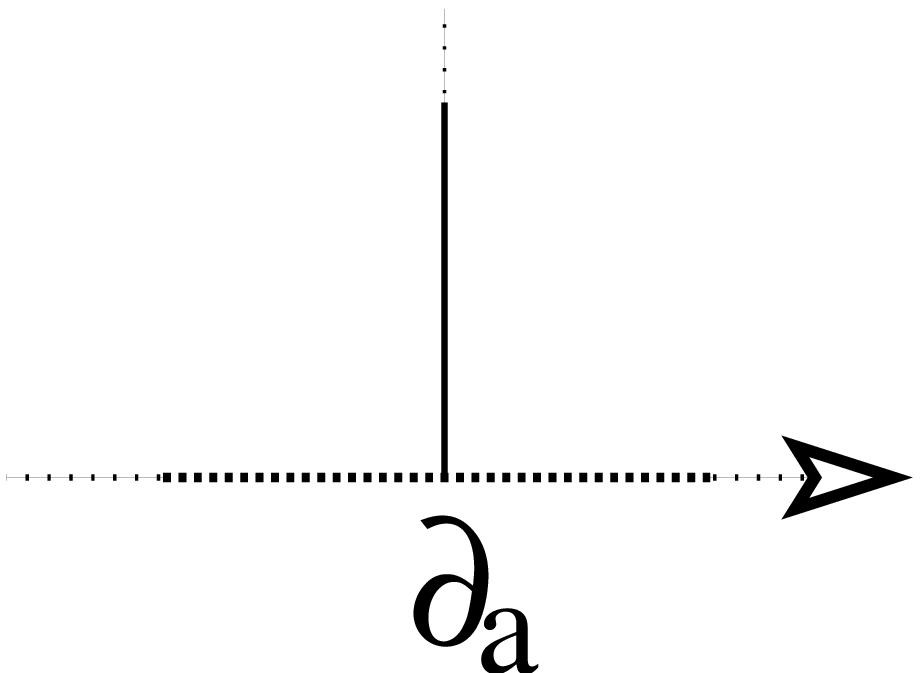}}}\
\ \text{and}\ \
\raisebox{-5.35ex}{\scalebox{0.25}{\includegraphics{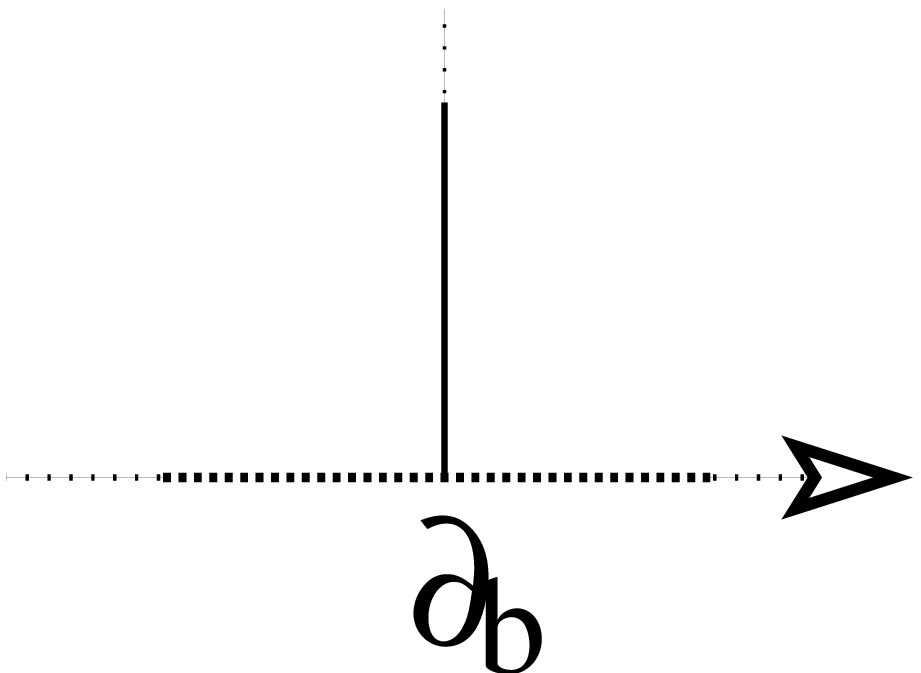}}}\
 .
\]
We require that the operator legs appear in a group at the far
right-hand end of the diagram. The parameter legs may appear
amongst the usual legs in any order. Here is an example of such an
{\bf operator Weil diagram}:
\[
\raisebox{-4ex}{\scalebox{0.32}{\includegraphics{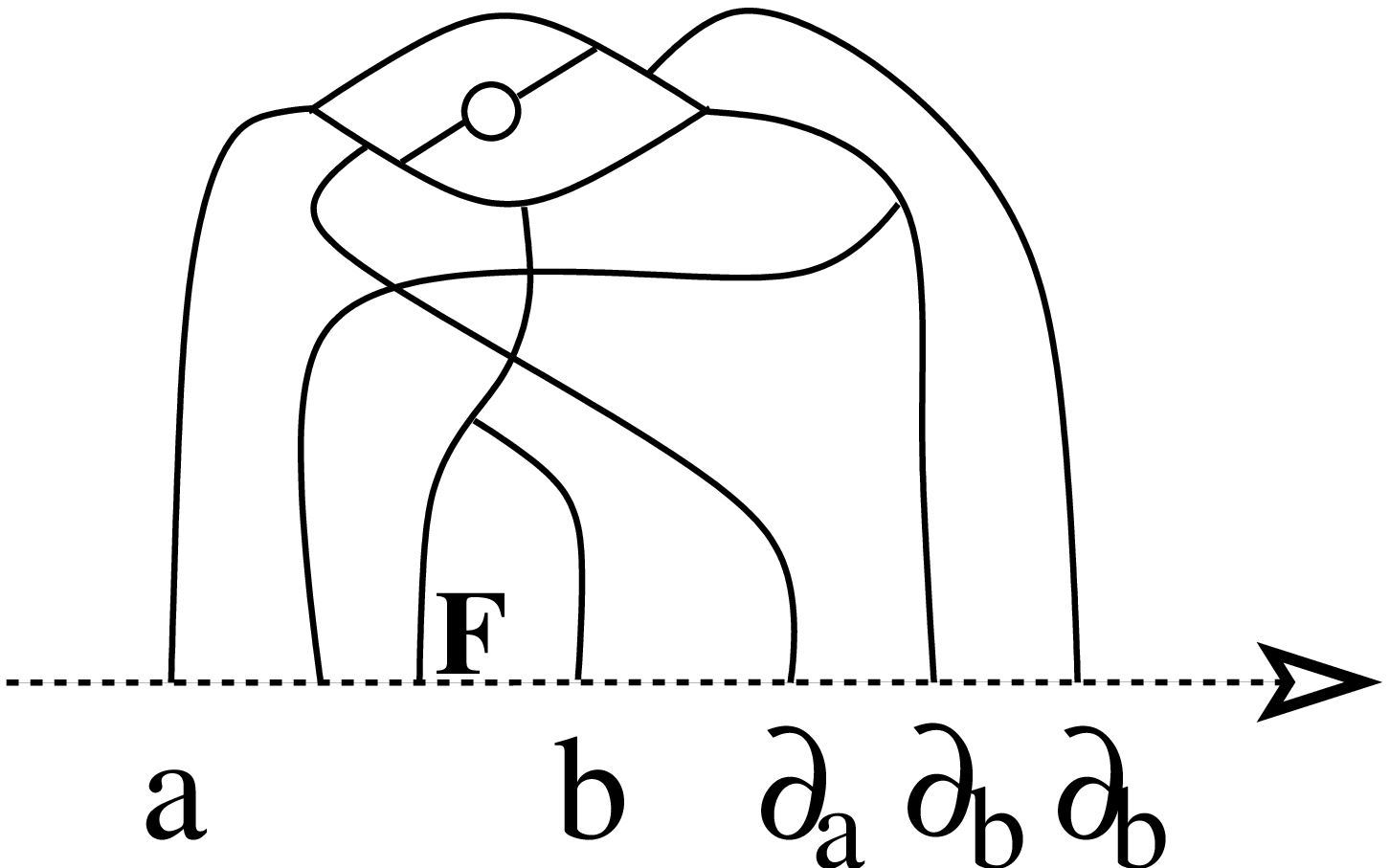}}}
\]
Let the {\bf parameter-grade} of a diagram be the total grade of
its parameter legs, where $a$-labelled legs count for $2$ and
$b$-labelled legs count for $1$. The diagram above has
parameter-grade $3$. Similarly define the quantity {\bf
operator-grade}; the diagram above has operator-grade $4$. If a
diagram has parameter-grade $i$ and operator-grade $j$ then we say
that it is an $(i,j)$-operator Weil diagram. The pair $(i,j)$ will
be referred to as the {\bf type} of the diagram. Thus, the
operator Weil diagram above has type $(3,4)$.

\begin{defn}
Define the vector space $\WhatF\ab^{(i,j)}$ to be the space of
formal $\mathbb{Q}$-linear combinations of operator Weil diagrams
of type $(i,j)$, subject to the same relations that the space
$\WhatF$ uses, together with relations that say that the parameter
and operator legs can be moved about freely (up to the appropriate
sign), as long as the operator legs all stay at the far-right hand
end of the orienting line.
\end{defn}
For example, the following equations hold in
$\WhatF\ab^{(3,2)}$:\\[0cm]
\[
\begin{array}{c}
\ \ \
\raisebox{-4ex}{\scalebox{0.25}{\includegraphics{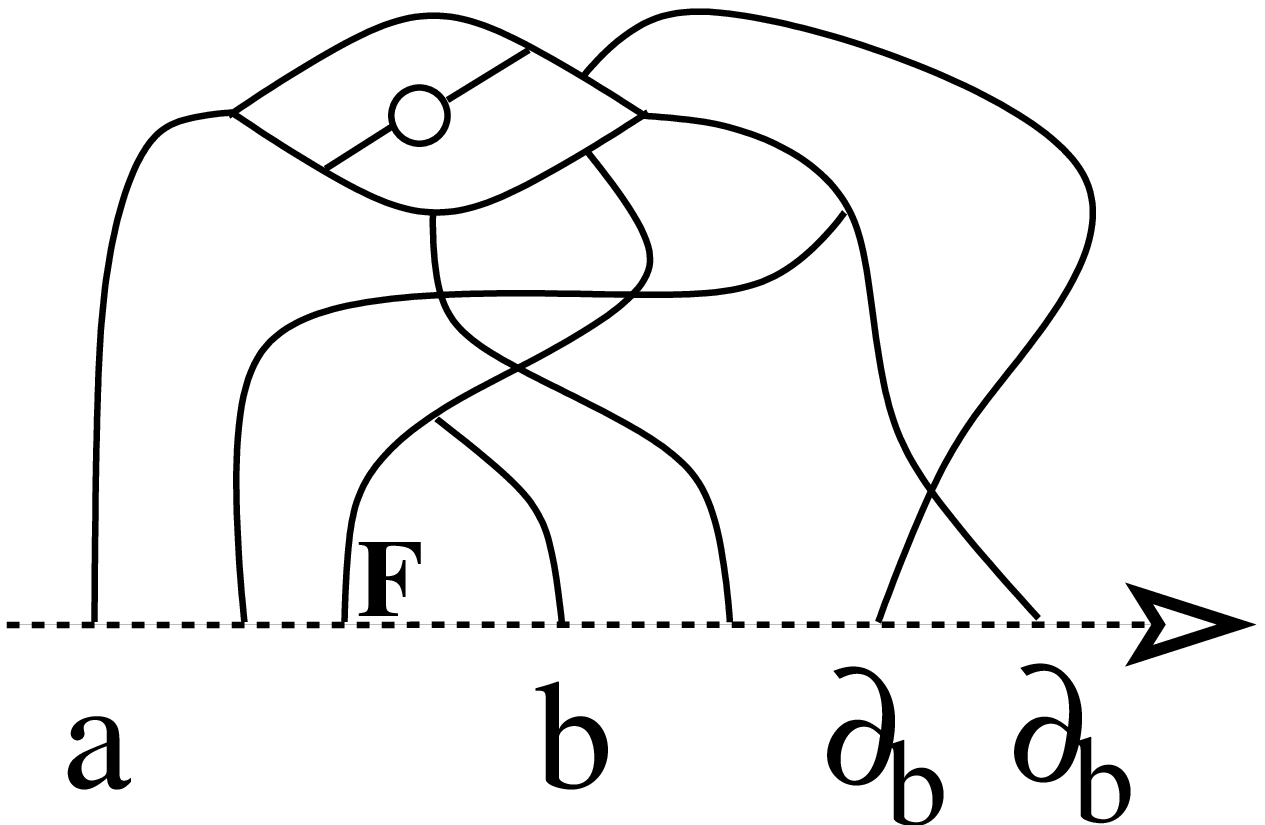}}}\ =\
-\ \raisebox{-4ex}{\scalebox{0.25}{\includegraphics{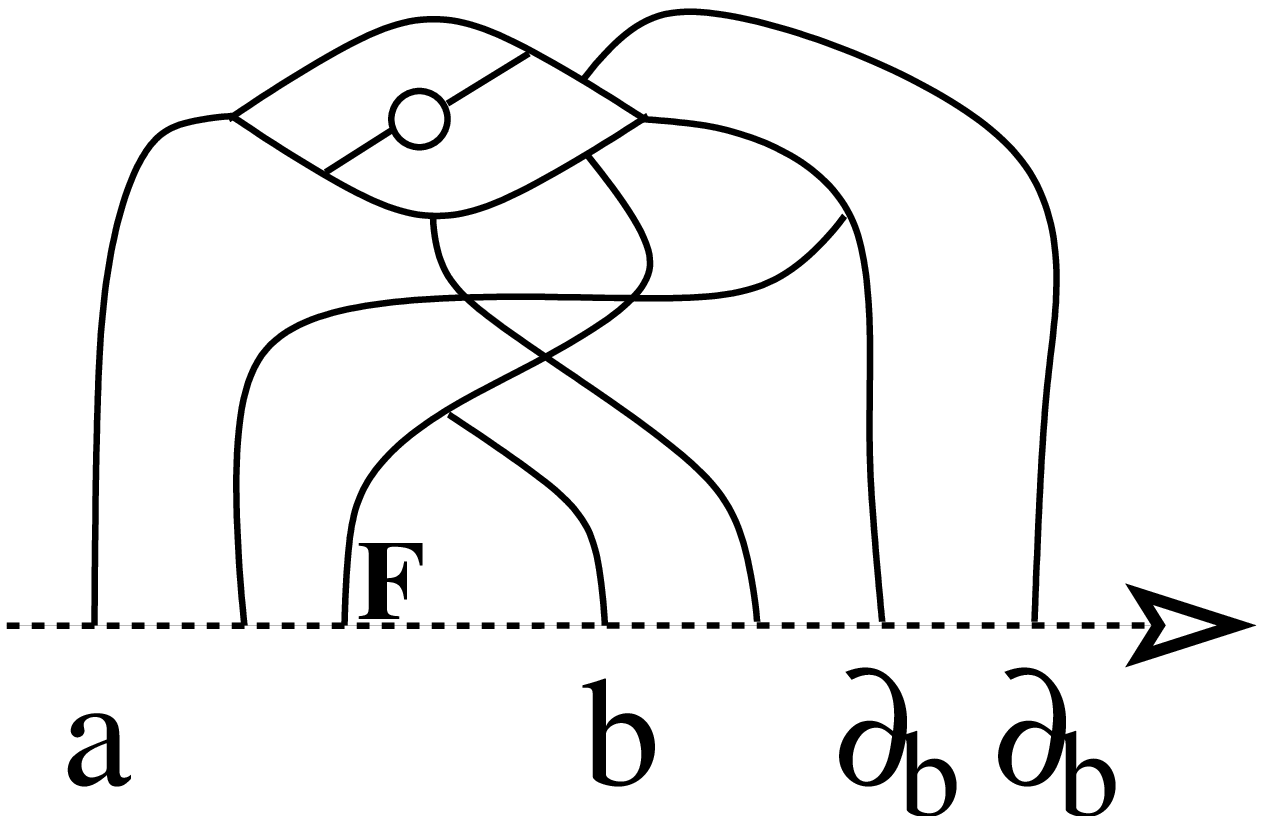}}}\
=\ \raisebox{-4ex}{\scalebox{0.25}{\includegraphics{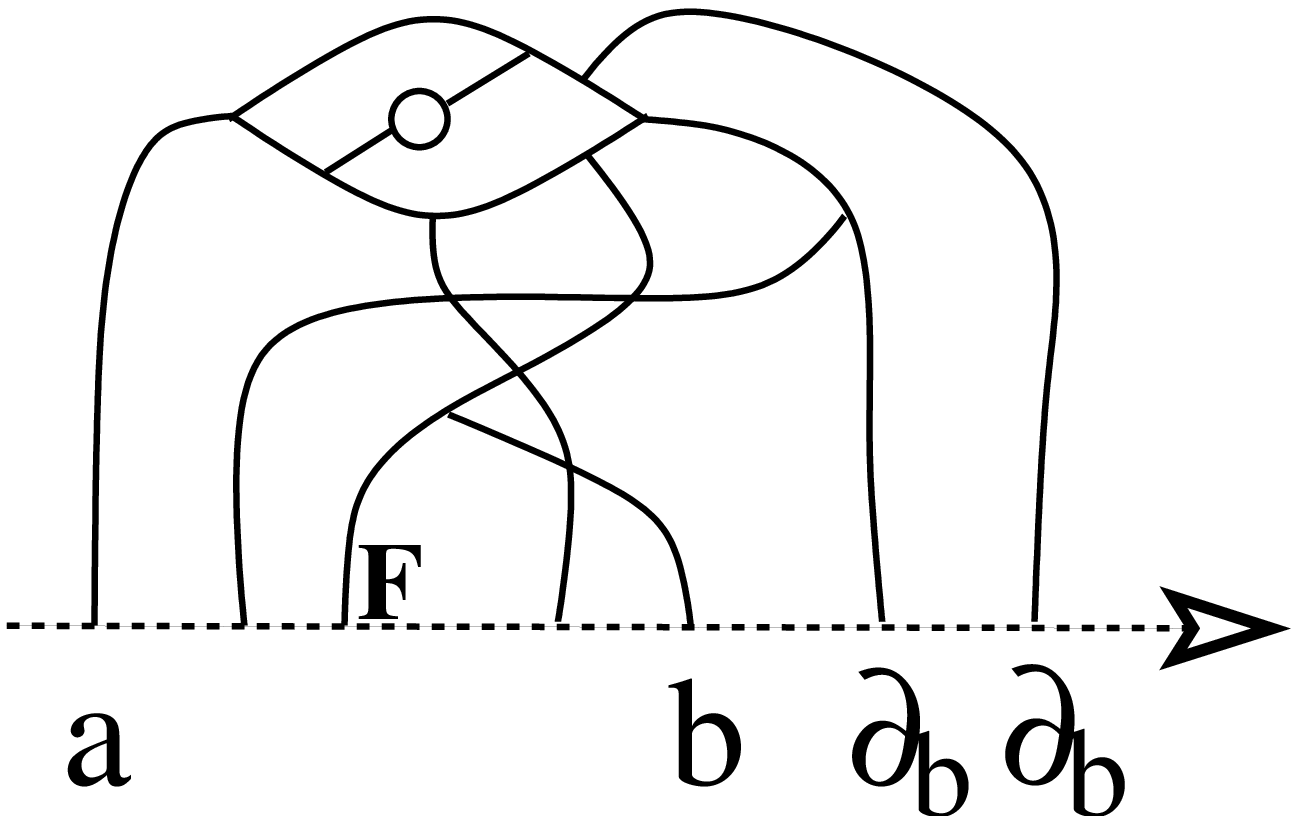}}}\
\\[0.75cm] =\
\raisebox{-4ex}{\scalebox{0.25}{\includegraphics{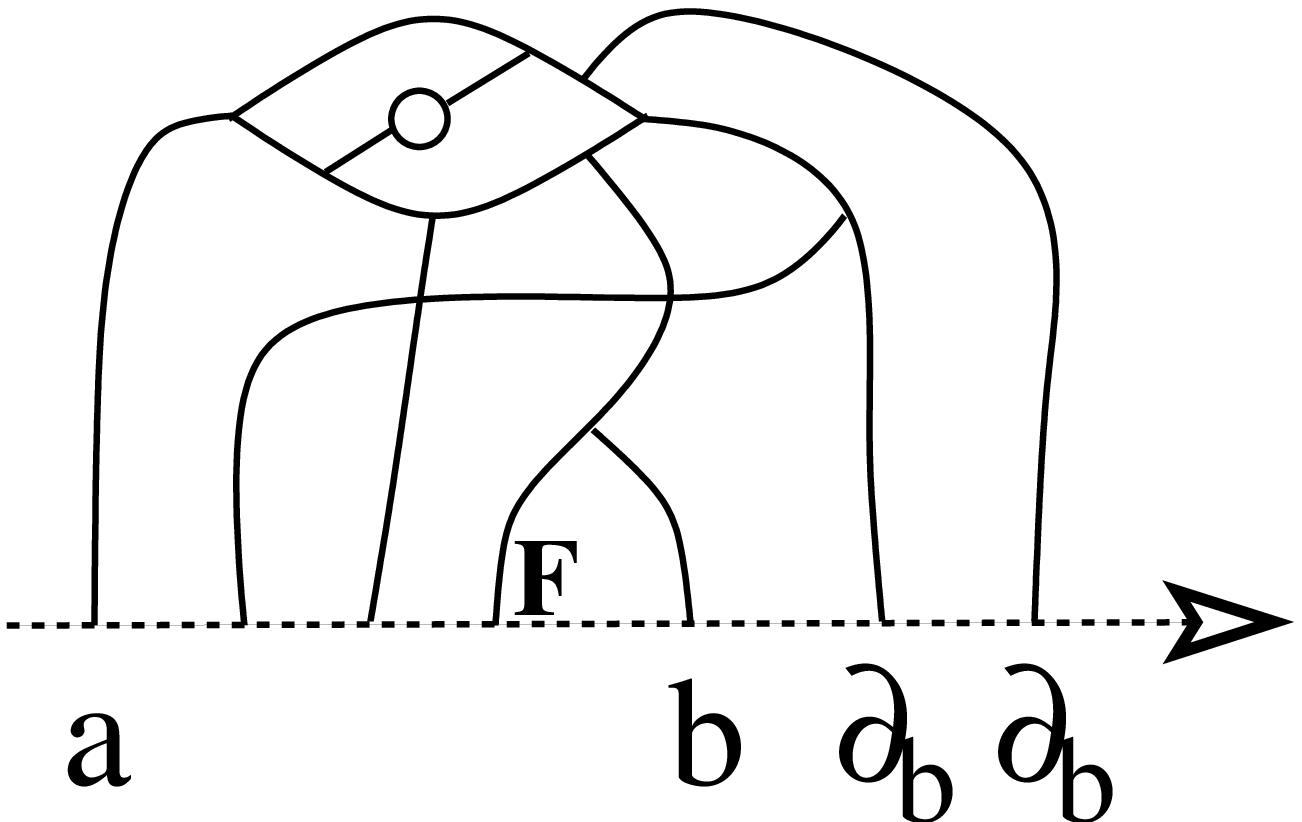}}}\ =\
-\ \raisebox{-4ex}{\scalebox{0.25}{\includegraphics{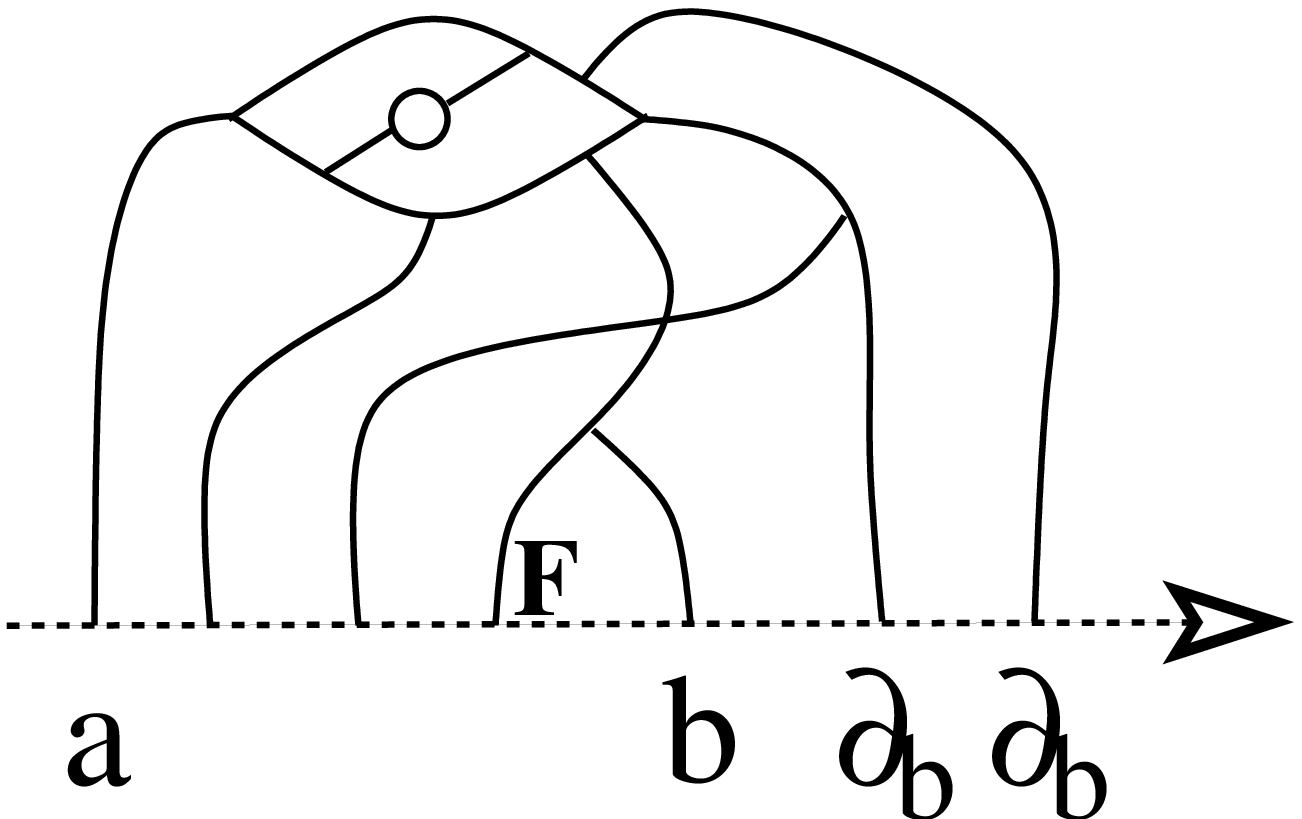}}}\
+\ \raisebox{-4ex}{\scalebox{0.25}{\includegraphics{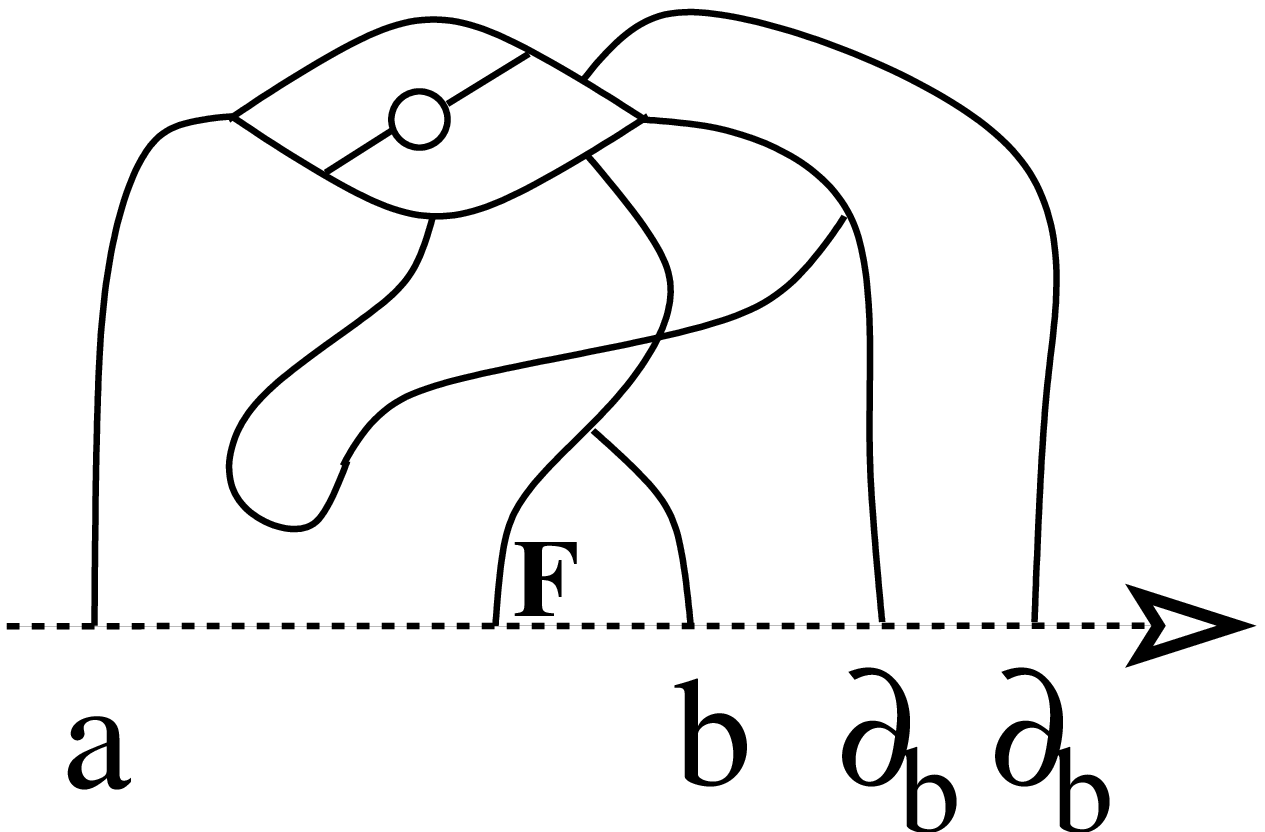}}}\
.\\[0.75cm]
\end{array}
\]
We will work with {\bf power series} of operator Weil diagrams.
Here is what we mean by that:
\begin{defn}
Define the space of formal power series of operator Weil diagrams
in the following way:
\[ \WhatF\abpow= \prod_{(i,j)\in
\mathbb{N}_0\times\mathbb{N}_0}\WhatF\ab^{(i,j)},
\]
where $\mathbb{N}_0$ denotes the set of non-negative integers.
\end{defn}
We suggest taking a moment to decode this: a {\it formal power
series} of operator diagrams is a choice, for every pair $(i,j)$
of non-negative integers, of a vector from $\WhatF\ab^{(i,j)}$.
\subsection{The operator pairing.}
We will now introduce a bilinear pairing on these power series: \[
\apply\,:\, \WhatF\abpow \times' \WhatF\abpow \rightarrow
\WhatF\abpow.
\]
The notation $\times'$ is to record the fact that the pairing is
only defined (only ``{converges}") on certain pairs of power
series:
\[
\WhatF\abpow \times' \WhatF\abpow\subset \WhatF\abpow\times
\WhatF\abpow.
\]
The discussion below requires the projection map
\[
\pi^{(i,j)} : \WhatF\abpow \rightarrow \WhatF\ab^{(i,j)}.
\]
\subsubsection{How to operate with a diagram.}
\label{firstopdefn} The purpose of operator Weil diagrams, of
course, is to have them operate on each other. We'll first define
how individual diagrams operate on each other, and then extend
that action to power series. Consider, then, two operator Weil
diagrams:
\[
\raisebox{-5ex}{\scalebox{0.28}{\includegraphics{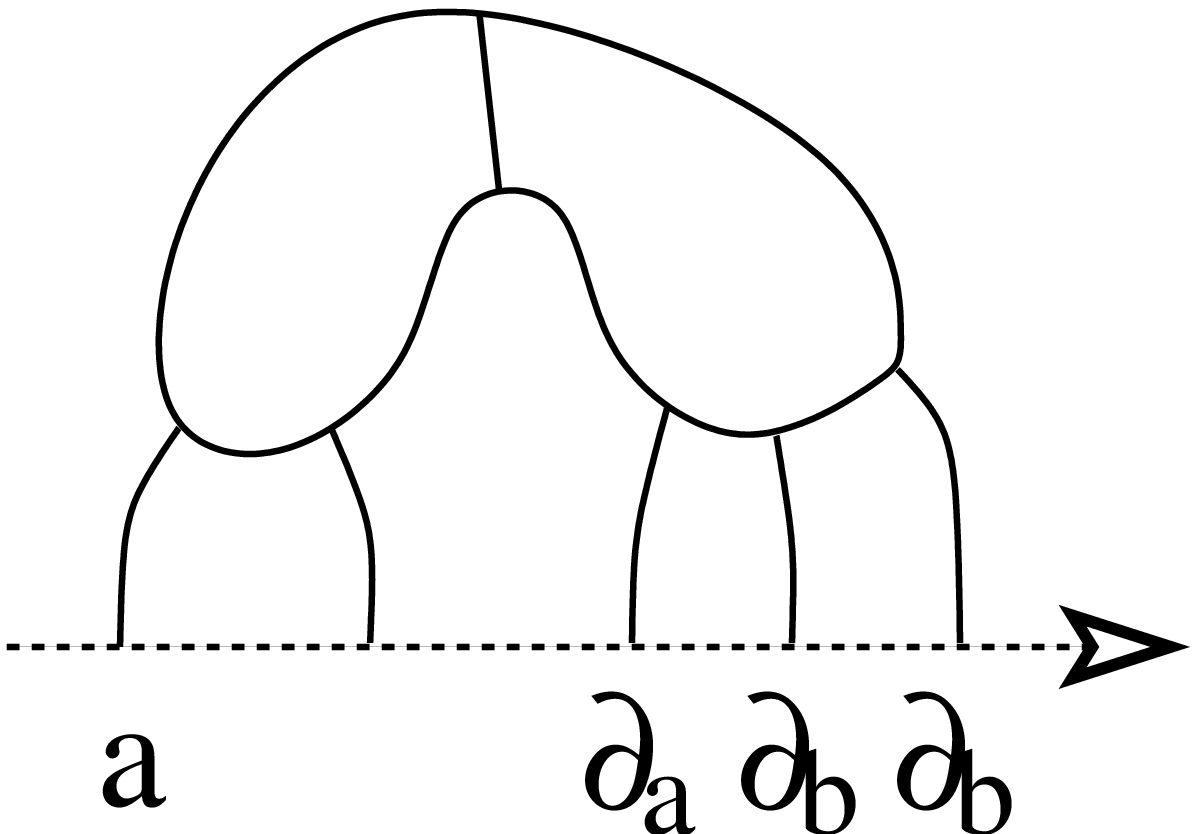}}}\ \
\apply \ \
\raisebox{-5ex}{\scalebox{0.28}{\includegraphics{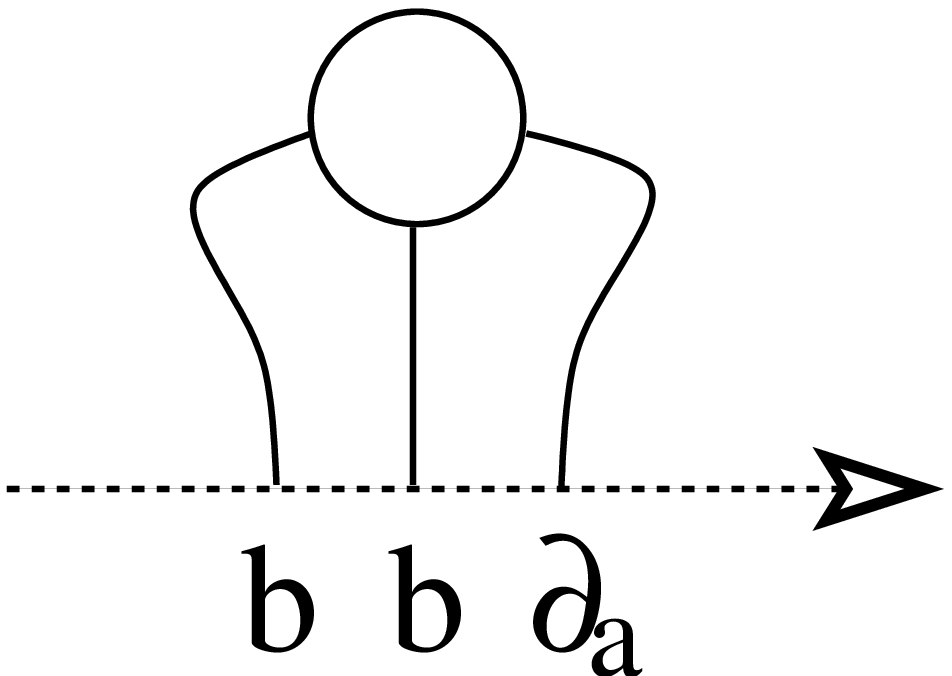}}}\ \
.
\]
To operate with the first on the second, you begin by placing the
two diagrams adjacent to each other on the orienting line.
Then you proceed to push the operator legs to the far-right hand
side of the resulting diagram by using substitution rules which
declare that the operator legs act as graded differential
operators. To be precise: if the operator leg encounters a
parameter leg corresponding to the same parameter, then it
operates on that leg:
\[
\raisebox{-5ex}{\scalebox{0.22}{\includegraphics{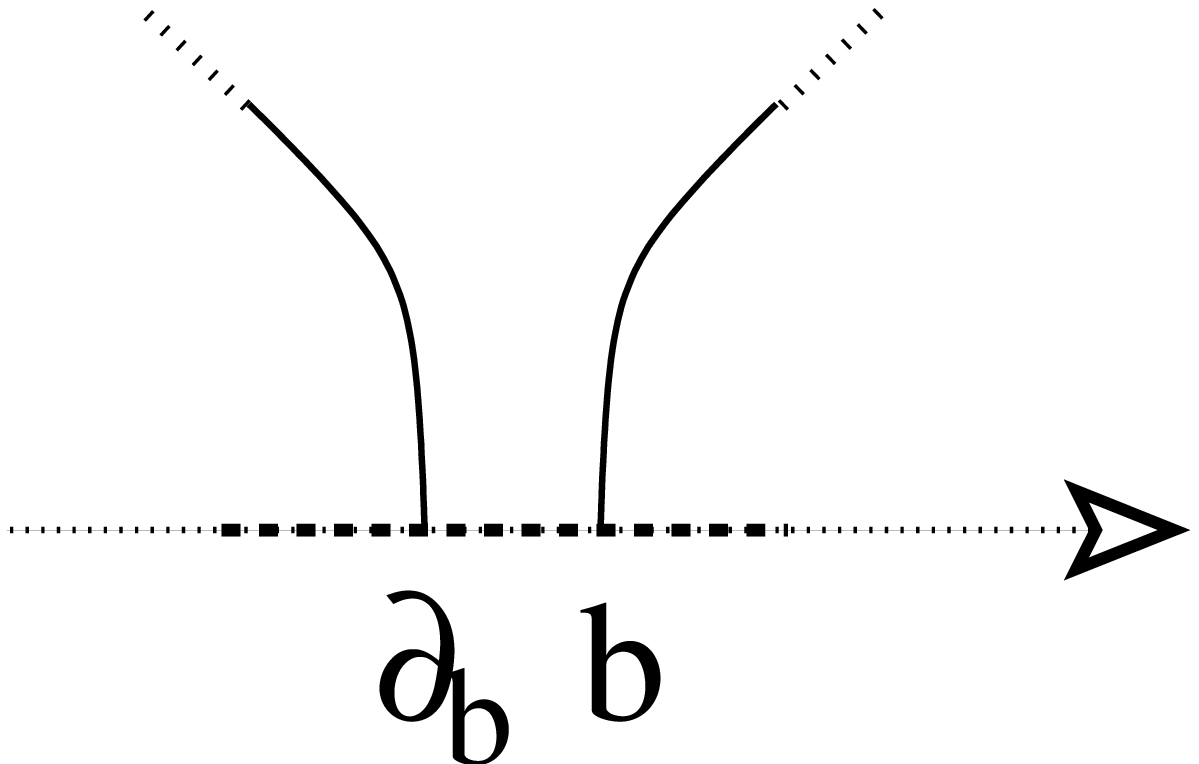}}}\ \
\leadsto\ \ -
\raisebox{-5ex}{\scalebox{0.22}{\includegraphics{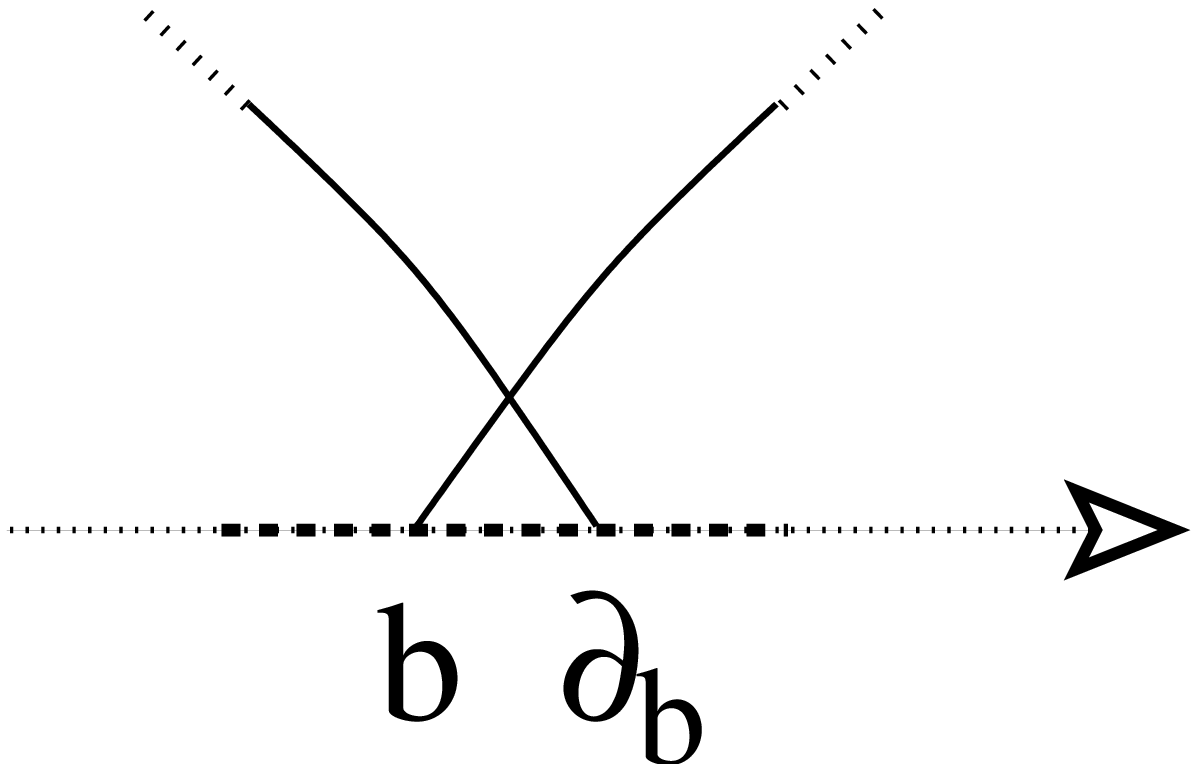}}}\ +\
\raisebox{-2.5ex}{\scalebox{0.22}{\includegraphics{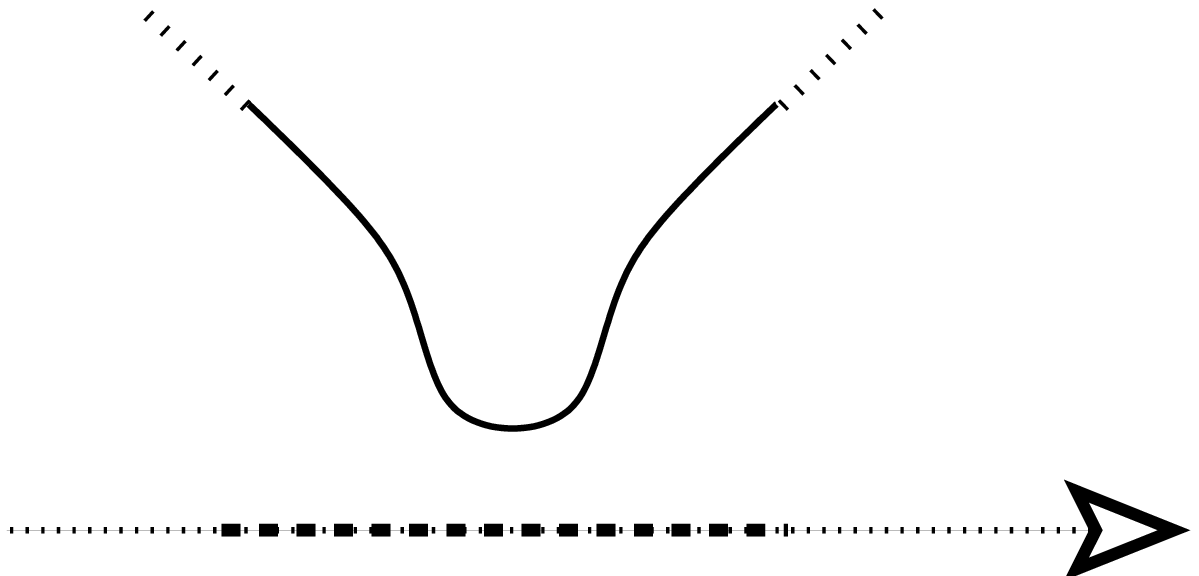}}}\ \
,
\]
or
\[
\raisebox{-5ex}{\scalebox{0.22}{\includegraphics{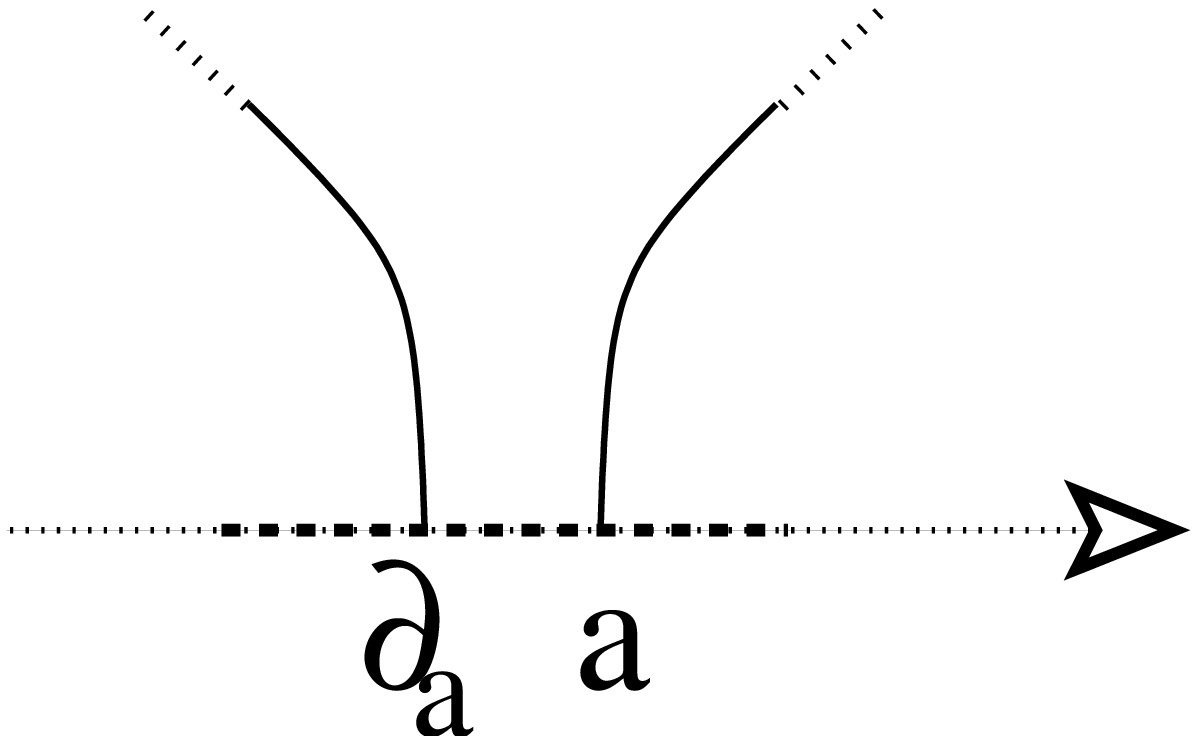}}}\ \
\leadsto\ \ \ \ \
\raisebox{-5ex}{\scalebox{0.22}{\includegraphics{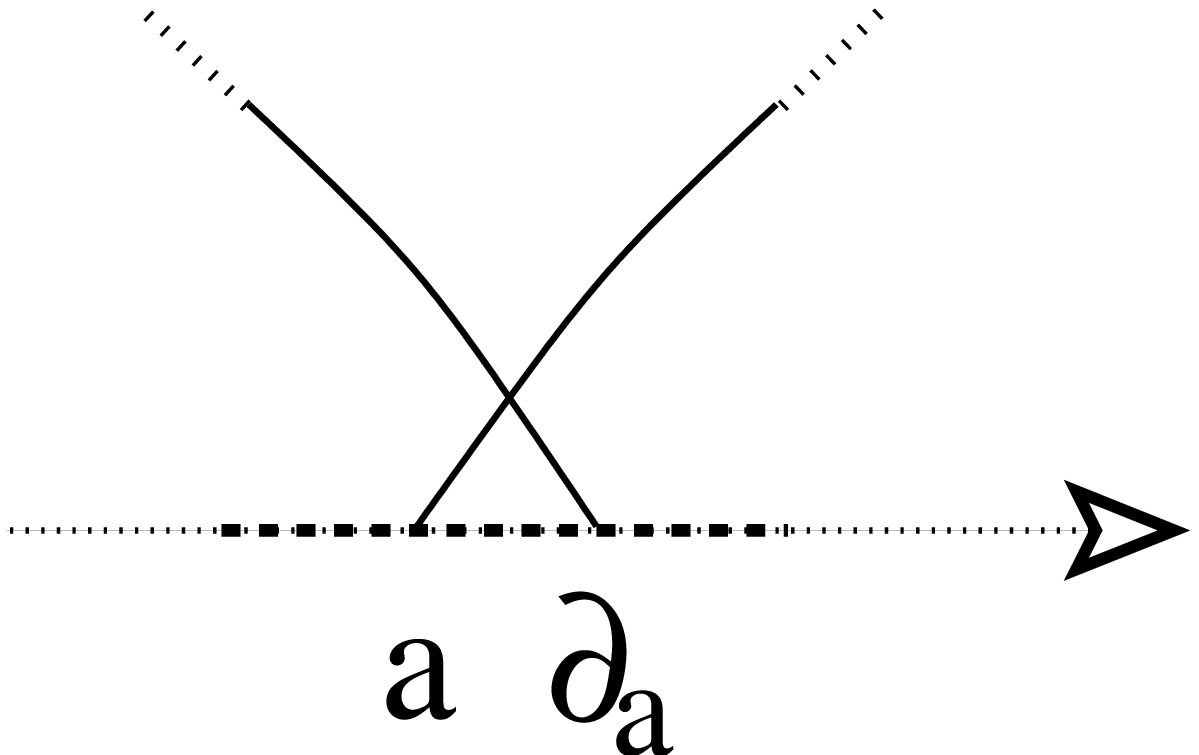}}}\ +\
\raisebox{-2.5ex}{\scalebox{0.22}{\includegraphics{pushpastG}}}\ \
.
\]
If, on the other hand, the operator leg encounters any leg it is
not matched to, then the operator is just pushed past the leg,
incurring the appropriate sign ($(-1)^{g_1g_2}$, where $g_1$ and
$g_2$ are the leg-grades of the two legs involved).
Figure \ref{opcompillustr} illustrates such a computation. The
reader might like to check that they get the final result, which
is contained in Figure \ref{finalopcompresult}.
\begin{figure}
 \caption{\label{opcompillustr}An illustration of one diagram
operating on another.}
\begin{eqnarray*}
 \lefteqn{
\raisebox{-5ex}{\scalebox{0.22}{\includegraphics{compexampA}}}\ \
\apply \ \
\raisebox{-5ex}{\scalebox{0.22}{\includegraphics{compexampB}}} }
\\[0.25cm]
 &
\leadsto &
\raisebox{-4ex}{\scalebox{0.22}{\includegraphics{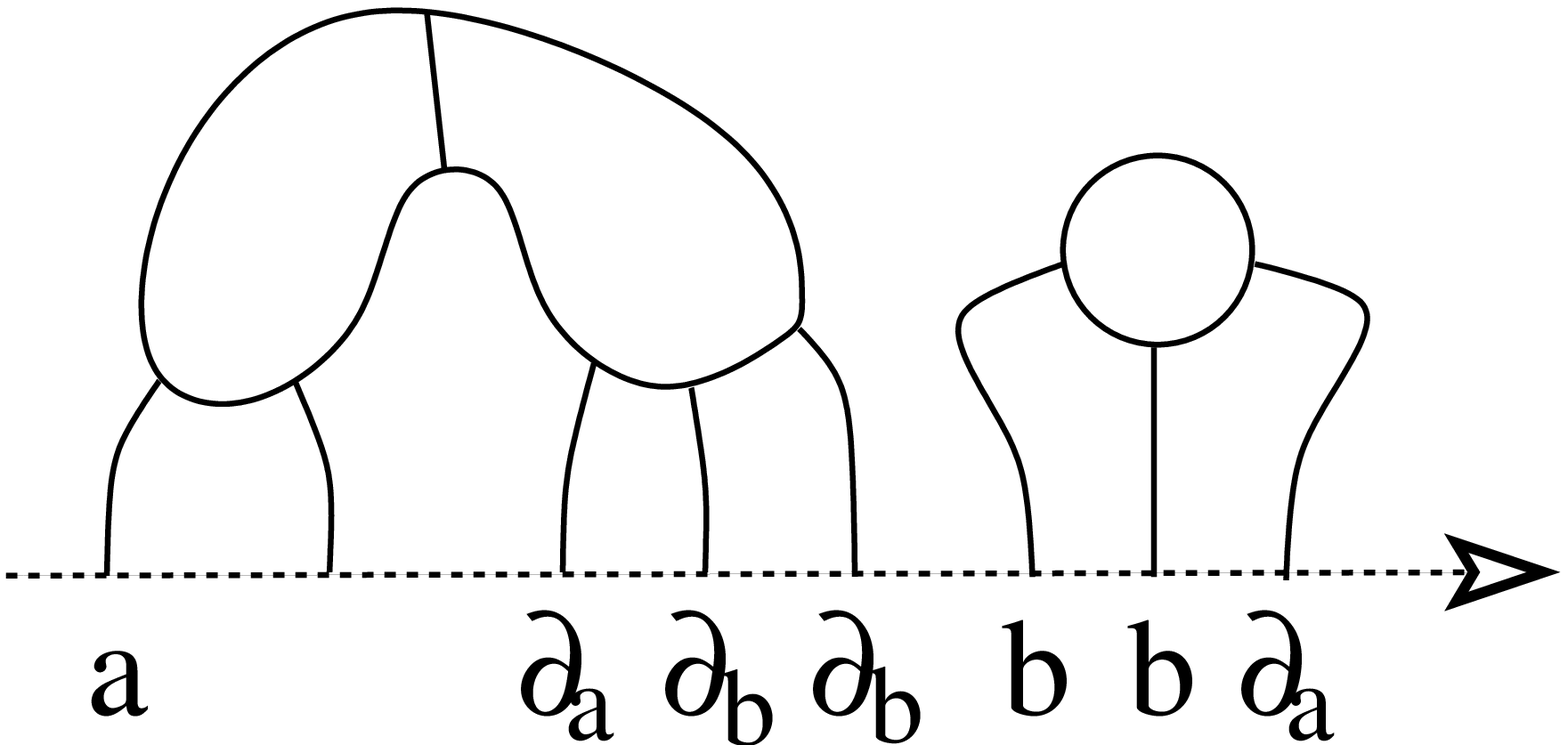}}}
\\[0.25cm]
& \leadsto &
\raisebox{-4ex}{\scalebox{0.22}{\includegraphics{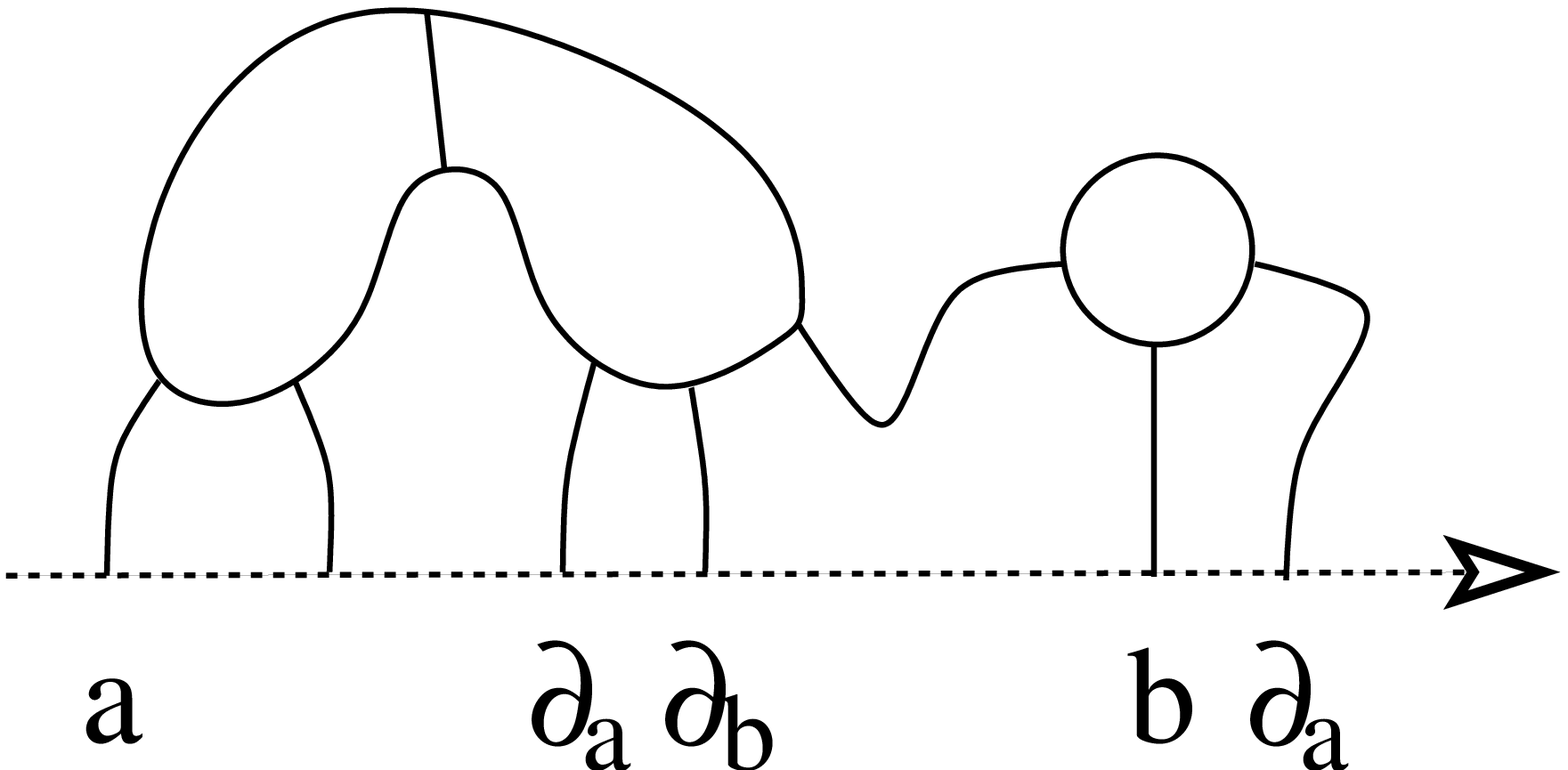}}}\ -\
\raisebox{-4ex}{\scalebox{0.22}{\includegraphics{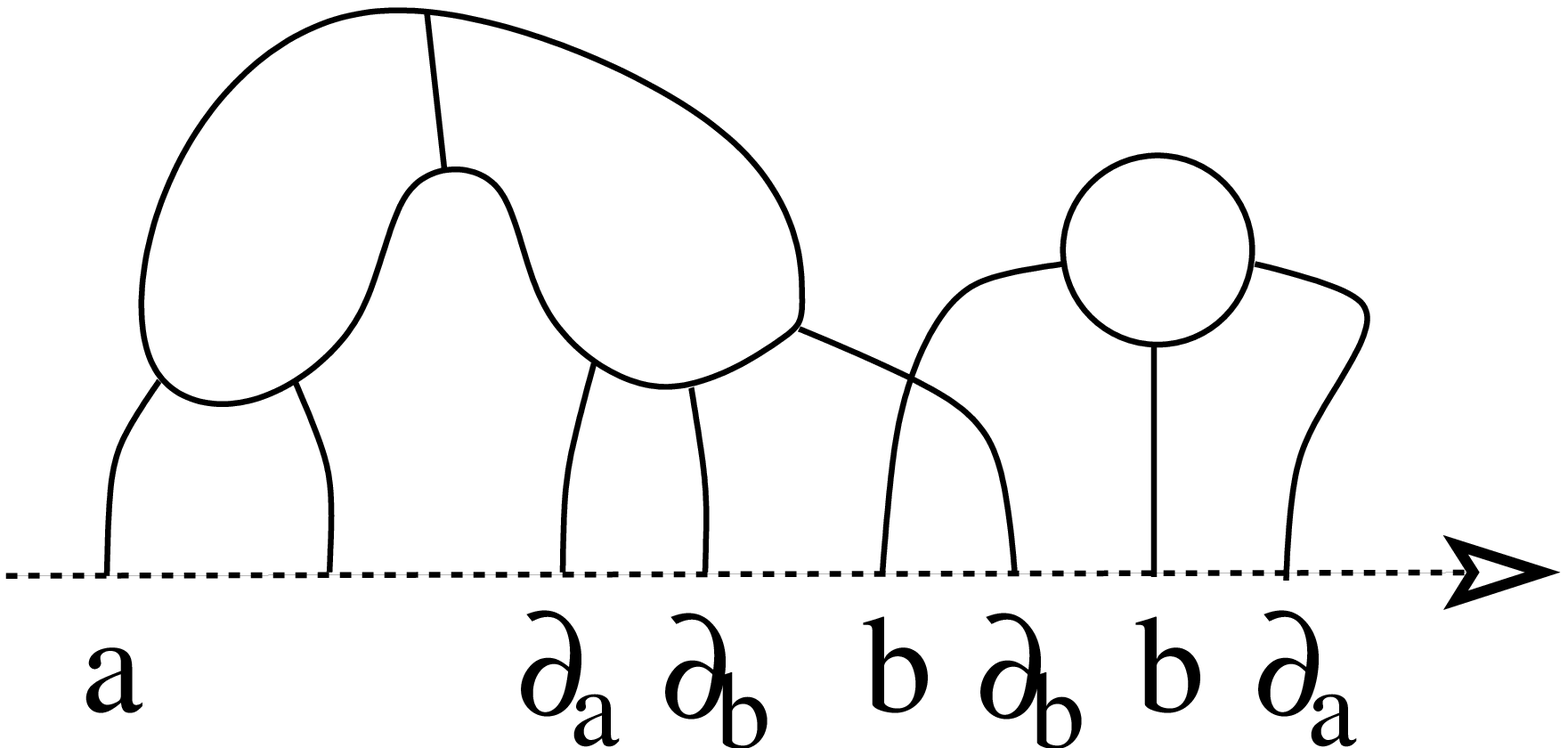}}}
\\[0.25cm]
& \leadsto &
\raisebox{-4ex}{\scalebox{0.22}{\includegraphics{compexampD}}}\ -\
\raisebox{-4ex}{\scalebox{0.22}{\includegraphics{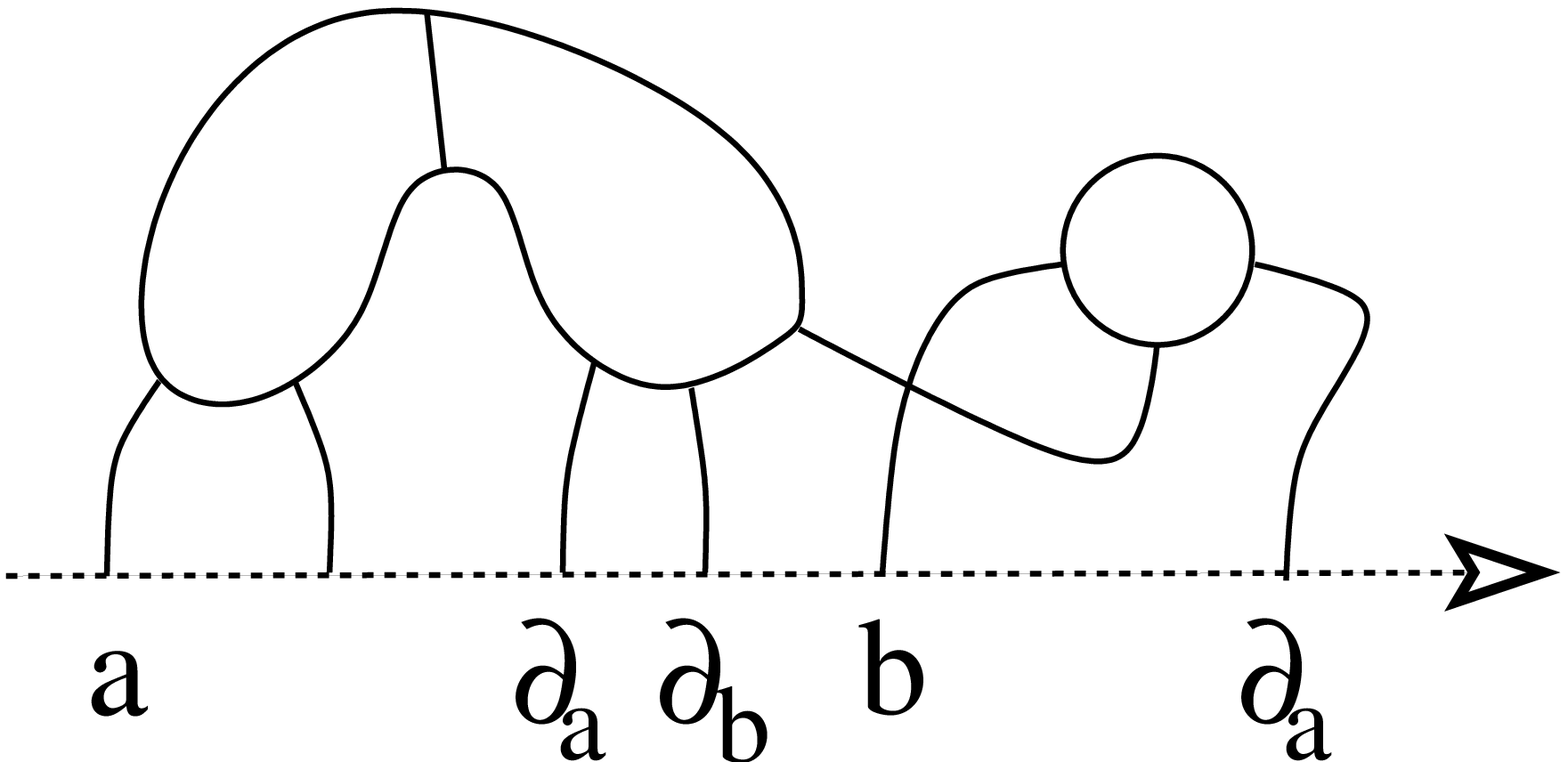}}}\\[0.25cm]
& &  +\
\raisebox{-4ex}{\scalebox{0.22}{\includegraphics{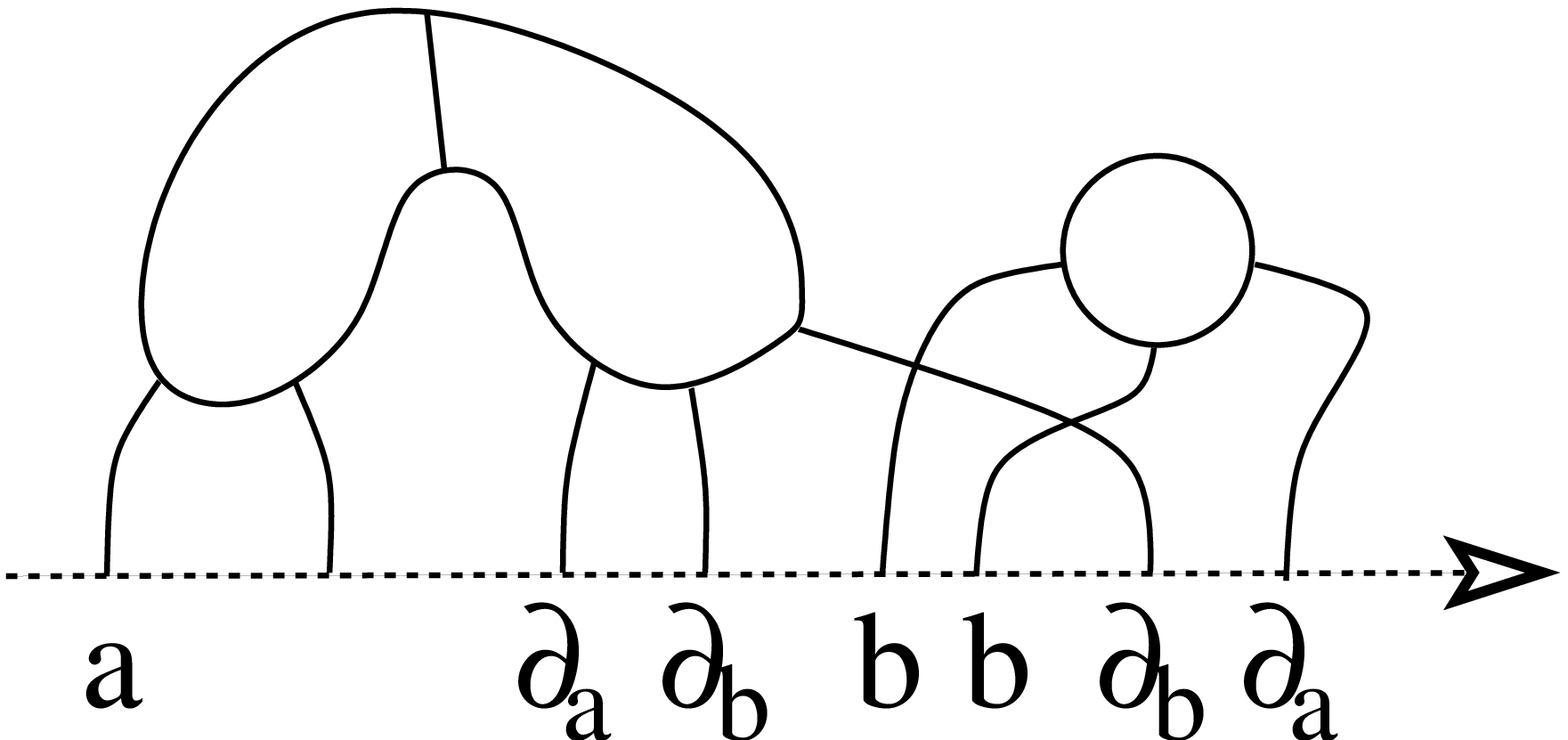}}}\ \
\ ,\ \ \ \ \text{etc.}
\end{eqnarray*}
\underline{\hspace{7cm}}
\end{figure}

\begin{figure}
\caption{\label{finalopcompresult}The final result of the diagram
operation started in Figure \ref{opcompillustr}.}
\[
\begin{array}{l}
\ \ \ \ \
\raisebox{-4ex}{\scalebox{0.22}{\includegraphics{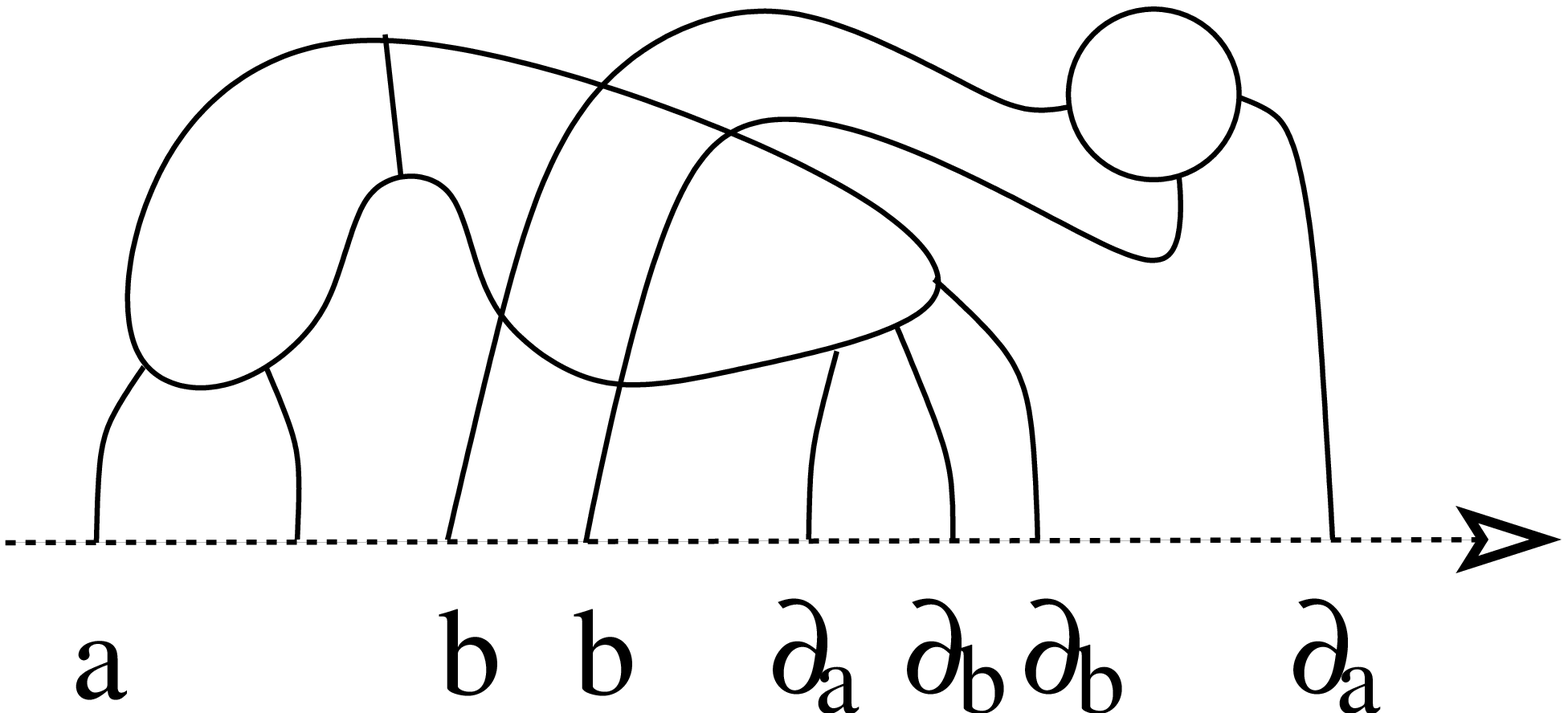}}}\ \
+\ \
\raisebox{-4ex}{\scalebox{0.22}{\includegraphics{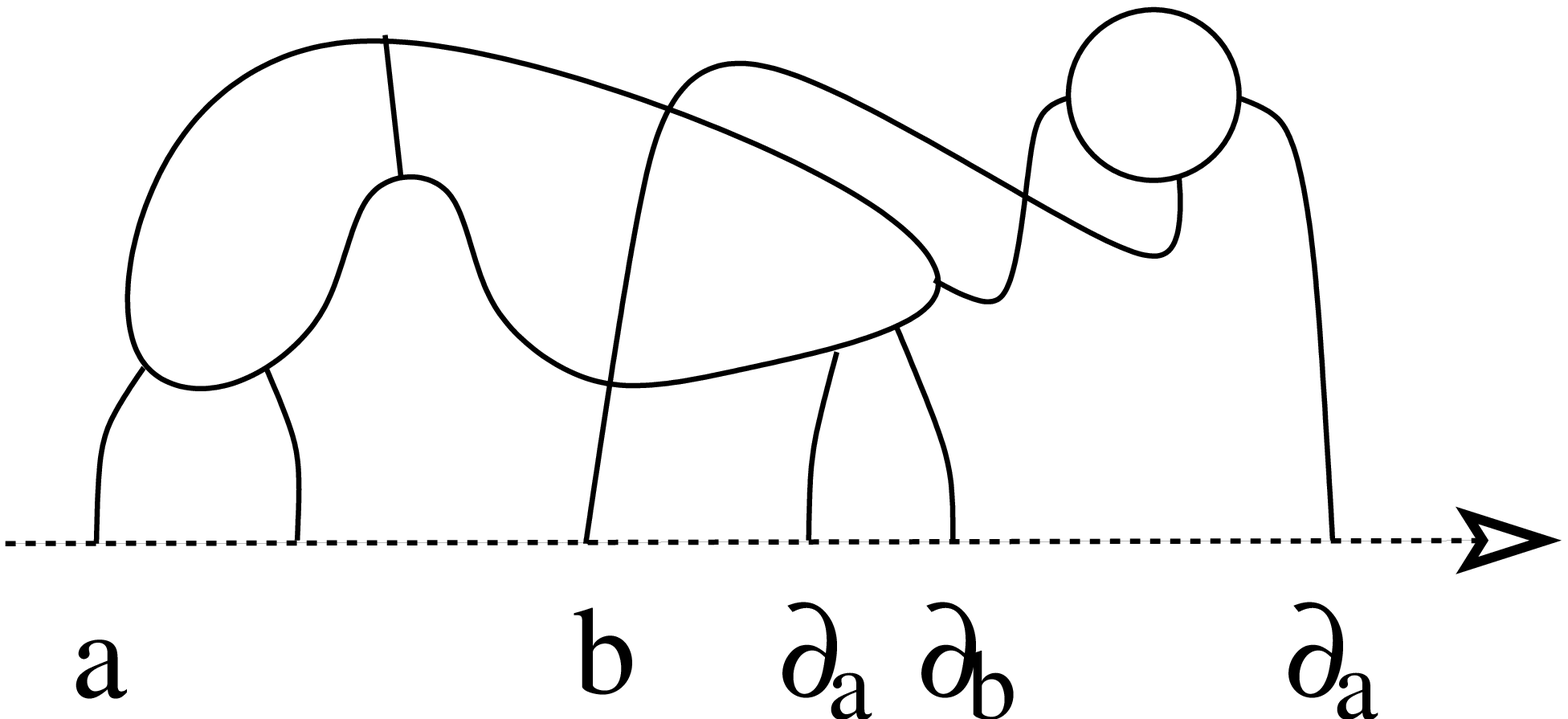}}}
\\[0.75cm]
\ \ -\
\raisebox{-4ex}{\scalebox{0.22}{\includegraphics{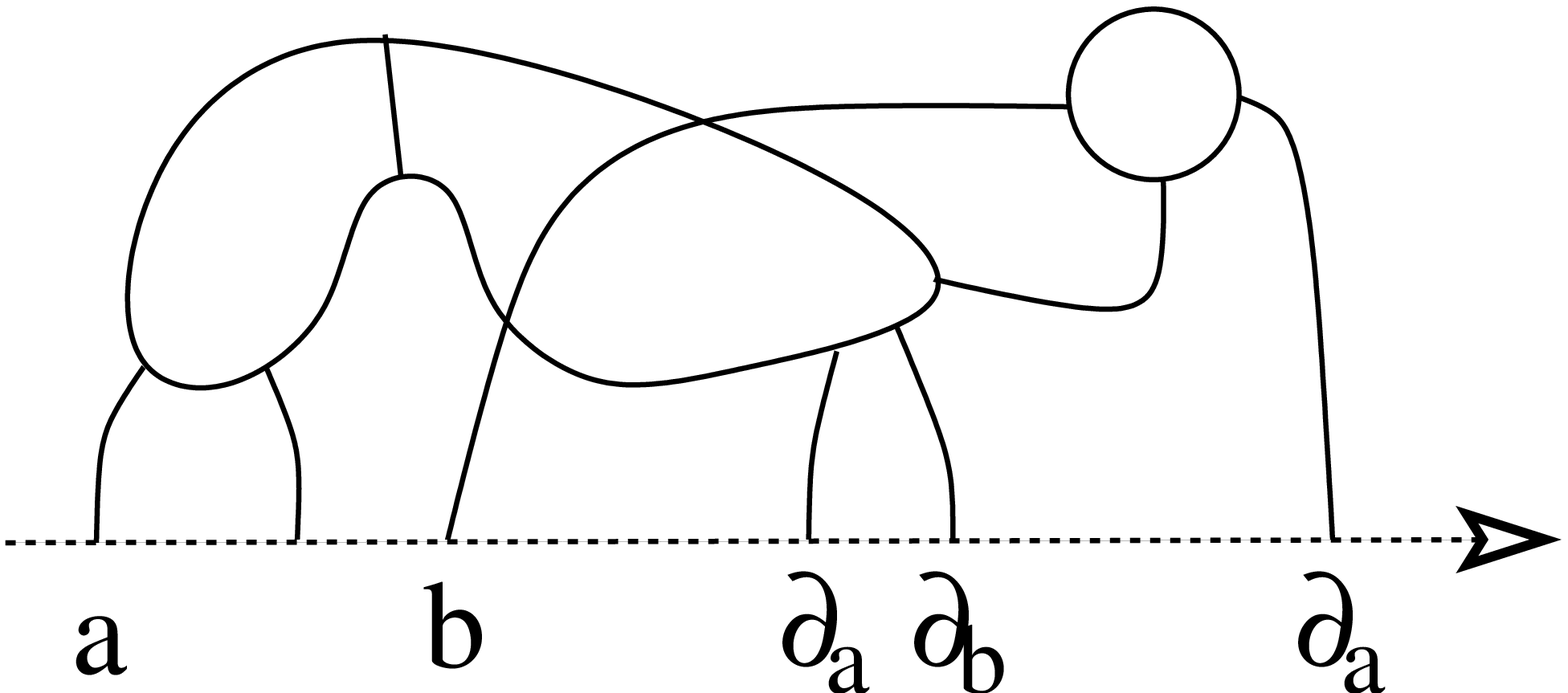}}} \
\ -\ \
\raisebox{-4ex}{\scalebox{0.22}{\includegraphics{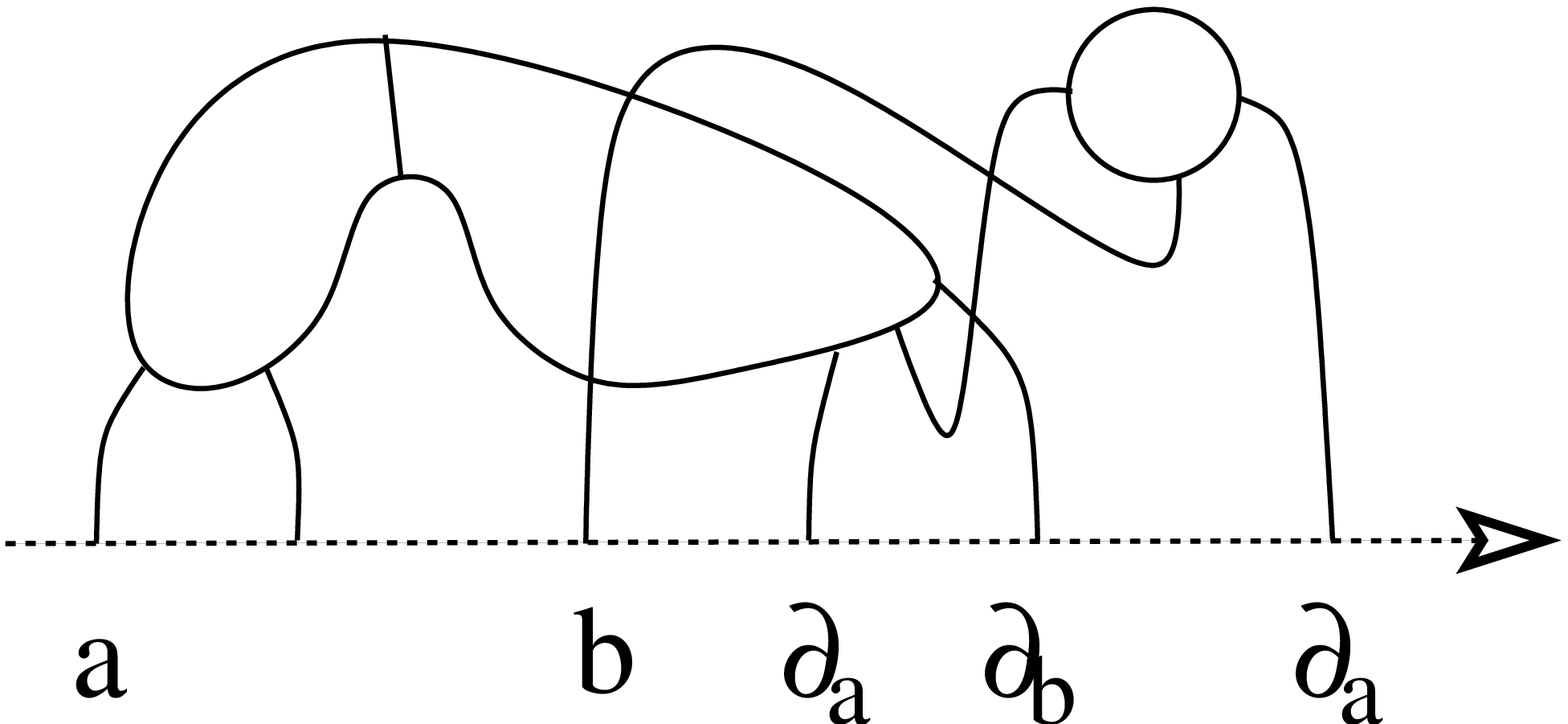}}}
\\[0.75cm] \ \ +\
\raisebox{-4ex}{\scalebox{0.22}{\includegraphics{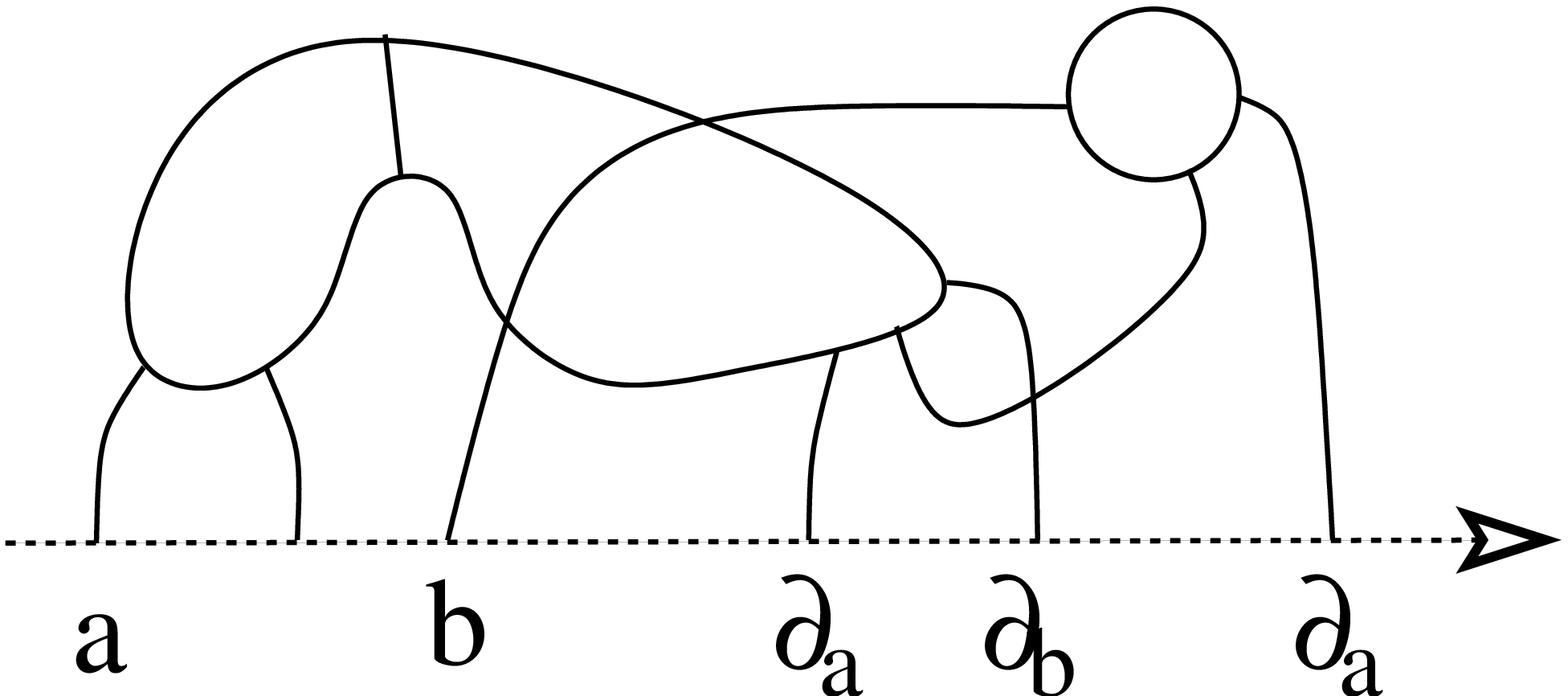}}} \
\ +\ \
\raisebox{-4ex}{\scalebox{0.22}{\includegraphics{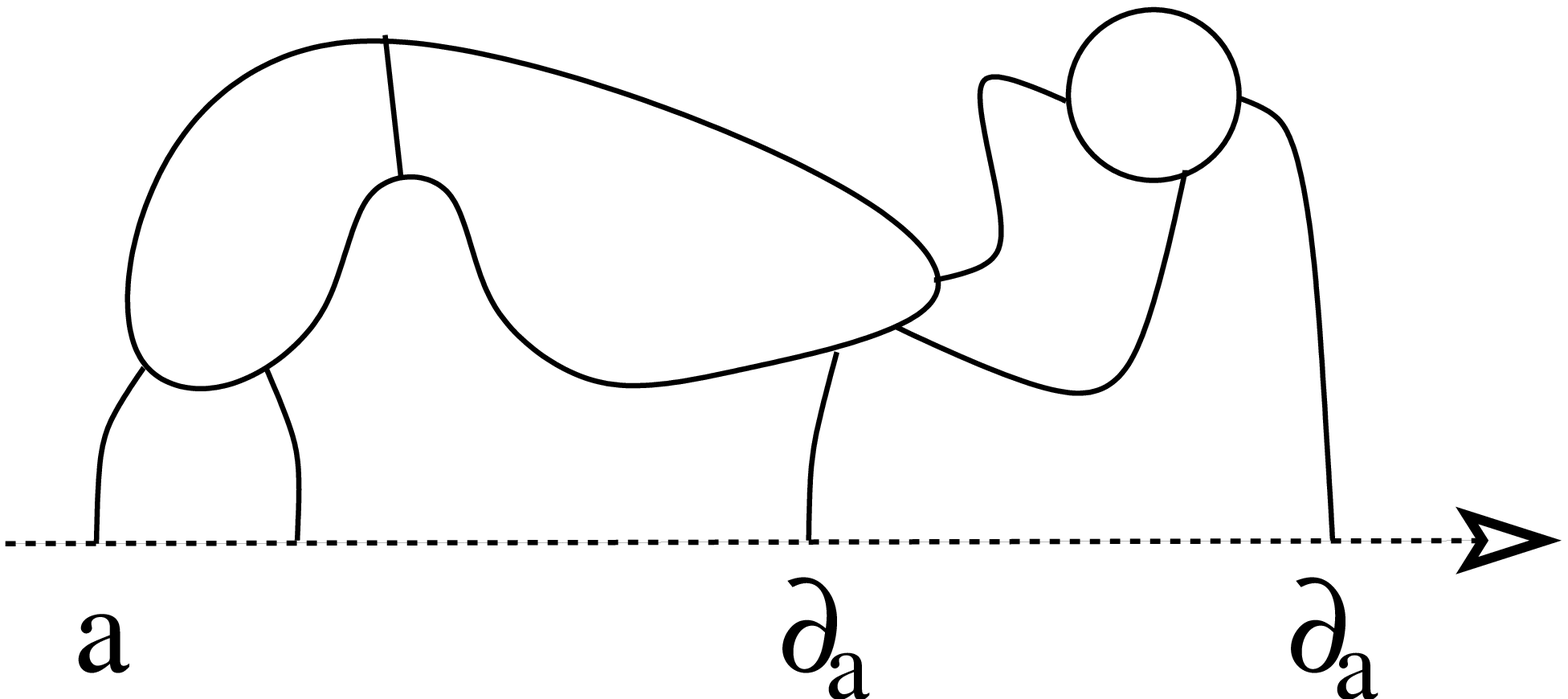}}}
\\[0.75cm]
\ \ -\
\raisebox{-4ex}{\scalebox{0.22}{\includegraphics{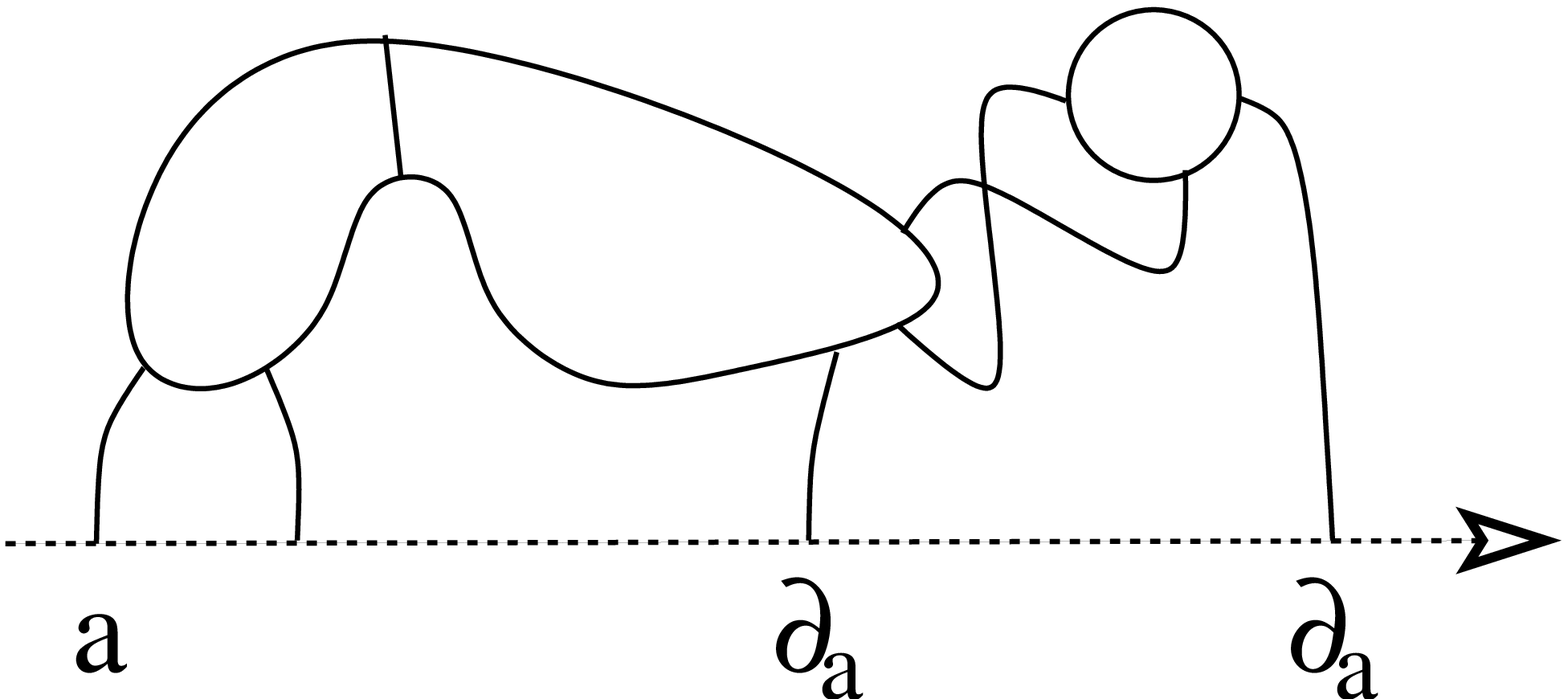}}}\ \
.
\end{array}
\]
\underline{\hspace{7cm}}
\end{figure}

\subsubsection{Checking relations.}
Having defined the operation $\apply$ on the level of diagrams,
let us proceed to verify that it respects the relations that we
introduce amongst those diagrams.
\begin{prop}
Let $i$, $j$, $k$ and $l$ be elements of $\mathbb{N}_0$. The
linear extension of the above definition of\, $\apply$ gives a
well-defined bilinear map
\[
\apply\  :\ \WhatF\ab^{(i,j)}\times \WhatF\ab^{(k,l)}\rightarrow
\WhatF\abpow\ . \]
\end{prop}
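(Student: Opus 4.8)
The plan is to verify that the diagram-level recipe for $\apply$ is compatible with every relation imposed on the two factors; bilinearity is then immediate from the linear extension, and membership in the codomain is clear, since each operator leg migrates \emph{monotonically} rightward through finitely many legs, so a single pair of diagrams produces a finite $\mathbb{Q}$-combination of diagrams lying in finitely many types $\WhatF\ab^{(m,n)}$, hence a well-defined element of $\WhatF\abpow$. The first step is a structural observation. When the operator diagram $D_1$ is placed to the left of the operand $D_2$, the operator legs of $D_1$ come to sit between the remaining legs of $D_1$ and all of $D_2$, and the recipe moves them only to the right, into and through $D_2$. Thus the part of $D_1$ lying to the left of its operator group is entirely inert, and every relation of $\WhatF\ab^{(i,j)}$ \emph{not} involving the operator legs is respected for trivial reasons. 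This reduces the problem to two remaining items: (i) the relations of $\WhatF\ab^{(i,j)}$ that reorder the operator group (up to sign), and (ii) all relations of $\WhatF\ab^{(k,l)}$ on the operand $D_2$.

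The second step is to reduce to a single operator leg. The substitution rules (``push past'' plus ``contract'') say precisely that an operator leg acts as a graded derivation on the ordered product of legs of $D_2$: the $a$-operator is \emph{even} (it passes every leg with no sign), while the $b$-operator is \emph{odd} (passing a leg of grade $g$ costs $(-1)^{g}$), in agreement with the universal sign $(-1)^{g_1g_2}$. I would then show that pushing the whole operator group through $D_2$ equals the ordered composite of the individual single-operator pushes, and that two adjacent such pushes graded-commute exactly as recorded by the operator-reordering relations ($\partial_a$ central, $\partial_b$'s anticommuting). This settles item (i) and simultaneously reduces item (ii) to a single graded derivation $\delta\in\{\partial_a,\partial_b\}$. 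Since a graded derivation kills a relation $r=0$ as soon as $\delta(r)$ is itself a consequence of the relations, the entire question collapses to: for each generating relation $r$ of $\WhatF\ab^{(k,l)}$, check $\delta(r)\equiv 0$.

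The third step carries out these checks by relation type. For the leg-transposition relations among the parameter and usual legs, this is pure sign-bookkeeping: the Leibniz expansions of $\delta(x\cdot y)$ and of $(-1)^{g_xg_y}\delta(y\cdot x)$ agree term-by-term. For the AS and IHX relations there is essentially nothing to do, since $\delta$ only deletes or slides univalent legs and never touches the trivalent skeleton in which these relations live, so a relation simply persists after the push. The substantive checks are the three classes of $\WhatF$ relations (the STU-type relations and the Clifford-type relation at the right-hand end of the ordering line): for each I would apply $\partial_a$ and $\partial_b$ to both sides and match terms, using the contraction rule to see that the derivative of the trivalent right-hand term equals the difference of the derivatives of the two ordered left-hand terms. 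This is the combinatorial shadow of the Alekseev--Meinrenken fact that the relevant operators are derivations compatible with the Weil/Clifford algebra structure.

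The main obstacle, as I see it, is this last check for the Clifford-type relation. There a single operator leg may contract against \emph{either} of the two legs taking part in the relation, and the signs incurred while pushing a grade-$1$ or grade-$2$ operator past the intervening legs must be shown to combine so that the two sides remain equal. Everything else I expect to be routine sign-tracking, but this step is delicate precisely because it is where the graded (rather than ordinary) commutation signs interact with the creation of a trivalent vertex, making it the most likely point of failure and the place deserving the most careful case analysis.
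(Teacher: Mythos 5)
Your overall architecture is essentially the paper's: reduce well-definedness to checking that every generating relation on either factor is sent to zero, observe that the part of the operator diagram to the left of its operator group is inert (the paper's Class 2), dispose of the relations that reorder the operator group (Class 1), and then verify the operand's relations by pushing operator legs through them (the paper's Class 4, which it checks with a single ``box'' computation covering all ten generating relations at once). Your finiteness remark for the codomain, the inertness observation, and your Leibniz treatment of the transposition relations are all correct and line up with the paper.

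However, your third step --- and precisely the point you single out as the main obstacle --- rests on a misunderstanding of the operation. In the defined pairing $\apply$, an operator leg contracts \emph{only} with a parameter leg carrying the same label: $\partial_a$ with an $a$-leg, $\partial_b$ with a $b$-leg. When an operator leg meets any usual leg of the operand (a curvature leg or a grade-1 leg attached to the graph), it is simply pushed past with the sign $(-1)^{g_1g_2}$; no contraction ever occurs there. The legs participating in the STU-type and Clifford-type relations of $\WhatF$ are usual legs, not parameter legs, so the situation you flag --- ``a single operator leg may contract against either of the two legs taking part in the relation'' --- never arises, and the derivative terms you propose to match are identically zero. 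For those relations the entire check is the observation that every term of a given relation has the same total leg-grade mod $2$, so the push-past sign $(-1)^{g_\delta\mu}$ is common to all terms and each relation is carried to a multiple of itself. Conversely, the only place where contraction genuinely interacts with a relation --- and hence the only substantive check --- is the case you dismiss as pure sign-bookkeeping: the transposition relations whose varying part contains a parameter leg; these are exactly what the paper's two displayed formulas, with the sign $(-1)^{|x|\mu}$, are designed to handle. Your proof would close up once this is corrected, but as written the delicate case analysis is aimed at terms that do not exist in the operation being verified.
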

\begin{proof}
 We require that the relations that define
$\WhatF\ab^{(i,j)}$ and $\WhatF\ab^{(k,l)}$ are sent to zero in
$\WhatF\abpow$. The sort of thing that we must show is that
\[
\raisebox{-6ex}{\scalebox{0.24}{\includegraphics{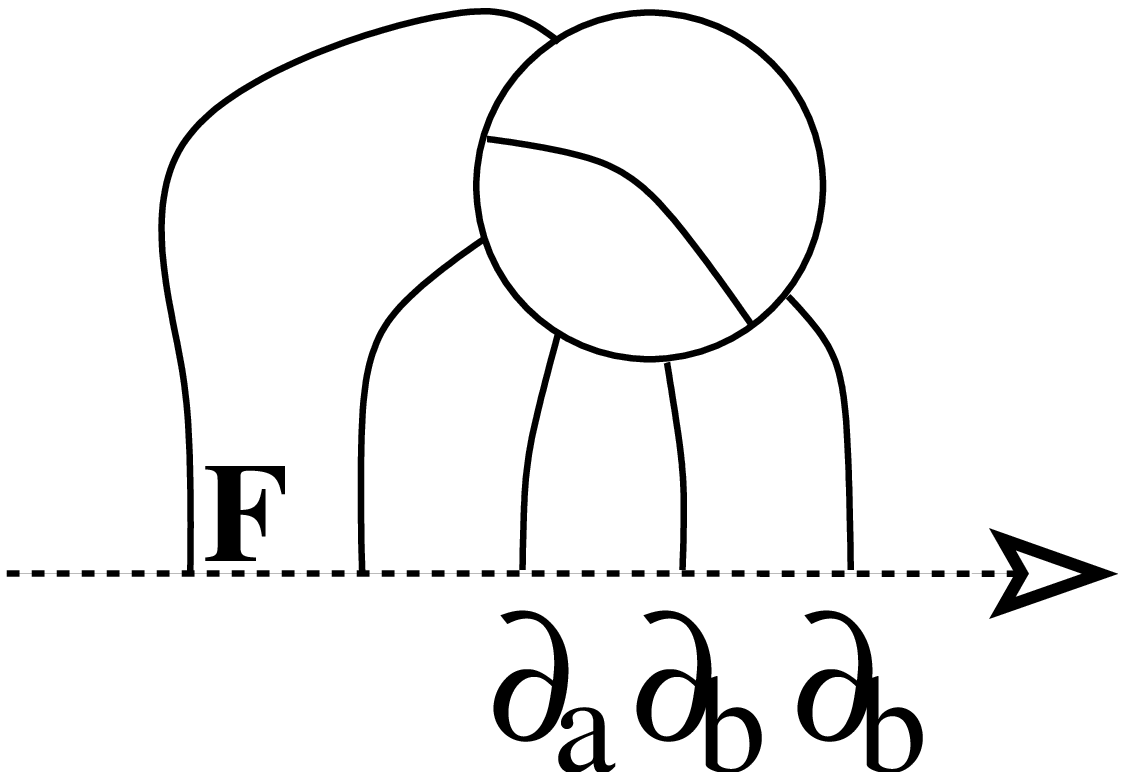}}}\apply
\left(
\raisebox{-6ex}{\scalebox{0.24}{\includegraphics{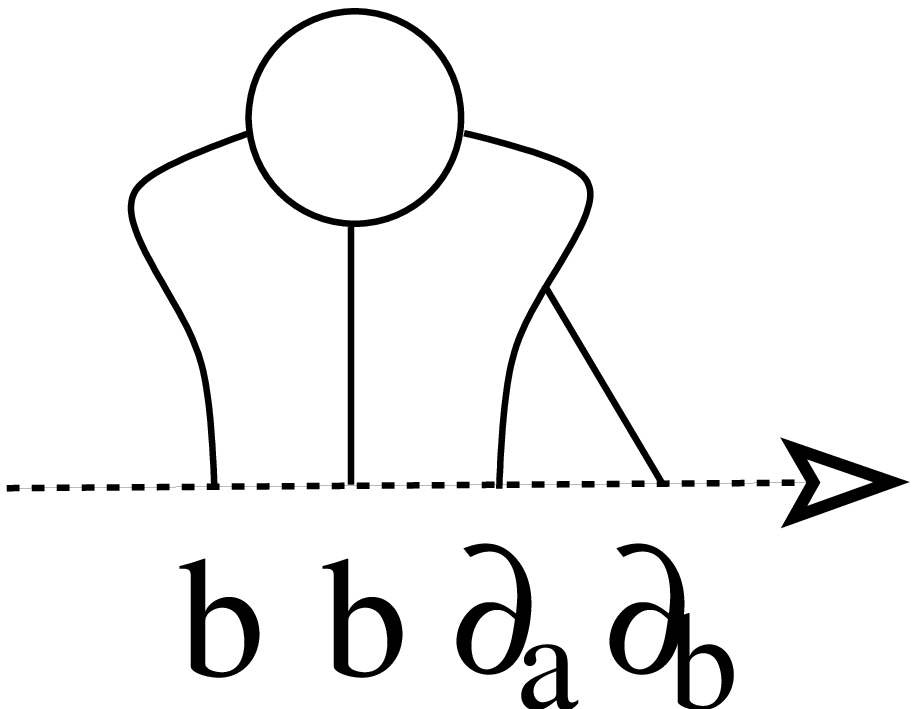}}}\
+\
\raisebox{-6ex}{\scalebox{0.24}{\includegraphics{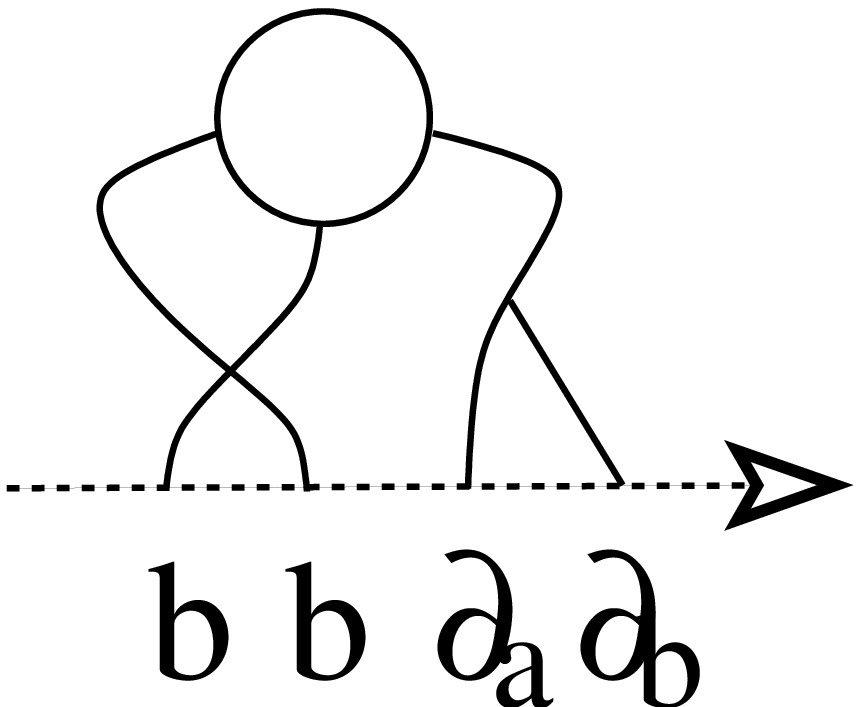}}}\right)\
=\ 0.
\]
In this discussion we'll use a box to represent the part of the
diagram that varies in a relation vector. For example, the
equation above will be represented schematically as follows: \[
\raisebox{-6ex}{\scalebox{0.24}{\includegraphics{comprelncheckA}}}\
\apply \
\raisebox{-6ex}{\scalebox{0.24}{\includegraphics{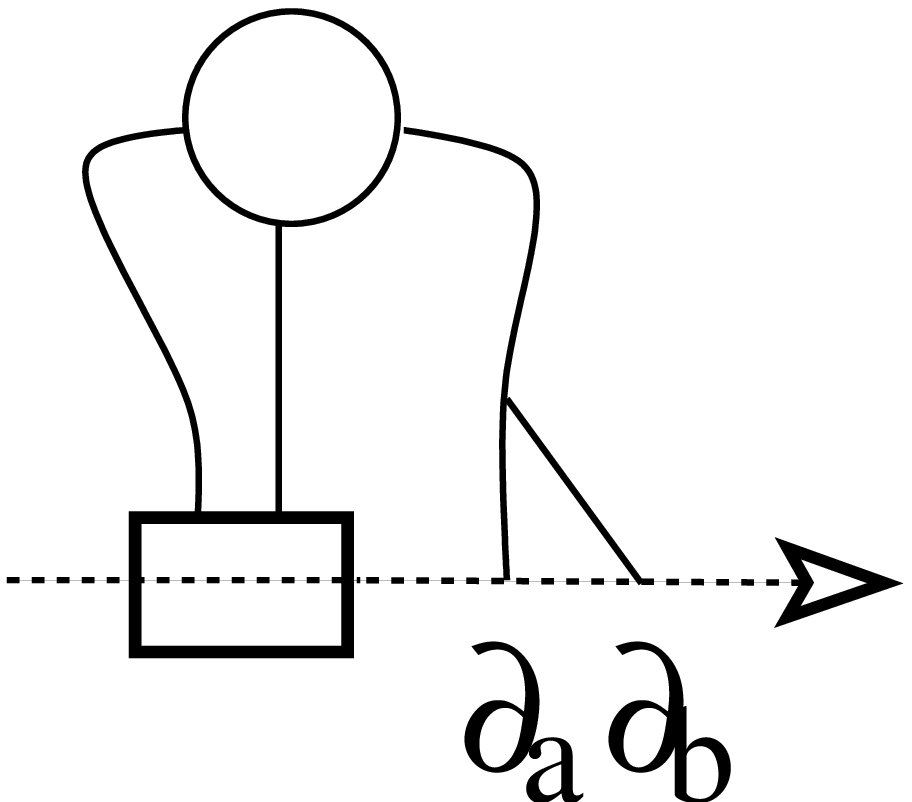}}}\
=\ 0 .
\]
There are four classes of relations that must be checked:
\begin{enumerate}
\item{ Amongst the operator legs of $\WhatF\ab^{(i,j)}$. For
example:
\[ \raisebox{-4ex}{\scalebox{0.21}{\includegraphics{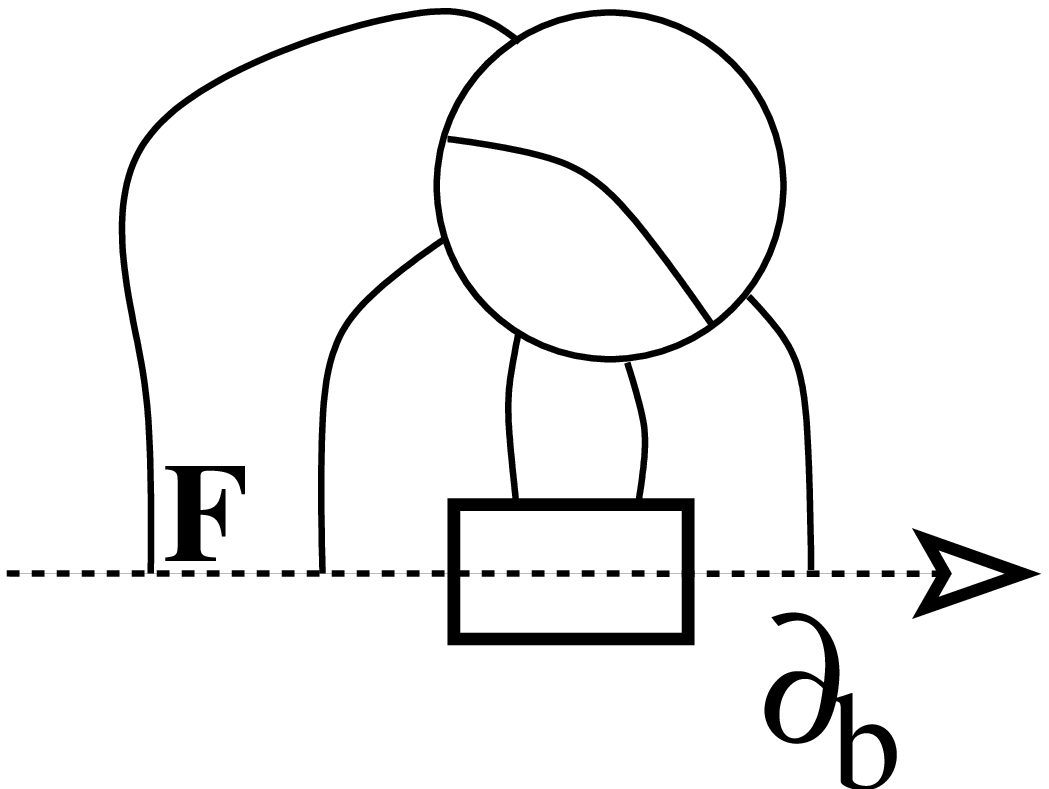}}}\
\apply \
\raisebox{-4ex}{\scalebox{0.21}{\includegraphics{comprelncheckB}}}\
=\ 0 .
\]} \item{Amongst the non-operator legs of
$\WhatF\ab^{(i,j)}$. For example: \[
\raisebox{-4ex}{\scalebox{0.21}{\includegraphics{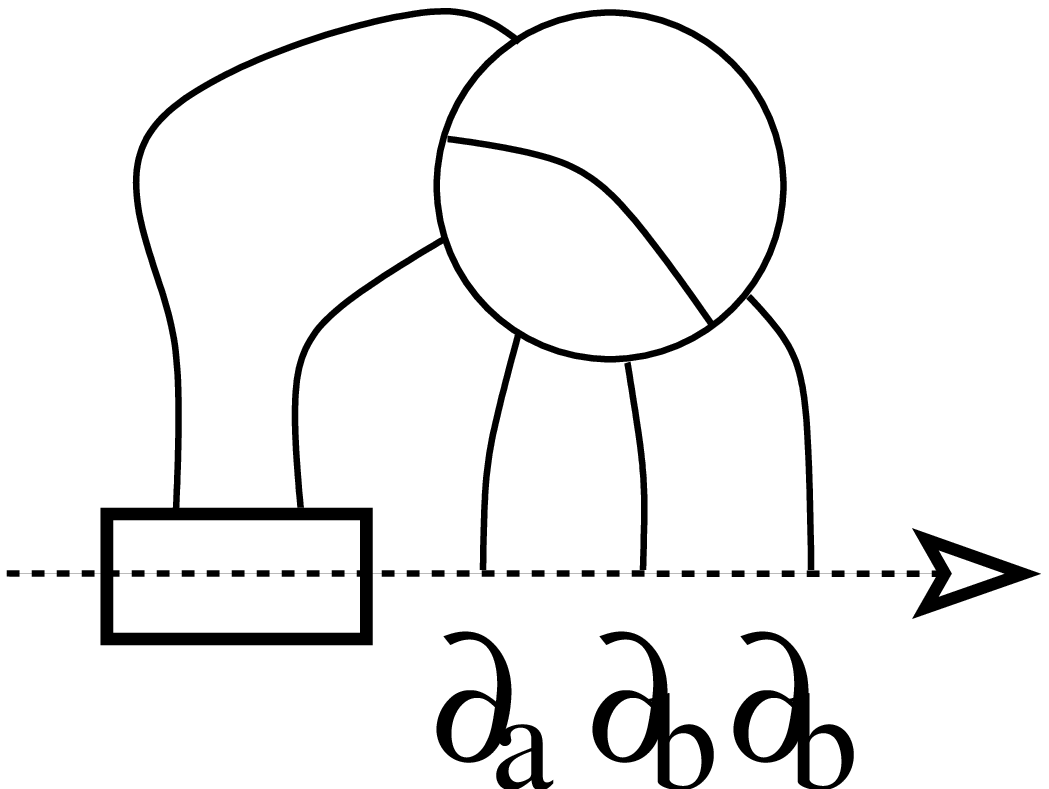}}}\ \apply \
\raisebox{-4ex}{\scalebox{0.21}{\includegraphics{comprelncheckB}}}\
=\ 0 .
\]}
\item{Amongst the operator legs of $\WhatF\ab^{(i,j)}$. For
example:
\[
\raisebox{-4ex}{\scalebox{0.21}{\includegraphics{comprelncheckA}}}\
\apply \ \raisebox{-4ex}{\scalebox{0.21}{\includegraphics{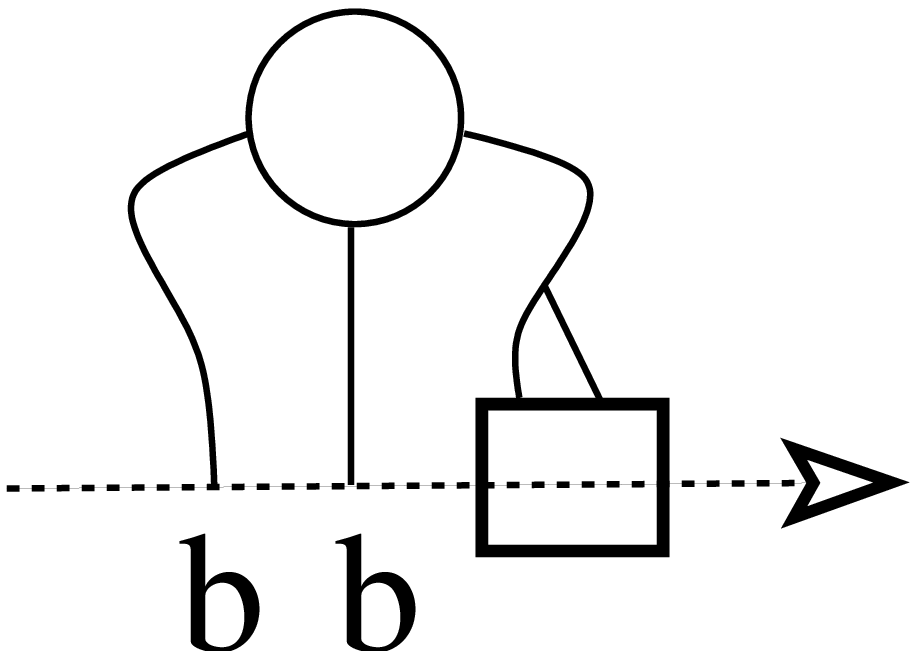}}}\
=\ 0 .
\]
} \item{Amongst the other legs of $\WhatF\ab^{(k,l)}$. For
example:
\[
\raisebox{-4ex}{\scalebox{0.21}{\includegraphics{comprelncheckA}}}\
\apply \
\raisebox{-4ex}{\scalebox{0.21}{\includegraphics{comprelncheckBP}}}\
=\ 0 .
\]
 }
 \end{enumerate}
Classes 2 and 3 are obviously sent to zero because they play no
part in the calculation. We will restrict ourselves to checking
Class 4 as these relations will play a role in the subsequent
calculation. Class 1 is also straightforward. To show why such relations are respected we'll consider a
specific example. We must show that \[
\raisebox{-4ex}{\scalebox{0.25}{\includegraphics{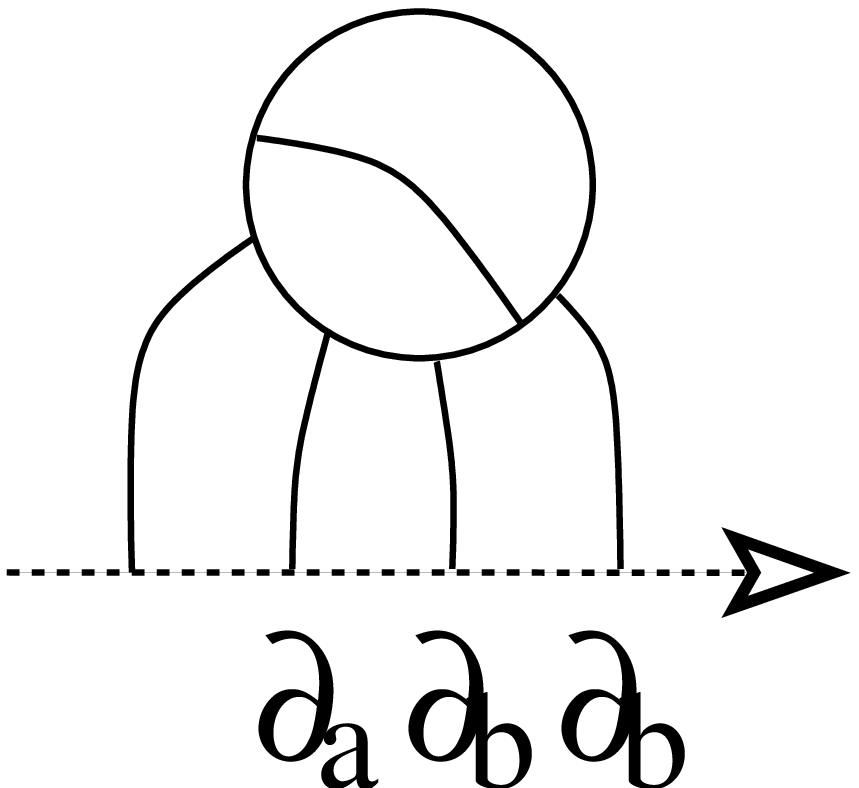}}}\
\apply \
\raisebox{-4ex}{\scalebox{0.25}{\includegraphics{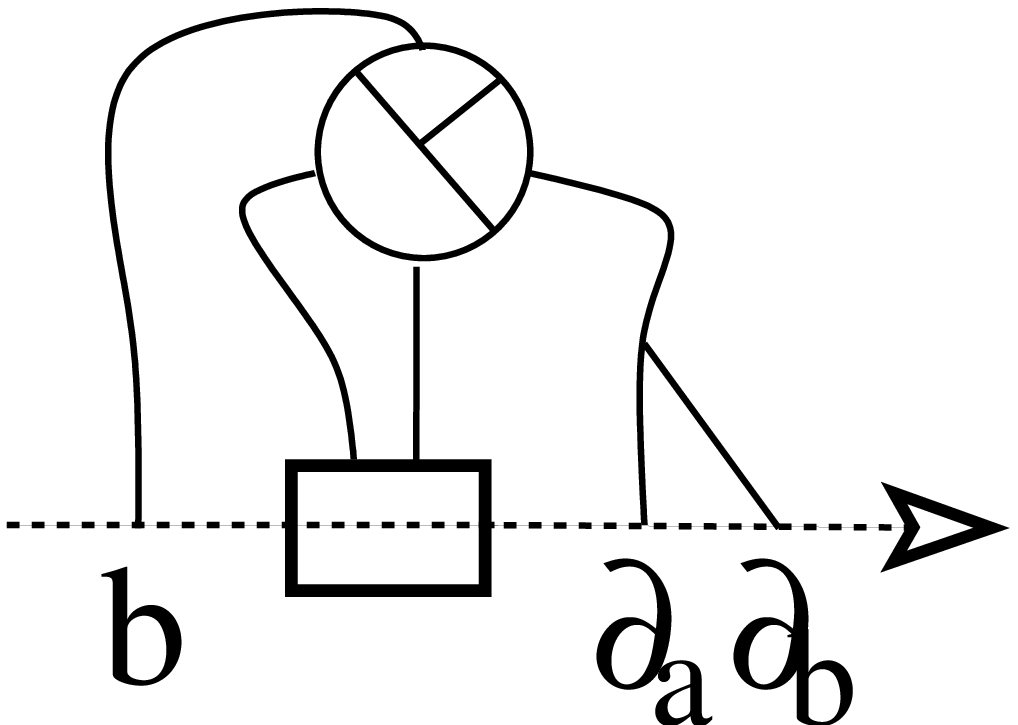}}}\ =\
0,
\]
where the box can represent any one of ten possible relations. It
might be one of the usual relations for
$\widehat{\mathcal{W}}_{\mathrm{F}}$:
\[
\begin{array}{ccccccc}
\raisebox{-2.2ex}{\scalebox{0.22}{\includegraphics{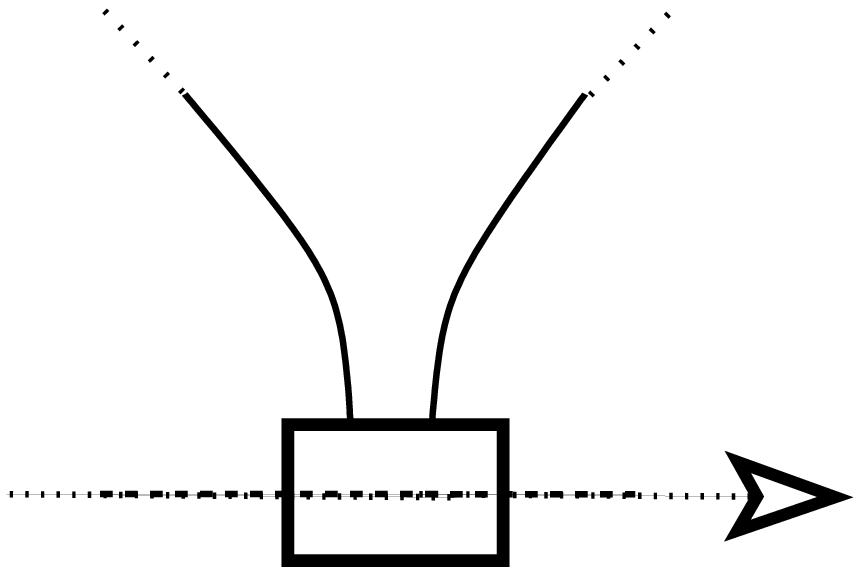}}} & = &
\raisebox{-2ex}{\scalebox{0.22}{\includegraphics{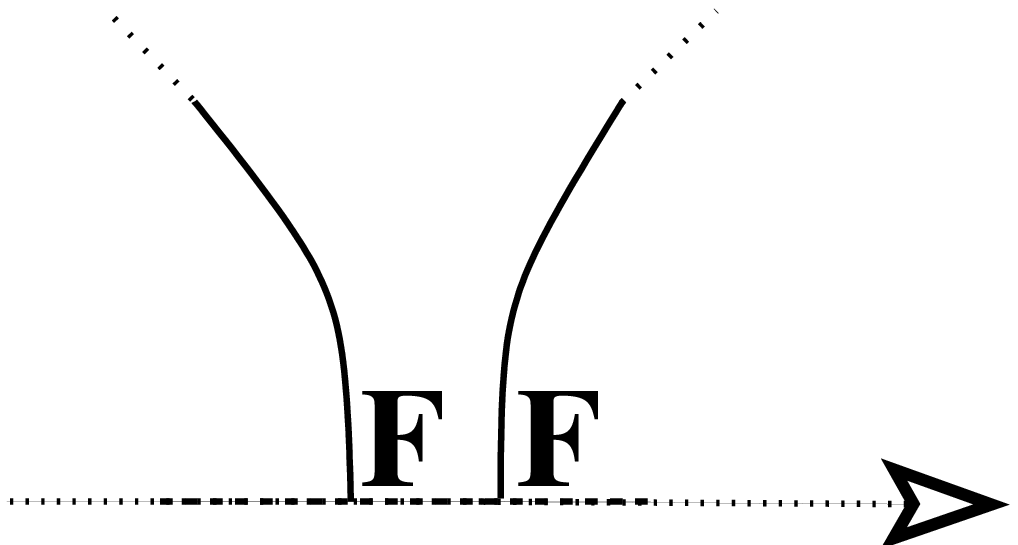}}} & - &
\raisebox{-2ex}{\scalebox{0.22}{\includegraphics{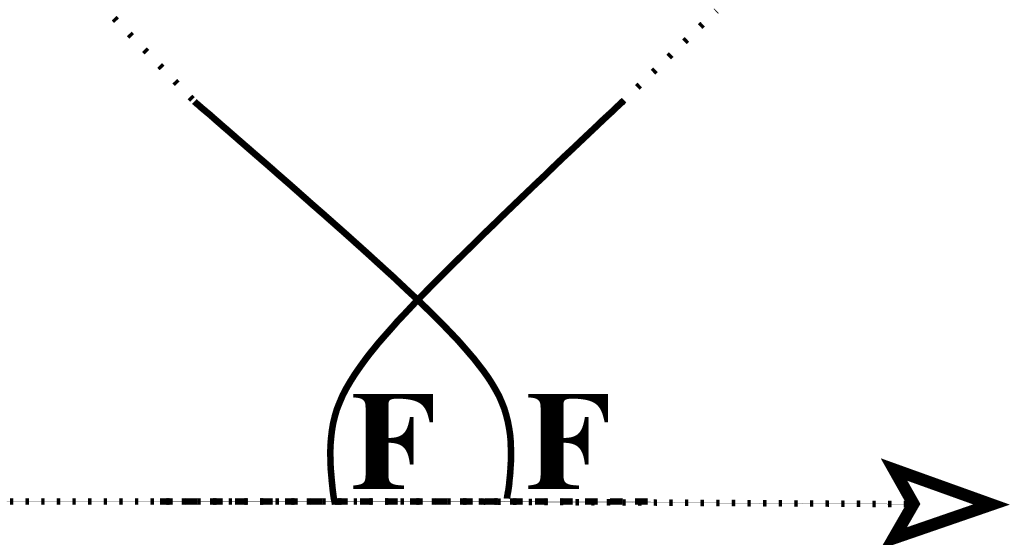}}} & - &
\raisebox{-2ex}{\scalebox{0.22}{\includegraphics{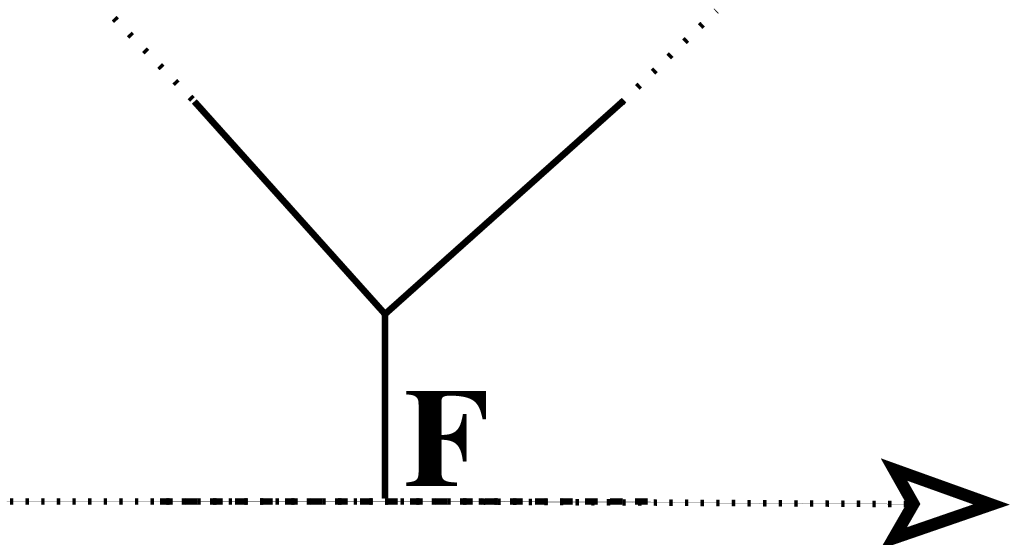}}}\,,
\\[0.5cm]
\raisebox{-2.2ex}{\scalebox{0.22}{\includegraphics{inbox}}} & = &
\raisebox{-2ex}{\scalebox{0.22}{\includegraphics{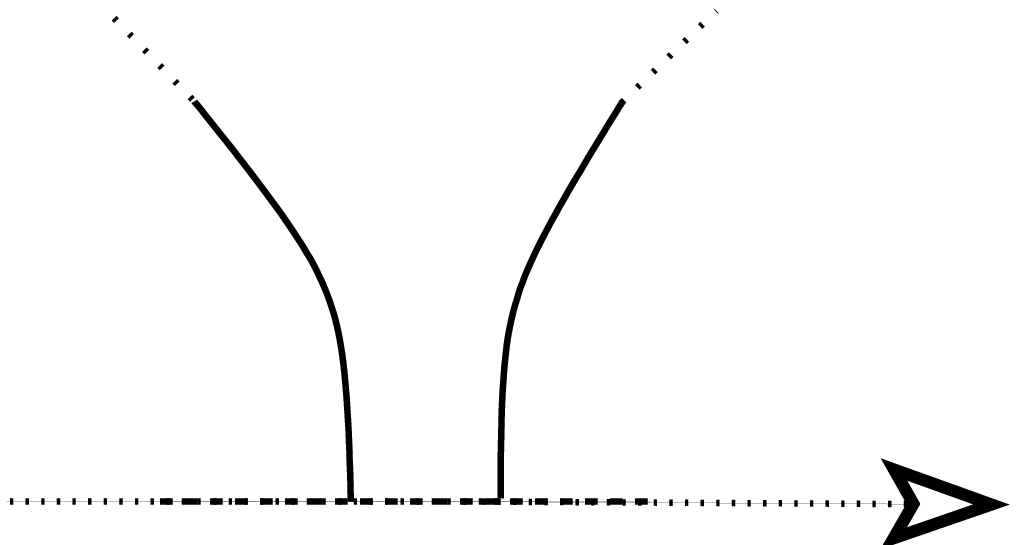}}} & + &
\raisebox{-2ex}{\scalebox{0.22}{\includegraphics{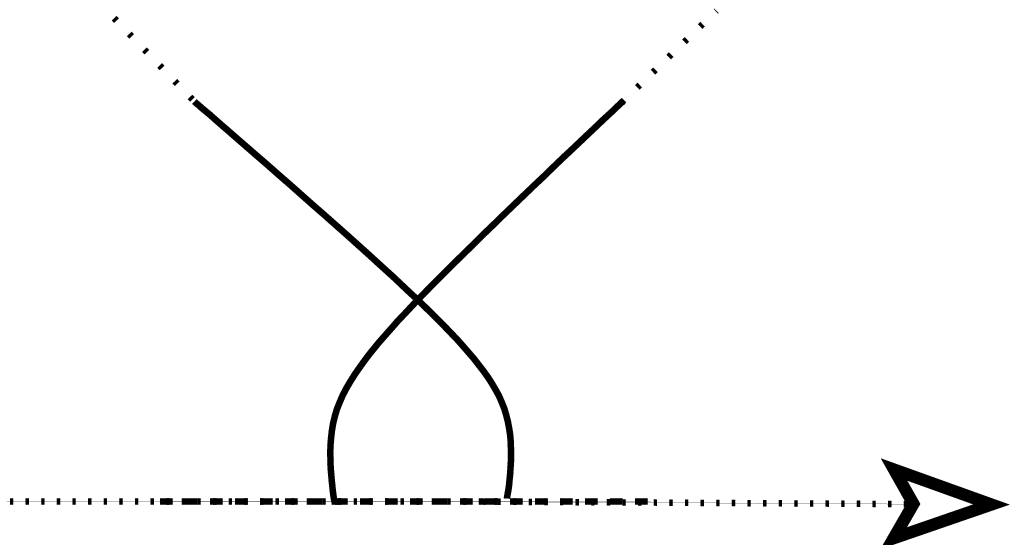}}} & - &
\raisebox{-2ex}{\scalebox{0.22}{\includegraphics{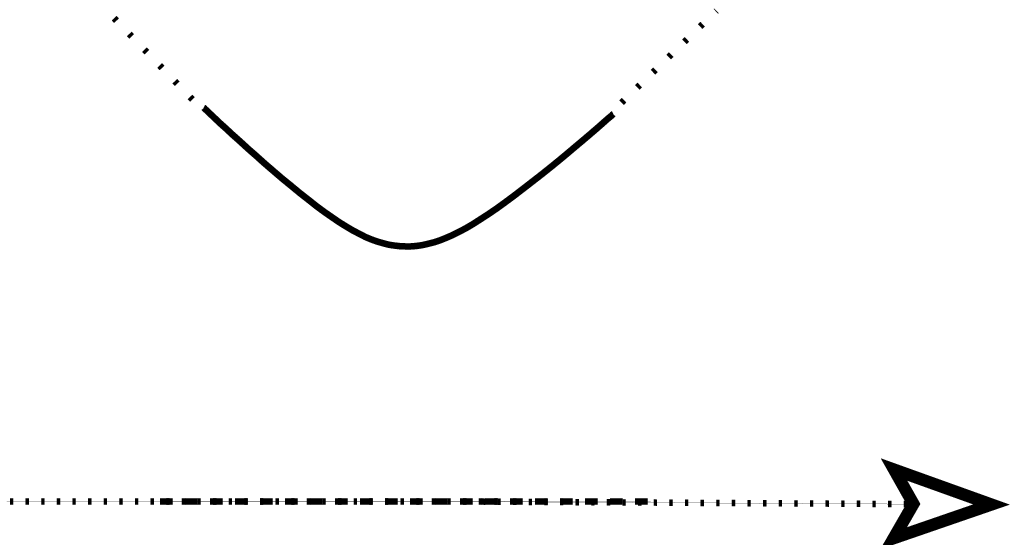}}}\,,
\\[0.5cm] \raisebox{-2.2ex}{\scalebox{0.22}{\includegraphics{inbox}}} & =
& \raisebox{-2ex}{\scalebox{0.22}{\includegraphics{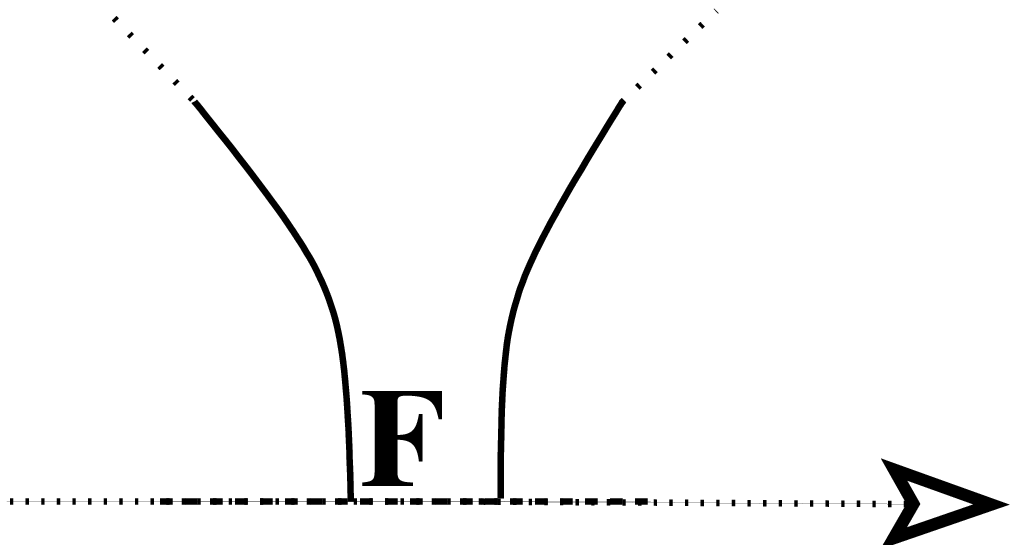}}} & - &
\raisebox{-2ex}{\scalebox{0.22}{\includegraphics{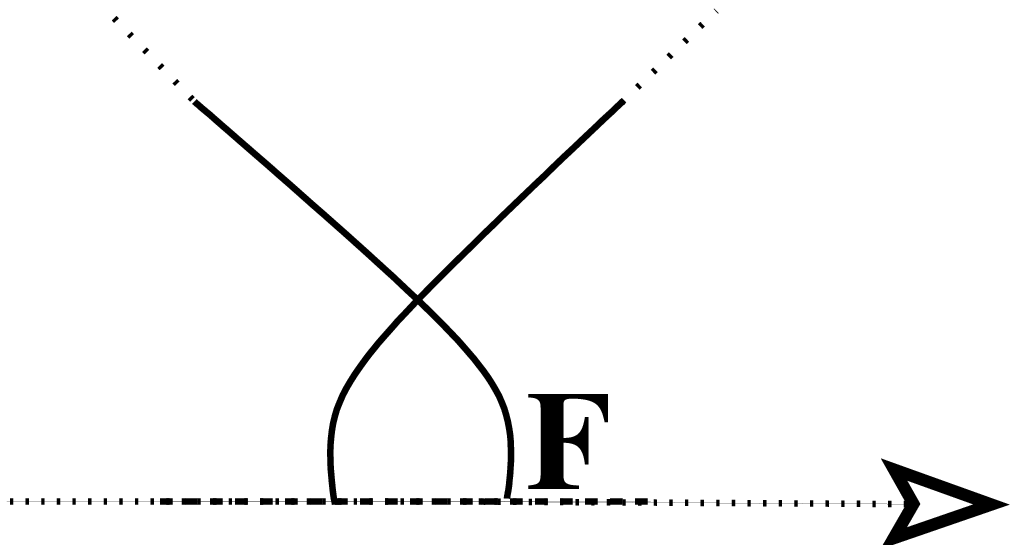}}}\ .
\end{array}
\]
On the other hand it might be one of the relations involving one
of the introduced parameters:
\[
\raisebox{-3.1ex}{\scalebox{0.22}{\includegraphics{inbox}}}\ =\
\raisebox{-4ex}{\scalebox{0.22}{\includegraphics{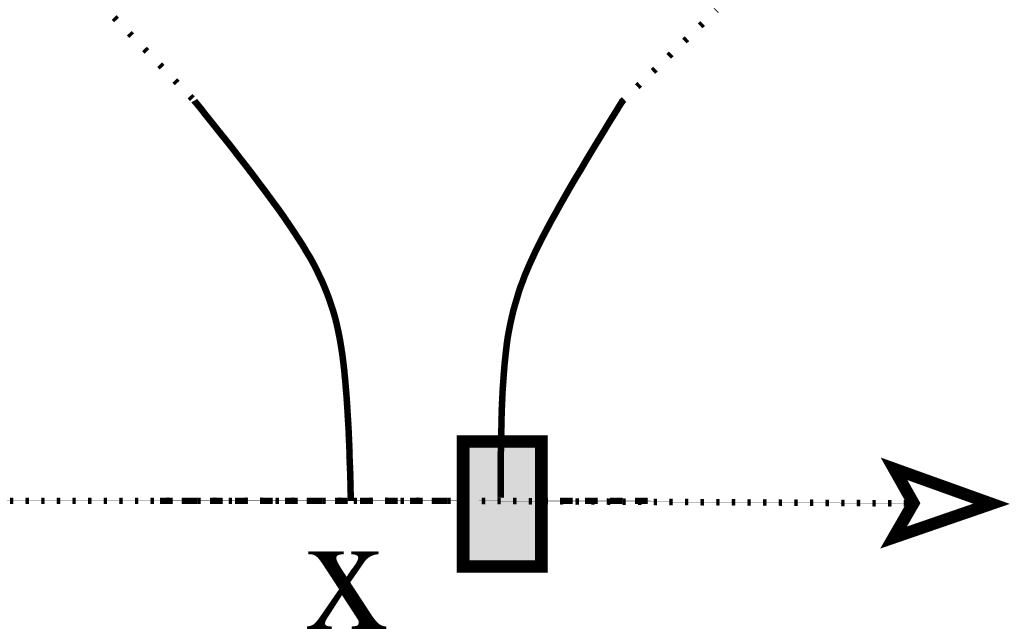}}}\ -\
(-1)^{|x|\mu}\
\raisebox{-4ex}{\scalebox{0.22}{\includegraphics{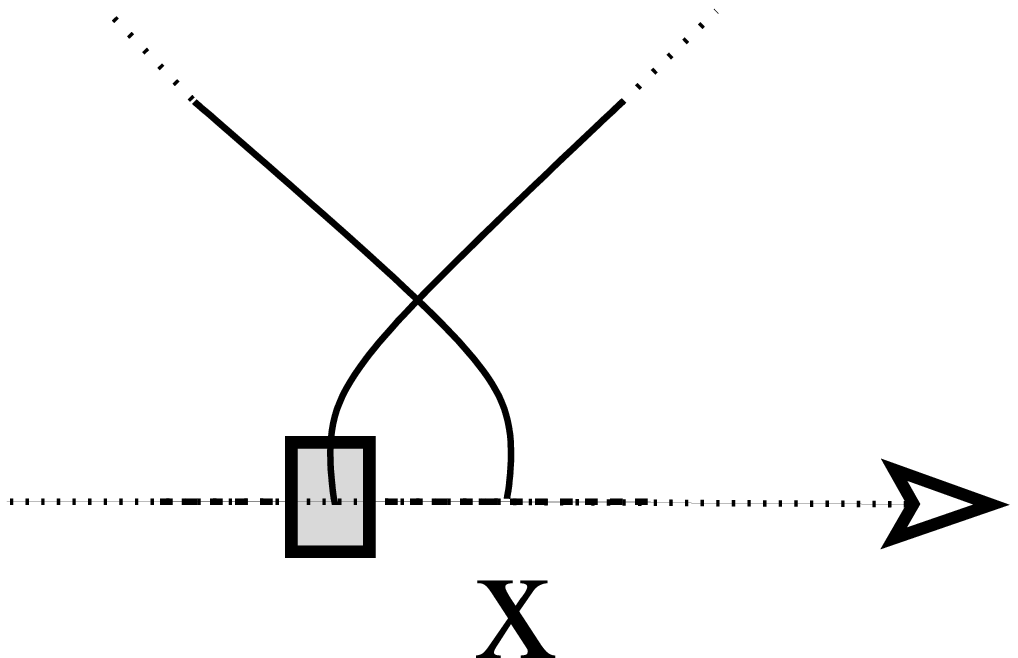}}}\ \ \ ,
\]
where $x$ can be either $a$ or $b$, where $|x|$ is the grade of
that leg, and  where there can be any type of leg in the shaded
box with $\mu$ representing its grade. To proceed, note that we
can perform the calculation on all the diagrams involved in the
relation at the same time. We begin:
\[
\begin{split}
\raisebox{-6ex}{\scalebox{0.25}{\includegraphics{instanceA}}} &
\apply
\raisebox{-6ex}{\scalebox{0.25}{\includegraphics{instanceB}}}
 \leadsto  \raisebox{-7ex}{\scalebox{0.25}{\includegraphics{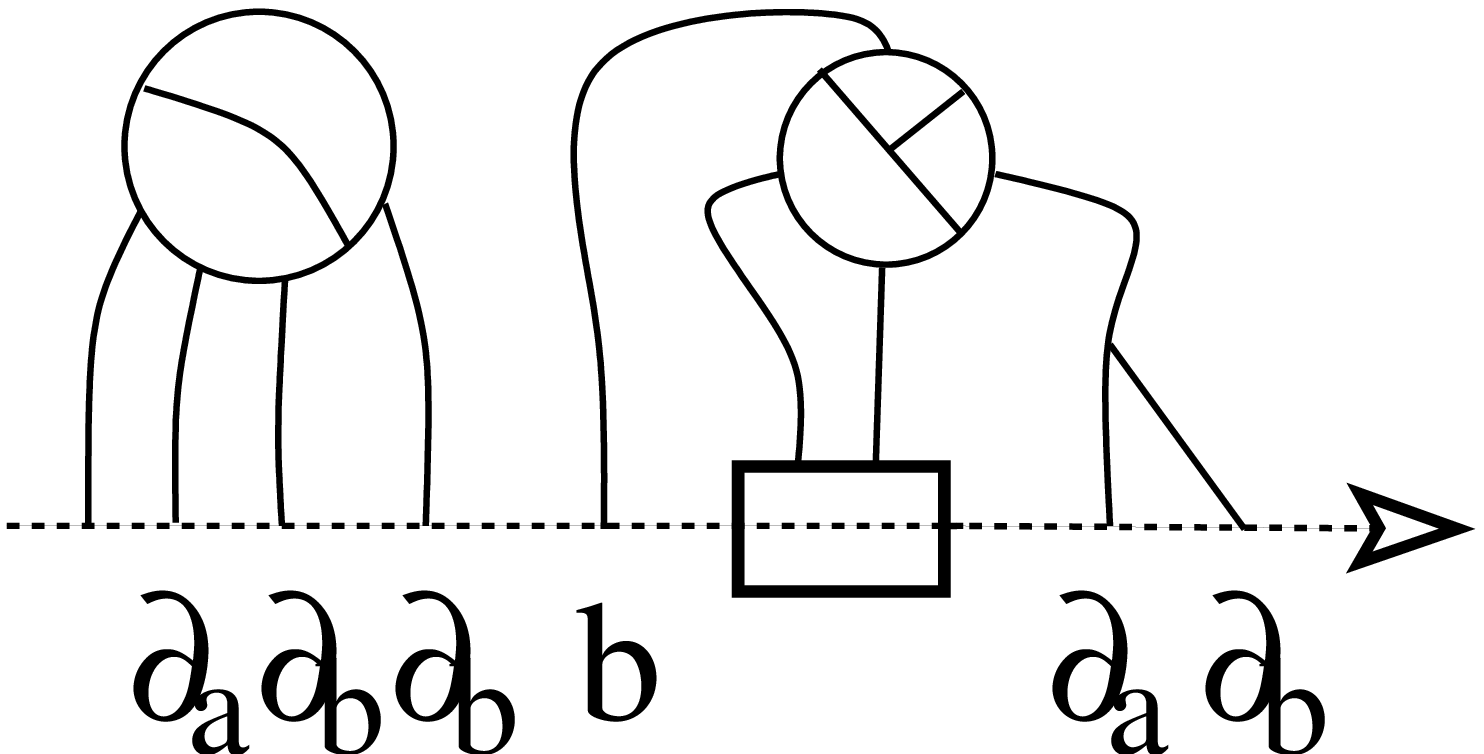}}}\\
{}& \leadsto
\raisebox{-6ex}{\scalebox{0.25}{\includegraphics{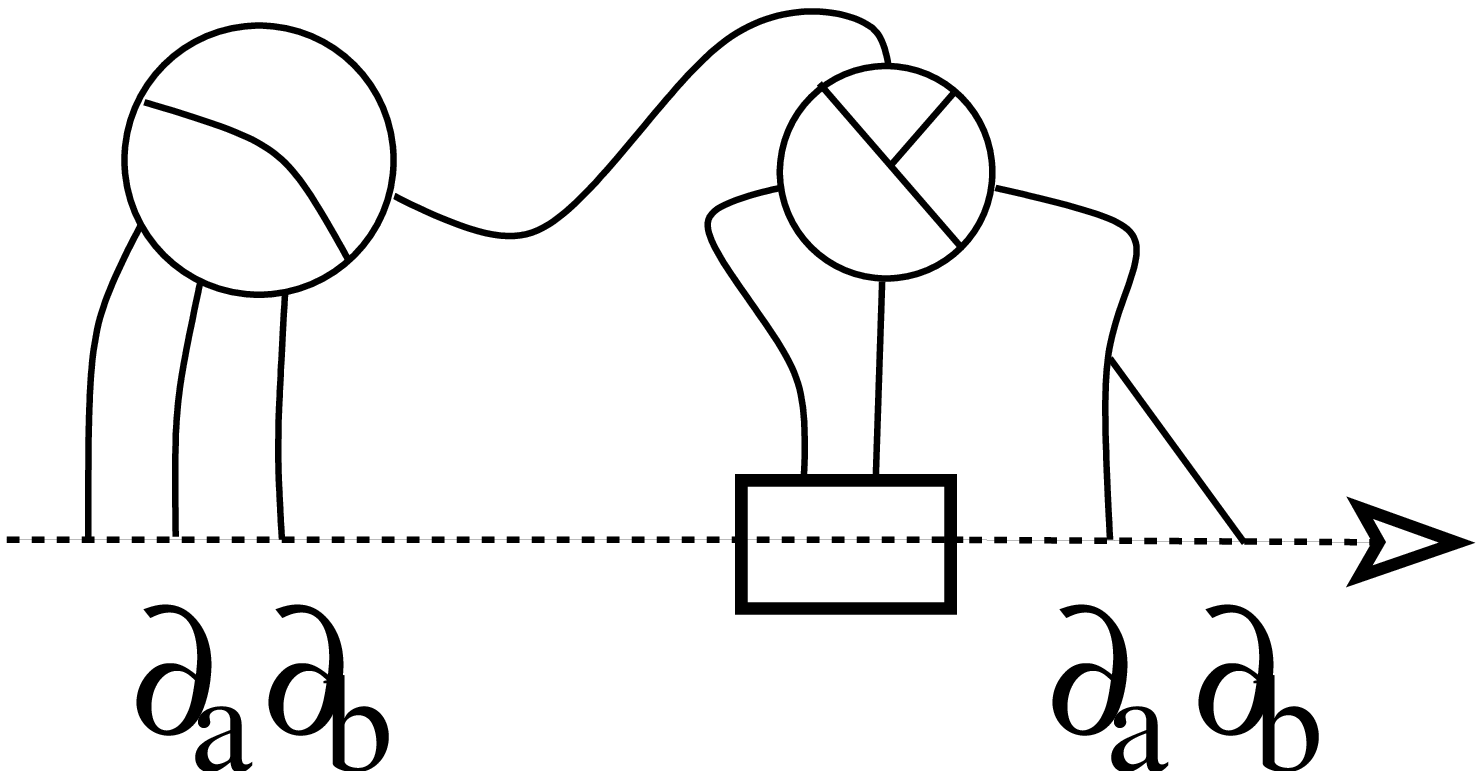}}}\ -\
\raisebox{-6ex}{\scalebox{0.25}{\includegraphics{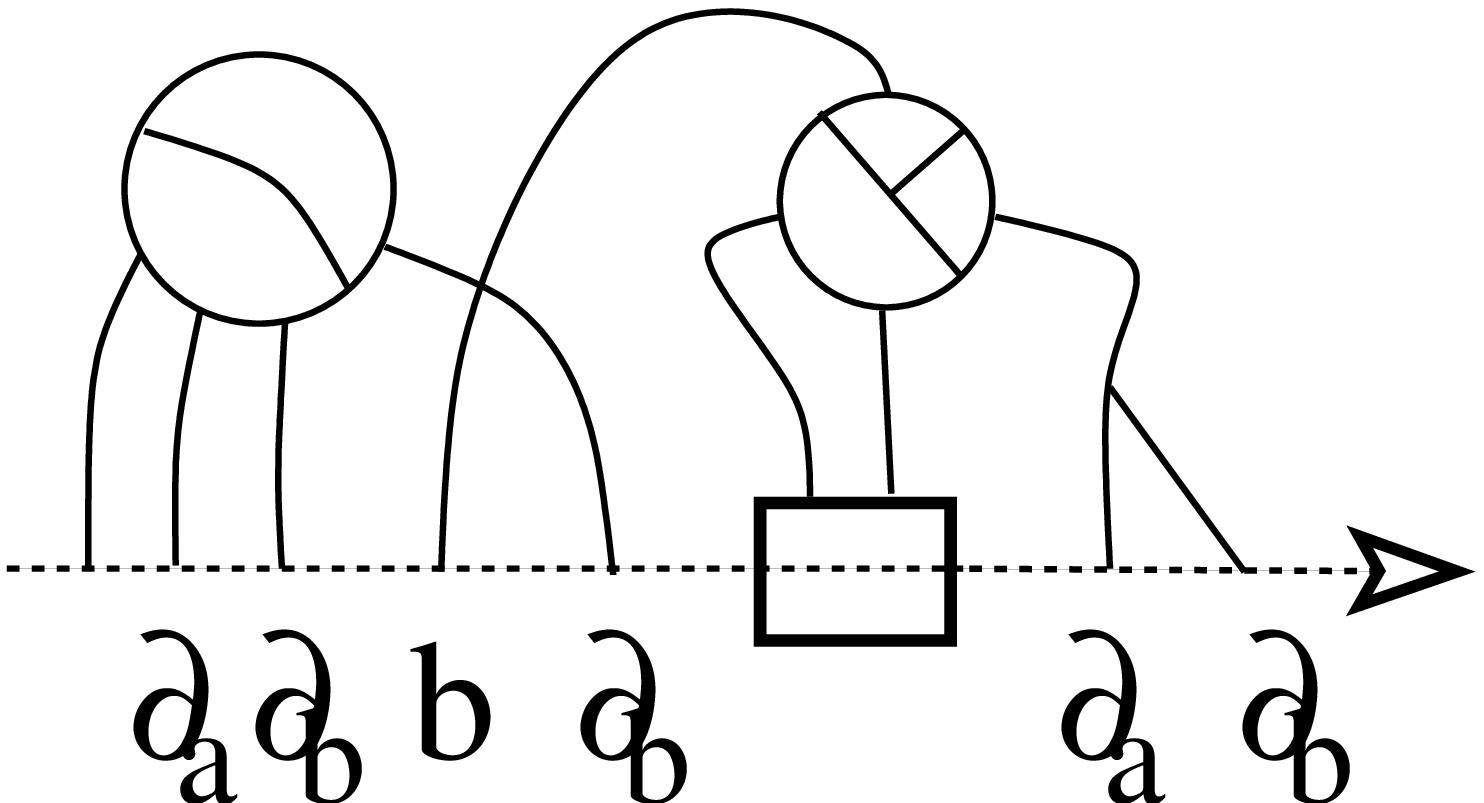}}}
\end{split}
\]
Now we employ the following formulas, which the reader can check
are true for each of the ten possible replacements for the box
(where $\mu$ is the total grade of the legs in the box):
\[
\begin{array}{cccc}
\raisebox{-4ex}{\scalebox{0.25}{\includegraphics{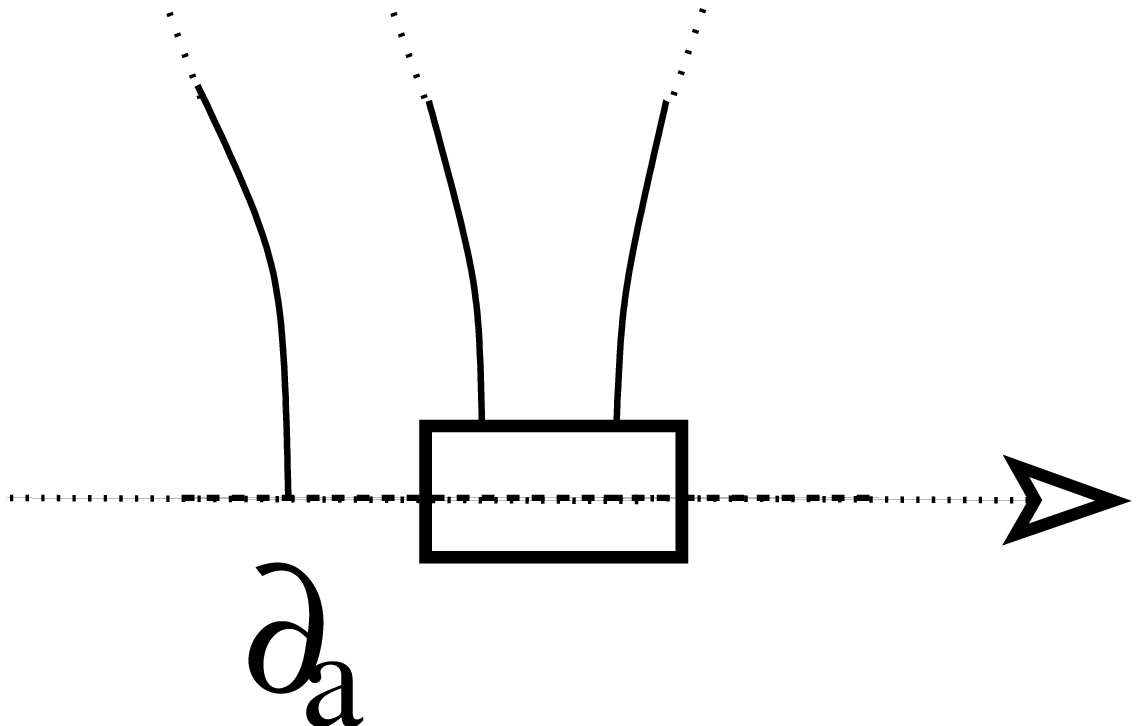}}}
& \leadsto &  &
\raisebox{-4ex}{\scalebox{0.25}{\includegraphics{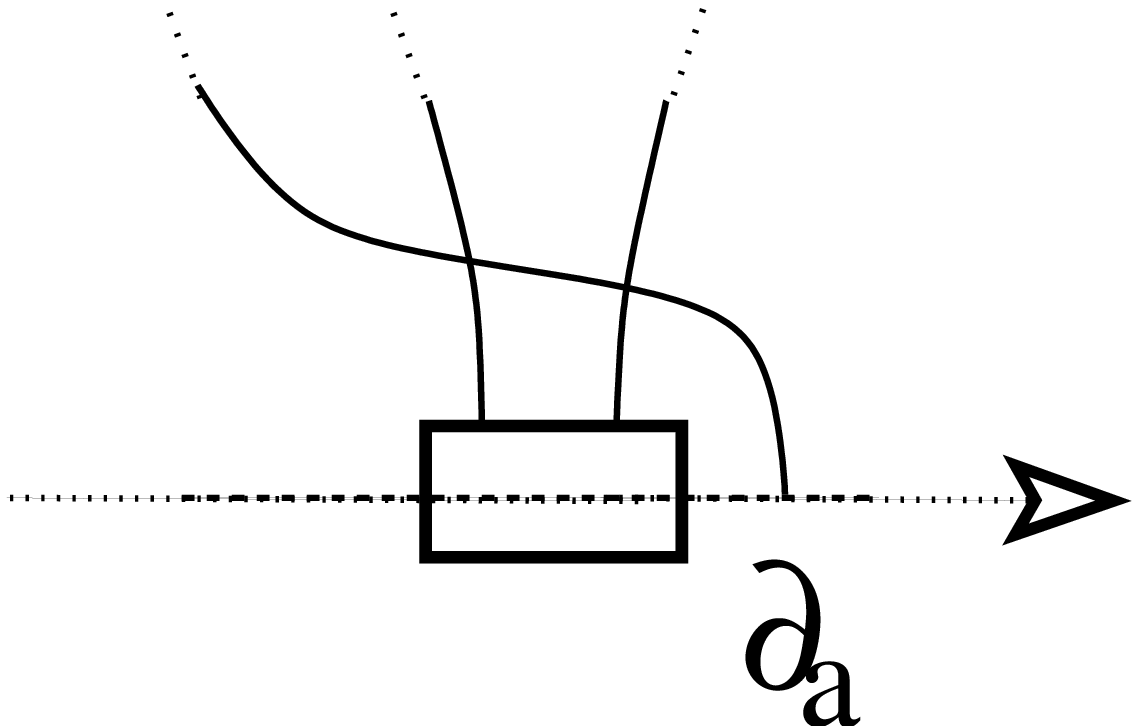}}}\
\ ,
\\
\raisebox{-4ex}{\scalebox{0.25}{\includegraphics{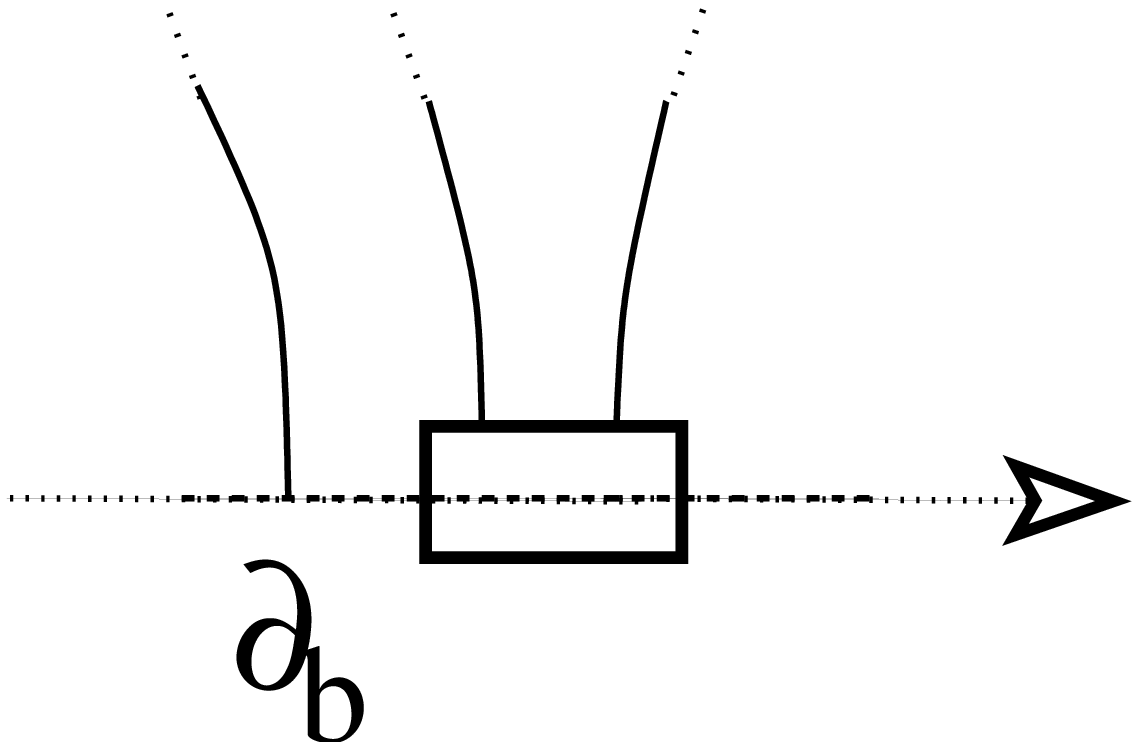}}}
& \leadsto & (-1)^\mu &
\raisebox{-4ex}{\scalebox{0.25}{\includegraphics{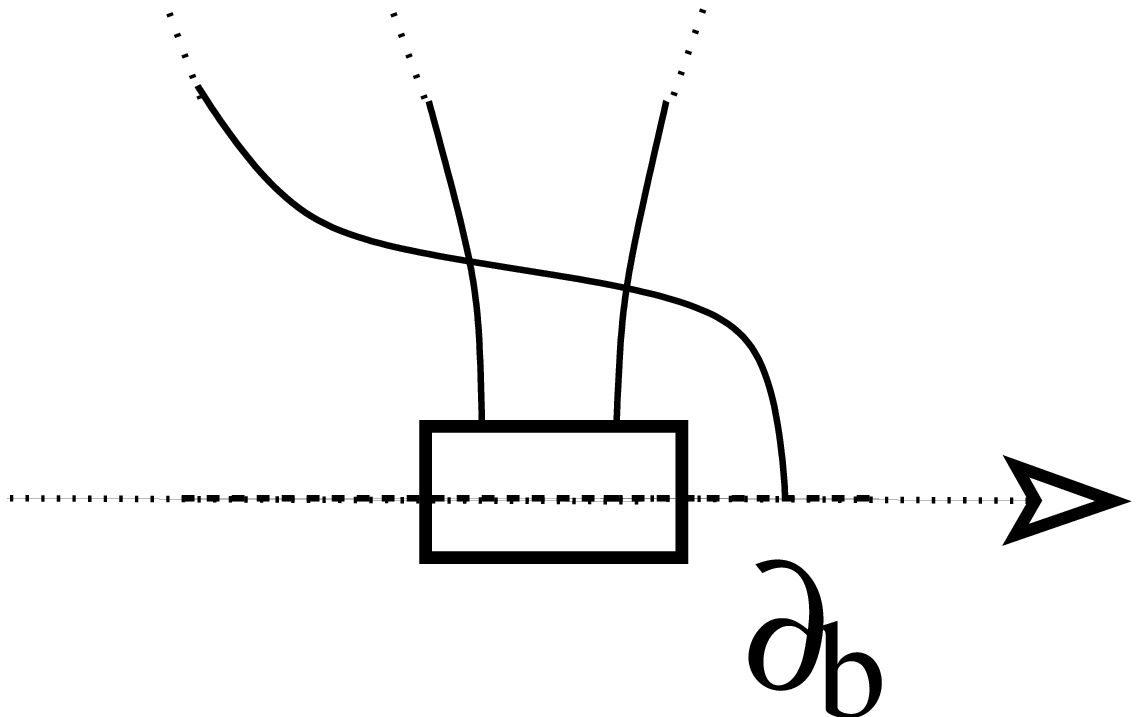}}}\
\ \ .
\end{array}
\]
Continuing with the example from earlier we obtain:
\[
\begin{array}{l}
(-1)^\mu
\raisebox{-6ex}{\scalebox{0.25}{\includegraphics{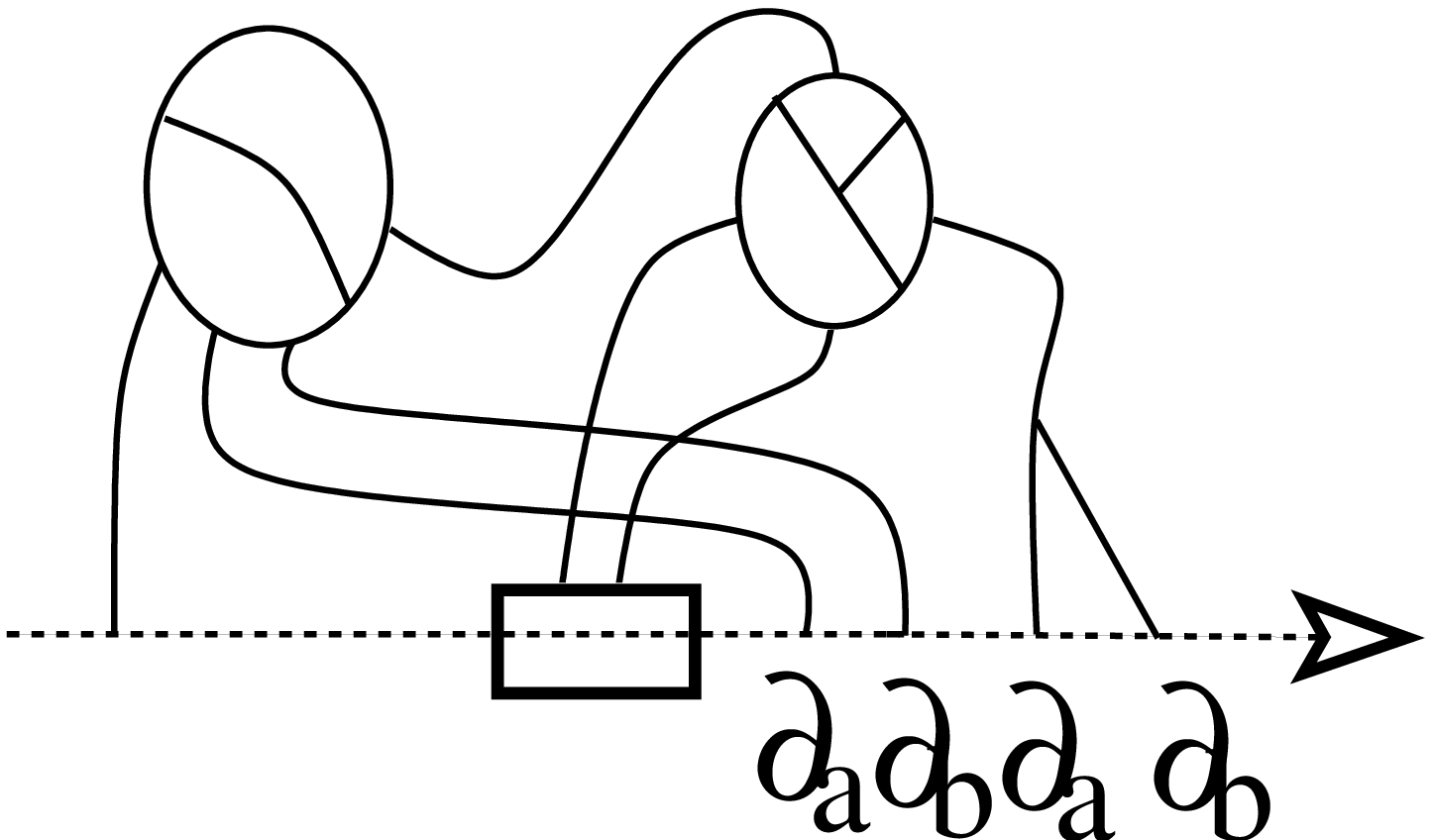}}}\
\ -\ (-1)^\mu
\raisebox{-6ex}{\scalebox{0.25}{\includegraphics{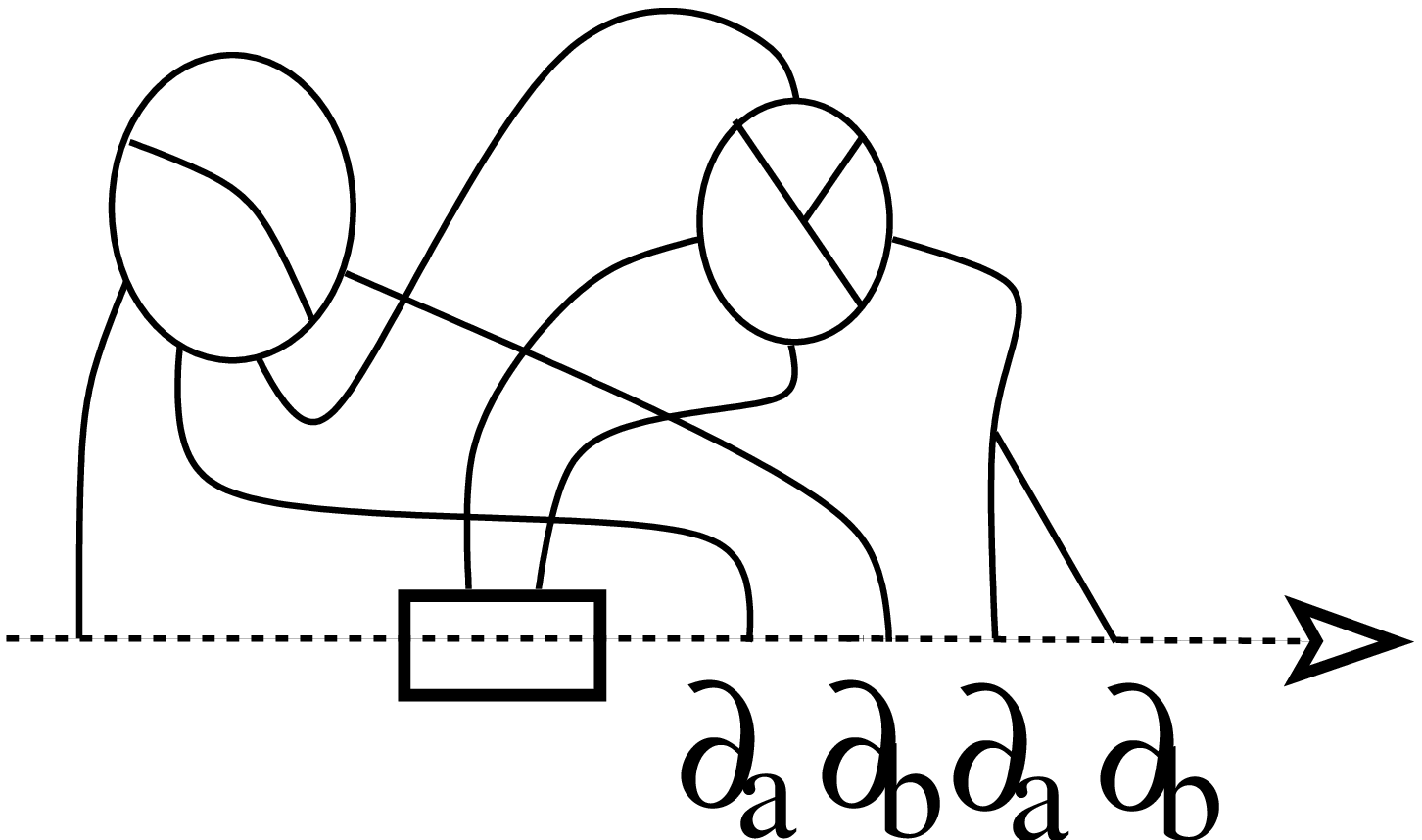}}}
\\[1.1cm]  \ \ \ \ \ \ \ \ \ \  \ +\
\raisebox{-6ex}{\scalebox{0.25}{\includegraphics{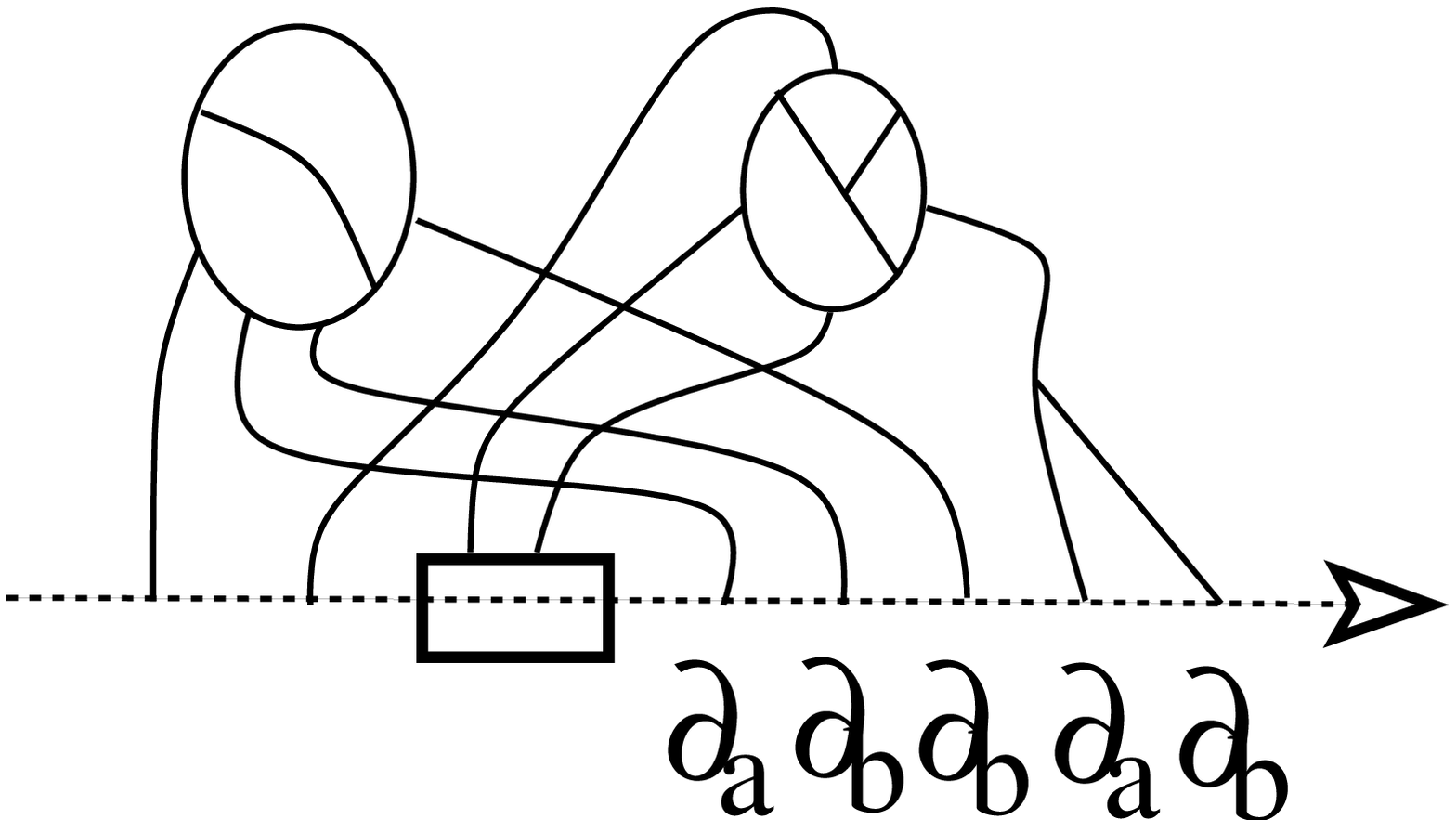}}}\
\ .
\end{array}
\]
This is clearly zero in $\WhatF\abpow$ (because it is a
combination of relations).
\end{proof}
\subsubsection{The extension to power series.}
 Recall that we are working with
``formal power series" of Weil operator diagrams, which are a
choice, for every pair $(i,j)$ of non-negative integers, of a
vector from $\WhatF[a,b]^{(i,j)}$:
\[
\WhatF\abpow = \prod_{(i,j)\in\mathbb{N}_0\times\mathbb{N}_0}
\WhatF\ab^{(i,j)}.
\]
In this section we will extend the operation product $\apply$ to
these power series in the obvious way: to multiply two power
series, do it term-by-term, then add up the results. Because there
is the possibility of infinite sums coming out of this, we must be
careful making statements in generality about this product.

For a power series $v$ write
\[
v = \sum_{(i,j)\in\mathbb{N}_0\times\mathbb{N}_0} v^{(i,j)},
\]
where $v^{(i,j)} = \pi^{(i,j)}(v)$, the type $(i,j)$ piece of $v$.
\begin{defn}
Let $v$ and $w$ be power series from $\WhatF\abpow$. If for every
pair $(i,j)\in\mathbb{N}_0\times\mathbb{N}_0$ it is true that
\[
\pi^{(i,j)}\left( v^{(k,l)}\apply w^{(m,n)} \right) = 0
\]
for all but finitely many pairs $\left((k,l),(m,n)\right)$, then
we say that the product $v\apply w$ is {\bf convergent}, in which
case it is defined to be
\[
\sum_{(k,l),(m,n)\in \mathbb{N}_0\times\mathbb{N}_0}
\left(v^{(k,l)}\apply w^{(m,n)}\right)\ \in\WhatF\abpow.
\]
\end{defn}
It is not difficult to construct non-convergent products. Here is
one:
\[
\exp_{\#}\left(
\raisebox{-0.85cm}{\scalebox{0.24}{\includegraphics{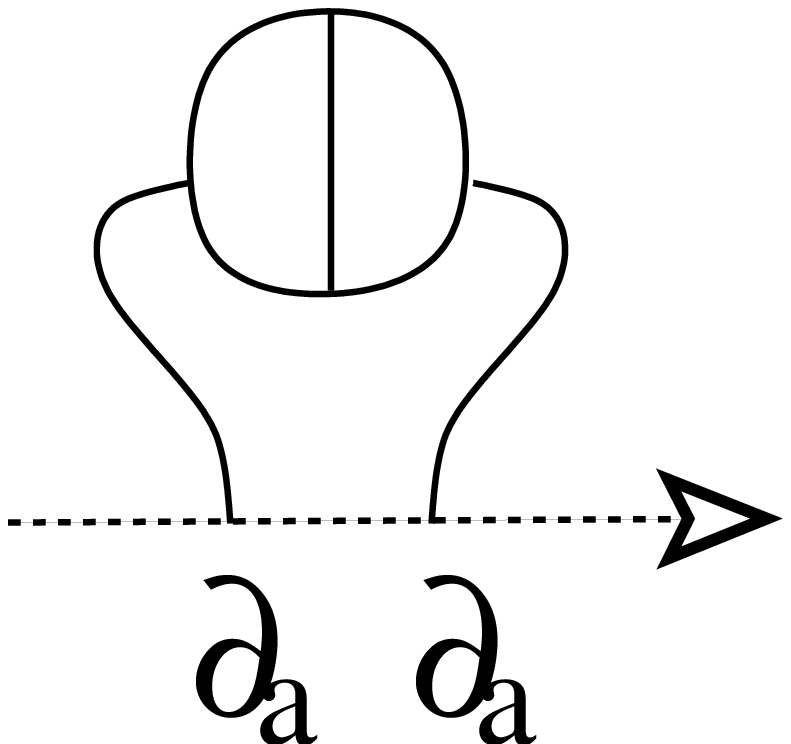}}}
\right) \apply \exp_{\#}\left(
\raisebox{-0.75cm}{\scalebox{0.24}{\includegraphics{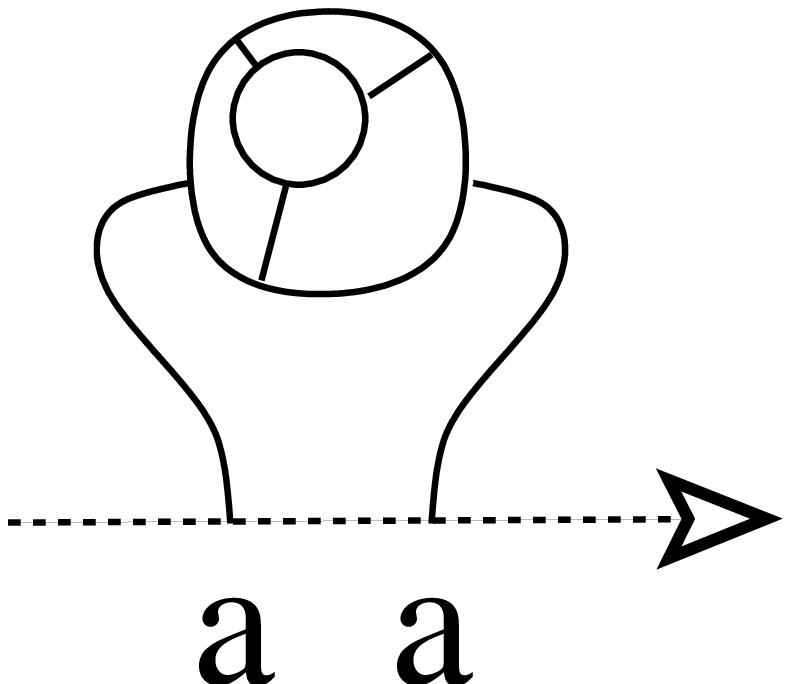}}}
\right).
\]
\subsection{A convenient graphical method for doing diagram
operations.} \label{applygraphical}\label{alternativedefn} In
certain of the computations to come later in this paper, we'll
need to be able to give a direct construction of all the terms
that contribute to a diagram operation, all at once, together with an easy way
to determine the signs of those contributions.

For this purpose we'll now introduce a convenient visual method
for doing a diagram operation. In this method, the ``operation" is
defined to be a sum of one term for every {\it gluing} of the two
diagrams:
\begin{defn}
Consider two operator Weil diagrams $v$ and $w$. Let
$\text{Op}(v)$ denote the set of operator legs of $v$, and let
$\text{Par}(w)$ denote the set of parameter legs of $w$. A {\bf
gluing} of $v$ onto $w$ is an injection of a (possibly empty)
subset of $\text{Op}(v)$ into $\text{Par}(w)$ that respects labels
(so a $\partial_a$-labelled leg of $v$ will only be mapped to an
$a$-labelled leg of $w$, and similarly for the $b$-labels). Let
$\mathcal{G}(v,w)$ denote the set of gluings of $v$ onto $w$.
\end{defn}
Below we will explain how to associate a term \[ t(v,w,\sigma) \]
to a gluing $\sigma\in \mathcal{G}(v,w)$, and then we'll define
the operation of $v$ on $w$ to be:
$
v\apply w = \sum_{\sigma\in \mathcal{G}(v,w)} t(v,w,\sigma).
$
This construction will obviously agree with the first definition
of the operation that we gave in Section \ref{firstopdefn}.

 In the discussion to follow, we'll consider the example of the
diagrams
\[
v =
\raisebox{-8ex}{\scalebox{0.22}{\includegraphics{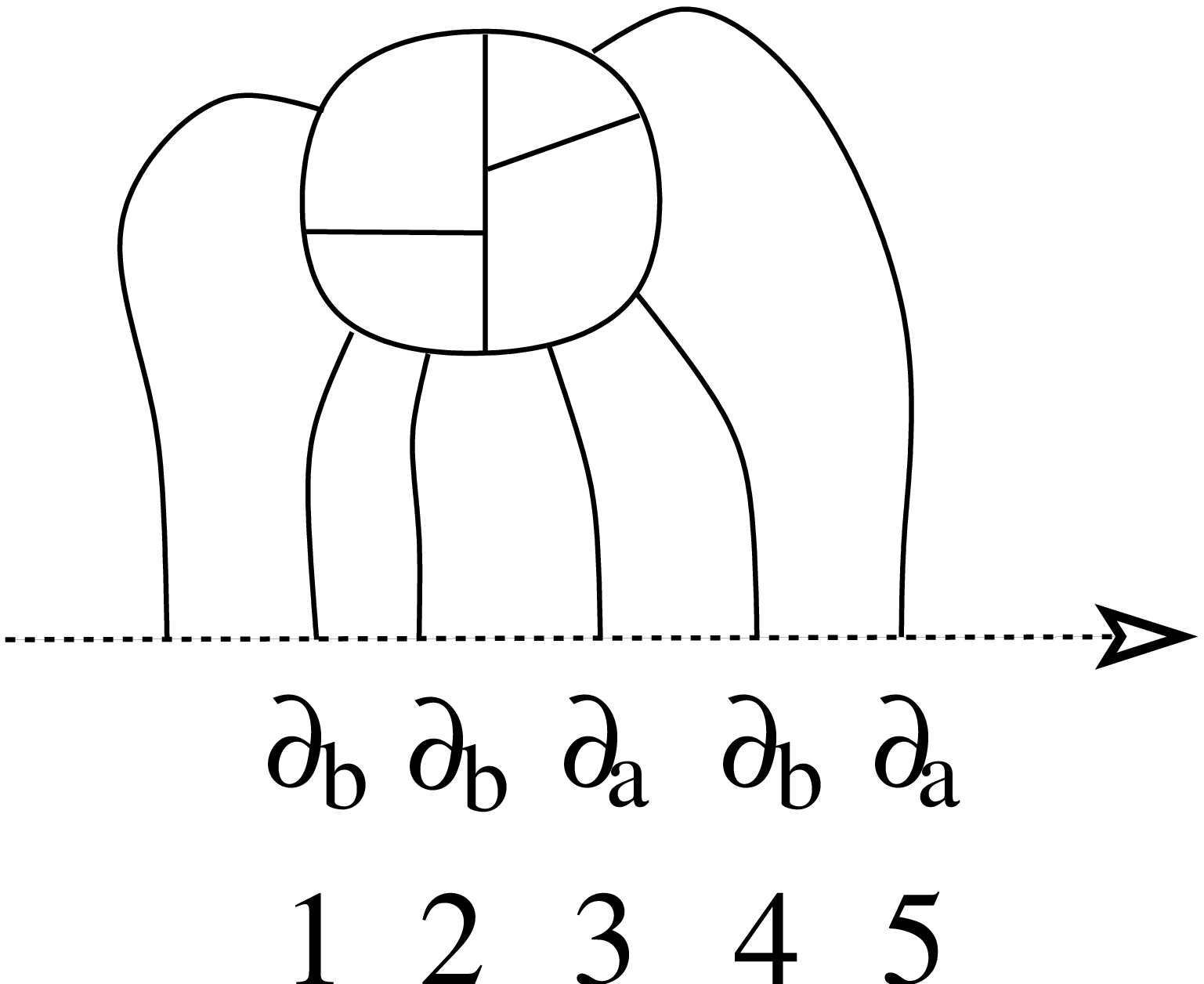}}} \
\ \text{and}\ \ w =
\raisebox{-8ex}{\scalebox{0.22}{\includegraphics{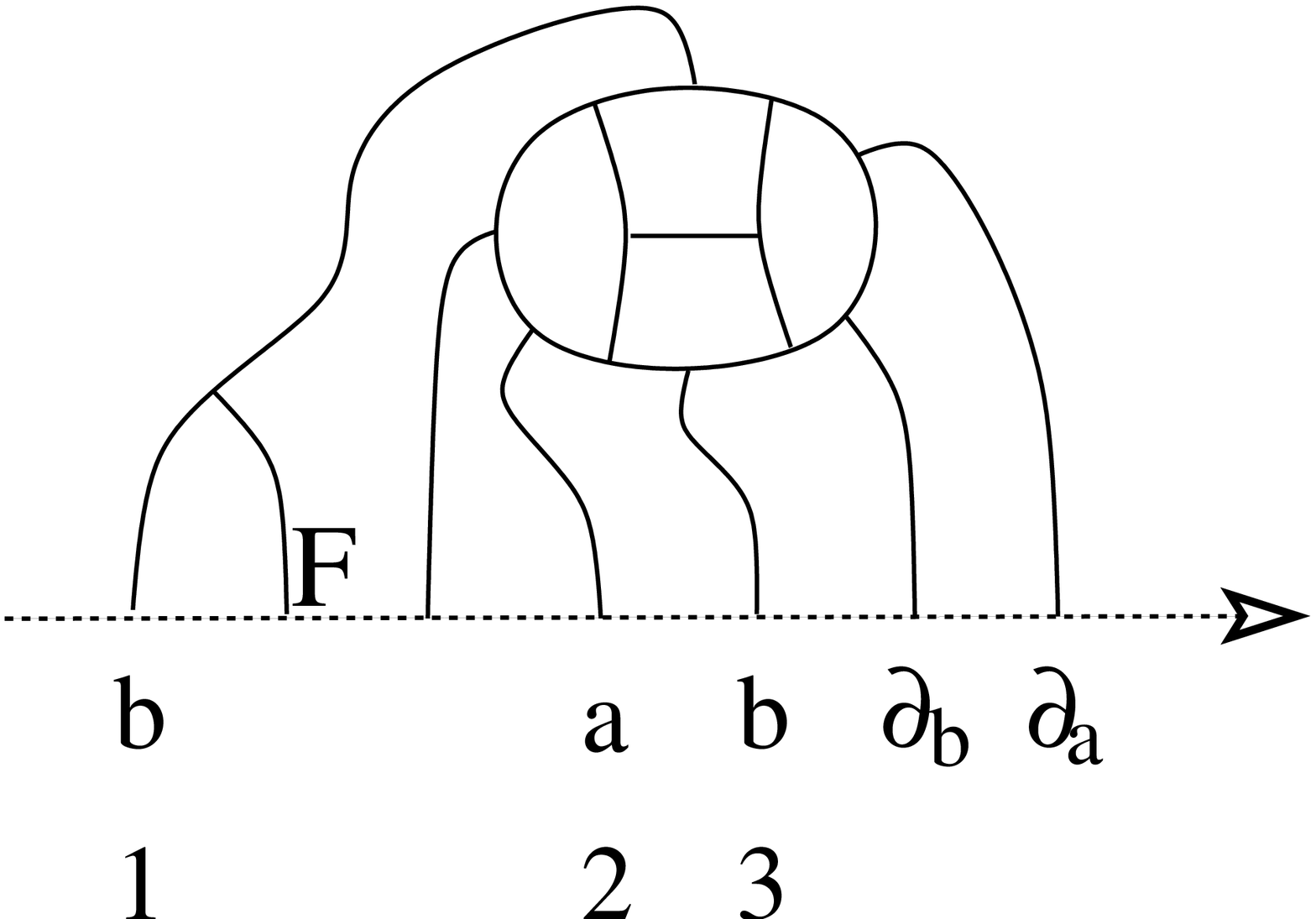}}}\ \
.
\]
For the purposes of the discussion, the operator legs of $v$ have been numbered from left to right, and so
have the parameter legs of $w$, as displayed by the above diagrams.
Below, we'll construct the term $t\left(v,w, \left({2 4 5    \atop
1 3 2}\right)\right)$.

The term $t(v,w,\sigma)$ corresponding to some gluing
$\sigma\in\mathcal{G}(v,w)$ will be constructed by the following
procedure. To begin, place the operator legs of $v$ up the
left-hand side of a grid, and the non-operator legs of $w$ across
the top of the grid, above an orienting line:
\[ \raisebox{-10ex}{\scalebox{0.2}{\includegraphics{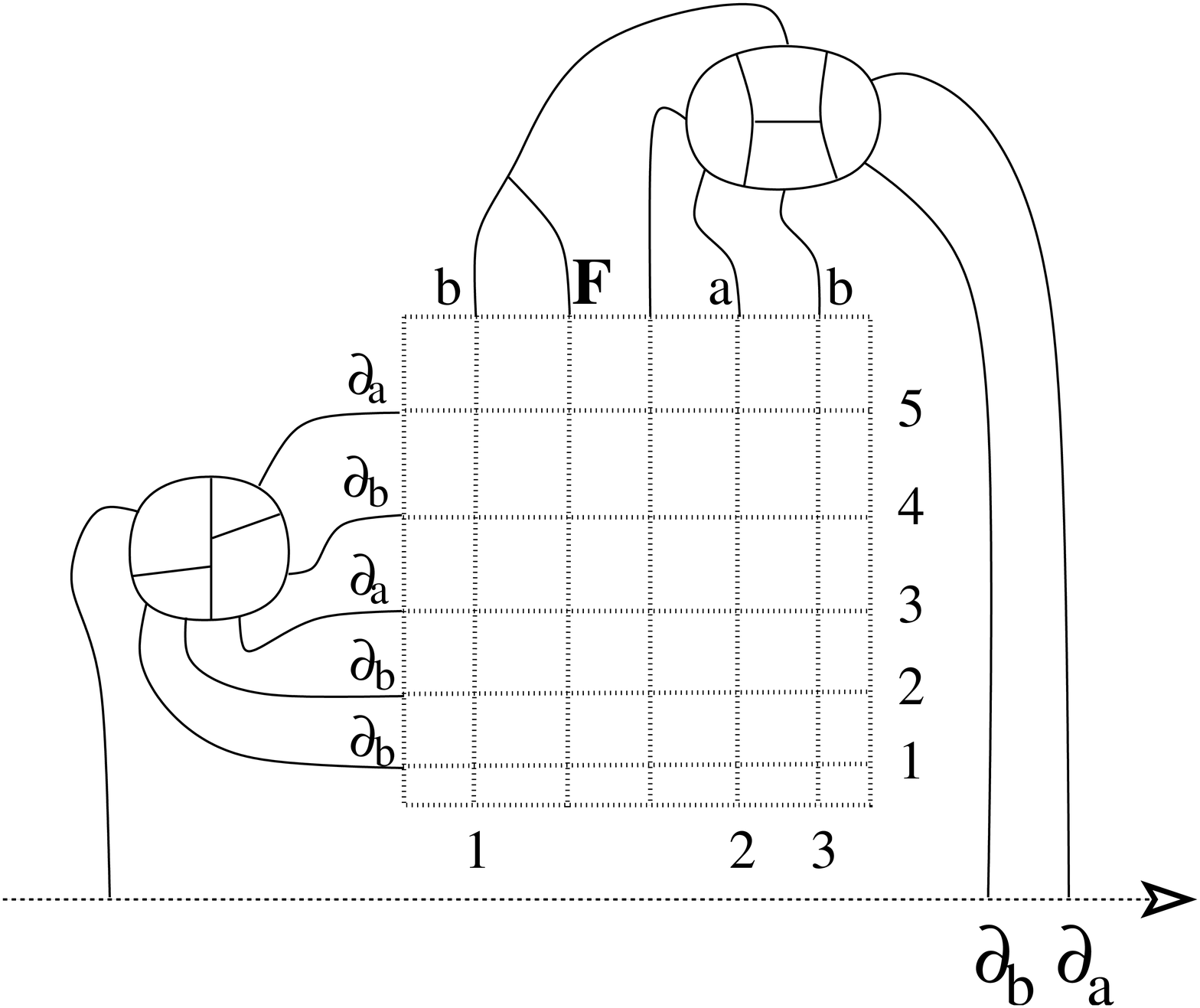}}}\ \ .
\]
The next step is to join up legs along the grid according to the
gluing. Join up grade 1 legs using a full line, and grade 2 legs
using a dashed line (this is so that we will be able to easily
read off the sign of the term at the end of the construction).
Continuing the example:
\[\raisebox{-10ex}{ \scalebox{0.15}{\includegraphics{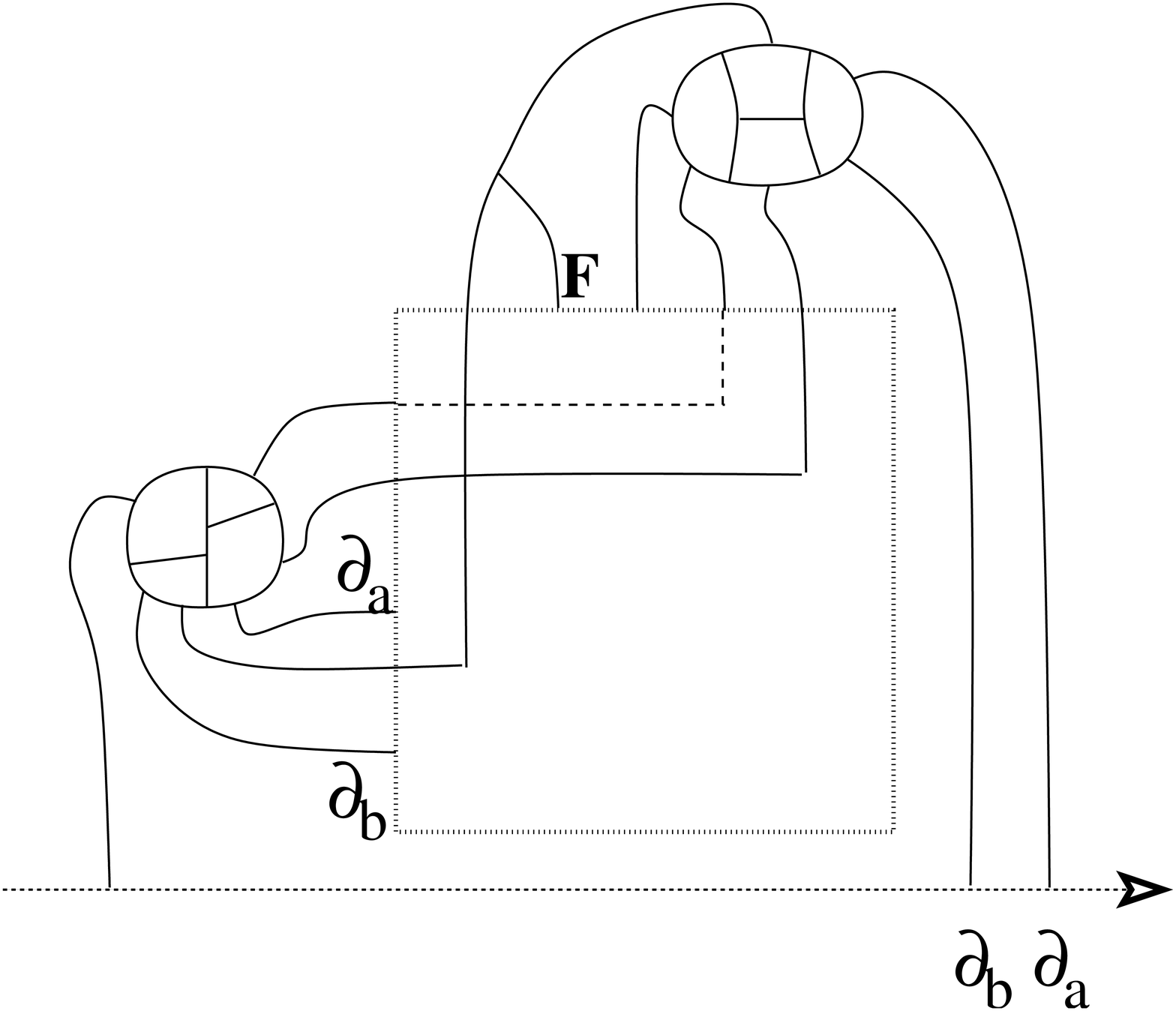}}}\ \ .
\]
Join any remaining legs on the top of the grid to the orienting
line. For grade 1 legs use a full line and for grade 2 legs use a
dashed line: \[
\raisebox{-10ex}{\scalebox{0.15}{\includegraphics{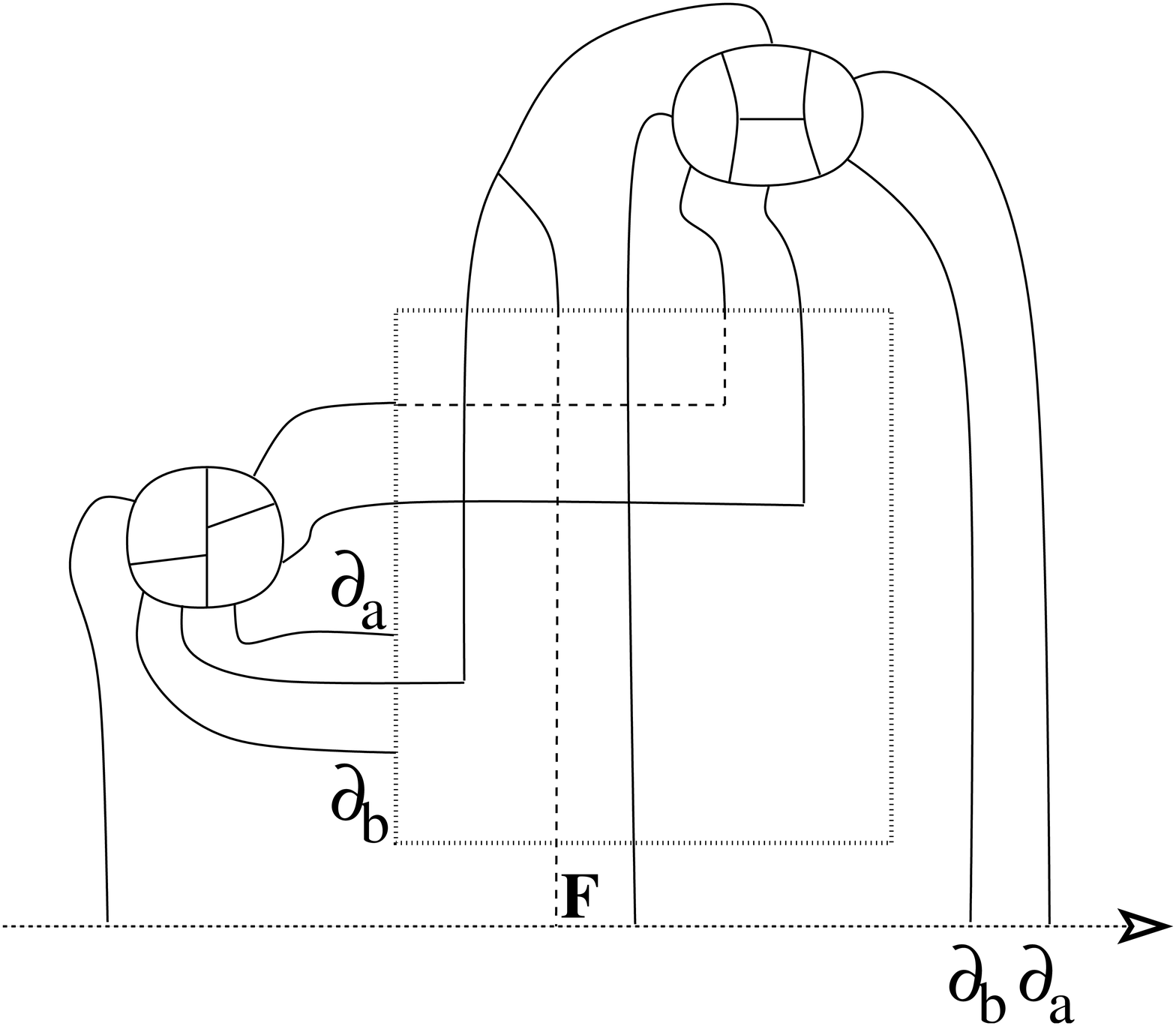}}}\ \ .
\]
Finally, carry any remaining operator legs lying along the
left-hand side of the grid to the far side of the grid, and then
place them on the orienting line using nested right-angles:
\[
\raisebox{-10ex}{\scalebox{0.15}{\includegraphics{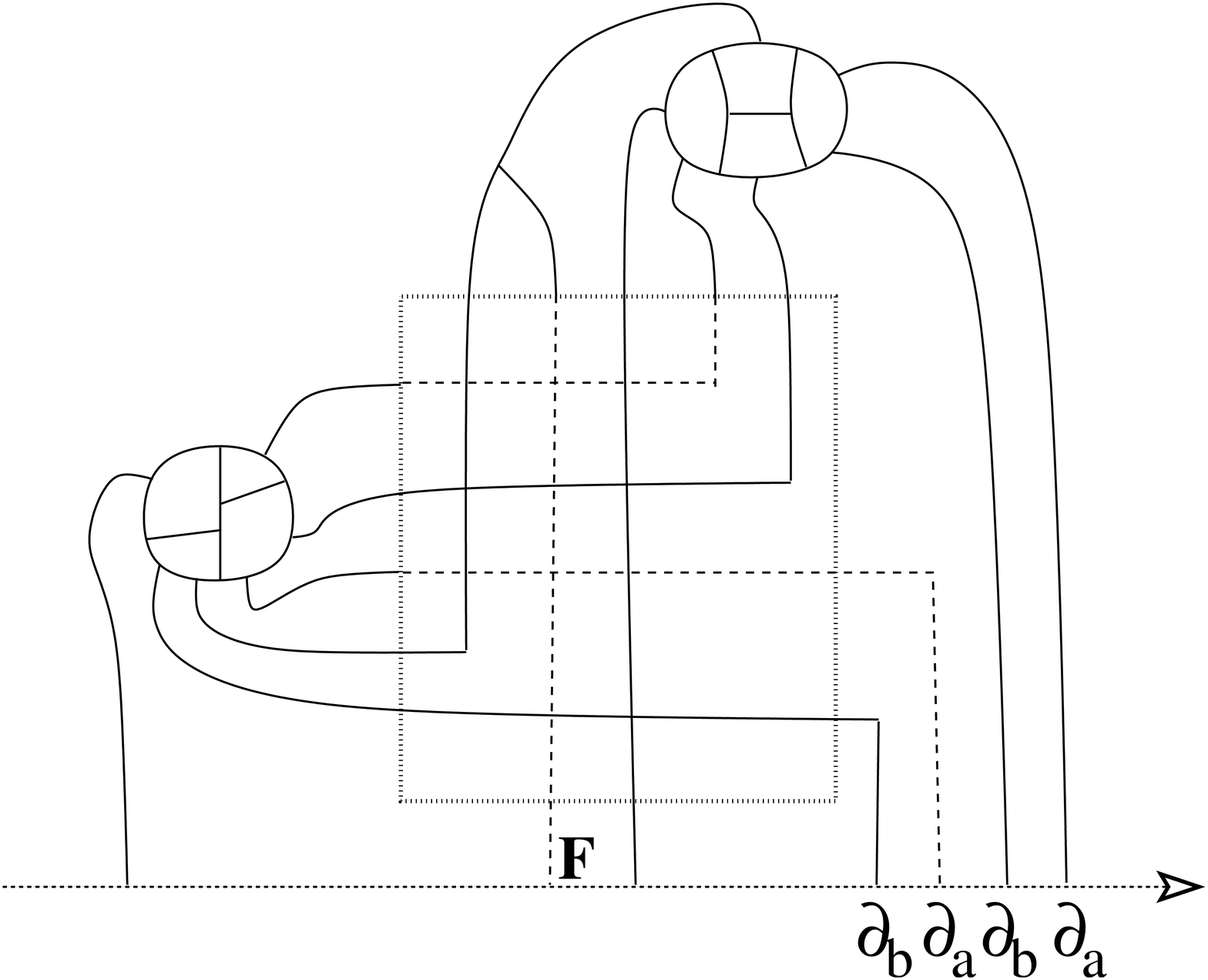}}}\ \ .
\]
Let $x$ denote the number of intersections between full lines
displayed within the box. The term $t(v,w,\sigma)$ is just the
diagram that has been constructed (with the dashed lines now
filled in), with a sign $(-1)^x$ out the front.

In the example at hand $x=3$, so:\vspace{-0.5cm}
\[
t\left(v,w, \left({2 4 5    \atop 1 3 2}\right)\right)\ =\ (-1)^3
\raisebox{-6ex}{\scalebox{0.15}{\includegraphics{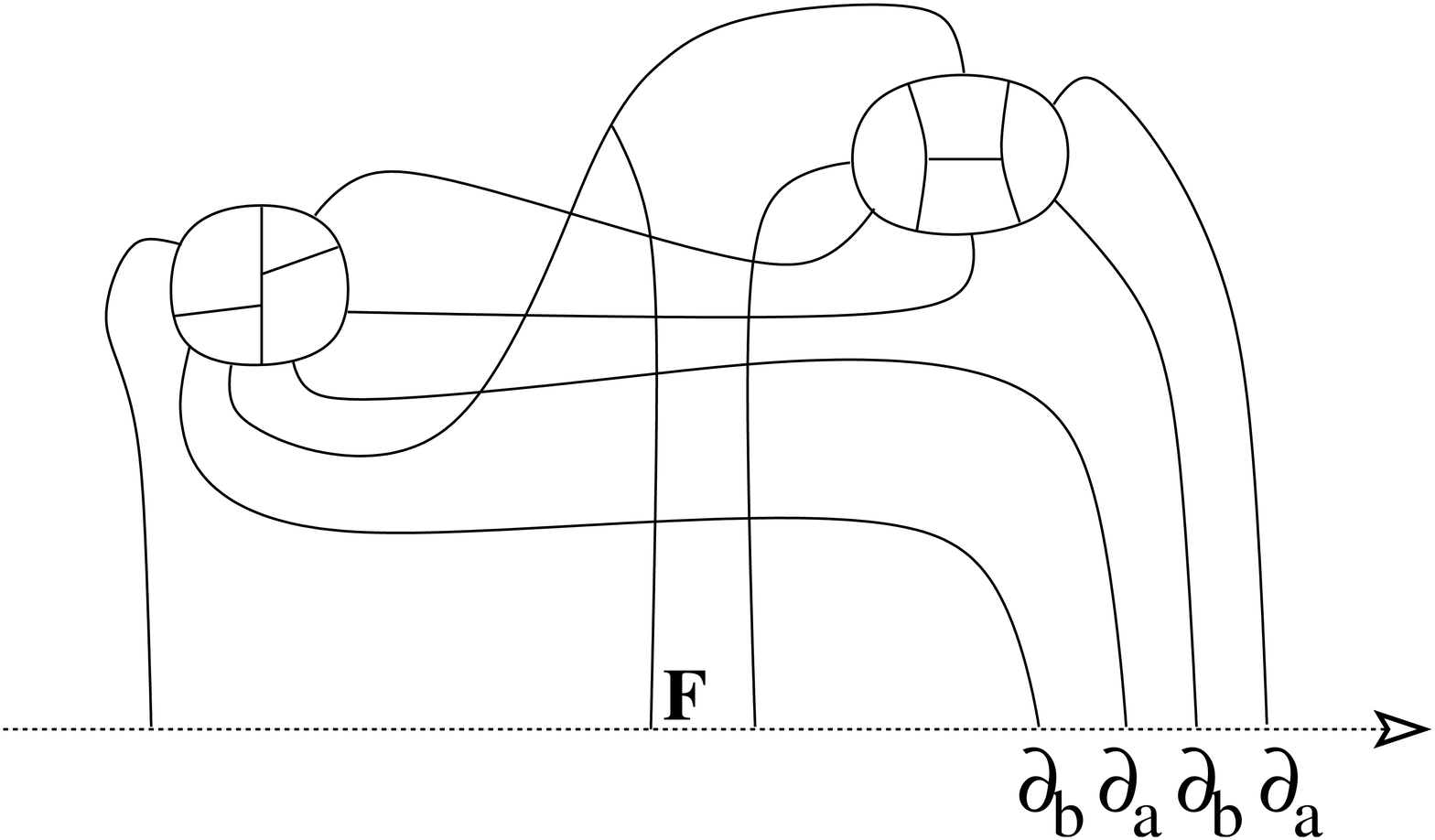}}}\ \
.\\[0.2cm]
\]

\subsection{Associativity}
As an illustration of this graphical method we'll now use it to
give a (probably more detailed than necessary) proof that the
operation product is {\it associative}:
\begin{prop}
Let $u$, $v$ and $w$ denote operator Weil diagrams. Then
\[
u\apply (v\apply w) = (u\apply v)\apply w.
\]
\end{prop}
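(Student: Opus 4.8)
The plan is to prove the identity directly from the graphical gluing description of $\apply$ introduced in Section~\ref{alternativedefn}, by expanding each side as an iterated sum over gluings and then matching terms one-to-one. Since $u$, $v$, $w$ are honest diagrams with finitely many legs, every sum that occurs is finite, so there is no convergence issue; and since the preceding proposition shows $\apply$ respects all the defining relations, it suffices to establish the identity at the level of individual diagrams.

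The first step is to record the structural effect of a single application. When $v\apply w$ is formed, a gluing $\sigma\in\mathcal{G}(v,w)$ matches a subset of $\mathrm{Op}(v)$ injectively and label-respectingly onto $\mathrm{Par}(w)$; in the result the surviving parameter legs are those of $v$ together with the unmatched parameter legs of $w$, while the surviving operator legs are the unmatched legs of $\mathrm{Op}(v)$ together with all of $\mathrm{Op}(w)$. Iterating, I would expand $u\apply(v\apply w)$ as a double sum: an inner gluing $\tau$ of $\mathrm{Op}(v)$ into $\mathrm{Par}(w)$, followed by an outer gluing of $\mathrm{Op}(u)$ into $\mathrm{Par}(v)\sqcup(\mathrm{Par}(w)\setminus\mathrm{im}\,\tau)$; and symmetrically expand $(u\apply v)\apply w$ as an inner gluing of $\mathrm{Op}(u)$ into $\mathrm{Par}(v)$, followed by an outer gluing, into $\mathrm{Par}(w)$, of the operator legs of $u\apply v$ (namely the unmatched legs of $\mathrm{Op}(u)$ together with all of $\mathrm{Op}(v)$).

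The key observation is that both double sums are indexed, canonically, by one and the same set of \emph{triple gluings}: label-respecting injective partial matchings sending each operator leg of a diagram to a parameter leg of a strictly later diagram in the order $u<v<w$, so that $\mathrm{Op}(u)$ may meet $\mathrm{Par}(v)$ or $\mathrm{Par}(w)$, $\mathrm{Op}(v)$ may meet only $\mathrm{Par}(w)$, and the various images are forced disjoint by injectivity. On the $u\apply(v\apply w)$ side such a matching is recovered by recording $\tau$ together with which of $u$'s operators land on $\mathrm{Par}(v)$ versus on surviving legs of $\mathrm{Par}(w)$; on the $(u\apply v)\apply w$ side by recording which legs of $\mathrm{Par}(w)$ are hit by surviving $u$-operators versus by $v$-operators. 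For a fixed triple gluing the underlying graph depends only on which legs are joined, so both sides produce the \emph{same} diagram; the entire content of the theorem is therefore that they produce it with the \emph{same sign}.

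This sign comparison is the main obstacle. Each stage contributes a factor $(-1)^{x}$, where $x$ counts grade-$1$/grade-$1$ (full-line) crossings in that stage's grid, and relocating operators past non-matched legs contributes the graded reordering signs $(-1)^{g_1g_2}$. To control these I would build a single ``master'' planar picture for a fixed triple gluing: place the bodies and parameter legs of $u,v,w$ left to right along one orienting line, list $\mathrm{Op}(u)$ and $\mathrm{Op}(v)$ down the side to be routed rightward, and draw every matching arc once. Writing $X$ for the total number of grade-$1$/grade-$1$ crossings of this master picture, the claim is that each iterated computation realizes exactly $(-1)^{X}$. This rests on two points: multiplicativity of the sign together with additivity mod $2$ of crossing number forces the stage-$1$ and stage-$2$ tallies to sum to $X$; and the planar isotopy class of the arc system --- hence $X$ --- is independent of whether one contracts the $u$--$v$ arcs or the $v$--$w$ arcs first, because in both associations the surviving legs end up in the same left-to-right order (surviving $u$-parameters, then surviving $v$-parameters, then surviving $w$-parameters, with surviving $u$-operators, then surviving $v$-operators, then $\mathrm{Op}(w)$ at the right). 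The delicate part, which I would treat carefully grade by grade, is verifying that no crossing is double-counted or omitted between the two stages and that the accumulated ``pass-past'' signs reassemble into the same net reordering permutation; this bookkeeping is the technical heart of the argument.
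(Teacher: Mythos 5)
Your proposal is correct and is essentially the paper's own argument: the paper likewise expands both bracketings as double sums over pairs of gluings (its sets $\mathcal{G}_1$ and $\mathcal{G}_2$, which are both in canonical bijection with your ``triple gluings''), and it handles the sign issue exactly as you do, by wiring every pair of gluings into one common planar ``step-ladder'' grid so that each term's sign is read off as $(-1)$ per displayed full-line crossing of that single picture, making the equality $t_1(\rho,\rho')=t_2(C(\rho,\rho'))$ immediate. The sign bookkeeping you flag as the technical heart is absorbed in the paper by working entirely within the graphical gluing calculus of Section~\ref{applygraphical}, where that sign convention is the definition.
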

\begin{proof}
We'll illustrate the discussion with the example of:\\[-0.15cm]
\[ u =
\raisebox{-0.75cm}{\scalebox{0.2}{\includegraphics{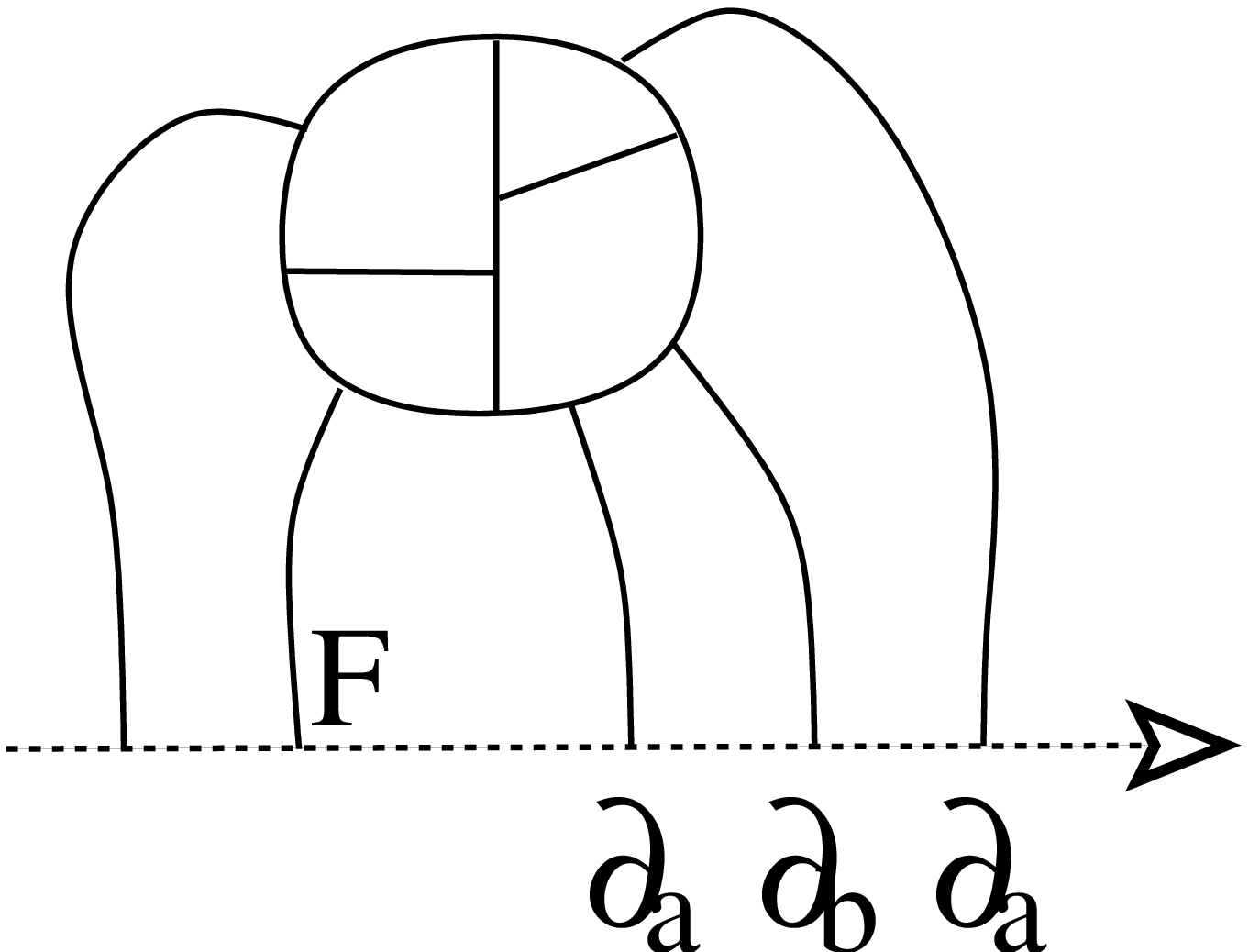}}}\ ,\ \
v = \raisebox{-0.75cm}{\scalebox{0.2}{\includegraphics{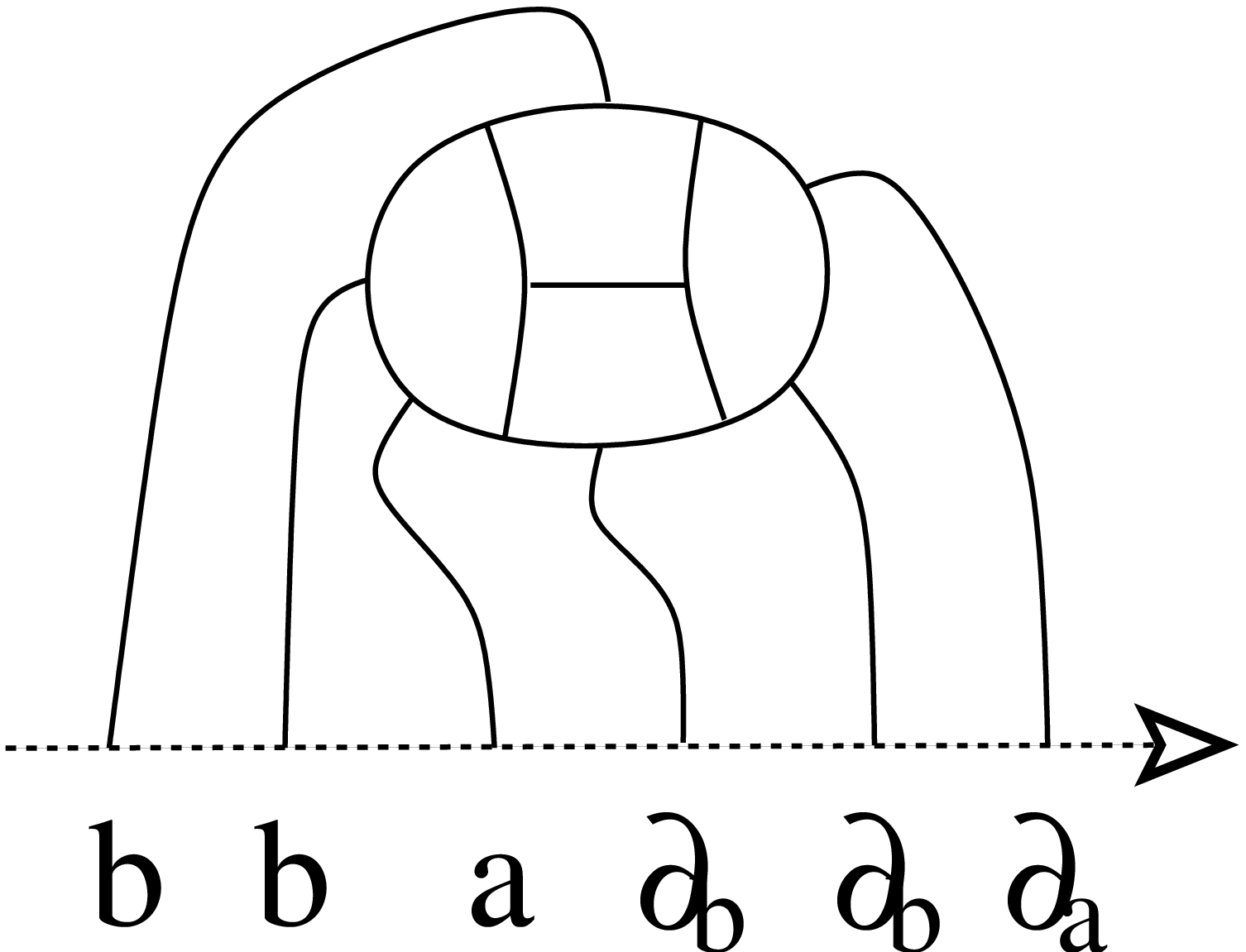}}}\
\text{and}\ w =
\raisebox{-0.75cm}{\scalebox{0.2}{\includegraphics{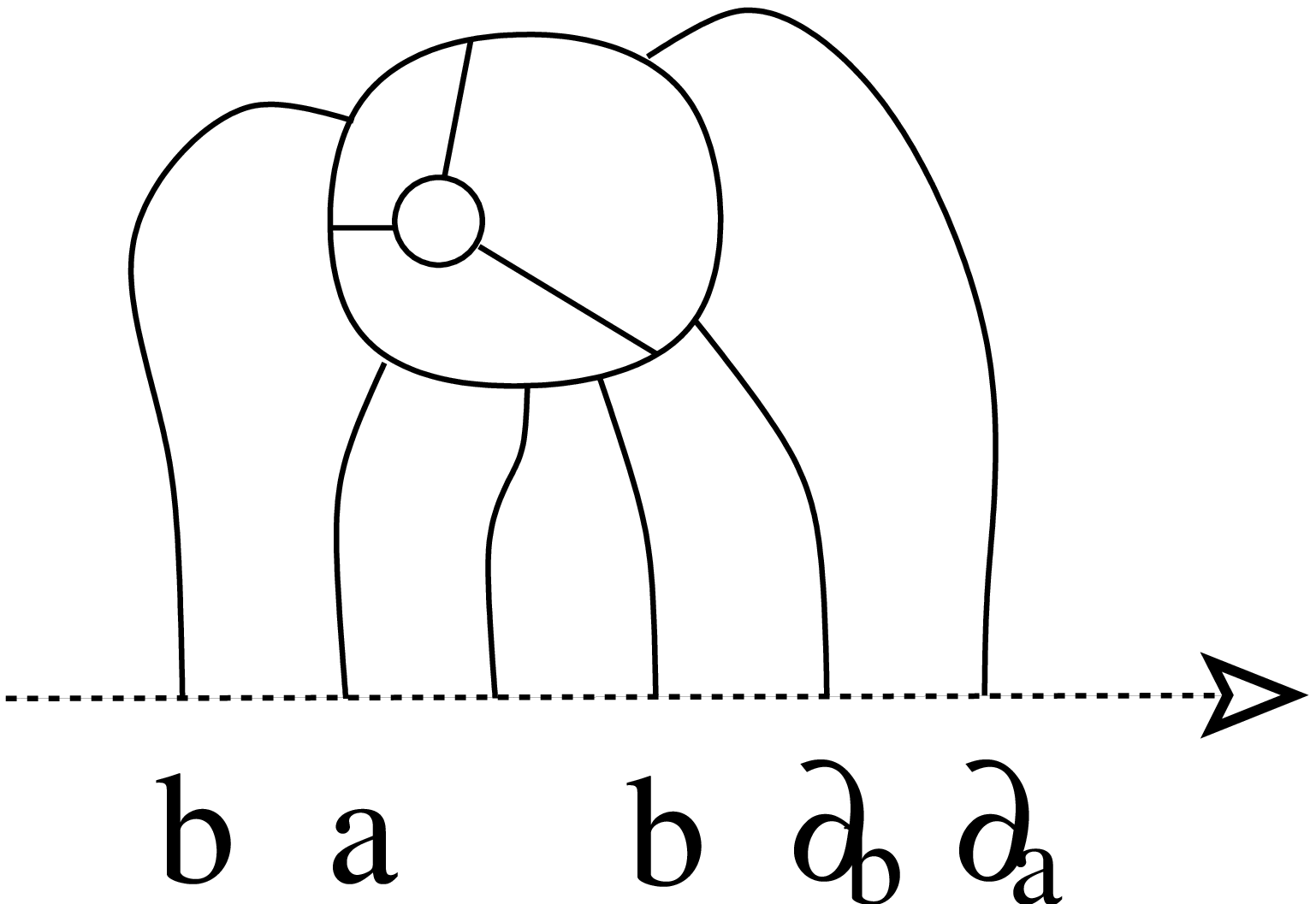}}}
.\\[0.15cm]
\]
To begin, assemble the three diagrams around the edges of a
``step-ladder" grid, in the following way:
\[
\scalebox{0.24}{\includegraphics{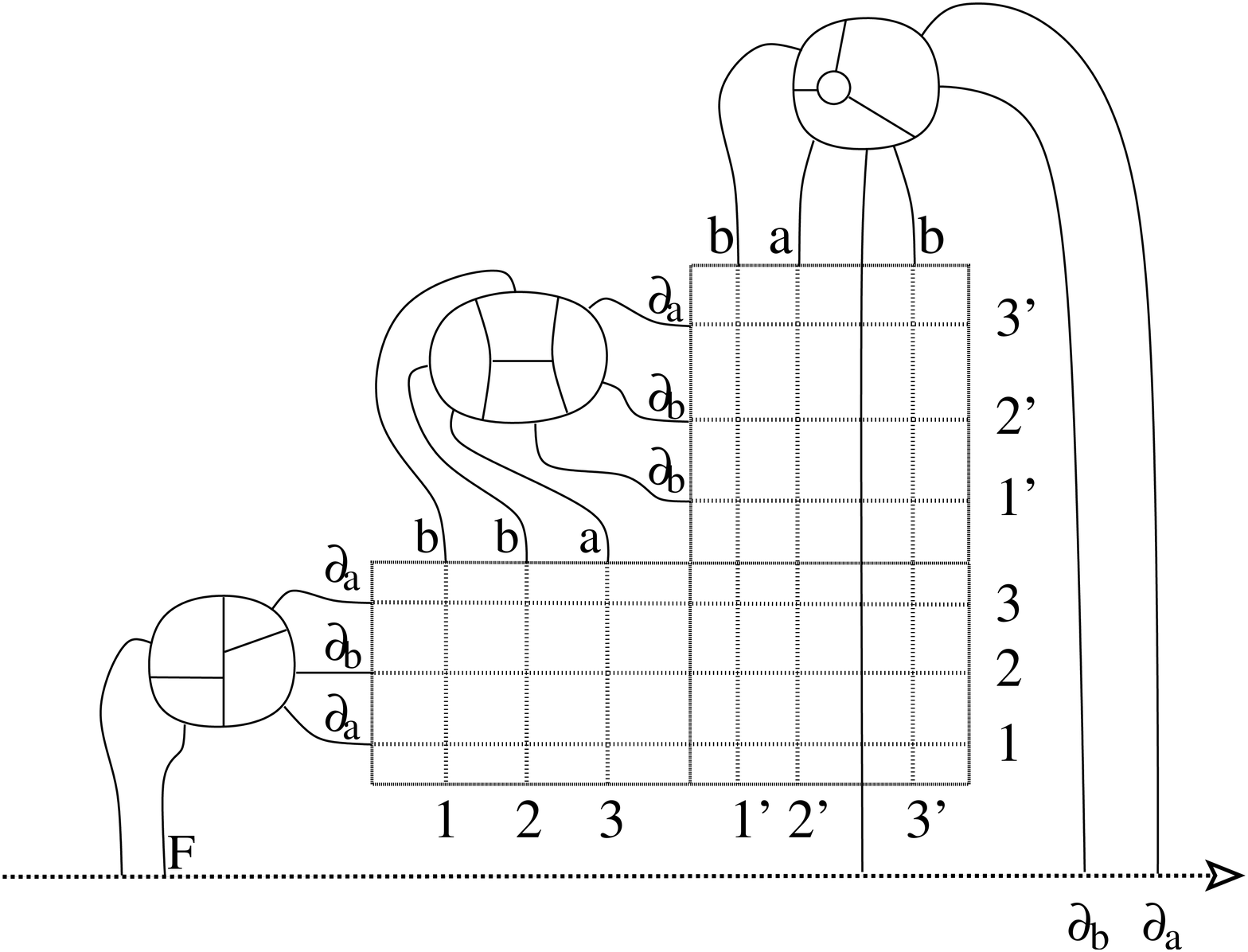}}
\]
%
To prove associativity we'll:
\begin{itemize}
\item{Show how to express $u\apply(v\apply w)$ as a sum of
diagrams built from this step-ladder grid.} \item{Show how to
express $(u\apply v)\apply w$ in the same way.} \item{Observe that
the terms of the two sums correspond.}
\end{itemize}

So focus first on the product $u\apply(v\apply w)$. If we follow
the definitions directly, we learn that this product has one term
for every pair of gluings (recalling that a gluing is a
parameter-respecting injection): \[
\begin{split} \rho & : \text{Op}(v)\supset K \to \text{Par}(w), \\
\rho' & : \text{Op}(u)\supset K'\to \text{Par}(v)\cup
\left(\text{Par}(w)\backslash\text{Im}(\rho)\right).
\end{split}
\]
Let the set of such pairs be denoted by $\mathcal{G}_{1}$. The
term $t_1(\rho,\rho')$ corresponding to some pair $(\rho,\rho')\in
\mathcal{G}_1$ is constructed by wiring up the top box of the
step-ladder using $\rho$, and then wiring up the bottom two boxes
using $\rho'$. The sign of the term, as usual, is a product of a
$(-1)$ for every intersection between full lines displayed by the
diagram.

For example, to construct the contribution $t_1\left(\left({1' 2'
\atop 1' 3'}\right),\left({1\, 2 \atop  2' 1}\right)\right)$, we
start by wiring up the top box using the gluing $\left({1' 2'
\atop 1' 3'}\right)$, which gives \[
\raisebox{-1cm}{\scalebox{0.2}{\includegraphics{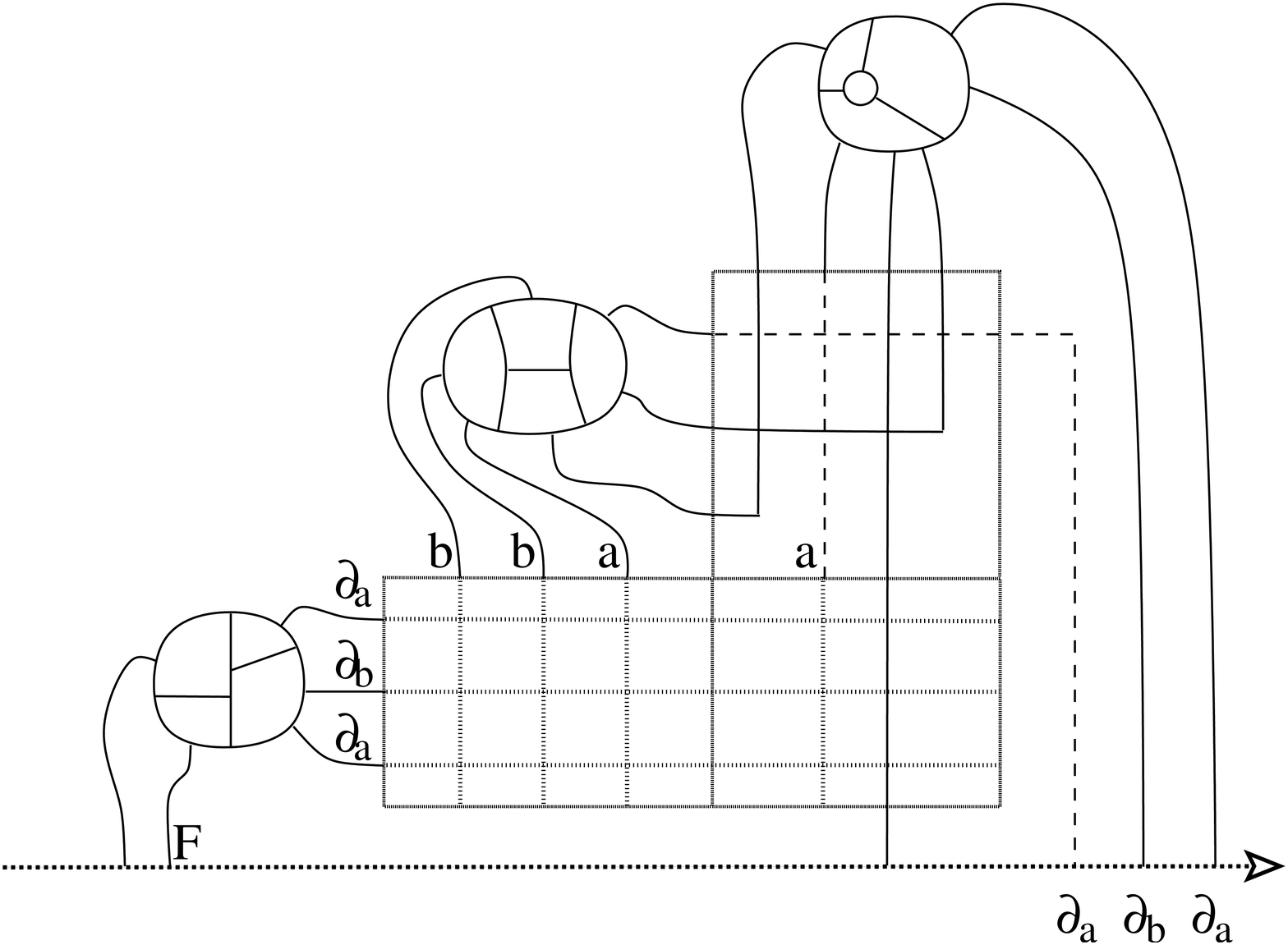}}}.
\]
Then we wire up the bottom two boxes using $\left({1\, 2 \atop  2'
1}\right)$, which gives
\[
\raisebox{-1cm}{\scalebox{0.2}{\includegraphics{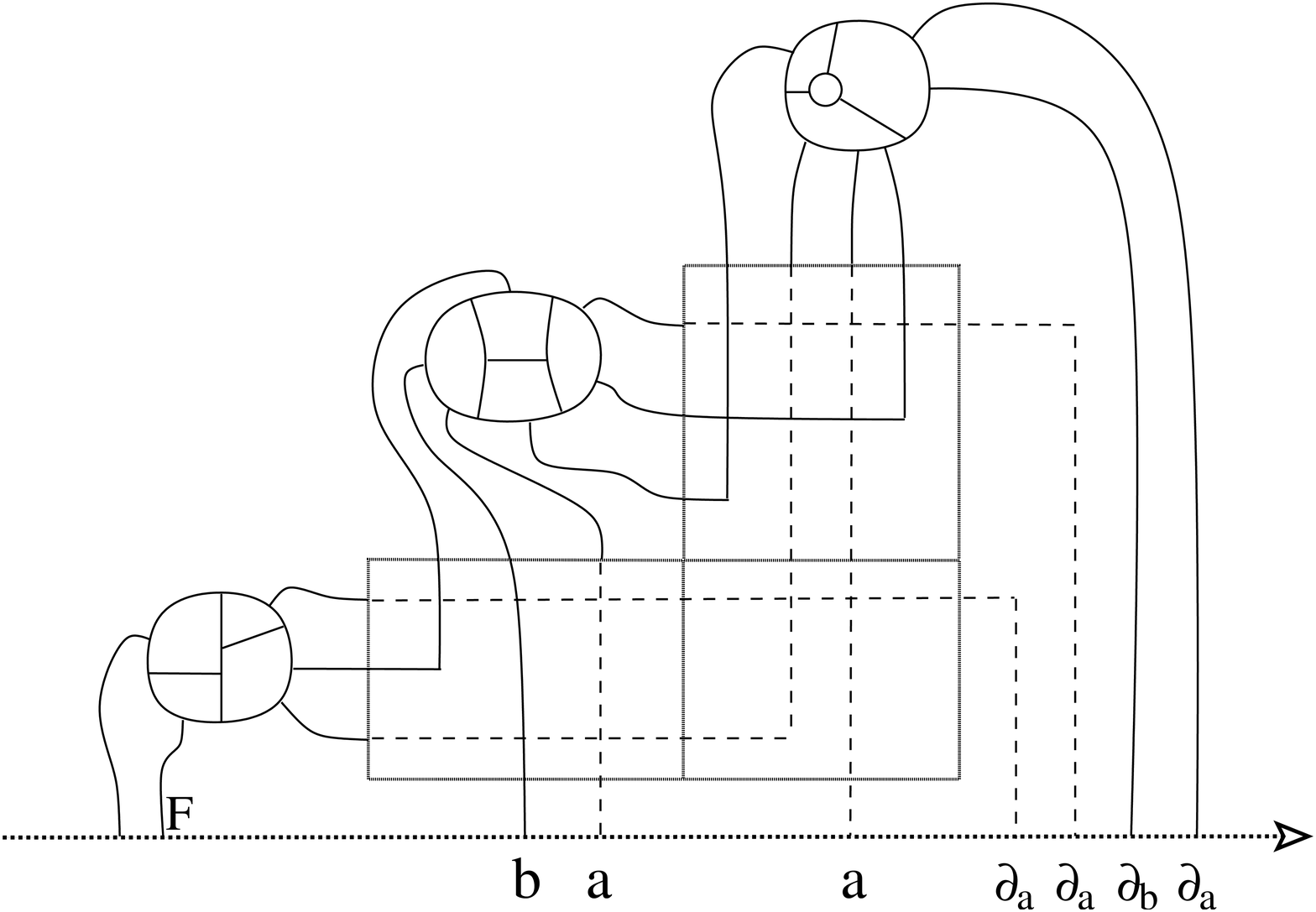}}}.
\]
Is is clear, as a direct application of the definitions, that
\[
u\apply (v\apply w) = \sum_{(\rho,\rho')\in
\mathcal{G}_1}t_1(\rho,\rho').
\]

Next we'll consider the other bracketting: $(u\apply v)\apply w$.
In this case the appropriate indexing set is $\mathcal{G}_2$, the
set of pairs of gluings
\[
\begin{split} \varsigma & : \text{Op}(u)\supset L \to \text{Par}(v), \\
\varsigma' & : \left(\left(\text{Op}(u)\backslash L\right)\cup
\text{Op}(v)\right) \supset L'\to \text{Par}(w).
\end{split}
\]
The corresponding expression is:
\[
(u\apply v)\apply w = \sum_{(\varsigma.\varsigma')\in
\mathcal{G}_2} t_2(\varsigma,\varsigma').
\]

To finish, note that there is an obvious correspondence
$C:\mathcal{G}_1 \simeq \mathcal{G}_2$ between the terms of the
two expressions:  $t_1(\rho,\rho') =
t_2\left(C(\rho,\rho')\right).$
\end{proof}

%

\subsection{Associativity and power series.}
We recommend this section for the second reading; it consists of
some unsurprising details about associativity and convergence.

In the computations that are the core of this work, we'll need to
re-bracket certain products of power series. We've just shown that
we can re-bracket products of the {\it generators}; to re-bracket
products of {\it power series} proves to be a more delicate affair
(because of convergence issues). To avoid getting bogged down by
the logic of our definitions, we'll introduce a simple finiteness
condition (Condition $(\S)$, below). When Condition $(\S)$ holds,
for a triple $u$, $v$ and $w$ of power series from $\WhatF\abpow$,
then it will be true that:
\[
(u\apply v)\apply w = u \apply (v\apply w).
\]
We'll state the
condition as a lemma:
\begin{lem}\label{powerseriesassociate}
Let $u$, $v$ and $w$ be power series from $\WhatF\abpow$. Assume
that:
\begin{itemize}
\item{The product $u\apply v$ converges.} \item{The product
$v\apply w$ converges.} \item{For all $(i,j)$, there are only
finitely many triples $((k,l),(m,n),(p,q))$ with the property that
\[
\pi^{(i,j)}\left( \left(u^{(k,l)} \apply v^{(m,n)} \right) \apply
w^{(p,q)}\right) \neq 0\ \ \ \ \ (\S).
\]
}
\end{itemize}
Then the products $(u\apply v) \apply w$ and $u\apply (v\apply w)$
converge, and:
\[
(u\apply v)\apply w=u\apply (v\apply w).
\]
\end{lem}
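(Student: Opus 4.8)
The plan is to show that both bracketings compute, type by type, the same finite sum, and then invoke the associativity of $\apply$ on \emph{generators} (the Proposition just proved) to identify the summands. Concretely, for each target type $(i,j)$ I would set
\[
T^{(i,j)}=\sum_{(k,l),(m,n),(p,q)}\pi^{(i,j)}\!\left(\left(u^{(k,l)}\apply v^{(m,n)}\right)\apply w^{(p,q)}\right),
\]
a sum that has only finitely many nonzero terms by Condition $(\S)$, and hence a genuine element of $\WhatF\ab^{(i,j)}$. By generator associativity each summand equals $\pi^{(i,j)}(u^{(k,l)}\apply(v^{(m,n)}\apply w^{(p,q)}))$, so $T^{(i,j)}$ does not depend on how we bracket. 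It then suffices to prove that $\pi^{(i,j)}((u\apply v)\apply w)=T^{(i,j)}$ and $\pi^{(i,j)}(u\apply(v\apply w))=T^{(i,j)}$, with both products convergent; the claimed identity follows at once.

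The engine of the argument is a grading observation about $\apply$ that I would record first. Inspecting the graphical definition of the operation, when a generator of type $(a,b)$ operates on a generator of type $(c,d)$, every resulting diagram has type $(a+c-g,\,b-g+d)$, where $g$ is the total grade consumed by the gluing and $0\le g\le\min(b,c)$. The consequence I need is that a product in which one factor is a single homogeneous piece always converges: for a power series $s$ and a generator $w_0$ of type $(p,q)$, a fixed target $(i,j)$ forces $g\le p$, hence restricts the source type of $s$ to the finitely many values $(i-p+g,\,j-q+g)$ with $0\le g\le p$; symmetrically, $u_0\apply t$ converges for a generator $u_0$ of type $(k,l)$ and a power series $t$, the source type of $t$ being confined to $(i-k+g,\,j-l+g)$ with $0\le g\le l$. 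Thus $s\apply w^{(p,q)}$ and $u^{(k,l)}\apply t$ are unconditionally well-defined.

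With this in hand I would treat the left bracketing as follows. Put $s=u\apply v$, which converges by hypothesis. Peeling off $w$ one homogeneous piece at a time, $\pi^{(i,j)}((u\apply v)\apply w)=\sum_{(p,q)}\pi^{(i,j)}(s\apply w^{(p,q)})$, where each inner product converges by the grading observation. Expanding $s^{(r,s')}=\sum_{(k,l),(m,n)}\pi^{(r,s')}(u^{(k,l)}\apply v^{(m,n)})$ (a finite sum, since $u\apply v$ converges) and using bilinearity to reassemble $\sum_{(r,s')}\pi^{(r,s')}(u^{(k,l)}\apply v^{(m,n)})=u^{(k,l)}\apply v^{(m,n)}$ inside a finite double sum, I obtain $\pi^{(i,j)}(s\apply w^{(p,q)})=\sum_{(k,l),(m,n)}\pi^{(i,j)}((u^{(k,l)}\apply v^{(m,n)})\apply w^{(p,q)})$. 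In particular this inner sum is nonzero only if some grouped triple term is nonzero, so by Condition $(\S)$ only finitely many $(p,q)$ contribute; hence $(u\apply v)\apply w$ converges and its $(i,j)$-piece is exactly $T^{(i,j)}$. The right bracketing is handled symmetrically: peel off $u$ one piece at a time against $t=v\apply w$, reassemble to get $\sum\pi^{(i,j)}(u^{(k,l)}\apply(v^{(m,n)}\apply w^{(p,q)}))$, rewrite each summand by generator associativity, and invoke $(\S)$ again.

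The step I expect to be the main obstacle is precisely this convergence bookkeeping, and the key to finessing it is to \emph{never} attempt the naive rearrangement of the triple product as a single sum indexed simultaneously by the three source types and the intermediate type $(r,s')$. That rearrangement is dangerous: a fixed source triple can contribute through several intermediate types $(r,s')$ — one for each admissible way of splitting the total consumed grade between the first and second products — and these contributions could in principle cancel after projection to $(i,j)$, so the finiteness of the nonzero \emph{grouped} triple terms furnished by $(\S)$ does not by itself bound the nonzero \emph{ungrouped} contributions. Peeling off a single homogeneous factor at a time sidesteps this entirely: the grading observation keeps every intermediate sum finite, and it reduces the only genuinely infinite sum — the one over $(p,q)$, respectively over $(k,l)$ — directly to the finiteness guaranteed by Condition $(\S)$.
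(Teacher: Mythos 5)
Your proposal is correct and is essentially the paper's own argument: both proofs identify each bracketing, type by type, with the finite triple sum $\sum \pi^{(i,j)}\bigl((u^{(k,l)}\apply v^{(m,n)})\apply w^{(p,q)}\bigr)$ furnished by Condition $(\S)$, and then equate the two triple sums termwise by associativity on generators --- your rewriting of the summands for the right bracketing is exactly the paper's passage from $(\S)$ to its re-bracketed version $(\S')$. The only real difference is expository: your grading observation and one-homogeneous-factor-at-a-time peeling spell out the convergence bookkeeping that the paper compresses into ``obviously converges'' and ``almost tautological''.
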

There is also a version for the other bracketting. It will be a
straightforward matter to check that this condition holds in any
situation that we perform a re-bracketting.
\begin{proof}
First, note that $(u\apply v)\apply w$ obviously converges.
(Otherwise Condition $(\S)$ would be violated.) Second, note that
because we can re-bracket the generators, Condition $(\S)$ implies
its re-bracketted version: that for each $(i,j)$ there are only
finitely many triples $((k,l),(m,n),(p,q))$ such that
\[
\pi^{(i,j)}\left( u^{(k,l)} \apply \left( v^{(m,n)}  \apply
w^{(p,q)}\right)\right) \neq 0\ \ \ \ (\S').
\]
Thus $u\apply(v\apply w)$ converges as well. Thirdly, note that
Condition $(\S)$ implies that the expression
\[
\sum_{(k,l),(m,n),(p,q)} \left(u^{(k,l)} \apply v^{(m,n)}\right)
\apply w^{(p,q)}
\]
makes sense. It is almost tautological that it is equal to
$(u\apply v)\apply w$. And similarly, $(\S')$ implies that
\[
u\apply (v\apply w) =\sum_{(k,l),(m,n),(p,q)} u^{(k,l)} \apply
\left(v^{(m,n)} \apply w^{(p,q)}\right).
\]
Associativity of products of generators gives the required
equality.
\end{proof}
The major reason we need such detail is that there are products
$(u\apply v)\apply w$ which don't satisfy Condition $(\S)$ but
which nevertheless converge. This makes general statements
difficult.


\section{Expressing the composition as an operator product.}\label{compopexp}
The computation that is the subject of Theorem
\ref{maincombinatorialtheorem} is based on an expression of the
value of the composition \[
\mathcal{B}\stackrel{\Upsilon}{\longrightarrow} \mathcal{W}
\stackrel{\chi_{\mathcal{W}}}{\longrightarrow}
\widetilde{\mathcal{W}}\stackrel{\pi}{\longrightarrow}
\widehat{\mathcal{W}} \stackrel{B_{\bullet\rightarrow\mathrm{\bf
F}}}{\longrightarrow} \widehat{\mathcal{W}}_{\mathrm{\bf F}}
\stackrel{\lambda}{\longrightarrow} \widehat{\mathcal{W}}_\wedge
\]
on some symmetric Jacobi diagram in terms of the operation product
$\apply$. The statement of the expression uses a linear map $
\text{B}_{l\mapsto\partial_a}: \Bspace\to \Whatwedge\abpow $ which
acts on a Jacobi diagram $v$ by first choosing an ordering of the
legs of $v$ and then labelling every leg with a $\partial_a$. For
example:
\[
\baselegtopartial\left(
\raisebox{-5ex}{\scalebox{0.25}{\includegraphics{illustrA}}}
\right) \ \ =\ \
\raisebox{-8ex}{\scalebox{0.25}{\includegraphics{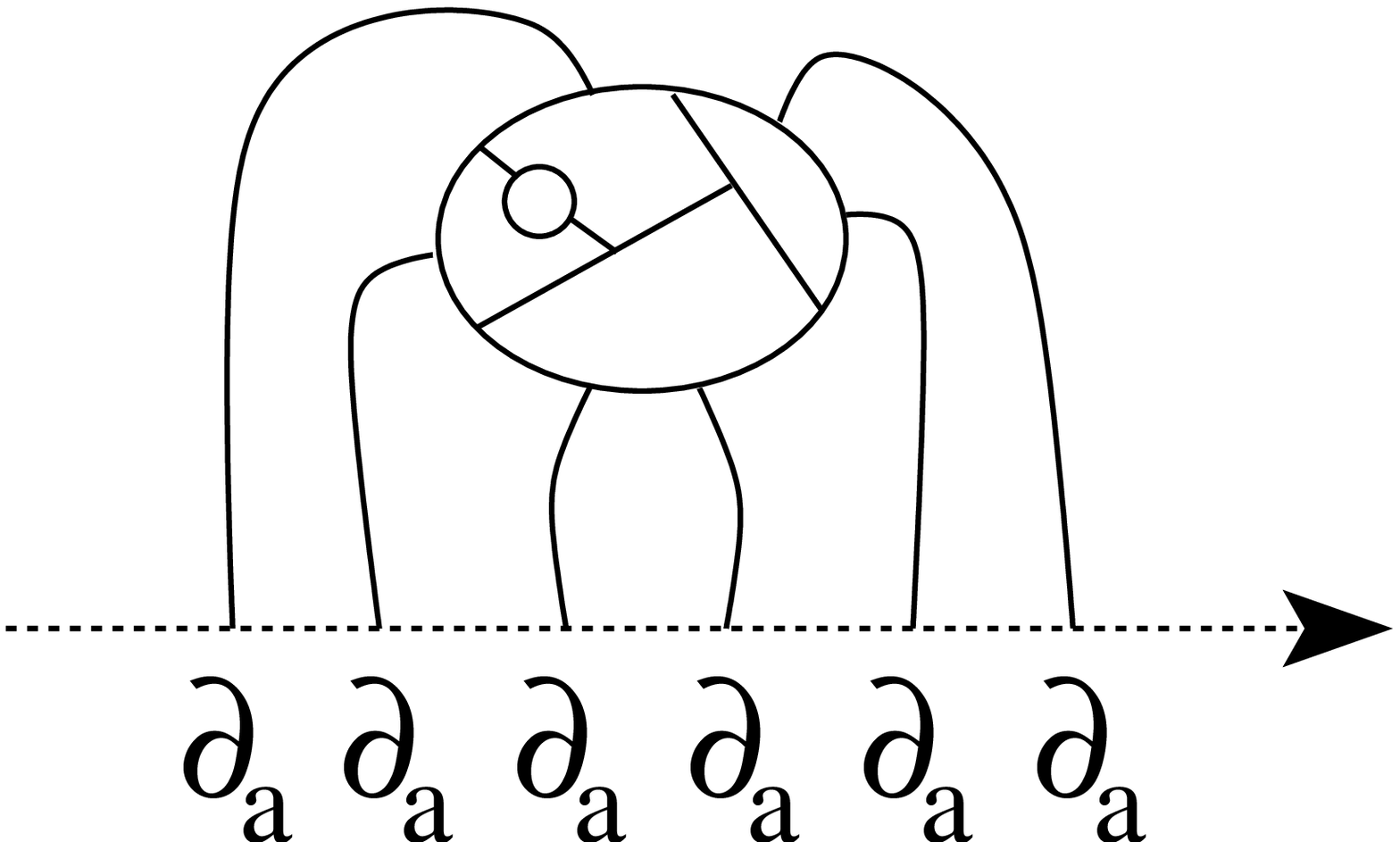}}}\,.\\[0.1cm]
\]
The purpose of this section is to prove the following theorem:
\begin{thm}
\label{importantexpression}
 \label{mainreexpressthm} Let $v$ be an
element of $\Bspace$, the space of symmetric Jacobi diagrams. Then
the element $ \left(\lambda\circ \basebulltoF \circ \pi \circ
\chi_\Wspace \circ \Upsilon\right)(v) $ is equal to
\[
\left[B_{l\mapsto\partial_a}(v)\,\apply \,\left(\exp_{\# }\left(
\raisebox{-3.5ex}{\scalebox{0.25}{\includegraphics{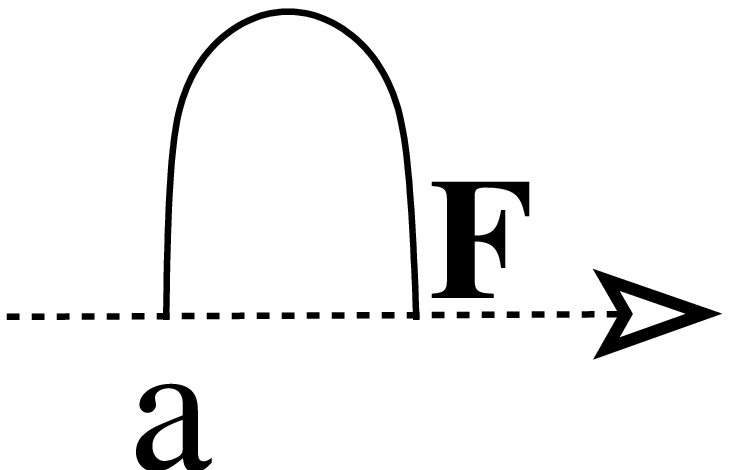}}}\right)
\#\, \mathcal{X} \right)\right]_{{a,b,\partial_a,\partial_b}=0}
\]
where $\mathcal{X}$ is equal to
\[
\exp_\apply
\left(-\frac{1}{2}\,\raisebox{-3.5ex}{\scalebox{0.25}{\includegraphics{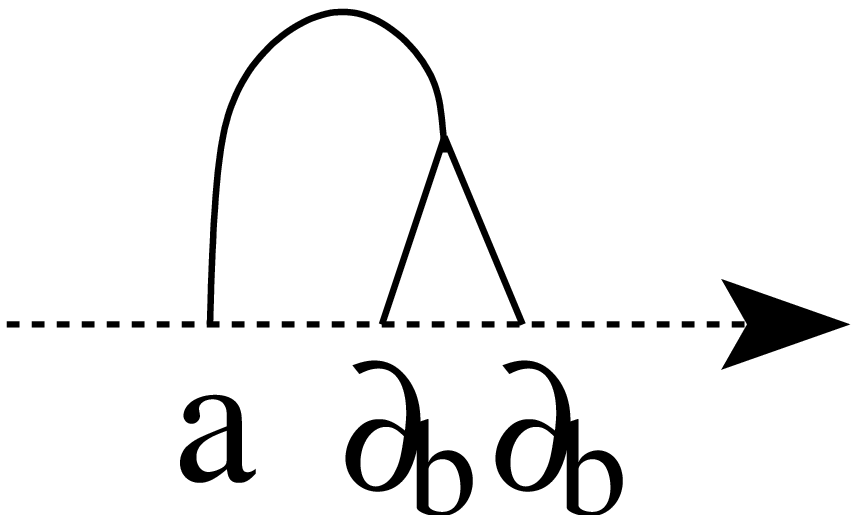}}}
\right) \apply \lambda\left(\exp_\#\left(\frac{1}{2}\,
\raisebox{-3.5ex}{\scalebox{0.25}{\includegraphics{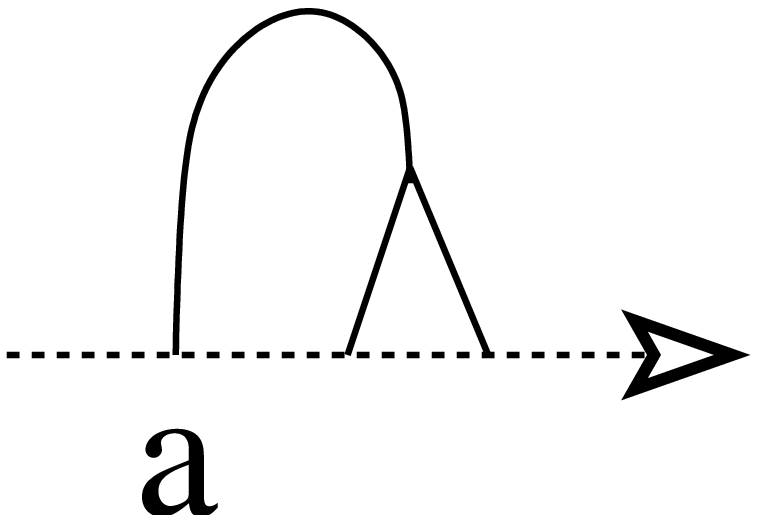}}} +
\raisebox{-3.5ex}{\scalebox{0.25}{\includegraphics{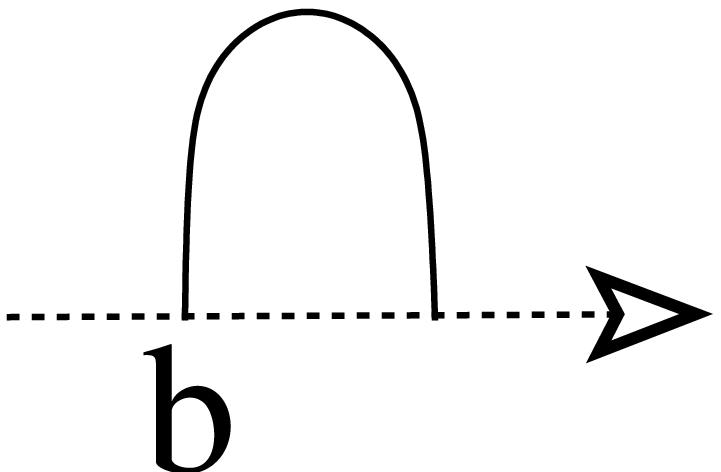}}}\right)
\right).
\]
\end{thm}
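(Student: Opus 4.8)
The engine of the proof will be Proposition \ref{symmpro}, which says that graded averaging of the legs of a diagram is computed by relabelling each leg as a $\partial_a$ operator and operating, via $\apply$, on a formal exponential $\exp_\#$ of a single $a$-parameter leg: the weight $1/n!$ carried by the exponential supplies the averaging coefficient, the freedom in matching the $n$ operator legs to the $n$ surviving parameter legs supplies the sum over permutations, and the graded commutation of operator legs past parameter legs supplies the signs. The plan is to insert this reformulation at the $\chi_\Wspace$ step and then transport the remaining maps $\pi$, $\basebulltoF$ and $\lambda$ across the operator product, so that the single operator diagram $\baselegtopartial(v)$ — which is exactly $v$ with every leg turned into a $\partial_a$ — does all the work, and everything else is absorbed into the series it operates on.

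First I would use the alternative description of $\Upsilon$ from Section \ref{hsd} to record that $\chi_\Wspace\circ\Upsilon(v)$ is the graded average of $v$ with an ordering chosen and every leg made a curvature leg. The subtlety, relative to the bare statement of the symmetrization principle, is twofold. The computation ultimately lands in $\Whatwedge[[a,b]]$, where curvature legs anticommute to zero (the third $\wedge$-relation), whereas the natural home of the symmetrization is $\WhatF$, where they obey the Clifford-type relation; the last map $\lambda$ is precisely the combinatorial Clifford-to-exterior comparison bridging the two. I expect to show that symmetrizing the curvature legs in the exterior space, corrected by the Clifford--exterior discrepancy, is what produces the operator factor $\exp_\apply(-\tfrac12\,(\cdots))$ inside $\mathcal{X}$, the coefficient $\tfrac12$ and the sign being exactly the per-pair data of $\lambda$. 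The principal averaging of the legs themselves is then carried by the factor $\exp_\#$ of the single $a$-leg, against which $\baselegtopartial(v)$ contracts.

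Next I would account for the grade-$1$ legs. The $-\tfrac12$ forks in the definition of $\Upsilon$ introduce grade-$1$ legs that persist through $\pi$ and $\basebulltoF$; the action of the final $\lambda$ on these, together with the grade-$1$ legs of the Clifford defect, is what I expect to assemble into the factor $\lambda(\exp_\#(\tfrac12\,(\cdots)+(\cdots)))$, joined to the operator factor by $\apply$ and to the principal exponential by $\#$, exactly as displayed. Throughout, the several re-bracketings of operator products of power series must be licensed: I would check Condition $(\S)$ of Lemma \ref{powerseriesassociate} at each re-association, which is routine here because $\baselegtopartial(v)$ has a fixed finite number of operator legs and each contraction strictly lowers operator- or parameter-grade, so only finitely many terms contribute to any fixed type $(i,j)$. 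The final evaluation at $a,b,\partial_a,\partial_b=0$ then simply enforces that every operator leg has been contracted and no parameter leg survives, projecting onto the genuinely averaged diagrams.

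The step I expect to be the main obstacle is the exact accounting of coefficients and signs in the passage through $\pi$ and $\basebulltoF$: because the curvature legs do not commute freely, their symmetrization is not literally the $\chi$ of Proposition \ref{symmpro} but $\chi$ corrected by the anticommutators, and it is precisely these correction terms — resummed — that must reproduce the Gaussian-type exponent $-\tfrac12\,(\cdots)$, while the $-\tfrac12$ coefficients from the forks of $\Upsilon$ and the $\tfrac12$ coefficients accompanying each glued grade-$1$ pair in $\lambda$ must conspire to give the exponent $\tfrac12\,(\cdots)+(\cdots)$ inside the $\lambda$-factor. Verifying that these bookkeeping factors match the claimed $\mathcal{X}$, with the correct placement of $\exp_\apply$ versus $\exp_\#$ and the correct interleaving of $\apply$ and $\#$, is where the genuine content of the theorem lies; the rest is the by-now-standard mechanics of the operator pairing set up in Section \ref{diagoperators}.
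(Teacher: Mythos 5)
Your high-level skeleton does match the paper's: Proposition \ref{symmpro} is indeed the engine, the remaining maps are transported across the operator product, Condition $(\S)$ of Lemma \ref{powerseriesassociate} licenses the re-bracketings, and the evaluation at $a,b,\partial_a,\partial_b=0$ finishes the argument. But the mechanism you propose for producing the two nontrivial factors of $\mathcal{X}$ is crossed, and this is not cosmetic --- it is where the theorem lives. You attribute the factor $\exp_\apply\bigl(-\frac{1}{2}\,\cdot\bigr)$ to the Clifford-to-exterior defect of $\lambda$, and the $\frac{1}{2}$-fork inside $\lambda\bigl(\exp_\#(\frac{1}{2}\,\cdot + \cdot)\bigr)$ to the $-\frac{1}{2}$ forks of $\Upsilon$; the paper has it exactly the other way around. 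The diagram in the $\exp_\apply$ factor is a tripod with one $a$-parameter leg and two $\partial_b$-\emph{operator} legs, and it encodes hair-splitting via the intertwining identity of Proposition \ref{howtosplit}:
\[
\baselegtopartial(v)\,\apply\, \mathrm{exp}_\apply\bigl(-\frac{1}{2}\,T\bigr) \;=\; \mathrm{exp}_\apply\bigl(-\frac{1}{2}\,T\bigr)\,\apply\, \intoop\bigl(\Upsilon(v)\bigr),
\]
where $T$ denotes that tripod. This identity is the heart of the proof: it is the only thing that lets you replace the operator diagram of the hair-split diagram $\Upsilon(v)$ by the operator diagram $\baselegtopartial(v)$ of $v$ itself, and your plan contains no counterpart to it. Your proposed substitute cannot work, because the forks created by $\Upsilon$ sit on the legs of $v$, so they must be created by something that the $\partial_a$-legs of $v$ contract against --- i.e.\ a parameter/operator tripod inside an $\exp_\apply$ --- whereas the pairing data of $\lambda$ only ever glues together \emph{existing} $\bot$-legs and never produces an operator leg at all.

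Symmetrically, the $+\frac{1}{2}$ fork inside the $\lambda$-factor does not come from $\Upsilon$: it comes from $\basebulltoF$ applied to the averaging exponential of Proposition \ref{symmpro} (the substitution sending a fat-dot leg to an F-leg plus $\frac{1}{2}$ of a fork, Corollary \ref{comblemcor}); note that it carries $+\frac{1}{2}$ while the forks of $\Upsilon$ carry $-\frac{1}{2}$, so your attribution is already inconsistent in sign. Also, $\lambda$ ends up applied only to the inner-most exponential not by any resummation of ``Clifford defects'' but by the commutation lemma proved at the end of Section \ref{compopexp}: $\lambda(v\apply w)=v\apply\lambda(w)$ whenever $v$ can be expressed without $\bot$-legs, applied after the associativity rearrangements. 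Finally, the actual evaluation of $\lambda$ on that exponential is deliberately postponed (it is Theorem \ref{connectedtheorem}, the next section); it plays no role in proving the present statement. Carried out as written, your plan would therefore not arrive at the stated $\mathcal{X}$: you need Proposition \ref{howtosplit} (or an equivalent intertwining identity) and the $\lambda$-commutation lemma, with the roles of $\Upsilon$ and $\basebulltoF$ assigned as above.
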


In Section \ref{computingtheoperatorexpressionA} we commence the
computation of $\mathcal{X}$ by performing the $\lambda$ operation
in the second factor above. Section
\ref{computingtheoperatorexpressionB} takes that result and
performs the remaining operation product.

To state the final result we'll employ a certain notation for the
$a$-labelled legs. Note that the $a$-labelled legs commute with
every type of leg, and so can be moved around freely. It proves
useful, then, to avoid drawing them in explicitly. We'll record
the $a$-labelled legs by (locally) orienting the edge they are
incident to, and labelling that edge with some power series in
$a$. For example:
\[
\raisebox{-1.5ex}{\scalebox{0.25}{\includegraphics{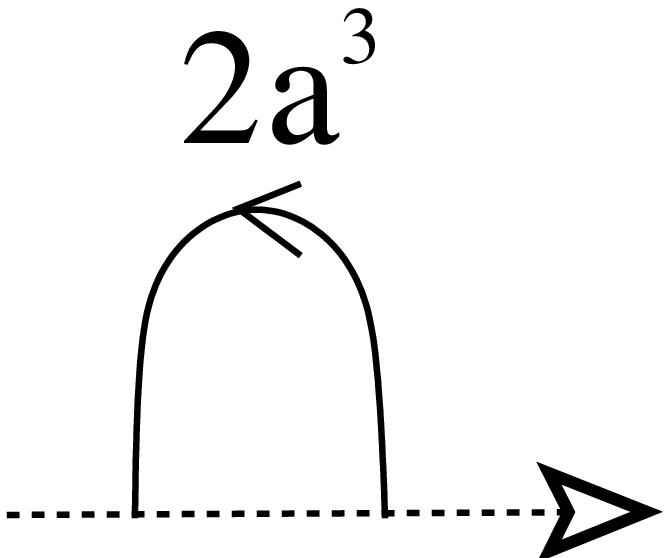}}}\ \ \
=\ \ \
2\raisebox{-3.8ex}{\scalebox{0.25}{\includegraphics{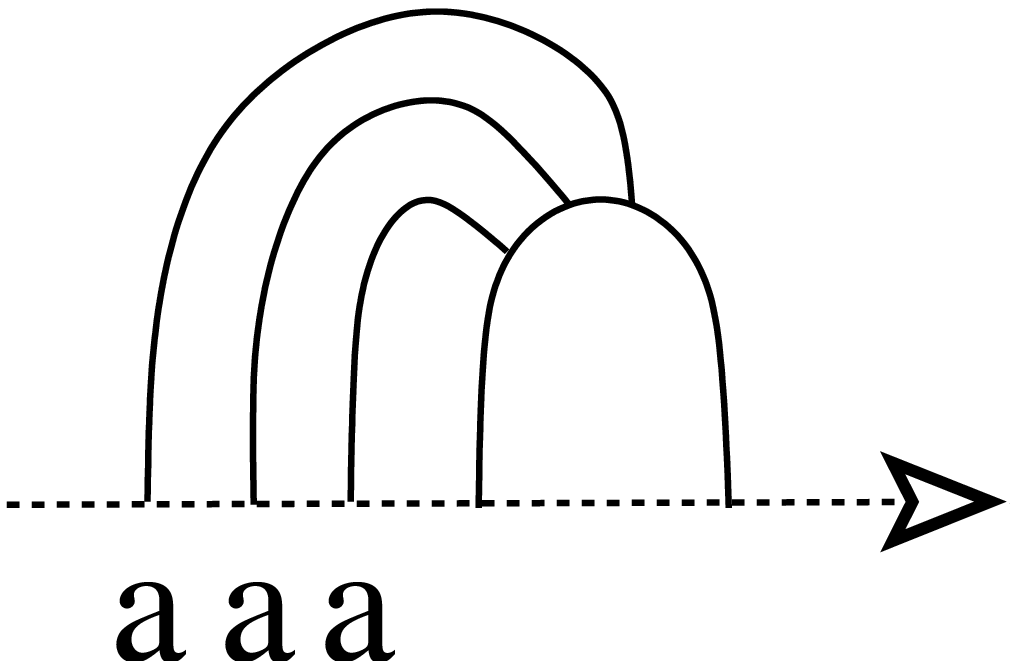}}}\ .
\]

Section \ref{computingtheoperatorexpressionB} completes the
computation that:
\begin{thm}\label{Xcalculate}
\[
\left[ \mathcal{X} \right]_{b,\partial_b=0} = \exp_\#\left(
\,\raisebox{-6ex}{\scalebox{0.25}{\includegraphics{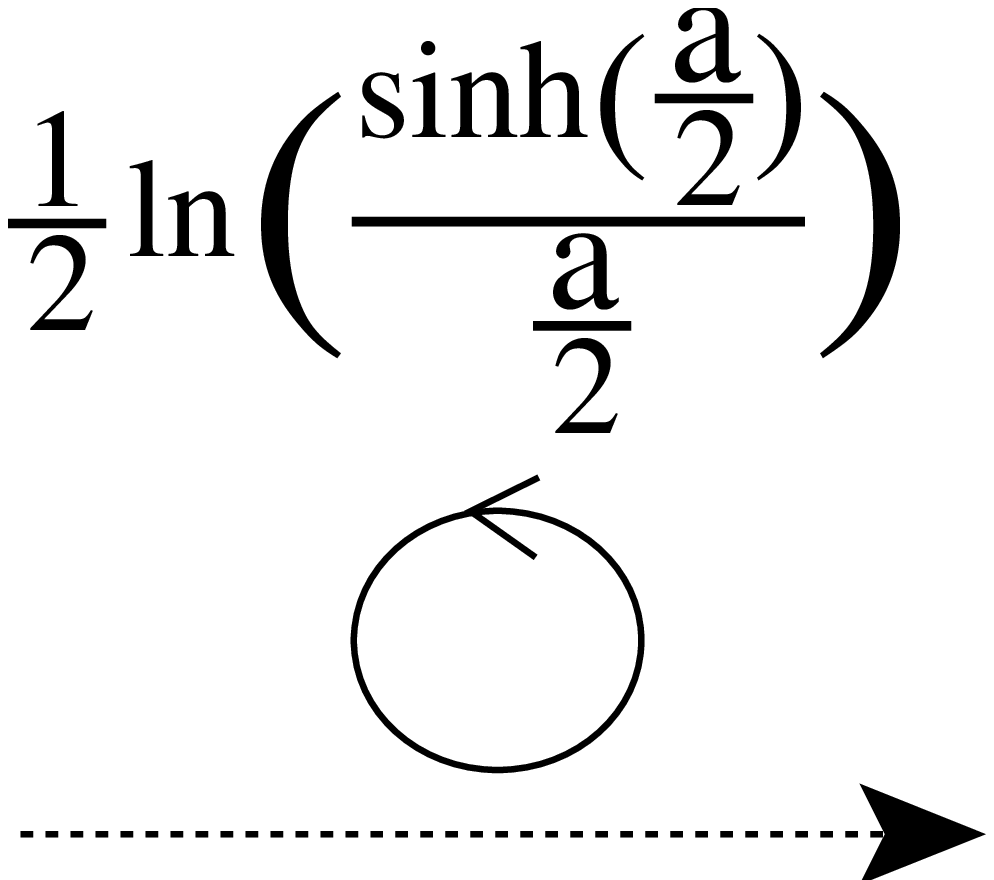}}}
\,\right).
\]
\end{thm}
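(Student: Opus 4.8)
The plan is to compute $\mathcal{X}$ directly, in the two stages foreshadowed in the introduction, and then to impose the specialization $b=\partial_b=0$. Throughout I would lean on the exponential principle announced there: an $\apply$- or $\lambda$-operation applied to a $\#$-exponential of connected diagrams again produces a $\#$-exponential, whose exponent is the sum over all \emph{connected} diagrams that can be assembled from the generators by that operation. I would first establish this principle for the two operations in the relevant graded settings (tracking the leg-transposition and crossing signs); everything then reduces to an enumeration of connected diagrams together with a bookkeeping of their coefficients.

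The first stage is the computation of the inner factor $\lambda\!\left(\exp_{\#}(\ldots)\right)$, as begun in Section \ref{computingtheoperatorexpressionA}. Since $\lambda$ glues grade-$1$ legs in disjoint pairs and the argument is a $\#$-exponential of the two connected generators, the exponential principle expresses the result as a $\#$-exponential whose exponent runs over all connected diagrams obtained by repeatedly $\lambda$-gluing copies of the two generators along their grade-$1$ legs. Because each generator carries only a couple of grade-$1$ legs, these connected outcomes are exactly open \emph{chains} and closed \emph{loops}; the loops are the nascent wheels. I would record this output explicitly, attaching to each connected piece its $\lambda$-coefficient, namely a factor $\tfrac12$ for every gluing and the crossing sign $(-1)^{x}$ read from the full-line intersections, exactly as in the definition of $\lambda$.

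The second stage, carried out in Section \ref{computingtheoperatorexpressionB}, operates on this output with the operator factor $\exp_\apply\!\left(-\tfrac12\,(\ldots)\right)$. Using Lemma \ref{powerseriesassociate} to license the necessary re-bracketings and to guarantee convergence, and invoking the exponential principle a second time (now for $\apply$), the operator legs contract the matching parameter legs of the chains and loops, fusing them into a single new $\#$-exponential of connected diagrams. I would then impose $b=\partial_b=0$. This annihilates every surviving diagram that still carries a grade-$1$ parameter leg or a $\partial_b$ operator leg, so only the connected \emph{pure-$a$ loops} remain. Summing these, and repackaging the resulting $a$-legs via the edge-labelling-by-a-power-series convention introduced just above, collapses the whole family into the single connected loop diagram on the right-hand side, giving $[\mathcal{X}]_{b,\partial_b=0}=\exp_{\#}(\text{that loop})$.

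The main obstacle is the coefficient and sign bookkeeping for the surviving loops. For a loop built from $k$ generators one must combine four competing weights --- the $\tfrac12$ per $\lambda$-gluing, the graded crossing signs $(-1)^{x}$, the cyclic symmetry factor of the loop, and the multinomial coefficients thrown off by expanding the two exponentials --- and show that they amalgamate into precisely the rational coefficients of the claimed power series in $a$. Two points are delicate: first, one must check that the graded signs conspire so that the contributions destined to cancel do cancel, and that the symmetry overcounting is exactly absorbed, leaving a sum of \emph{connected} loops each with the correct weight (so that the exponent is genuinely a single generator under the power-series convention); second, one must confirm that the specialization $b=\partial_b=0$ removes only the intended chains and strands and leaves the full generating function rather than a truncation of it. Once these are in hand, the remainder is a routine, if lengthy, enumeration.
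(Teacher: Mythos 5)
Your two-stage outline and the appeal to the ``exponential of connected diagrams'' principle do match the overall architecture of the paper's proof (Theorem \ref{decompintoconnecteds} and its analogue in Section \ref{computingtheoperatorexpressionB}). But there is a genuine gap at the step where you conclude that imposing $b=\partial_b=0$ leaves ``only the connected pure-$a$ loops.'' That inference fails: the specialization kills only diagrams carrying $b$-parameter legs or $\partial_b$-operator legs, and it does \emph{not} touch the grade-$1$ legs of the Weil diagrams themselves (the $\bot$-legs). Consequently the connected open chains terminating in two $\bot$-legs survive the specialization. Such chains arise twice: once from the $\lambda$-stage (the contribution $C_{||}$ of Theorem \ref{connectedtheorem}, an open chain whose edge label works out to $\tanh(\frac{a}{2})$), and once from the operator stage (the contribution $C_2$ of Lemma \ref{secondapply}, chains of $b$-legged pieces closed off by the operator forks, whose outer ends are again $\bot$-legs). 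The theorem is true not because these terms are annihilated, but because they cancel exactly: one must compute that the stage-two chains sum to $-\tanh(\frac{a}{2})$, via the geometric-series identity $\frac{1}{2}\sum_{n\geq 0} Y(a)\left(aZ(a)\right)^n a\,Y(a)=\tanh(\frac{a}{2})$ with $Y(a)=-\tanh(\frac{a}{2})/(\frac{a}{2})$ and $Z(a)=-\frac{1}{2}\bigl(\tanh(\frac{a}{2})-\frac{a}{2}\bigr)/(\frac{a}{2})^2$, and this in turn requires having first evaluated the stage-one labels in closed form (the descent-statistic enumeration and the recursion $\Psi'=1-\Psi^2$ giving $\Psi=\tanh$ in Proposition \ref{psicalc}). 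Your sketch defers all of this to ``routine enumeration,'' but the cancellation is the crux of the theorem, not bookkeeping.

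The same oversight corrupts the loop coefficient. The loops produced by $\lambda$ in stage one (your ``nascent wheels'') carry the label $\frac{1}{2}\ln\cosh(\frac{a}{2})$; the Wheels series only emerges because stage two's $C_0$ contribution adds new loops, labelled $\frac{1}{2}\ln\bigl(\tanh(\frac{a}{2})/(\frac{a}{2})\bigr)$, and the sum of the two is $\frac{1}{2}\ln\bigl(\sinh(\frac{a}{2})/(\frac{a}{2})\bigr)$. A proof following your outline literally --- loops come from stage one, everything else dies under the specialization --- would output $\exp_\#$ of the $\ln\cosh$ wheel, which is not the statement of Theorem \ref{Xcalculate}. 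So the missing idea is precisely the pair of exact generating-function computations (both chain families equal $\pm\tanh(\frac{a}{2})$, both loop families combine to $\ln\sinh(\frac{a}{2})/(\frac{a}{2})$) that the paper carries out in Sections \ref{determineconnected} and \ref{computingtheoperatorexpressionB}; without them the structure of your argument does not yield the theorem.
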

Substituting this computation into Theorem
\ref{importantexpression} completes the proof of Theorem
\ref{maincombinatorialtheorem}, that for $v\in \Bspace$,
\[
\left(\phi_\Aspace^{-1}\circ\lambda\circ B_{\bullet\rightarrow
F}\circ\pi\circ\chi_\mathcal{W}\circ\Upsilon\right)(v) =
\left(\chi_\mathcal{B}\circ \partial_\Omega\right)(v).
\]

\subsection{Using operator diagrams to average.}
\label{operatorexpression} We'll build up to Theorem
\ref{mainreexpressthm} piece-by-piece. The construction begins
with the following piece:
\[ \mathcal{B}\stackrel{\Upsilon}{\longrightarrow}
\underbrace{\mathcal{W}
\stackrel{\chi_{\mathcal{W}}}{\longrightarrow}
\widetilde{\mathcal{W}}\stackrel{\pi}{\longrightarrow}
\widehat{\mathcal{W}}} \stackrel{B_{\bullet\rightarrow\mathrm{\bf
F}}}{\longrightarrow} \widehat{\mathcal{W}}_{\mathrm{\bf F}}
\stackrel{\lambda}{\longrightarrow} \widehat{\mathcal{W}}_\wedge.
\]
To present this piece, we'll turn a diagram $v\in \Wspace$ into an
operator diagram, and then have that diagram operate on an
exponential of parameters.
\begin{defn}
Define a linear map
\[
\intoop: \Wspace \rightarrow \What\abpow
\]
by replacing legs according to the rules
\[
\raisebox{-2ex}{\scalebox{0.25}{\includegraphics{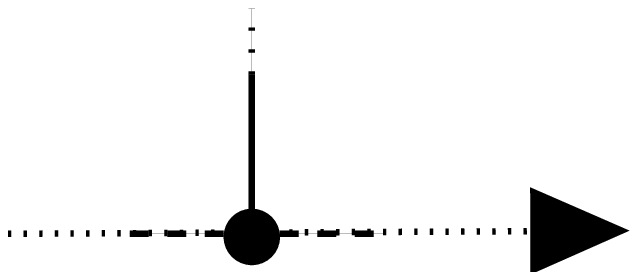}}}
\mapsto
\raisebox{-4.5ex}{\scalebox{0.25}{\includegraphics{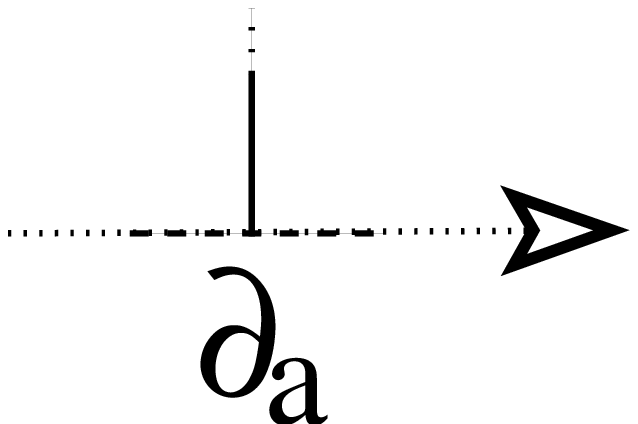}}}\ \
\ \mbox{and}\ \ \
\raisebox{-2ex}{\scalebox{0.25}{\includegraphics{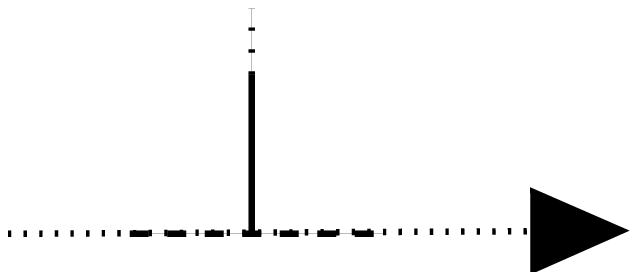}}}
\mapsto
\raisebox{-4.5ex}{\scalebox{0.25}{\includegraphics{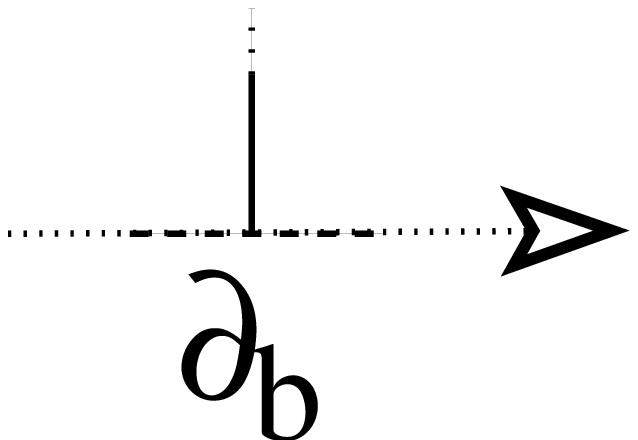}}}\ \
\ .
\]
\end{defn}
For example:
\[
\intoop\left(
\raisebox{-3ex}{\scalebox{0.25}{\includegraphics{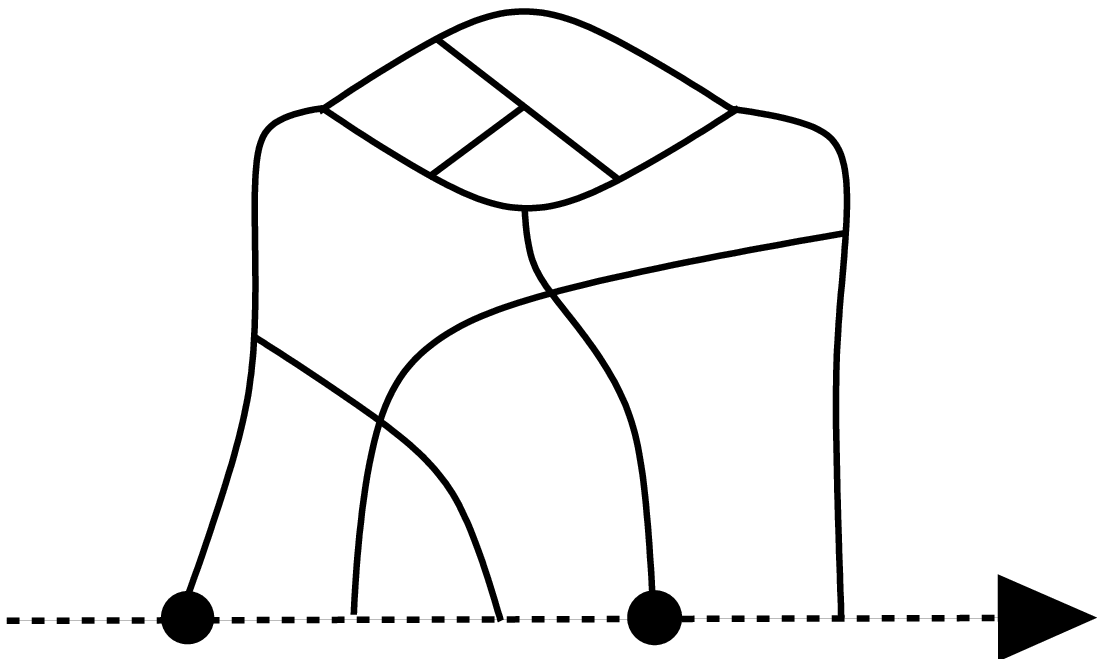}}}
\right) \ =\
\raisebox{-5.2ex}{\scalebox{0.25}{\includegraphics{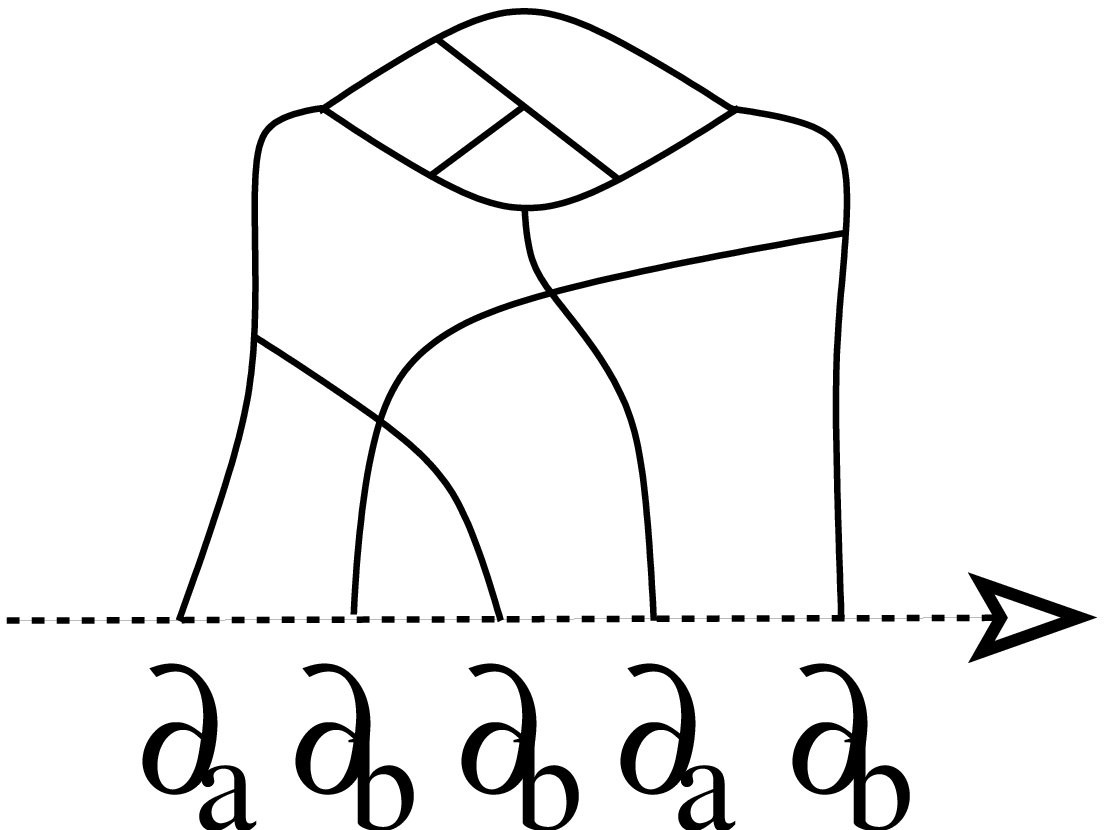}}}\ .
\]
The key proposition follows. It says that we can take the signed
average of a diagram by changing it into an operator, applying the
resulting operator to an exponential of formal parameters, then
setting all parameters to zero. The map which sets all the
parameters to zero, denoted $[.]_{a,b,\partial_a,\partial_b=0}$
below, is precisely pr$^{(0,0)}$, the projection of the $(0,0)$
factor out of the power series.
\begin{prop}\label{symmpro}
Let $v\in\mathcal{W}$. Then
\begin{equation}\label{symprop}
(\pi\circ\chi_\mathcal{W})(v)= \left[\intoop(v)\apply
\left(\exp_{\applyhash }\left(
\raisebox{-3ex}{\scalebox{0.25}{\includegraphics{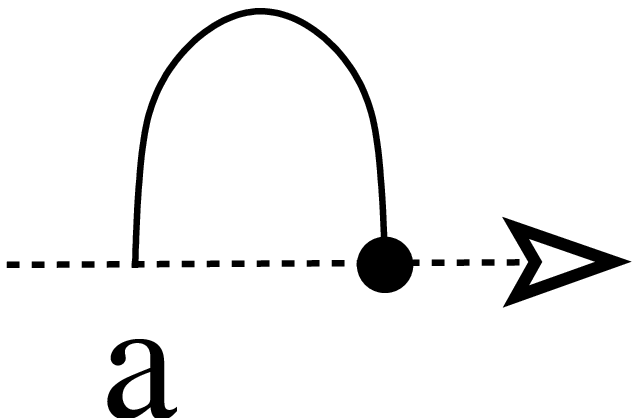}}} +
\raisebox{-3ex}{\scalebox{0.25}{\includegraphics{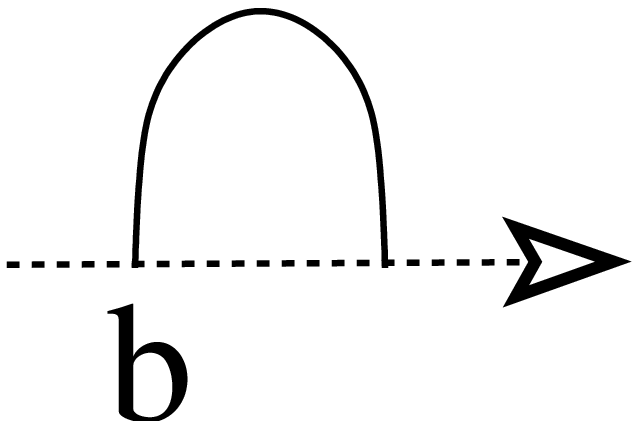}}}
\right)\right)\right]_{a,b,\partial_a,\partial_b=0}.
\end{equation}
\end{prop}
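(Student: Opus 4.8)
The plan is to reduce both sides to the \emph{same} explicit $\mathbb{Q}$-linear combination of reorderings of the legs of $v$. By bilinearity it suffices to treat a single symmetric Weil diagram $v$ with $k$ legs, of which $p$ have leg-grade $2$ and $q=k-p$ have leg-grade $1$. Writing $a$, $b$ for the grade-$2$ and grade-$1$ parameter legs appearing in the exponential, the diagram $\intoop(v)$ carries exactly $k$ operator legs ($p$ of type $\partial_a$ and $q$ of type $\partial_b$) and no other legs.

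First I would determine which terms survive the projection $[\,\cdot\,]_{a,b,\partial_a,\partial_b=0}=\mathrm{pr}^{(0,0)}$. In the operation product each operator leg either contracts with a matching parameter leg (consuming both) or is pushed past it (and survives, lowering nothing). Hence a term has operator-grade $0$ only if \emph{every} operator leg contracts, which consumes exactly $k$ parameter legs, and it has parameter-grade $0$ only if no parameter leg is left over. Expanding $\exp_{\applyhash}(a+b)=\sum_{n\ge 0}\frac{1}{n!}(a+b)^{\applyhash n}$, where $(a+b)^{\applyhash n}$ is the sum of all $2^{n}$ ordered words in the letters $a,b$, it follows that only the $n=k$ summand can contribute, and within it only the \emph{complete contractions}: those matching every operator leg bijectively to a parameter leg of the word, respecting the $a/b$ labels.

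Next I would organize the complete contractions using the grid description of $\apply$ from Section \ref{applygraphical}. A complete contraction of $\intoop(v)$ against a length-$k$ word $w$ is a label-respecting bijection $\beta$ from the legs of $v$ to the positions $\{1,\dots,k\}$; the word $w=w_\beta$ is then forced (position $j$ carries the leg-grade of $\beta^{-1}(j)$), so the pairs (word, contraction) are in bijection with $S_k$ — a count one checks is exact, since $\binom{k}{p}p!\,q!=k!$. Each such $\beta$ inherits the coefficient $\frac{1}{k!}$ from the $n=k$ term of the exponential, and because a complete contraction simply deposits each leg of $v$ onto the orienting line at the position of its matched parameter (preserving the leg-grade, as $\partial_a\leftrightarrow a$ and $\partial_b\leftrightarrow b$), the resulting diagram is exactly $v$ with its legs reordered by $\beta$, which I call $v_\beta$. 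By the grid rule the attached sign is $(-1)^{x(\beta)}$, where $x(\beta)$ counts crossings between full (leg-grade $1$) lines, i.e.\ the number of inversions of $\beta$ occurring between two leg-grade $1$ legs. Thus the right-hand side equals $\frac{1}{k!}\sum_{\beta\in S_k}(-1)^{x(\beta)}\,v_\beta$, viewed in $\What$.

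Finally I would match this against the left-hand side. By definition $\chi_\Wspace(v)=\frac{1}{k!}\sum_{\beta\in S_k}\varepsilon(\beta)\,v_\beta$, where $\varepsilon(\beta)$ is the product of the leg-transposition signs $(-1)^{xy}$ accumulated in realizing $\beta$; an inversion between legs of grades $g,g'$ contributes $-1$ exactly when both grades are odd, i.e.\ both equal $1$, so $\varepsilon(\beta)=(-1)^{x(\beta)}$ for precisely the $x(\beta)$ above. Since $\pi$ merely imposes the $\What$-relations and acts as the identity on the underlying linear combination, $(\pi\circ\chi_\Wspace)(v)=\frac{1}{k!}\sum_{\beta}(-1)^{x(\beta)}v_\beta$ in $\What$, which is exactly the right-hand side. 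The main obstacle is this sign reconciliation: one must verify that the transposition sign defining $\chi_\Wspace$ and the crossing-number sign produced by $\apply$ coincide, which rests on the observation that all grade-$2$ contributions are trivial and only grade-$1$/grade-$1$ inversions survive. A secondary point needing care is confirming that the correspondence (word, contraction) $\leftrightarrow S_k$ is genuinely a bijection, so that the coefficient is exactly $\frac{1}{k!}$ with no hidden multiplicity.
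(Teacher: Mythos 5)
Your proposal is correct and takes essentially the same route as the paper's own proof: expand $\exp_{\#}(a+b)$ into words, observe that after projecting to type $(0,0)$ only complete label-respecting contractions (equivalently, parameter-respecting permutations) survive with coefficient $\frac{1}{k!}$, identify each surviving term as a signed reordering $v_\beta$, and match the crossing sign with the graded-averaging sign. Your ``sign reconciliation'' step is precisely the paper's key lemma ($t(v_{\mathrm{op}},f_w,\sigma)=\varphi(\sigma)v^\sigma$), which the paper also justifies by the same topological observation that only crossings between grade-$1$ (full) lines contribute modulo $2$.
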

\begin{proof}
 Both sides are linear
maps, so it suffices to check this formula on generators. So take
some symmetric Weil diagram $v$, and assume that it has $p$ grade
2 legs and $q$ grade 1 legs. We'll evaluate the value that the
right-hand side takes on $v$ and observe that it is precisely the
signed average of $v$, as required.

For convenience, write
\[
v_{\mathrm{op}}\ \ \mbox{for}\ \ \ \intoop(v).
\]
Consider, then, the exponential that $v_{\mathrm{op}}$ is to be
applied to. We'll index its terms by the set of words (including
the empty word) that can be built from the symbols $A$ and $B$.
Given some such word $w$, let $f_w$ denote the corresponding
diagram. For example:
\[
f_{BABBA}\ =\
\raisebox{-4.5ex}{\scalebox{0.25}{\includegraphics{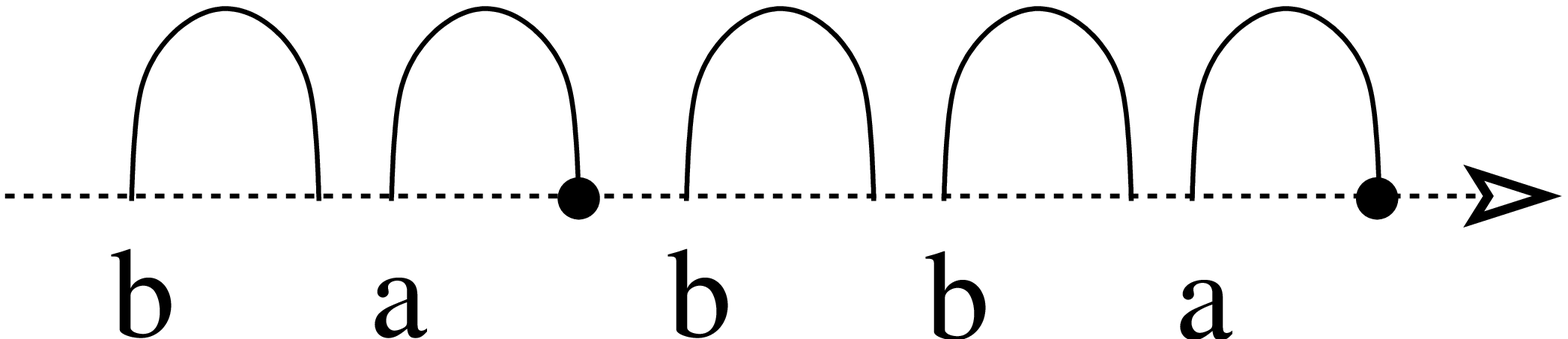}}}\
.
\]
Thus we can expand the right-hand-side of equation
\ref{symprop} to get:
\begin{equation}\label{expandedexp} \left[ v_{\mathrm{op}} \apply  \left(
\raisebox{0.1ex}{$\sum_{
\begin{subarray}{c}
\mathrm{Words}\ w\ \mathrm{made\ from} \\
\mathrm{the\ symbols}\ A\ \mathrm{and}\
B.\end{subarray}}\frac{1}{|w|!} f_{w}$} \right)
\right]_{a,b,\partial_a,\partial_b=0}.
\end{equation} Because $v_{\mathrm{op}}$ has exactly $p$
legs labelled by $\partial_a$  and exactly $q$ legs labelled by
$\partial_b$, the only terms that will survive when all the
parameters are set to zero will arise from the terms $f_w$
corresponding to words $w$ built from exactly $p$ copies of $A$
and exactly $q$ copies of $B$. Restricting expression
\ref{expandedexp} to these terms we write: \begin{equation}
\label{ees} \frac{1}{(p+q)!} \sum_{
\begin{subarray}{c}
\mathrm{Words}\ w\ \mathrm{built}\ \\ \mathrm{from}\ p\
\mathrm{copies}\ \mathrm{of}\ A\ \\ \mathrm{and}\  q\
\mathrm{copies}\ \mathrm{of}\ B.
\end{subarray}}
\left[v_{\mathrm{op}}\apply  f_{w}
\right]_{a,b,\partial_a,\partial_b=0}.
\end{equation}
Fix, then, such a word $w$ and let's proceed to calculate $[v_\mathrm{op}\apply
f_w]_{a,b,\partial_a,\partial_b=0}$. For example, if
\[
v =\
\raisebox{-3.1ex}{\scalebox{0.25}{\includegraphics{makediffA}}}\ \
\
\]
and $w=BABBA$, then we wish to calculate
\[
\left[
\left(\raisebox{-5.5ex}{\scalebox{0.23}{\includegraphics{makediffB}}}\right)
\apply
\raisebox{-5ex}{\scalebox{0.23}{\includegraphics{exptermexamp}}}
\right]_{a,b,\partial_a,\partial_b=0}.
\]
To do such a computation directly, it helps to employ the
graphical method for doing diagram operations that was described
in Section \ref{alternativedefn}.

Let us take a moment to recall this method. We begin by placing
the operator legs of $v_{\mathrm{op}}$ up the left-hand side of a grid, and the legs of
$f_w$ along the top of the grid: \[
\scalebox{0.25}{\includegraphics{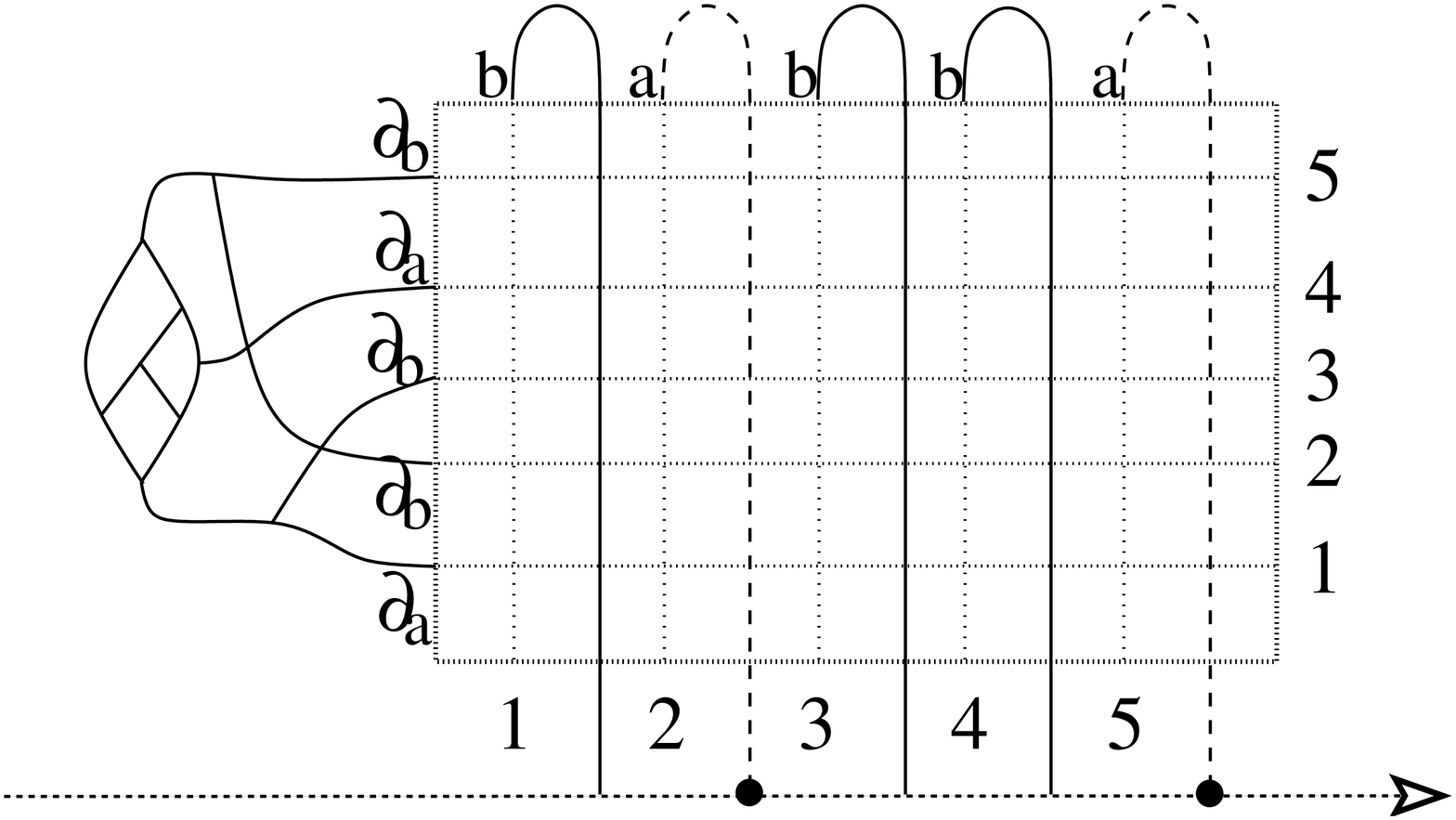}}
\]
According to this method, we get precisely one contribution
$t(v_{\mathrm{op}},f_{w},\sigma)$ to the operation for every
permutation
\[
\sigma : \{1,2,3,4,5\} \to \{1,2,3,4,5\}
\]
which respects the parameters. (To be precise, these are the contributions that will survive after the parameters get sent to zero at the end of the operation.) So if we let Perm$_n$
denote the set of all permutations on $\{1,\ldots,n\}$ and let
$\text{Perm}_{p+q}(v,w)\subset\text{Perm}_{p+q}$ denote the set of
permutations respecting the parameters, then we can write
\begin{equation}\label{opcontr}
\left[v_{\mathrm{op}}\apply
f_w\right]_{a,b,\partial_a,\partial_b=0} = \sum_{\sigma \in
\mathrm{Perm}_{p+q}(v,w)} t(v_{op},f_w,\sigma),
\end{equation}
where $t(v_{op},f_w,\sigma)$ is determined by the usual graphical
method. For example, the term corresponding to the permutation
$\left({1 2 3 4 5 \atop 2 4 3 5 1}\right)$ is determined by
joining up legs in the following way:
\[
\scalebox{0.25}{\includegraphics{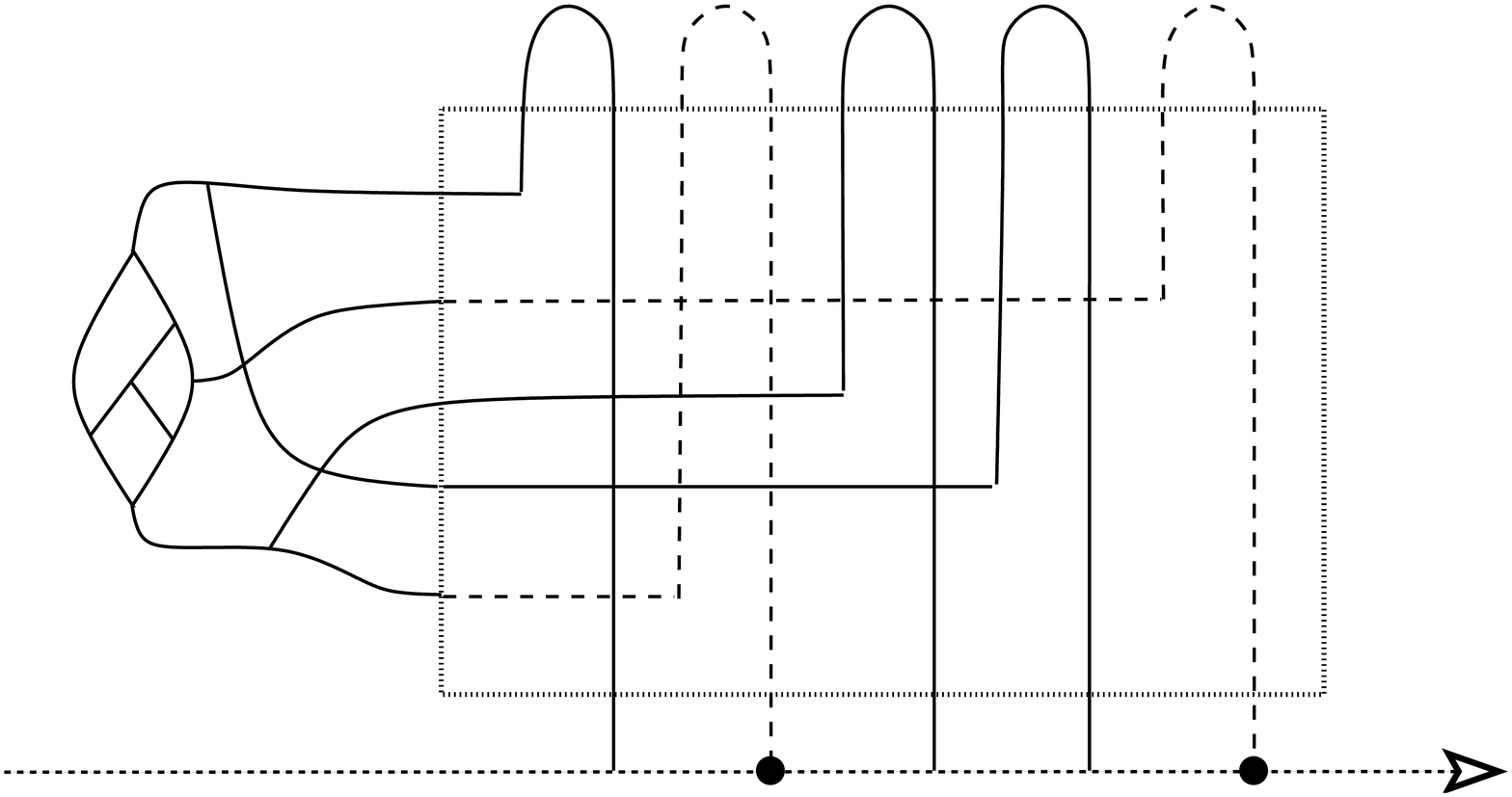}}
\]
Recall that the sign of the contribution is determined by counting
the number of intersections between full lines displayed within
the box. Thus: \[
 t\left(v_{\mathrm{op}},f_w,\left({1 2 3 4 5 \atop 2 4 3 5
1}\right)\right)\ =\
(-1)^3\raisebox{-6ex}{\scalebox{0.25}{\includegraphics{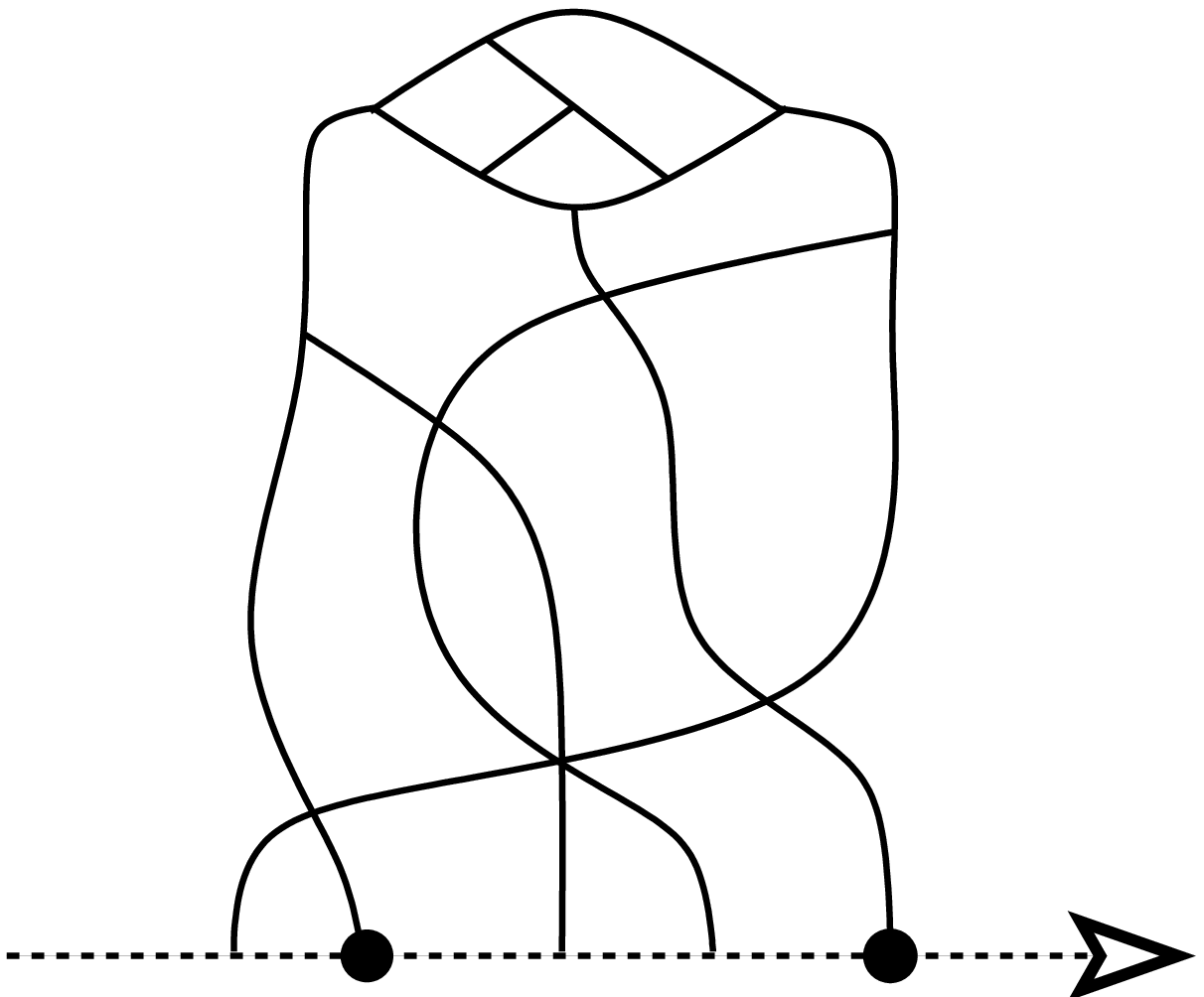}}}\
.
\]
Here is what we need to notice: the diagram
$t(v_\mathrm{op},f_w,\sigma)$ put out by this operation is (up to
some to-be-determined sign) precisely what we get by permuting the
legs of the original diagram using the permutation $\sigma$. To
state this observation precisely, let's introduce some notation:
Given a permutation $\sigma\in \text{Perm}_{p+q}$, let $v^\sigma$
denote the diagram one gets by permuting (without introducing
signs) the legs of $v$ according to $\sigma$. For example, if
\[
v = \raisebox{-4ex}{\scalebox{0.25}{\includegraphics{makediffA}}}\
\ \text{then}\ \
v^{\left( \begin{subarray}{c} 1  2  3  4  5 \\
1 3  5  2  4 \end{subarray} \right)}\ =\ \
\raisebox{-4ex}{\scalebox{0.25}{\includegraphics{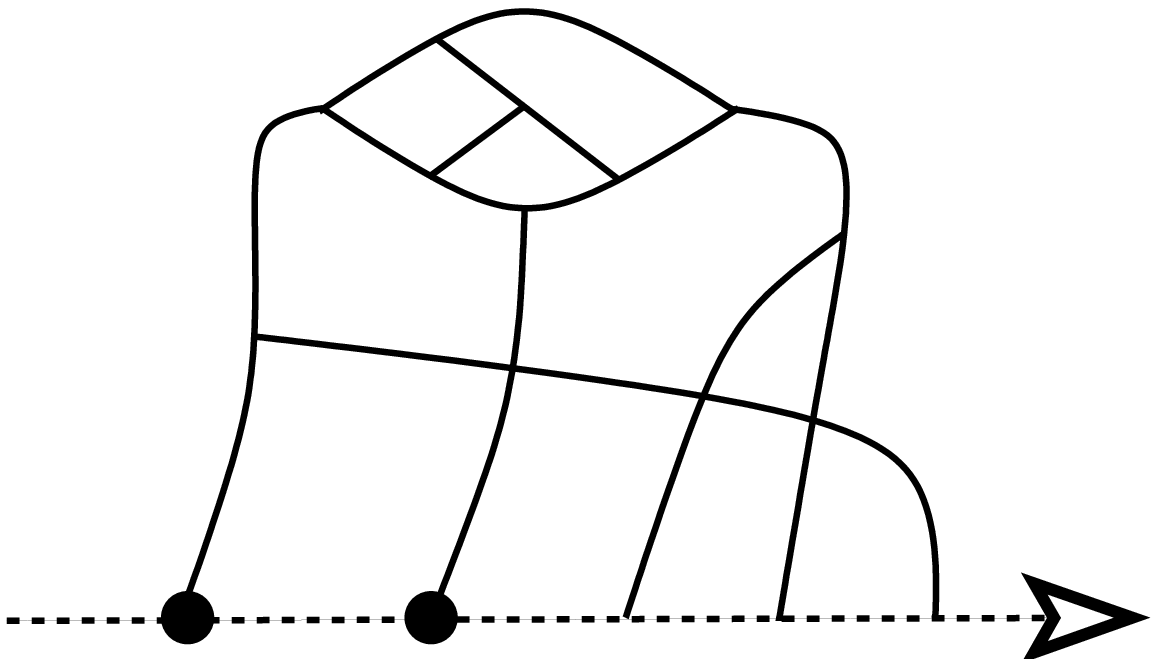}}}\ .
\]
The required observation is:
\begin{lem}
Let $v$ be some diagram from $\Wspace$ with $p$ grade $2$ legs and
$q$ grade $1$ legs. Let $w$ be some word which uses $p$ copies of
the symbol $A$ and $q$ copies of the symbol $B$. Let $\sigma$ be
some permutation from $\text{Perm}_{p+q}(v,w)$. Then:
\[
t(v_{\mathrm{op}},f_w,\sigma) = \varphi(\sigma)v^\sigma,
\]
where $\varphi(\sigma)$ is the product of a $(-1)$ for every pair
of grade $1$ legs of $v$ which reverse their order in
$v^{\sigma}$.
\end{lem}
{\it Proof of the Lemma:} The fact that
$t(v_{\mathrm{op}},f_w,\sigma)$ is some sign multiplied by
$v^\sigma$ is quite clear. The subtlety is in the sign. This is
quite clear too, once we regard the problem from the point of view
of the visual method of doing the gluing. Here is the general
picture of a gluing:
\[
\scalebox{0.30}{{\includegraphics{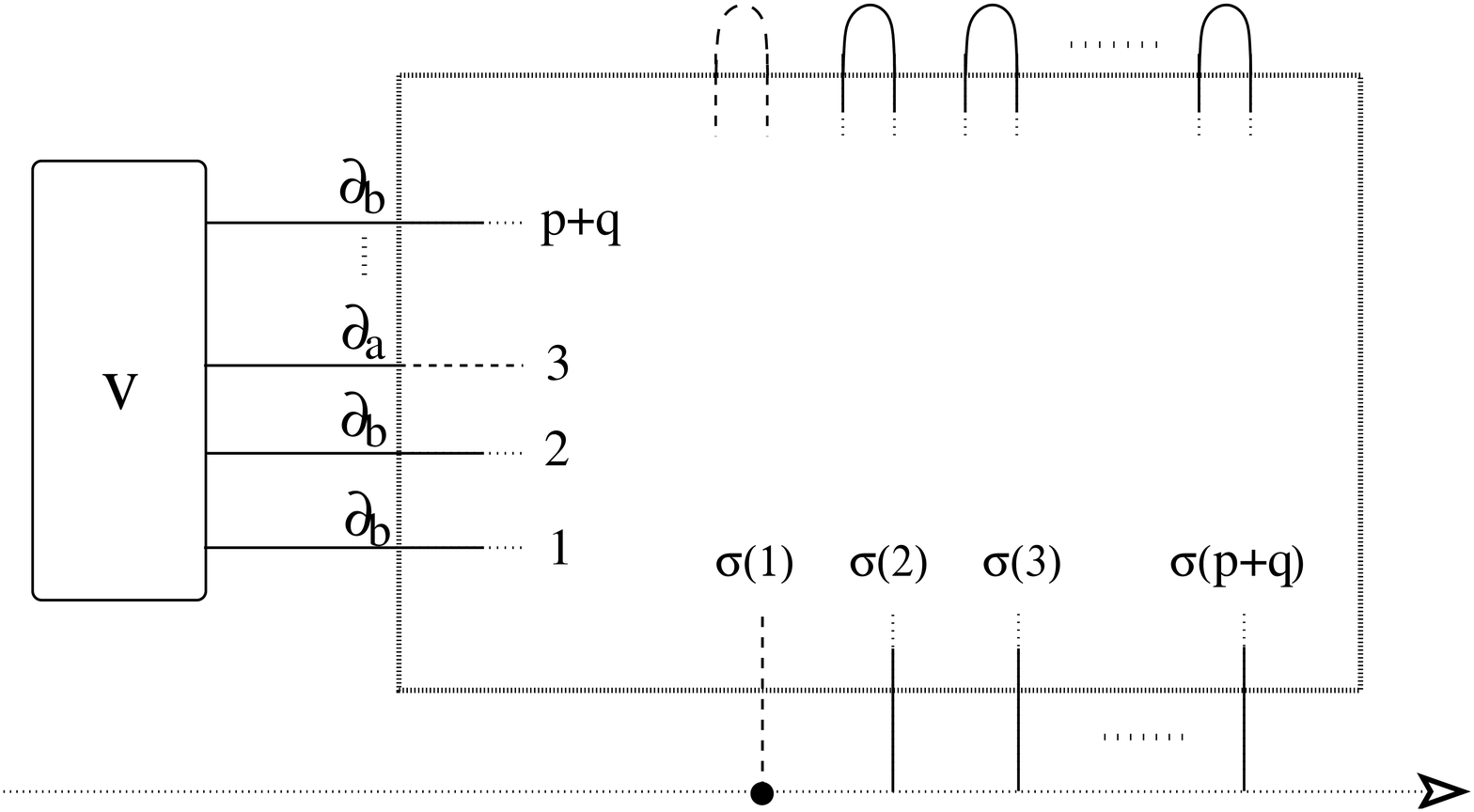}}}\ \ \ .
\]
The legs of $v$ appear in their original order up the left-hand
side of the grid. From there, they continue through the box (and the arcs at
the top) along paths determined by the gluing, then finish along the
orienting line at the bottom in the order determined by the
permutation $\sigma$. The sign we are trying to determine is
$(-1)$ raised to the number of intersections between full lines
displayed inside the box. The topology of the arrangement
requires that the number of intersections between full lines is
given, modulo 2, by the stated formula. {\it End of the proof of the Lemma.}

Now we can put it all together. According to Equation \ref{ees}
and Equation \ref{opcontr}, the right-hand side of the Equation
\ref{symprop} can be written \[ \frac{1}{(p+q)!} \sum_{
\begin{subarray}{c}
\mathrm{Words}\ w\ \mathrm{built}\ \\ \mathrm{from} p\
\mathrm{copies}\ \mathrm{of}\ A\ \\ \mathrm{and}\  q\
\mathrm{copies}\ \mathrm{of}\ B.
\end{subarray}}
\sum_{\sigma \in \mathrm{Perm}_{p+q}(v,w)} t(v_{op},f_w,\sigma).
\]
Substituting the result of the lemma into this, we get:
\[
\frac{1}{(p+q)!} \sum_{
\begin{subarray}{c}
\mathrm{Words}\ w\ \mathrm{built}\ \\ \mathrm{from} p\
\mathrm{copies}\ \mathrm{of}\ A\ \\ \mathrm{and}\  q\
\mathrm{copies}\ \mathrm{of}\ B.
\end{subarray}}
\sum_{\sigma \in \mathrm{Perm}_{p+q}(v,w)}
\varphi(\sigma)v^\sigma\ .
\]
This, of course, is just $\pi\circ\chi_\Wspace$, the required
graded averaging map.
\end{proof}

It is a simple step to extend this proposition to give a formula
for the following piece of the composition
\[ \Bspace\stackrel{\Upsilon}{\longrightarrow}
\underbrace{\ \mathcal{W}
\stackrel{\chi_{\mathcal{W}}}{\longrightarrow}
\widetilde{\mathcal{W}}\stackrel{\pi}{\longrightarrow}
\widehat{\mathcal{W}} \stackrel{B_{\bullet\mapsto\mathrm{\bf
F}}}{\longrightarrow} \widehat{\mathcal{W}}_{\mathrm{\bf F}}}
\stackrel{\lambda}{\longrightarrow} \widehat{\mathcal{W}}_\wedge.
\]

\begin{cor}\label{comblemcor}
Let $v\in\mathcal{W}$. Then
$\left(\basebulltoF\circ\pi\circ\chi_{\mathcal{W}}\right)(v)$ is
given by the expression
\[
\left[\intoop(v)\apply  \left(\exp_{\# }\left(
\raisebox{-3.5ex}{\scalebox{0.25}{\includegraphics{paramj}}}
+\frac{1}{2}
\raisebox{-3.5ex}{\scalebox{0.25}{\includegraphics{paramk}}} +
\raisebox{-3.5ex}{\scalebox{0.25}{\includegraphics{paraml}}}
\right)\right)\right]_{a,b,\partial_a,\partial_b=0}.
\]
\end{cor}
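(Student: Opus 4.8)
The plan is to read the Corollary off Proposition~\ref{symmpro} by applying the change-of-basis map $\basebulltoF$ to both sides of \eqref{symprop}. The governing principle is that $\basebulltoF$ touches only grade-$2$ legs, so on the right-hand side its effect can be absorbed into the formal exponential that the operator $\intoop(v)$ is applied to. Write $A$ and $B$ for the grade-$2$ and grade-$1$ parameter generators occurring in \eqref{symprop}, so that the right-hand side there is $\bigl[\,\intoop(v)\apply\exp_{\applyhash}(A+B)\,\bigr]_{a,b,\partial_a,\partial_b=0}$.

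First I would extend $\basebulltoF$ to the operator-diagram space $\What\abpow$ by the same local rule -- replacing each fat-dot grade-$2$ leg (including the grade-$2$ parameter leg $A$) by a curvature leg plus one-half of a parameter fork, while fixing every grade-$1$ leg and every operator leg; this respects the relations exactly as on $\WhatF$. Since $\intoop(v)$ carries only operator legs and no fat-dot grade-$2$ legs, $\basebulltoF$ acts on it as the identity. The crux is then the commutation
\[
\basebulltoF\bigl(\intoop(v)\apply y\bigr)=\intoop(v)\apply\basebulltoF(y),
\]
valid precisely because the left-hand operand has no fat-dot legs of its own. Granting this, and using that $\basebulltoF$ is local and hence multiplicative for the juxtaposition product $\applyhash$, I obtain $\basebulltoF(\exp_{\applyhash}(A+B))=\exp_{\applyhash}(\basebulltoF(A)+B)$, as $\basebulltoF(B)=B$. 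The whole Corollary then reduces to the single local identity
\[
\basebulltoF\!\left(\raisebox{-1.5ex}{\scalebox{0.22}{\includegraphics{parama}}}\right)=\raisebox{-1.5ex}{\scalebox{0.22}{\includegraphics{paramj}}}+\frac{1}{2}\,\raisebox{-1.5ex}{\scalebox{0.22}{\includegraphics{paramk}}},
\]
which is nothing but the defining rule of $\basebulltoF$ applied to the grade-$2$ parameter.

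The step I expect to be the main obstacle is the commutation above, and more precisely the term-by-term check that gluing $\partial_a$ onto the two summands of $\basebulltoF(A)$ reproduces what $\basebulltoF$ does to the grade-$2$ output leg created when $\partial_a$ glues onto the original fat dot. One argues directly from the graphical description of $\apply$ in Section~\ref{alternativedefn}: gluing $\partial_a$ onto the curvature summand $\raisebox{-1ex}{\scalebox{0.2}{\includegraphics{paramj}}}$ deposits a single curvature grade-$2$ leg at the end of the incoming edge of $v$, while gluing $\partial_a$ onto the $a$-edge of the fork $\raisebox{-1ex}{\scalebox{0.2}{\includegraphics{paramk}}}$ plants a trivalent vertex there and leaves exactly the two grade-$1$ legs that $\basebulltoF$ produces when it splits a fat dot; with the coefficient $\tfrac12$ this is precisely the fat-dot rule. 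Because the only terms surviving $[\,\cdot\,]_{a,b,\partial_a,\partial_b=0}=\mathrm{pr}^{(0,0)}$ are those matching the operator legs of $\intoop(v)$ bijectively with the parameter legs, every grade-$2$ output leg arises this way from a single copy of $A$, so the local check propagates to the entire operation and the commutation follows. No new convergence difficulty appears -- the modified exponential merely splits each grade-$2$ generator into two terms, and $\mathrm{pr}^{(0,0)}$ still gathers contributions from only finitely many gluings -- so applying $\basebulltoF$ to the left side of \eqref{symprop} gives $(\basebulltoF\circ\pi\circ\chi_{\Wspace})(v)$ while the right side becomes the asserted expression.
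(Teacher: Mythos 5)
Your proposal is correct and follows essentially the paper's own route: the paper gives no separate proof of this corollary, presenting it as a ``simple step'' extending Proposition \ref{symmpro}, and your argument --- extend $\basebulltoF$ to operator diagrams so that the grade-$2$ parameter leg maps to the curvature parameter leg plus half the parameter fork, observe that it fixes $\intoop(v)$, commutes with $\apply$ when the left operand has no fat-dot legs, and is multiplicative for $\#$ --- is precisely the natural filling-in of that step. Your key local check, that gluing $\partial_a$ into the curvature-plus-half-fork combination reproduces the $\basebulltoF$-image of the grade-$2$ leg created by gluing $\partial_a$ into the original grade-$2$ parameter, is exactly the point the paper leaves implicit.
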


\subsection{Hair-splitting with operator diagrams.}
We now turn our focus to the first step in the composition:
\[ \underbrace{\ \mathcal{B}\stackrel{\Upsilon}{\longrightarrow}
{\mathcal{W}}} \stackrel{\chi_{\mathcal{W}}}{\longrightarrow}
\widetilde{\mathcal{W}}\stackrel{\pi}{\longrightarrow}
\widehat{\mathcal{W}} \stackrel{B_{\bullet\rightarrow\mathrm{\bf
F}}}{\longrightarrow} \widehat{\mathcal{W}}_{\mathrm{\bf F}}
\stackrel{\lambda}{\longrightarrow} \widehat{\mathcal{W}}_\wedge.
\]

\begin{prop}\label{howtosplit}
Let $v\in\mathcal{B}$. Then the equation
\[
\baselegtopartial(v) \apply
\mathrm{exp}_\apply\left(-\frac{1}{2}\,
\raisebox{-3.5ex}{\scalebox{0.25}{\includegraphics{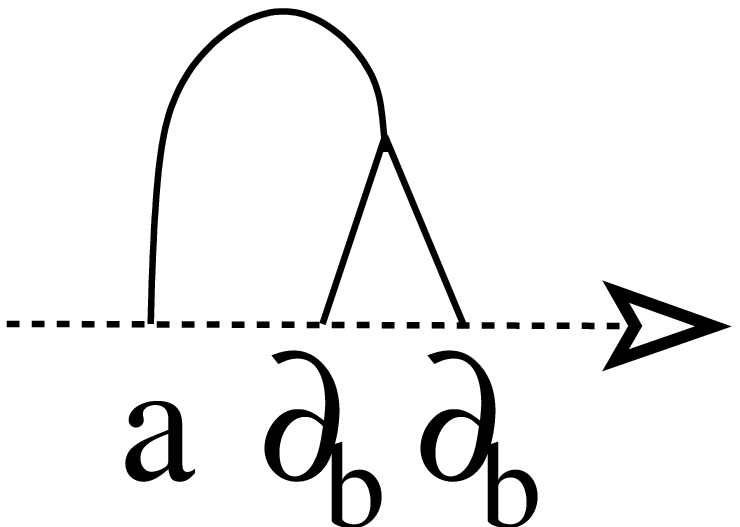}}}
\right) = \mathrm{exp}_\apply \left(-\frac{1}{2}\,
\raisebox{-3.5ex}{\scalebox{0.25}{\includegraphics{paramZ}}}
\right)\apply  \intoop\left(\Upsilon(v)\right)
\]
holds in $\WhatF\abpow$.
\end{prop}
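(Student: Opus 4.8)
\emph{Reformulation.} Write $E=\exp_\apply\!\left(-\tfrac12 Z\right)$ for the exponential appearing on both sides, where $Z$ denotes the operator Weil diagram drawn as paramZ. Since $Z$ is even and contracts with nothing in a second copy of itself, $E$ is $\apply$-invertible, with inverse $\exp_\apply\!\left(+\tfrac12 Z\right)$, so the asserted identity is equivalent to the conjugation statement
\[
E^{-1}\apply \baselegtopartial(v)\apply E=\intoop\bigl(\Upsilon(v)\bigr).
\]
The plan is to read off both sides leg-by-leg. On the right, recall that $\Upsilon$ replaces each leg of $v$ by a curvature leg, and that a curvature leg expands as a grade-$2$ leg minus one half of a fork; applying $\intoop$ then turns the grade-$2$ leg into a $\partial_a$ operator leg and the fork into a trivalent vertex carrying two $\partial_b$ operator legs. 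Thus $\intoop(\Upsilon(v))$ is exactly $\baselegtopartial(v)$ with each $\partial_a$ leg replaced by $\partial_a-\tfrac12 F$, where $F$ denotes that two-$\partial_b$ fork operator. So the whole content of the Proposition is that pushing $E$ from the right past $\baselegtopartial(v)$ converts every $\partial_a$ leg into $\partial_a-\tfrac12 F$. (Note also that the right-hand side is in fact a plain juxtaposition $E\#\intoop(\Upsilon(v))$, since $\intoop(\Upsilon(v))$ carries no parameter legs for the operator legs of $E$ to act on; this is a useful bookkeeping check on the final answer.)

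\emph{The single-leg computation (the heart).} The key is the identity
\[
\partial_a\apply E=E\apply\bigl(\partial_a-\tfrac12 F\bigr)
\]
for a single $\partial_a$ operator leg. I would prove it by the graded Leibniz rule governing $\apply$: as $\partial_a$ is pushed rightward through the product $Z^{\apply m}$, at each factor it either passes by (the pass-through term) or contracts with the unique $a$-parameter leg of that factor (the contraction term). Because $Z$ is \emph{linear} in the parameter $a$, a second contraction against the same factor is impossible, so no higher derivatives occur; and the operator $F$ produced by a contraction carries only $\partial_b$ legs, hence is central with respect to $E$. Summing the Leibniz terms over the exponential, the factor $m\cdot\tfrac1{m!}=\tfrac1{(m-1)!}$ regenerates $E$, and centrality lets $F$ be moved freely, yielding precisely $E\apply(\partial_a-\tfrac12 F)$, with the $-\tfrac12$ coming directly from the exponent.

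\emph{Assembling all legs.} Now $\baselegtopartial(v)$ is a single diagram carrying $n$ operator legs glued to a fixed internal graph, not literally a product of single-leg operators, so I would carry out the global computation with the graphical gluing method of Section \ref{applygraphical}: the product $\baselegtopartial(v)\apply E$ is the sum of one term for every injection of a subset of the $n$ operator legs into the $a$-parameter legs of the factors of $E$. Each such gluing fires the chosen legs (producing forks $F$) and passes the rest through (leaving $\partial_a$ legs); since each factor $Z$ has a single $a$-leg, distinct operator legs necessarily fire on distinct factors. Grouping the gluings by which subset $S$ of legs fires, the internal sum over which factors of $E$ are consumed again telescopes through the exponential coefficients to reproduce a clean factor of $E$, while the sum over $S$ assembles the expansion $\prod_{\text{legs}}(\partial_a-\tfrac12 F)=\intoop(\Upsilon(v))$. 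To move the single $E$ past all $n$ legs and to rebracket the resulting power-series products I would invoke associativity together with the finiteness Condition $(\S)$ of Lemma \ref{powerseriesassociate}, which is immediate here because each fixed output type receives contributions from only finitely many gluings.

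\emph{Main obstacle.} The genuine work is not the existence of the terms but their \emph{signs} and the verification that nothing else survives. The created operators $F$ carry odd ($\partial_b$) legs, so transporting them rightward past the still-unfired $\partial_a$ legs, past the internal graph, and past the operator legs of $E$ incurs graded signs; these must be shown to agree, term by term, with the signs already packaged into the leg-transposition conventions of $\Wspace$ and $\WhatF$ and hence into $\intoop\circ\Upsilon$. In the graphical method these signs are counted as the parity of the number of crossings $x$ of full (grade-$1$) lines, so the heart of the sign check is the combinatorial-topological statement that this crossing parity equals the reordering sign demanded by the target diagram — exactly the kind of bookkeeping established for the averaging map in Proposition \ref{symmpro}. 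Getting this crossing-count to match, uniformly over all firing patterns, is where I expect the main difficulty to lie.
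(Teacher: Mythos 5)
Your proposal is correct and its core --- the ``assembling all legs'' step --- is essentially the paper's own proof: the paper likewise expands $\baselegtopartial(v)\apply E$ as a sum over gluings, i.e.\ over injections of subsets $S$ of the legs of $v$ into the factors of the exponential, counts the $n!/(n-|S|)!$ injections per subset so that the exponential factor regenerates itself, and identifies $\sum_S\left(-\frac{1}{2}\right)^{|S|}v_S$ with $\intoop\left(\Upsilon(v)\right)$ (your conjugation reformulation and single-leg Leibniz identity are harmless packaging around this same computation). The sign bookkeeping you flag as the likely main obstacle is in fact vacuous: every mobile piece in this computation --- a $\partial_a$ leg, or a fork carrying a pair of $\partial_b$ legs --- has even total grade, so every gluing contributes with coefficient exactly $+1$, which is why the paper's proof passes over signs in silence.
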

\begin{proof} As the two sides of this equation are both linear maps, it
suffices to show that the equation holds for generators of $\mathcal{B}$. So let
$v$ be a symmetric Jacobi diagram. We begin by expanding the
left-hand side of Equation \ref{howtosplit} in the following way:
\[ \sum_{n=0}^{\infty}\left(\left(-\frac{1}{2}\right)^n \frac{1}{n!}\right)
B_{l\mapsto\partial_a}(v)\apply
\left(\raisebox{-6ex}{\scalebox{0.25}{\includegraphics{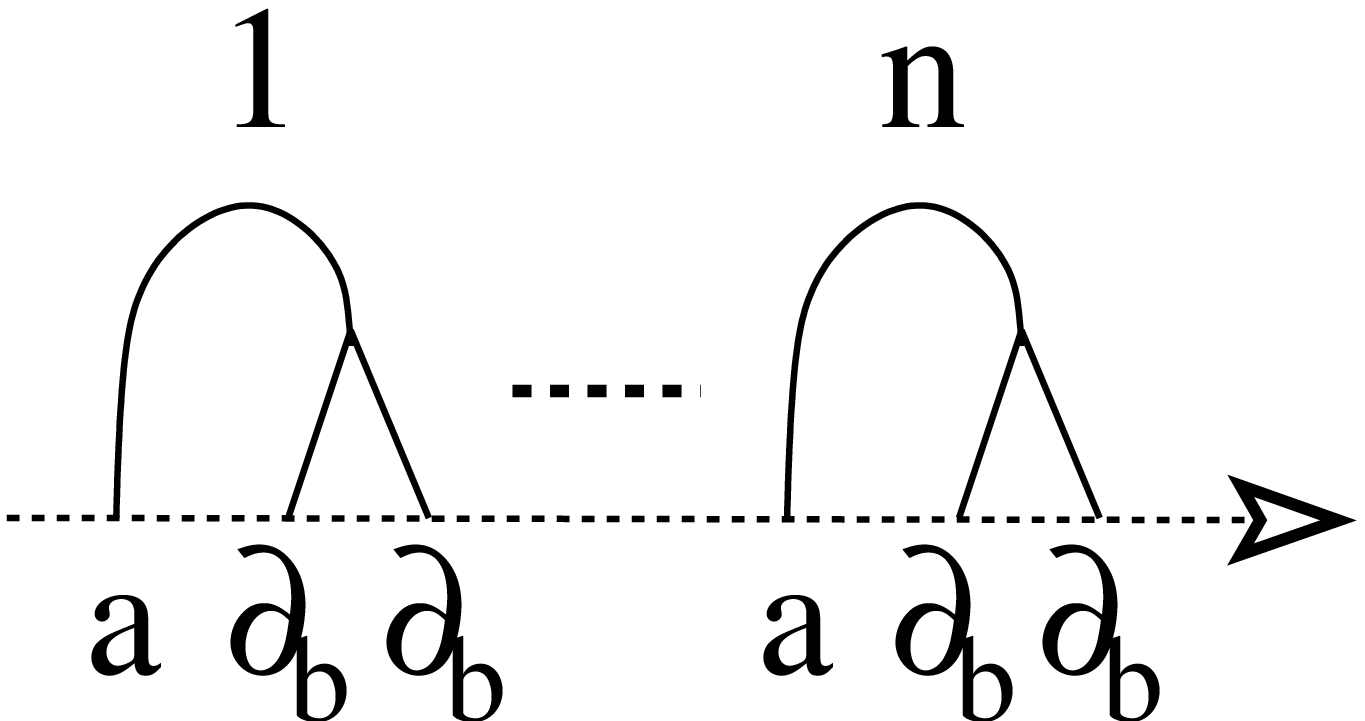}}}\right)\
\ .
\]
Now consider the diagram operation in the above sum. Recalling how
to do diagram operations (see Section \ref{alternativedefn}), we
get (letting  $\mathcal{L}$ denote the set of the legs of $v$):
\[ \sum_{n=0}^{\infty}\left(\left(-\frac{1}{2}\right)^n
\frac{1}{n!}\right) \sum_{{\mathrm{Injections}\atop
\phi:\mathcal{L} \supset S \rightarrow \{1,\ldots,n\} }} \left(
\raisebox{-6ex}{\scalebox{0.25}{\includegraphics{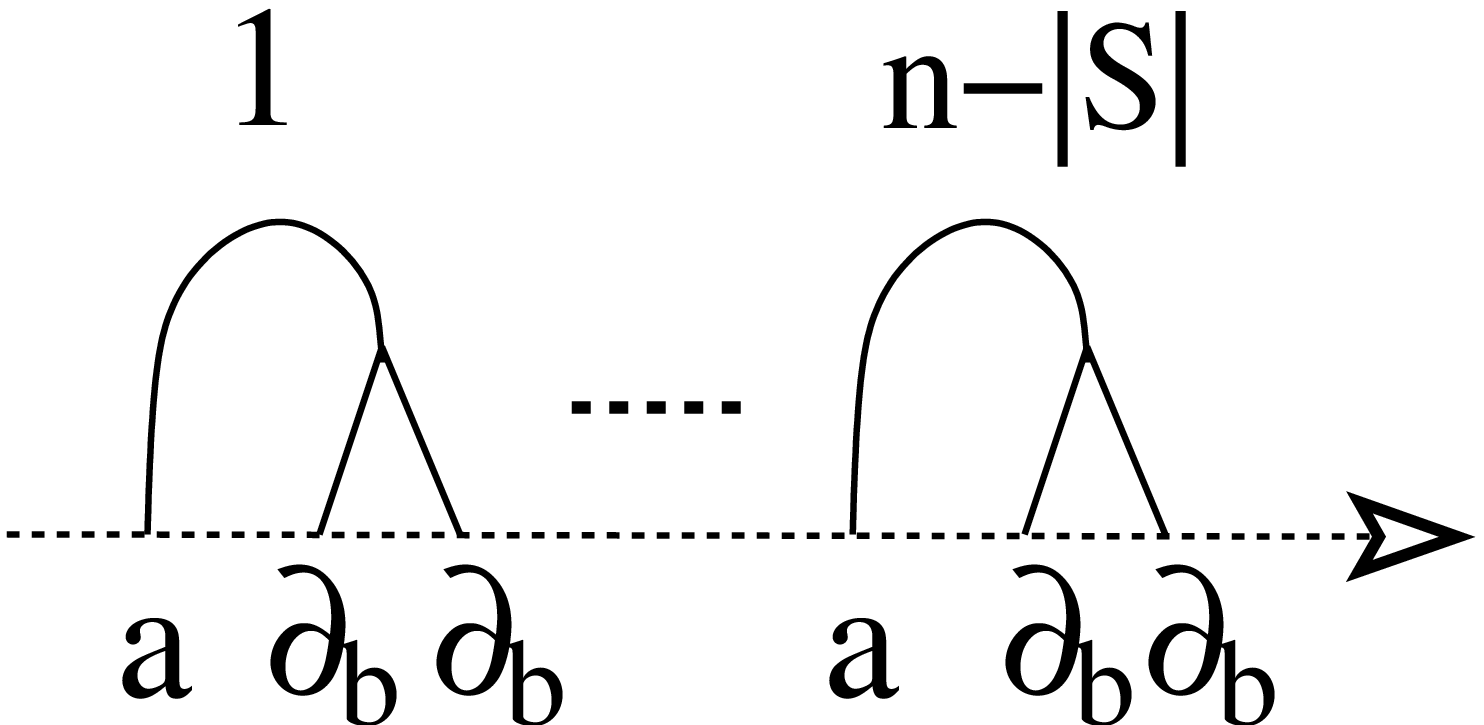}}}\right)\apply
v_S \ ,
\]
where, for example,
\[
\mbox{if}\ \ v\ =\
\raisebox{-6ex}{\scalebox{0.25}{\includegraphics{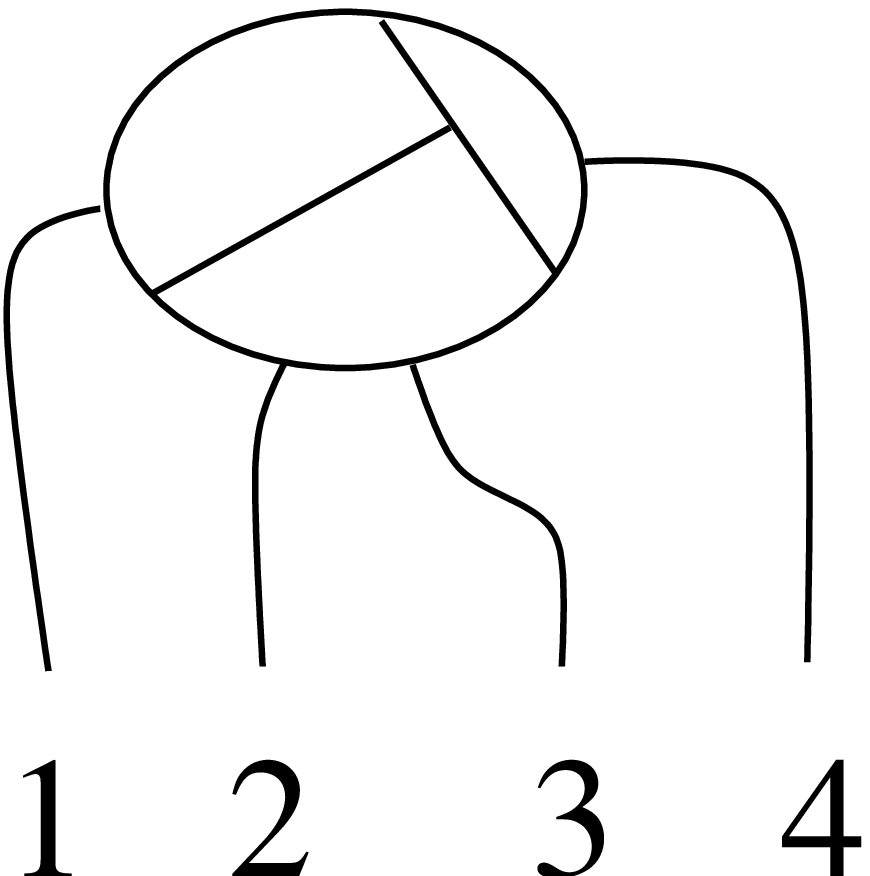}}}\ \
\ \mbox{then}\ \ v_{\{1,3,4\}}\ =\
\raisebox{-8ex}{\scalebox{0.25}{\includegraphics{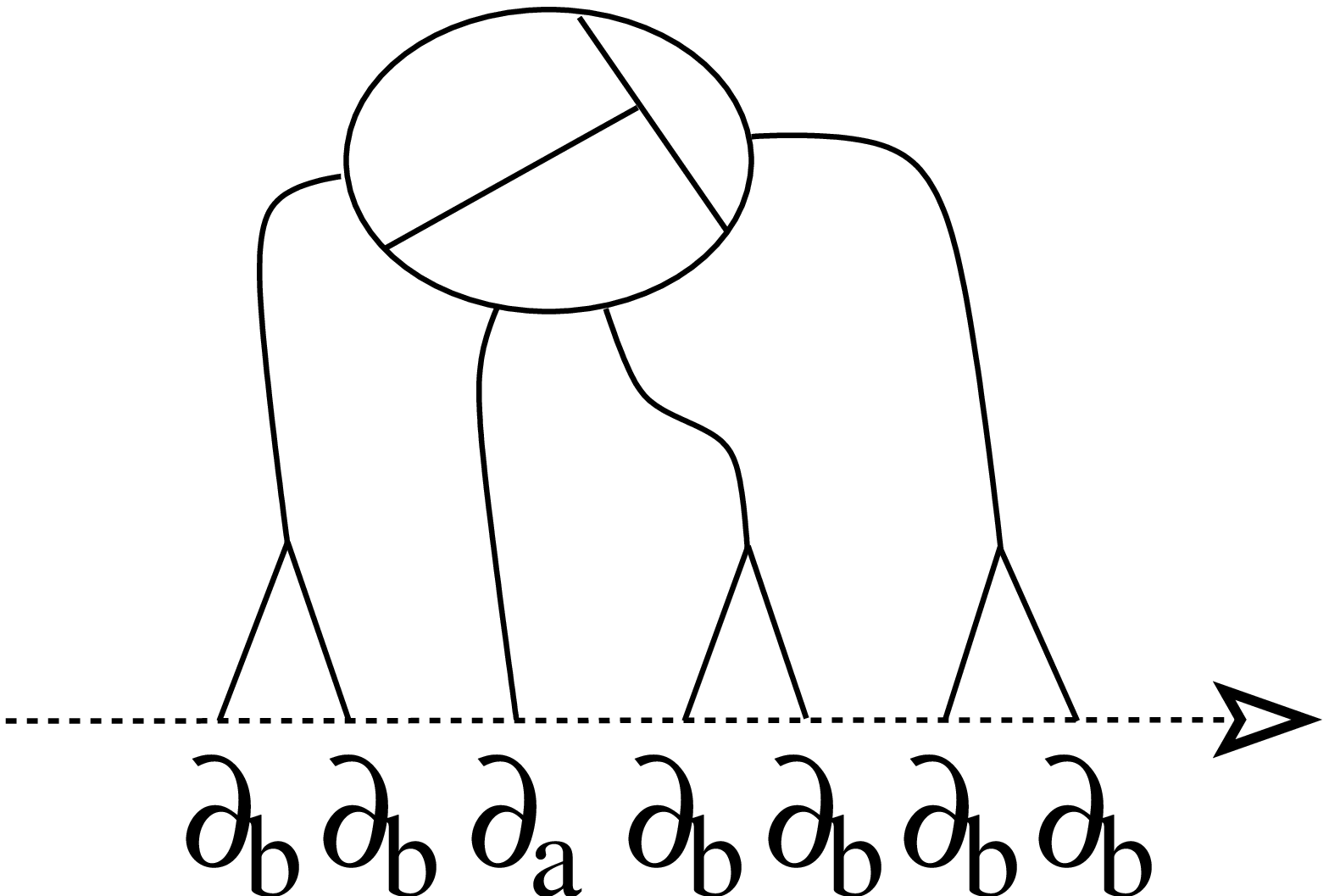}}}\
.
\]
Now for every subset $S\subset \mathcal{L}$ of legs there are
$\frac{n!}{(n-|S|)!}$ injections $\phi:S\rightarrow
\{1,\ldots,n\}$. Thus the above expression may be rewritten
\begin{eqnarray*}
& &  \sum_{n=0}^{\infty}\left(\left(-\frac{1}{2}\right)^n
\frac{1}{n!}\right) \sum_{{\mathrm{Subsets}\ S\subset
\mathcal{L}} \atop \mathrm{with}\ |S|\leq n} \frac{n!}{(n-|S|)!} \left(
\raisebox{-5ex}{\scalebox{0.25}{\includegraphics{paramZE}}}\right)\apply
v_S \ \\[0.2cm]
 & = &
\left( \sum_{p=0}^{\infty}
 \left( \left(-\frac{1}{2}\right)^p \frac{1}{p!}\right)
\raisebox{-5ex}{\scalebox{0.25}{\includegraphics{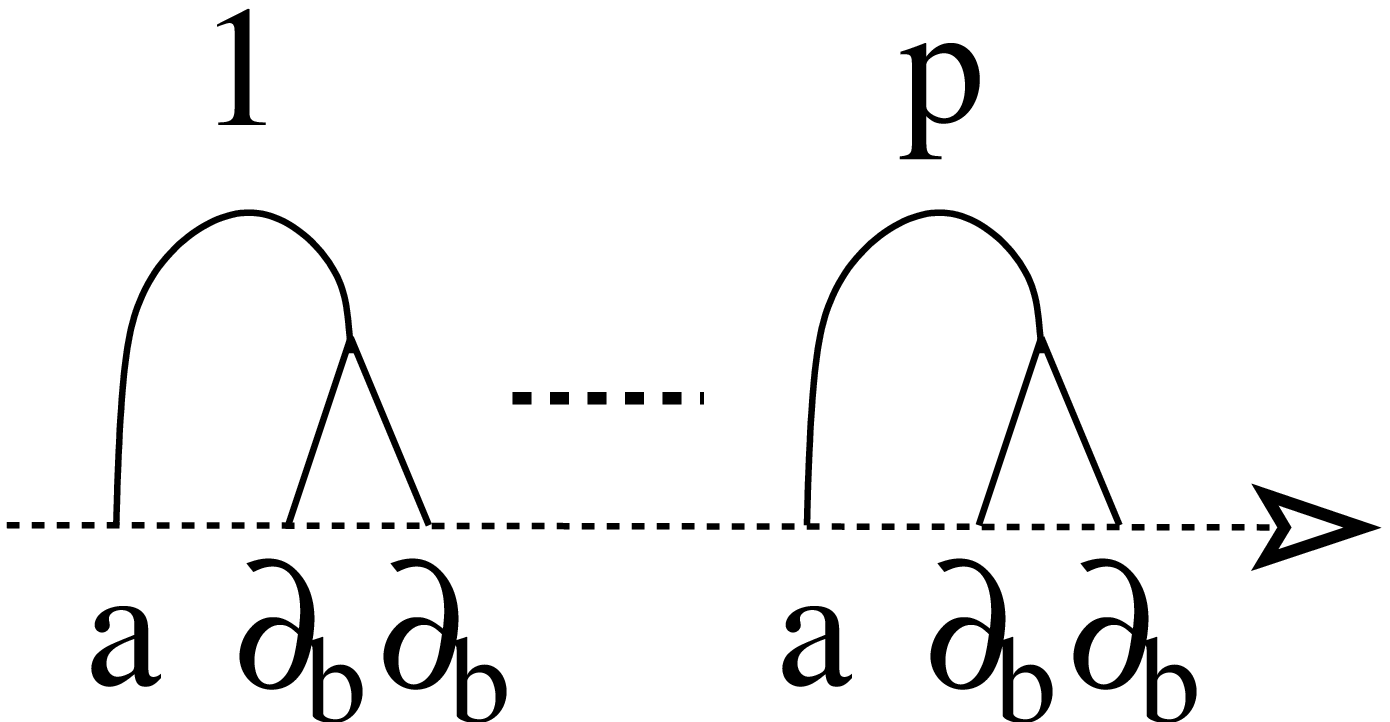}}}\right)\apply
\left( \sum_{{\mathrm{Subsets}\ S\subset
\mathcal{L}}}\left(-\frac{1}{2}\right)^{|S|}v_S\right).\\
\end{eqnarray*}
This is the right hand side of Equation \ref{howtosplit}.
\end{proof}

\subsection{Putting the pieces together.}
Now we'll put these pieces together to obtain an expression for
the following part of the composition:
\[ \underbrace{\ \mathcal{B}\stackrel{\Upsilon}{\longrightarrow}
\mathcal{W} \stackrel{\chi_{\mathcal{W}}}{\longrightarrow}
\widetilde{\mathcal{W}}\stackrel{\pi}{\longrightarrow}
\widehat{\mathcal{W}} \stackrel{B_{\bullet\mapsto\mathrm{\bf
F}}}{\longrightarrow} \widehat{\mathcal{W}}_{\mathrm{\bf F}}}
\stackrel{\lambda}{\longrightarrow} \widehat{\mathcal{W}}_\wedge.
\]
So let $v$ be an element of $\Bspace$. If we substitute
$\Upsilon(v)$ directly into Corollary \ref{comblemcor} then we are
given the following expression for $\left(\basebulltoF\circ
\pi\circ \chi_\Wspace\circ\Upsilon\right)(v)$:
\[
\left[ \intoop(\Upsilon(v))\,\apply \,\mathrm{exp}_\# \left(
\raisebox{-3.5ex}{\scalebox{0.24}{\includegraphics{paramj}}}
+\frac{1}{2}
\raisebox{-3.5ex}{\scalebox{0.24}{\includegraphics{paramk}}} +
\raisebox{-3.5ex}{\scalebox{0.24}{\includegraphics{paraml}}}
\right)\right]_{{a,b,\partial_a,\partial_b}=0}.
\]
We now wish to use Proposition \ref{howtosplit} to re-express this
as a direct function of $v$. To this end, we begin by inserting
the missing piece of that proposition into the front of this
expression, giving: \begin{multline*} \left[ \mathrm{exp}_\apply
\left(-\frac{1}{2}\,
\raisebox{-3.5ex}{\scalebox{0.25}{\includegraphics{paramZ}}}
\right)\apply\left( \intoop(\Upsilon(v))\,\right.\right. \\
\left.\left.\apply \,\mathrm{exp}_\# \left(
\raisebox{-3.5ex}{\scalebox{0.24}{\includegraphics{paramj}}}
+\frac{1}{2}
\raisebox{-3.5ex}{\scalebox{0.24}{\includegraphics{paramk}}} +
\raisebox{-3.5ex}{\scalebox{0.24}{\includegraphics{paraml}}}
\right)\right)\right]_{{a,b,\partial_a,\partial_b}=0}.
\end{multline*}
We can do this because all operation products in the resulting
expression converge and because, when we set the parameter $a$ to
zero, all the introduced terms will vanish.

Now we perform an associativity rearrangement (an ultra-careful reader
may wish to read Lemma \ref{powerseriesassociate} and then check
that Condition $(\S)$ holds before rearranging):
\begin{multline*} \left[ \left(\mathrm{exp}_\apply
\left(-\frac{1}{2}\,
\raisebox{-3.5ex}{\scalebox{0.25}{\includegraphics{paramZ}}}
\right)\apply\intoop(\Upsilon(v))\right)\,\right. \\
\left.\left.\apply \,\mathrm{exp}_\# \left(
\raisebox{-3.5ex}{\scalebox{0.24}{\includegraphics{paramj}}}
+\frac{1}{2}
\raisebox{-3.5ex}{\scalebox{0.24}{\includegraphics{paramk}}} +
\raisebox{-3.5ex}{\scalebox{0.24}{\includegraphics{paraml}}}
\right)\right)\right]_{{a,b,\partial_a,\partial_b}=0}.
\end{multline*}
Using Proposition \ref{howtosplit} to replace the bracket, and
then doing another associativity rearrangement (again checking Condition
$(\S)$) we get: \begin{multline*} \left[
\baselegtopartial(v)\apply\left(\exp_\apply \left(-\frac{1}{2}\,
\raisebox{-3.5ex}{\scalebox{0.25}{\includegraphics{paramZ}}}
\right)\,\right.\right. \\
\left.\left.\apply \,\mathrm{exp}_\# \left(
\raisebox{-3.5ex}{\scalebox{0.24}{\includegraphics{paramj}}}
+\frac{1}{2}
\raisebox{-3.5ex}{\scalebox{0.24}{\includegraphics{paramk}}} +
\raisebox{-3.5ex}{\scalebox{0.24}{\includegraphics{paraml}}}
\right)\right)\right]_{{a,b,\partial_a,\partial_b}=0}.
\end{multline*}
Finally, observe that $a$-labelled legs commute with all other
types of legs, and {\bf F}-legs commute with all other legs
(except other {\bf F}-legs), so some straightforward
rearrangements allow us to write (using $\{\}$ instead of $()$
only to make this equation easier on the eye):
\begin{multline}
\left[ \baselegtopartial(v)\apply\left\{\exp_\# \left(
\raisebox{-3.5ex}{\scalebox{0.25}{\includegraphics{paramj}}}\right)
\# \right.\right.\\ \left(\exp_\apply \left(-\frac{1}{2}\,
\raisebox{-3.5ex}{\scalebox{0.25}{\includegraphics{paramZ}}}
\right)\,\apply \right. \\ \left.\left.\left. \exp_\# \left(
\frac{1}{2}
\raisebox{-3.5ex}{\scalebox{0.25}{\includegraphics{paramk}}} +
\raisebox{-3.5ex}{\scalebox{0.25}{\includegraphics{paraml}}}
\right)\right)\right\}\right]_{{a,b,
\partial_a,\partial_b}=0}.\label{almostfinal}
\end{multline}
To complete the construction of the composition
\[ \mathcal{B}\stackrel{\Upsilon}{\longrightarrow} \mathcal{W}
\stackrel{\chi_{\mathcal{W}}}{\longrightarrow}
\widetilde{\mathcal{W}}\stackrel{\pi}{\longrightarrow}
\widehat{\mathcal{W}} \stackrel{B_{\bullet\rightarrow\mathrm{\bf
F}}}{\longrightarrow} \widehat{\mathcal{W}}_{\mathrm{\bf F}}
\stackrel{\lambda}{\longrightarrow} \widehat{\mathcal{W}}_\wedge
\]
it remains for us to apply $\lambda$ to the Expression
\ref{almostfinal}. The next section will show that we can commute
$\lambda$ through this expression and begin by applying it to the
right-most exponential above.

\subsection{Commuting $\lambda$ through the expression.}
For the purposes of this discussion we'll refer to legs of
the form
\[
\raisebox{-0.375ex}{\scalebox{0.25}{\includegraphics{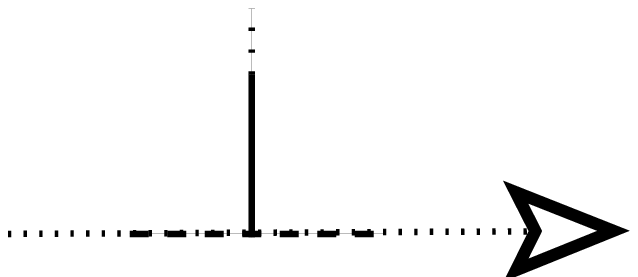}}}
\]
as {\bf $\bot$-legs}.

In Section \ref{lambdarecall} we recalled $\lambda$ as a map\[
\lambda : \widehat{\mathcal{W}}_\mathrm{\bf
F}\rightarrow\widehat{\mathcal{W}}_\mathrm{\wedge},\] basically
defined by ``gluing $\bot$-legs together in all possible ways
(with coefficients)". In this section we'll need the natural extension of
$\lambda$ to the situation where there are some parameter and
operator legs present (we'll make this precise shortly):
\[
\lambda : \WhatF\abpow\rightarrow\Whatwedge\abpow.\] The purpose
of this section is to prove the following proposition.
\begin{prop}
Let $v$ and $w$ be elements of $\WhatF\abpow$ such that the product $v\apply w$ converges, and assume that $v$
can be expressed without $\bot$-legs. Then:
\[
\lambda(v\apply w) = v\apply \lambda(w).
\]
\end{prop}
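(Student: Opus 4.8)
The plan is to reduce to generators and then match the two sides term-by-term, using the graphical method of Section \ref{alternativedefn} for $\apply$ and the visual recipe of Section \ref{lambdarecall} for $\lambda$. The first observation I would make is that $\lambda$ preserves the type $(i,j)$ of a diagram: it only glues honest grade-$1$ legs (the $\bot$-legs) together, and such legs contribute neither to the parameter-grade nor to the operator-grade, so $\lambda$ commutes with the projections $\pi^{(i,j)}$ and extends termwise to power series. Together with the hypothesis that $v\apply w$ converges, this lets me pass the claimed identity through the defining sums and reduce to the case where $v$ and $w$ are single operator Weil diagrams with $v$ carrying no $\bot$-legs; the same type-preservation shows that $\lambda(v\apply w)$ and the right-hand side $v\apply\lambda(w)$ are both well-defined elements of $\Whatwedge\abpow$ (and that, since $v$ has no $\bot$-legs, it represents the same operator in both the $\mathrm{\bf F}$ and the $\wedge$ settings).

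For generators the key structural remark is that the set of $\bot$-legs is preserved by the operation. Since $v$ has no $\bot$-legs and the substitution rules only ever contract operator legs against parameter legs — never creating or destroying honest grade-$1$ legs — every diagram appearing in $v\apply w$ has exactly the $\bot$-legs of $w$. Dually, forming $\mathcal{D}_\wp(w)$ glues $\bot$-legs of $w$ but leaves its parameter legs untouched, so each diagram of $\lambda(w)$ has exactly the parameter legs of $w$. Consequently both $\lambda(v\apply w)$ and $v\apply\lambda(w)$ are indexed by the \emph{same} set of pairs $(\sigma,\wp)$, where $\sigma\in\mathcal{G}(v,w)$ is a gluing of the operator legs of $v$ onto the parameter legs of $w$ and $\wp\in\mathcal{P}(w)$ is a pairing of the $\bot$-legs of $w$; and for a fixed pair the two underlying diagrams literally coincide. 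I would set up this bijection explicitly and note that the factor $\frac{1}{2}$ per glued pair contributed by $\lambda$ depends only on $\wp$ and so agrees on the two sides.

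It then remains to reconcile the signs, and this is where I expect the real work to be. The idea is to draw a single combined picture: place the operator legs of $v$ up the left-hand side, the remaining legs of $w$ across the top, and then draw simultaneously the operator-gluing arcs of $\sigma$, the pairing arcs of $\wp$, and the straight carry-down lines, using a full line for every grade-$1$ strand and a dashed line for every grade-$2$ strand exactly as in the two separate recipes. Both $\lambda$ and $\apply$ assign their sign as $(-1)$ raised to a count of crossings between full lines, so the total sign attached to $(\sigma,\wp)$ is $(-1)$ to the total number of full--full crossings in this one picture. The point is that this total crossing parity is an intrinsic feature of the isotopy class of the combined arc diagram, hence insensitive to the order in which the two families of arcs are introduced: reading it off as ``operate, then pair'' reproduces the sign of $\lambda(v\apply w)$, while reading it off as ``pair, then operate'' reproduces the sign of $v\apply\lambda(w)$. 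The hypothesis that $v$ has no $\bot$-legs is exactly what guarantees that the pairing arcs connect legs of $w$ only, so that the two arc families live in a common planar diagram and no pairing can involve a leg of $v$, which would have no counterpart on the right.

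The main obstacle is thus the sign bookkeeping: verifying that the two recipes, in which the $\bot$-legs occupy different positions — their original positions in $w$ when $\lambda$ acts first, versus their post-operation positions when $\apply$ acts first — really produce crossing counts of the same parity. I would handle this by the topological invariance of crossing parity under isotopy rather than by direct enumeration: once all strands are drawn in the common grid, sliding the operator carry-lines past the pairing arcs alters the picture only by isotopy and by moves that preserve the mod-$2$ count of full crossings, so the two groupings of crossings agree modulo $2$. This is the graded analogue of the basic fact exploited throughout Section \ref{diagoperators}, that the sign of a gluing is read off from crossings and is independent of how the diagram is drawn.
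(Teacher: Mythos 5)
Your proposal is correct and follows essentially the same route as the paper: reduce to generators, observe that both sides are indexed by the same set of pairs (gluing $\sigma$, pairing $\wp$) with identical underlying diagrams and $\tfrac{1}{2}$-factors, and then settle the signs by noting that the mod-$2$ count of full-line crossings in the combined picture is a topological invariant determined by the positions of the arc endpoints, hence independent of whether one operates first or pairs first. The paper phrases this last step as a correspondence of arcs cut out by the ``total box,'' with crossing parity of each pair of arcs fixed by the relative position of their ends on the box boundary, which is precisely the invariance you invoke.
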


Before turning to the proof, let's repeat the definition of
$\lambda$ (in this more general context). Consider some diagram
$w$, a generator of $\WhatF\abpow$. Let $\mathcal{L}_\bot(w)$
denote the set of $\bot$-legs of $w$. Recall that a {\bf pairing}
of $w$ is a (possibly empty) set of disjoint 2-element subsets of
$\mathcal{L}_\bot$. As before, $\mathcal{P}(w)$ denotes the set of
pairings of $w$. We then define $\lambda$ by
\[
\lambda(w) = \sum_{\wp\in\mathcal{P}(w)} \mathcal{D}_\wp(w)
\]
where the term $\mathcal{D}_\wp(w)$ is constructed by the
graphical procedure described in Section
\ref{lambdarecall}. The procedure is, recall: draw another
orienting line under the existing one, separated by a gap; then
join up the $\bot$-legs according to the pairing $\wp$ using
non-self-intersecting full arcs lying entirely in the gap between
the two orienting lines; then carry any legs remaining on the
original orienting line down to the new orienting line, using full
lines for (all) the grade 1 legs and dashed lines for (all) the
grade 2 legs. Finally, write down a coefficient of
$(-1)^x\left(\frac{1}{2}\right)^y$ where $x$ is the number of
self-intersections displayed by full lines between the two
orienting lines and $y$ is the number of pairs in $\wp$.

For example, if
\[ w=\ \ \
\raisebox{-6ex}{\scalebox{0.2}{\includegraphics{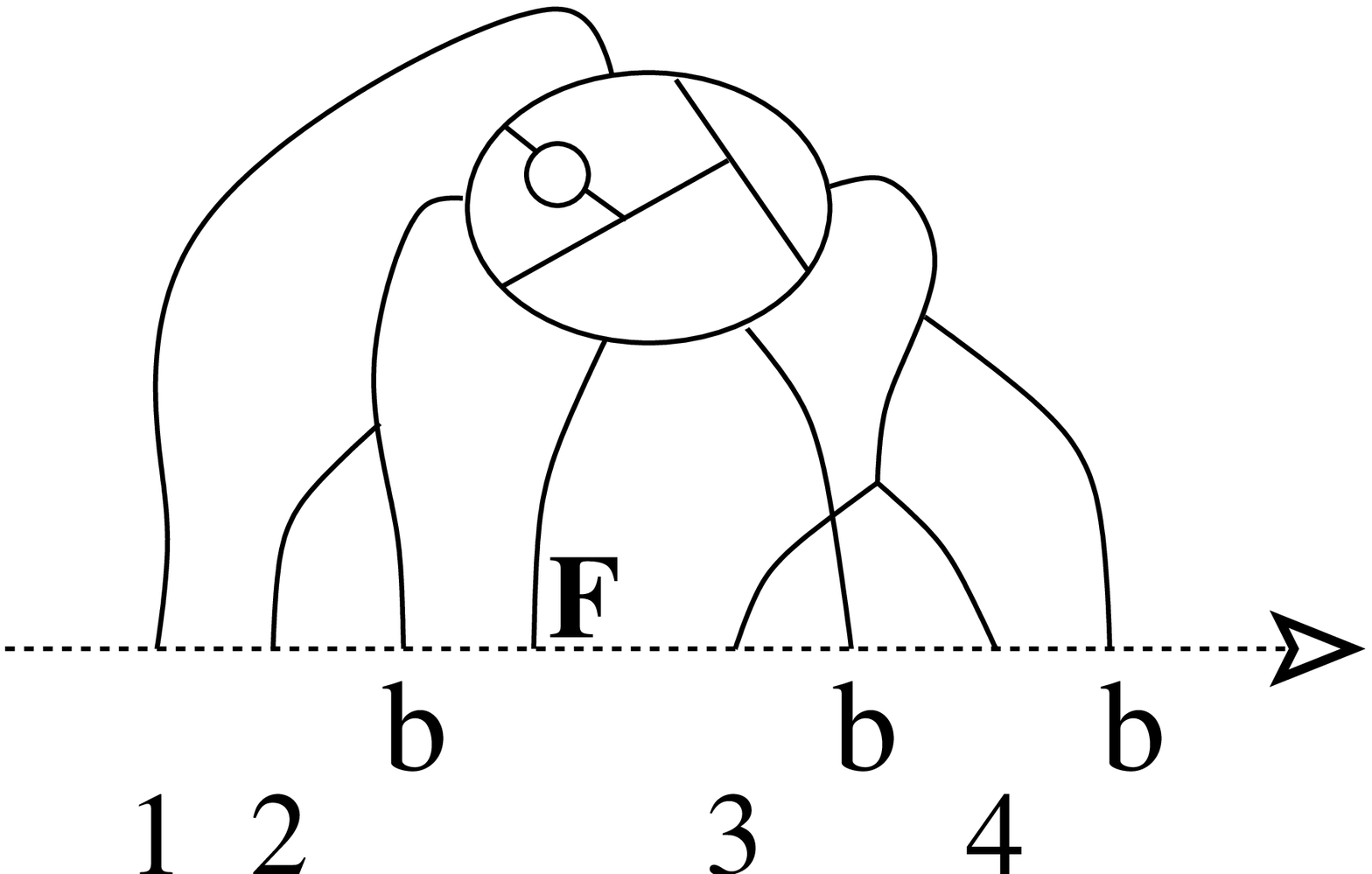}}}\
,\\[0.2cm]
\]
and we wished to construct $\mathcal{D}_{\{\{1,4\},\{2,3\}\}}(w)$,
then we would draw the diagram
\[
\raisebox{-6ex}{\scalebox{0.2}{\includegraphics{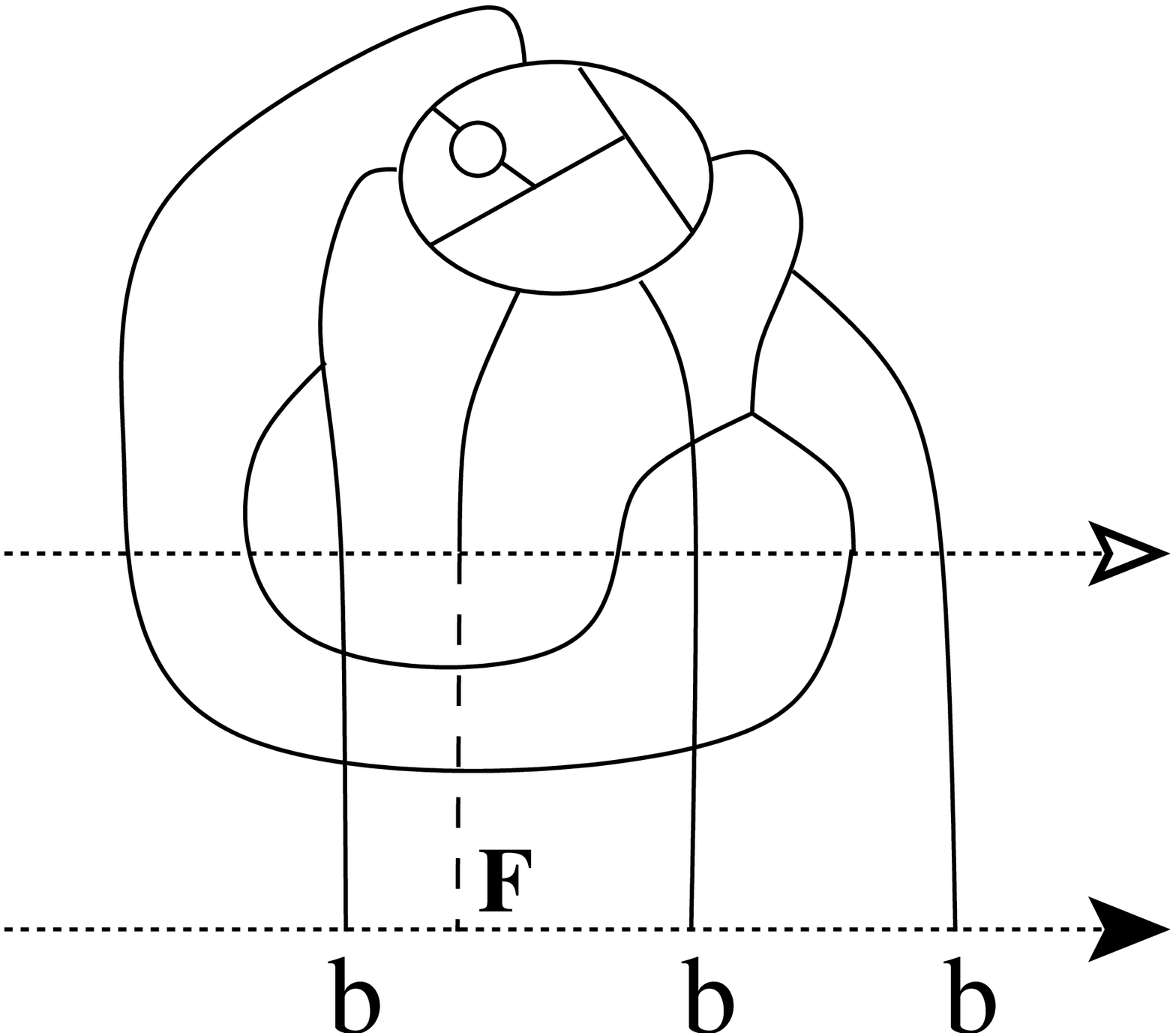}}}\ \ ,
\]
and thus deduce that:
\[
\mathcal{D}_{\{\{1,4\},\{2,3\}\}}(w) =
(-1)^3\left(\frac{1}{2}\right)^2
\raisebox{-5ex}{\scalebox{0.2}{\includegraphics{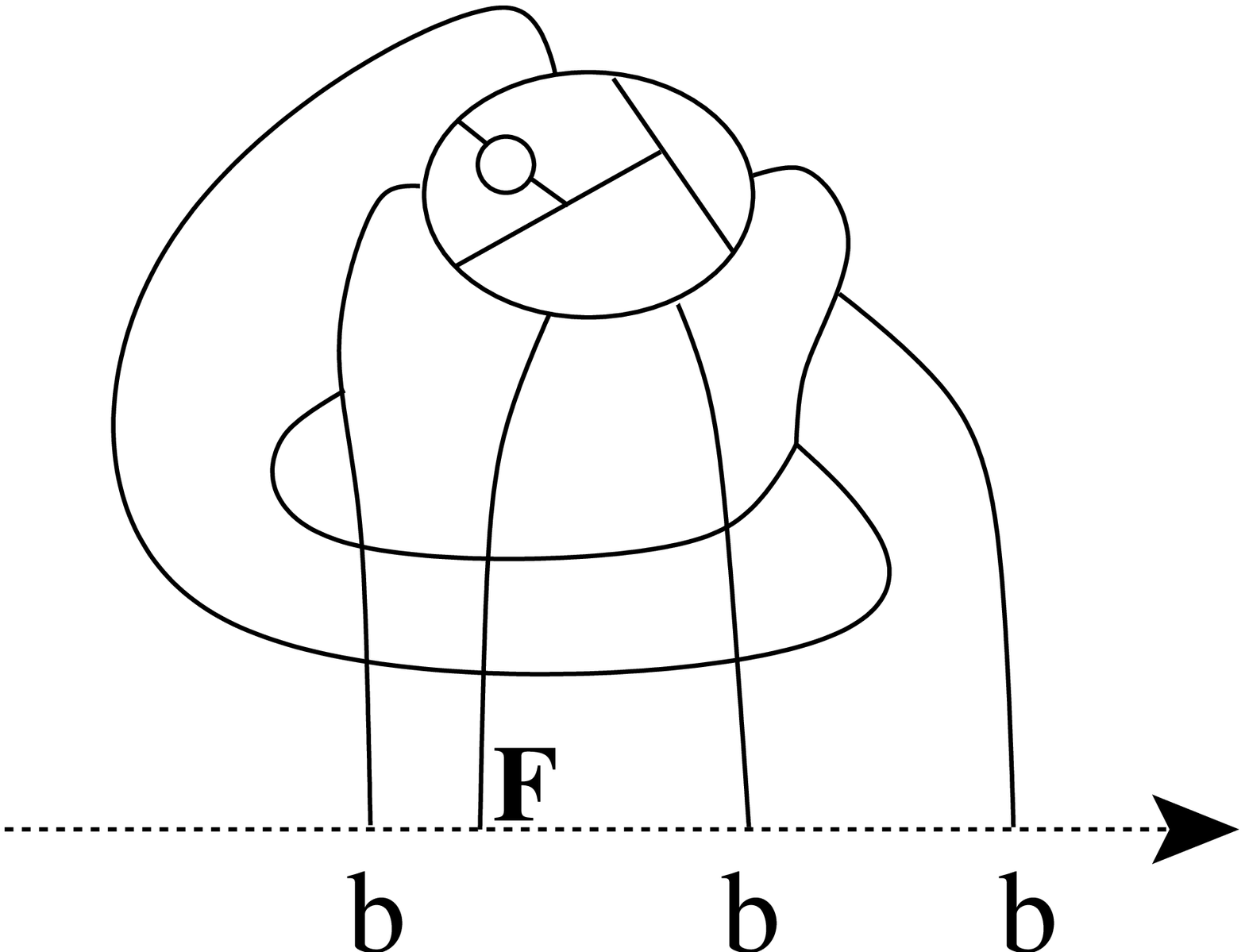}}}.
\]
\begin{proof}
It suffices to show that
\begin{equation}\label{uphere}
\lambda(v\apply w) = v\apply \lambda(w)
\end{equation}
is true for generators.

So let $v$ and $w$ be operator Weil diagrams and assume that $v$
has no $\bot$-legs. For the purposes of the discussion below we'll
consider the example where:
\[
v =
\raisebox{-1cm}{\scalebox{0.25}{\includegraphics{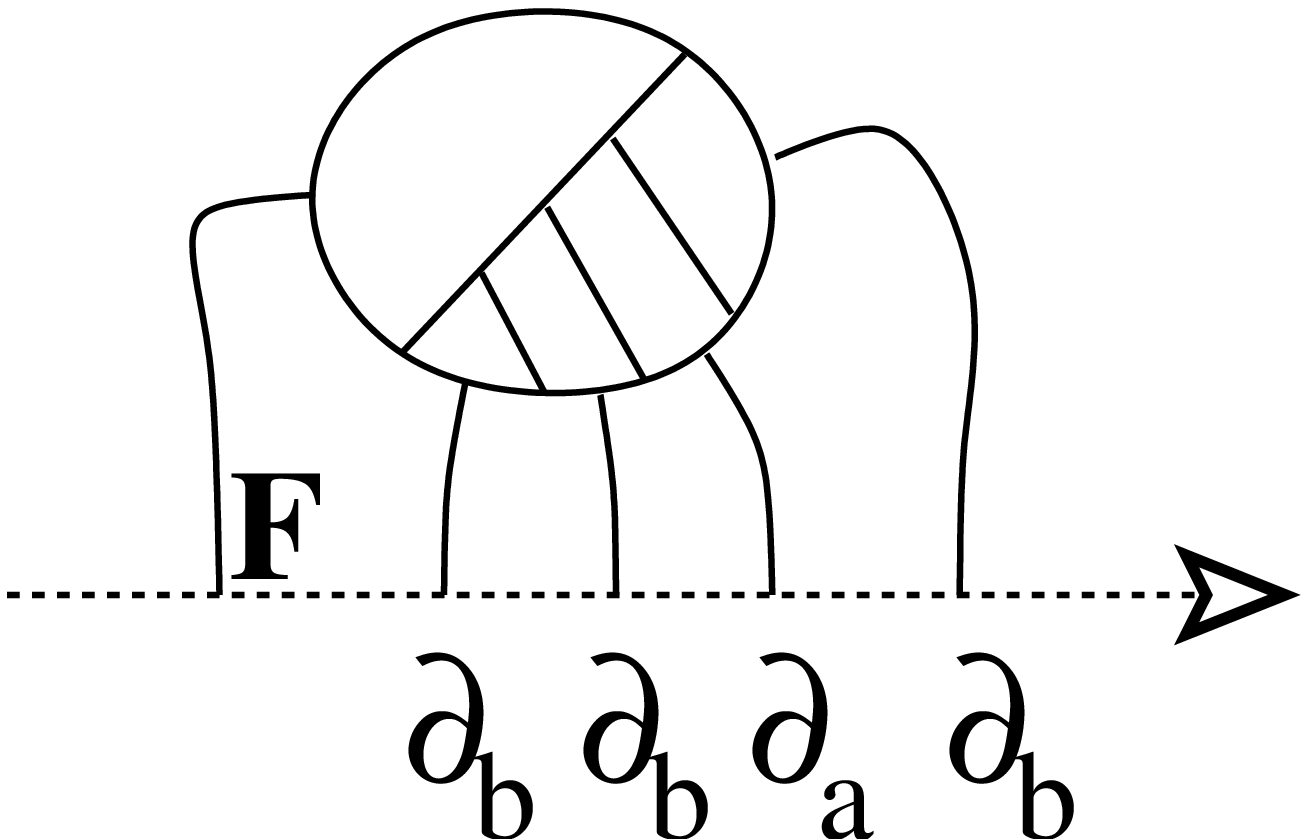}}}\
\ \ \ \ \text{and}\ \ \ \ \ w =
\raisebox{-0.75cm}{\scalebox{0.25}{\includegraphics{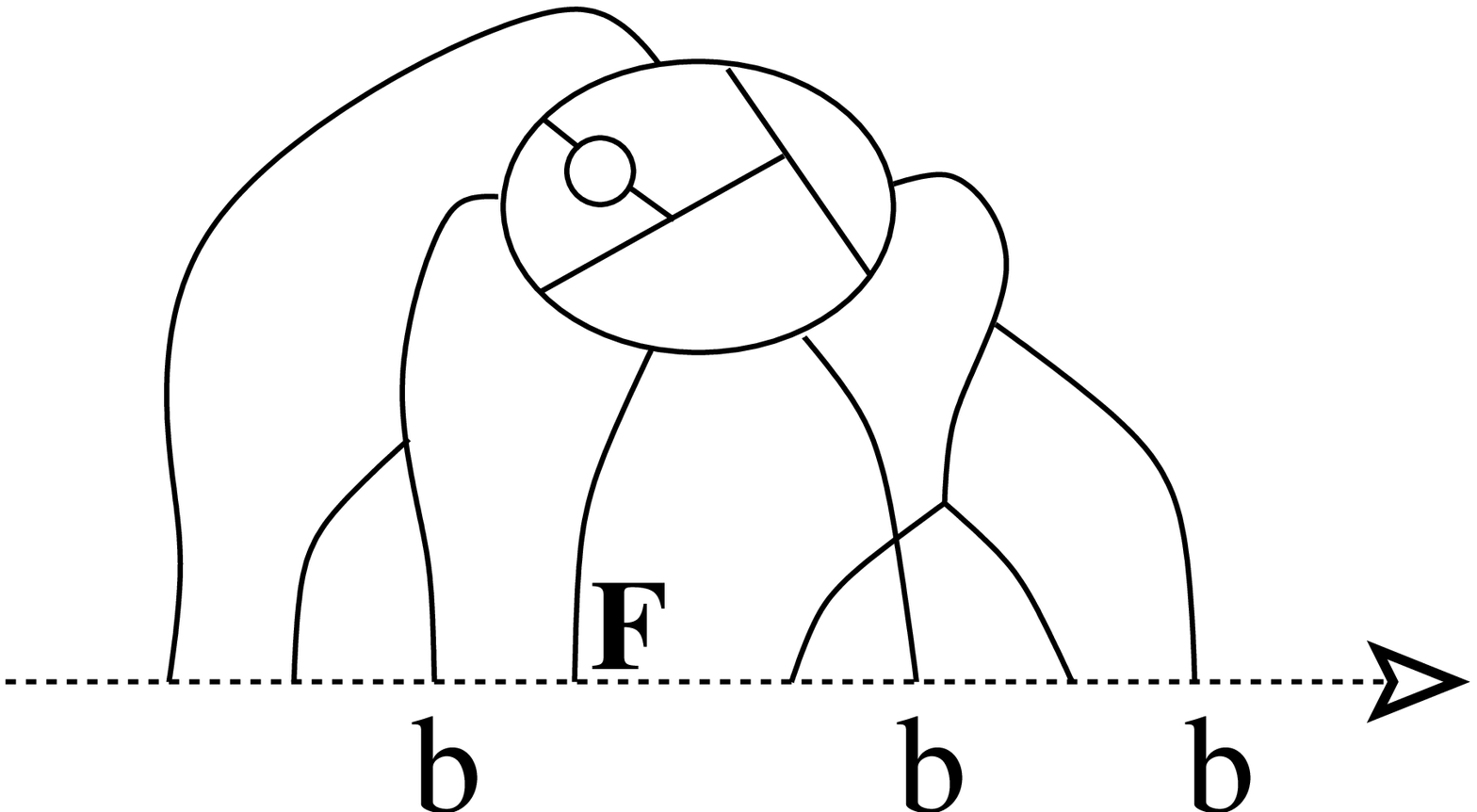}}}\
.
\]
It follows from the construction of the operations $\lambda$ and
$\apply$ that the two sides of Equation \ref{uphere} are sums
indexed by the same set. Define that set $\mathcal{PG}$ to consist
of pairs $(\wp,\sigma)$, where $\wp$ is a pairing
$\wp\in\mathcal{P}(w)$ and $\sigma$ is a gluing $\sigma\in
\mathcal{G}(v,w)$. Then:
\begin{enumerate}
\item{$\lambda(v\apply w) = \sum_{(\wp,\sigma)\in \mathcal{PG}}
\mathcal{D}_{\wp}\left(t(v,w,\sigma)\right),$} \item{$
v\apply\lambda(w) = \sum_{(\wp,\sigma)\in \mathcal{PG}}
t(v,\mathcal{D}_\wp(w),\sigma),$}
\end{enumerate}
where $t(v,w,\sigma)$ is the notation used in Section \ref{applygraphical} for the term that arises when $v$
is applied to $w$ using the gluing $\sigma$.
The required equality will be
established if we can show that for every pair
$(\wp,\sigma)\in\mathcal{PG}$:
\[
\mathcal{D}_\wp\left(t(v,w,\sigma)\right) =
t\left(v,\mathcal{D}_\wp(w),\sigma\right).
\]

We'll begin by illustrating the desired equality for the case of
the given example and the pair
$(\wp,\sigma)=\left(\{\{1,3\},\{2,4\}\},\left({4 1 \atop 3
2}\right)\right)$. A direct application of the definitions tells
us that to construct the term
$\mathcal{D}_\wp\left(t(v,w,\sigma)\right)$ we draw the diagram
shown in Figure \ref{gluethenpair}. On the other hand: a direct
application of the definitions tells us that to construct
$t\left(v,\mathcal{D}_\wp(w),\sigma\right)$ we draw the diagram
shown in Figure \ref{pairthenglue}.
\begin{figure}
\caption{The diagram you draw to build
\(\mathcal{D}_\wp\left(t(v,w,\sigma)\right)\).\label{gluethenpair}}
\[
\scalebox{0.22}{\includegraphics{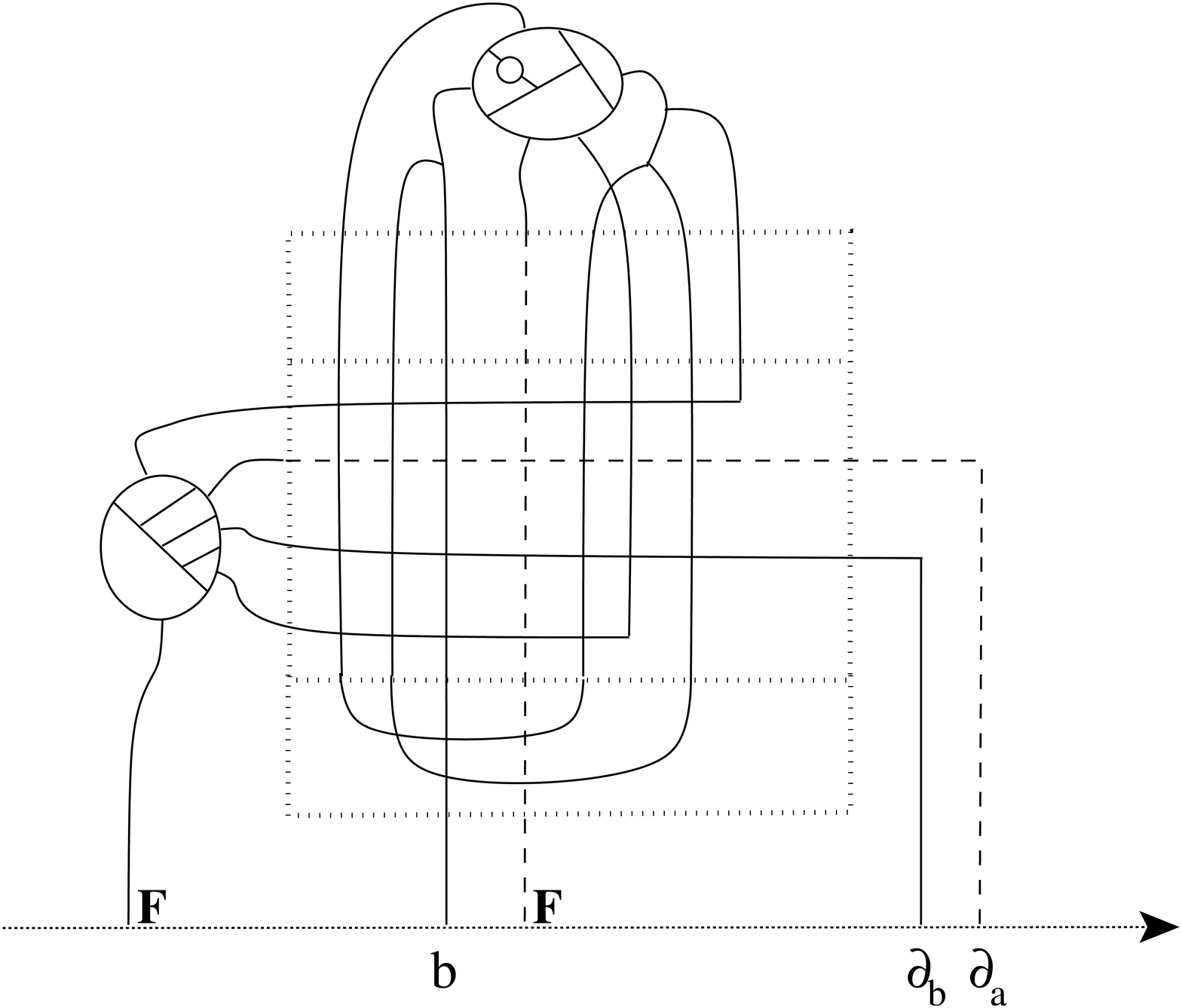}}\ .
\]
\end{figure}
\begin{figure}
\caption{The diagram that you draw to build
$t\left(v,\mathcal{D}_\wp(w),\sigma\right)$.\label{pairthenglue}}
\[
\scalebox{0.22}{\includegraphics{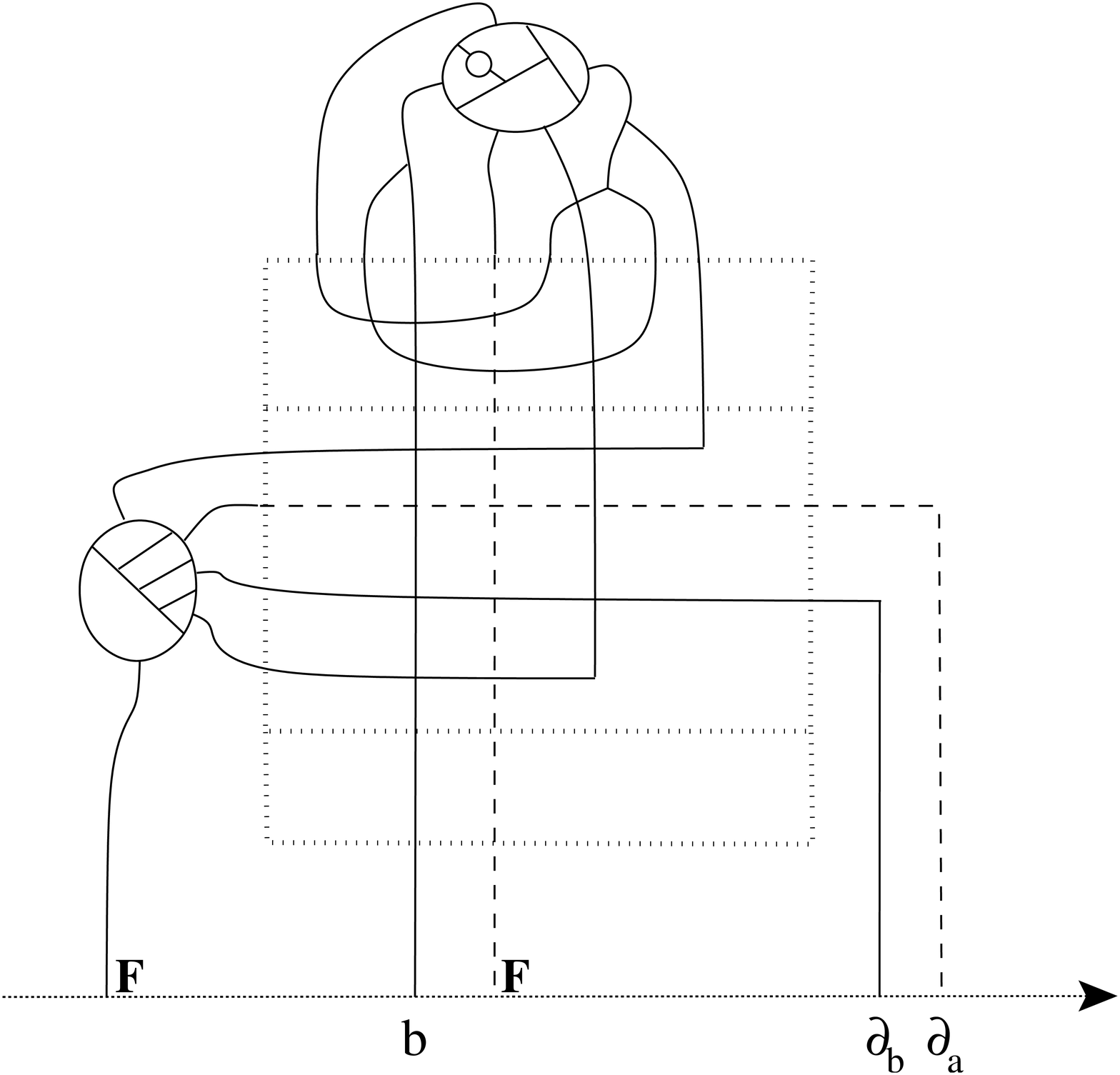}}\ .
\]
\end{figure}
In both cases the coefficient of the contributing term is given by
$(-1)^x\left(\frac{1}{2}\right)^y$, where $x$ is the respective number of
intersections between full lines displayed inside the total box
(by {\it total box} we'll refer to the union of the three
displayed dotted boxes), and $y$ is the number of pairs in $\wp$
(in this case, 2). The underlying diagrams are obviously the same,
so it remains to understand why the two diagrams have
the same number of intersections between full lines (mod $2$).

To explain why, in generality: First observe that the total box
cuts the full lines that go through the total box into a number of
{\it arcs}. Then notice that there is obvious correspondence
between the arcs of the first diagram and the arcs of the second
diagram. Finally, notice that the ends of any pair of arcs will
have the same relative position around the edge of the total box
in both diagrams; hence each pair of arcs will have the same number
of intersections mod $2$ in both diagrams.
\end{proof}

\section{Computing the operator product I: The inner-most piece.}
\label{computingtheoperatorexpressionA} We'll begin the
computation of the expression in Theorem \ref{importantexpression}
with the inner-most piece. The objective of this section is to
prove the following theorem.
\begin{thm}\label{connectedtheorem}
The following equality holds in $\Whatwedge\abpow$.
\begin{multline*}
\lambda\left(\exp_{\# }\left(
\raisebox{-3ex}{\scalebox{0.27}{\includegraphics{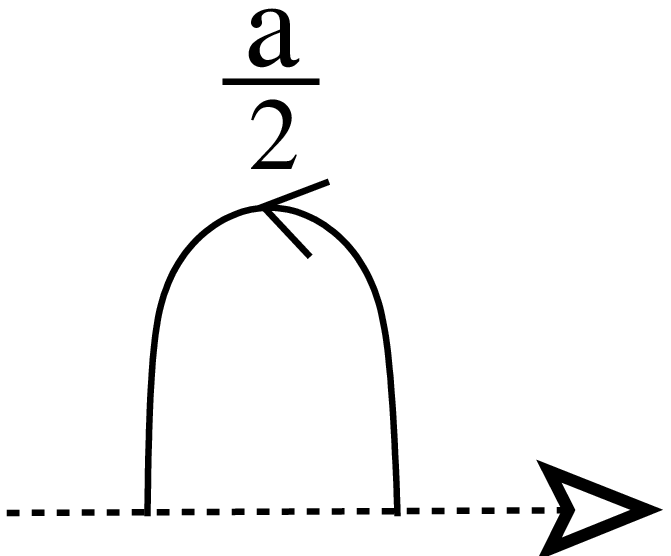}}} +
\raisebox{-4.85ex}{\scalebox{0.27}{\includegraphics{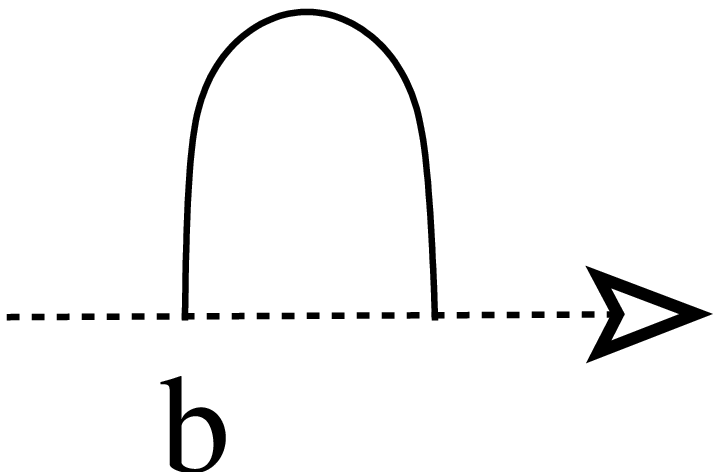}}}
\right)\right)  \\
= \exp_{\#}\left(\left(\frac{1}{2}\right)
\raisebox{-5ex}{\scalebox{0.27}{\includegraphics{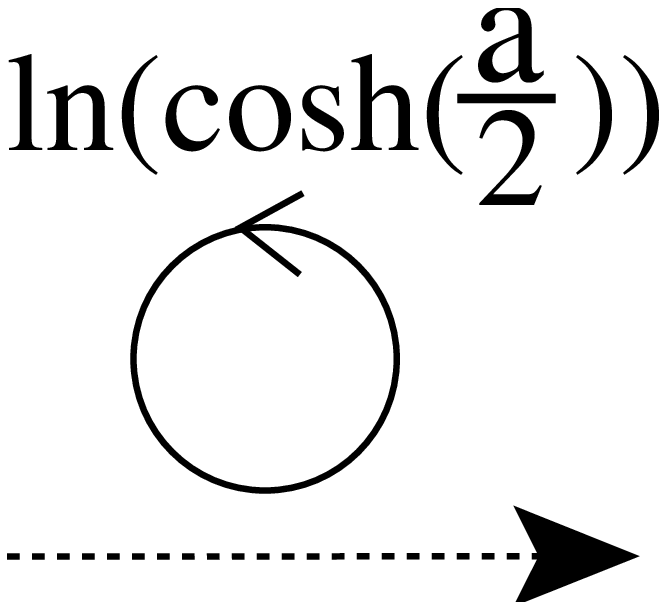}}} +
\raisebox{-5ex}{\scalebox{0.27}{\includegraphics{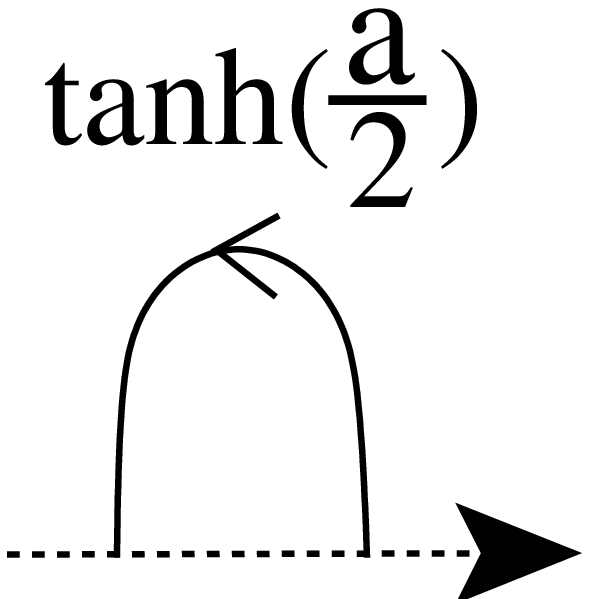}}}
-\raisebox{-6.9ex}{\scalebox{0.27}{\includegraphics{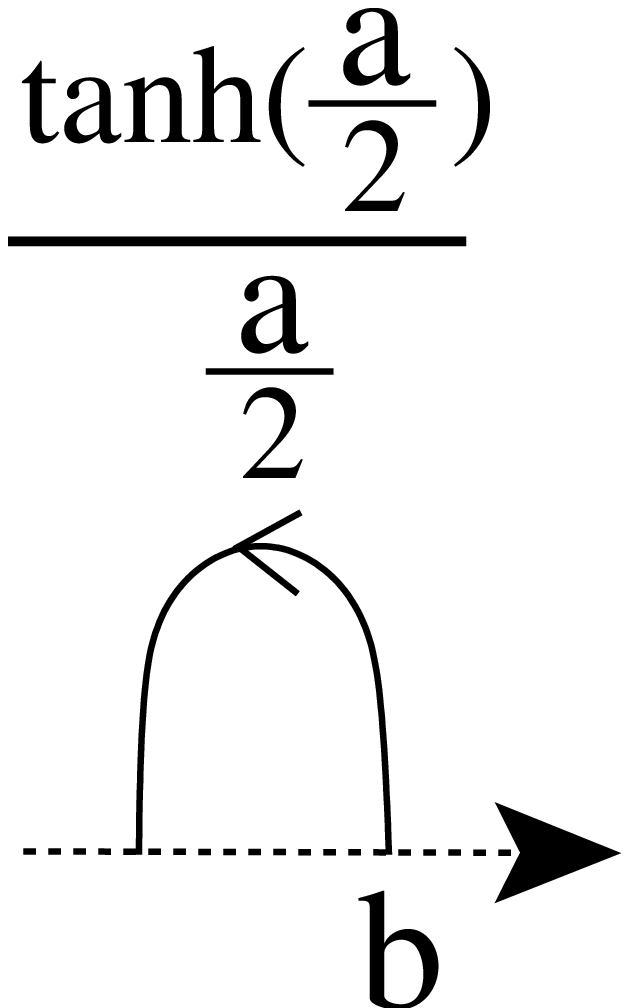}}}
-\left(\frac{1}{2}\right)^2
\raisebox{-6.75ex}{\scalebox{0.27}{\includegraphics{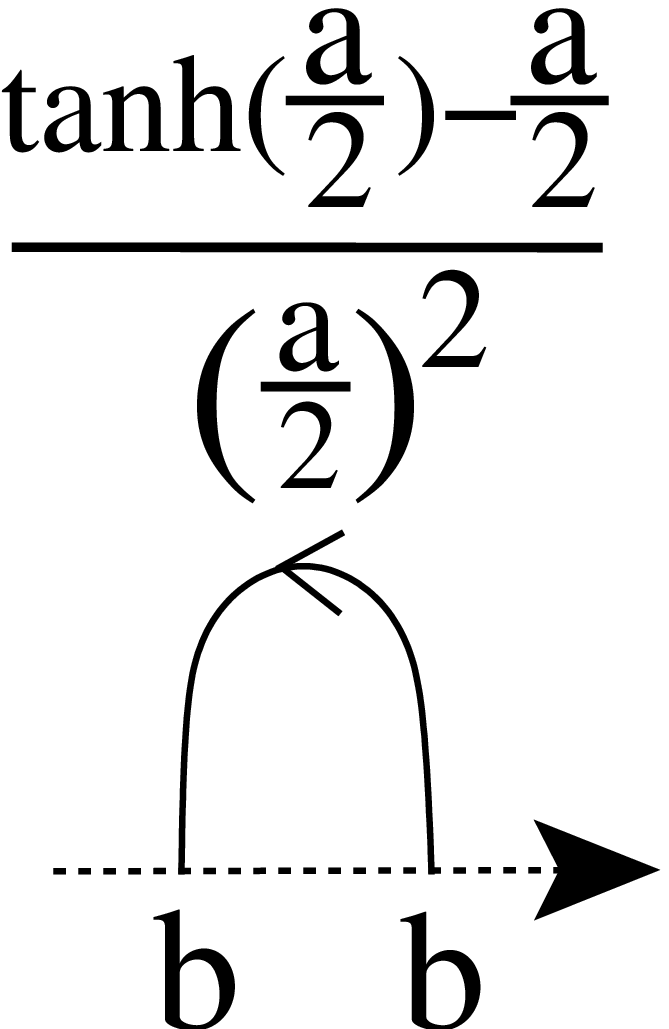}}}\
\right) .
\end{multline*}
\end{thm}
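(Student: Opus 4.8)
The plan is to prove this by the linked-cluster (exponential) principle for the operation $\lambda$. Recall that $\lambda$ acts on a generator by summing over all pairings of its $\bot$-legs, a pairing $\wp$ contributing the glued diagram $\mathcal{D}_\wp$ with weight $(-1)^x(\tfrac12)^y$. Abbreviate the argument of the exponential as $D=\tfrac12\,k+l$, where $k$ is the first summand (a trivalent ``fork'' carrying two $\bot$-legs) and $l$ is the second; then $\exp_\# D=\sum_{n\ge0}\tfrac1{n!}D^{\#n}$, and each $D^{\#n}$ is a signed sum of $\#$-juxtapositions of $n$ copies of $k$ and $l$. The central step is to establish the identity
\[
\lambda(\exp_\# D)=\exp_\#\!\big(\Lambda_{\mathrm{conn}}(D)\big),
\]
where $\Lambda_{\mathrm{conn}}(D)$ is the sum over all \emph{connected} diagrams assembled by gluing the $\bot$-legs of finitely many copies of $k$ and $l$, each weighted by the product of the input coefficients, the pairing weight $(-1)^x(\tfrac12)^y$, and the reciprocal order of its symmetry group. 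Granting this, the theorem reduces to the purely enumerative task of computing $\Lambda_{\mathrm{conn}}(D)$ and checking that it equals the four-term expression on the right.

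First I would prove the displayed reduction. Every term of $\lambda(\exp_\# D)$ is a disjoint union of connected pieces; grouping terms by their multiset of connected pieces, the factors $\tfrac1{n!}$ together with the multinomial count of how the $n$ copies are distributed among the pieces reorganize, in the standard way, into the exponential of the connected sum. The one point genuinely special to the graded setting is that the weight $(-1)^x$, which counts crossings among \emph{all} the grade-$1$ (full-line) arcs simultaneously, must factor as the product of the weights of the individual connected components. To secure this I would use that grade-$1$ legs anticommute in $\Whatwedge$: one may reorder the legs along the orienting line, up to sign, so that the legs of distinct components do not interleave, after which no full-line arc of one component crosses a full-line arc of another and $(-1)^x$ becomes multiplicative. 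I would isolate this as a lemma and prove it by the same arc-correspondence argument already used above in establishing $\lambda(v\apply w)=v\apply\lambda(w)$.

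With the reduction in hand, the remaining work is to enumerate the connected gluings. Since only $\bot$-legs are glued (parameter and operator legs are passed through unchanged), a connected diagram is a connected graph built by pairing the $\bot$-legs of some number of copies of $k$ and $l$. I expect these to sort into four families: closed loops formed by cyclically chaining the forks $k$ --- these are the wheels, and by the edge-labelling convention for $a$-legs introduced just before Theorem~\ref{Xcalculate} the whole family should collapse to the single loop diagram with coefficient $\tfrac12$ --- together with three further families of ``open'' gluings producing the remaining three terms. For these three finite families the coefficients $+1$, $-1$, and $-\tfrac14$ should fall out after one combines the input factor, the pairing weight $(\tfrac12)^y$, the crossing sign, and the symmetry factor.

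\textbf{The main obstacle} is the loop family. One must sum over all loop lengths $n\ge1$, correctly accounting for the dihedral (cyclic-plus-reflection) symmetry factor of an $n$-wheel, the accumulated crossing signs, and the compounded factors of $\tfrac12$ coming from both the input coefficient and the pairing weight, and then recognize the resulting signed, symmetry-weighted generating series in $a$ as exactly the power-series edge-label carried by the loop on the right-hand side. This generating-function computation --- showing that the infinite family of wheels collapses to a single labelled loop with coefficient $\tfrac12$ --- is where the content of the theorem resides; that this series is the \emph{logarithm} (a single connected loop) rather than the full sum of all wheel configurations is precisely what the exponential formula of the first step guarantees.
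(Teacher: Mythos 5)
Your first step---the reduction $\lambda(\exp_\# D)=\exp_\#\big(\Lambda_{\mathrm{conn}}(D)\big)$, with the observation that the only genuinely delicate point is making the crossing sign $(-1)^x$ factor over connected components---is essentially the paper's Theorem \ref{decompintoconnecteds}, which is proved there via Lemma \ref{factorlemma} (signs factorize, by exactly the kind of leg-reordering and even-crossing-count argument you sketch) together with a word-counting lemma playing the role of your multinomial bookkeeping. That part of your plan is sound.

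The gap is in the second step. You describe the three ``open'' families as \emph{finite}, with numerical coefficients $+1$, $-1$, $-\frac14$ to be checked by hand, and you locate all of the analytic content in the loop family. In fact all four families are infinite: a connected gluing with two surviving legs is a chain of arbitrarily many fork blocks, and each of the four terms on the right-hand side is a diagram whose edge carries a \emph{power series} in $a$---that is precisely what the edge-labelling notation means here. Concretely, the open families sum to $\tanh$-type series (in the application immediately following the theorem these are read off as $Y(a)=-\tanh(a/2)/(a/2)$ and $Z(a)=-\frac{1}{2}\big(\tanh(a/2)-(a/2)\big)/(a/2)^2$), not scalars. Consequently the real work, which your plan never addresses, is twofold: (i) determining the sign of each individual connected gluing---the paper indexes the gluings of $n$ blocks by arrow-decorated words $w$ and proves, via standard-form arguments such as Lemma \ref{standardformlemma}, that the sign is $(-1)^{d(w)}$ where $d(w)$ is the descent number of the underlying word; and (ii) evaluating the resulting signed enumerations $\sum_w(-1)^{d(w)}$: their generating function $\Psi$ satisfies $\Psi'=1-\Psi^2$, whence $\Psi(x)=\tanh(x)$ (Proposition \ref{psicalc}), and its primitive $\ln\cosh$ is what labels the loop. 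Without this descent/generating-function computation (or an equivalent), none of the four coefficients---including the loop's $\frac12\,\ln\cosh$ label---can be obtained. A further, more minor, point: the paper handles the loop family not by dividing by dihedral symmetry factors but by rooting the traversal at a fixed block (the words in $\overrightarrow{\Xi}_n$ begin with the symbol $1$), so your symmetry-group bookkeeping would need separate justification to be consistent with the $\frac{1}{n!}$ weights of the first step.
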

This section consists of two subsections. In Section
\ref{isconnected} we show that the left-hand side of the above
equation can be expressed as an exponential of the series of terms
with {\bf connected} diagrams that arise from the evaluation of
$\lambda$. In Section \ref{determineconnected} we'll perform a
detailed calculation of that series.

\subsection{An exponential
of connected diagrams.}\label{isconnected}

Consider the terms that arise when you compute the left-hand side
of the above equation. We'll index these terms with a certain set
$\mathcal{T}$. The set $\mathcal{T}$ is defined to be the set of
pairs $(w,\wp)$ consisting of a non-empty word $w$ in the symbols
$A$ and $B$ and a disjoint (possibly empty) family $\wp$ of
2-element subsets of the set $\{1,2,3,\,\ldots\,,2\#A+\#B\}$
(where $\#A$ and $\#B$ denote the number of appearances in the
word $w$ of the symbols $A$ and $B$ respectively). If
$\tau=(w,\wp)$ we'll often write $|\tau|$ for $|w|$, the length of
the word $w$.

Given such a pair $(w,\wp)$, the corresponding term $T_{(w,\wp)}$
is constructed in two steps. The first step is to place a number
of copies of \[
\raisebox{-6ex}{\scalebox{0.26}{\includegraphics{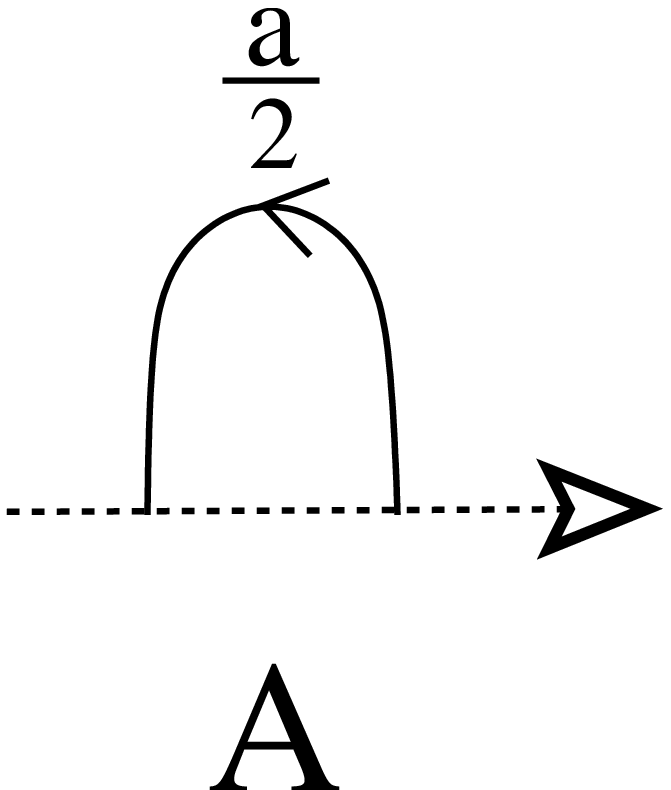}}}\ \ \
\ \ \ \mbox{and}\ \ \ \ \ \
\raisebox{-6.5ex}{\scalebox{0.26}{\includegraphics{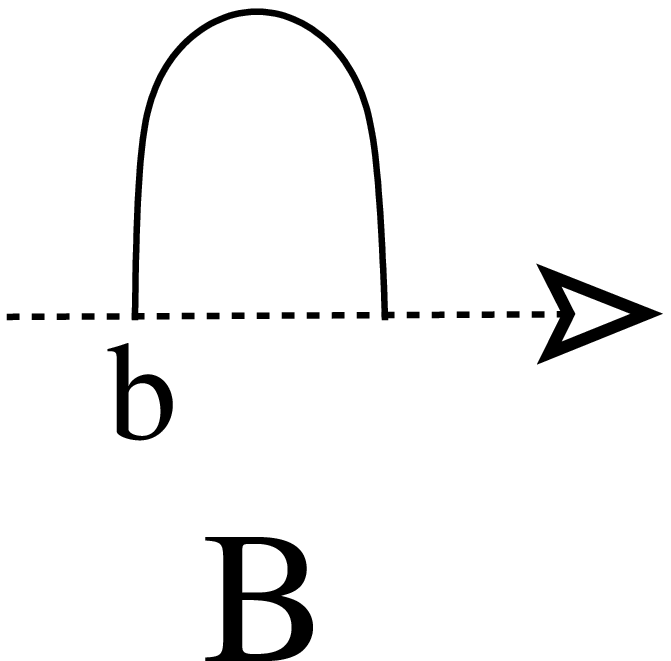}}}
\]
along an orienting line in the order dictated by $w$. The second
step is to pair up (with appropriate signs) the $\bot$-legs of
this diagram according to the pairing $\wp$, multiplying by a
factor of $\left(\frac{1}{2}\right)$ for every pair of legs glued
together (in other words, by a factor of
$\left(\frac{1}{2}\right)^{|\wp|}$).

For example, to construct $T_{\left(AABAB,
\{\{1,5\},\{2,3\},\{4,6\},\{7,8\}\}\right)}$, begin by writing
down
\[\raisebox{-3.5ex}{\scalebox{0.26}{\includegraphics{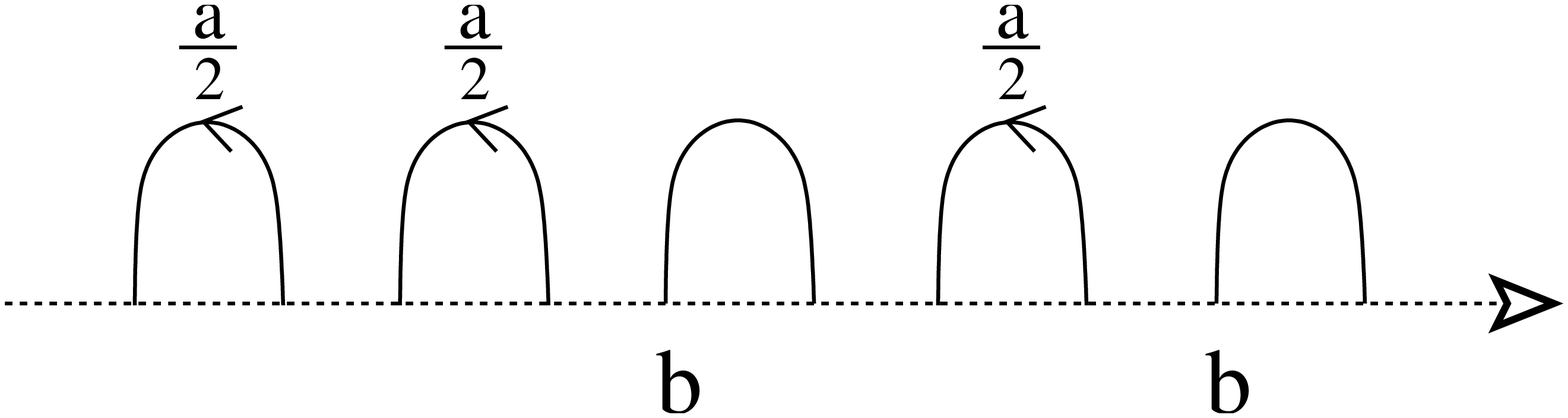}}}\ .
\]
Then pair up legs according to the pairing information, giving:
\[
(+1)\left(\frac{1}{2}\right)^3\,
\raisebox{-5ex}{\scalebox{0.26}{\includegraphics{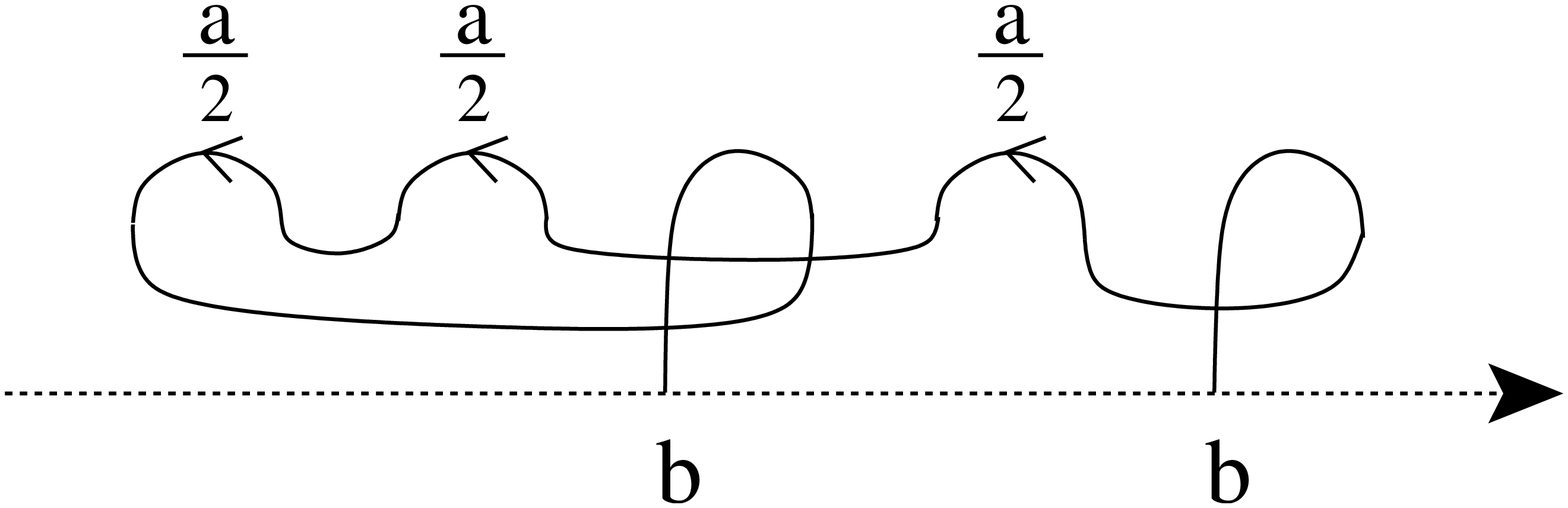}}}\ .
\]
The follow proposition is just a restatement of the definitions.
\begin{prop}\label{restatement}
\[
\lambda\left( \exp_{\# }\left(
\raisebox{-3ex}{\scalebox{0.25}{\includegraphics{paramkay}}} +
\raisebox{-4.6ex}{\scalebox{0.25}{\includegraphics{paraml}}}
\right)\right) = 1 + \sum_{\tau\in\mathcal{T}}
\frac{1}{|\tau|!}T_{\tau}.
\]
\end{prop}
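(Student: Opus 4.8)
The plan is to treat this identity as a direct unwinding of the definitions of $\exp_\#$, of the juxtaposition product, and of $\lambda$, matching the terms on the left-hand side one-for-one with the terms $T_\tau$ indexed by $\mathcal{T}$. Write $K$ and $L$ for the two generators being exponentiated, labelled by the symbols $A$ and $B$ respectively in accordance with the construction of $T_{(w,\wp)}$; note that $K$ carries two $\bot$-legs and $L$ carries one, which is exactly what is forced by the appearance of $\{1,\dots,2\#A+\#B\}$ in the definition of $\mathcal{T}$.

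First I would expand the exponential in the usual way,
\[
\exp_\#(K+L) = \sum_{n=0}^\infty \frac{1}{n!}\,(K+L)^{\# n},
\]
and then expand each power using the (non-commutative) distributive law for $\#$. Since $\#$ is just juxtaposition along the orienting line, $(K+L)^{\# n}$ is the sum, over all words $w$ of length $n$ in the symbols $A$ and $B$, of the diagram $g_w$ obtained by juxtaposing copies of $K$ and $L$ in the order prescribed by $w$, and each such word occurs exactly once. Consequently every $g_w$ appears with coefficient $\frac{1}{n!}=\frac{1}{|w|!}$, with no overcounting.

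Next I would apply $\lambda$. Because $\lambda$ acts on each graded piece of the power series separately, and each output type receives contributions from only finitely many $g_w$, it commutes with these sums, giving
\[
\lambda\bigl(\exp_\#(K+L)\bigr) = \sum_{n=0}^\infty \frac{1}{n!}\sum_{|w|=n}\lambda(g_w).
\]
The diagram $g_w$ has exactly $2\#A+\#B$ $\bot$-legs, so by the definition of $\lambda$ recalled in Section \ref{lambdarecall} we have $\lambda(g_w)=\sum_{\wp\in\mathcal{P}(g_w)}\mathcal{D}_\wp(g_w)$, the sum running over precisely the families of disjoint $2$-element subsets of $\{1,\dots,2\#A+\#B\}$ that index $\mathcal{T}$. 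The crucial observation — which I would verify by laying the two recipes side by side — is that the graphical construction of $\mathcal{D}_\wp(g_w)$ (draw the ordered diagram, join the paired $\bot$-legs by arcs, record a sign $(-1)^x$ for full-line crossings and a factor $(\tfrac12)$ per pair) is \emph{identical} to the two-step construction of $T_{(w,\wp)}$; hence $\mathcal{D}_\wp(g_w)=T_{(w,\wp)}$.

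Finally I would separate the $n=0$ term, for which the empty word yields $g_\emptyset=1$ with only the empty pairing and thus contributes the summand $1$, and reindex the remaining terms (those with $n\geq 1$, i.e. $w$ non-empty) by $\mathcal{T}$, arriving at $1+\sum_{\tau\in\mathcal{T}}\frac{1}{|\tau|!}T_\tau$. The argument is essentially pure bookkeeping, so the only point demanding genuine care — the ``main obstacle'' such as it is — is the coefficient match: confirming that the exponential contributes exactly $\frac{1}{|w|!}$ to each word, and that the $(-1)^x$ sign and the $(\tfrac12)$-factors built into $T_\tau$ coincide with those in $\mathcal{D}_\wp$. No convergence question arises, since the right-hand side is literally the term-by-term expansion of the left-hand side inside $\Whatwedge\abpow$, so well-definedness transfers automatically.
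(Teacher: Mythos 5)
Your proposal is correct and follows exactly the route the paper takes: the paper dismisses this proposition as ``just a restatement of the definitions,'' and your argument is precisely the careful bookkeeping behind that restatement — expanding $\exp_\#$ over words in $A$ and $B$ with coefficient $\frac{1}{|w|!}$, commuting $\lambda$ through the sum, identifying $\mathcal{D}_\wp(g_w)$ with $T_{(w,\wp)}$, and peeling off the empty word to produce the leading $1$. Nothing is missing.
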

Now let $\mathcal{T}_{\mathcal{C}}\subset \mathcal{T}$ denote the subset consisting of those pairs
$(w,\wp)$ whose corresponding term $T_{(w,\wp)}$ is {\bf
connected}. (By connected we mean that the graph is connected when
the orienting line is ignored.) In this discussion we'll call
$\mathcal{T}_\mathcal{C}$ the set of {\bf connected types}.

\begin{thm}\label{decompintoconnecteds}
The following equation holds in $\Whatwedge\abpow$:
\[
\lambda\left( \exp_{\# }\left(
\raisebox{-3ex}{\scalebox{0.25}{\includegraphics{paramkay}}} +
\raisebox{-4.5ex}{\scalebox{0.25}{\includegraphics{paraml}}}
\right)\right) = \exp_{\#
}\left(\sum_{\tau\in\mathcal{T}_\mathcal{C}}
\frac{1}{|\tau|!}T_\tau \right).
\]
\end{thm}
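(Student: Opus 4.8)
The statement is an instance of the \emph{linked-cluster} (or exponential) formula: a weighted sum over all diagrams equals the exponential of the same sum restricted to connected diagrams. By Proposition \ref{restatement} it is equivalent to prove
\[
1 + \sum_{\tau\in\mathcal{T}}\frac{1}{|\tau|!}T_\tau = \exp_{\#}\left(\sum_{\tau_c\in\mathcal{T}_\mathcal{C}}\frac{1}{|\tau_c|!}T_{\tau_c}\right),
\]
and the plan is to carry out the standard exponential-formula bookkeeping, adapted to the signed setting of $\Whatwedge\abpow$. First I would record the algebraic structure that makes the argument run: the space $\Whatwedge\abpow$ is graded by the number of leg-grade $1$ legs, transposing an adjacent pair of leg-grade $1$ legs introduces a sign $-1$, and in the configurations that occur here all other transpositions are sign-free; consequently any $\#$-product of diagrams may be re-bracketed and its factors transposed at the cost of the sign of the induced permutation of their leg-grade $1$ legs.

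Next I would establish the multiplicative step. For a type $\tau=(w,\wp)$, let the underlying graph have connected components $C_1,\dots,C_m$ (the orienting line ignored). Grouping the atoms of $w$ by the component to which they belong and sliding each group together using the transposition rule above, one expresses $T_\tau = \varepsilon\, T_{\tau_1}\#\cdots\#T_{\tau_m}$, where each $\tau_i\in\mathcal{T}_\mathcal{C}$ is the connected type carried by $C_i$ (with its atoms in the induced order) and $\varepsilon=\pm1$. With this in hand I would set up the bijection underlying the exponential formula: a term $(w,\wp)\in\mathcal{T}$ with $m$ components is the same datum as an \emph{unordered} collection of connected types together with an \emph{interleaving} of their atoms along the orienting line; equivalently, the reconstruction map from (ordered $m$-tuples of connected types) $\times$ (interleavings) onto the $m$-component part of $\mathcal{T}$ is exactly $m!$-to-one.

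Expanding $\exp_{\#}\left(\sum_{\tau_c}\frac{1}{|\tau_c|!}T_{\tau_c}\right)=\sum_{m\ge 0}\frac{1}{m!}\bigl(\sum_{\tau_c}\frac{1}{|\tau_c|!}T_{\tau_c}\bigr)^{\#m}$, the plan is then to check that the factor $\frac{1}{m!}$ absorbs the ordering of the components, the factors $\frac{1}{|\tau_c|!}$ absorb the internal re-orderings of the atoms inside each component, and the multinomial count of interleavings cancels the weight $\frac{1}{|\tau|!}=\frac{1}{|w|!}$ appearing on the left. This is the purely enumerative skeleton of the identity, and modulo signs it is routine.

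The main obstacle is the signs, and it is genuinely essential: it is \emph{not} enough to count interleavings. The sign $\varepsilon$ above combines two separate contributions -- the sign incurred when leg-grade $1$ legs are permuted past one another, and the $(-1)^x$ arc-crossing sign built into the definition of $\lambda$ -- and one must show that, after summing a fixed connected diagram's contributions over all interleavings and all internal atom re-orderings that produce it, the resulting signed count is exactly the rational coefficient predicted by the exponential. A naive tally that keeps only the leg-permutation signs and discards the arc-crossing signs does not balance, so the two families of signs must be shown to conspire. The tool I expect to settle this is the same topological observation used in the proof that $\lambda$ commutes with $\apply$ (Figures \ref{gluethenpair} and \ref{pairthenglue}): once the full lines are cut by the relevant boxes into arcs, the number of crossings depends only on the cyclic positions of the arc-endpoints and is therefore invariant mod $2$ under exactly the re-orderings at issue. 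Matching this invariance against the leg-transposition signs is the crux of the calculation, and I would expect it to consume the bulk of the proof.
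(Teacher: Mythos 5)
Your proposal follows essentially the same route as the paper's own proof: the reduction via Proposition \ref{restatement}, the factorization of each $T_\tau$ into its connected components with the sign controlled by a crossing-parity argument (this is exactly the paper's ``Pairings factorize'' Lemma \ref{factorlemma}, whose proof maintains the invariant you describe --- the coefficient equals $(-1)$ raised to the number of displayed intersections, preserved both under leg transpositions, which flip sign and crossing parity together, and under the component-separating moves, which change crossings by an even number), followed by the multiplicity bookkeeping (Lemma \ref{howmanycontent}, which is equivalent to your $m!$-to-one reconstruction map). Your identification of the signs as the genuine crux, and of the conspiracy between leg-transposition signs and arc-crossing signs as the mechanism that resolves it, is precisely how the paper settles the matter.
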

We'll build up to this theorem with a number of combinatorial
lemmas. The computation of
$\sum_{\tau\in\mathcal{T}_\mathcal{C}}\frac{1}{|\tau|!}T_\tau$,
the series of {\it connected} diagrams that can arise from the
computation, is the subject of the next section. Certain readers
may feel that the above theorem is obvious, and we encourage such
readers to skip to Section \ref{determineconnected}. (We
remark that the subtlety in this situation is mostly to do with the
signs.)

\subsubsection{The content of a pairing.}
Consider some pair $(w,\wp)$. The corresponding diagram
$T_{(w,\wp)}$ decomposes into a number a connected components. To
each connected component $x$ we can associate some other pair
$(w_x,\wp_x)$. Namely, just delete every other component and write
down the pair $(w_x,\wp_x)$ which produces the remaining
component.

For example, consider the diagram corresponding to
\[(AABAAA,\{\{2,5\},\{4,6\},\{3,9\},\{7,11\},\{8,10\}\}\,).\]It is:
\[
\raisebox{-3.5ex}{\scalebox{0.24}{\includegraphics{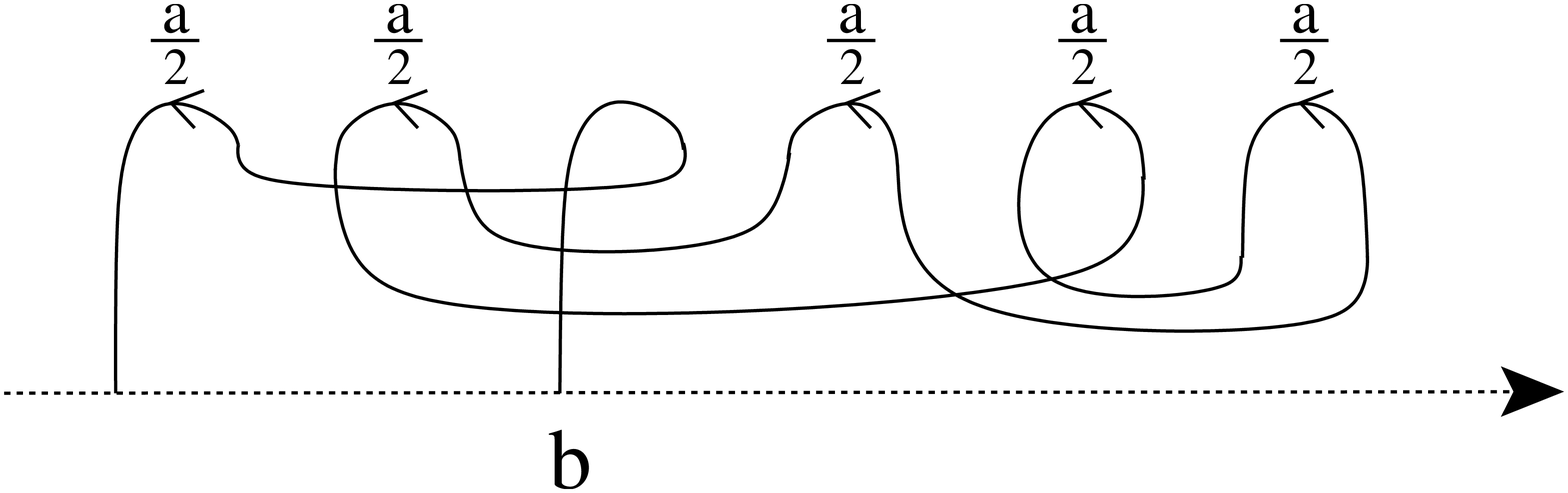}}}\
 .
\]
This diagram has 2 connected components. One component corresponds
to the pair $(AAAA,\{\{1,6\},\{2,3\},\{4,8\},\{5,7\}\}\,)$:
\[
\raisebox{-3.5ex}{\scalebox{0.24}{\includegraphics{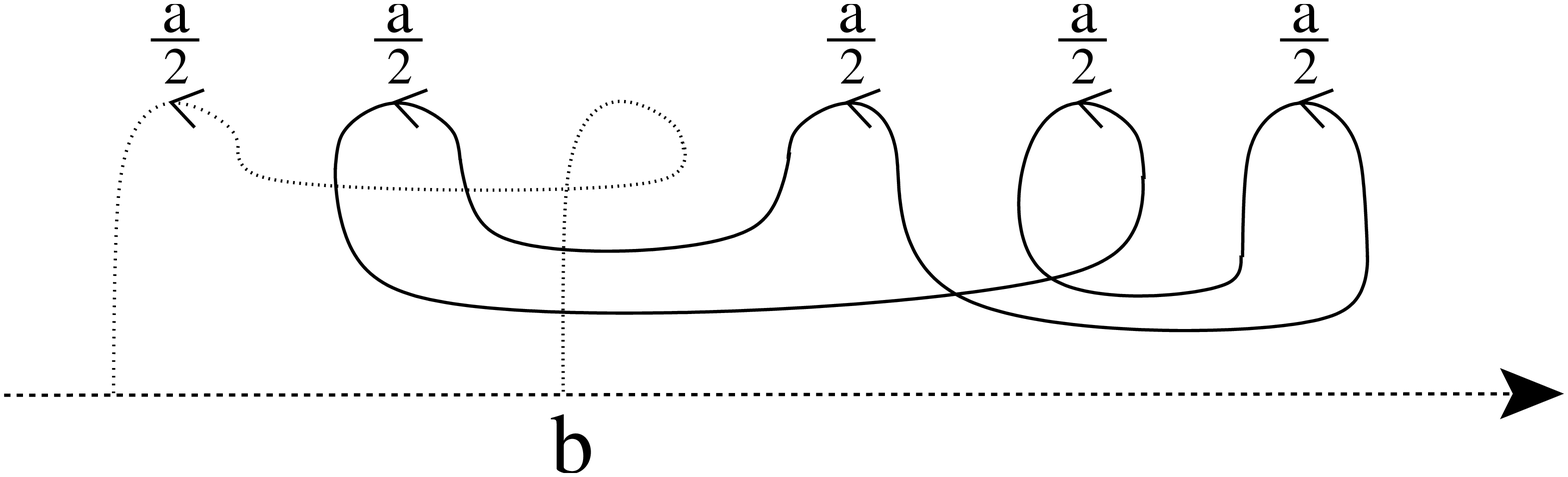}}}\
.
\]
The other component corresponds to the pair $(AB,\{\{2,3\}\}\,)$:
\[
\raisebox{-3.5ex}{\scalebox{0.24}{\includegraphics{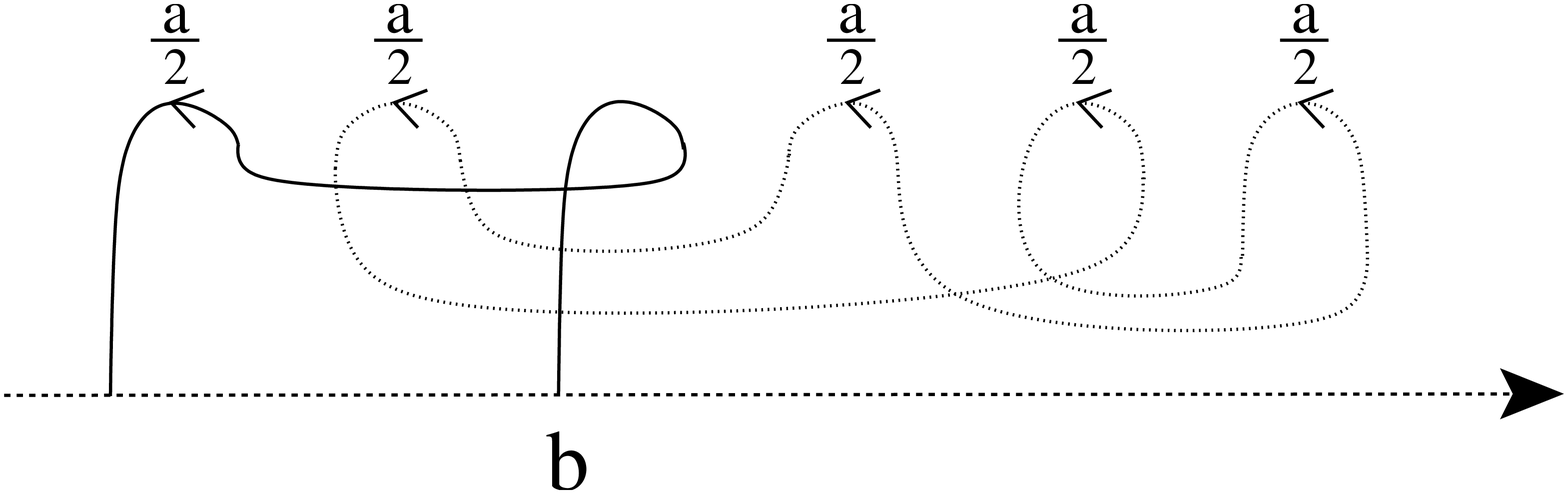}}}\
 .
\]

Define now the {\bf content} of a pair $(w,\wp)$ to be the map
$c_{(w,\wp)}:\mathcal{T}_\mathcal{C}\rightarrow \mathbb{N}$ which
counts the different types of the connected components. For
example, the diagram corresponding to the pair
\[
(w,\wp)=(AAAAABAA ,
\{\{1,10\},\{2,9\},\{3,5\},\{4,6\},\{7,13\},\{8,12\},\{11,14\}\})
\]
is \[
\raisebox{-3.5ex}{\scalebox{0.24}{\includegraphics{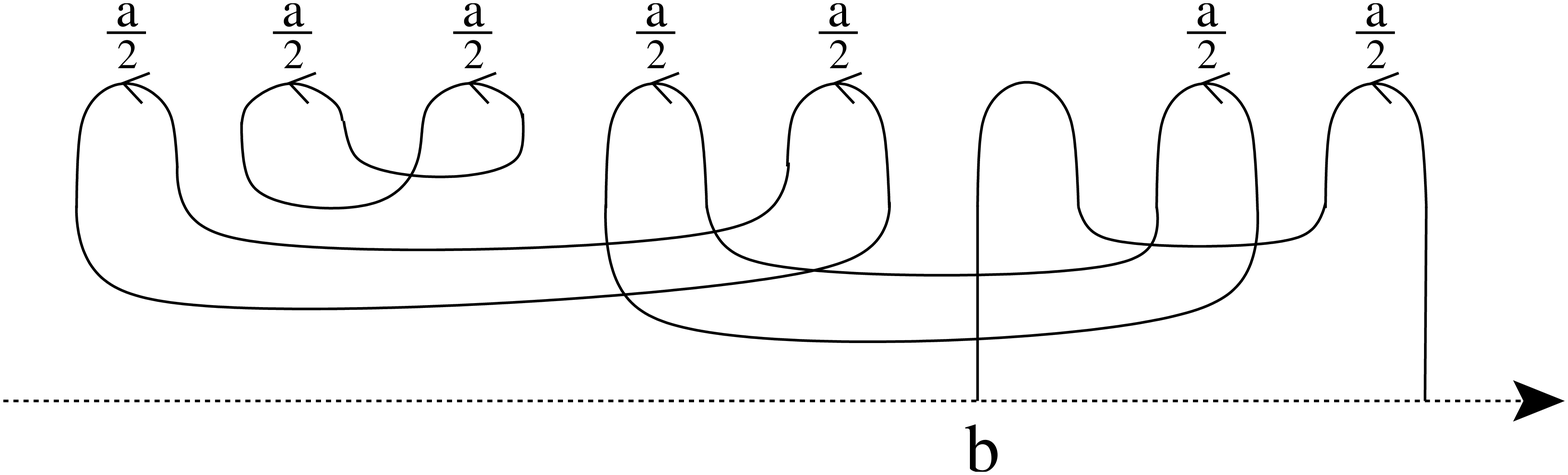}}}\
\ \ \ ,
\]
and its corresponding content is thus:
\[
c_{(w,\wp)}((\omega,\rho)) = \left\{
\begin{array}{cl}
2 & \mbox{if $(\omega,\rho)=(AA,\{\{1,4\},\{2,3\}\})$}, \\
1 & \mbox{if $(\omega,\rho)=(AA,\{\{1,3\},\{2,4\}\})$}, \\
1 & \mbox{if $(\omega,\rho)=(BA,\{\{1,2\}\})$}, \\
0 & \mbox{otherwise.}
\end{array}
\right.
\]

\subsubsection{The ``Pairings factorize" lemma.}
\begin{lem}\label{factorlemma}
Consider some $\tau\in \mathcal{T}$. Then:
\[
T_\tau =
\prod^\#_{\kappa\in\mathcal{T}_{\mathcal{C}}}\left(T_\kappa\right)^{\#c_\tau(\kappa)}.
\]
\end{lem}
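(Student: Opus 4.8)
The plan is to peel the statement into its three ingredients---the underlying graph, the power of $\frac{1}{2}$, and the sign---and to reduce everything to a single parity count. Write $\tau=(w,\wp)$, and let the connected components of the diagram underlying $T_\tau$ be $x_1,\dots,x_m$, each of some connected type in $\mathcal{T}_\mathcal{C}$; by construction $c_\tau$ records their multiplicities. The part that is essentially bookkeeping I would dispose of first. Since connectivity ignores the orienting line and every $2$-element block of $\wp$ joins two $\bot$-legs that necessarily lie in one and the same component, the graph underlying $T_\tau$ is literally the disjoint union of the graphs underlying the $x_r$, so it agrees with the graph underlying $\prod^\#_{\kappa}\left(T_\kappa\right)^{\# c_\tau(\kappa)}$ up to the left-to-right order in which the surviving legs meet the orienting line. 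Likewise $\wp$ partitions into the pairings of the individual components, so the coefficient $\left(\frac{1}{2}\right)^{|\wp|}$ splits as $\prod_\kappa\left(\left(\frac{1}{2}\right)^{|\wp_\kappa|}\right)^{c_\tau(\kappa)}$. Thus the entire content of the lemma is concentrated in the sign.

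Next I would make the sign comparison explicit. In $\Whatwedge$ the grade $1$ legs (which include both the $\bot$-legs and the grade $1$ parameter legs $b$) anticommute, while any transposition involving a grade $2$ leg (a curvature leg, or the grade $2$ parameter $a$) is trivial. Hence reordering the surviving legs of $T_\tau$ from the order inherited from $w$ into the component-grouped order used on the right-hand side multiplies the diagram by $(-1)^{r}$, where $r$ is the number of grade $1$/grade $1$ inversions produced by that regrouping. As the left-hand side carries the $\lambda$-sign $(-1)^{x(\tau)}$ and $\prod^\#_{\kappa}\left(T_\kappa\right)^{\# c_\tau(\kappa)}$ carries $(-1)^{\sum_\kappa c_\tau(\kappa)x(\kappa)}$, the whole lemma is equivalent to the congruence
\[
x(\tau)\;\equiv\;r+\sum_{\kappa}c_\tau(\kappa)\,x(\kappa)\pmod 2,
\]
where $x(\cdot)$ denotes the number of crossings between full (grade $1$) lines in the defining drawing.

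The proof of this congruence is the real work, and the place where I expect all the difficulty to sit---exactly the sign subtlety flagged before the theorem. My approach is to regard the full lines of the drawing as a properly embedded $1$-manifold in the rectangle between the two orienting lines, and to use the elementary fact that the parity of the number of intersections of two such arcs is determined solely by whether their four boundary endpoints link around the rectangle. This makes $x(\tau)\bmod 2$ an honest invariant and lets me split it as $x_{\mathrm{same}}+x_{\mathrm{diff}}$, the crossings between full lines of a single component versus those between full lines of two distinct components. Because each block of $w$ occupies a contiguous arc of the orienting line, the full lines of any one component involve only that component's own legs in their intrinsic relative order, so $x_{\mathrm{same}}\equiv\sum_\kappa c_\tau(\kappa)x(\kappa)$. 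It then remains to identify $x_{\mathrm{diff}}$ with $r$: I would show that the full-line system of one component meets that of another in a number of points whose parity equals the mod-$2$ linking of their boundary endpoints, and that summing this linking over all pairs of components reproduces precisely the grade $1$ inversion count $r$ of the regrouping (a line of one component that does not link another crosses it an even number of times, and so contributes nothing). Combining $x_{\mathrm{same}}$ and $x_{\mathrm{diff}}$ yields the congruence, hence the lemma. The delicate point throughout---and the one I would write out in full rather than assert---is that both the $\lambda$-sign and the regrouping sign see exactly the grade $1$ legs and nothing else, so that the mod-$2$ linking computation and the fermionic reordering are measuring the very same crossings.
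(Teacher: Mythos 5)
Your proposal is correct, and it rests on the same two elementary facts that drive the paper's proof: that the coefficient of a $\lambda$-term, drawn canonically, is $(-1)$ raised to the number of crossings between full lines, and that such crossing parities are topological invariants of the endpoint configuration. Where you differ is in the architecture. The paper argues \emph{dynamically}: it draws $T_{(w,\wp)}$ canonically, records the observation ($\dagger$) that the sign equals the displayed crossing parity, and then physically transforms the drawing into the disjoint union of its components, checking that every move preserves ($\dagger$) --- a transposition of two grade-$1$ legs flips the sign in $\Whatwedge\abpow$ \emph{and} adds one displayed crossing, while the two planar separation moves (the Reidemeister-$2$/$3$-like moves) change the crossing count by an even number. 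You argue \emph{statically}: nothing is moved; instead you reformulate the lemma as the congruence $x(\tau)\equiv r+\sum_\kappa c_\tau(\kappa)\,x(\kappa) \pmod 2$ and prove it by double counting, splitting crossings into intra-component ones (matched with $\sum_\kappa c_\tau(\kappa)x(\kappa)$ because each component's legs keep their intrinsic relative order) and inter-component ones (matched with the inversion count $r$ by a linking-parity computation). The content is essentially the same --- the paper's leg transpositions are exactly your $r$ inversions, and its even-parity isotopy moves are exactly your linking invariance --- but your version isolates the precise combinatorial identity being proved, whereas the paper's version never needs to define inversion counts or linking at all.

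Two points you should make explicit in a full write-up. First, your blanket statement that ``any transposition involving a grade $2$ leg is trivial'' is false for curvature legs in $\Whatwedge$: transposing two curvature ({\bf F}) legs costs an STU correction term, not nothing. It is harmless here only because the diagrams $T_\tau$ contain no curvature legs at all --- their grade-$2$ legs are $a$-parameter legs, which genuinely do commute with every other leg. Second, your identification $x_{\mathrm{diff}}\equiv r$ requires an argument that it does not depend on the order in which the components are grouped on the right-hand side: an inversion count depends on the target order, whereas a crossing parity does not. The reconciliation is that each block is a segment with exactly two grade-$1$ ends, so every connected component is a path or a cycle and carries exactly $0$ or $2$ surviving grade-$1$ legs; this evenness makes $r$ mod $2$ (and, for that matter, the $\#$-product on the right-hand side of the lemma) independent of the grouping order. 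That observation deserves a sentence, since it is also what makes the statement of the lemma unambiguous in the first place.
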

The proof of this lemma will appear shortly, in Section
\ref{pairingsfactorize}. To illustrate it, consider the case that
\[\tau=\left(AAAAAB, \{\{2,9\},\{4,5\},\{8,11\}\}\,\right).
\]In this case, the
left hand side of the above equation is equal to \[
(+1)\left(\frac{1}{2}\right)^3\raisebox{-6ex}{\scalebox{0.24}{\includegraphics{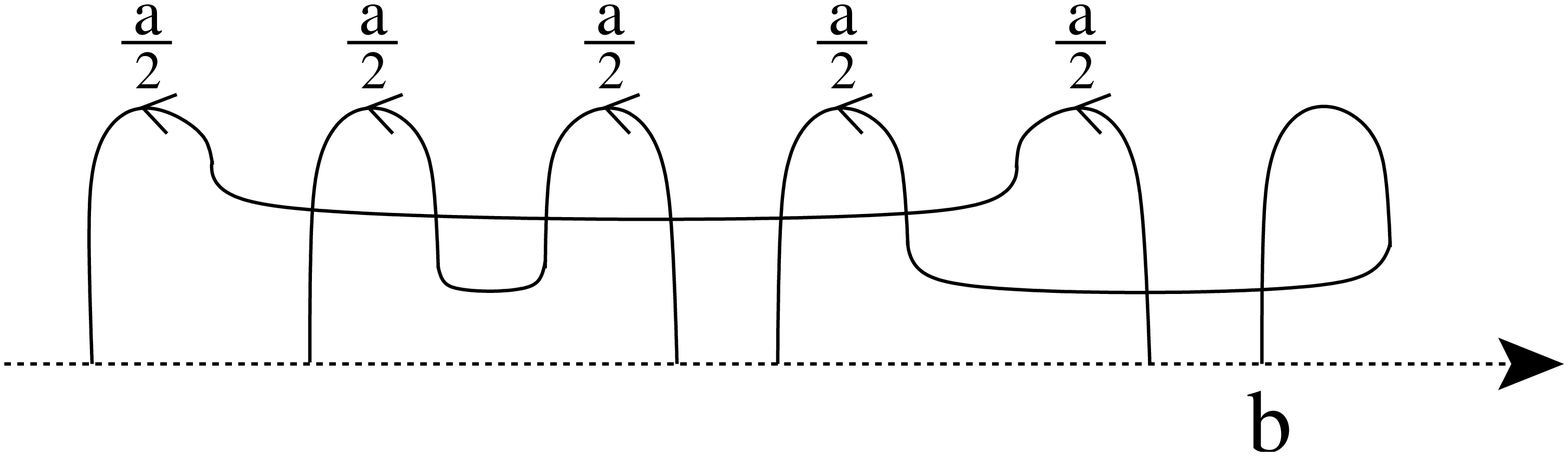}}},
\]
and the right hand side is
\[
\left((+1)\left(\frac{1}{2}\right)
\raisebox{-3.5ex}{\scalebox{0.24}{\includegraphics{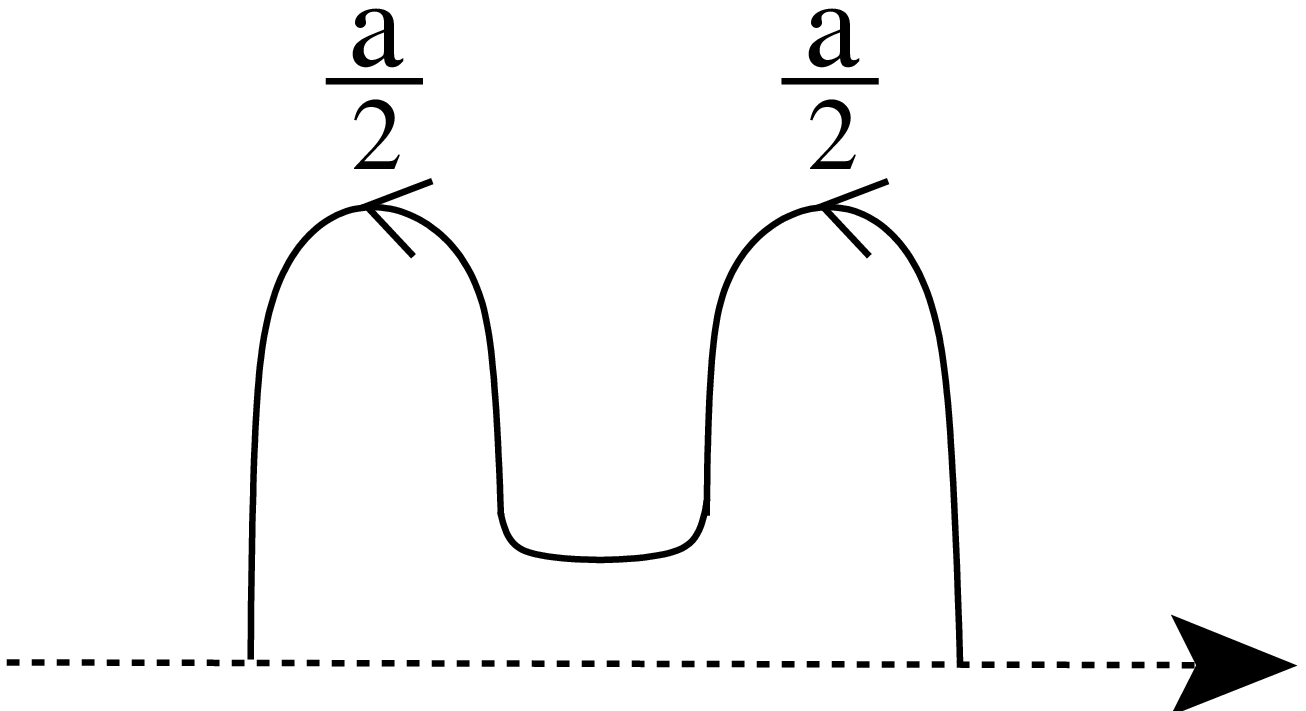}}}
\right)^{\#2}\# \left((-1)\left(\frac{1}{2}\right)
\raisebox{-5.7ex}{\scalebox{0.24}{\includegraphics{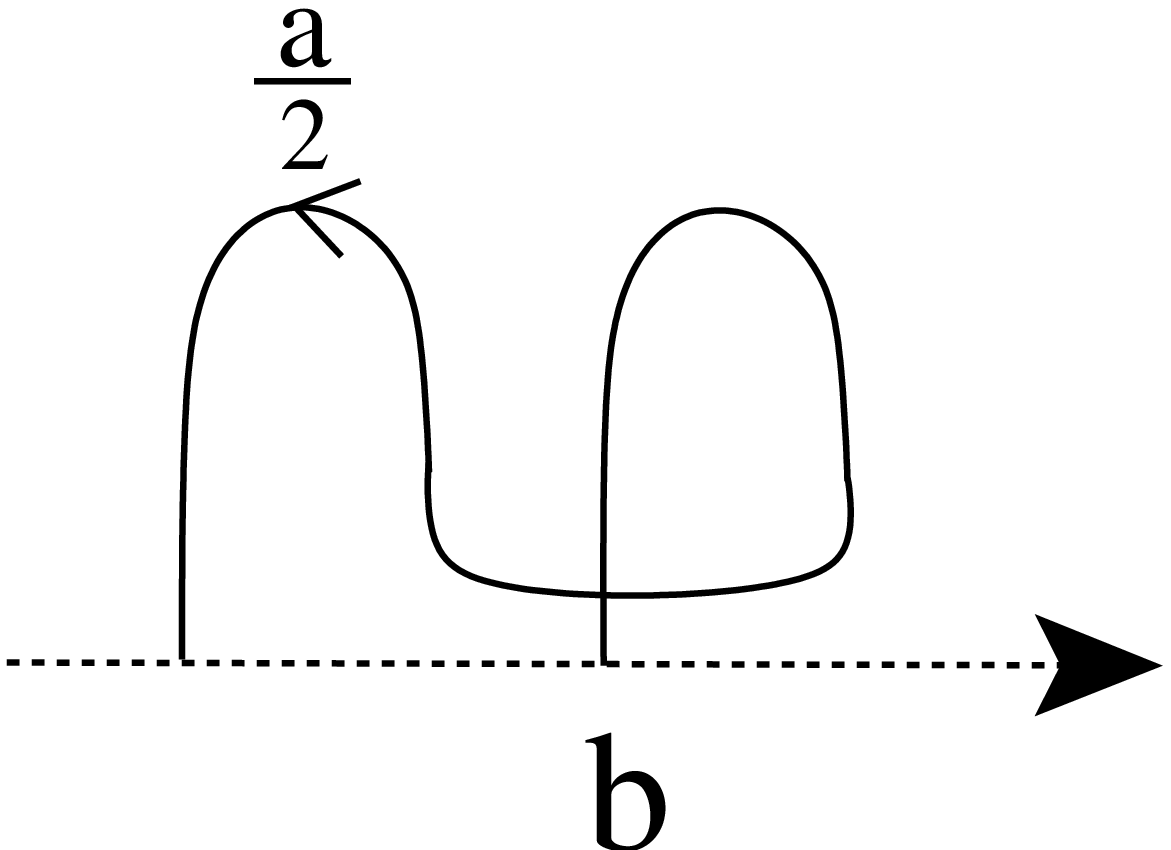}}}
\right) .
\]
The equality of these two expressions (in $\Whatwedge\abpow$) may
be immediately observed.

Applying this lemma to Proposition \ref{restatement} we can write:
\begin{multline}
1 + \sum_{\tau\in\mathcal{T}} \frac{1}{|\tau|!}T_{\tau} \\ =  1
+ \sum_{{\mathrm{content\ functions}\atop 0\neq c:
\mathcal{T}_{\mathcal{C}}\rightarrow
\mathbb{N}_0}}\left(\begin{array}{c}\mbox{Number of} \\ \mbox{pairs $\tau\in\mathcal{T}$} \\
\mbox{with content
$c$.}\end{array}\right)\frac{1}{\left(\sum_{\tau\in\mathcal{T}_\mathcal{C}}c(\tau)|\tau|\right)!}
  \prod^\#_{\kappa\in\mathcal{T}_{\mathcal{C}}}
  T_\kappa^{\#c(\kappa)}. \label{aboveeqn}
\end{multline}

It remains for us to count the number of pairs $(w,\wp)$ with some
given content
$c:\mathcal{T}_{\mathcal{C}}\rightarrow\mathbb{N}_0$. That is
achieved by the next lemma, which is proved in Section
\ref{pairingsfactorize}.
\begin{lem}\label{howmanycontent}
Consider some content function
$\tau:\mathcal{T}_\mathcal{C}\rightarrow\mathbb{N}_0$. The number
of pairs $(w,\wp)$ with this content is
\[
\frac{\left(\sum_{\tau\in\mathcal{T}_\mathcal{C}}c\left(\tau\right)|\tau|\right)!}
{\prod_{\kappa\in\mathcal{T}_\mathcal{C}}\left(|\kappa|!\right)^{c(\kappa)}\left(c(\kappa)\right)!}.
\]
\end{lem}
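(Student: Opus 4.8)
The plan is to prove Lemma \ref{howmanycontent} by a standard ``exponential formula'' enumeration: I would count the content-$c$ pairs $(w,\wp)$ by exhibiting a surjection onto this set from a larger, fully ordered set of configurations whose cardinality is transparent, and then showing that every fibre has the same size. Write $m := \sum_{\kappa\in\mathcal{T}_\mathcal{C}} c(\kappa)$ for the total number of connected components and $N := \sum_{\kappa\in\mathcal{T}_\mathcal{C}} c(\kappa)|\kappa|$ for the common length of the words $w$ in question; the letters of such a $w$ occupy positions $1,\dots,N$ along the orienting line, and each position will belong to exactly one connected component of $T_{(w,\wp)}$.

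First I would make precise the operation of \emph{installing} a connected type. Given $\kappa=(w_\kappa,\wp_\kappa)\in\mathcal{T}_\mathcal{C}$ and a subset $B\subset\{1,\dots,N\}$ with $|B|=|\kappa|$, the unique order-preserving bijection $\{1,\dots,|\kappa|\}\to B$ lets me place the letters of $w_\kappa$ at the positions of $B$ and translate the pairing $\wp_\kappa$ accordingly. Conversely, a connected component sitting on a position set $B$ determines a well-defined type by the inverse order-preserving relabelling---this is exactly the ``delete the other components and read off'' operation used earlier to define the content. The larger set I would count then consists of ordered sequences $\big((\kappa_1,B_1),\dots,(\kappa_m,B_m)\big)$ in which the multiset $\{\kappa_1,\dots,\kappa_m\}$ realizes $c$ and $(B_1,\dots,B_m)$ is an ordered partition of $\{1,\dots,N\}$ with $|B_i|=|\kappa_i|$; installing each $\kappa_i$ on $B_i$ and unioning produces a pair $(w,\wp)$, and this assignment is the surjection.

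Counting the larger set factors cleanly. The number of orderings of the type-multiset is $m!\big/\prod_\kappa c(\kappa)!$, and, once the type sequence is fixed, the number of ordered partitions of $\{1,\dots,N\}$ into blocks of the prescribed sizes is the multinomial $N!\big/\prod_\kappa (|\kappa|!)^{c(\kappa)}$, so the larger set has $m!\,N!\big/\big(\prod_\kappa c(\kappa)!\,\prod_\kappa (|\kappa|!)^{c(\kappa)}\big)$ elements. It then remains to check that the installation map is well defined and has constant fibres. Since each $\wp_{\kappa_i}$ joins only the $\bot$-legs within its own block and each $\kappa_i$ is connected, the installed pieces are exactly the connected components of the output, so the image genuinely has content $c$ (well-definedness and surjectivity). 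For the fibres, the components of a fixed $(w,\wp)$ are pairwise distinct because they occupy disjoint nonempty position sets; hence a preimage is precisely an ordering of these $m$ distinct components, of which there are $m!$. Dividing, the number of content-$c$ pairs is $N!\big/\big(\prod_\kappa c(\kappa)!\,\prod_\kappa(|\kappa|!)^{c(\kappa)}\big)$, which is the claimed formula.

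I expect the only real obstacle to be the bookkeeping around well-definedness of the component-type assignment and the non-merging of installed pieces---that is, verifying carefully that the connected decomposition of the output is exactly the collection of installed blocks, so that the content comes out right and the fibres are honestly orderings of distinct components. Unlike Lemma \ref{factorlemma}, no signs enter here, since I am merely enumerating the index pairs $(w,\wp)$ rather than comparing the diagrams they label; this keeps the argument a pure matter of multinomial combinatorics.
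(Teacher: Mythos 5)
Your proof is correct, and it is a close variant of the paper's own argument, differing only in how the redundancy among interchangeable components is handled. The paper constructs a bijection from the set $X$ of content-$c$ pairs onto a set $Y$ of words in symbols $\zeta_{i,j}$ (one symbol for the $j$-th copy of the $i$-th connected type, used $|w_i|$ times each), subject to the constraint that the first appearances of $\zeta_{i,1},\zeta_{i,2},\dots$ occur in left-to-right order; that constraint is a canonical labelling that removes the $\prod_\kappa c(\kappa)!$ overcount, and the paper then leaves the enumeration of $Y$ to the reader. You instead count fully ordered configurations --- an ordered list of components, each carrying its type and its set of positions --- and divide by $m!$, justified by the observation that the components of a fixed pair occupy disjoint nonempty position sets and hence are pairwise distinct, so every fibre of the installation map has exactly $m!$ elements. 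The arithmetic is the same in both cases, namely the multinomial $N!/\prod_\kappa (|\kappa|!)^{c(\kappa)}$ corrected by $\prod_\kappa c(\kappa)!$, and the combinatorial core (components partition the letter positions; identical components are interchangeable) is identical. What your version buys is self-containedness: the final formula drops out of the division rather than being deferred to an unproved count of $Y$, and you make explicit the point the paper's proof leaves implicit --- that installed pieces neither merge nor split, so the connected decomposition of the output is exactly the collection of installed blocks, which is what makes both the surjectivity and the fibre count honest.
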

If we substitute this computation into the right-hand side of the
above equation, we get:
\[
1 + \sum_{{\mathrm{content\ functions}\atop 0\neq c:
\mathcal{T}_{\mathcal{C}}\rightarrow \mathbb{N}_0}}
\prod^\#_{\kappa\in\mathcal{T}_{\mathcal{C}}}
\frac{1}{c(\kappa)!}\left(\frac{T_\kappa}{|\kappa|!}\right)^{\#c(\kappa)}.
\]
This completes the proof of Theorem \ref{decompintoconnecteds}.

\subsubsection{The proofs of the two lemmas.}
\label{pairingsfactorize} This section contains proofs of the two
technical lemmas that were used in the proof of Theorem
\ref{decompintoconnecteds}.

 {\it Proof of Lemma
\ref{factorlemma}.} We are asked to show that, for
$(w,\wp)\in\mathcal{T}$, \begin{equation}\label{factorhere}
T_{(w,\wp)} =
\prod^\#_{\kappa\in\mathcal{T}_{\mathcal{C}}}\left(T_\kappa\right)^{\#c_{(w,\wp)}(\kappa)}.
\end{equation}
The right hand side of this equation is just the left hand side,
factored into its connected components. The equality is obvious,
except for the possibility that the signs may differ. To establish
this equality we'll begin by drawing the diagram representing the
left-hand side in a canonical way; then we'll push the legs around
(using the signed permutation relations) until the diagram is
separated into its constituent components. Our task is to keep
track of what happens to the sign out the front of the term during
this process.

We'll illustrate the following discussion with the example:
\[(w,\wp)=\left(AAAAAB
, \{\{2,9\},\{4,5\},\{8,11\}\}\,\right).
\]
We begin with the left-hand side. Construct the corresponding term
$T_{(w,\wp)}$ using the graphical approach to $\lambda$ discussed
in Section \ref{lambdarecall}. In the given example, we would
draw:
\[
T_{(w,\wp)}=(-1)^8\left(\frac{1}{2}\right)^3\
\raisebox{-8ex}{\scalebox{0.24}{\includegraphics{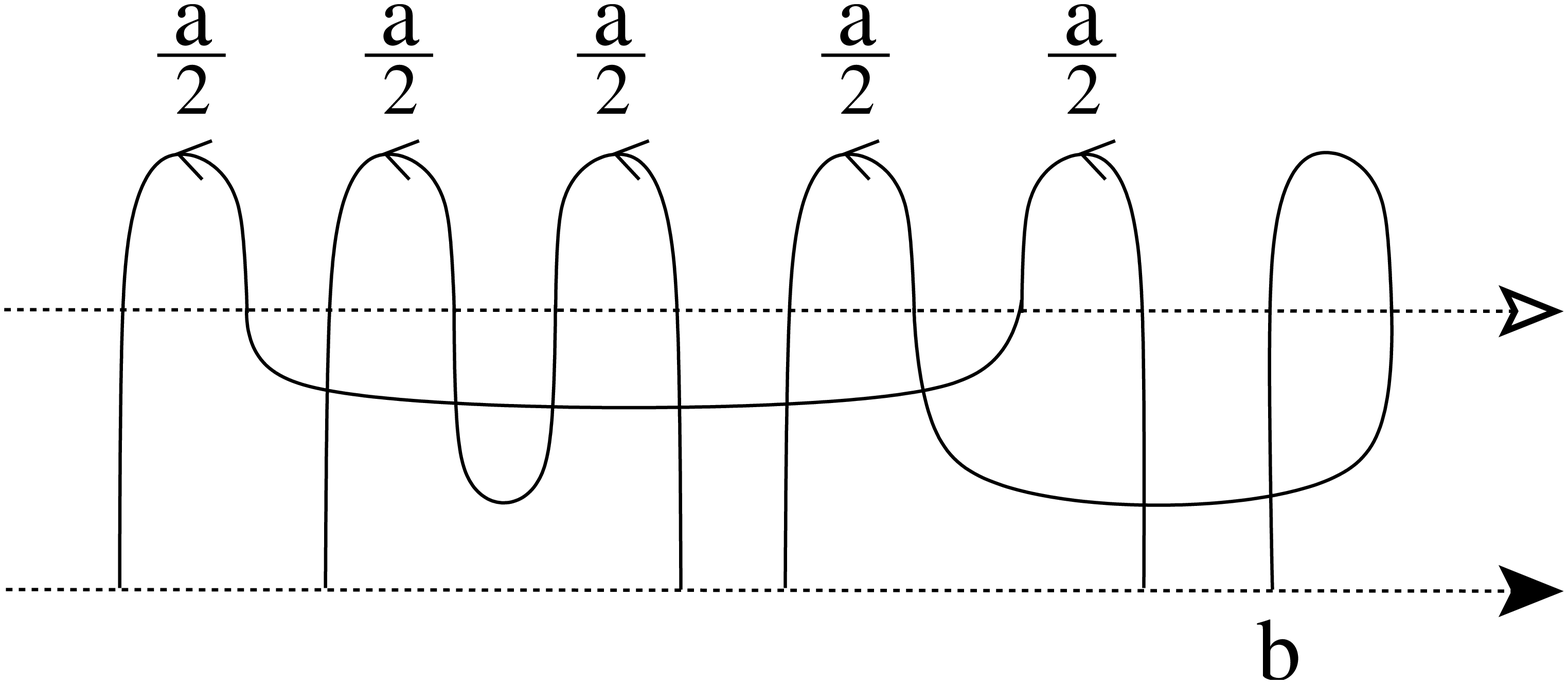}}}\
.
\]
Before we proceed, observe the crucial point: {\it if we draw the
diagram in this fashion, then the sign out the front of the term
is precisely a product of a $(-1)$ for every intersection
displayed by the drawing}. Call this observation ($\dagger$).

So this is the left hand side, $T_{(w,\wp)}$. To connect this with
the expression on the right-hand side, we will now separate this
diagram into its connected components. To do this we have to
perform permutations of the legs. Every time we permute a pair of
legs we pick up a $(-1)$, but also pick up an extra intersection
point in the drawing. So throughout this process observation $(\dagger)$ still holds. Continuing with our example:
\begin{eqnarray*}
T_{(w,\wp)}& = & (-1)^9\left(\frac{1}{2}\right)^3 \raisebox{-6ex}{\scalebox{0.24}{\includegraphics{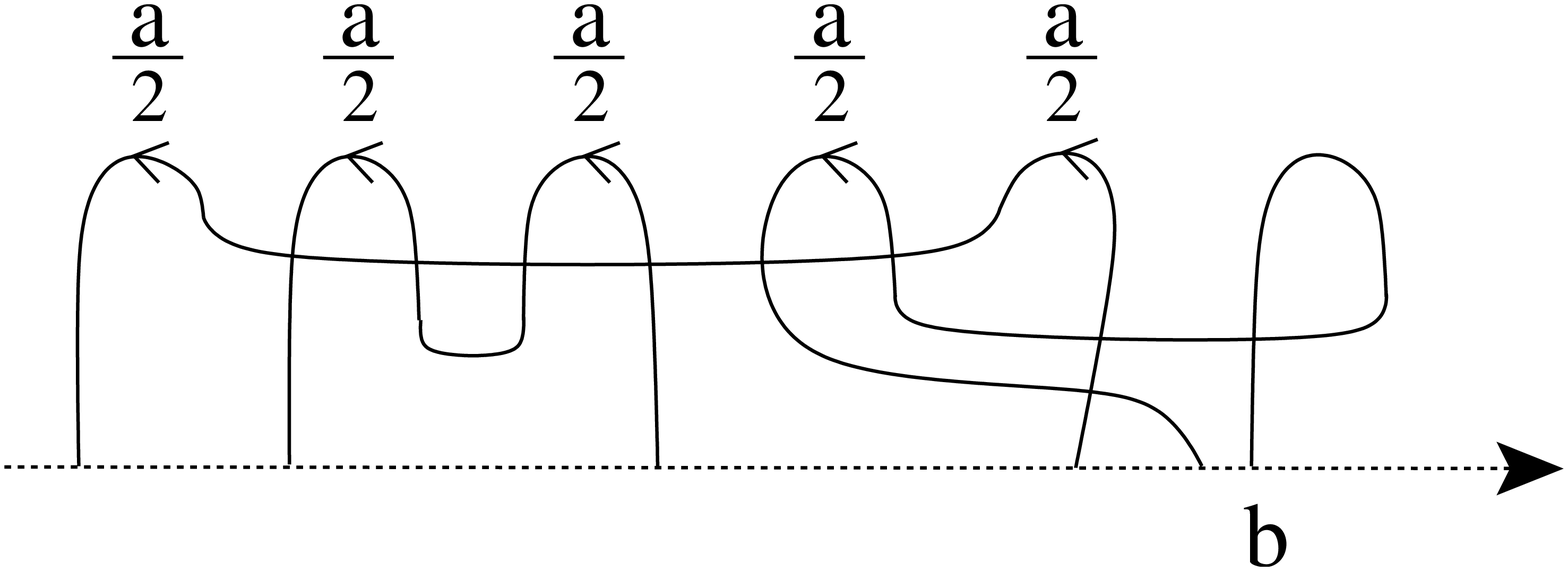}}},\\
&= & (-1)^{10}\left(\frac{1}{2}\right)^3
\raisebox{-6ex}{\scalebox{0.24}{\includegraphics{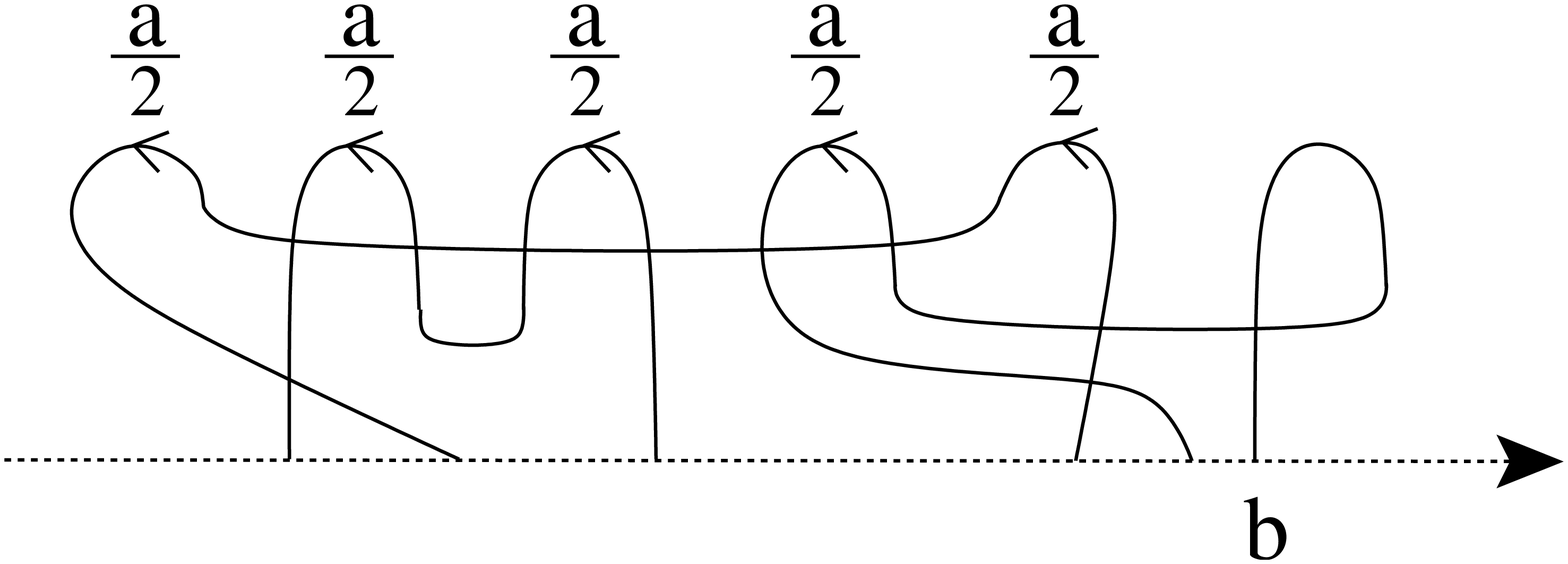}}},
\\
& = &(-1)^{11}\left(\frac{1}{2}\right)^3
\raisebox{-6ex}{\scalebox{0.24}{\includegraphics{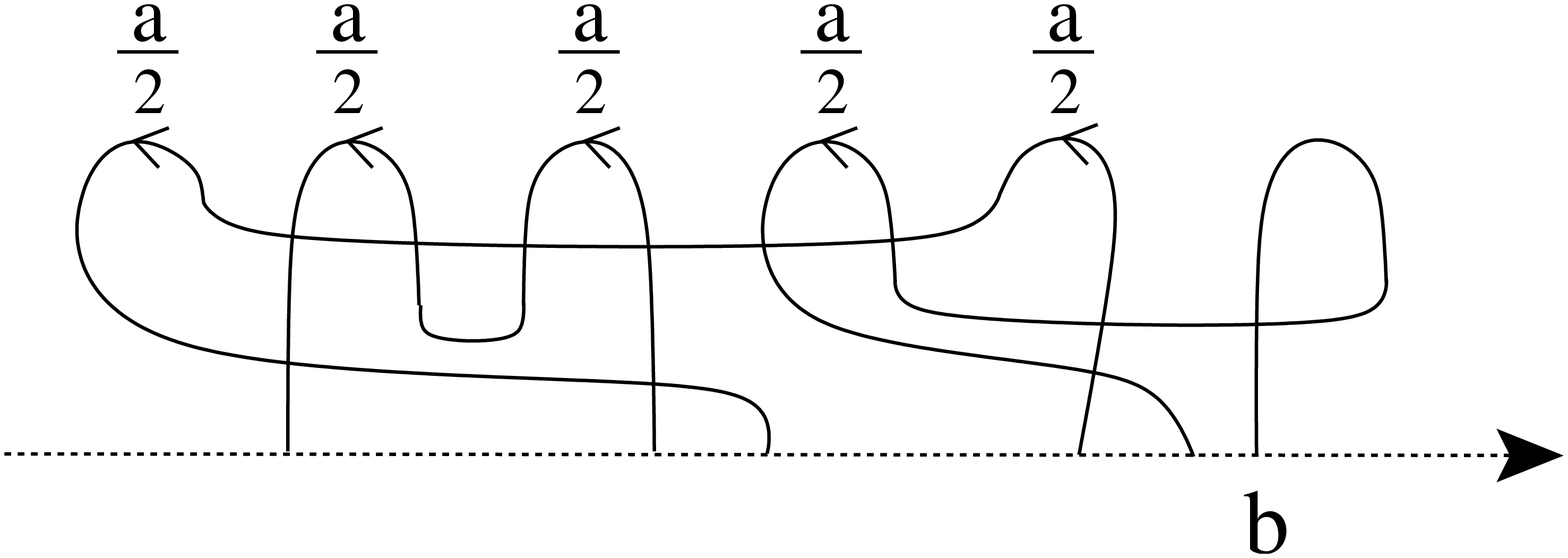}}}.
\end{eqnarray*}
We have now separated the legs of the connected components into
their respective components.

We finish by fully separating the connected components in the
drawing. To be precise, we can now do a combination of the
following two moves (where the dashed line indicates that there
are two connected components involved):
\[
\raisebox{-3.5ex}{\scalebox{0.16}{\includegraphics{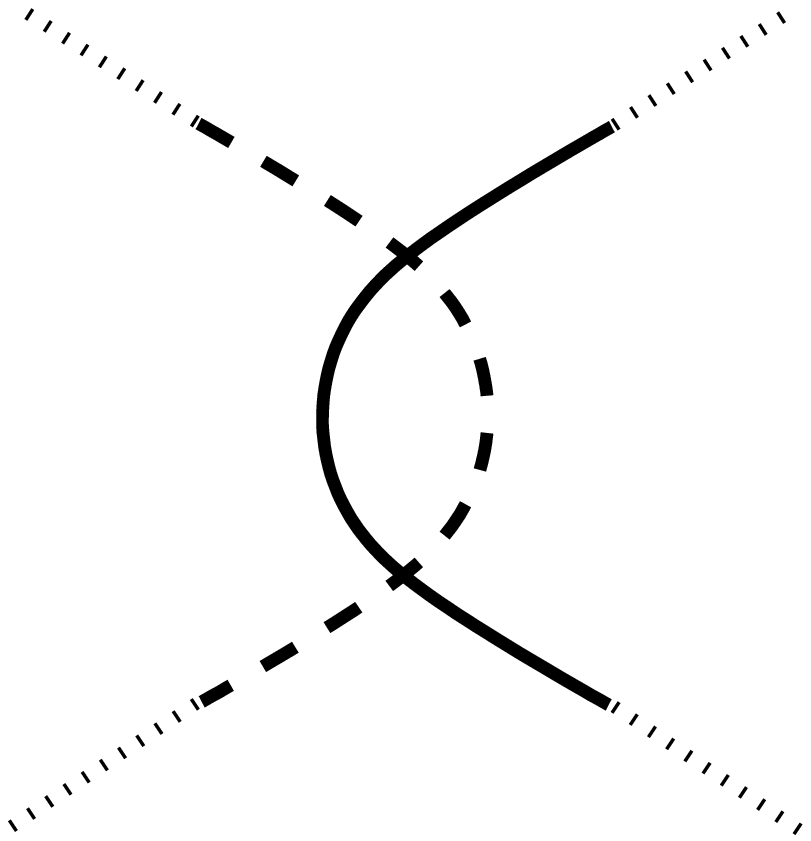}}}
\rightleftharpoons
\raisebox{-3.5ex}{\scalebox{0.16}{\includegraphics{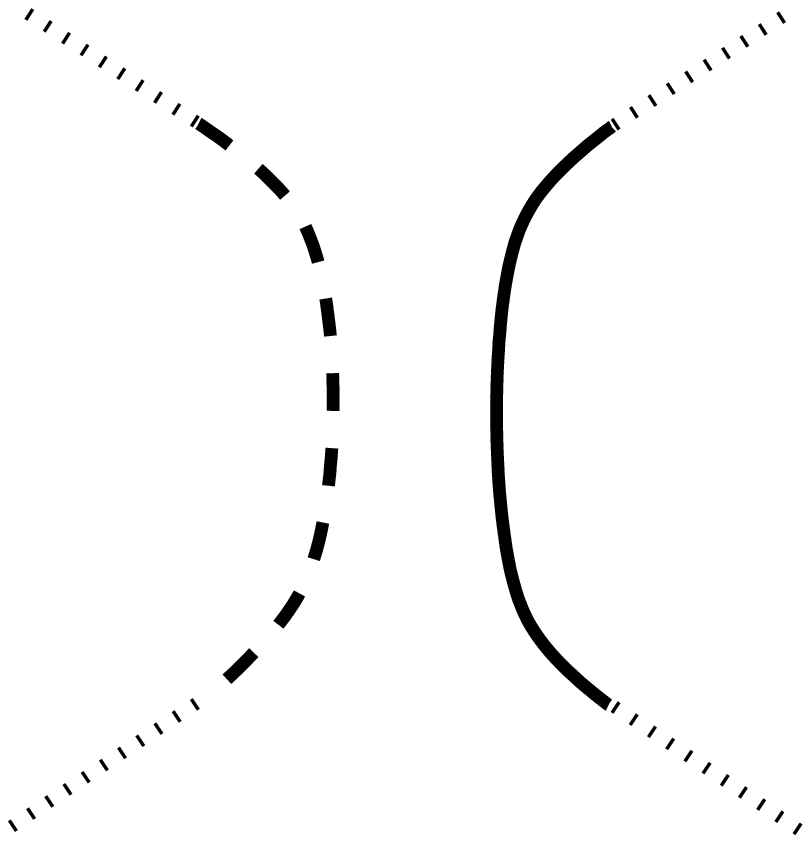}}} \ \ \ \
\text{and}\ \ \ \
\raisebox{-3.5ex}{\scalebox{0.18}{\includegraphics{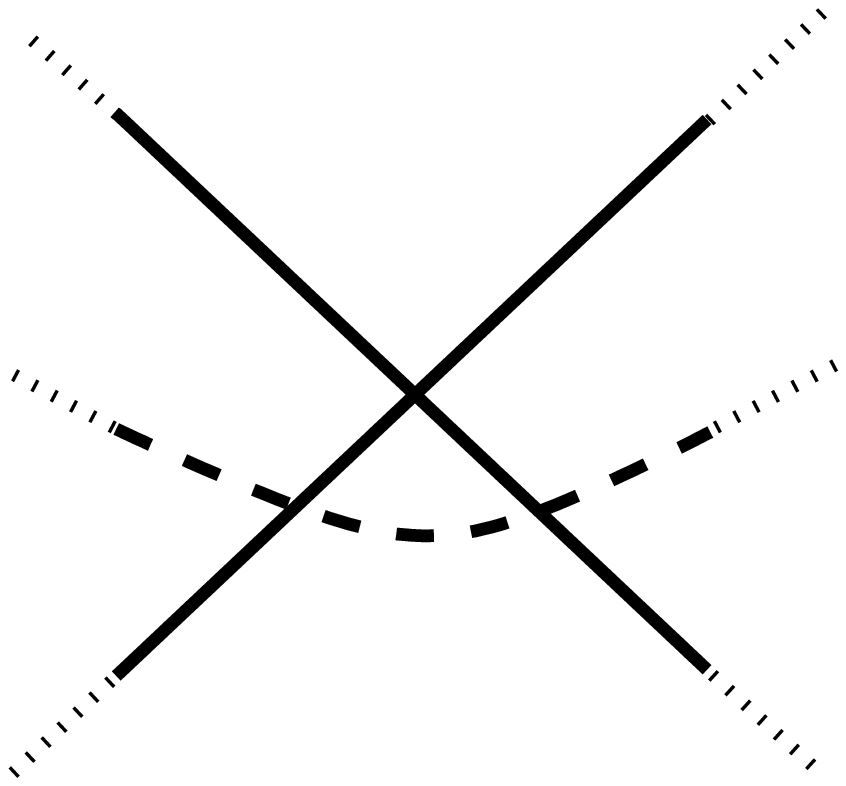}}}
\rightleftharpoons
\raisebox{-3.5ex}{\scalebox{0.18}{\includegraphics{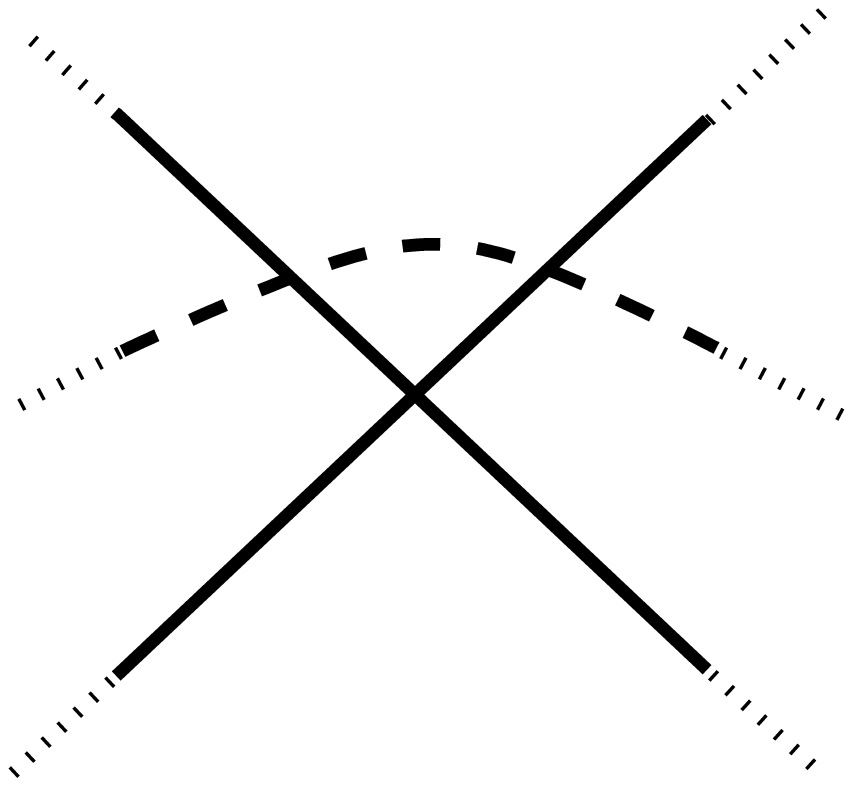}}}\ \ \ ,
\]
to separate the connected components in the drawing without
affecting the graphical structure of the drawings of the connected
components themselves. Note that these moves only change the
number of displayed intersections by an even number, so observation $(\dagger)$ still
holds after doing these moves.
In our example:
\begin{eqnarray*}
T_{(w,\wp)}& = & (-1)^{5}\left(\frac{1}{2}\right)^3
\raisebox{-6ex}{\scalebox{0.24}{\includegraphics{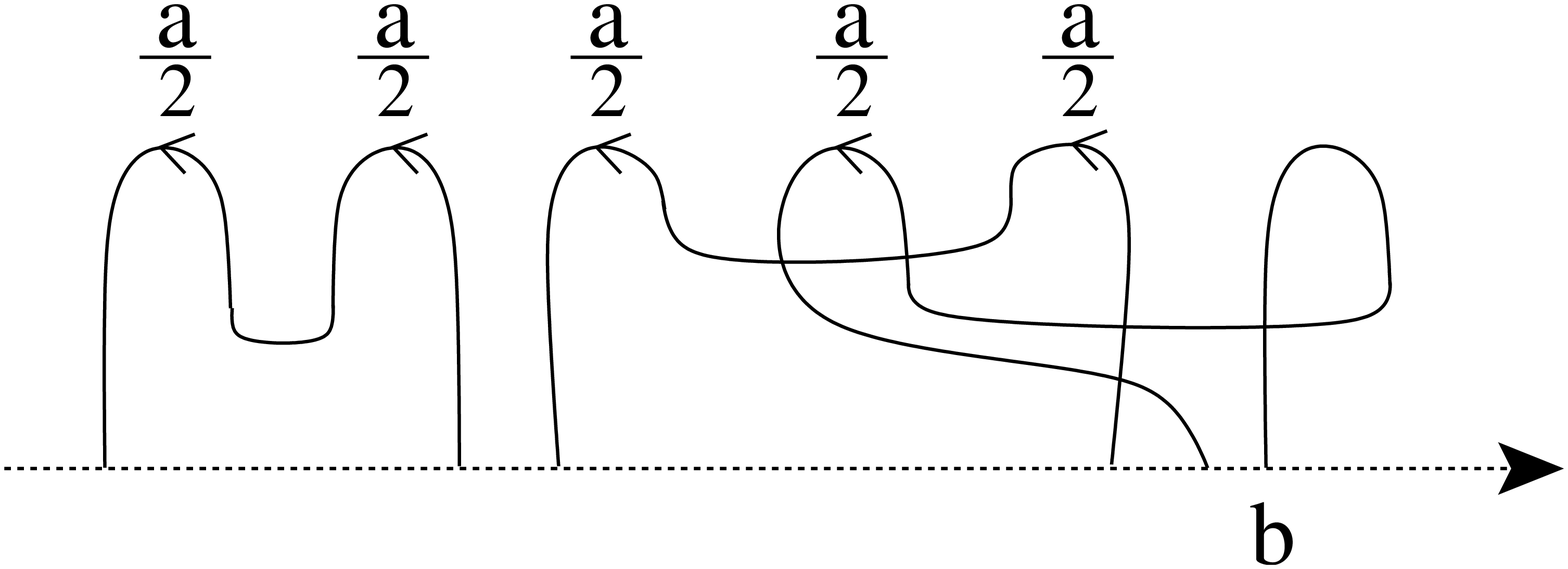}}}\ \
,\\& = & (-1)^1\left(\frac{1}{2}\right)^3
\raisebox{-6ex}{\scalebox{0.24}{\includegraphics{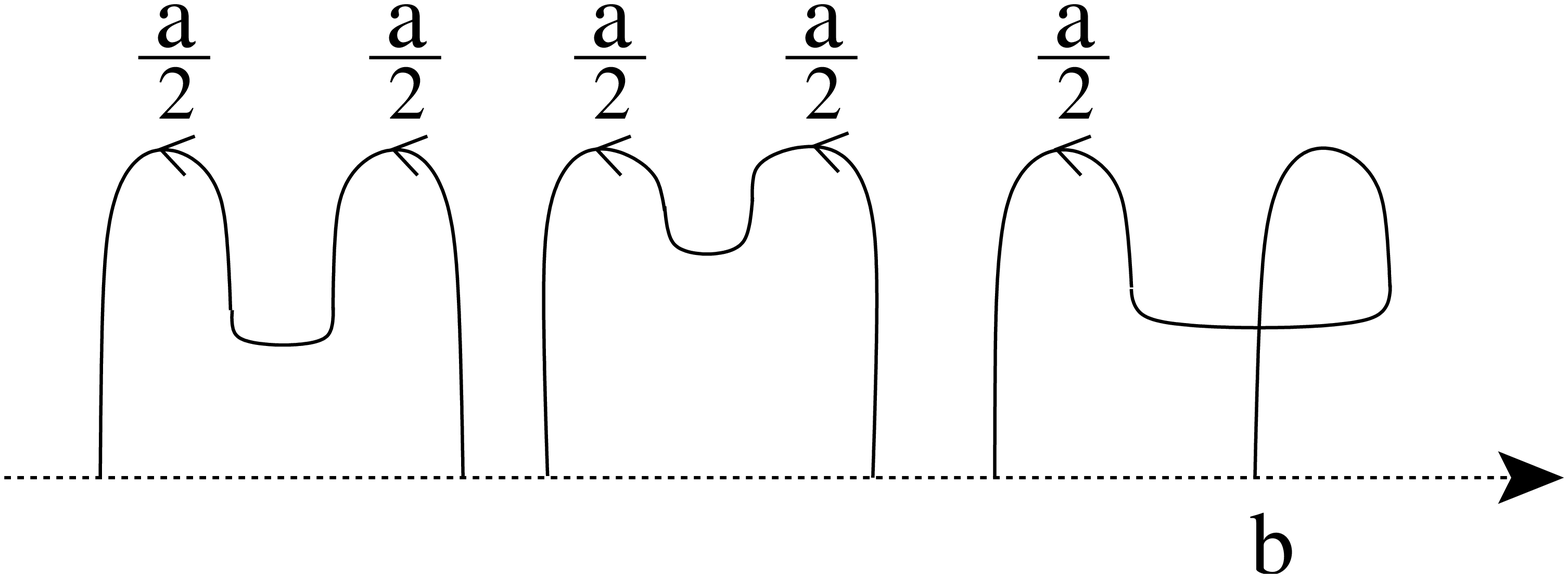}}}\ \
.
\end{eqnarray*}
After this factorization procedure the sign that we are left with,
then, is a $(-1)$ for every intersection displayed by the drawing.
Thus the sign is a $(-1)$ for every self-intersection of the
connected components, which exactly gives the right-hand side of
Equation \ref{factorhere}.
\begin{flushright}
$\Box$
\end{flushright}
{\it Proof of Lemma \ref{howmanycontent}.} So, we are given a
specific content function
\[
\tau : \mathcal{T}_\mathcal{C} \to \mathbb{N}_0
\]
and asked to count how many pairs $(w,\wp)$ have this content
function. For convenience, just say that the given content
consists of $n_1$ copies of the pair $(w_1,\wp_1)$, $n_2$ copies
of the pair $(w_2,\wp_2)$, and so on, up to $n_m$ copies of the
pair $(w_m,\wp_m)$.

Let $X\subset \mathcal{T}$ denote the set of pairs $(w,\wp)$
having this content. To count $X$ we'll construct a bijection from
$X$ to a certain set $Y$ of words. Consider the following set of
symbols:
\[
\{\zeta_{i,j}, 1\leq i\leq m, 1\leq j\leq n_i\}.
\]
For every such $i$ and $j$, take $|w_i|$ copies of the symbol
$\zeta_{i,j}$. Let $Y$ denote the set of words that can be built
from this collection of symbols which satisfy the restriction that
for every $i$ and $j<k$, the first appearance of the symbol
$\zeta_{i,j}$ appears to the left of the first appearance of the
symbol $\zeta_{i,k}$.

The map from $X$ to $Y$ is just to scan the corresponding diagram
$T_{(w,\wp)}$ factor by factor, and for each factor to write down
a symbol $\zeta_{i,j}$ if that factor is used by the $j$-th copy
of the $i$-th connected type.

For example, consider the following content:
\[
c_{(w,\wp)}((\omega,\rho)) = \left\{
\begin{array}{cl}
2 & \mbox{if $(\omega,\rho)=(AA,\{\{1,4\},\{2,3\}\})$}, \\
1 & \mbox{if $(\omega,\rho)=(AA,\{\{1,3\},\{2,4\}\})$}, \\
1 & \mbox{if $(\omega,\rho)=(BA,\{\{1,2\}\})$}, \\
0 & \mbox{otherwise.}
\end{array}
\right.
\]

Here is an example of how a pair $(w,\wp)$ with this content gives
a word in these symbols:
\[
\raisebox{-6.5ex}{\scalebox{0.24}{\includegraphics{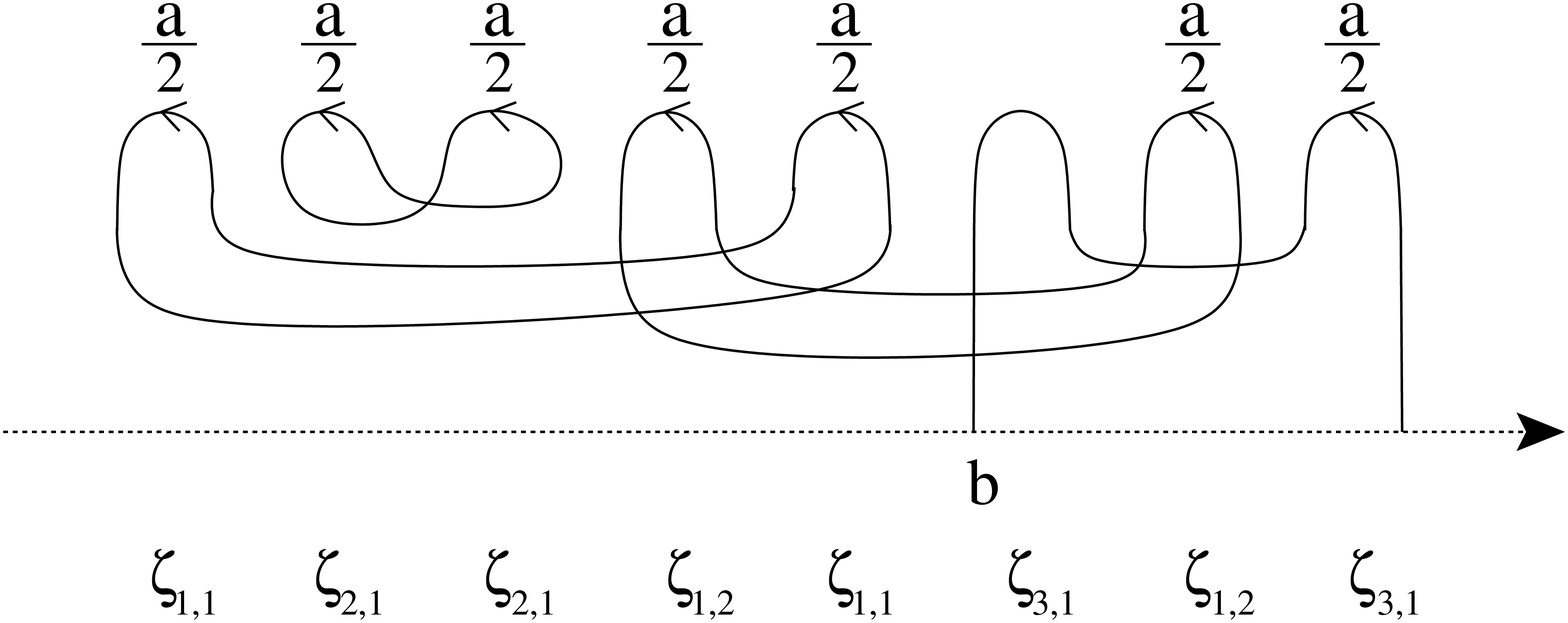}}}\
\ \ \ .\\[0.25cm]
\]
This map sets up a bijection between $X$ and $Y$. It is a
straightforward combinatorial problem to count $Y$.
\begin{flushright}
$\Box$
\end{flushright}
\subsection{The computation of $\sum_{\tau\in\mathcal{T}_\mathcal{C}}\frac{1}{|\tau|!}T_\tau$.}
\label{determineconnected}
Our task in this section is to write down the series of all
possible terms that can arise by the following procedure:
\begin{enumerate}
\item{Putting down a number of copies of the diagrams
\[
\raisebox{-2ex}{\scalebox{0.26}{\includegraphics{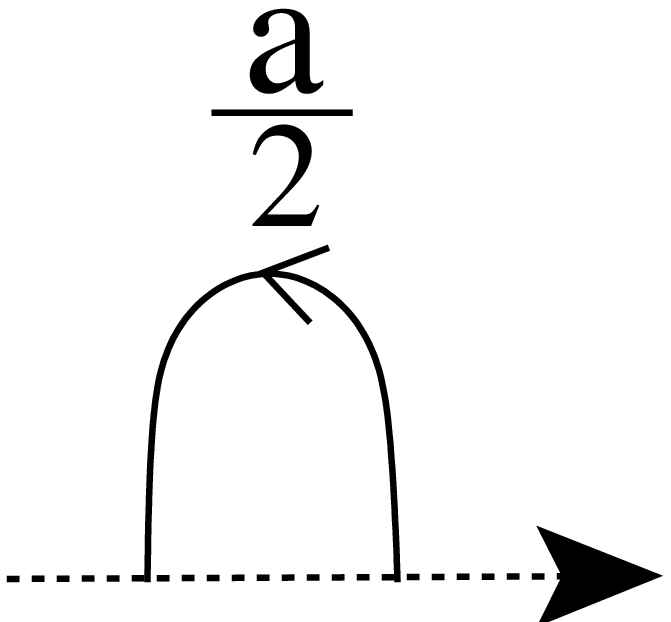}}}\ \ \
\ \mbox{and}\ \ \ \
\raisebox{-3.5ex}{\scalebox{0.26}{\includegraphics{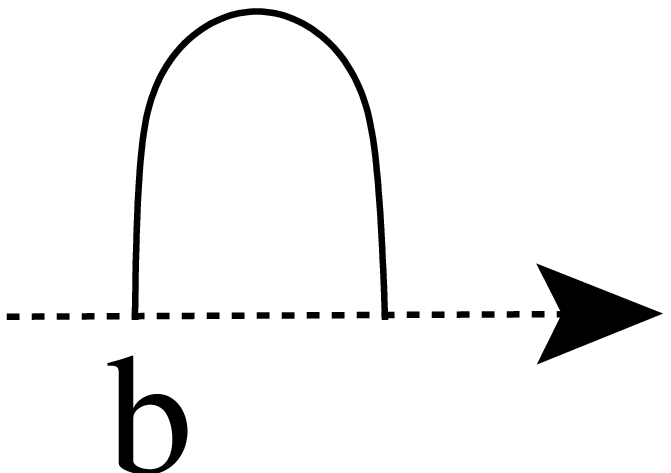}}}
\]
in some order along an orienting line. If we use $n$ factors in
total then we multiply the diagram by $\frac{1}{n!}$.}
\item{Joining up (with signs) the $\bot$-legs in such a way as to
produce a {\it connected} diagram  (multiplying by a factor of
$\frac{1}{2}$ for every pair of legs joined up).}
\end{enumerate}
We  begin by observing that the connected diagrams that can arise
in this way fall into exactly four groups. Below, we'll refer to a
leg of the form
$\raisebox{-0.5ex}{\scalebox{0.25}{\includegraphics{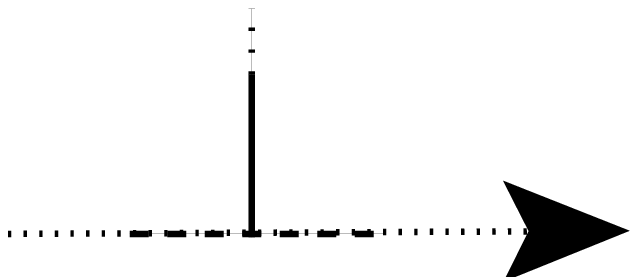}}}$
as a $\bot$-leg and a leg of the form
$\raisebox{-2.2ex}{\scalebox{0.25}{\includegraphics{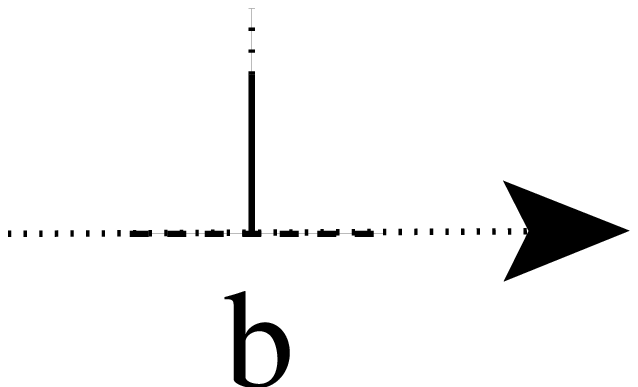}}}$\
as a $b$-leg. The four possibilities are:
\begin{itemize}
\item{ The resulting diagram has exactly two remaining legs, and
they are both $\bot$-legs. For example: \[
\raisebox{-3.5ex}{\scalebox{0.24}{\includegraphics{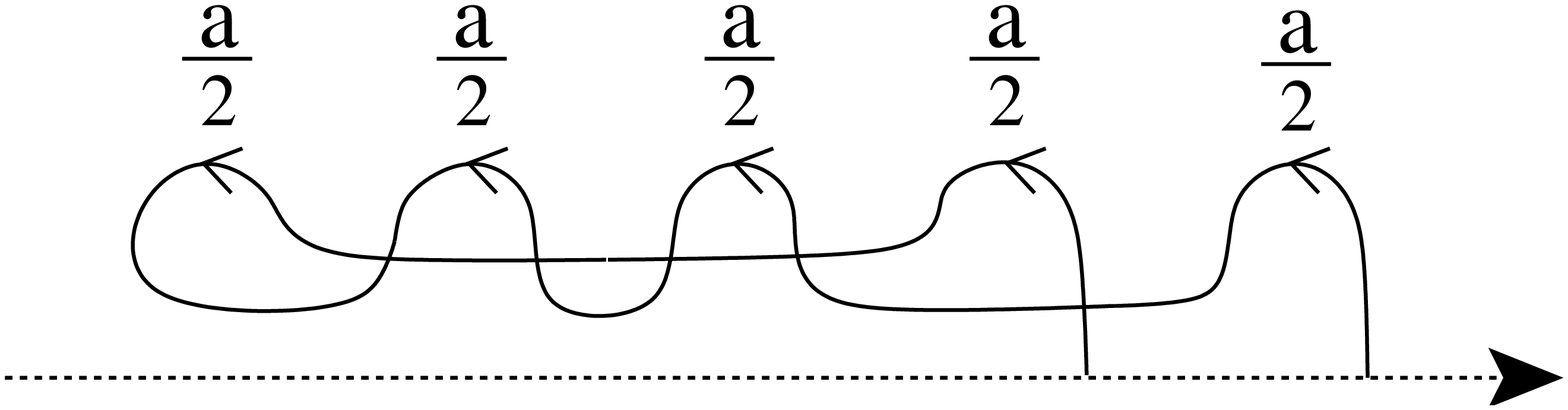}}}\ .\\
\]} \item{The resulting diagram has exactly two
remaining legs, and they are both $b$-legs. For example: \[
\raisebox{-3.5ex}{\scalebox{0.24}{\includegraphics{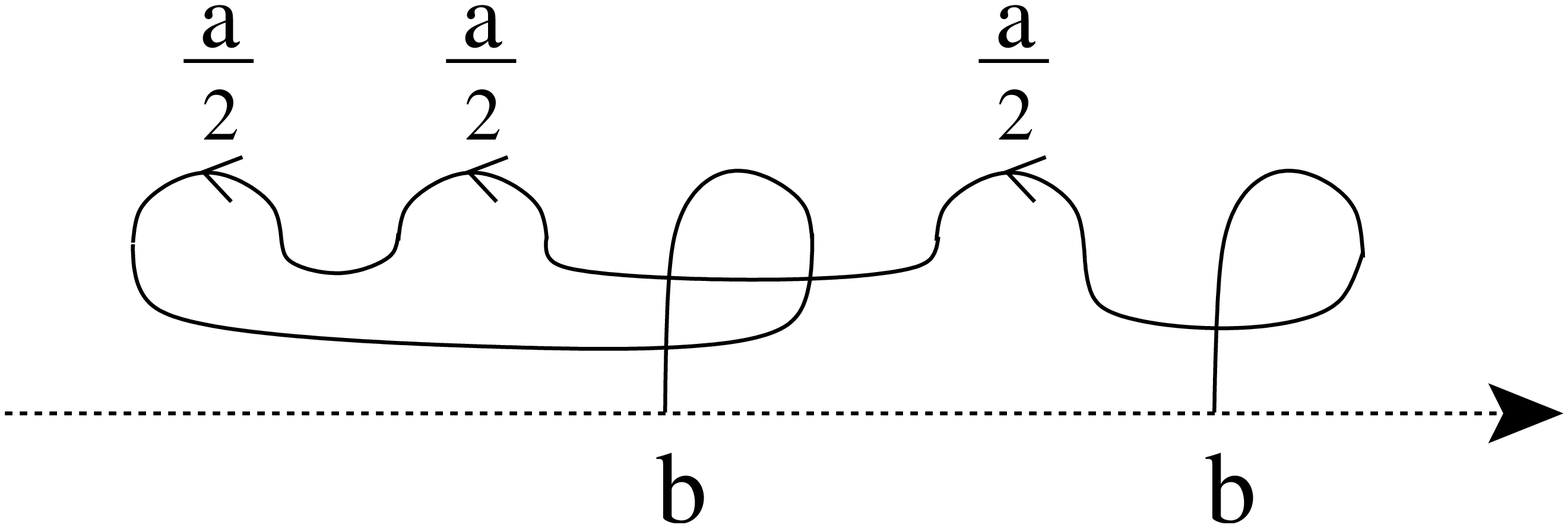}}}\ .\\
\]} \item{The resulting diagram has exactly two
remaining legs, one $\bot$-leg and one $b$-leg. For example: \[
\raisebox{-3.5ex}{\scalebox{0.24}{\includegraphics{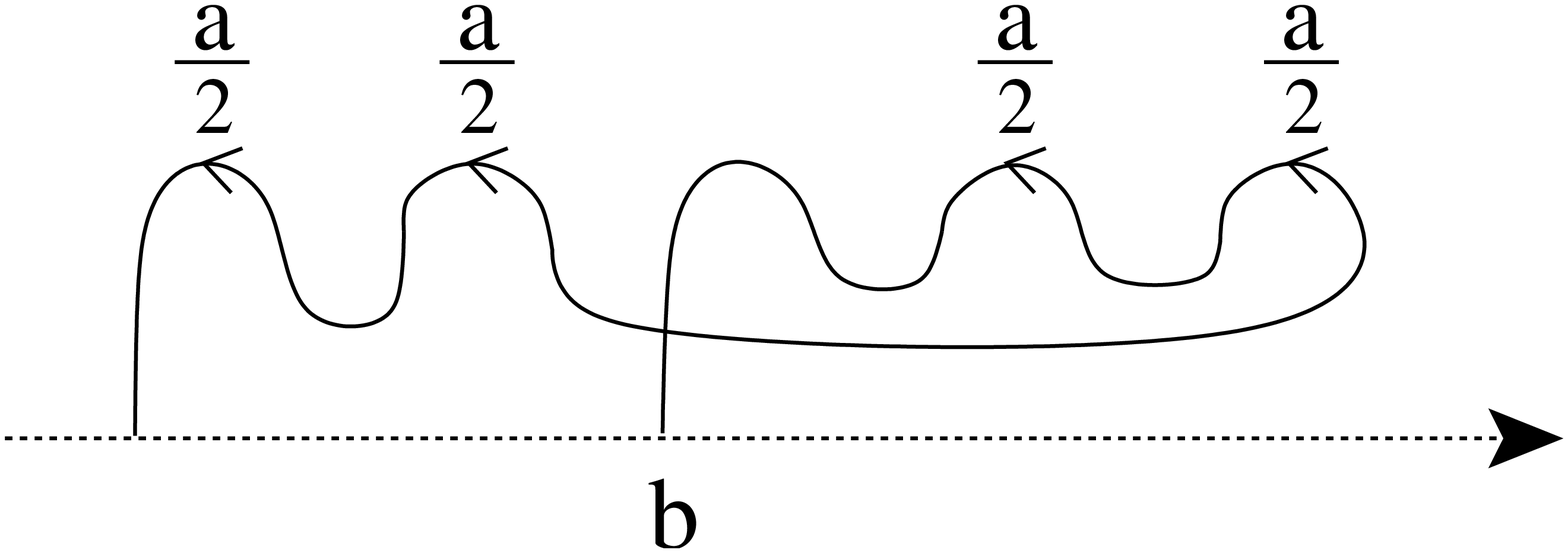}}}\ .\\
\]} \item{The resulting diagram has no remaining
legs. For example:
\[
\raisebox{-3.5ex}{\scalebox{0.24}{\includegraphics{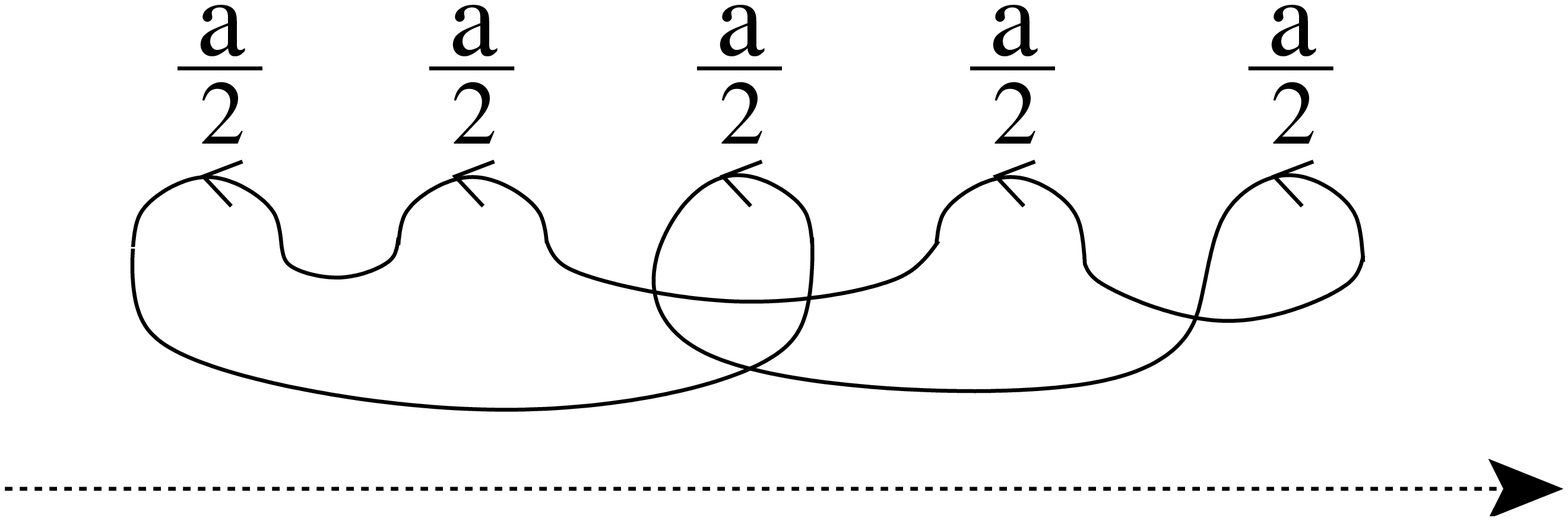}}}\ .\\
\]}
\end{itemize}
Denote these different contributions in the following way:
\[ \sum_{\tau\in\mathcal{T}_\mathcal{C}}\frac{1}{|\tau|!}T_\tau =
C_{||}+C_{|b}+C_{bb}+C_o .
\]
We'll compute these different contributions in turn. The
computation of the contribution $C_{||}$ will be described in some
detail. The other contributions will be computed in much the same
way and will be described in less detail. We remark that the
subsection describing $C_{||}$ introduces certain definitions used
in the other subsections.
\subsubsection{The contribution $C_{||}$.}
The goal of this subsection is the computation that:
\begin{equation}\label{iicomp}
C_{||} =
\raisebox{-3.5ex}{\scalebox{0.27}{\includegraphics{ansii}}}.
\end{equation}

So consider some $n\geq 2$ ($n=1$ we'll put in by hand). We wish
to compute the contributions from the {\it connected} diagrams
with {\it 2} legs that we can get by doing signed pairings of the
legs of the following term (where the {\bf blocks} have been
numbered for convenience): \[ \frac{1}{n!}\
\raisebox{-4ex}{\scalebox{0.24}{\includegraphics{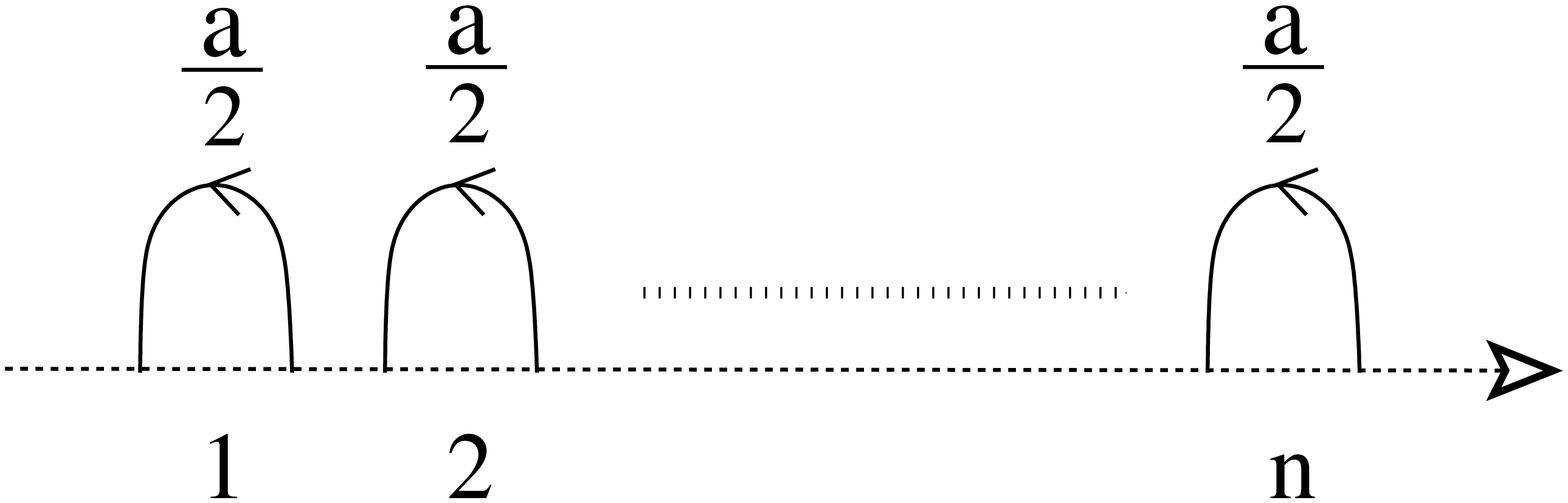}}}\ .
\]
To enumerate the possible pairings we'll construct a certain set
$\overrightarrow{\Gamma}_n$. An element of the set
$\overrightarrow{\Gamma}_n$ is a word which uses each of the
symbols $\{1,2,\ldots,n\}$ precisely once, where, in addition,
each symbol $s$ is decorated by either an arrow pointing to the
left $\overleftarrow{s}$ or an arrow pointing to the right
$\overrightarrow{s}$. For example, we'll soon see that
\[
\overrightarrow{2}\overrightarrow{3}\overleftarrow{5}\overrightarrow{1}\overrightarrow{4}
\in \overrightarrow{\Gamma}_5.
\]
The set $\overrightarrow{\Gamma}_n$ is defined to be the set of
words of this form based on the symbols $\{1,\ldots,n\}$ subject
to the single constraint that the first symbol is less than the
final symbol. To see that they agree with our definition the
reader might like to check that
$|\overrightarrow{\Gamma}_n|=n!2^{n-1}$.

For example:
\begin{eqnarray*}
\overrightarrow{\Gamma}_3 & = & \left\{
\overrightarrow{1}\overrightarrow{2}\overrightarrow{3}\ ,\
\overrightarrow{1}\overrightarrow{2}\overleftarrow{3}\ ,\
\overrightarrow{1}\overleftarrow{2}\overrightarrow{3}\ ,\
\overrightarrow{1}\overleftarrow{2}\overleftarrow{3}\ ,\ \right.\\
& &\ \ \overleftarrow{1}\overrightarrow{2}\overrightarrow{3}\ ,\
\overleftarrow{1}\overrightarrow{2}\overleftarrow{3}\ ,\
\overleftarrow{1}\overleftarrow{2}\overrightarrow{3}\ ,\
\overleftarrow{1}\overleftarrow{2}\overleftarrow{3}\ ,\ \\
& &\ \ \overrightarrow{2}\overrightarrow{1}\overrightarrow{3}\ ,\
\overrightarrow{2}\overrightarrow{1}\overleftarrow{3}\ ,\
\overrightarrow{2}\overleftarrow{1}\overrightarrow{3}\ ,\
\overrightarrow{2}\overleftarrow{1}\overleftarrow{3}\ ,\ \\
& &\ \ \overleftarrow{2}\overrightarrow{1}\overrightarrow{3}\ ,\
\overleftarrow{2}\overrightarrow{1}\overleftarrow{3}\ ,\
\overleftarrow{2}\overleftarrow{1}\overrightarrow{3}\ ,\
\overleftarrow{2}\overleftarrow{1}\overleftarrow{3}\ ,\ \\
& &\ \ \overrightarrow{1}\overrightarrow{3}\overrightarrow{2}\ ,\
\overrightarrow{1}\overrightarrow{3}\overleftarrow{2}\ ,\
\overrightarrow{1}\overleftarrow{3}\overrightarrow{2}\ ,\
\overrightarrow{1}\overleftarrow{3}\overleftarrow{2}\ ,\ \\
& &\left.\ \
\overleftarrow{1}\overrightarrow{3}\overrightarrow{2}\ ,\
\overleftarrow{1}\overrightarrow{3}\overleftarrow{2}\ ,\
\overleftarrow{1}\overleftarrow{3}\overrightarrow{2}\ ,\
\overleftarrow{1}\overleftarrow{3}\overleftarrow{2}\ \right\}.
\end{eqnarray*}

We'll demonstrate how the different pairings correspond with the
elements of the sets $\overrightarrow{\Gamma}_n$ by means of the
following example:
\[
\raisebox{-3.5ex}{\scalebox{0.24}{\includegraphics{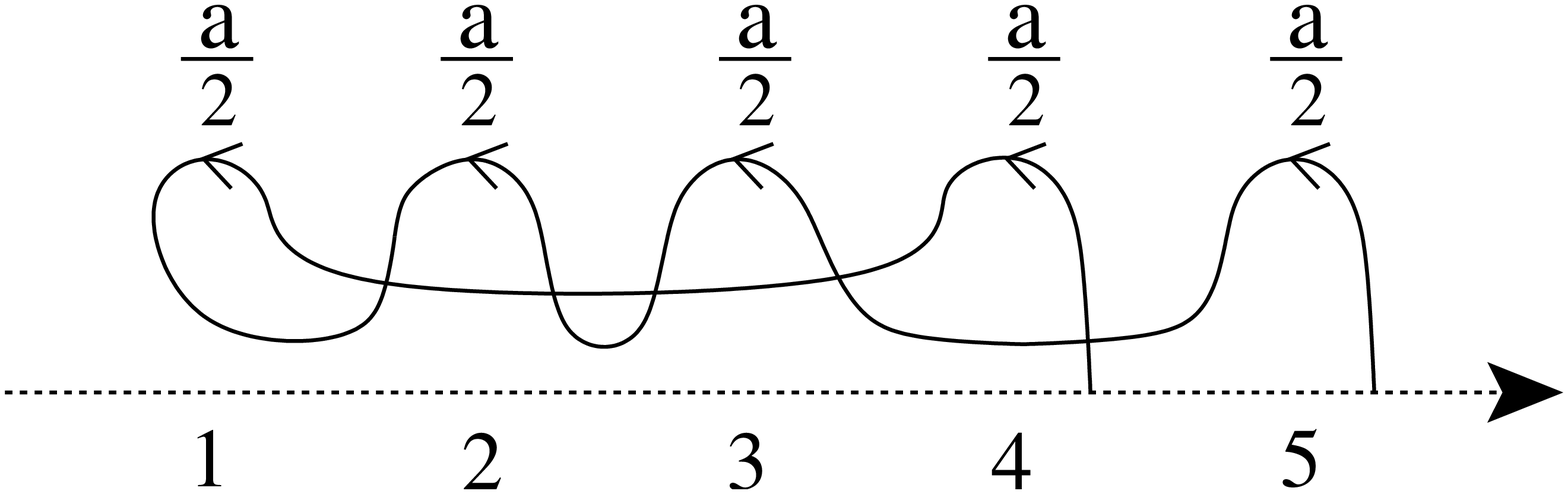}}}\ .\\
\]
To write down the word corresponding to some gluing, begin at the
base of the left-most of the two remaining legs. Now traverse the
graph by simply following the edge until you reach the second of
the two remaining legs. Write down the symbols $\{1,\ldots,n\}$ in
the order in which you visit the different blocks. In this case,
you should write down:
\[
41235.
\]
Now decorate this word with arrows to record how you traverse each
block (whether from left to right, or from right to left). (Take
care not to confuse this arrow with the arrow which locally
orients the edges around the $a$ labels.) The decoration in this
case is this:
\[
\overleftarrow{4}\overleftarrow{1}\overrightarrow{2}\overrightarrow{3}\overrightarrow{5}.
\]
This word contains sufficient instructions for uniquely
reconstructing the pairing, so we get precisely one of the
relevant pairings for each element of $\overrightarrow{\Gamma}_n$.

Given some word $w\in \overrightarrow{\Gamma}_n$, let $\gamma_w$
denote the corresponding contribution to $C_{||}$ (that is,
including the factorial out the front as well as the signs that
arise from the gluing and a factor of $\left(\frac{1}{2}\right)$
for every joined pair). For example:
\begin{eqnarray*}
\gamma_{\overleftarrow{4}\overleftarrow{1}\overrightarrow{2}\overrightarrow{3}\overrightarrow{5}}
& = & \frac{(-1)^5}{5!}\left(\frac{1}{2}\right)^4
\raisebox{-1.5ex}{\scalebox{0.25}{\includegraphics{possibilityA}}}
\\[0.15cm]
& = & -\frac{1}{2^45!}\,
\raisebox{-1.5ex}{\scalebox{0.22}{\includegraphics{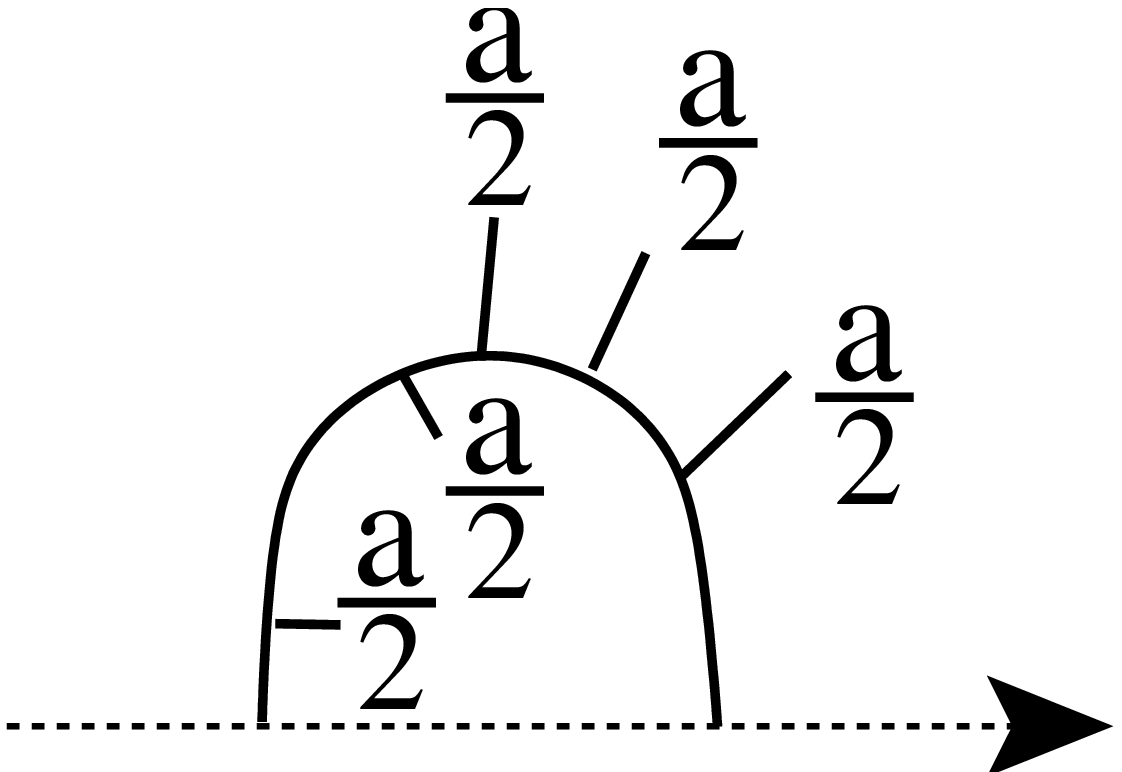}}}\
\ =\ \ \frac{1}{2^45!}\,
\raisebox{-1.5ex}{\scalebox{0.22}{\includegraphics{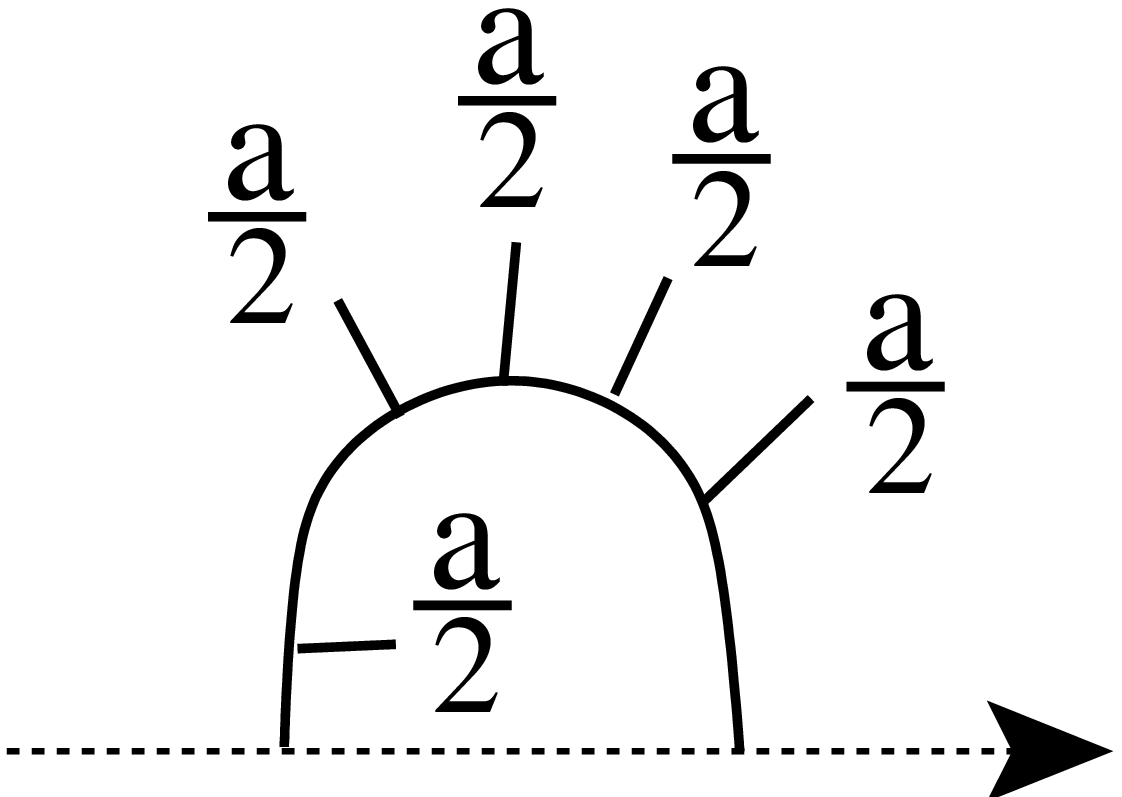}}}
\\[0.15cm]
& = & -\frac{1}{2^45!}\,
\raisebox{-1.5ex}{\scalebox{0.22}{\includegraphics{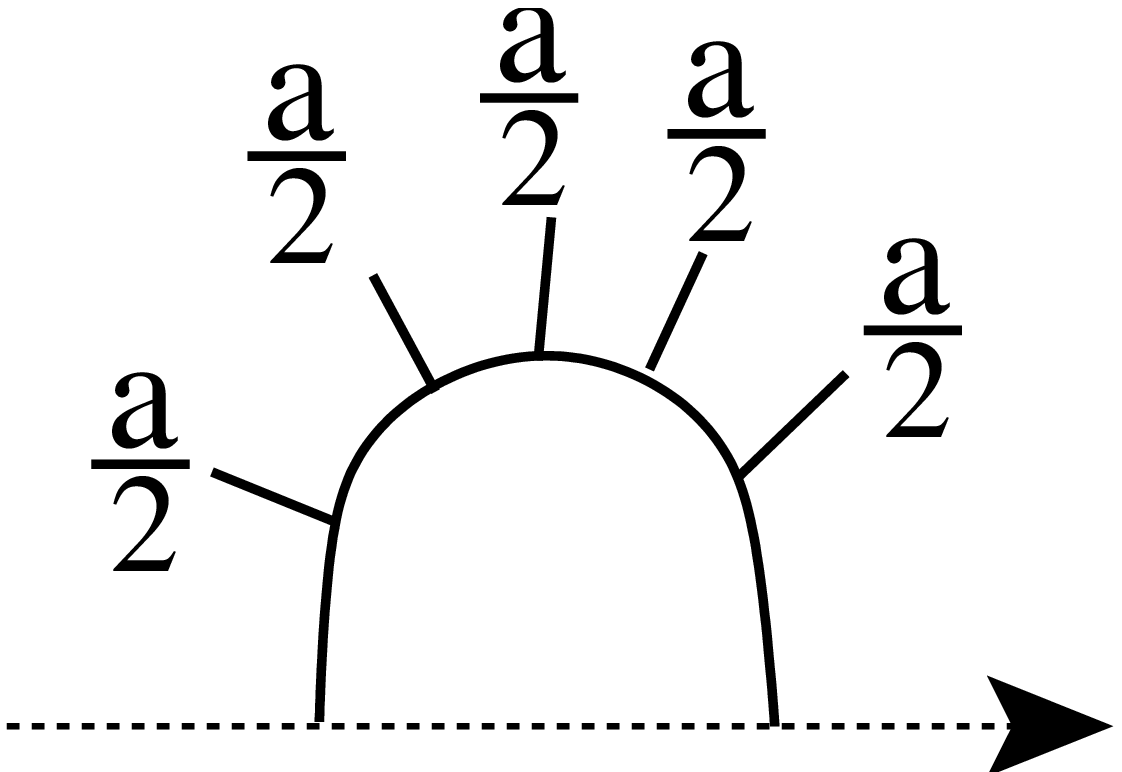}}}
\ \  =\ \ -\frac{1}{2^45!}\,
\raisebox{-1.5ex}{\scalebox{0.22}{\includegraphics{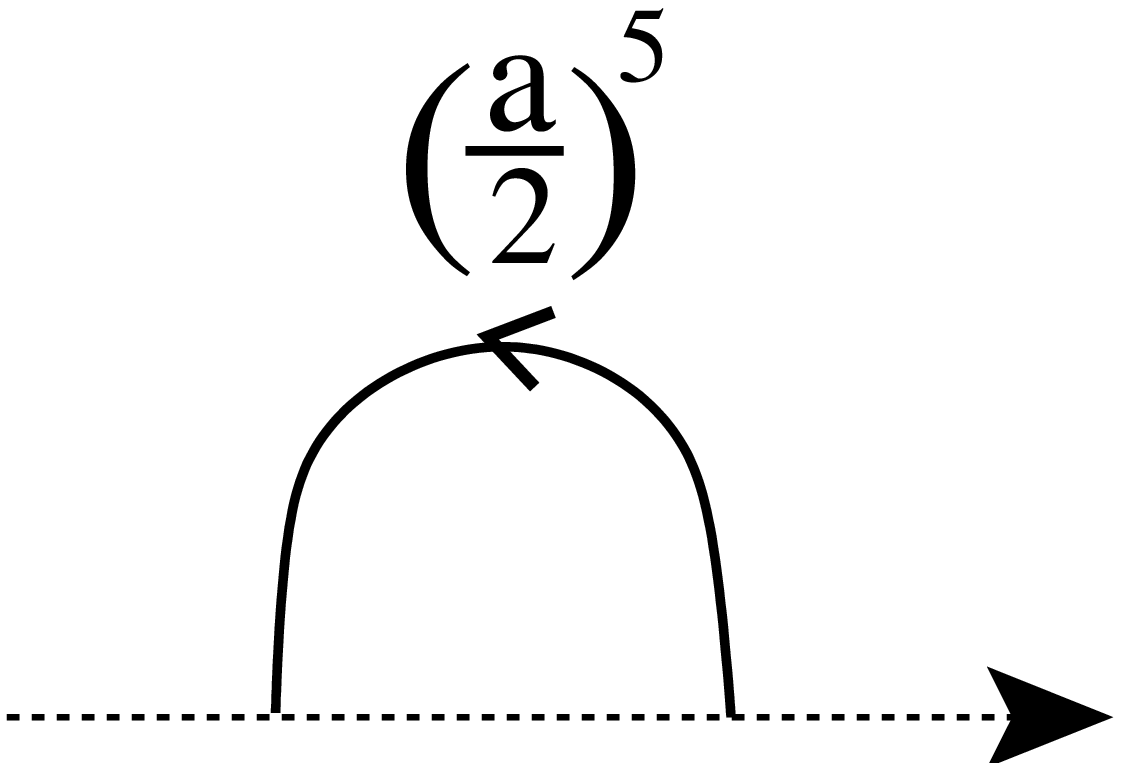}}}.\\[0.05cm]
\end{eqnarray*}With this definition we can write the contribution that we are seeking to compute in
the following way:
\begin{equation}\label{||eqn}
C_{||} =
\raisebox{-1.75ex}{\scalebox{0.17}{\includegraphics{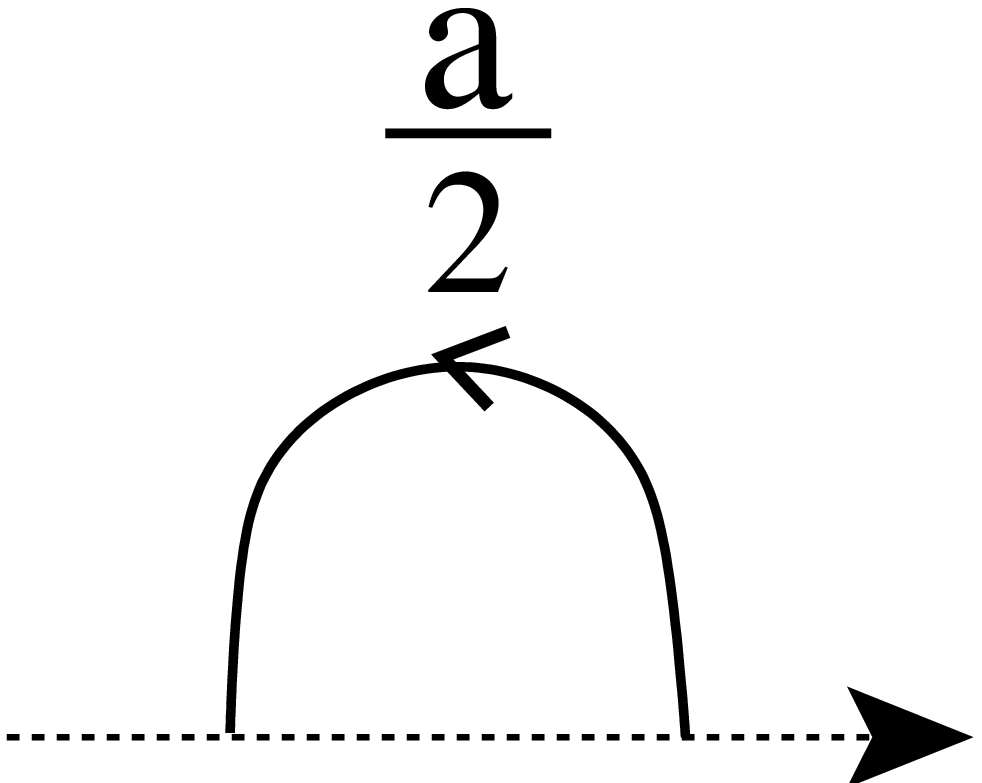}}}+\sum_{n=2}^\infty
\sum_{w\in \overrightarrow{\Gamma}_n} \gamma_w.
\end{equation}
Observe that, for some fixed $n$, all the $\gamma_w$, for $w\in
\overrightarrow{\Gamma}_n$, are equal, up to sign:
\[
\gamma_w = \pm \frac{1}{2^{n-1}n!}\,
\raisebox{-2ex}{\scalebox{0.2}{\includegraphics{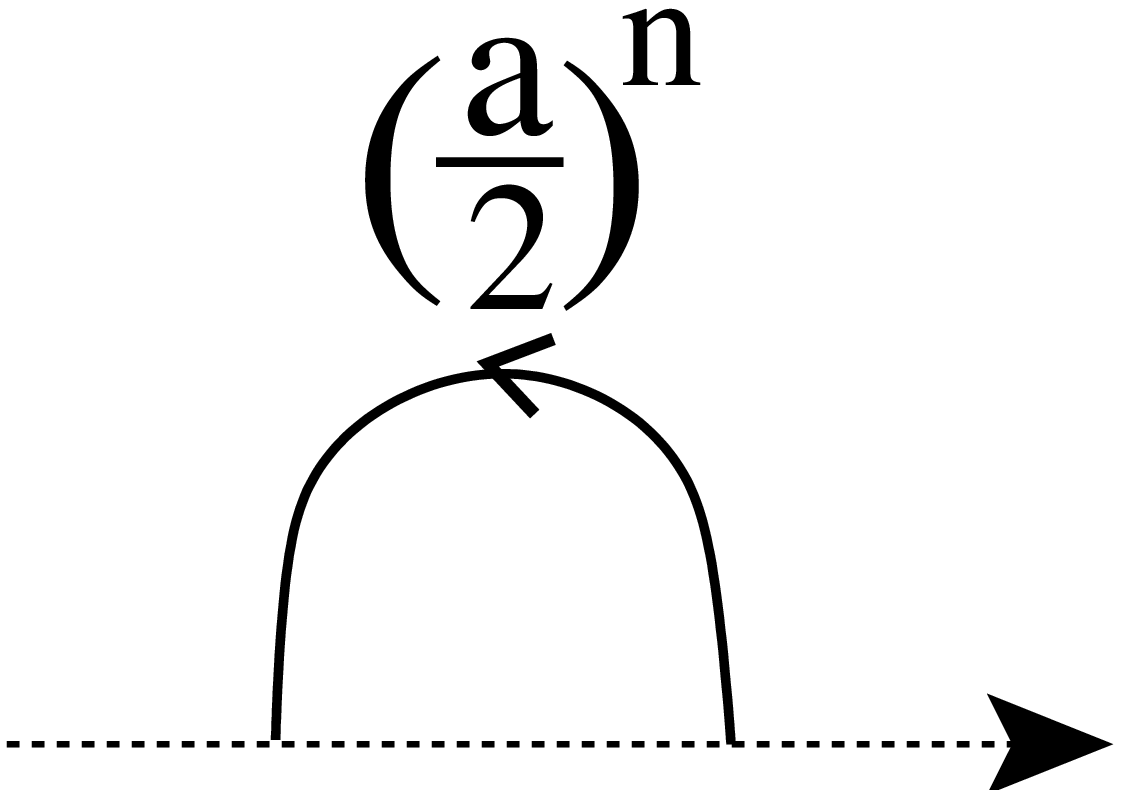}}}.
\]
The difficulty, then, in computing the sum $\left(\sum_{w\in
\overrightarrow{\Gamma}_n}\gamma_w\right)$, is to determine the
signs of the various $\gamma_w$. This difficulty is dealt with by
the next lemma, whose proof is later in this section.

Let $\Gamma_n$ denote the set of words in the symbols
$\{1,\ldots,n\}$ with the property that the right-most symbol in a
word has greater value than the left-most symbol. There is an
obvious $2^n$-to-$1$ forgetful map:
\[
f_\Gamma : \overrightarrow{\Gamma}_n\rightarrow \Gamma_n.
\]
Define the {\it descent} of a word $w\in\Gamma_n$, denoted $d(w)$,
to be the number of times in which the value of the symbol
decreases as you scan the word from left to right. For example:
\[
d(41235)=1,
\]
because the value decreases once (going from $4$ to $1$).
\begin{lem}\label{standardformlemma}
Let $n\geq 2$ and let $w\in \overrightarrow{\Gamma}_n$. Then:
\[
\gamma_w = (-1)^{d(f_\Gamma(w))}\frac{1}{2^{n-1}n!} \,
\raisebox{-3ex}{\scalebox{0.2}{\includegraphics{Nterm}}}.
\]
\end{lem}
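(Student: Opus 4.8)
The plan is to reduce the entire computation to a parity count of crossings in a single canonical drawing, exploiting observation $(\dagger)$ from the proof of Lemma \ref{factorlemma}: when a term arising from $\lambda$ is drawn with its blocks in order along the orienting line and its pairing arcs drawn in the gap, its sign is precisely $(-1)$ raised to the number of intersections between full lines displayed by the drawing. Since the magnitude $\frac{1}{2^{n-1}n!}$ is already recorded in Equation \ref{||eqn}, the content of the lemma is entirely the determination of this sign. I would organise the argument in two stages. First, I would show that $\gamma_w$ depends only on the underlying word $f_\Gamma(w)\in\Gamma_n$, so that the arrow decorations are irrelevant. Second, I would compute the sign for the all-right-pointing representative $\overrightarrow{u}$ of a given $u\in\Gamma_n$ by counting crossings directly, and check that this parity equals $d(u)$.

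For the first stage (arrow-independence) I would examine the effect of reversing a single arrow, say changing $\overrightarrow{s}$ to $\overleftarrow{s}$. This reverses the direction in which the traversal passes through block $s$, which swaps the two $\bot$-legs of that block between its incoming and outgoing arcs and simultaneously reverses the orientation of the $a$-labelled edge threading the block. The local swap of the two arcs at block $s$ changes the number of displayed full-line crossings, while the reversal of the oriented $a$-edge contributes a compensating sign, via the edge-reorientation convention recalled just before Theorem \ref{Xcalculate}. The heart of this stage is a purely local computation at block $s$ showing that these two effects cancel, so that $\gamma_{w}=\gamma_{\overrightarrow{f_\Gamma(w)}}$; this reduces the lemma to the case of all-right-pointing words.

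For the second stage I would set up the arc diagram explicitly. Laying the $n$ blocks in their natural positions $1,\ldots,n$ along the line, the all-right-pointing traversal $u=u_1\cdots u_n$ produces the free legs at blocks $u_1$ and $u_n$ and a pairing arc joining the right leg of block $u_k$ to the left leg of block $u_{k+1}$ for each $k$; for this representative the oriented $a$-edges all run in the direction of traversal, matching the convention of the standard diagram, so that the only surviving sign is $(-1)^{x(u)}$ with $x(u)$ the number of displayed crossings. Those crossings are of two kinds: crossings between two arcs, which occur exactly when their supporting intervals interleave, and crossings of a carried-down free leg with an arc whose interval contains the foot of that leg. I would prove that the total parity equals $d(u)$ by induction on $n$, deleting the block carrying the largest symbol and tracking how the removed arcs and the re-routed free leg alter both the crossing parity and the descent count in lock-step, with the $n=1$ term inserted by hand and the value $d=0$ for the unique length-two word serving as base cases.

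I expect the main obstacle to be the crossing count of the second stage: the interaction between arc-arc interleavings and free-leg crossings must be tracked carefully, since neither kind alone has the right parity (indeed one checks on small cases such as $u=132$ that a single descent produces one arc-arc crossing together with two free-leg crossings), and the bookkeeping in the induction step, when the maximal block is interior to the traversal, is precisely where the descent is created. The arrow-independence of the first stage, while conceptually the subtle point flagged in the remark following Theorem \ref{decompintoconnecteds}, should follow from a single local model computation once the crossing and orientation conventions are pinned down.
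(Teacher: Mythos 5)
Your proposal is correct, and its key steps check out, but it organizes the combinatorics genuinely differently from the paper. Both arguments rest on the same foundational observation (your $(\dagger)$): drawn canonically, a term's sign is $(-1)$ raised to the number of displayed full-line crossings, and an AS flip of an $a$-leg is always compensated by exactly one extra crossing. From there the paper runs a single normalization procedure on the drawing: it inserts block-permutation moves (each creating four crossings, hence sign-neutral) to bring the blocks into word order, then twists (each an AS sign plus one crossing) so every block is traversed left to right; in the resulting standard form distinct arcs cross evenly, and the only odd contributions are the arc self-intersections created by the permutation step, one for each descent. You instead keep the blocks in their natural positions and split the work in two: first the local arrow-flip invariance (your Stage 1, which is the paper's twist mechanism localized at one block -- swapping the two adjacent leg-endpoints changes the crossing parity by one, cancelling the AS sign), and then an induction on $n$ computing the crossing parity of the fixed all-right drawing. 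That induction does close: deleting the rightmost block (largest symbol) at an interior position $k$ of the word and re-routing a single new arc from the right leg of block $u_{k-1}$ to the left leg of block $u_{k+1}$ changes the crossing parity by $1$ precisely when $u_{k-1}<u_{k+1}$ (the interleaving condition for the two deleted arcs, all other crossings being preserved mod $2$ by the re-routing), and this equals the change $d(u)-d(u')$ in the descent; when the largest symbol is first or last in the word the parity change is $1$, respectively $0$, again matching the descent change. One small correction to your wording: in the interior case it is an arc, not the free leg, that gets re-routed; the free leg is re-routed only in the boundary cases. What the paper's route buys is reusability -- the same normalization-and-count is invoked almost verbatim for the companion computations of $C_{o}$, $C_{bb}$ and $C_{|b}$; what yours buys is a self-contained parity computation on a single fixed drawing, at the cost of the interleaving bookkeeping in the induction step.
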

Substituting this computation into Equation \ref{||eqn}, we find
that:
\begin{eqnarray*}C_{||} & = &
\raisebox{-1.5ex}{\scalebox{0.19}{\includegraphics{firstterm}}}+\sum_{n=2}^\infty
\sum_{w\in \overrightarrow{\Gamma}_n} \gamma_w\ , \\
& = &
\raisebox{-1.5ex}{\scalebox{0.19}{\includegraphics{firstterm}}}+\sum_{n=2}^\infty\sum_{w\in
\overrightarrow{\Gamma}_n}\left(
(-1)^{d(f_\Gamma(w))}\frac{1}{2^{n-1}n!} \,
\raisebox{-1.95ex}{\scalebox{0.19}{\includegraphics{Nterm}}}\right),
\\
& = &
\raisebox{-1.5ex}{\scalebox{0.19}{\includegraphics{firstterm}}}+\sum_{n=2}^\infty
\left(\frac{2\sum_{w\in\Gamma_n}(-1)^{d(w)}}{n!}\right)
\raisebox{-1.95ex}{\scalebox{0.19}{\includegraphics{Nterm}}}\ .
\end{eqnarray*}
(In the last equality above the $2^{n-1}$ is cancelled by a $2^n$
arising from the fact that there are $2^n$ words in
$\overrightarrow{\Gamma}_n$ for every word in $\Gamma_n$.) Now
note that if $n$ is even then:
\[
\raisebox{-1.95ex}{\scalebox{0.19}{\includegraphics{Nterm}}} = 0\
.
\]
For example:
\begin{eqnarray*}
\raisebox{-1.95ex}{\scalebox{0.19}{\includegraphics{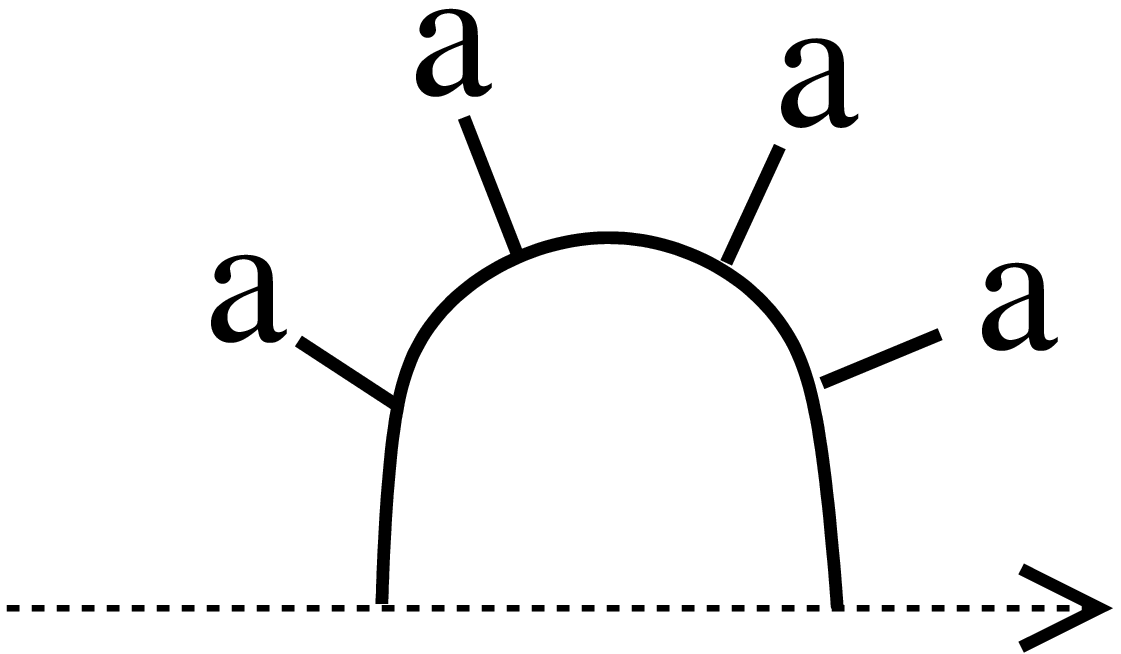}}}
& = &
(+1)\raisebox{-1.95ex}{\scalebox{0.19}{\includegraphics{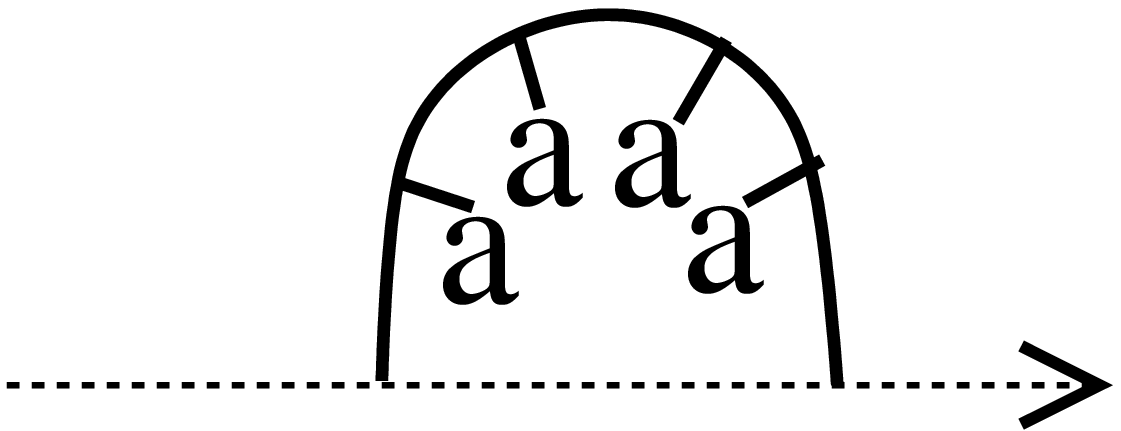}}}
\ =\
(-1)\raisebox{-1.95ex}{\scalebox{0.19}{\includegraphics{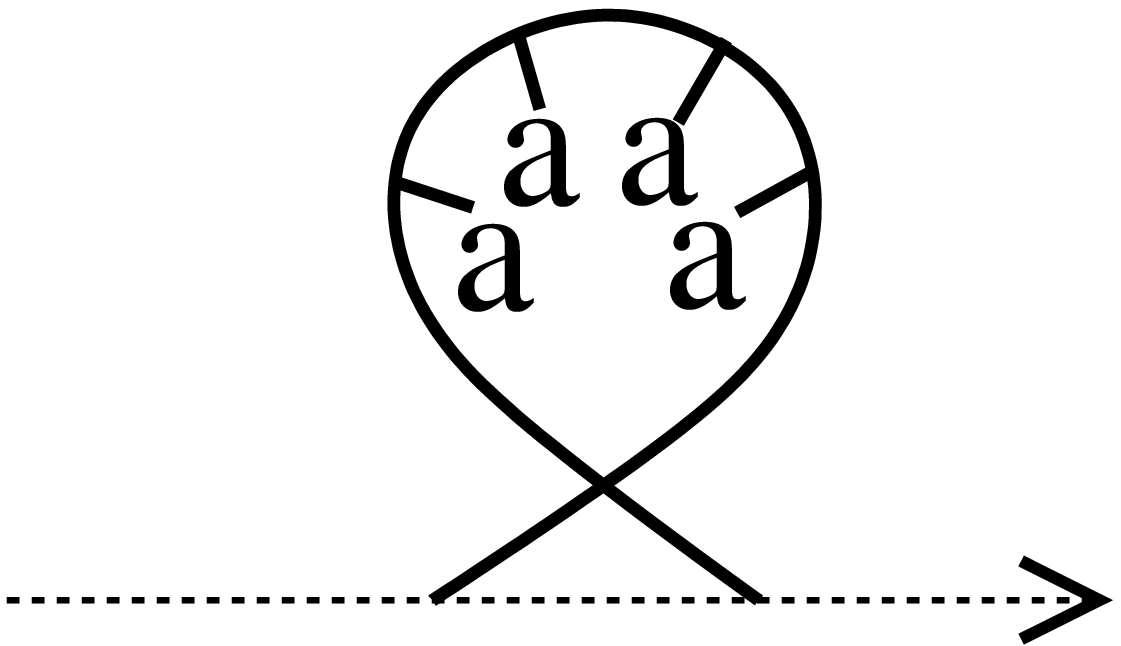}}}
\\[0.15cm]
& = &
(-1)\raisebox{-1.95ex}{\scalebox{0.19}{\includegraphics{evenvanishA}}}\
\ .
\end{eqnarray*}
Thus we may write: \[ C_{||} =
\raisebox{-1.95ex}{\scalebox{0.17}{\includegraphics{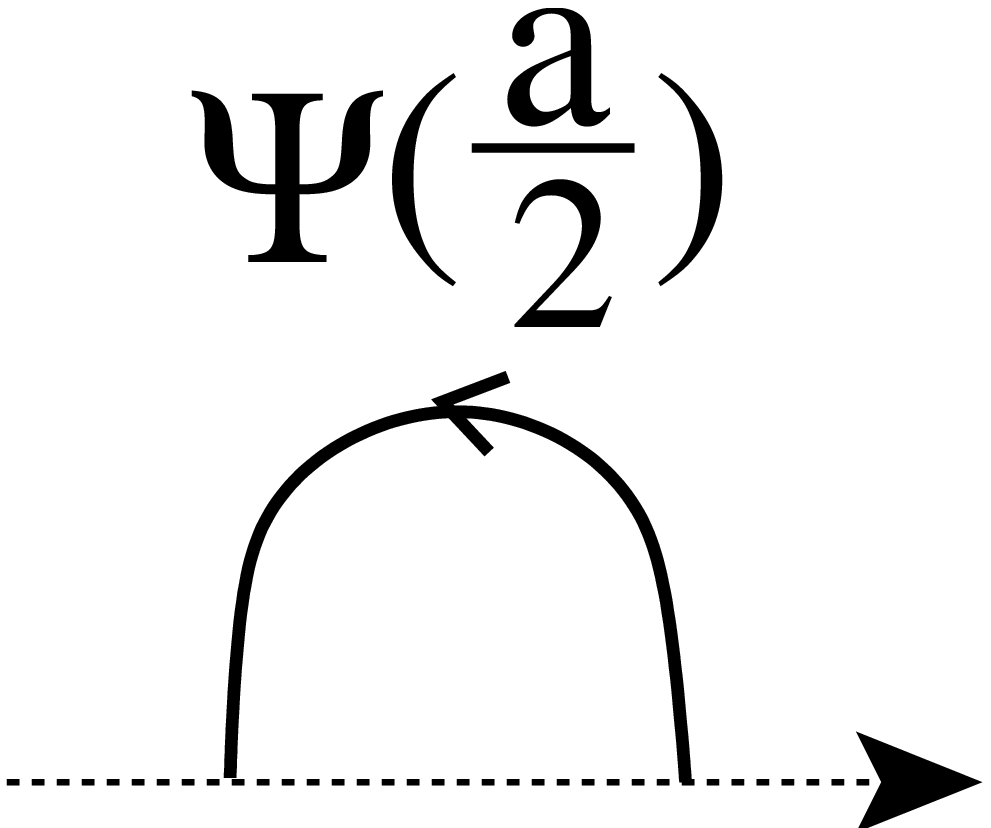}}}\ ,
\]
where $\Psi(x)$ is the formal power series defined by $\Psi(x) =
\sum_{n=1}^{\infty} \frac{\psi(n)}{n!}\,x^n$, with
\[
\psi(n) = \left\{
\begin{array}{ll}
1 & \mbox{if $n=1$,} \\[0.1cm]
2\sum_{w\in\Gamma_n}(-1)^{d(w)}& \mbox{if $n>1$ and $n$ is odd,}
\\[0.1cm]
0 & \mbox{if $n>1$ and $n$ is even.}
\end{array}
\right.
\]
The required computation, Equation \ref{iicomp}, is completed by
the following proposition.
\begin{prop}\label{psicalc}
\[
\Psi(x) = \tanh(x).
\]
\end{prop}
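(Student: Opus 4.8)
The plan is to recognize the sign-weighted descent sum appearing in $\psi(n)$ as a specialization of the Eulerian polynomials and then reduce the whole statement to a classical exponential generating function. Write $\mathrm{Perm}_n$ for the set of all permutation-words on $\{1,\dots,n\}$ (the paper's notation) and set $A_n(t)=\sum_{w\in\mathrm{Perm}_n}t^{d(w)}$, the $n$-th Eulerian polynomial, with $d$ the descent statistic of the proposition. Since $\Gamma_n\subset\mathrm{Perm}_n$ is exactly the subset of words whose first symbol is smaller than the last, the sum $\sum_{w\in\Gamma_n}(-1)^{d(w)}$ that enters $\psi(n)$ runs over only half of $\mathrm{Perm}_n$, so the first task is to relate it to the full sum $A_n(-1)$.

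The key step is a reversal symmetry. For $w=w_1\cdots w_n$ let $w^R=w_n\cdots w_1$. Reversal is an involution of $\mathrm{Perm}_n$ that interchanges ascents and descents, so $d(w^R)=(n-1)-d(w)$; moreover $w\in\Gamma_n$ if and only if $w^R\notin\Gamma_n$, because the symbols are distinct and so $w_1<w_n$ is equivalent to $w_n>w_1$. Hence reversal is a bijection of $\Gamma_n$ onto its complement. Splitting $A_n(-1)$ over $\Gamma_n$ and its complement and substituting $w\mapsto w^R$ in the complementary sum gives
\[
A_n(-1)=\bigl(1+(-1)^{n-1}\bigr)\sum_{w\in\Gamma_n}(-1)^{d(w)}.
\]
For odd $n$ the prefactor is $2$, so $2\sum_{w\in\Gamma_n}(-1)^{d(w)}=A_n(-1)$; for even $n\geq 2$ the prefactor is $0$, forcing $A_n(-1)=0$. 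Comparing with the definition of $\psi$, and using $A_1(-1)=1$, this yields $\psi(n)=A_n(-1)$ for every $n\geq 1$.

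It then remains only to sum an exponential generating function. I would invoke the classical identity $\sum_{n\geq 0}A_n(t)\,x^n/n!=(t-1)/(t-e^{(t-1)x})$ (see e.g. Stanley, \emph{Enumerative Combinatorics I}). Setting $t=-1$ and simplifying gives
\[
\sum_{n\geq 0}A_n(-1)\frac{x^n}{n!}=\frac{2}{1+e^{-2x}}=\frac{2e^x}{e^x+e^{-x}}=1+\tanh x.
\]
Since $A_0(-1)=1$, removing the constant term leaves $\sum_{n\geq 1}A_n(-1)\,x^n/n!=\tanh x$, which by the previous paragraph is exactly $\Psi(x)$. As a consistency check, the oddness of $\tanh$ reproduces the vanishing of the even coefficients $\psi(2k)=0$ recorded in the proposition.

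The main obstacle is the sign bookkeeping in the reversal step: one must confirm that restricting to $\Gamma_n$ genuinely halves the full Eulerian sum, which is exactly what the factor $1+(-1)^{n-1}$ certifies, and which succeeds precisely because the relevant $n$ are odd (the even case being handled separately by the diagrammatic vanishing already established). The remaining input is the evaluation of a standard generating function; should a self-contained derivation be preferred to a citation, one can instead translate the reversal identity into a convolution recurrence for the numbers $A_n(-1)$ and check that $\Psi$ satisfies the Riccati equation $\Psi'=1-\Psi^2$ with $\Psi(0)=0$, whose unique power-series solution is $\tanh$.
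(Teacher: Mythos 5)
Your proof is correct. Its first half coincides with the paper's: the paper likewise passes from the one-sided sum over $\Gamma_n$ to the unrestricted sum $\phi(n)=\sum_{w\in\Sigma_n}(-1)^{d(w)}$ (your $A_n(-1)$) via the reversal involution, with the same parity bookkeeping $d(w^R)=(n-1)-d(w)$. Note only that your displayed identity $A_n(-1)=\bigl(1+(-1)^{n-1}\bigr)\sum_{w\in\Gamma_n}(-1)^{d(w)}$ is valid only for $n\geq 2$ (for $n=1$ the single word is fixed by reversal yet lies outside $\Gamma_1$, so the left side is $1$ and the right side is $0$); this is harmless, since you evaluate $A_1(-1)=1$ directly, exactly as the paper sets $\psi(1)=1$ by hand. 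The second half genuinely diverges: where you quote the classical exponential generating function $\sum_{n\geq 0}A_n(t)\,x^n/n!=(t-1)/(t-e^{(t-1)x})$ and specialize to $t=-1$, the paper stays self-contained, deriving the convolution recurrence
\[
\frac{\phi(n)}{n!}=-\frac{1}{n}\sum_{i=2}^{n-1}\frac{\phi(i-1)}{(i-1)!}\,\frac{\phi(n-i)}{(n-i)!}\qquad (n\geq 3)
\]
by partitioning words according to the position of the largest symbol $n$, and then observing that this recurrence, with $\phi(1)=1$ and $\phi(2)=0$, is equivalent to the Riccati equation $\Psi'=1-\Psi^2$ with $\Psi=x+O(x^3)$, whose unique power-series solution is $\tanh$. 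This is essentially the alternative you sketch in your closing sentence, except that the recurrence arises not from the reversal identity but from conditioning on where the symbol $n$ sits. Your route is shorter and places the computation in its classical context (Eulerian polynomials at $t=-1$, i.e.\ tangent numbers), at the cost of an external citation; the paper's route costs an extra page of combinatorics but keeps the argument self-contained, which is consonant with its stated goal of a completely combinatorial proof.
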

\begin{proof}
We'll begin by replacing $\psi(n)$ with a function that is easier
to use.
 For every $n\geq 1$ let
$\Sigma_n$ denote the set of words that can be made using each of
the symbols $\{1,\ldots,n\}$ exactly once (with no restrictions on
order) and define
\[
\phi(n)=\sum_{w\in\Sigma_n}(-1)^{d(w)}.
\]
Let $\nu:\Sigma_n\rightarrow\Sigma_n$ be the involution of
$\Sigma_n$ which writes a word in its reverse order. Define the
{\it descent} $d(w)$ of a word  in the obvious way. Notice that:
\begin{equation}\label{involanddescent}
d(\nu(w)) = \left\{
\begin{array}{cl}
+d(w) & \mbox{if $n$ is odd,} \\[0.1cm]
-d(w) & \mbox{if $n$ is even.}
\end{array}
\right.
\end{equation}
Thus:
\[ \phi(n) = \left\{
\begin{array}{ll}
1 & \mbox{if $n=1$,} \\[0.1cm]
2\sum_{w\in\Gamma_n}(-1)^{d(w)}& \mbox{if $n>1$ and $n$ is odd,}
\\[0.1cm]
0 & \mbox{if $n>1$ and $n$ is even.}
\end{array}
\right.
\]
In other words, $\phi(n)=\psi(n)$, and our task is to calculate
the power series:
\[
\Psi(x) = \sum_{n=1}^{\infty} \frac{\phi(n)}{n!}x^n.
\]
We'll calculate this power series by writing down a recursion
relation which determines the function $\frac{\phi(n)}{n!}$, and
then we'll identify a power series whose coefficients solve the
recursion relation.

To deduce the appropriate recursion relation we'll partition the
set $\Sigma_n$ according to the position of the symbol $n$. Let
$\Sigma_n^i\subset \Sigma_n$ denote the subset consisting of the
words where the symbol $n$ appears in position $i$. Then, for
$n\geq 3$:
\begin{eqnarray*}
\frac{\phi(n)}{n!} & = & \frac{1}{n!}\sum_{w\in\Sigma_n}(-1)^{d(w)}, \\
& = & \frac{1}{n!}\sum_{i=1}^n\left( \sum_{w\in
\Sigma_n^i}(-1)^{d(w)}
\right), \\
& = & -\frac{1}{n!}\sum_{i=2}^{n-1} \left({n-1 \atop
i-1}\right)\phi(i-1)\phi(n-i), \\
& = &
-\frac{1}{n}\sum_{i=2}^{n-1}\frac{\phi(i-1)}{(i-1)!}\frac{\phi(n-i)}{(n-i)!}.
\end{eqnarray*}
Now observe that this recursion relation, together with the
initial conditions $\phi(1)=1$ and $\phi(2)=0$, completely
determines the sequence $\phi(n)$. It follows from this recursion
relation that $\Psi(x)$ is the unique power series satisfying the
functional equation:
\[
\frac{\mathrm{d}}{\mathrm{d}x}\left[\Psi(x)\right] = 1 -
\Psi(x)^2,
\]
with intial terms
\[
\Psi(x) = x + (\mbox{terms of degree at least $3$}).
\]
Thus, $\Psi(x)=\tanh(x)$.
\end{proof}
 {\it Proof of Lemma \ref{standardformlemma}.} So consider the
diagram arising from the pairing corresponding to some word $w\in
\overrightarrow{\Gamma}_n$. Draw the diagram of the pairing
canonically. That is, start with:
\[
\raisebox{-4ex}{\scalebox{0.2}{\includegraphics{lemmaproofZ}}}\,,
\]
and, when doing the pairing, introduce only transversal
double-point intersections each lying above the orienting line. If
you do this then note that the sign of the resulting term is
precisely $(-1)$ raised to the number of intersections displayed
in drawing of the diagram. For example:
\[
\gamma_{\overleftarrow{2}\overleftarrow{1}\overleftarrow{5}\overrightarrow{3}\overleftarrow{4}}
= \frac{(-1)^4}{5!}\left(\frac{1}{2}\right)^4
\raisebox{-3.5ex}{\scalebox{0.2}{\includegraphics{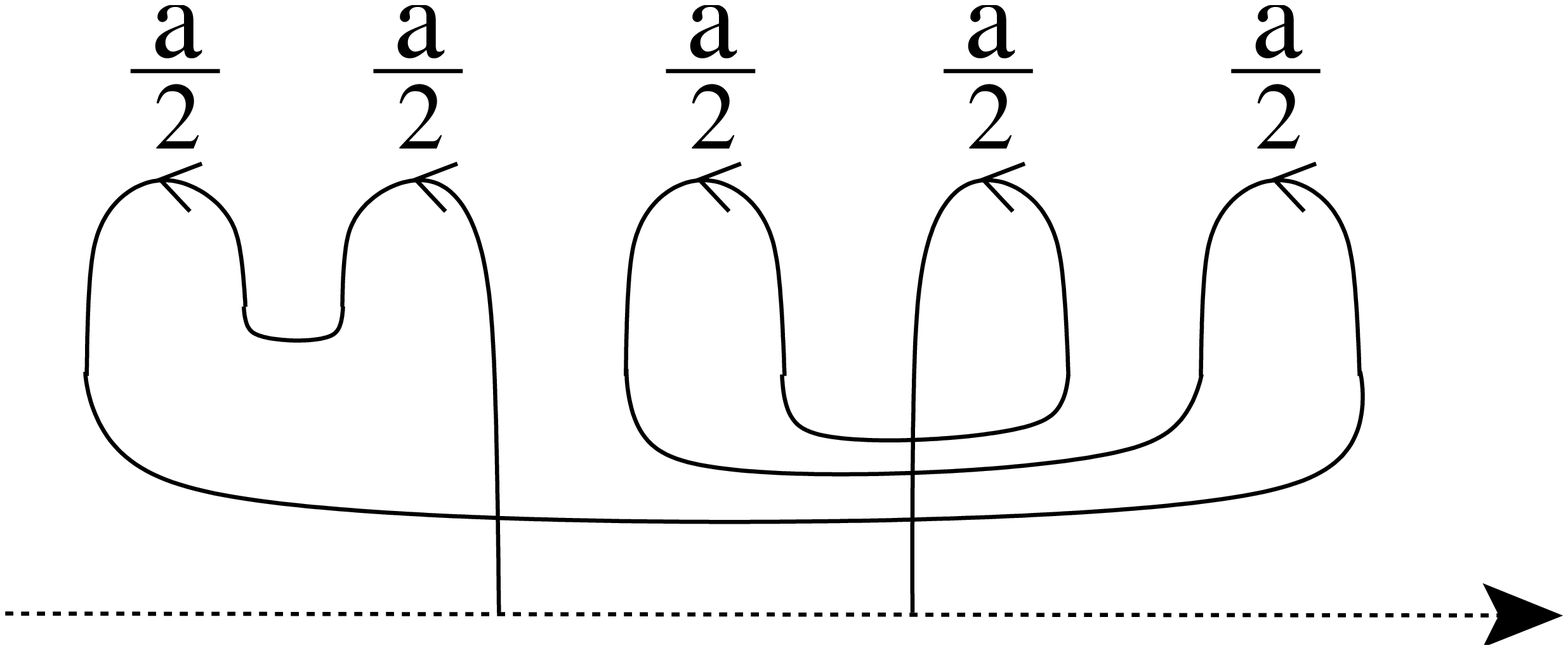}}}.
\\[0.15cm]
\]
Our problem is to work out what further signs must be introduced
to make all the $a$-legs lie on the same side of the edge, and to
write the final sign as a function of $w$.

We'll put diagrams into a standard form with two steps. The first
step will be to add permutations of the following form to the top
of the drawing: \[
\raisebox{-3.5ex}{\scalebox{0.2}{\includegraphics{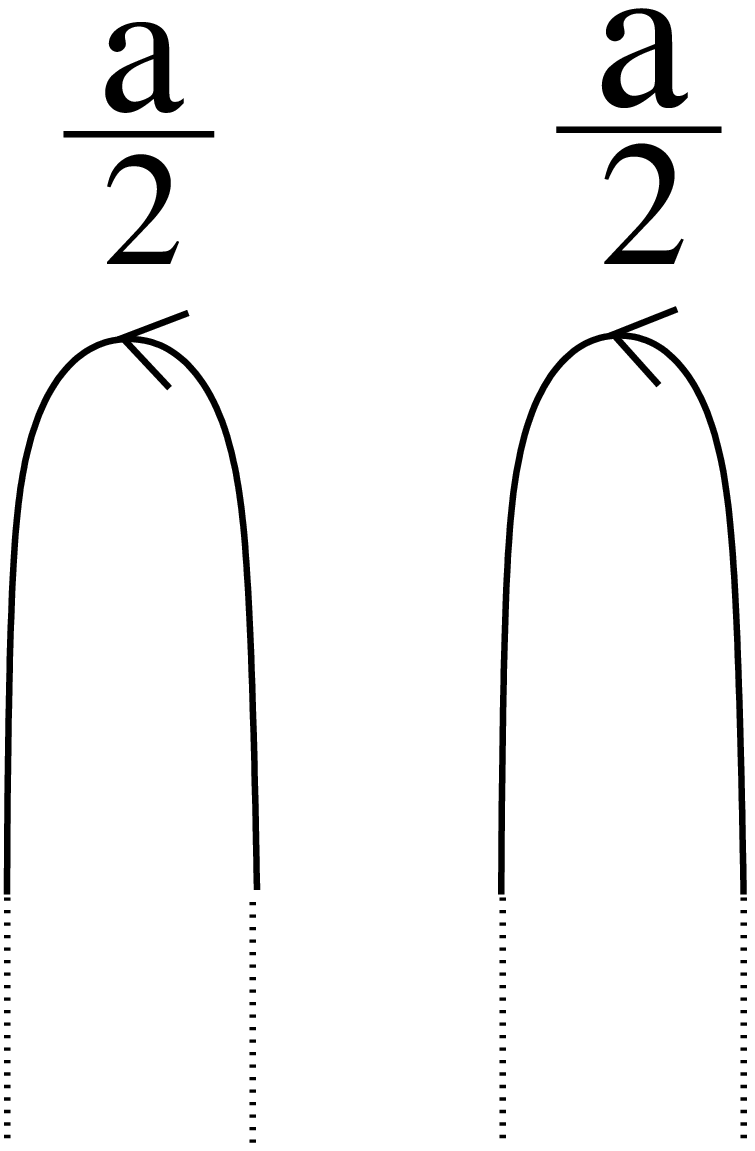}}}\ \
=\ \
\raisebox{-3.5ex}{\scalebox{0.2}{\includegraphics{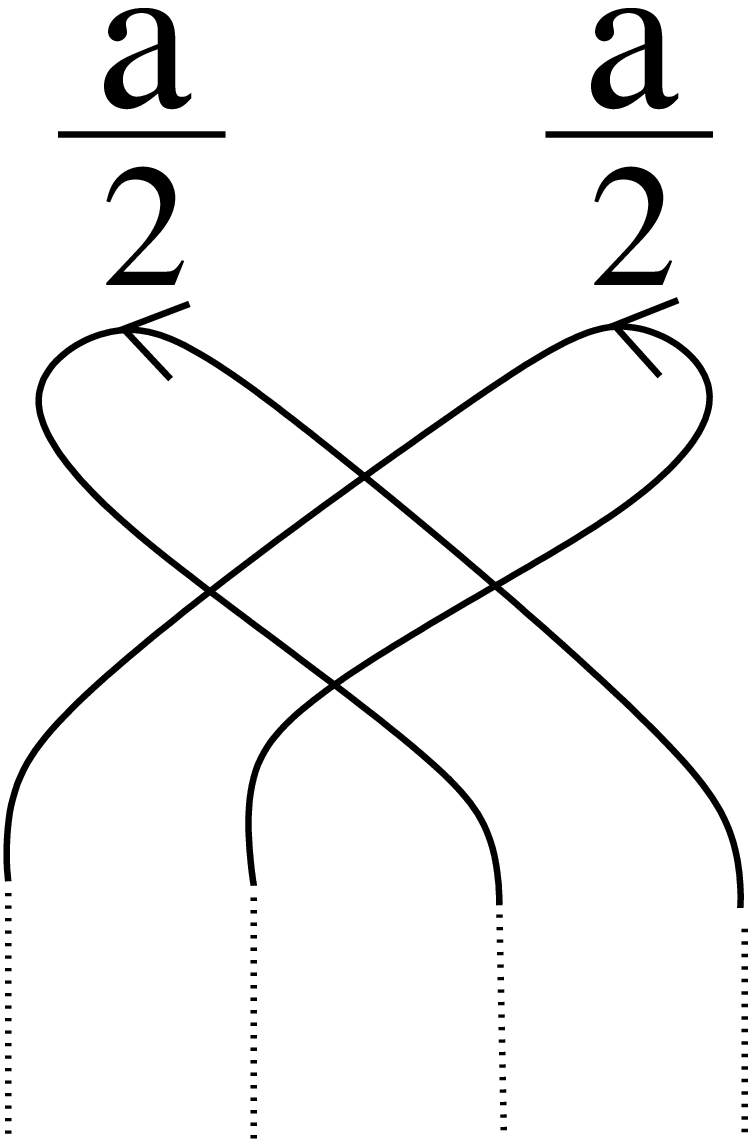}}}\ \ .
\]
Add permutations so that the tops of the blocks appear in the same
order as they appear in the word $w$. Notice that a single such
permutation introduces $4$ intersections into the diagram, so it
is still true, after such a move, that the sign of the term is
precisely $(-1)$ raised to the number of intersections displayed
in the drawing of the diagram. Continuing with our example:
\[
\gamma_{\overleftarrow{2}\overleftarrow{1}\overleftarrow{5}\overrightarrow{3}\overleftarrow{4}}
= \frac{(-1)^{16}}{5!}\left(\frac{1}{2}\right)^4
\raisebox{-5.5ex}{\scalebox{0.2}{\includegraphics{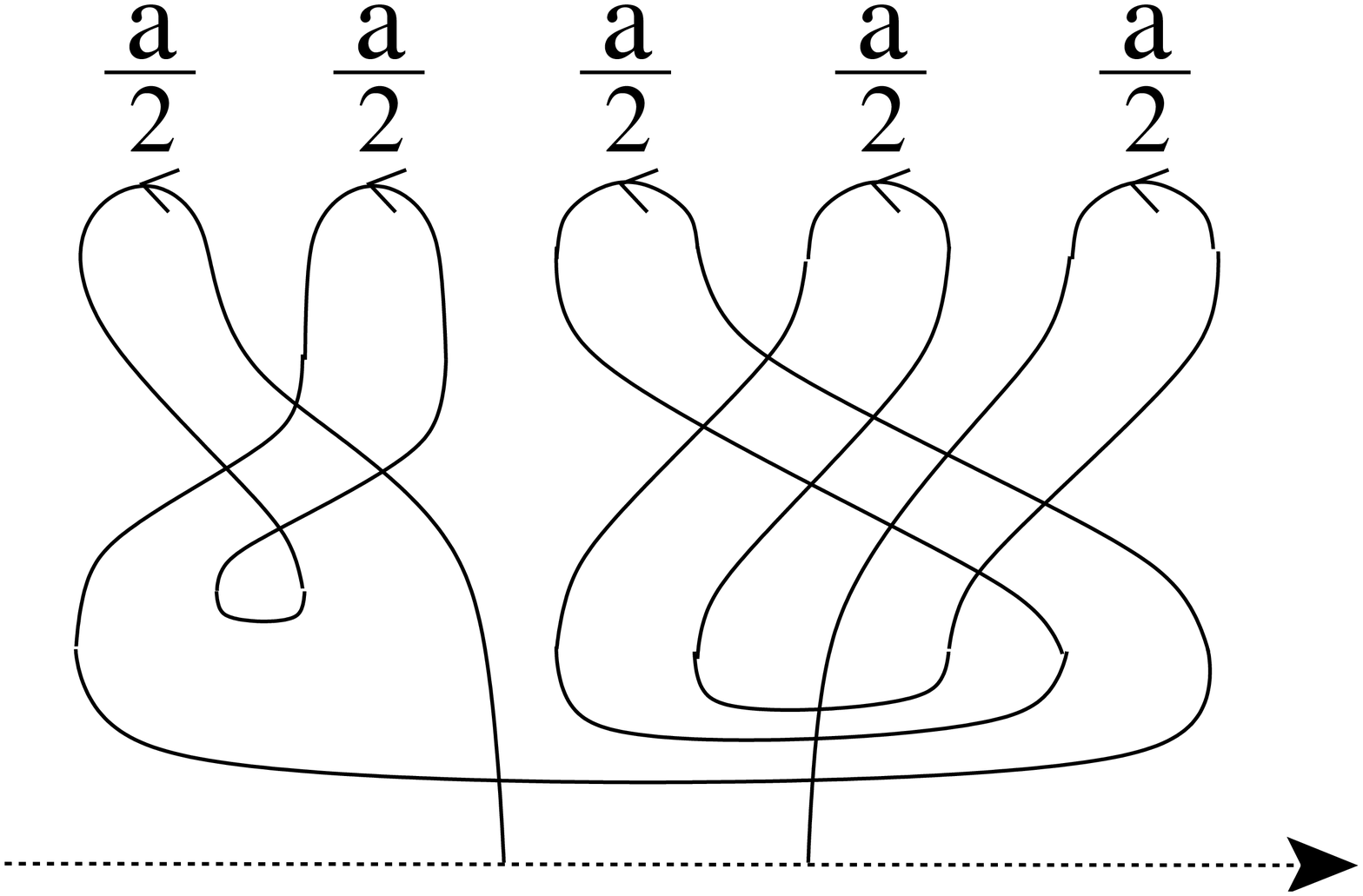}}}\ .
\\[0.15cm]
\]
Notice that as you traverse the edge from the base of the left leg
to the base of the right leg, then some factors are traversed from
left to right (the 4th factor from the left, above), while some
factors are traversed from right to left (the other factors). The
second and final step in the procedure to put the diagram into
standard form is to add twists of the following form:
\[
\raisebox{-3.5ex}{\scalebox{0.2}{\includegraphics{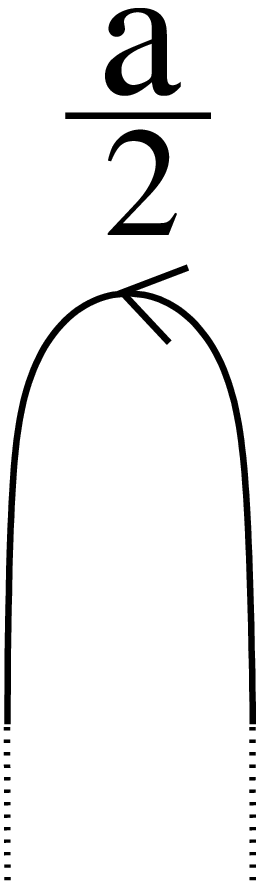}}}\ \
=\ \ (-1)\
\raisebox{-3.5ex}{\scalebox{0.2}{\includegraphics{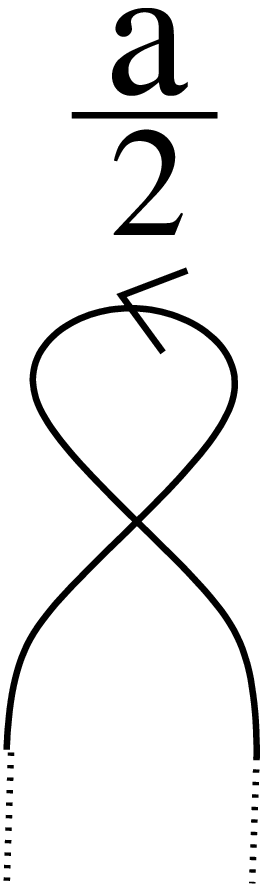}}}
\]
in order that every factor is traversed from left to right. These
diagrams differ by a $(-1)$ which arises from an AS relation which
is employed to shift the $a$-leg to the other side of the edge.
Because this move introduces an extra intersection, it is still
true that the sign of the term is just $(-1)$ raised to the number
of intersections in the drawing. Continuing our example: \[
\gamma_{\overleftarrow{2}\overleftarrow{1}\overleftarrow{5}\overrightarrow{3}\overleftarrow{4}}
= \frac{(-1)^{20}}{5!}\left(\frac{1}{2}\right)^4
\raisebox{-5.5ex}{\scalebox{0.2}{\includegraphics{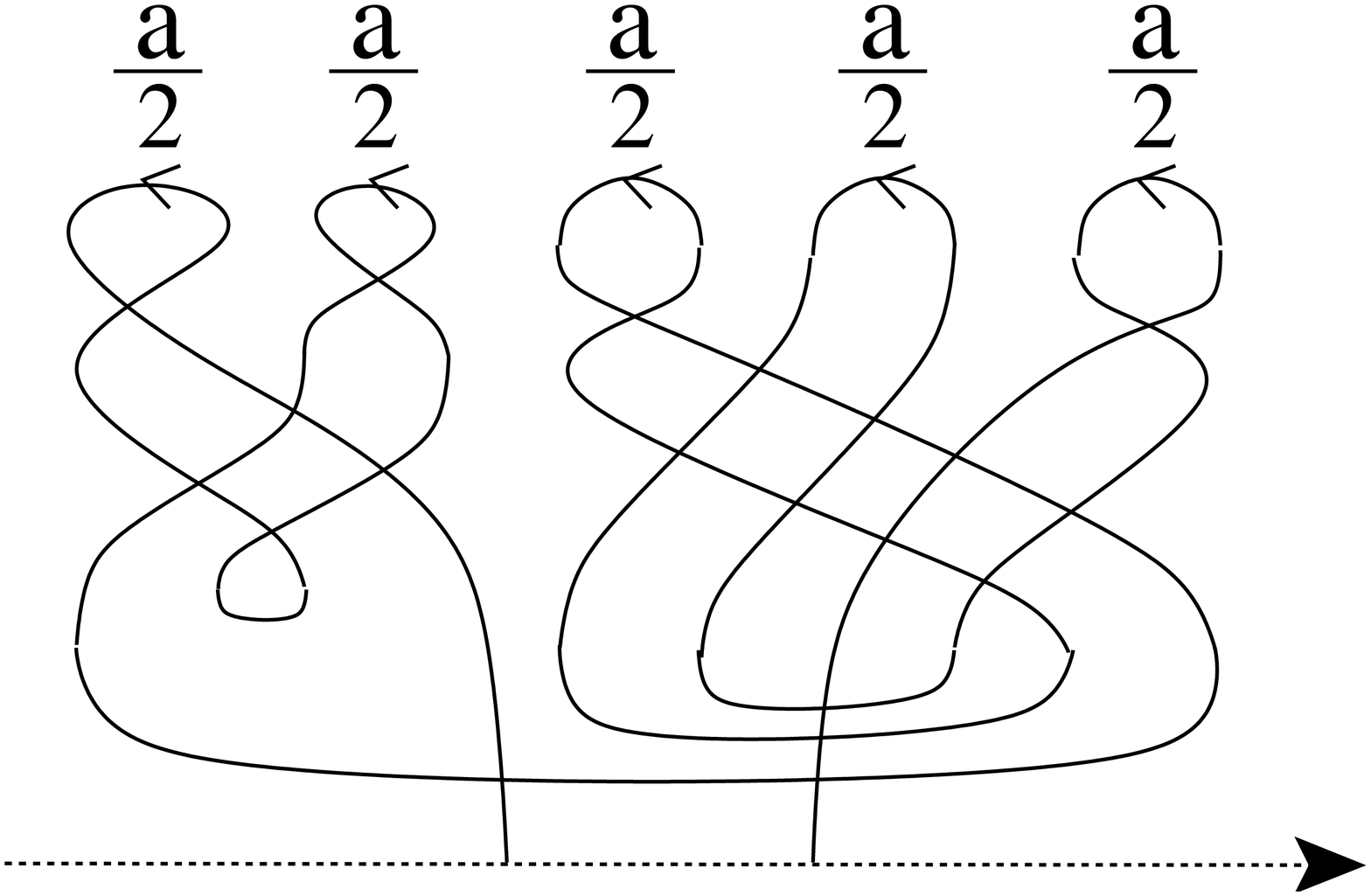}}}\ .
\\[0.15cm]
\]
Notice that after these two steps, the initial diagram has been
transformed into the following standard form, \[
\raisebox{-5.5ex}{\scalebox{0.18}{\includegraphics{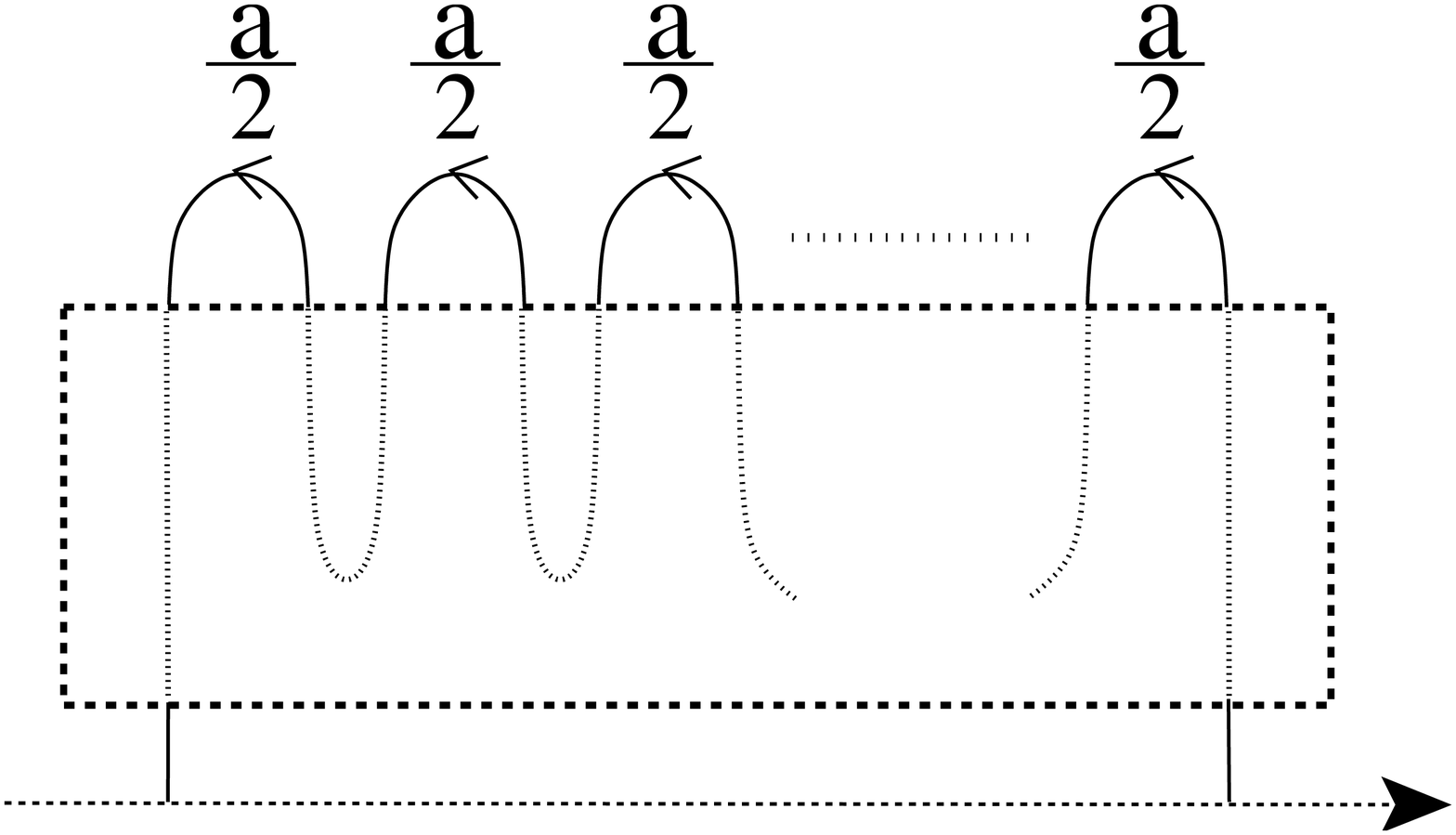}}}\ \ \ ,
\\[0.15cm]
\]
where the parts of the edges within the dashed box follow
complicated, possibly self-intersecting paths.

On account of our procedure,  the sign of the corresponding term
is precisely $(-1)$ raised to the number of intersection points
displayed in the drawing that we have just obtained.  To finish
the calculation, then, it remains for us to count the number of
intersection points displayed by the diagram within the dashed
box.

Notice that the dashed box cuts the edge up into pieces. We'll
call the pieces inside the box the {\it arcs}. They correspond in a direct way to the arcs that were introduced
when this diagram was created in the operation $\lambda$. (Think of those arcs as being coloured
when they are introduced by $\lambda$.)
The number of intersections of
two different arcs must be even for elementary topological reasons. It remains, then, to count the number of
self-intersection points of these arcs. Well, the arcs had no
self-intersections at the beginning (consider the construction $\lambda$), so we we just have to trace
how many self-intersection points were created by our procedure.
Self-intersections may be created in the first step of our
procedure, when a permutation is added to the top of the diagram,
in the following way:
\[
\raisebox{-3.5ex}{\scalebox{0.18}{\includegraphics{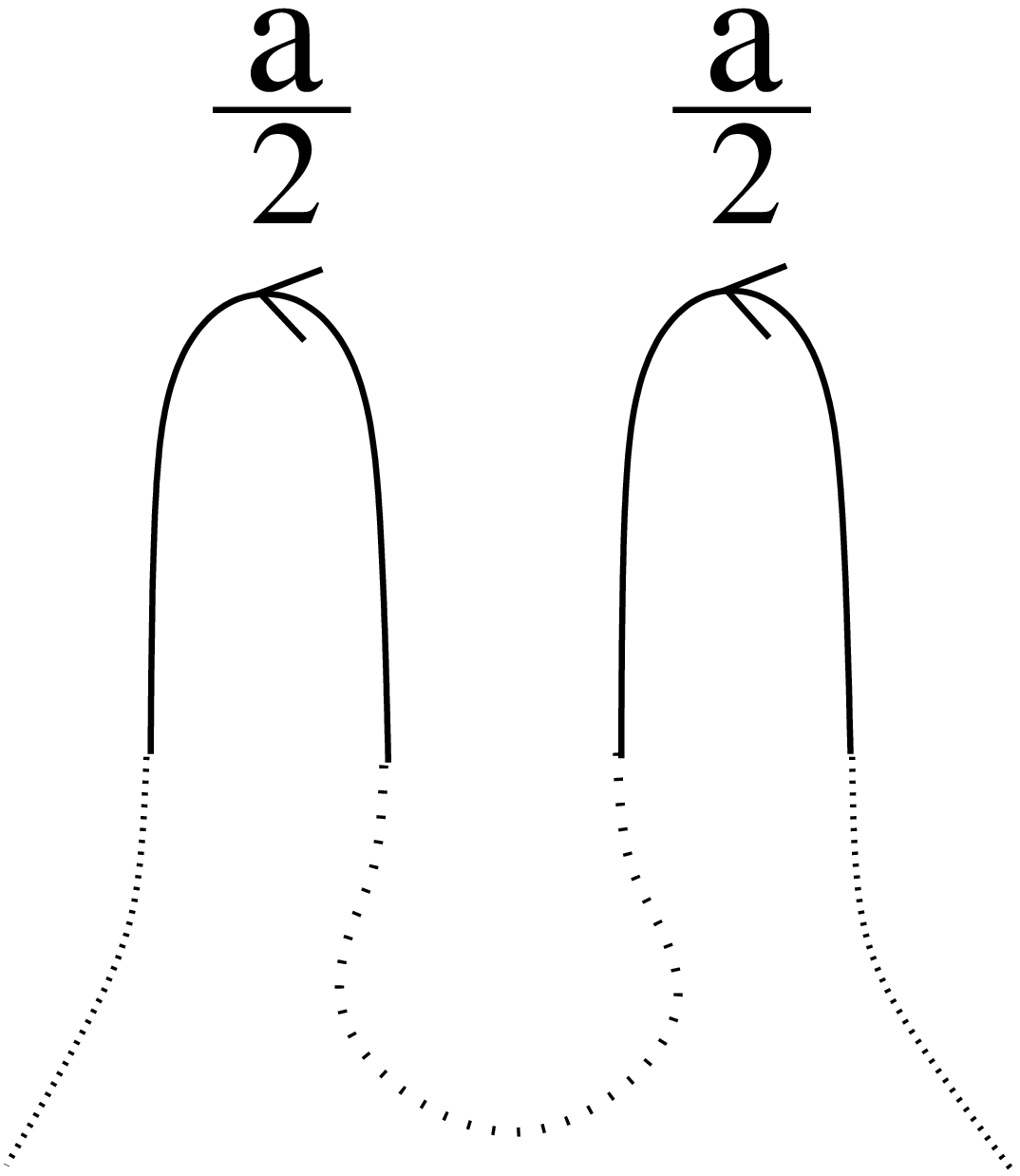}}}\ \
=\ \
\raisebox{-3.5ex}{\scalebox{0.18}{\includegraphics{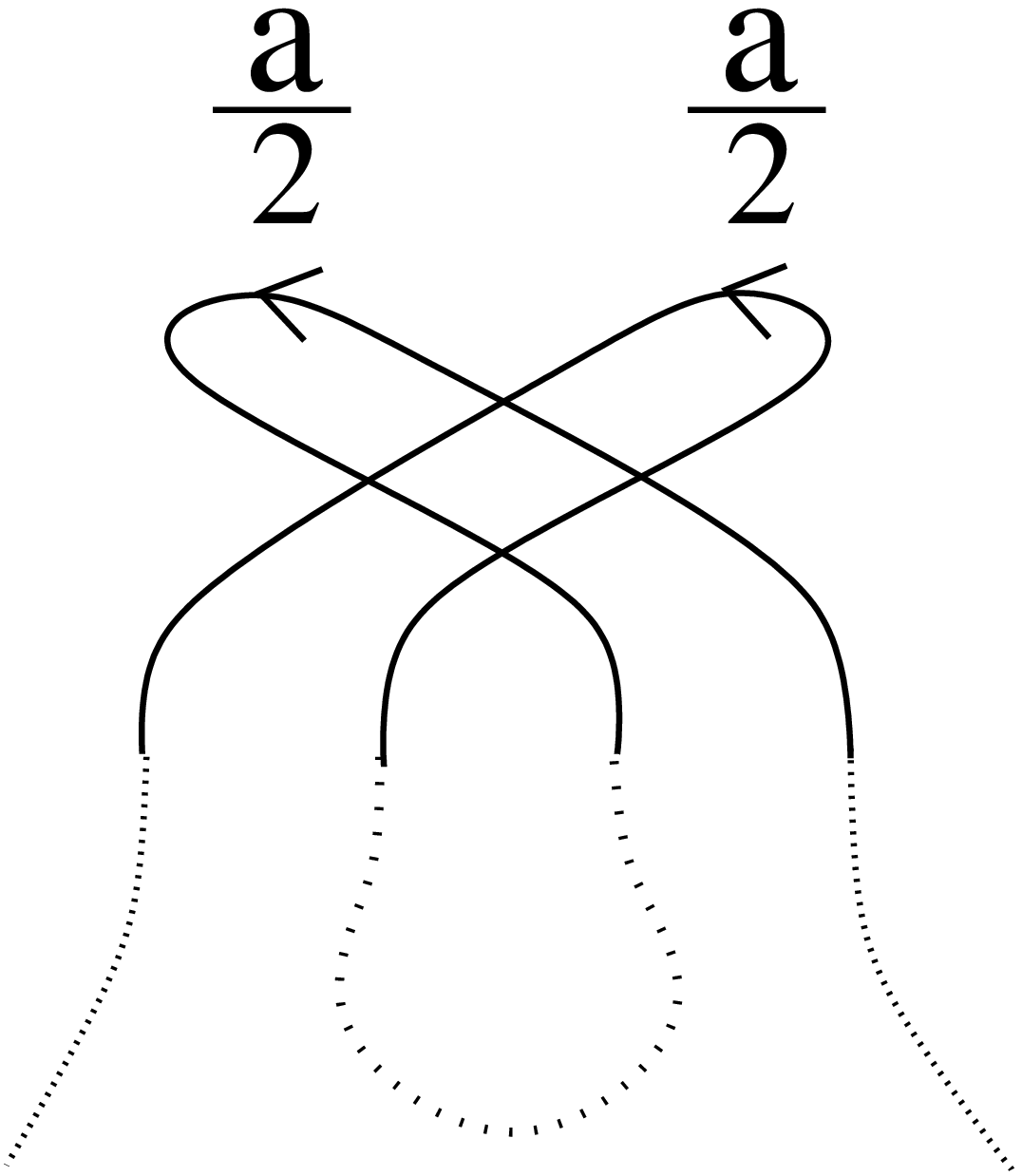}}}\ \ .
\]
There will be one of these for every case of two factors
consecutive in $w$, with the factor appearing later in $w$ having
the smaller value. Thus the sign is $(-1)^{d(w)}$.
\begin{flushright}
$\Box$
\end{flushright}

\subsubsection{The contribution $C_o$.}
The goal of this subsection is the computation that
\[
C_{o} = +\frac{1}{2}
\raisebox{-3.5ex}{\scalebox{0.27}{\includegraphics{ansloop}}}\ .
\]
So consider some $n\geq 2$ (the $n=1$ case we'll observe is zero).
In this subsection we wish to compute the contributions from the
{\it connected} diagrams with {\it zero} legs that we can get by
doing signed pairings of the legs of the following term:
\[ \frac{1}{n!}\
\raisebox{-4ex}{\scalebox{0.24}{\includegraphics{lemmaproofZ}}}\ .
\]
To enumerate these terms we'll employ a certain set
$\overrightarrow{\Xi}_n$. This set consists of the words that can
be made by using each of the symbols $\{1,\ldots,n\}$ precisely
once, such that the left-most symbol is a 1, and where every
symbol $s$ except the initial $1$ is decorated by either an arrow
pointing to the right $\overrightarrow{s}$ or an arrow pointing to
the left $\overleftarrow{s}$. For example:
\begin{eqnarray*}
\overrightarrow{\Xi}_3 & = & \left\{
1\overrightarrow{2}\overrightarrow{3}\ ,\
1\overrightarrow{2}\overleftarrow{3}\ ,\
1\overleftarrow{2}\overrightarrow{3}\ ,\
1\overleftarrow{2}\overleftarrow{3}\ ,\ \right.\\
& &\left.\, \ 1\overrightarrow{3}\overrightarrow{2}\ ,\
1\overrightarrow{3}\overleftarrow{2}\ ,\
1\overleftarrow{3}\overrightarrow{2}\ ,\
1\overleftarrow{3}\overleftarrow{2}\ \right\}.
\end{eqnarray*}
Let $\Xi_n$ denote the set defined in the same way but without the
arrow decorations and let
\[
f_\Xi : \overrightarrow{\Xi}_n \rightarrow \Xi_n
\]
denote the corresponding $2^{n-1}$-to-$1$ forgetful map.

Consider, then, some pairing which uses all the available legs and
results in exactly one connected component. For example: \[
\raisebox{-3.5ex}{\scalebox{0.24}{\includegraphics{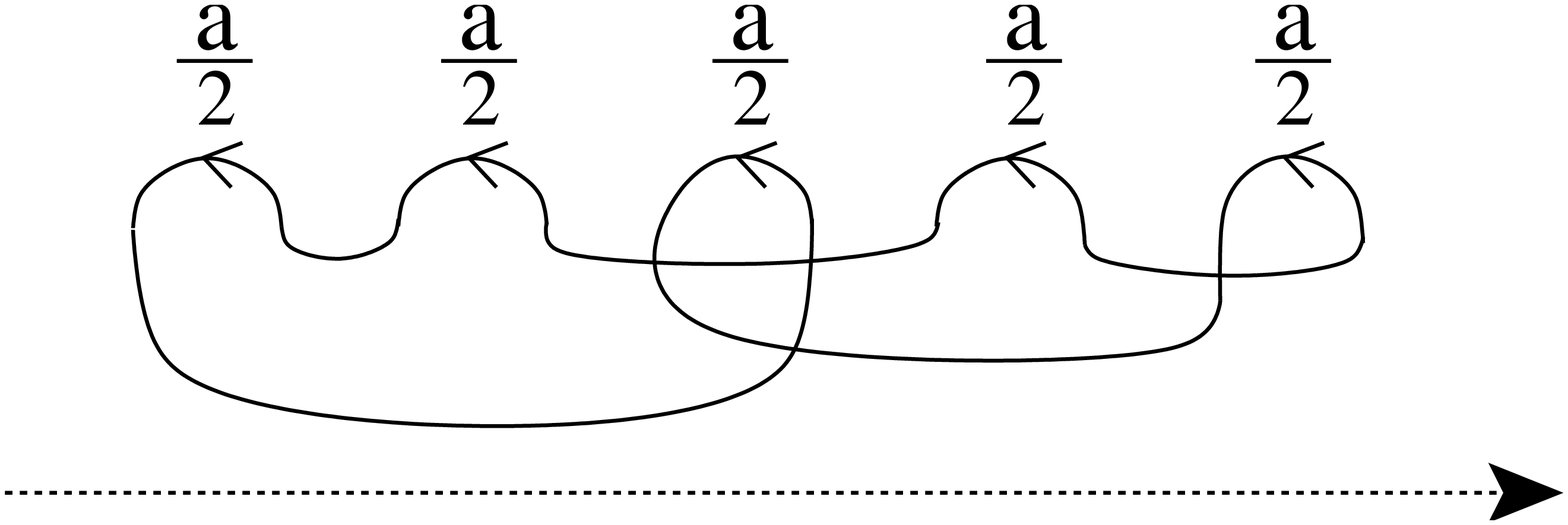}}}\ .\\
\]
The word that corresponds with a gluing is determined in the
following way. To begin, ignore the arc that terminates on the
left-hand side of the left-most block:
\[
\raisebox{-3.5ex}{\scalebox{0.24}{\includegraphics{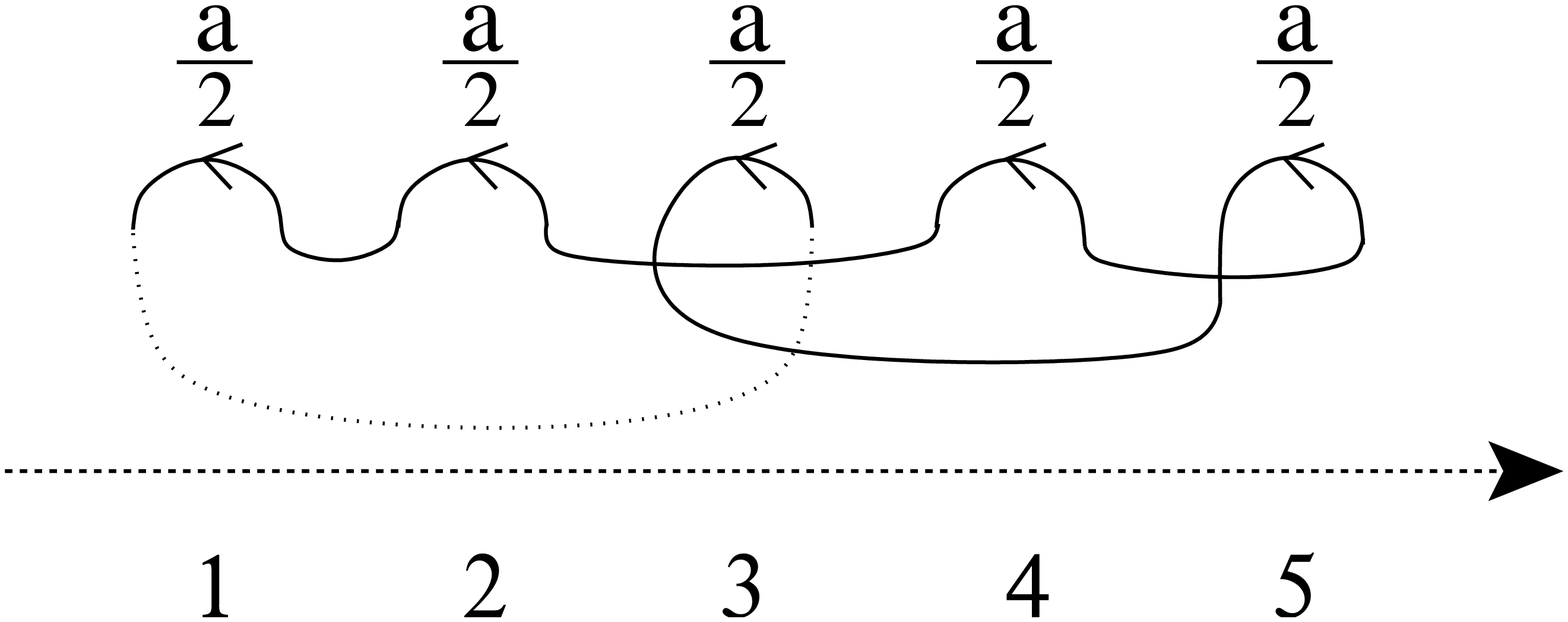}}}\ \ .\\
\]
Now traverse the graph, starting with the left-most block, writing
down the order in which blocks are visited together with the
corresponding directions. The example given leads to:
\[
1\overrightarrow{2}\overrightarrow{4}\overleftarrow{5}\overrightarrow{3}
.
\]
This word contains sufficient information to reconstruct the
pairing, so we have just set up a bijection between the set
$\overrightarrow{\Xi}_n$ and the set of pairings that we are
concerned with presently.

Given some word $w\in\overrightarrow{\Xi}_n$ let $\xi_w$ denote
the corresponding contribution.
\begin{lem} Let $n\geq 2$ and let $w\in\overrightarrow{\Xi}_n$.
Then:
\[
\xi_w =
(-1)^{d(f_\Xi(w))}\frac{1}{2^nn!}\,\raisebox{-2.5ex}{\scalebox{0.2}{\includegraphics{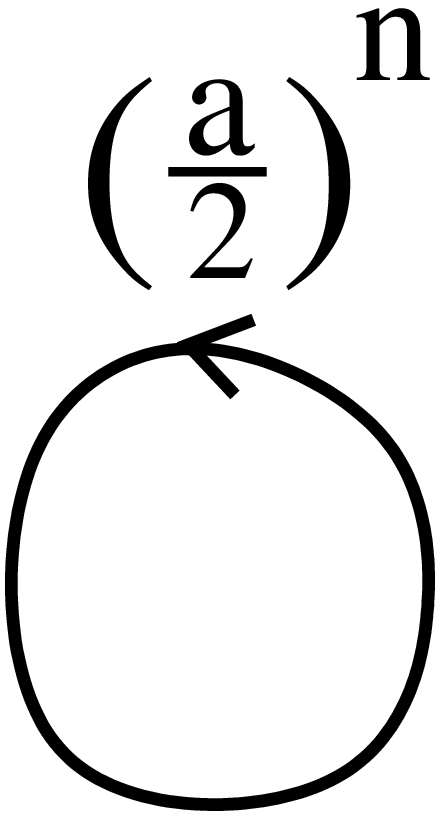}}}\
\ .
\]
\end{lem}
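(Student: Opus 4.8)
The plan is to run the proof of Lemma \ref{standardformlemma} essentially verbatim, since the $C_o$ situation differs from the $C_{||}$ situation only in that the two free legs have been replaced by a single closed loop (equivalently, the two ends of the earlier traversal have been joined up). First I would take the pairing associated to a word $w\in\overrightarrow{\Xi}_n$ and draw it in the same canonical fashion: lay down the $n$ blocks in the order $1,2,\ldots,n$ along the orienting line, and realize each arc of the pairing as a path lying above the line that meets the other arcs only in transversal double points. As in that proof, the payoff of drawing it this way is that the sign of $\xi_w$ is then exactly $(-1)$ raised to the total number of displayed intersection points, carried by the coefficient $\frac{1}{n!}\left(\frac{1}{2}\right)^{n}$ coming from the $\frac{1}{n!}$ out the front together with the $n$ paired-up pairs of $\bot$-legs.

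Next I would bring the diagram to the standard form used for the $C_{||}$ case by means of the same two moves. The first move inserts block-transpositions at the top so that the blocks are visited, as one traverses the loop, in the order recorded by $w$; each such transposition contributes four intersections, an even number, so the identity ``sign $=(-1)^{\#\text{intersections}}$'' is preserved. The second move inserts the AS-twists that slide each $a$-leg to a common side of its edge, so that every block is traversed from left to right; each twist contributes one $(-1)$ from the AS relation and simultaneously one extra displayed intersection, so the identity is again preserved. The only bookkeeping difference from Lemma \ref{standardformlemma} is the starting convention: because a word in $\overrightarrow{\Xi}_n$ begins with an undecorated $1$, the loop is cut open at the distinguished arc on the left of block $1$, and block $1$ is by construction always traversed from left to right, so no twist is ever applied to it.

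Finally I would count the self-intersections in the standard form exactly as before. After the two moves the arcs correspond to the non-self-intersecting arcs introduced by $\lambda$; any two distinct arcs meet an even number of times for the topological reason already invoked, so only the self-intersections of individual arcs affect the parity. Such self-intersections are created only by the first move, one for each consecutive pair of blocks in $w$ in which the later block carries the smaller label --- that is, $d(f_\Xi(w))$ of them. Hence the displayed sign is $(-1)^{d(f_\Xi(w))}$, which yields the claimed value of $\xi_w$.

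The main obstacle, as in Lemma \ref{standardformlemma}, is purely the sign bookkeeping: verifying that the parity of the intersection count is genuinely insensitive to the even contributions (the block-transpositions and the cross-arc intersections) and is therefore governed solely by the descent statistic. The closed-loop geometry forces one extra sanity check absent from the two-legged case --- namely that cutting the loop at the chosen arc and fixing block $1$ to read left to right neither creates nor destroys a self-intersection --- but this is immediate from the fact that block $1$ needs no twist.
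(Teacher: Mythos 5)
Your proposal is correct and takes essentially the same route as the paper, which likewise proves this lemma by rerunning the proof of Lemma \ref{standardformlemma}: draw the pairing canonically, standardize with block transpositions (even intersection changes) and AS-twists (one sign, one intersection each), and read off the sign as $(-1)^{d(f_\Xi(w))}$. Your accounting of the coefficient $\frac{1}{2^n n!}$ via the $n$ glued pairs is exactly the paper's remark that the closed-loop term carries one extra arc, hence one extra factor of $\frac{1}{2}$, compared with the two-legged case.
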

{\it Sketch of the proof.} This proof proceeds in much the same
way as the proof of Lemma \ref{standardformlemma}. That is, we
start by drawing the diagram of the graph that results from the
gluing canonically. Then we put it in standard form in two steps.
In the first step we add permutations to the top of the diagram so
that the factors appear in the diagram in the same order in which
they appear in the word $w$. In the second step we add any twists
that are required in order for the diagram to appear in the
following standard form:
\[
\raisebox{-5.5ex}{\scalebox{0.24}{\includegraphics{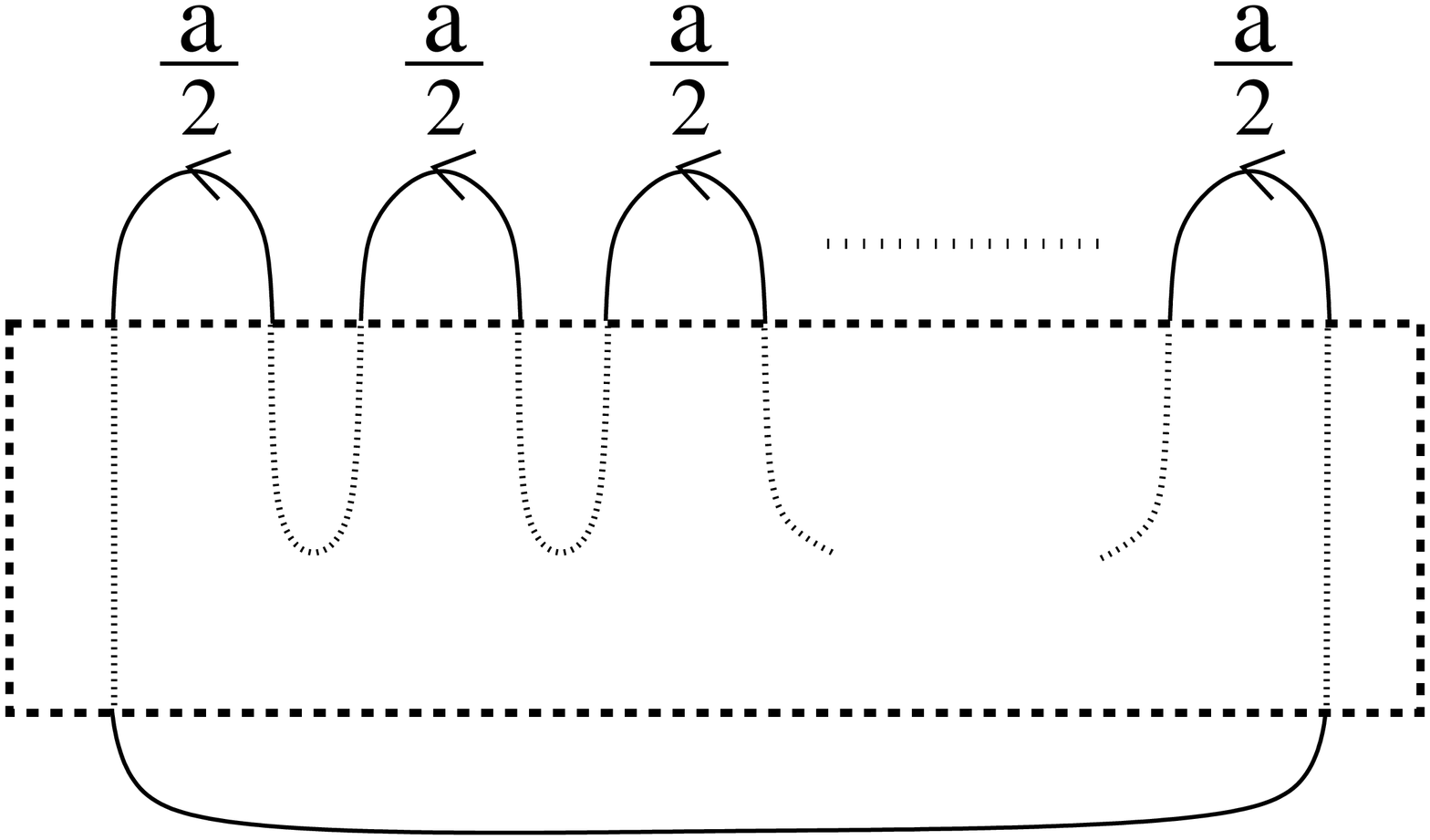}}}\ \ .
\\[0.15cm]
\]
We can keep track of signs in exactly the same way as we did in
the proof of Lemma \ref{standardformlemma}, and we are led to the
given conclusion. Observe that this term has an extra factor of $2$ in the denominator (in comparison with Lemma \ref{standardformlemma}) because
this term has an extra arc attached.
\begin{flushright}
$\Box$
\end{flushright}

Now we can compute the contribution $C_o$. To begin, note that the
term that arises in the $n=1$ case is zero:
\[
\frac{1}{2}\
\raisebox{-2.6ex}{\scalebox{0.2}{\includegraphics{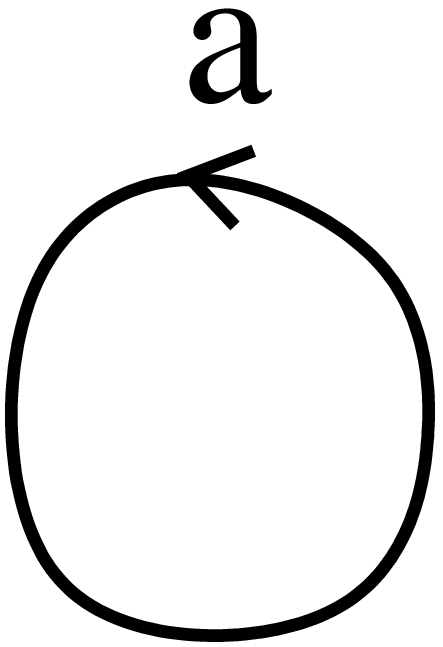}}}\
 =\ 0\ .
\]
Proceeding:
\begin{eqnarray*}
C_o & = &
\sum_{n=2}^{\infty}\sum_{w\in\overrightarrow{\Xi}_n}\xi_w, \\
& = &
\sum_{n=2}^{\infty}\sum_{w\in\overrightarrow{\Xi}_n}\left(\frac{1}{2^nn!}(-1)^{d(f_\Xi(w))}
\raisebox{-2.6ex}{\scalebox{0.2}{\includegraphics{Nloop}}}\,\right),
\\
& = &
\sum_{n=2}^{\infty}\left(\frac{2^{n-1}\sum_{w\in{\Xi}_n}(-1)^{d(w)}}{2^nn!}\right)
\raisebox{-2.6ex}{\scalebox{0.2}{\includegraphics{Nloop}}}\ ,
\\
 & = &
\frac{1}{2}\sum_{n=2}^{\infty}\left(\frac{1}{n}\frac{\phi(n-1)}{(n-1)!}\right)
\raisebox{-2.6ex}{\scalebox{0.2}{\includegraphics{Nloop}}}\ .
\end{eqnarray*}
Thus we have computed that:
\[
C_{o} =\frac{1}{2}\
\raisebox{-2.5ex}{\scalebox{0.2}{\includegraphics{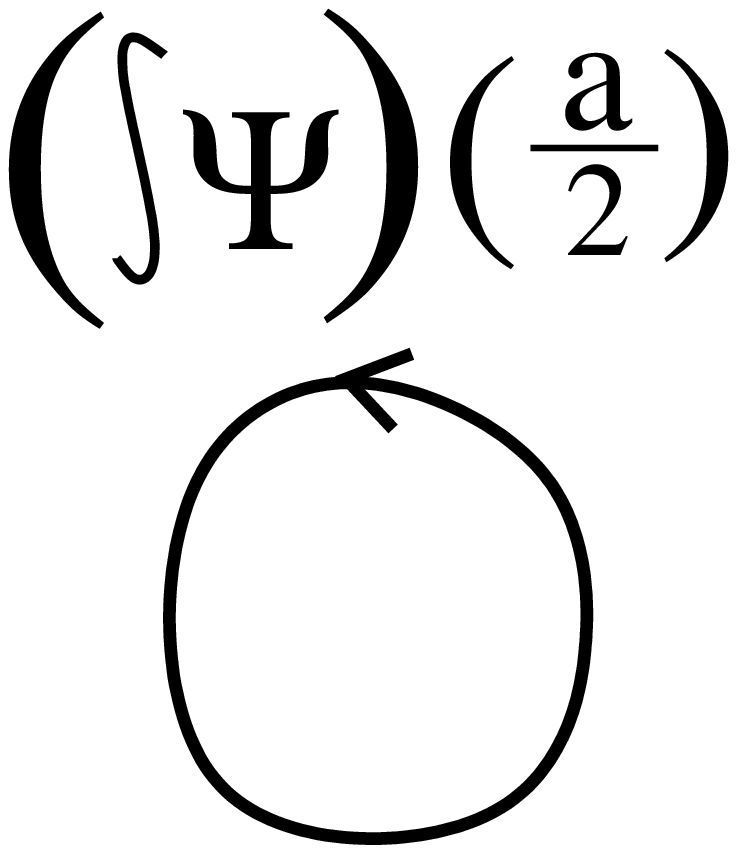}}}\ ,
\]
where $\left(\int\hspace{-0.75ex}\Psi\right)(x)$ denotes
$\sum_{n=2}^{\infty}\frac{1}{n}\frac{\phi(n-1)}{(n-1)!}x^n$, the
unique power series with zero constant term whose formal
derivative is $\Psi(x)$. That power series is given by:
\[
\left(\int\Psi\right)(x) = \ln \cosh x\ .
\]

\

\subsubsection{The contribution $C_{bb}$.} We now wish to compute
the contribution of pairings which lead to connected diagrams with
exactly two $b$-legs. For example:
\[
\raisebox{-3.5ex}{\scalebox{0.24}{\includegraphics{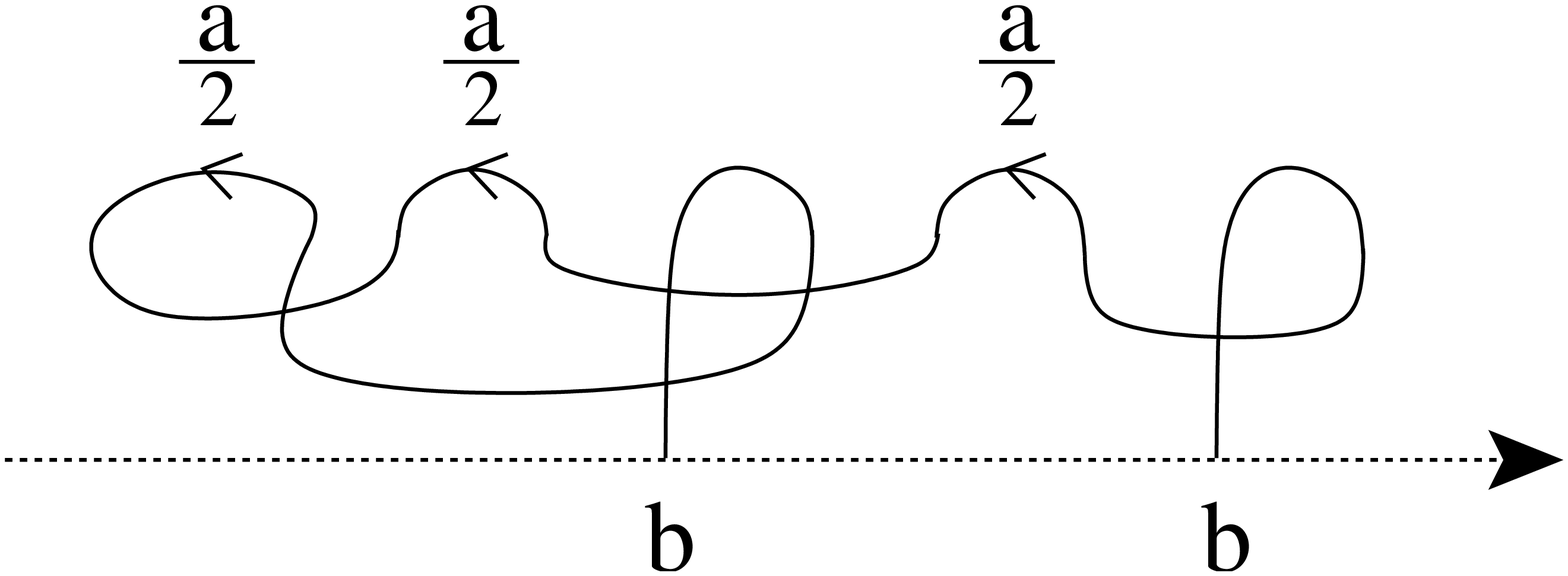}}}\ \ .\\
\]
The set which enumerates these gluings is the following
$\overrightarrow{\Omega}_n$. The elements of
$\overrightarrow{\Omega}_n$ are certain words that use each of the
symbols $\{1,\ldots,n\}$ precisely once and where each symbol
except the first and last symbols in the word is decorated by an
arrow. The set $\overrightarrow{\Omega}_n$ is defined to be all words of
this type with the property that the last symbol is greater than
the first symbol.

To write down the word corresponding to some gluing, traverse the
graph, starting at the left-most of the 2 legs, writing down the
order that blocks are encountered as you traverse (decorating with
the appropriate arrow). For example, the gluing above corresponds
with the word:
\[
3\overleftarrow{1}\overrightarrow{2}\overrightarrow{4}5 \in
\overrightarrow{\Omega}_5\ .
\]
Let
\[
f_\Omega : \overrightarrow{\Omega}_n \rightarrow \Gamma_n
\]
denote the $2^{n-2}$-to-$1$ forgetful map. Let $\omega_w$ denote
the contribution corresponding to some word $w\in
\overrightarrow{\Omega}_n$. We leave the proof of the following
lemma as an exercize for the reader.
\begin{lem}\label{standardformlemmaB}
Let $n\geq 2$ and let $w\in \overrightarrow{\Omega}_n$. Then:
\[
\omega_w = -(-1)^{d(f_\Omega(w))}\frac{1}{2^{n-1}n!} \,
\raisebox{-6ex}{\scalebox{0.25}{\includegraphics{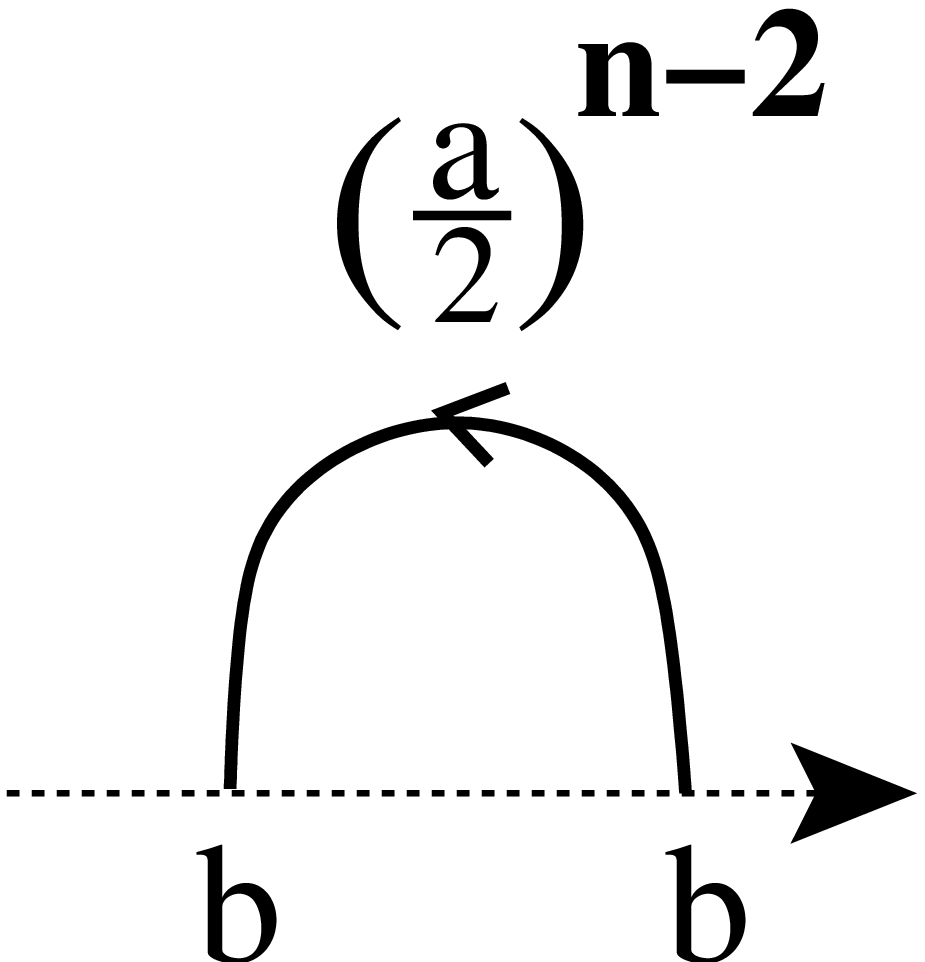}}}\ .
\]
\end{lem}
We can now complete the computation of the contribution $C_{bb}$:
\begin{eqnarray*}
C_{bb} & = & \sum_{n=2}^{\infty}\sum_{w\in \overrightarrow{\Omega}_n}\omega_w\ , \\
& = & -\sum_{n=2}^{\infty}\sum_{w\in \overrightarrow{\Omega}_n}
(-1)^{d(f_\Omega(w))}\frac{1}{2^{n-1}n!} \,
\raisebox{-3.5ex}{\scalebox{0.2}{\includegraphics{NtermBB}}}\ , \\
& = &
-\sum_{n=2}^{\infty}\left(\frac{2^{n-2}}{2^{n-1}n!}\sum_{w\in
{\Gamma}_n}(-1)^{d(w)}\right)
\raisebox{-3.5ex}{\scalebox{0.2}{\includegraphics{NtermBB}}}\ ,\\
& = &
-\left(\frac{1}{2}\right)^2\sum_{n=2}^{\infty}\left(\frac{\psi(n)}{n!}\right)
\raisebox{-3.5ex}{\scalebox{0.2}{\includegraphics{NtermBB}}}\ ,\\[0.15cm]
& = & -\left(\frac{1}{2}\right)^2
\raisebox{-3.5ex}{\scalebox{0.2}{\includegraphics{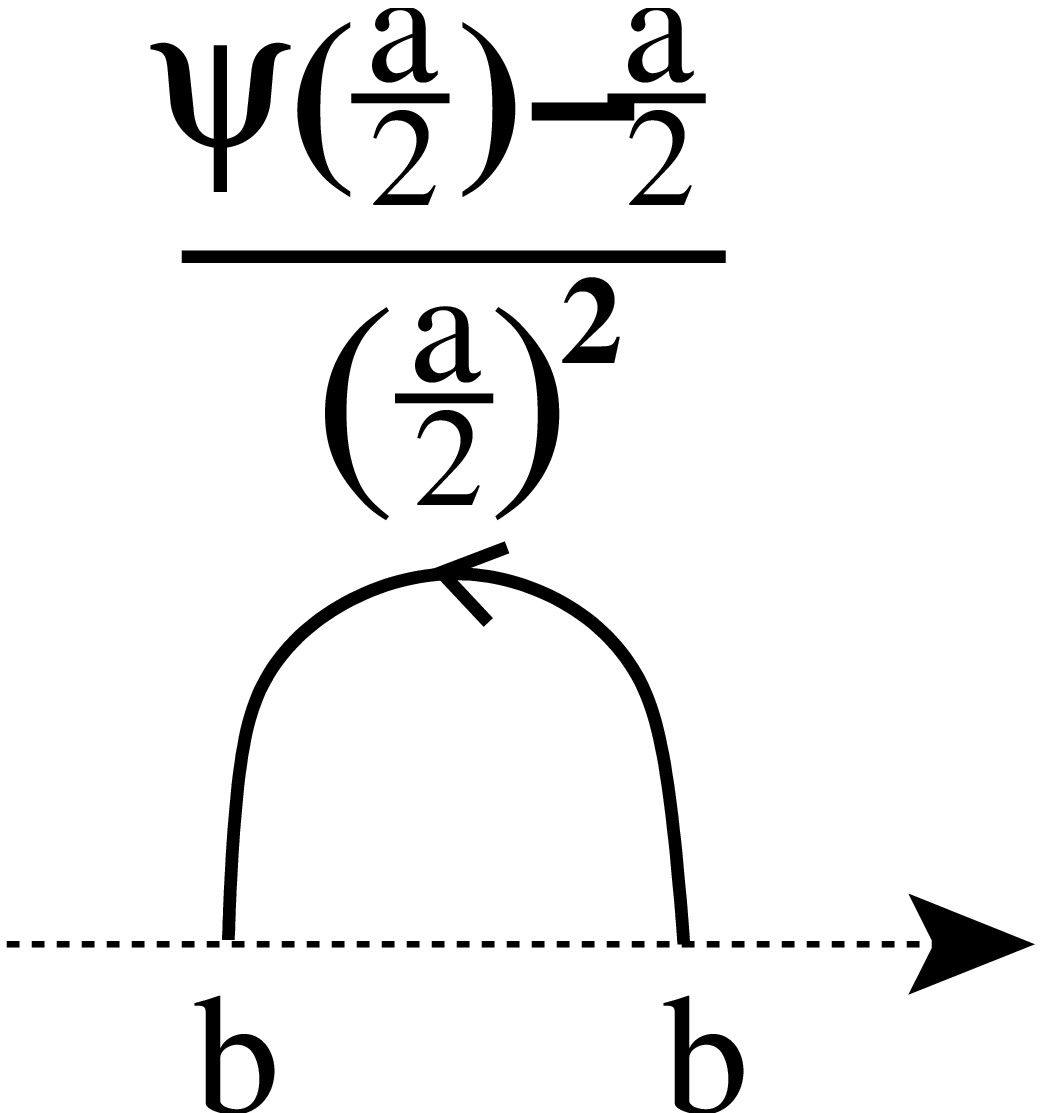}}}\
.
\end{eqnarray*}

\subsubsection{The contribution $C_{|b}$.}

To enumerate these terms we'll employ a set
$\overrightarrow{\Delta}_n$. The elements of this set are words
which use each one of the symbols $\{1,\ldots,n\}$ exactly once,
where every symbol except the last symbol is decorated by an
arrow. The set $\overrightarrow{\Delta}_n$ is defined to be the
set of all such words. For example:
\[
\overleftarrow{4}\overrightarrow{2}\overleftarrow{3}1 \in
\overrightarrow{\Delta}_5.
\]
To write down the word that corresponds with a given pairing,
traverse the resulting diagram, starting at the $\bot$-leg and
proceeding until you reach the $b$-leg. As you traverse, record
the order in which you visit the different blocks and the
directions in which you travel as you visit the blocks. For
example, the pairing
\[
\raisebox{-3.5ex}{\scalebox{0.24}{\includegraphics{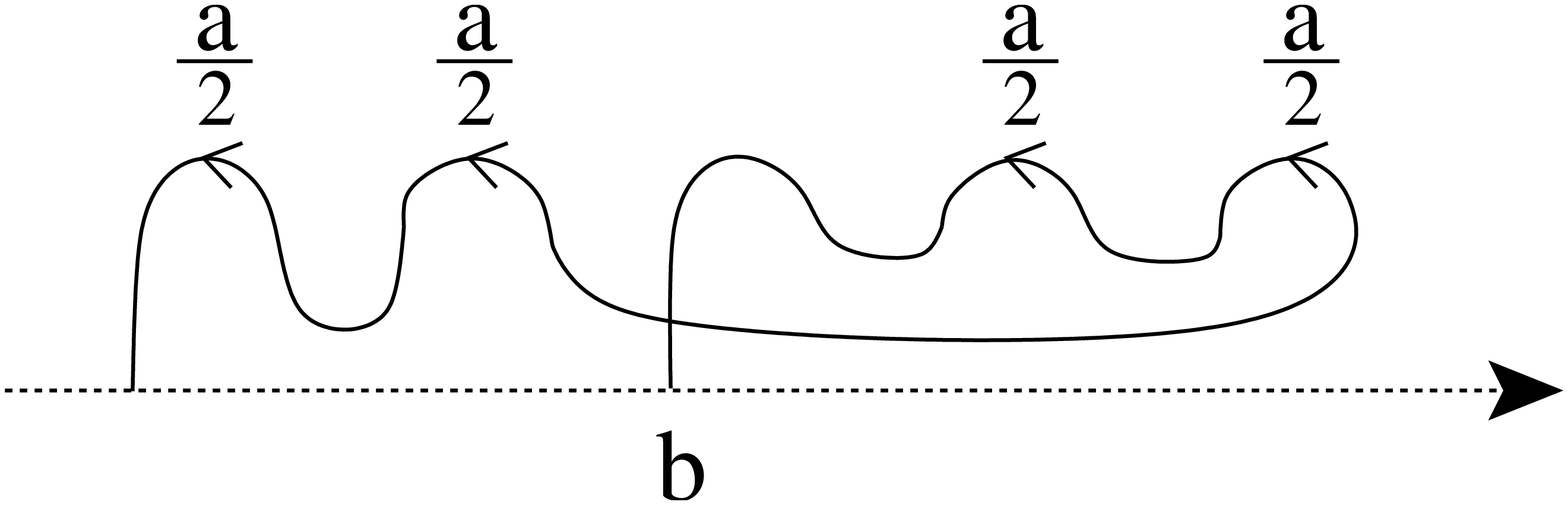}}}\ \\
\]
corresponds with the word
\[
\overrightarrow{1}\overrightarrow{2}\overleftarrow{5}\overleftarrow{4}3\
\in\ \overrightarrow{\Delta}_5\ .
\]
For some $w\in \overrightarrow{\Delta}_n$, let $\delta_w$ denote
the corresponding term. Let
\[
f_\Delta : \overrightarrow{\Delta}_n \rightarrow \Sigma_n
\]
denote the $2^{n-1}$-to-$1$ forgetful map.
\begin{lem}
Let $n\geq 2$ and let $w\in \overrightarrow{\Delta}_n$. Then:
\[
\delta_w = -(-1)^{d(f_\Delta(w))}\frac{1}{2^{n-1}n!} \,
\raisebox{-3.5ex}{\scalebox{0.2}{\includegraphics{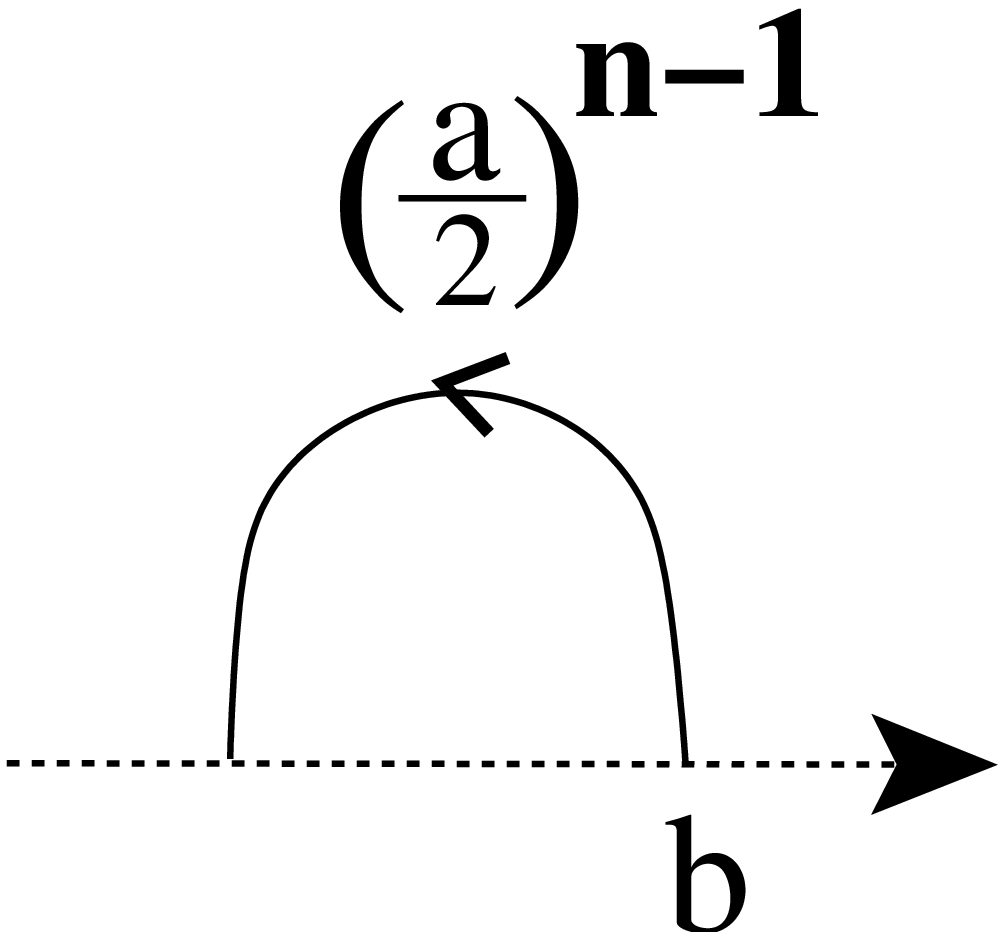}}}\ .
\]
\end{lem}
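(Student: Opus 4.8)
The plan is to reproduce, almost verbatim, the argument used for Lemma~\ref{standardformlemma}, since the contribution $C_{|b}$ differs from $C_{||}$ only in that one of the two surviving legs is now a $b$-leg rather than a $\bot$-leg. Fix $w\in\overrightarrow{\Delta}_n$ and draw the diagram of the associated pairing \emph{canonically}: lay the $n$ numbered blocks along the orienting line as in the canonical drawing used for $C_{||}$, and realise the gluing arcs as transversal double points sitting above the line. Drawn this way, the coefficient $\delta_w$ carries the sign $(-1)$ raised to the number of crossings displayed above the line, and this ``sign $=$ (number of displayed crossings)'' bookkeeping is the invariant I will preserve through every subsequent move. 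I would first note that exactly one of the $n$ blocks is a $B$-block, since each $B$-block contributes an unpairable $b$-leg while $C_{|b}$ has exactly one surviving $b$-leg.

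Next I would put the diagram into standard form by the same two moves as in Lemma~\ref{standardformlemma}. First, insert permutations at the top so that, traversing from the $\bot$-leg to the $b$-leg, the blocks are visited in the order recorded by $w$; a single such permutation creates $4$ crossings, an even number, so the invariant is untouched. Second, insert twists so that every block is traversed left-to-right and all $a$-labelled legs sit on the standard side of their edge; each twist costs one $(-1)$ coming from an AS relation and simultaneously adds one crossing, so the invariant is again preserved. After these moves the diagram is the standard chain, with the (possibly self-intersecting) arc paths confined to a dashed box and the sign still equal to $(-1)$ to the number of crossings inside that box.

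It then remains to evaluate that crossing number mod $2$. Exactly as before, the arcs come from $\lambda$ and so start embedded; hence crossings between two \emph{distinct} arcs occur in pairs and wash out mod $2$, and the surviving crossings are the self-intersections created by the top permutations, of which there is precisely one per consecutive pair in $w$ whose later block is the smaller---that is, $d(f_\Delta(w))$ of them. This yields the factor $(-1)^{d(f_\Delta(w))}$. (The underlying word now ranges over all of $\Sigma_n$, with no ``first~$<$~last'' constraint, precisely because the two endpoints are of different types and so carry no traversal symmetry to quotient by, in contrast with the $\Gamma_n$ of the $C_{||}$ computation.) For the coefficient: the single $B$-block supplies one $\bot$-leg and the $n-1$ $A$-blocks supply two each, giving $2n-1$ in all; one survives and the remaining $2n-2$ are glued in $n-1$ pairs, producing $(\tfrac12)^{n-1}$, which together with the $\tfrac{1}{n!}$ carried along gives the asserted $\tfrac{1}{2^{n-1}n!}$.

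The one ingredient with no analogue in Lemma~\ref{standardformlemma} is the terminal block carrying the surviving $b$-leg, and tracking its sign is where I expect the real work to be. In Lemma~\ref{standardformlemma} both ends were $\bot$-legs and the standard form needed no extra sign at its ends; here the final block is entered along its (paired) $\bot$-leg and left along its $b$-leg, and I expect that bringing its $a$-labelled edge onto the standard side forces exactly one further AS twist with no matching partner at the starting end, producing the global factor $(-1)$ in the statement. The delicate points I would check carefully are (i) that this sign is genuinely independent of $w$, and (ii) that seating the grade-$1$ $b$-leg into its standard terminal position introduces no \emph{additional} sign; here it is reassuring that with a single $b$-leg there is no pair of grade-$1$ survivors to transpose, so, unlike the $C_{bb}$ computation, no reordering sign competes with the twist. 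Once this single overall $(-1)$ is pinned down, the remainder of the proof is the verbatim bookkeeping of Lemma~\ref{standardformlemma}.
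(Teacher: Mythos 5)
Your overall strategy is the intended one: the paper in fact prints no proof of this lemma at all (like Lemma~\ref{standardformlemmaB} it is implicitly left as a rerun of Lemma~\ref{standardformlemma}), and most of your bookkeeping is correct --- exactly one $B$-factor for connectedness, the coefficient $\tfrac{1}{2^{n-1}n!}$ from $n-1$ glued pairs, the descent sign from the permutation moves, and the disappearance of the ``first $<$ last'' constraint because the two surviving legs have different types. The genuine gap is that the single feature distinguishing this lemma from Lemma~\ref{standardformlemma} --- the global factor $(-1)$ --- is precisely what you do not establish, and the mechanism you conjecture for it cannot be right. The terminal block here is the $b$-labelled factor of the exponential: a single edge joining a $\bot$-leg to a $b$-parameter leg. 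It contains no trivalent vertex and no $a$-labelled edge, so there is no AS relation that can be applied to it and no ``twist'' of the Lemma~\ref{standardformlemma} type available at that block. Your proposed accounting also fails a consistency test against the paper: an unmatched AS twist per $b$-end would give sign $(-1)^{\#\{b\text{-ends}\}}$, hence $+1$ for the two-$b$-leg case, whereas Lemma~\ref{standardformlemmaB} carries the same single minus sign.

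The sign actually comes from the grade-$1$ line of the $b$-leg itself, not from any AS move. In the canonical drawing, the $b$-parameter leg is carried down to the lower orienting line by a \emph{full} line (it has grade $1$), and the arc which glues the terminal $B$-factor into the chain crosses that full line an odd number of times; equivalently, after standardization one transposition of two grade-$1$ legs is needed to match the drawn diagram with the diagram on the right-hand side of the lemma, and grade-$1$ legs anticommute. A decisive check, which your argument as written cannot survive, is the case $n=1$ (which the paper's summation for $C_{|b}$ does use): there are no gluings, no $A$-blocks, and no possible AS moves, yet the asserted formula still reads $\delta_w = -\bigl(\text{right-hand diagram}\bigr)$, so the minus sign must already be present as a pure leg-ordering sign between the bare $b$-labelled factor and the standard-form diagram. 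Once you identify this crossing and verify, as you intended, that the R- and W-moves preserve the ``sign equals number of displayed crossings'' invariant, the rest of your argument does go through verbatim; but as it stands the key sign is asserted, with an incorrect justification, rather than proved.
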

Thus we can calculate:
\begin{eqnarray*}
C_{|b} & = &
\sum_{n=1}^{\infty}\sum_{w\in\overrightarrow{\Delta}_n}\delta_w\ , \\
& = &
-\sum_{n=1}^{\infty}\sum_{w\in\overrightarrow{\Delta}_n}(-1)^{d(\delta(w))}\frac{1}{2^{n-1}n!}
\raisebox{-3.5ex}{\scalebox{0.18}{\includegraphics{NtermB}}}\ ,
\\[0.1cm]
& = &
-\sum_{n=1}^{\infty}\frac{\sum_{w\in\Sigma_n}(-1)^{d(w)}}{n!}
\raisebox{-3.5ex}{\scalebox{0.18}{\includegraphics{NtermB}}}\ ,
\\[0.2cm]
& = &
-\raisebox{-3.5ex}{\scalebox{0.18}{\includegraphics{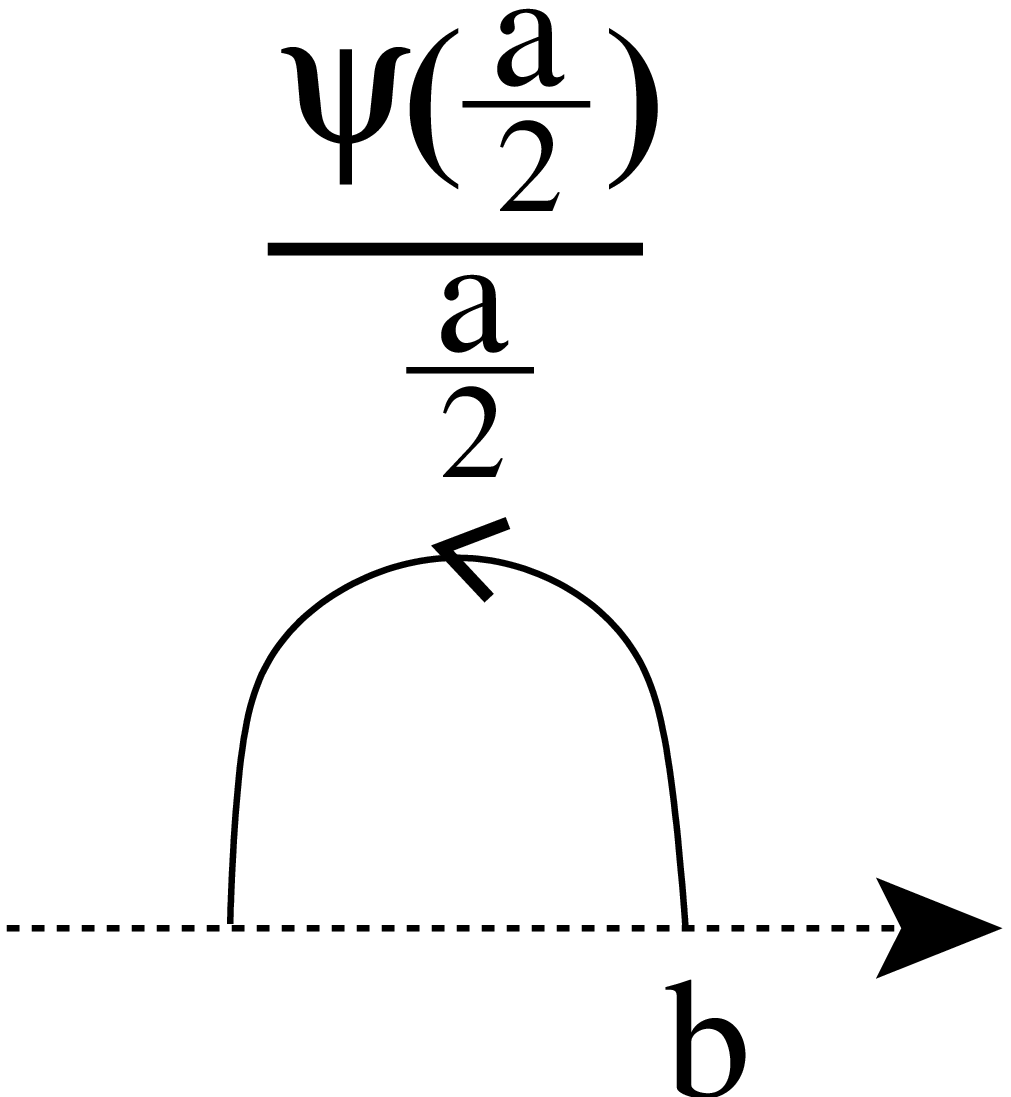}}}\
.
\end{eqnarray*}
\section{Computing the operator product II.}
\label{computingtheoperatorexpressionB} In this section we'll take
the computation of the last section (Theorem
\ref{connectedtheorem}) and use it to compute the series
$\left[\mathcal{X} \right]_{b,\partial_b=0}$ (where $\mathcal{X}$ is the series introduced in Theorem \ref{importantexpression}); this will give us
Theorem \ref{Xcalculate}. A direct substitution of the previous
section's result into $\mathcal{X}$ (taking terms without $b$-legs
out the front of the expression) yields that
$[\mathcal{X}]_{b,\partial_b=0}$ is equal to:
\begin{equation}\label{xcalclast} \exp_{\#}\left(\left(\frac{1}{2}\right)
\raisebox{-3ex}{\scalebox{0.27}{\includegraphics{ansloop}}} +
\raisebox{-3ex}{\scalebox{0.27}{\includegraphics{ansii}}}\right)\,\#\,
\mathcal{Y}
\end{equation}
where $\mathcal{Y}$ is equal to
\[
\left[ \exp_\apply
\left(-\frac{1}{2}\,\raisebox{-4.5ex}{\scalebox{0.25}{\includegraphics{paramzzz}}}
\right)\apply \exp_\#\left(
-\raisebox{-6.9ex}{\scalebox{0.25}{\includegraphics{ansB}}}
-\left(\frac{1}{2}\right)^2
\raisebox{-6.75ex}{\scalebox{0.25}{\includegraphics{ansBB}}}
\right)\right]_{b,\partial_b=0}\hspace{-0.5cm}.\\[0.1cm]
\]
This series $\mathcal{Y}$ will be computed with the following
lemma, whose proof is the subject of this section. Observe that the minus that appears inside the first term has been removed by appropriately 
choosing the orientation on the corresponding edge.
\begin{lem}\label{opcompprop}\label{secondapply}
Let $Y(a)$ be a power series containing only even powers of $a$
and let $Z(a)$ be a power series containing only odd powers of
$a$. Then:
\begin{multline*}
\left[ \mathrm{exp}_\apply \left(
\raisebox{-5.5ex}{\scalebox{0.2}{\includegraphics{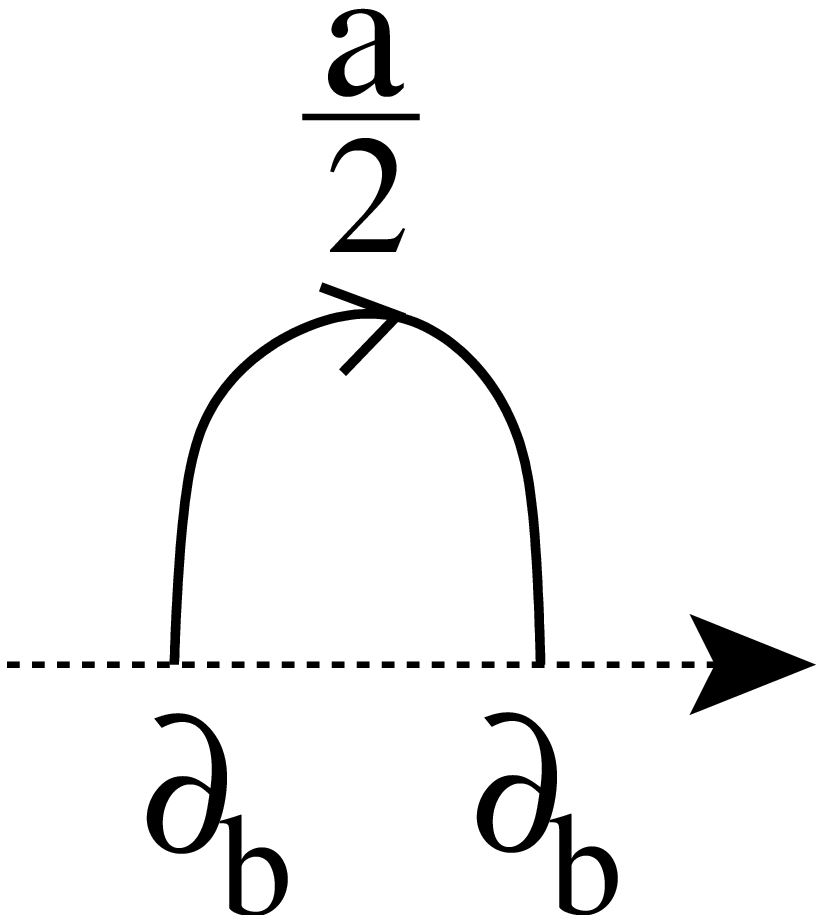}}}
\right)\apply \exp_{\# }\left(
\raisebox{-4.75ex}{\scalebox{0.2}{\includegraphics{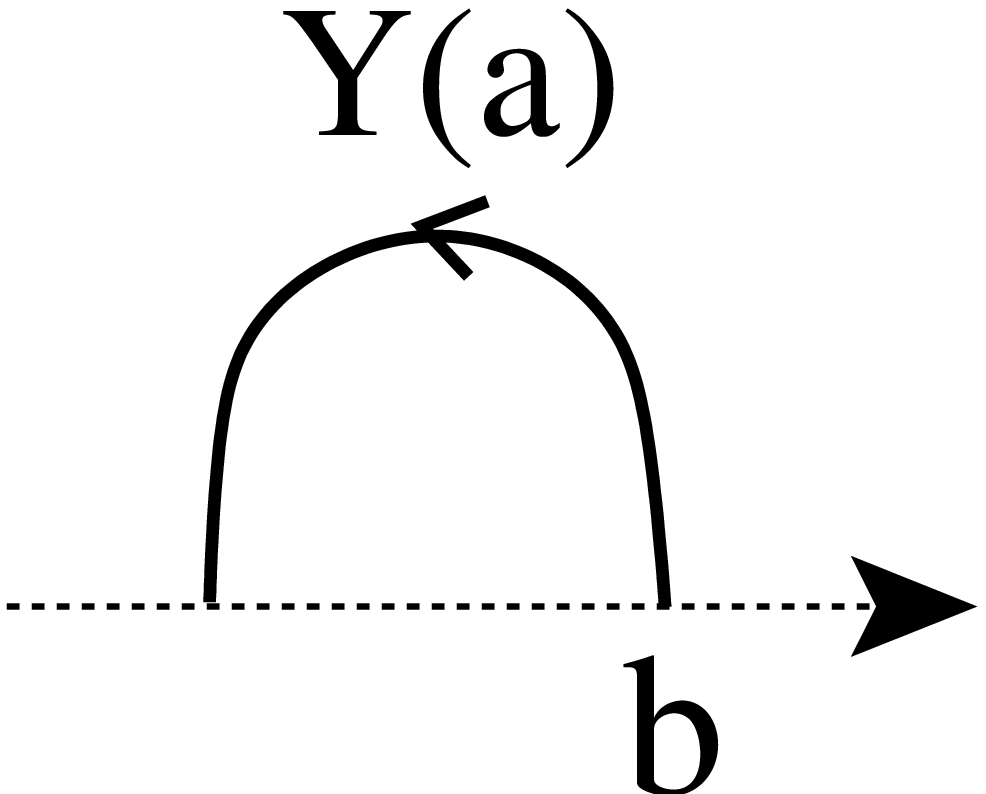}}} +
\raisebox{-4.5ex}{\scalebox{0.2}{\includegraphics{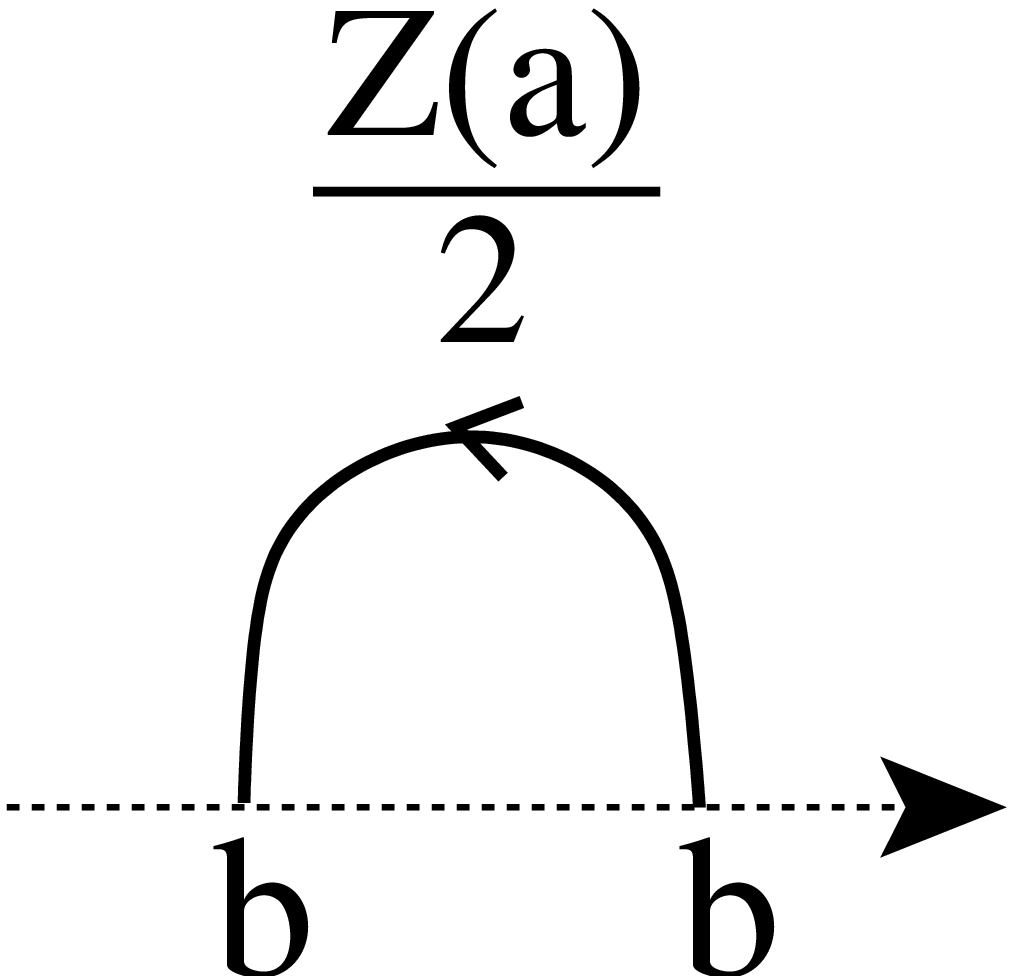}}}
\right)\right]_{{b,\partial_b}=0} \\[0.15cm]  =  \exp_{\#}\left(
-\frac{1}{2}\sum_{n=1}^{\infty}\frac{1}{n}\,
\raisebox{-3.75ex}{\scalebox{0.22}{\includegraphics{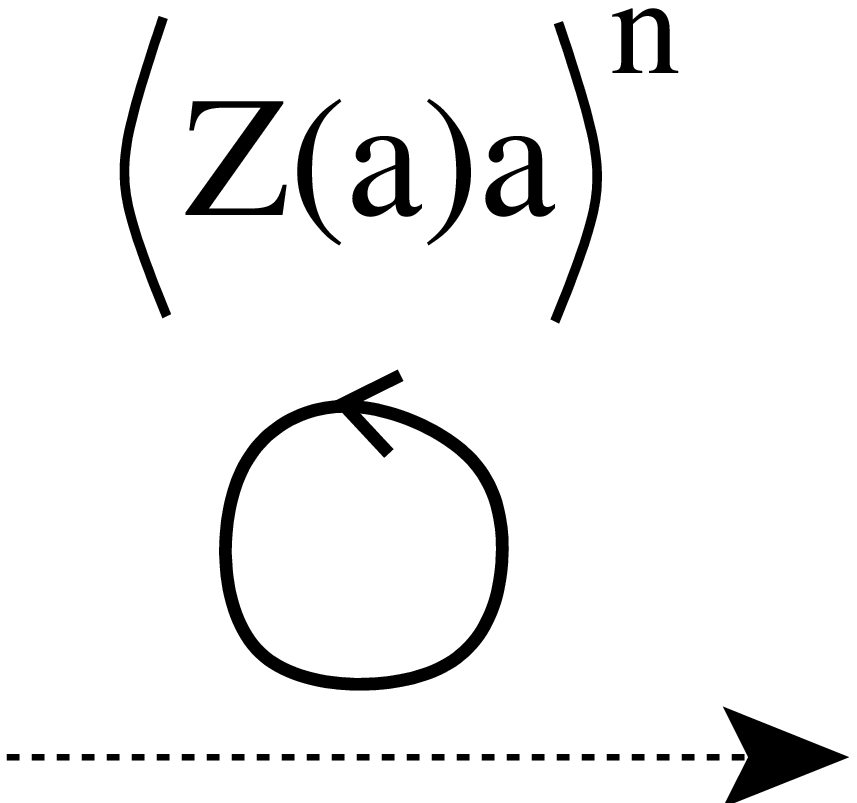}}}\
+\frac{1}{2}\sum_{n=0}^{\infty}\
\raisebox{-3.5ex}{\scalebox{0.22}{\includegraphics{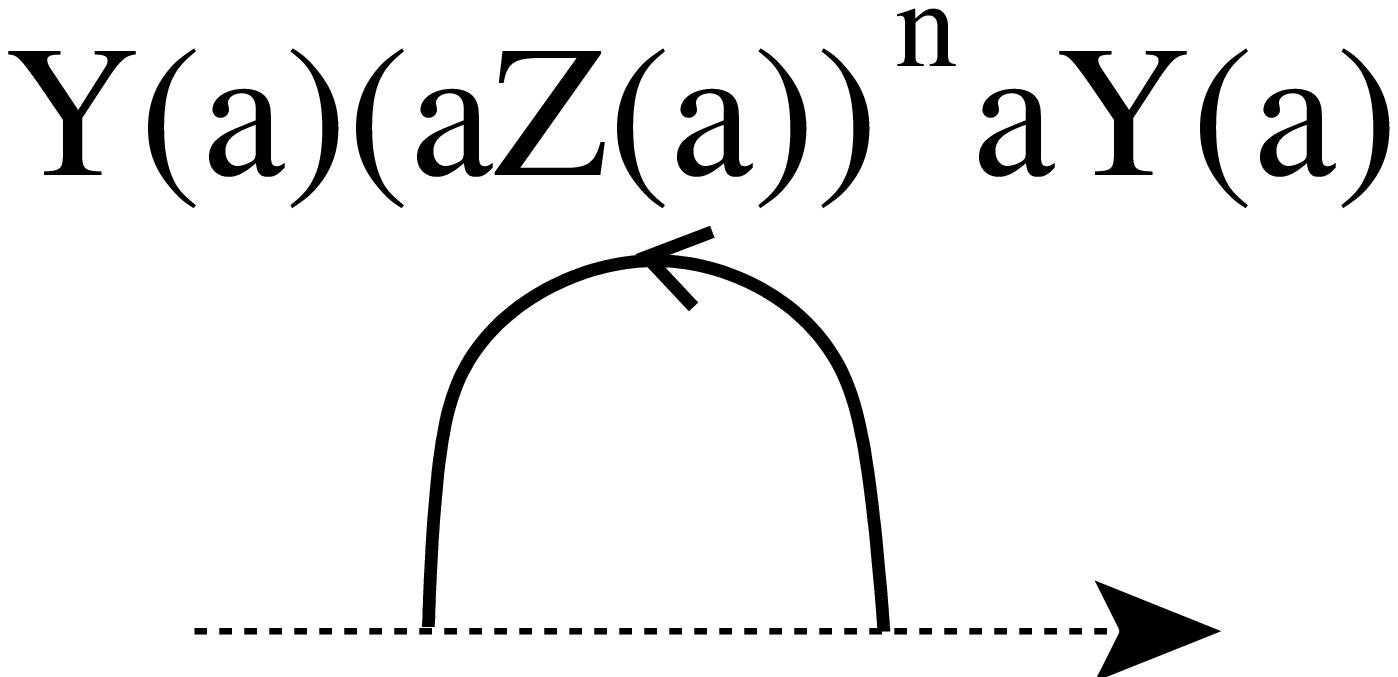}}}\
\right).
\end{multline*}
\end{lem}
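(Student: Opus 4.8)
The plan is to prove the lemma by the same ``exponential of connected diagrams'' principle that governs the other computations in this paper, reducing the whole product to an exponential whose argument is an explicit, finite-per-degree sum of connected diagrams, and then enumerating those connected diagrams directly. Concretely, I would first establish an analogue of Theorem~\ref{decompintoconnecteds} for the present $\apply$-product: after projecting with $[\,\cdot\,]_{b,\partial_b=0}$, the left-hand side equals $\exp_\#$ of the sum, over all \emph{connected} diagrams, of the terms that can be assembled by laying down some copies of the single $\exp_\apply$-factor (the propagator ``arightZ'', an edge carrying two $\partial_b$-operator legs and a power series in $a$) together with some copies of the two $\#$-factors (the even $Y$-bead and the odd $Z$-bead), and then using the propagator legs to contract their $b$-parameter legs. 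The justification is the factorization-and-counting argument already used for $\lambda$: a general diagram factors as a $\#$-product of its connected pieces (the analogue of Lemma~\ref{factorlemma}, with the identical sign analysis via observation $(\dagger)$), and the multinomial count of Lemma~\ref{howmanycontent} converts the sum over all diagrams into the exponential of the sum over connected ones. One must check a Condition~$(\S)$-style finiteness (Lemma~\ref{powerseriesassociate}) so the products converge and may be regrouped, but this is automatic here since every surviving diagram has \emph{all} of its $b$-legs contracted.

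Next I would classify the connected diagrams that survive the projection. Because the projection retains only terms in which every $b$-parameter leg has been contracted, and because a $Y$-bead carries two $b$-legs while a $Z$-bead carries one $b$-leg and one free grade-$1$ leg, connectivity together with ``all $b$-legs paired'' forces exactly two shapes, just as the four cases of Section~\ref{determineconnected} arose: a \emph{closed loop} built from $n\ge 1$ $Y$-beads whose $b$-legs are paired cyclically and which carries no free legs; and an \emph{open chain} with two free grade-$1$ legs, built from two $Z$-beads at its ends and $n\ge 0$ $Y$-beads in its interior, whose $b$-legs are paired along a line. These produce the wheel family and the two-legged chain family on the right-hand side. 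As in the previous section I would index the gluings giving a fixed shape by arrow-decorated words --- cyclic words for the loops (compare $\overrightarrow{\Xi}_n$) and linear words for the chains (compare $\overrightarrow{\Delta}_n$, $\overrightarrow{\Omega}_n$) --- the arrows recording the direction in which each bead is traversed.

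With the indexing in place, the coefficient of each shape is computed exactly as in Lemma~\ref{standardformlemma}: draw the gluing canonically so the overall sign is $(-1)$ to the number of displayed crossings, then put it into standard form by permuting beads into their cyclic or linear order and inserting twists, reading off the sign as $(-1)^{d(\cdot)}$ of the underlying undecorated word and summing the $2^{k}$ decorations against the combinatorial prefactors. The parity hypotheses enter decisively here: since $Y(a)$ has only even powers, reversing a $Y$-bead's traversal (equivalently, the reflection of a loop or chain) acts without an unwanted sign on the $a$-labels, so the $2$-to-$1$ reflection overcounting contributes a clean factor of $\tfrac12$; since $Z(a)$ has only odd powers, the two identical $Z$-ends of a chain behave consistently under reflection as well. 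For the loops the residual symmetry is cyclic, whose $\tfrac1n$ (the $\tfrac1{n!}$ of the exponential against the $(n-1)!$ cyclic orderings) together with the reflection $\tfrac12$ yields $-\tfrac12\sum_{n\ge1}\tfrac1n$; recognizing $\sum_{n\ge1}\tfrac1n x^n$ identifies this as a logarithm, just as $\int\Psi=\ln\cosh$ appeared for $C_o$. For the chains the analogous count, which has the reflection $\tfrac12$ but \emph{no} cyclic factor (this is exactly why the coefficient here is $\tfrac12$ rather than the $-1$ of the distinguishable-ends case $C_{|b}$), gives the geometric-series coefficient $\tfrac12\sum_{n\ge0}$, with the $a$-labels multiplying along the chain.

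The main obstacle is the sign and symmetry-factor bookkeeping for the loop family --- pinning down the exact coefficient $-\tfrac12\cdot\tfrac1n$. Getting the cyclic factor $\tfrac1n$ right requires carefully separating cyclic from linear orderings, and the extra $\tfrac12$ requires verifying that loop reflection is a genuine involution on the indexing set, free away from degenerate words, and that the even parity of $Y$ guarantees the reflected gluing carries the same coefficient. This is precisely where the hypotheses on $Y$ and $Z$ are indispensable and where the graded transposition signs must be tracked most delicately; once the loop coefficient is established, the chain coefficient follows by the same method with the simpler reflection-only symmetry. Finally, as flagged in the remark preceding the lemma, the lone minus inside the first right-hand term is absorbed into the local orientation chosen on the loop edge, so no sign discrepancy remains once the power series in $a$ are assembled.
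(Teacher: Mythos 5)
Your overall strategy matches the paper's: reduce the left-hand side to $\exp_\#$ of a sum of connected diagrams (the paper states this as a proposition whose proof it omits as closely analogous to Theorem \ref{decompintoconnecteds}), classify the connected shapes surviving $[\,\cdot\,]_{b,\partial_b=0}$ into closed loops and two-legged chains, index the gluings by arrow-decorated words, and pin down coefficients by a standard-form sign analysis in which the parities of $Y$ and $Z$ are decisive. However, there is a concrete structural error: you have the leg-content of the two beads backwards. In the lemma the $Y$-bead carries \emph{one} $b$-leg and one free grade-$1$ leg, while the $Z$-bead carries \emph{two} $b$-legs. This is forced by the paper's indexing set $\nabla$, whose words $w$ in the symbols $Y$ and $Z$ must satisfy $(\#Y)+2(\#Z)=2n$ so as to match the $2n$ operator legs of the $n$ propagators, and it is visible in the worked examples: the no-leg terms come from words like $ZZZZ$ and the two-leg terms from words like $YZY$. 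Consequently the closed loops are built entirely from $Z$-beads (label $(Z(a)a)^n$, giving the $-\frac{1}{2}\sum\frac{1}{n}$ series), and the chains have $Y$-beads at their two ends with $Z$-beads inside (label $Y(a)(aZ(a))^{n}aY(a)$) --- exactly what the stated right-hand side records. Your classification (loops of $Y$-beads, chains with $Z$-ends) would, if carried through, produce a formula incompatible with the one to be proved.

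The swap is not cosmetic, because the parity hypotheses only do their job under the correct assignment. The engine of the paper's computation is Lemma \ref{allthesame} (and its two-leg analogue): \emph{every} gluing of a fixed shape contributes the \emph{same} term, because a half-twist (W-move) of a row or column changes the displayed crossing count by $\pm 1$ and simultaneously flips the $a$-label across the edge by an AS move, and $Z(-a)=-Z(a)$ supplies the compensating sign; this requires the fully contracted (loop and chain-interior) beads to be \emph{odd}, while the evenness $Y(-a)=Y(a)$ is what allows the two chain-end beads to be put into standard position with no sign at all. Under your assignment, twisting an (even) loop bead would change the crossing parity with no compensating label sign, so gluings of opposite sign would coexist and partially cancel instead of all adding up. Relatedly, your plan to read off residual signs as $(-1)^{d(\cdot)}$ of the underlying word, as in Lemma \ref{standardformlemma}, imports the mechanism of the \emph{previous} section and would yield alternating, $\tanh$-type coefficient series; in the present lemma, precisely because of the parity hypotheses, no descent signs survive, all contributions are equal, and the coefficients are pure counting factors, e.g. $|\overrightarrow{\Phi}_n|/(n!\,n!)=2^{2n-1}/n$, which is where the loop coefficient $-\frac{1}{2}\cdot\frac{1}{n}$ actually comes from.
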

To apply this lemma to Equation \ref{xcalclast} we set
$Y(a) = -\frac{\tanh(\frac{a}{2})}{\frac{a}{2}}$ and  $Z(a)=
-\frac{1}{2}\frac{\tanh\left(\frac{a}{2}\right)-\left(\frac{a}{2}\right)}{\left(\frac{a}{2}\right)^2}.$
With these assignments:
\[
\sum_{n=1}^{\infty}\frac{1}{n}\left( Z(a)a \right)^n =
\sum_{n=1}^{\infty}\frac{1}{n}\left(1-\frac{\tanh\left(\frac{a}{2}\right)}{\frac{a}{2}}\right)^n
=
-\ln\left(\frac{\tanh\left(\frac{a}{2}\right)}{\frac{a}{2}}\right).
\]
%
Furthermore:
\begin{eqnarray*}
\frac{1}{2}\sum_{n=0}^{\infty}Y(a)\left(aZ(a)\right)^naY(a) & = &
\sum_{n=0}^{\infty}\frac{\tanh(\frac{a}{2})}{\frac{a}{2}} \left(1
- \frac{\tanh\left(\frac{a}{2}\right)}{\frac{a}{2}}\right)^n
\frac{a}{2}\frac{\tanh(\frac{a}{2})}{\frac{a}{2}}, \\
& = & \frac{\tanh(\frac{a}{2})}{\frac{a}{2}} \frac{1}{1- \left(1 -
\frac{\tanh\left(\frac{a}{2}\right)}{\frac{a}{2}}\right)}
\tanh\left(\frac{a}{2}\right), \\
& = & \tanh\left(\frac{a}{2}\right).
\end{eqnarray*}
With these calculations in hand, a direct application of Lemma
\ref{secondapply} to Equation \ref{xcalclast} yields that
$\mathcal{Y}$ is equal to
\[
\exp_{\#}\left(\left(\frac{1}{2}\right)
\raisebox{-5ex}{\scalebox{0.27}{\includegraphics{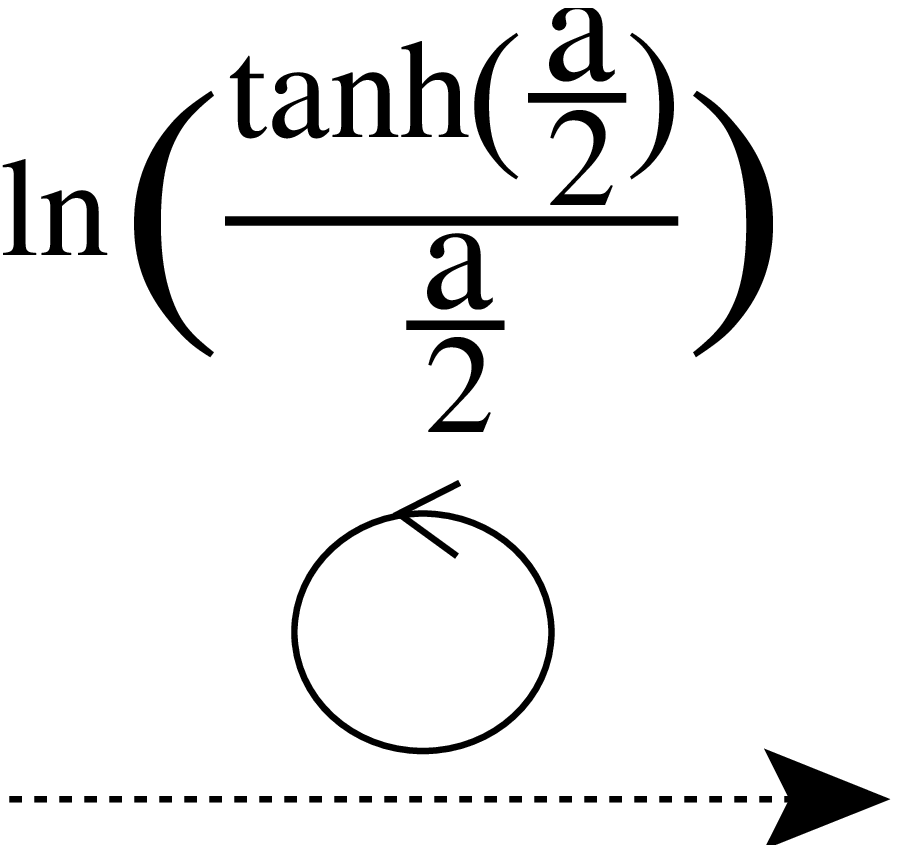}}} -
\raisebox{-5ex}{\scalebox{0.27}{\includegraphics{ansii}}}\right).
\]
Substituting this computation into expression \ref{xcalclast}, we deduce that $\left[\mathcal{X}\right]_{b,\partial_b=0}$ is equal to
\[
\exp_\#\left(\left(\frac{1}{2}\right)\raisebox{-6ex}{\scalebox{0.27}{\includegraphics{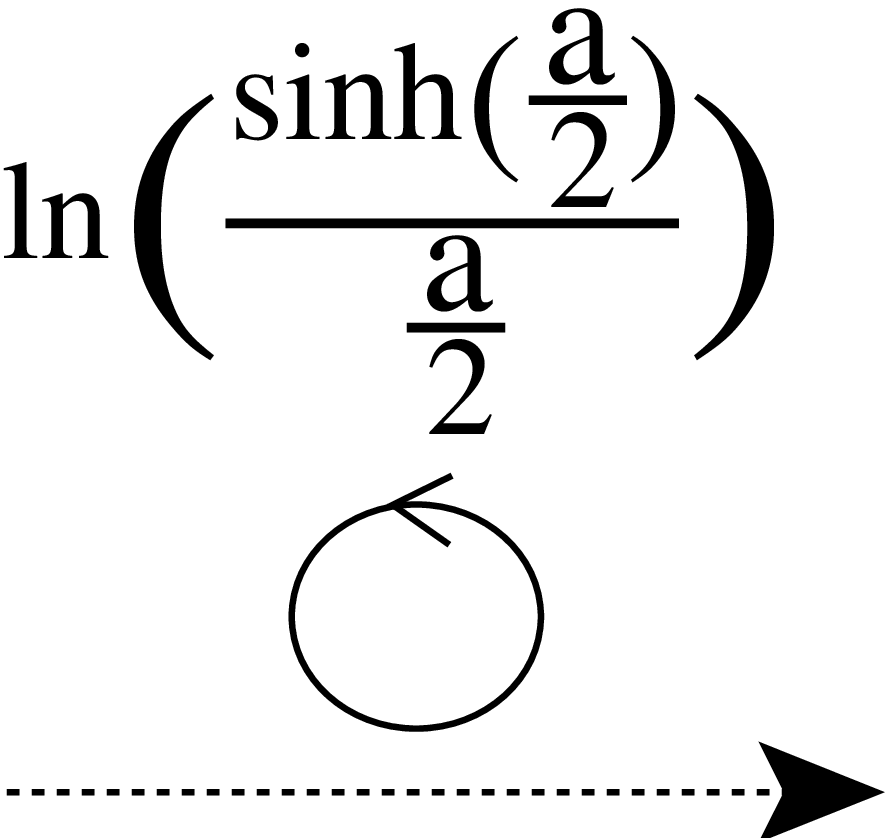}}}\right),
\]
as required. This completes the proof of Theorem \ref{Xcalculate}.

\subsection{Proof of Lemma \ref{secondapply}.}
Our task in this section is the combinatorial computation of the
following expression:
\begin{equation}\label{tobecomputed}
\left[ \mathrm{exp}_\apply \left(
\raisebox{-5ex}{\scalebox{0.175}{\includegraphics{arightZ}}}
\right)\apply \exp_{\# }\left(
\raisebox{-4.25ex}{\scalebox{0.175}{\includegraphics{YTERM}}} +
\raisebox{-4ex}{\scalebox{0.175}{\includegraphics{ZTERM}}}
\right)\right]_{{b,\partial_b}=0}\hspace{-0.5cm}.
\end{equation}
We'll begin with some general observations. First of all note
that, because we set $b$ and $\partial_b$ to zero at the end of
the calculation, there will only be contributions from those terms
arising from the expansion of the exponentials with the property
that the number of $\partial_b$ legs in the first factor is equal
to the number of $b$ legs in the second factor. A typical
contributing term is:
\begin{multline*}
\frac{1}{4!}\,\raisebox{-0.75cm}{\scalebox{0.175}{\includegraphics{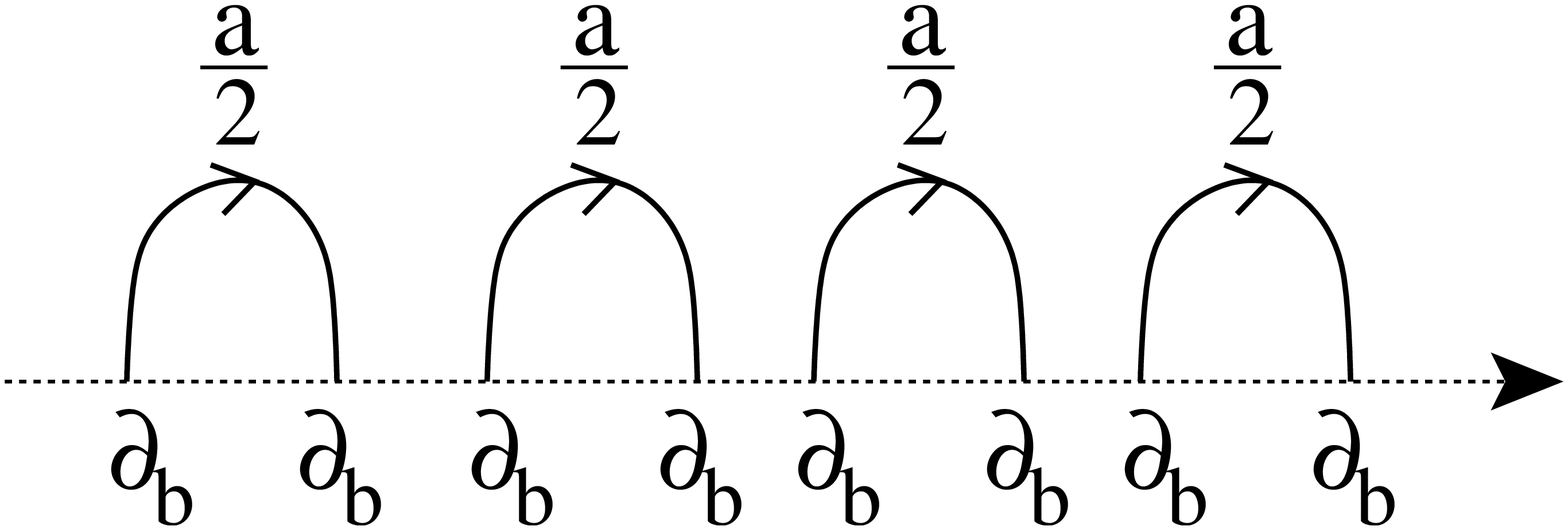}}} \\
\apply
\frac{1}{5!}\,\raisebox{-0.75cm}{\scalebox{0.175}{\includegraphics{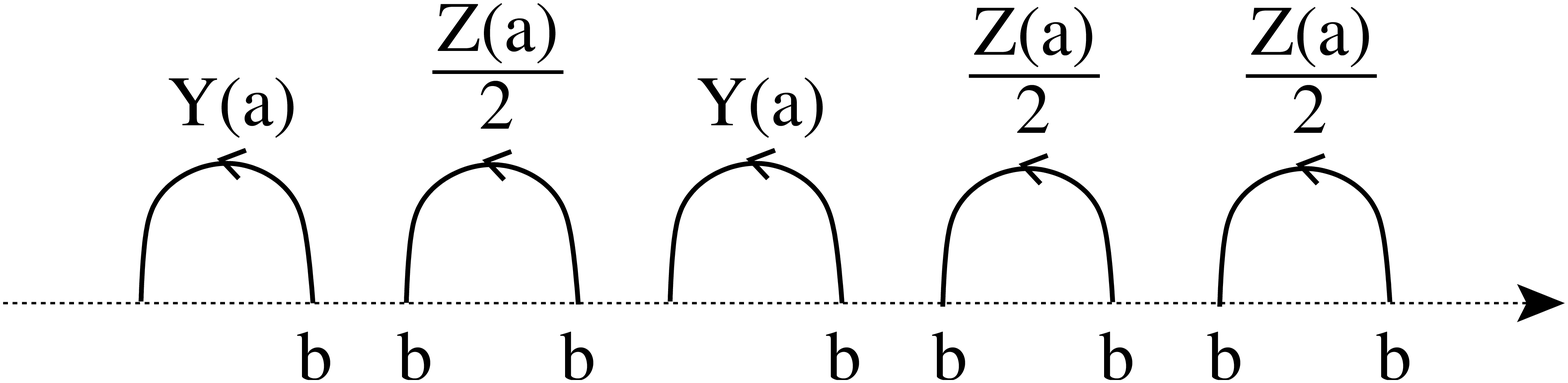}}}.
\end{multline*}
Let's briefly recall, then, how to compute such an operation
product using the graphical method described in Section
\ref{applygraphical}. We draw a grid over an orienting line,
placing the legs of the first factor in order up the left-hand
side of the grid, and the legs of the second factor in order along
the top of the grid:
\[
\raisebox{-3ex}{\scalebox{0.2}{\includegraphics{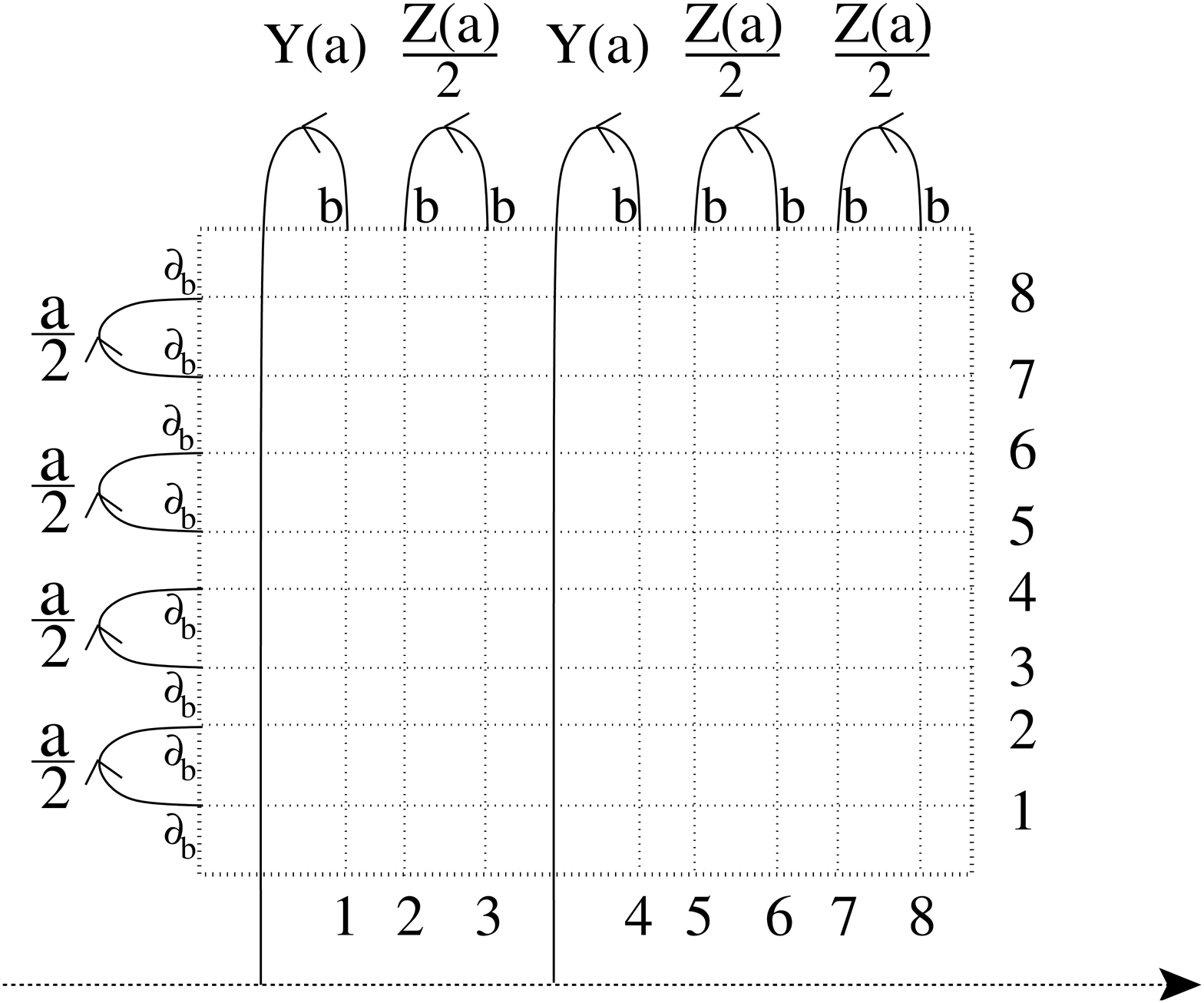}}}\ .
\]
Because we are going to set $b$ and $\partial_b$ to zero at the
end of the calculation, we will get exactly one contribution for
every different way of wiring {\it all} the $\partial_b$-legs to
{\it all} the $b$-legs. In other words, we get exactly one
contribution for every bijection
\[
\phi : \{1,\ldots,8\}\to\{1,\ldots,8\}.
\]
E.g., to construct the contribution corresponding to the bijection
$\left({1 2 3 4 5 6 7 8 \atop 1 5 2 7 4 6 8 3}\right)$ we, first
of all, wire up the grid using this bijection:
\[
\raisebox{-3ex}{\scalebox{0.2}{\includegraphics{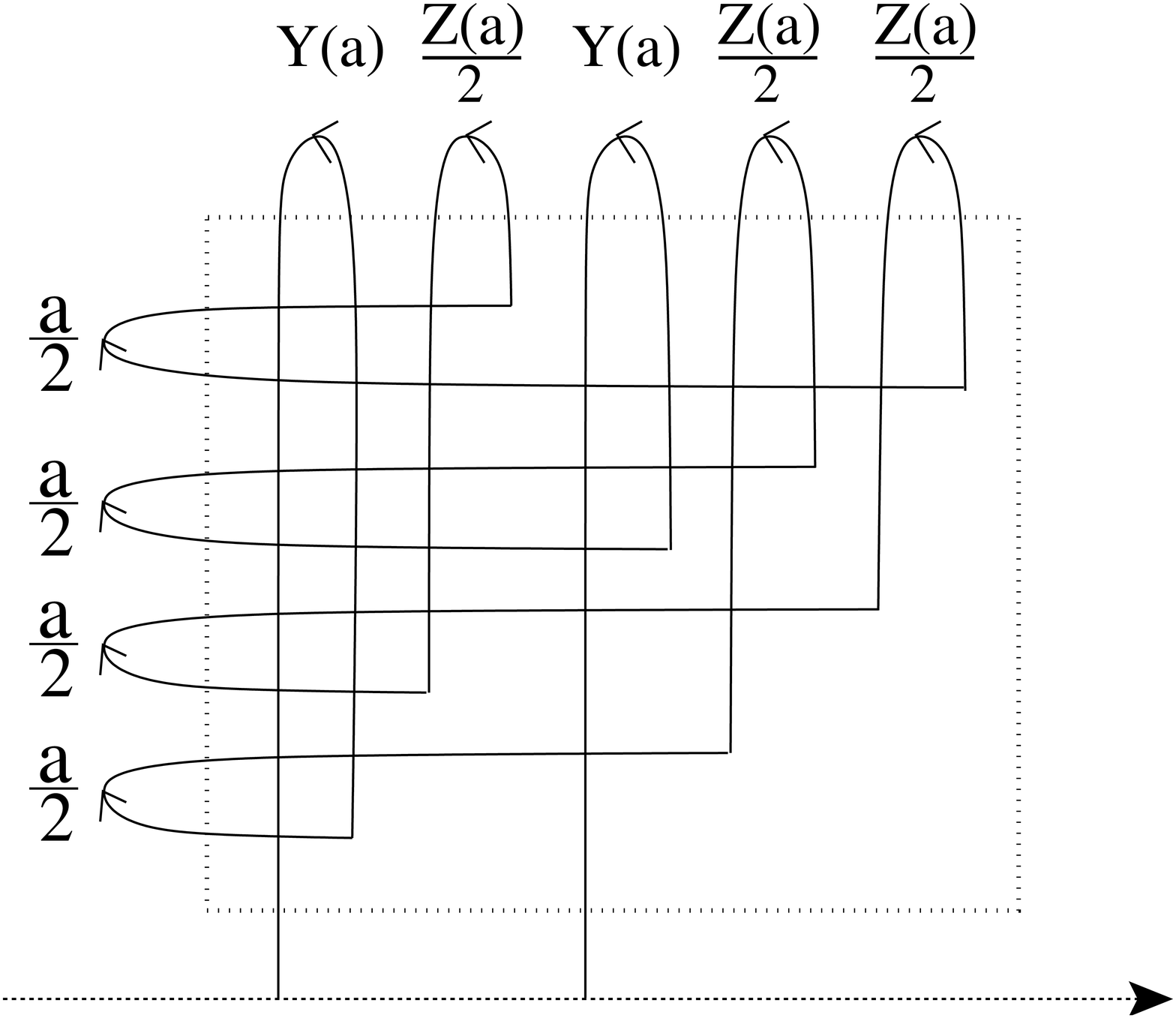}}}\ .
\]
And then the contribution is this diagram multiplied by
$\frac{1}{4!}\frac{1}{5!}(-1)^x$, where $x$ denotes the number of
intersections displayed within the grid. In this example, then,
the contribution is:
\begin{eqnarray*}
& & \frac{1}{4!}\frac{1}{5!}(-1)^{32}\,
\raisebox{-3ex}{\scalebox{0.175}{\includegraphics{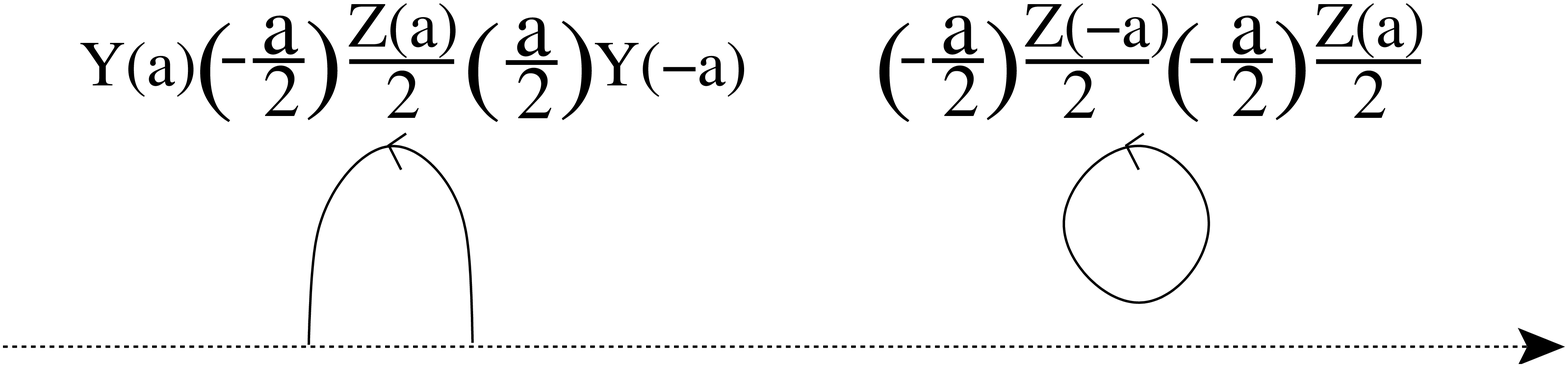}}},
\\[0.2cm]
& =& (+1)\frac{1}{4!}\frac{1}{5!}\,
\raisebox{-3ex}{\scalebox{0.175}{\includegraphics{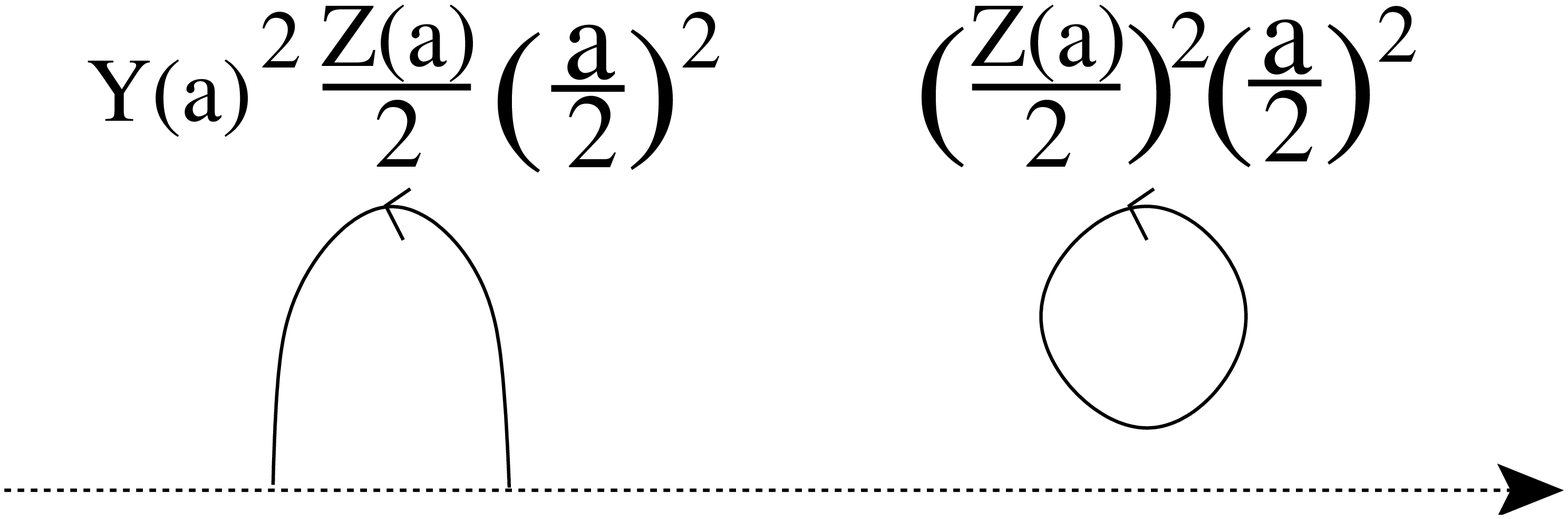}}}.\\[0.05cm]
\end{eqnarray*}
Here we have replaced $a$ by $-a$ whenever we have had to use AS
relations to make $a$-legs lie on the correct side of the
(oriented) edge. Also recall that $Y$ is assumed to have only even powers of $a$, so $Y(-a)=Y(a)$, while $Z$ is assumed to only have odd powers
of $a$, giving $Z(-a)=-Z(a)$.
\subsubsection{The formal development.}
Now let's set this up more formally. We'll begin by defining a set
$\nabla$ which will index the different terms that arise when
Expression \ref{tobecomputed} is evaluated. The elements of
$\nabla$ are triples $(n,w,\sigma)$, where $n$ is a positive
integer, where $w$ is a word in the symbols $Y$ and $Z$ such that
$(\#Y)+2(\#Z)=2n$, and where $\sigma$ is a bijection
\[
\sigma : \{1,2,\ldots,2n\} \rightarrow \{1,2,\ldots,2n\}.
\]
Given some triple $(n,w,\sigma)\in \nabla$, let
$\widetilde{T}_{(n,w,\sigma)}$ denote the corresponding term
(where the $\widetilde{T}$ has a tilde to logically distinguish it
from the $T$'s of Section \ref{computingtheoperatorexpressionA}).
For example, we just showed that
\[
\widetilde{T}_{\left(\,4\,,\,YZYZZ\,,\,\left({1 2 3 4 5 6 7 8
\atop 1 5 2 7 4 6 8 3}\right)\,\right)} =
(+1)\frac{1}{4!}\frac{1}{5!}\,
\raisebox{-3ex}{\scalebox{0.175}{\includegraphics{contributionexampC}}}.
\]

Let $\nabla_{\mathcal{C}}\subset \nabla$ denote the elements of
$\nabla$ whose corresponding diagram is {\bf connected}. (For
example, the case just treated is {\it not} an element of this
subset.) We'll omit the proof of the following proposition, which
is a tedious combinatorial argument closely analogous to the proof
of Proposition \ref{decompintoconnecteds}.
\begin{prop}
The expression to be computed, Expression \ref{tobecomputed}, is
equal to
\[
\exp_{\#}\left(\sum_{\tau\in\nabla_{\mathcal{C}}}
\widetilde{T}_\tau \right).
\]
\end{prop}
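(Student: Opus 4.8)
The plan is to prove this linked-cluster identity along the same three-step route used for Theorem~\ref{decompintoconnecteds}: first expand both exponentials so the left-hand side becomes the explicit sum $1+\sum_{\tau\in\nabla}\widetilde T_\tau$ over the index set $\nabla$; then show each $\widetilde T_\tau$ factors, \emph{with the correct sign and scalar}, as the juxtaposition product of the terms attached to its connected components; and finally count how many triples $\tau$ produce a prescribed multiset of connected components, observing that the resulting multinomial coefficients reassemble an exponential. Because $b$ and $\partial_b$ are set to zero at the end, only those triples survive in which every $\partial_b$-leg of the first factor is contracted against a $b$-leg of the second; this is precisely why $\sigma$ is a \emph{bijection} of $\{1,\dots,2n\}$ and why the balance condition $\#Y+2\#Z=2n$ appears. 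The asserted equality is one of power series in $\Whatwedge\abpow$, and it suffices to check it in each fixed degree (number of legs, equivalently power of $a$), where every sum in sight is finite, so no convergence question arises.

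First I would prove the factorization lemma, the analogue of Lemma~\ref{factorlemma}. Ignoring the orienting line, the diagram underlying $\widetilde T_\tau$ breaks into connected pieces; each piece, together with the operator blocks and $Y/Z$-blocks it uses and the contractions induced by $\sigma$, is itself a triple $\kappa_i\in\nabla_{\mathcal C}$ (with repetitions $\kappa_1,\dots,\kappa_r$). The content of the lemma is that
\[
\widetilde T_\tau=\frac{(-1)^{s}}{n!\,|w|!}\,\Bigl(\text{juxtaposition of the connected diagrams of }\kappa_1,\dots,\kappa_r\Bigr),\qquad s\equiv\textstyle\sum_i s_{\kappa_i}\ \ (\mathrm{mod}\ 2),
\]
where $s_{\kappa_i}$ is the self-crossing number of the $i$-th component. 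This sign identity is the one delicate point, and I expect it to be the main obstacle. It is handled exactly as in Lemma~\ref{factorlemma}: draw $\widetilde T_\tau$ so that its scalar is $(-1)$ raised to the number of displayed crossings, then pull the components apart using the signed leg-transposition relations together with the two separation moves employed there; each transposition trades a $(-1)$ for one extra crossing, and each separation move changes the crossing count only by an even number, so once the pieces are disjoint the sign is exactly $(-1)$ per self-crossing of a component, i.e.\ the product over components. The extra signs produced by the AS relations used to place $a$-legs on a chosen side of each oriented edge (the source of $Y(-a)=Y(a)$ and $Z(-a)=-Z(a)$) cause no difficulty, since every $a$-edge lies inside a single component, so its sign is already local to that component.

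Next I would carry out the counting, the analogue of Lemma~\ref{howmanycontent}. Define the \emph{content} of $\tau$ to be the function $c_\tau:\nabla_{\mathcal C}\to\mathbb N_0$ recording how many components of each type occur, and fix a content $c$ with values $m_\kappa=c(\kappa)$, so that $n=\sum_\kappa m_\kappa n_\kappa$ and $|w|=\sum_\kappa m_\kappa|w_\kappa|$. Assembling a global triple from its components amounts to distributing the $n$ labelled operator blocks and the $|w|$ labelled term blocks among the pieces; treating these as two independent labelled resources and using the first-appearance-ordering bijection of Lemma~\ref{howmanycontent} to quotient by permutations of identical components, one obtains
\[
\#\{\tau\in\nabla:\,c_\tau=c\}=\frac{n!\,|w|!}{\prod_{\kappa\in\nabla_{\mathcal C}}m_\kappa!\,\bigl(n_\kappa!\,|w_\kappa|!\bigr)^{m_\kappa}}.
\]
(The two numerator factorials are genuinely needed; a one-component sanity check with $n_\kappa=|w_\kappa|=1$ confirms that the $2^m$ internal leg-pairings are accounted for by summing over the sign-variant types in $\nabla_{\mathcal C}$.) Multiplying this count by the coefficient $\tfrac{1}{n!\,|w|!}$ carried by each $\widetilde T_\tau$ cancels both numerator factorials, leaving precisely the symmetry factor $\prod_\kappa m_\kappa!\,(n_\kappa!\,|w_\kappa|!)^{m_\kappa}$ generated by $\exp_{\#}$. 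Summing over all content functions then gives
\[
\text{Expression~\ref{tobecomputed}}=\sum_{c:\nabla_{\mathcal C}\to\mathbb N_0}\ \prod^{\#}_{\kappa\in\nabla_{\mathcal C}}\frac{1}{m_\kappa!}\bigl(\widetilde T_\kappa\bigr)^{\#m_\kappa}=\exp_{\#}\Bigl(\sum_{\tau\in\nabla_{\mathcal C}}\widetilde T_\tau\Bigr),
\]
which is the claim. As with the companion theorem, everything but the sign factorization is formal bookkeeping once one keeps the operator blocks and the term blocks as two separate labelled families.
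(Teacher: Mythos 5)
Your proposal is correct and takes essentially the approach the paper intends: the paper actually omits this proof, saying only that it is a tedious combinatorial argument closely analogous to the proof of Theorem \ref{decompintoconnecteds}, and your three-step argument (expansion over $\nabla$, sign-correct factorization into connected components, two-resource content count) is precisely that analogue carried out in detail. In particular, your factorization step with its crossing-count sign bookkeeping mirrors Lemma \ref{factorlemma}, and your counting formula $\frac{n!\,|w|!}{\prod_{\kappa} m_\kappa!\,(n_\kappa!\,|w_\kappa|!)^{m_\kappa}}$ is the correct two-factorial generalization of Lemma \ref{howmanycontent} needed here because the operator blocks and the $Y/Z$-blocks are separately ordered resources, so the multinomials cancel against the $\frac{1}{n!\,|w|!}$ carried by each $\widetilde{T}_\tau$ and reassemble into $\exp_{\#}$ as claimed.
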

\subsubsection{The computation of
$\sum_{\tau\in\nabla_{\mathcal{C}}} \widetilde{T}_\tau$\ .} We can
group this sum into two contributions:
\[
\sum_{\tau\in\nabla_{\mathcal{C}}} \widetilde{T}_\tau = C_0 + C_2,
\]
where:
\begin{itemize}
\item{$C_{0}$ denotes the series of terms $\widetilde{T}_\tau$
whose underlying diagrams are connected and have no legs. An
example is $\widetilde{T}_{\left(4\,,\,ZZZZ\,,\,\left({1 2 3 4 5 6
7 8 \atop 4 1 6 8 5 3 7 2}\right)\,\right)}$, which is:
\[
\frac{1}{4!4!}(-1)^{13}\, \raisebox{-8ex}{
\scalebox{0.17}{\includegraphics{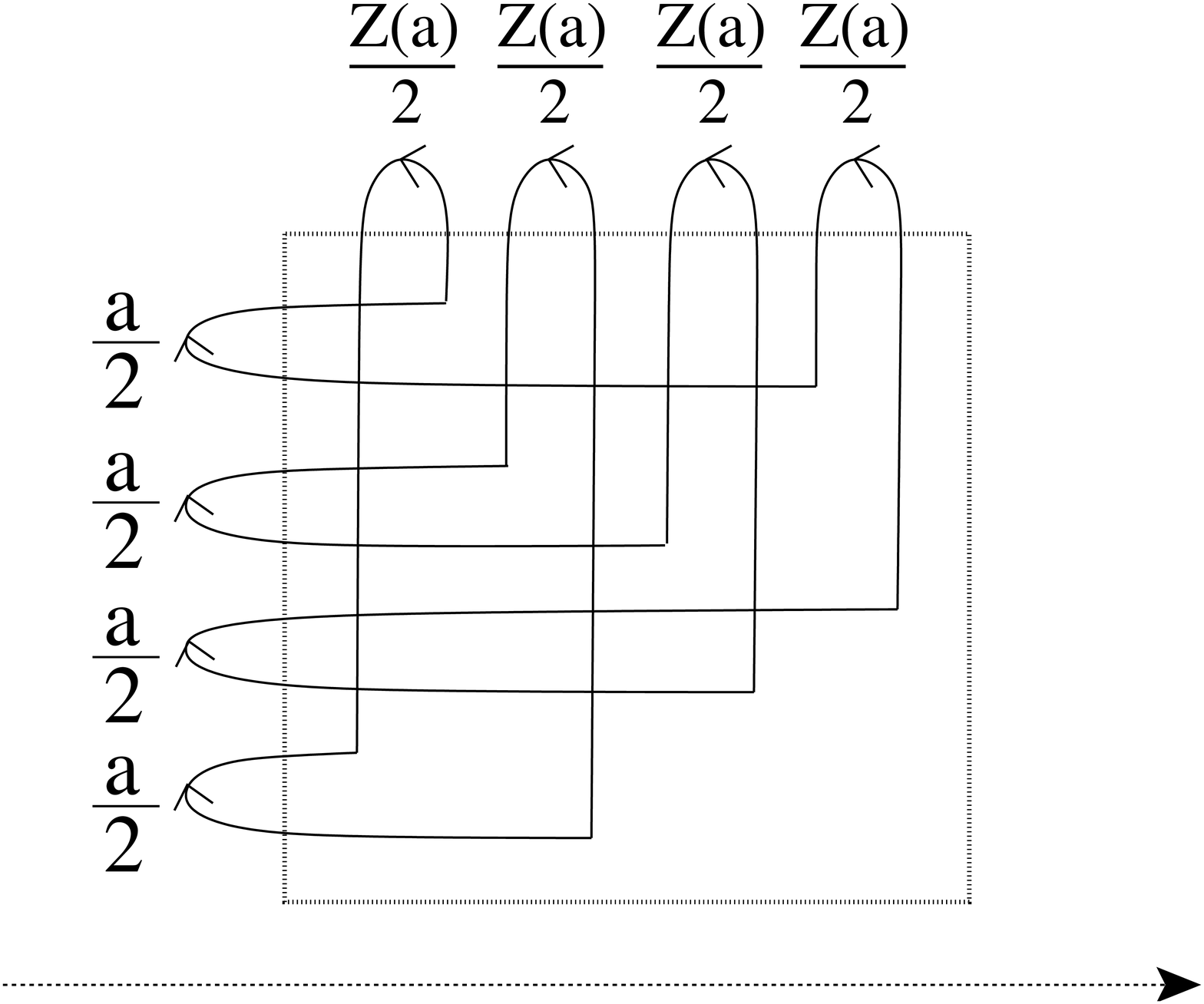}}}\ \ . \\[0.2cm]
\]}
 \item{$C_{2}$ denotes the series of terms $\widetilde{T}_\tau$
whose underlying diagrams are connected and have 2 legs, such as:
\[
\widetilde{T}_{\left(\,2\,,YZY\,,\,\left({1 2 3 4 \atop 4 2 3
1}\right)\,\right)} = \frac{1}{2!3!}(-1)^{6}\, \raisebox{-6ex}{
\scalebox{0.16}{\includegraphics{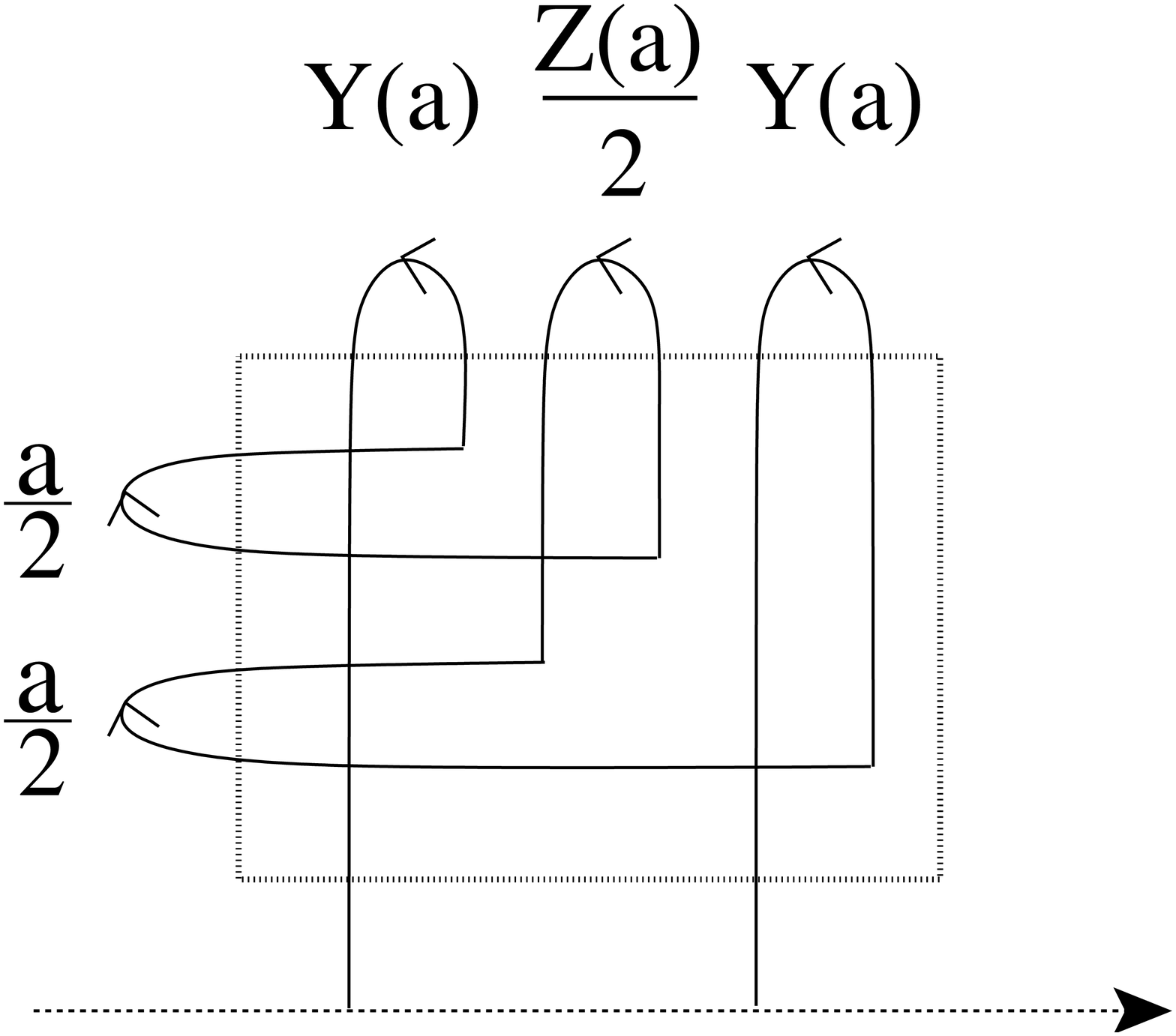}}}\ . \\[0.2cm]
\]}
\end{itemize}
These classes are the only combinatorial possibilities. We'll
compute these two contributions in turn.
\subsubsection{The contribution $C_0$.} The goal of this
subsection is the computation that
\[
C_0\ =\ -\frac{1}{2}\sum_{n=1}^{\infty}\frac{1}{n}\,
\raisebox{-3ex}{\scalebox{0.175}{\includegraphics{loopanswerB}}}.
\]
We'll begin by defining a set $\overrightarrow{\Phi}_n$ which will
index, in a convenient manner, the various terms which contribute
to $C_0$. The elements of $\overrightarrow{\Phi}_n$ will be pairs
$(w_1,w_2)$ of words, the first word $w_1$ using each of the
symbols $\{1\,,\,\ldots\,,\,n\}$ exactly once, and the second word
$w_2$ using each of the symbols $\{2\,,\,\ldots\,,\,n\}$ exactly
once. In addition, every symbol $s$ of the first word $w_1$ is
decorated by either an arrow pointing to the left
$\overleftarrow{s}$ or an arrow pointing to the right
$\overrightarrow{s}$, and every symbol $s$ of the second word
$w_2$ is decorated by either an arrow pointing up $s\uparrow$ or
an arrow pointing down $s\downarrow$. The set
$\overrightarrow{\Phi}_n$ is defined to be the set of all such
pairs. For example:
$\left(\overrightarrow{4}\overleftarrow{1}\overleftarrow{3}\overrightarrow{2}\,,\,2\uparrow3\downarrow4\downarrow\right)
\in \overrightarrow{\Phi}_4.$ To write down the pair of words
corresponding to some gluing giving a connected diagram with no
legs, start at the top-most factor $\frac{a}{2}$. The point
referred to is decorated in the following example with a bullet:
\begin{equation}\label{traverseexample} \raisebox{-8ex}{
\scalebox{0.18}{\includegraphics{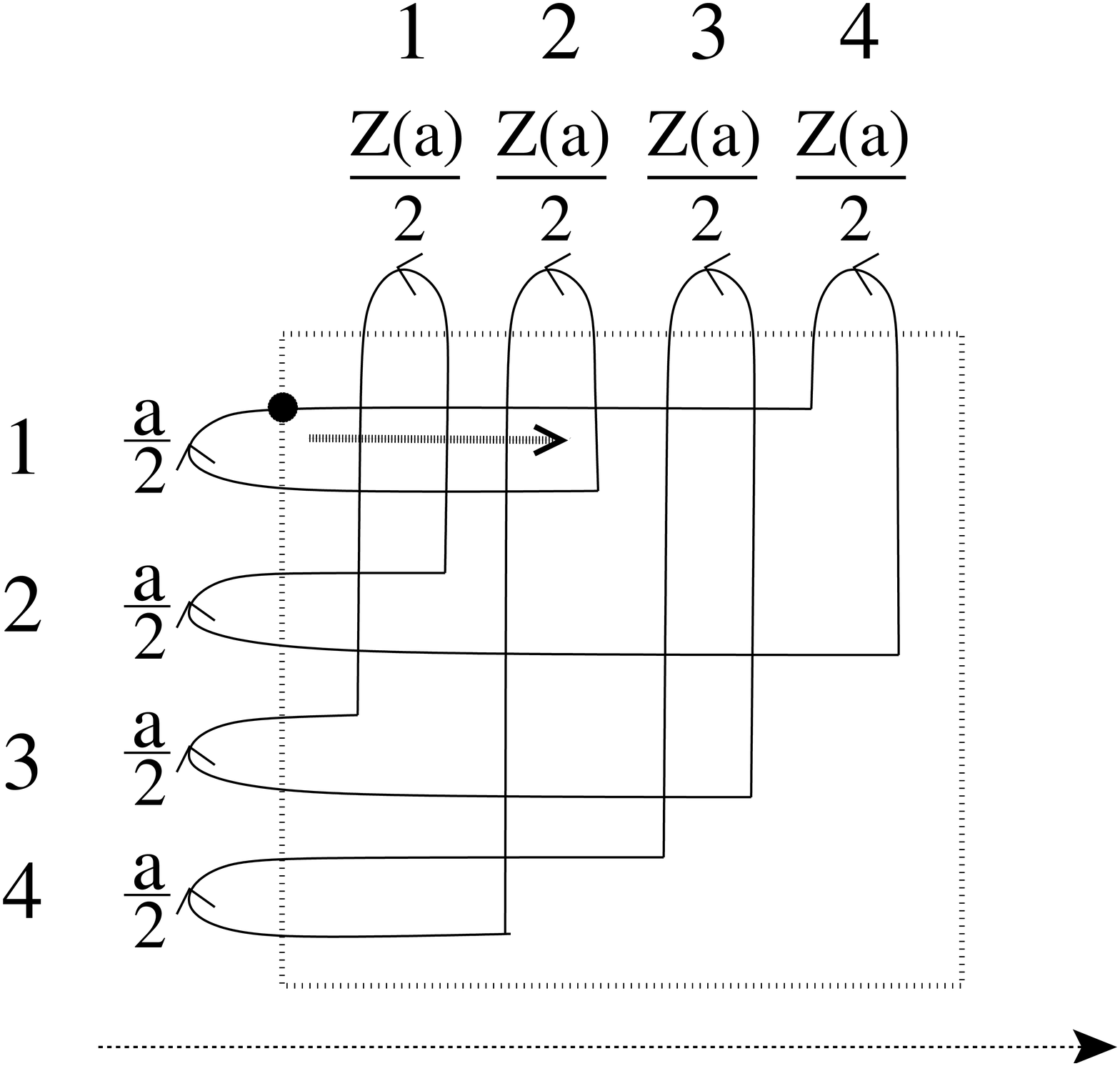}}}\ .
\end{equation}
Now traverse the diagram until you return to where you started
(the arrow in the example indicates how to begin this traverse).
The first word records the order and direction in which you
encounter the factors written along the top line as you traverse
(in this example the corresponding word is
$\overrightarrow{4}\overleftarrow{1}\overleftarrow{3}\overrightarrow{2}$),
and the second word records the order in which you encounter the
factors written down the left-hand side (in this case,
$2\uparrow3\downarrow4\downarrow$).

Given some element $(w_1,w_2)\in \overrightarrow{\Phi}_n$, let
$\phi_{(w_1,w_2)}$ denote the corresponding term. In the example
at hand:
\[
\phi_{
\left(\overrightarrow{4}\overleftarrow{1}\overleftarrow{3}\overrightarrow{2}\,,\,2\uparrow3\downarrow4\downarrow\right)
}\ =\ (-1)^{17}\frac{1}{4!4!} \raisebox{-8ex}{
\scalebox{0.18}{\includegraphics{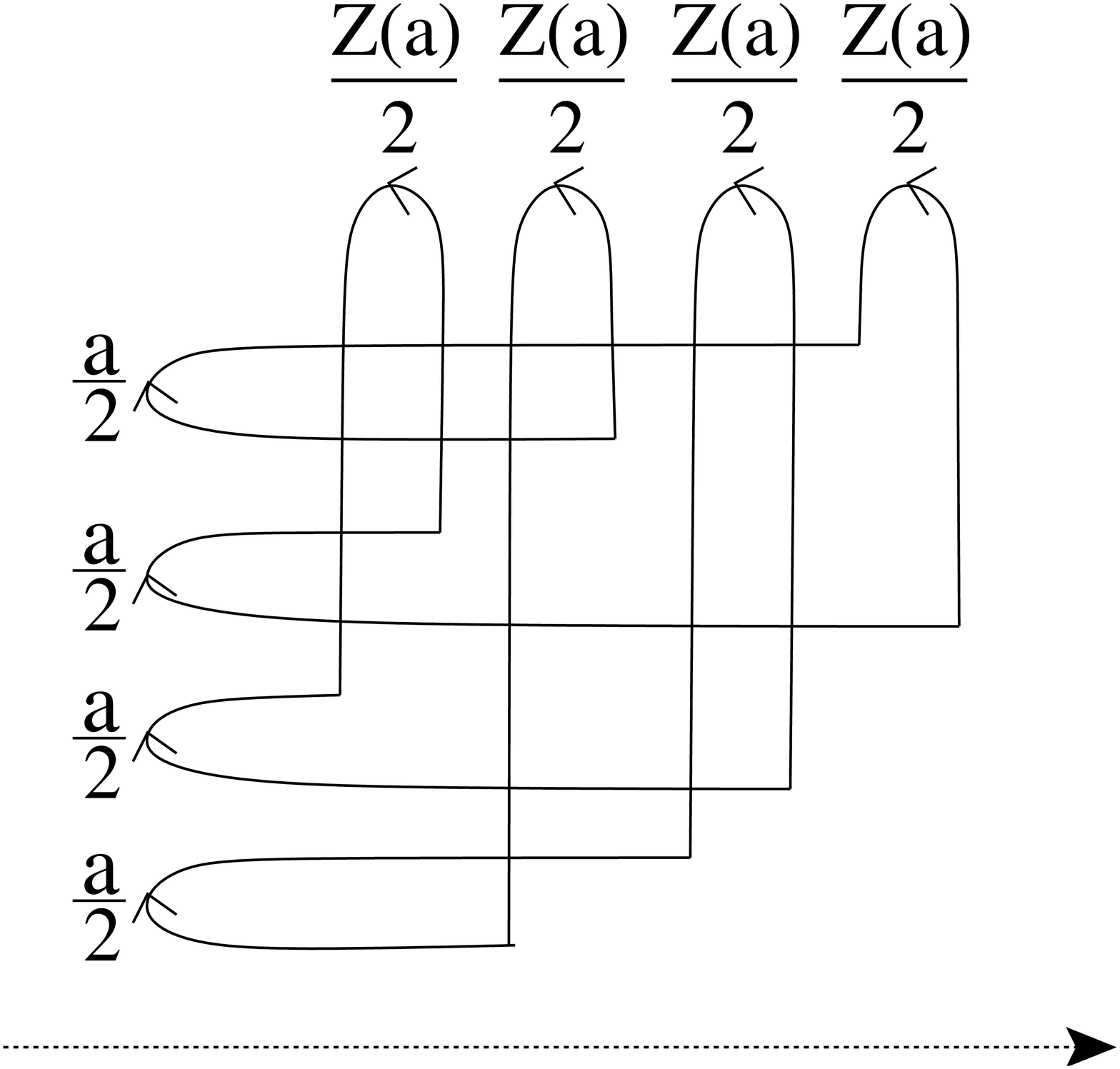}}}.
\]
The series to be calculated can now be written \[C_0 =
\sum_{n=1}^{\infty} \sum_{(w_1,w_2)\in
\overrightarrow{\Phi}_n}\phi_{(w_1,w_2)}\ .\] This calculation is
greatly simplified by the observation that, for a fixed $n$, all
the $\phi_{(w_1,w_2)}$, for $(w_1,w_2)\in\overrightarrow{\Phi}_n$,
are {\it precisely {equal}}. This fact is part of the following
lemma, whose proof appears later in this section.
\begin{lem}\label{allthesame}
Consider some $n\geq 1$ and some
$(w_1,w_2)\in\overrightarrow{\Phi}_n$. Then:
\[
\phi_{(w_1,w_2)} = -\frac{1}{n!n!}\ \
\raisebox{-4ex}{\scalebox{0.2}{\includegraphics{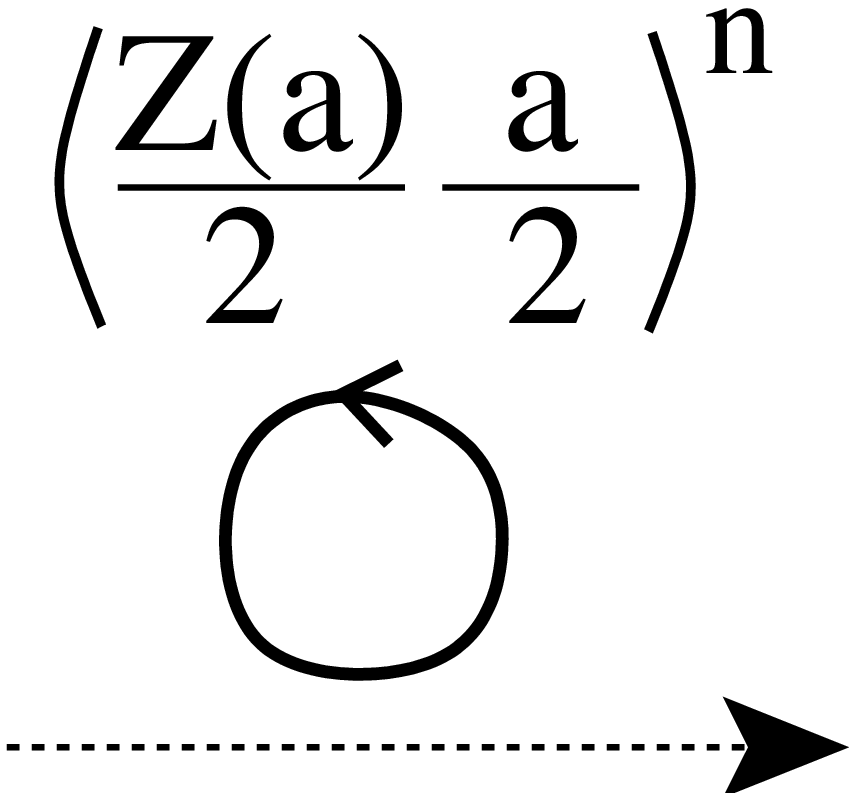}}}\ .
\]
\end{lem}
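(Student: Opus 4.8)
The plan is to reduce the statement to a single sign computation. It suffices to prove that each individual term $\phi_{(w_1,w_2)}$ already equals the asserted value $-\frac{1}{n!n!}(\text{loopanswer})$; the ``all equal'' assertion is then a consequence. First I would dispose of the easy parts. The underlying graph produced by \emph{any} gluing indexed by $\overrightarrow{\Phi}_n$ is manifestly the same closed loop: it is the cycle obtained by alternating the $n$ operator struts from $\exp_\apply$ against the $n$ copies of the $Z$-factor around a circle. Crucially, no $Y$-factor can occur here, since a $Y$-factor (matching \text{ansB}) carries an uncapped $\bot$-leg and would therefore contribute an external leg, placing the term in $C_2$ rather than in the legless class $C_0$; hence $\#Y=0$ and $\#Z=n$. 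The magnitude of the coefficient is then immediate: one factor $\frac{1}{n!}$ comes from expanding $\exp_\apply$ to its $n$-strut term and the other from expanding $\exp_\#$ to its $n$-fold $Z$-term, giving $\frac{1}{n!n!}$. Everything therefore reduces to showing that the sign of $\phi_{(w_1,w_2)}$ is $-1$, independently of the words $w_1$ and $w_2$.

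For the sign I would follow the method used in the proof of Lemma \ref{standardformlemma}. Draw the gluing canonically, introducing only transverse double points above the orienting line, so that the sign is exactly $(-1)$ raised to the number of displayed intersections. Then bring the picture to a standard form in two steps: first add permutations at the top to reorder the blocks into the order prescribed by $(w_1,w_2)$ (each such permutation contributes four intersections, hence is sign-neutral mod $2$), and then add twists so that every factor is traversed left-to-right, exactly as encoded by the arrow decorations. The point that distinguishes this case from Lemma \ref{standardformlemma} is the interaction of each twist with the edge label: reversing the local orientation of a $Z$-edge uses AS relations to carry its $a$-legs across the edge and simultaneously reads the label $Z(a)$ as $Z(-a)$. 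Since $Z$ contains only odd powers of $a$ we have $Z(-a)=-Z(a)$, so each reversed $Z$-edge contributes a compensating $(-1)$ from its label. I expect the bookkeeping to show that these label signs, together with the twist-induced AS signs and the reordering self-intersections (which produced the genuine descent factor $(-1)^{d(w)}$ in Lemma \ref{standardformlemma}), combine to a total that no longer depends on $w_1$ or $w_2$ — the closed-loop topology here, as opposed to an arc running between two external legs, being what forces the would-be descent dependence to cancel.

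What then remains is to pin down the residual constant. After standardization the diagram is the canonical loop drawn with a definite, word-independent number of crossings; I would read this number modulo $2$ directly off the closed-loop standard form and verify that it is odd, producing the overall sign $-1$ (consistent with the worked instance, where the displayed exponent was $17$). Combining this with the underlying-diagram and coefficient-magnitude computations above yields $\phi_{(w_1,w_2)}=-\frac{1}{n!n!}(\text{loopanswer})$ for every $(w_1,w_2)\in\overrightarrow{\Phi}_n$.

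I expect the main obstacle to be precisely this sign analysis: verifying cleanly that for every choice of arrow decoration the twist-induced AS signs are exactly cancelled by the $Z(-a)=-Z(a)$ label signs, and that the reordering self-intersections contribute trivially because of the loop's closure. Once this neutralization is established, word-independence — and hence the coincidence of all the $\phi_{(w_1,w_2)}$ — is forced, and only the routine computation of the single residual $-1$ is left.
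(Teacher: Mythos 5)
Your outline matches the paper's proof: isolate the coefficient $\frac{1}{n!n!}$ (your observations that $Y$-factors cannot occur in a legless connected term and that the word must therefore be $Z^n$ glued against $n$ struts are exactly right), then establish word-independence of the sign by moving an arbitrary gluing to a standard one, and finally evaluate the standard gluing to extract the residual $(-1)$. Your treatment of twists is also the paper's mechanism (its ``W-moves''): reversing an edge reads its label at $-a$, and since $Z(a)$ and the strut label $\tfrac{a}{2}$ contain only odd powers of $a$, each twist produces a label sign $(-1)$ that cancels the crossing the twist introduces into the drawing.

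The genuine gap is the reordering step, and the reason you offer for expecting it to be harmless is wrong. You claim the ``would-be descent dependence'' cancels \emph{because the diagram is a closed loop}. The paper's own computation of the contribution $C_o$ in Section \ref{determineconnected} refutes that heuristic: those terms are also connected and legless --- closed loops --- yet their signs are genuinely descent-dependent, $\xi_w = (-1)^{d(f_\Xi(w))}\tfrac{1}{2^n n!}(\cdots)$, and it is precisely this dependence that makes $C_o$ sum to a $\ln\cosh$ series rather than to a single multiple of one diagram. So closure of the loop cannot be what kills the descent signs in the present lemma. What actually does the work --- and what the paper proves as its ``R-move'' invariance --- is a feature special to \emph{operator} gluings drawn on the grid of Section \ref{applygraphical}: every glued arc runs from a parameter leg on the top edge of the grid to an operator leg on the left edge (never top-to-top or left-to-left, since operator legs glue only to parameter legs), and each factor carries exactly two glued legs. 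Consequently, transposing adjacent rows or columns re-routes arcs without ever creating a self-crossing --- the mechanism that produced descents in the one-line pictures of Section \ref{determineconnected} simply cannot occur --- and changes the displayed crossing count only by an even number. Without this justification (or an equivalent one) your argument stalls exactly at the step you flag as the main obstacle; with it, the proof closes just as you outline, by reducing to the standard gluing $(\overrightarrow{1}\overrightarrow{2}\ldots\overrightarrow{n},\,2\downarrow 3\downarrow\ldots n\downarrow)$ and computing its sign directly.
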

With this information in hand, $C_0$ is easily computed:
\begin{eqnarray*}
C_0 & = & \sum_{n=1}^{\infty} \sum_{(w_1,w_2)\in
\overrightarrow{\Phi}_n}\phi_{(w_1,w_2)} \\
& = &
(-1)\sum_{n=1}^{\infty}|\overrightarrow{\Phi}_n|\frac{1}{n!n!}
\raisebox{-4ex}{\scalebox{0.2}{\includegraphics{loopanswer}}}\ ,
\\[0.2cm]
& = & (-1)\sum_{n=1}^{\infty}\frac{2^nn!2^{n-1}(n-1)!}{n!n!}
\raisebox{-4ex}{\scalebox{0.2}{\includegraphics{loopanswer}}}\ ,
\\[0.2cm]
& = & -\frac{1}{2}\sum_{n=1}^{\infty}\frac{1}{n}
\raisebox{-3ex}{\scalebox{0.2}{\includegraphics{loopanswerB}}}\
,\\[0.05cm]
\end{eqnarray*}
as required.

 {\it Proof of Lemma
\ref{allthesame}.} We'll begin by introducing some notation. Given
a gluing datum $(w_1,w_2)\in\overrightarrow{\Phi}_n$, let
$D_{(w_1,w_2)}$ denote the series of diagrams (in
$\Whatwedge\abpow$) represented by the drawing you get when you
wire up a grid according to $(w_1,w_2)$. Let $x(w_1,w_2)$ denote
the number of intersections displayed by that drawing. According
to these definitions, the corresponding contribution is written:
\[
\phi_{(w_1,w_2)} = \frac{1}{n!n!}(-1)^{x(w_1,w_2)}D_{(w_1,w_2)}.
\]
This proof is based on two moves, which we'll call R-moves (for
tRansposition) and W-moves (for tWist), that we can perform on
gluing data:
\[
(w_1,w_2)\stackrel{\text{R-move}}{\longrightarrow}(w_1',w_2')\ \ \
\text{and}\ \ \
(w_1,w_2)\stackrel{\text{W-move}}{\longrightarrow}(w_1',w_2'),
\]
whose key property is that:
\begin{equation}\label{proofskeyproperty}
\frac{1}{n!n!}(-1)^{x(w_1,w_2)}D_{(w_1,w_2)} =
\frac{1}{n!n!}(-1)^{x(w_1',w_2')}D_{(w_1',w_2')}.
\end{equation}
We'll begin by introducing these two moves and establishing that
the key property holds for them.

({\it T-moves}.) This move is: transposition of adjacent columns
or adjacent rows. Here is an example of a T-move, (where the arcs
which are unaltered by the move have not been drawn in):
\[
\raisebox{-9ex}{\scalebox{0.12}{\includegraphics{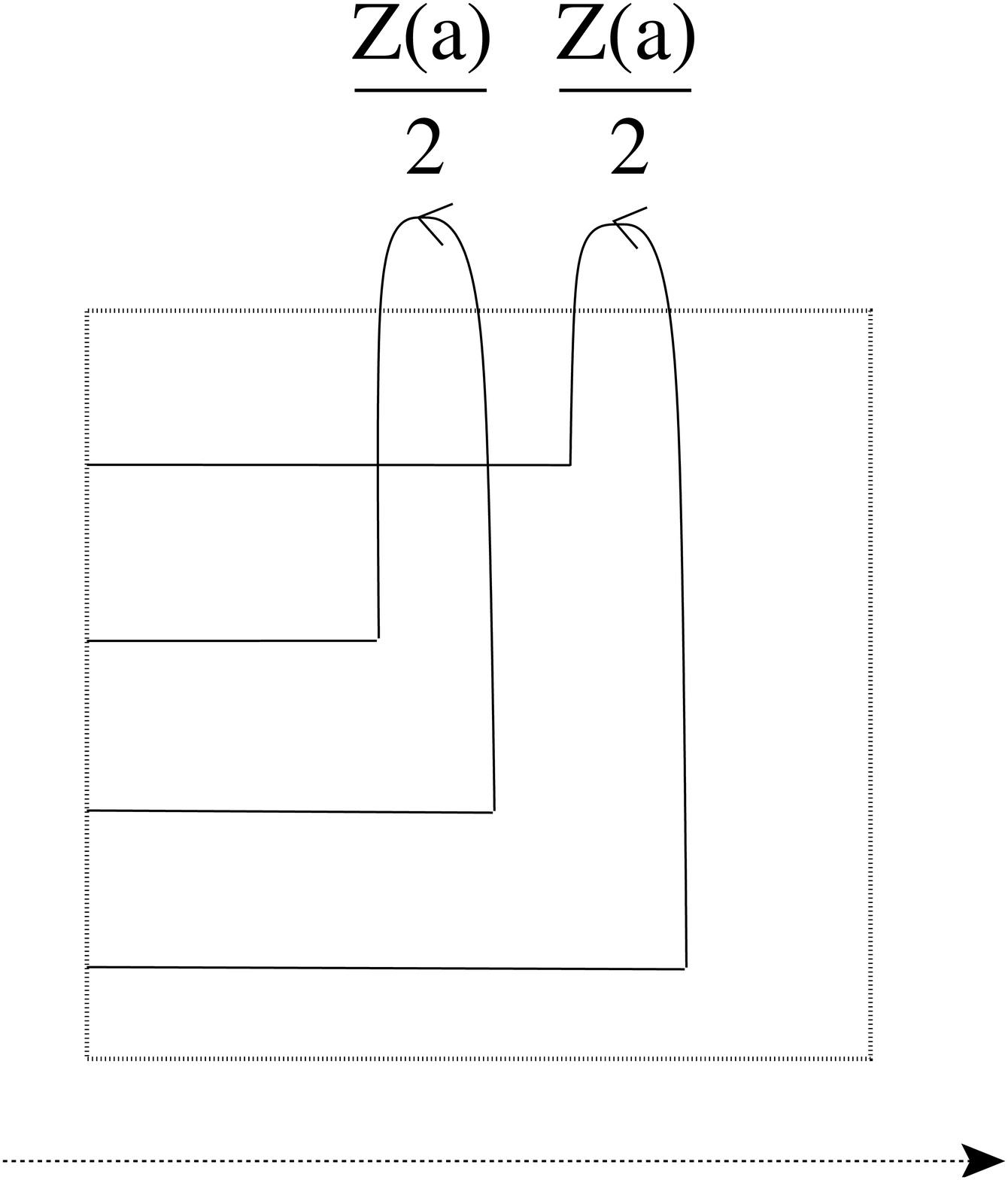}}}
\ \rightleftharpoons\
\raisebox{-9ex}{\scalebox{0.12}{\includegraphics{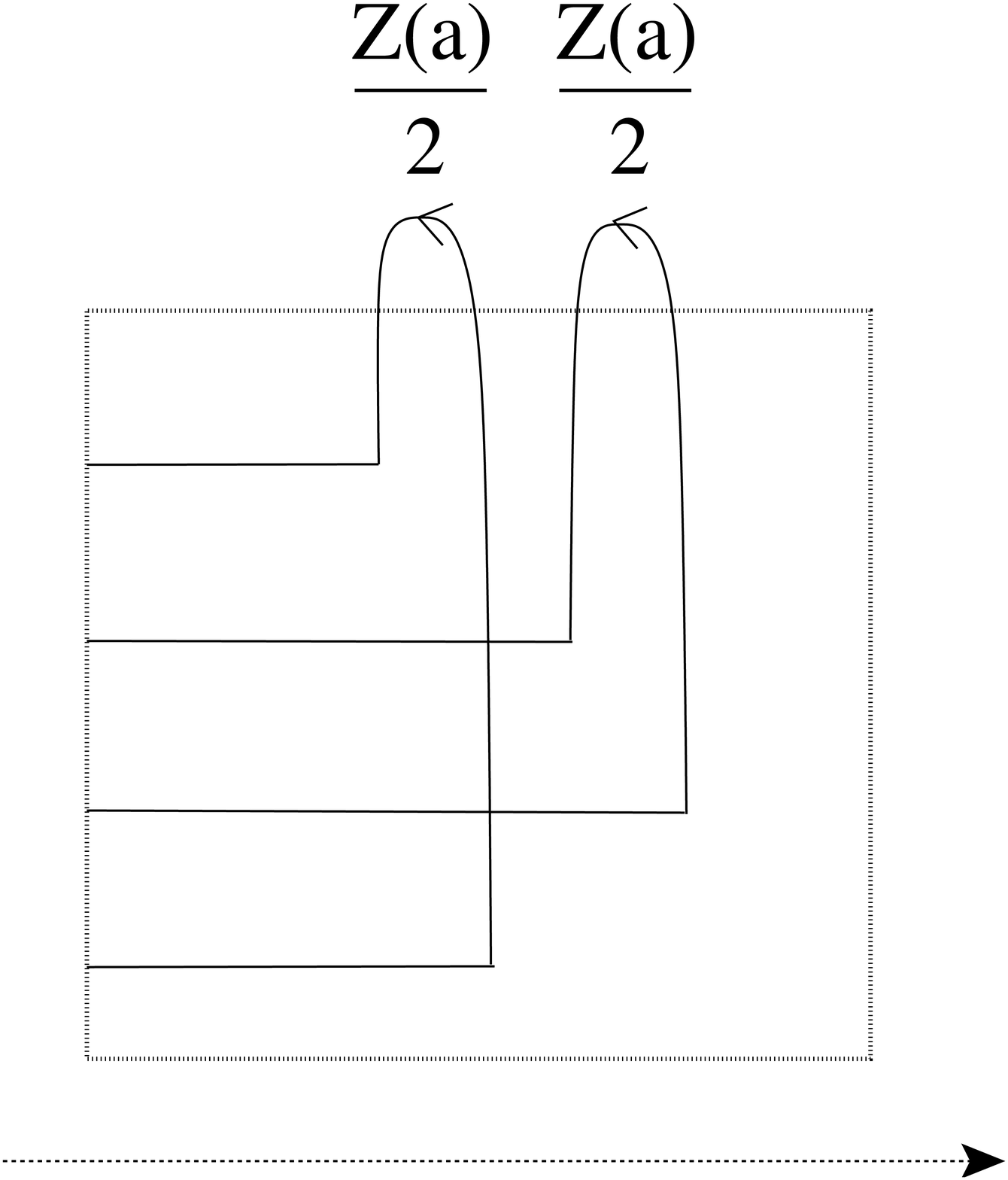}}}\
.
\]
This move can only change the number of displayed intersections by
an even number (indeed, observe that the relative positions of the
ends of all arcs down the left-hand edge of the grid is unaltered
by this move). Thus:
\[
\begin{array}{clp{0cm}l}
 & \frac{1}{n!n!}(-1)^{x(w_1,w_2)}D_{(w_1,w_2)} \\ = &
\frac{1}{n!n!}(-1)^{x(w_1,w_2)}D_{(w_1',w_2')}& &
\text{(As $D_{(w_1',w_2')}=D_{(w_1,w_2)}$),} \\
 = & \frac{1}{n!n!}(-1)^{x(w_1',w_2')}D_{(w_1',w_2')} & &
\text{(As $x(w_1',w_2')=x(w_1,w_2)+\mbox{an even number}$).}
\end{array}
\]

 ({\it W-moves}.) This move is a `half-twist' of a single column
or row: \[
\raisebox{-10ex}{\scalebox{0.13}{\includegraphics{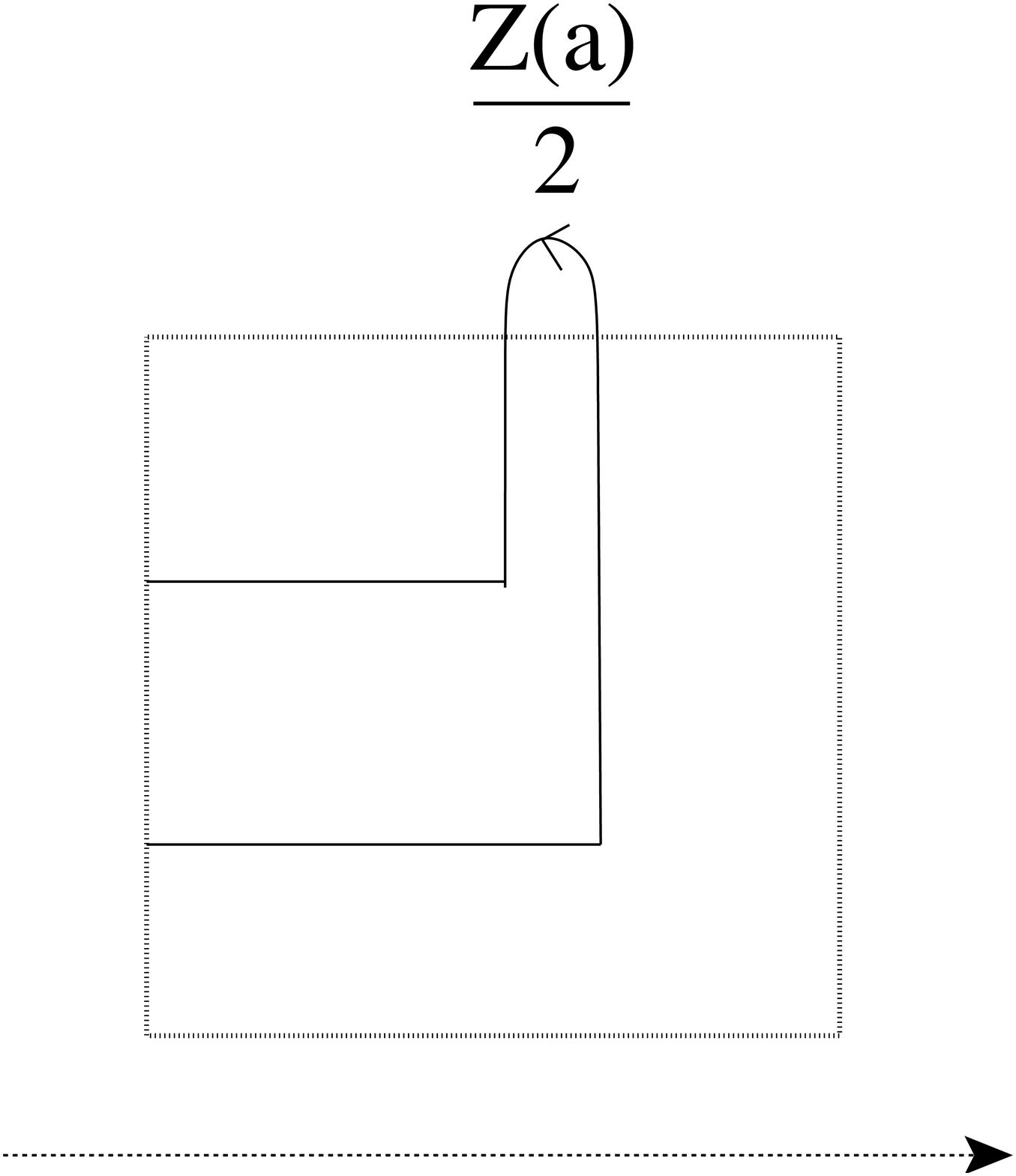}}}
\rightleftharpoons
\raisebox{-10ex}{\scalebox{0.13}{\includegraphics{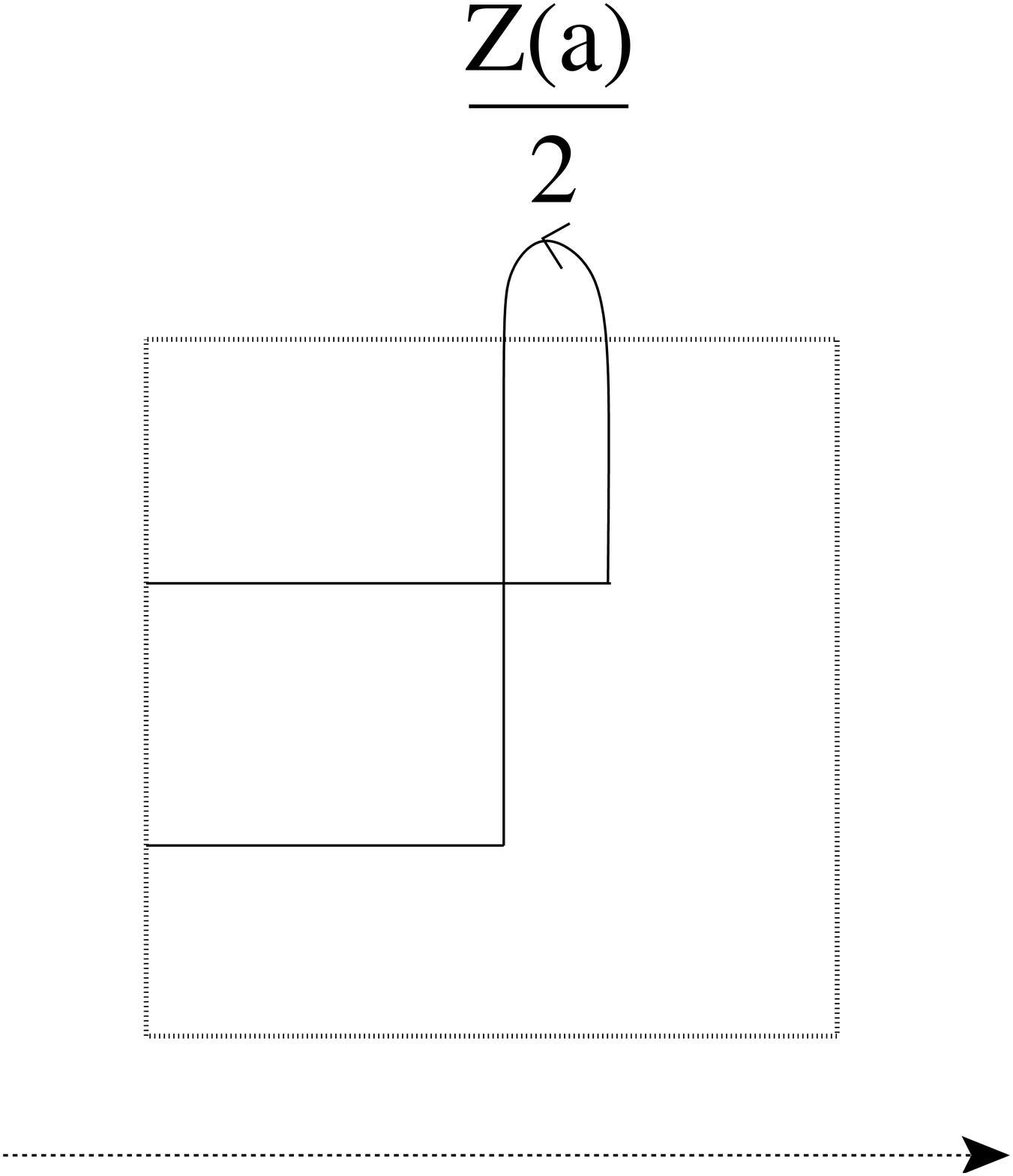}}}\ .
\]
Note that in this case, $D_{(w_1',w_2')} = (-1)D_{(w_1,w_2)}$,
because
\[
\raisebox{-10ex}{\scalebox{0.13}{\includegraphics{twistitA}}} =
\raisebox{-10ex}{\scalebox{0.13}{\includegraphics{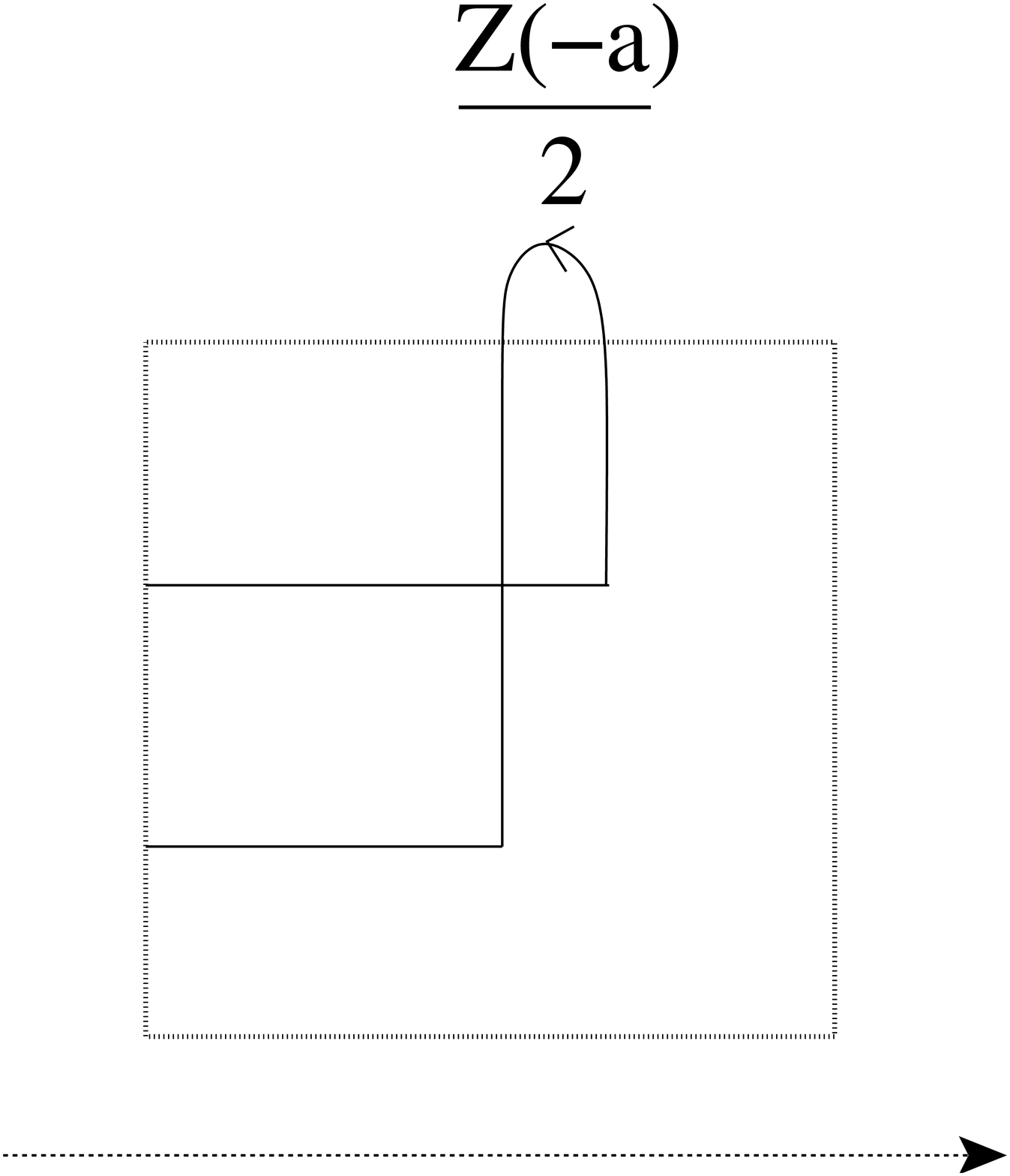}}} =
(-1)\,
\raisebox{-10ex}{\scalebox{0.13}{\includegraphics{twistitC}}}\ .
\]
So:
\[
\begin{array}{cclp{0.15cm}l}
& & \frac{1}{n!n!}(-1)^{x(w_1,w_2)}D_{(w_1,w_2)} \\ & = &
(-1)\frac{1}{n!n!}(-1)^{x(w_1,w_2)}D_{(w_1',w_2')}& &
\text{(As $D_{(w_1',w_2')}=(-1)D_{(w_1,w_2)}$),} \\
& = & \frac{1}{n!n!}(-1)^{x(w_1',w_2')}D_{(w_1',w_2')} & &
\text{(As $x(w_1',w_2')=x(w_1,w_2)\pm 1$),}
\end{array}
\]
as required.

With these moves in hand, we can turn to the general argument. The
simple idea is to show that we can transform any gluing
$(w_1,w_2)$ that we are given, via a sequence of R- and W-moves,
into a {\it standard} gluing:
\[
(w_1,w_2)\rightarrow (w_1',w_2') \rightarrow (w_1'',w_2'')
\rightarrow \ldots \rightarrow
(\overrightarrow{1}\overrightarrow{2}\ldots\overrightarrow{n},2\downarrow
3\downarrow \ldots\, n\downarrow).
\]
For example, the standard gluing in the case $n=4$ is:
\[
D_{\left(\overrightarrow{1}\overrightarrow{2}\overrightarrow{3}\overrightarrow{4},2\downarrow
3\downarrow 4\downarrow\right)}\ =\
\raisebox{-10ex}{\scalebox{0.175}{\includegraphics{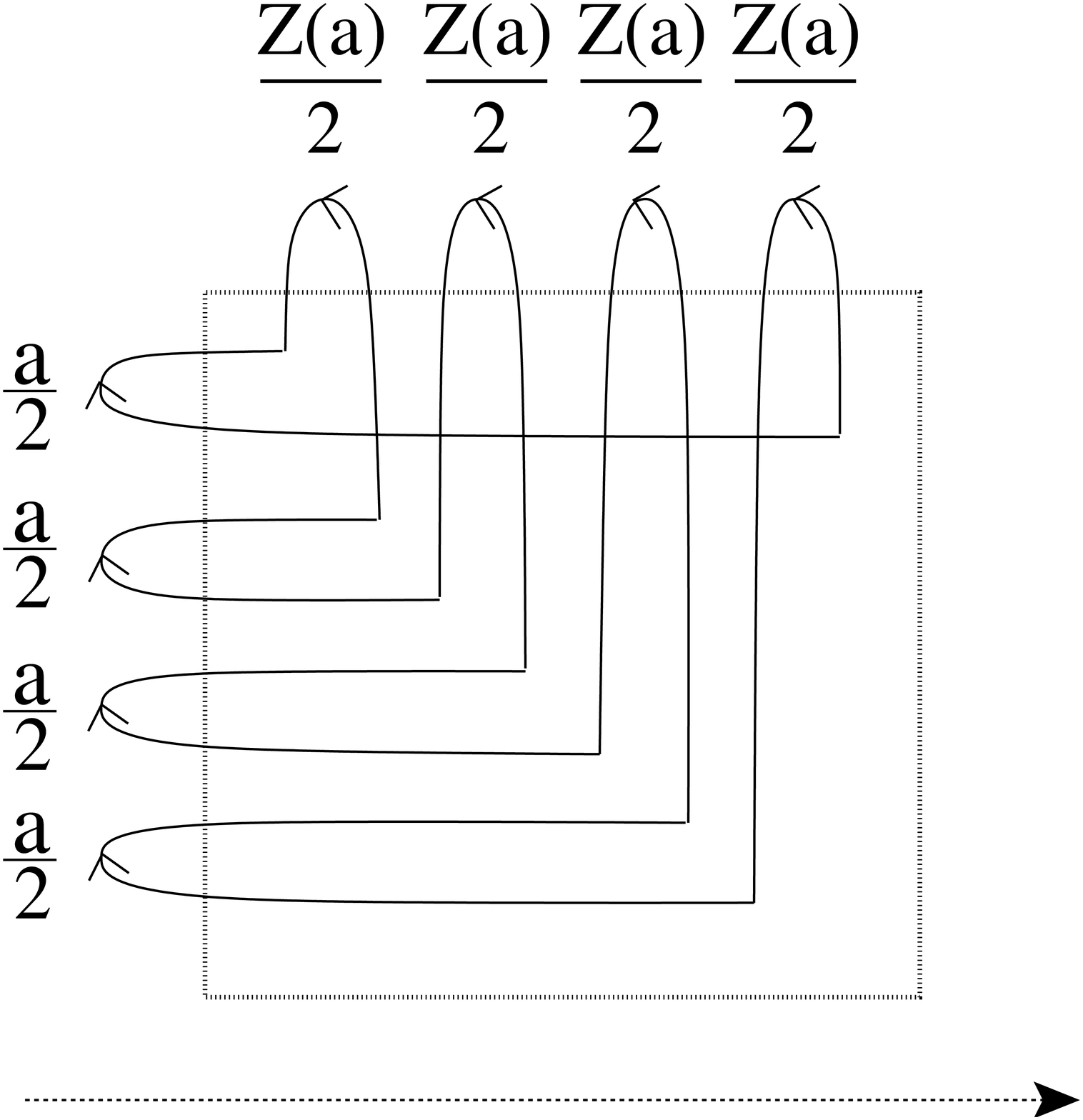}}}\
\ . \\[0.2cm]
\]

We'll now explain how to transform any gluing $(w_1,w_2)$ into the
standard gluing using R- and W-moves. The explanation will be
illustrated by the example of the gluing
$\left(\overleftarrow{1}\overleftarrow{2}\overrightarrow{3}\overleftarrow{4},4\downarrow
2\downarrow 3\uparrow\right)$:
\begin{equation}\label{dotransposeexample}
\raisebox{-10ex}{
\scalebox{0.175}{\includegraphics{nolegsexamp}}}\ \ . \\[0.2cm]
\end{equation}
There are two steps in the procedure.

({\it The first step.}) Begin by doing R-moves to put the factors
along the top and down the side into the order in which they
appear in the words $w_1$ and $w_2$. Let's go through this in the
case of the example shown in line $(\ref{dotransposeexample})$.
We'll start by swapping row 3 and row 4, to get:
\[
\stackrel{\text{R-move}}{\longrightarrow}\raisebox{-10ex}{
\scalebox{0.16}{\includegraphics{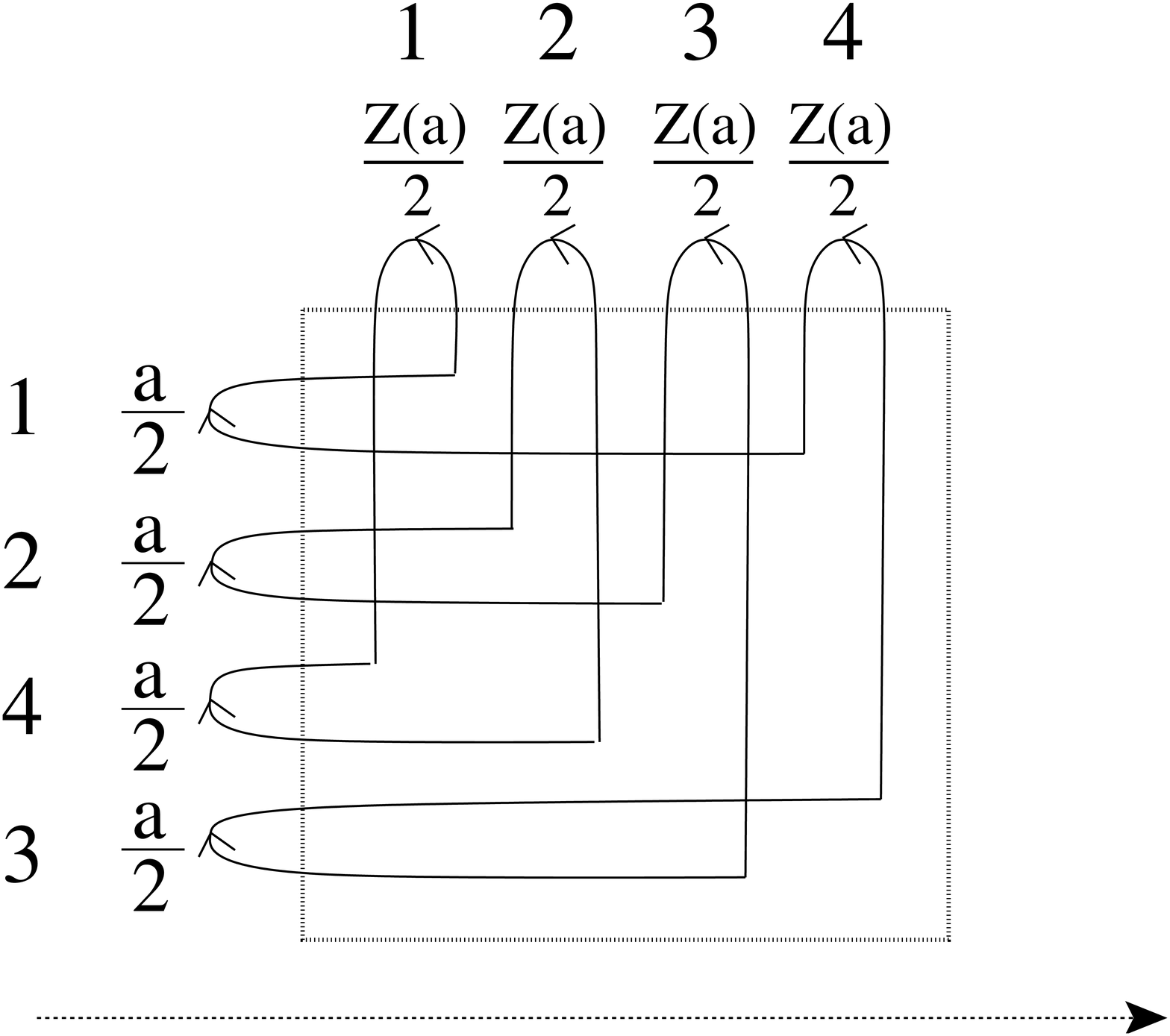}}}\ .\\[0cm]
\]
The we'll swap row 2 and row 3 (i.e. this swaps factor 2 and
factor 4), giving:
\begin{equation}\label{fromhere}
\stackrel{\text{R-move}}{\longrightarrow} \raisebox{-10ex}{
\scalebox{0.16}{\includegraphics{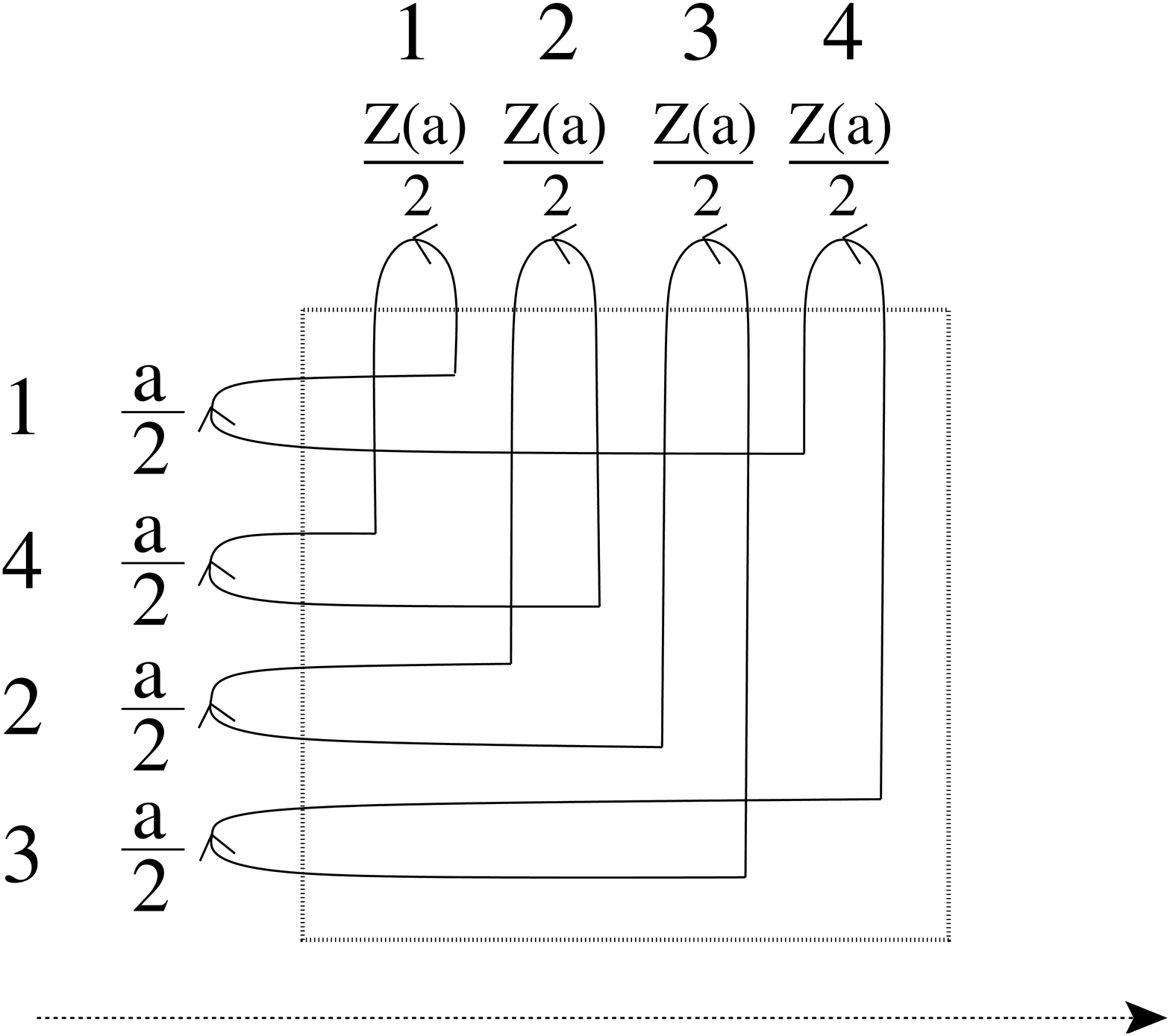}}}\ .
\end{equation}
This finishes the first step of the procedure (notice that after
this step the factors along the top are visited in order from left
to right, and the factors down the side are visited in order from
top to bottom).

({\it The second step.}) The second step of the procedure is to
employ W-moves to arrange it so that, as the drawing is traversed
(as indicated in Line \ref{traverseexample}), the factors along
the top are traversed from left-to-right, and the factors up the
side (except the top-most) are traversed from top-to-bottom.

Our example requires four such twists. Continuing from line
\ref{fromhere}:
\[ \stackrel{\text{W-move}}{\longrightarrow}\raisebox{-9ex}{
\scalebox{0.16}{\includegraphics{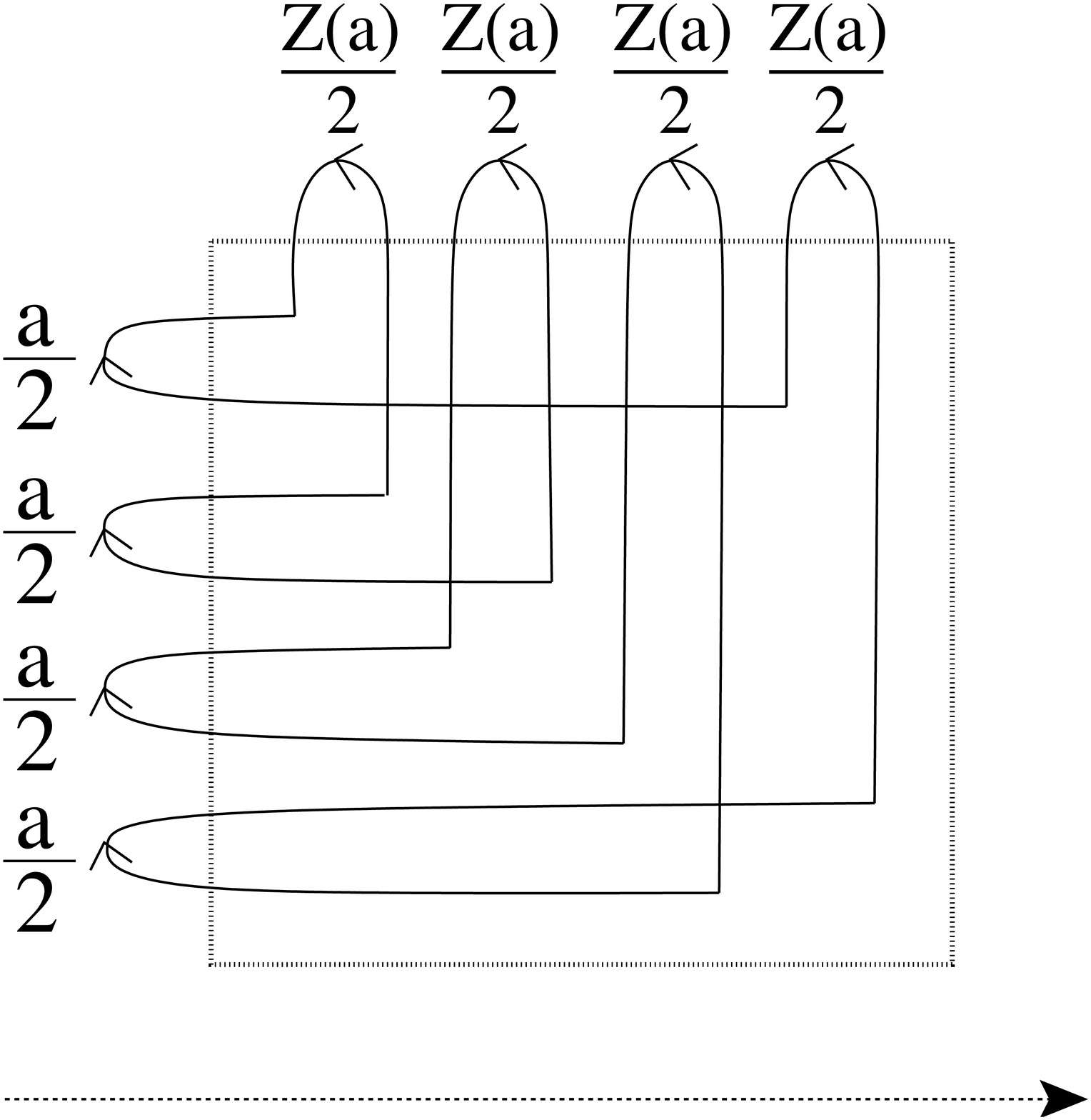}}}
\stackrel{\text{W-move}}{\longrightarrow} \raisebox{-9ex}{
\scalebox{0.16}{\includegraphics{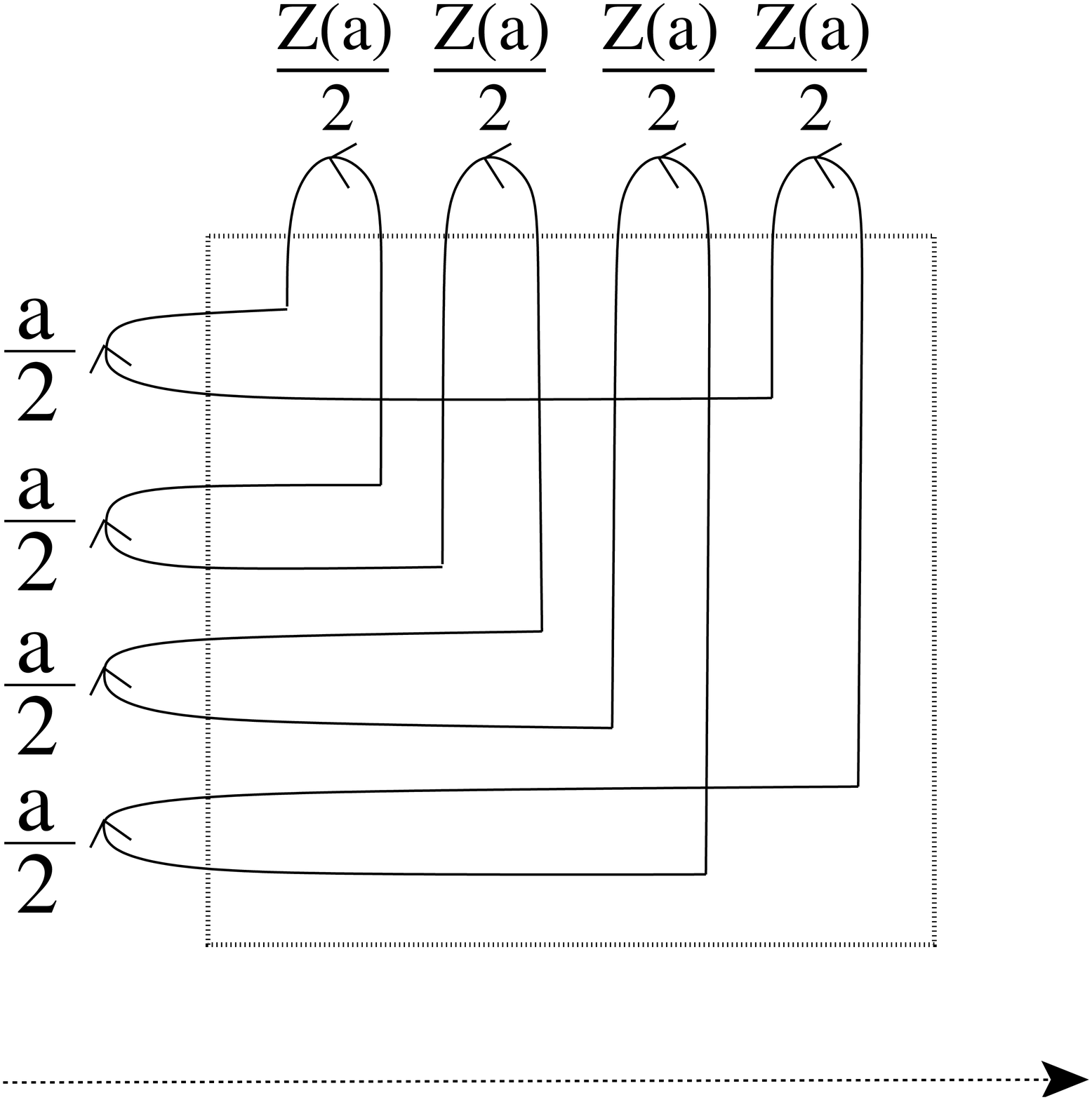}}}.
\]
And:
\[
\stackrel{\text{W-move}}{\longrightarrow}
 \raisebox{-9ex}{
\scalebox{0.16}{\includegraphics{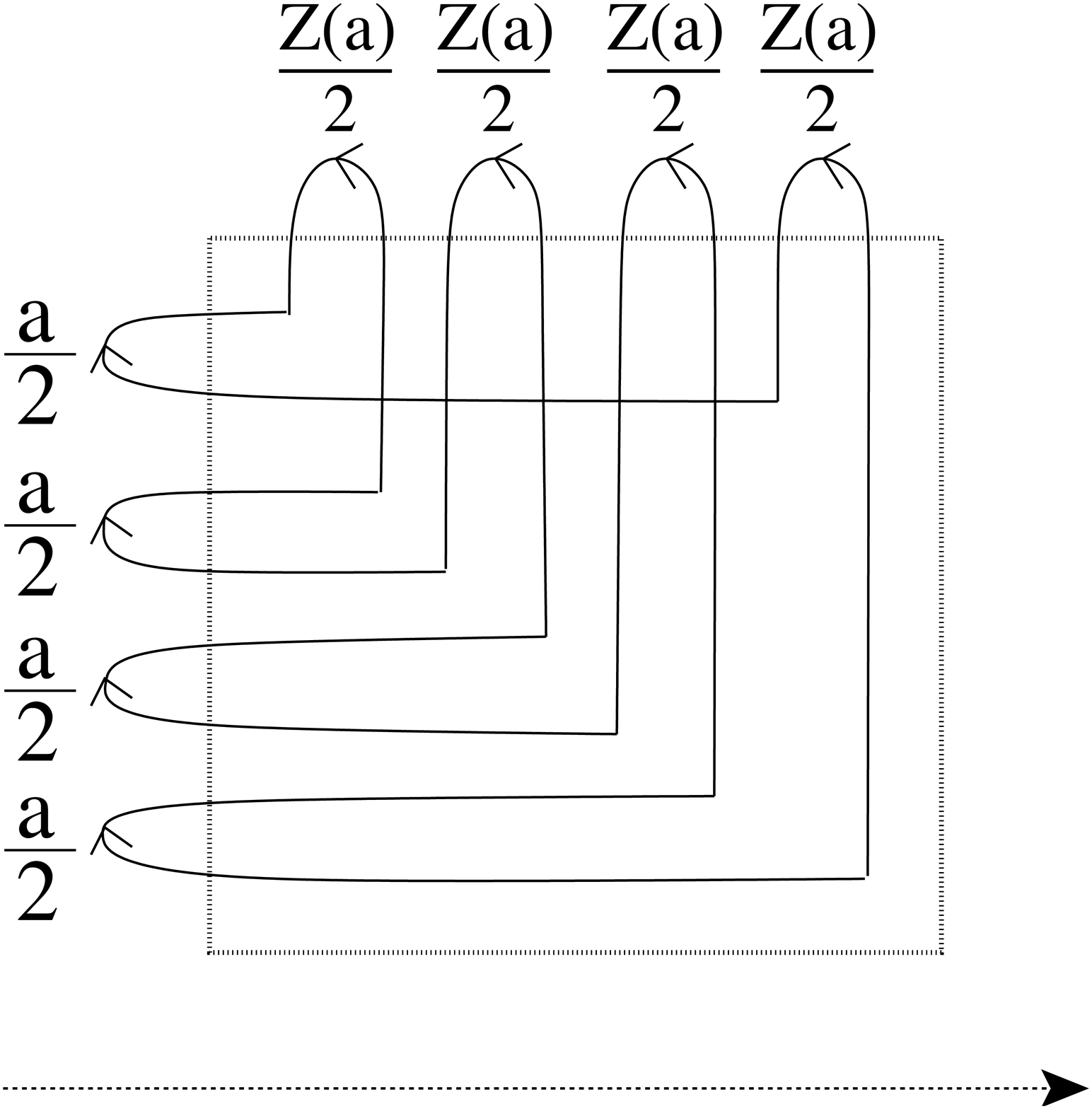}}}
\stackrel{\text{W-move}}{\longrightarrow}
\raisebox{-9ex}{
\scalebox{0.16}{\includegraphics{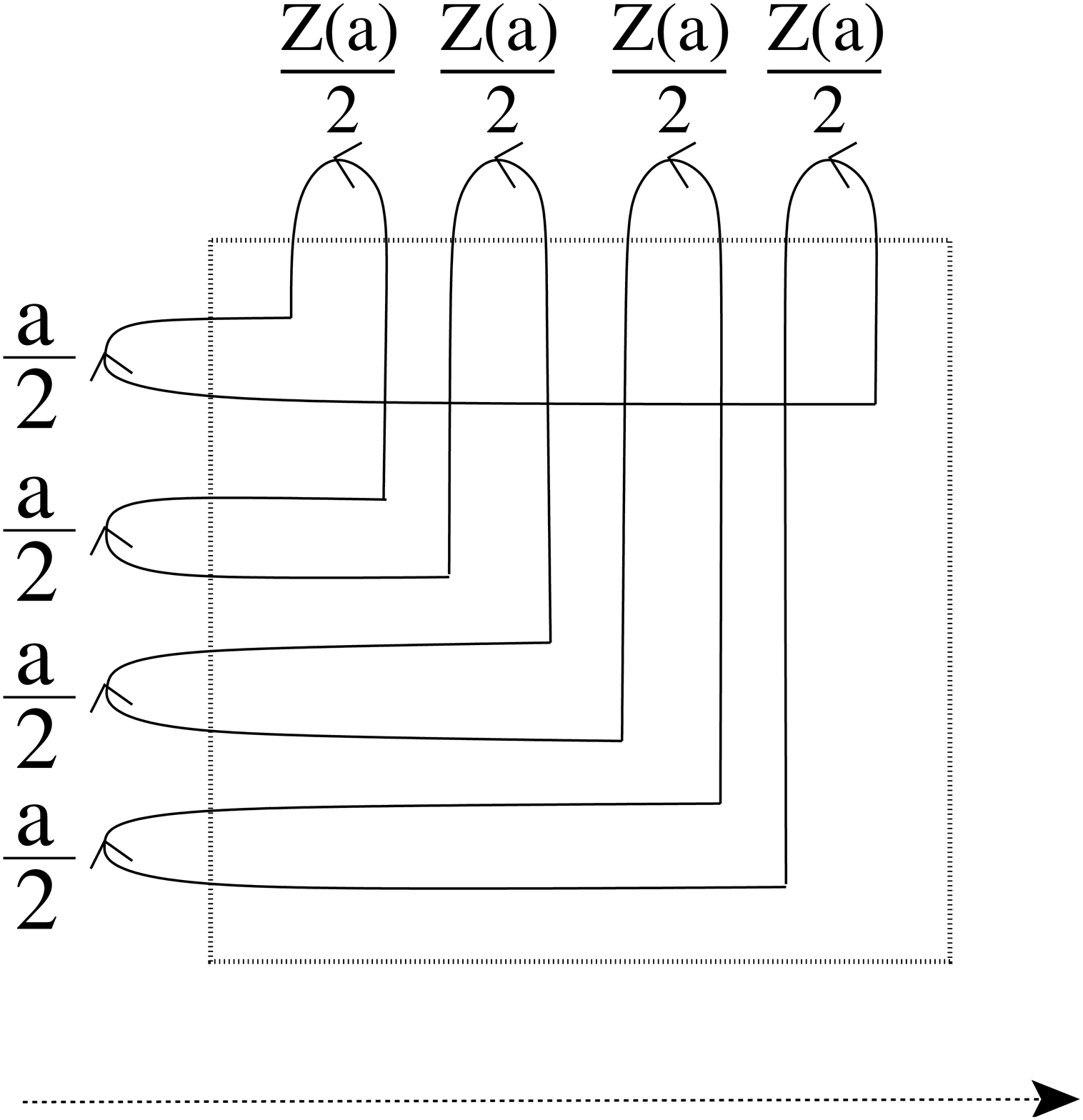}}}\   . \\[0.2cm]
\]
These two steps will transform any given $(w_1,w_2)$ into the
standard gluing, and so, by repeated application of Equation
\ref{proofskeyproperty}, for any
$(w_1,w_2)\in\overrightarrow{\Phi}_n$,
\[
\phi_{(w_1,w_2)}=\frac{1}{n!n!}(-1)^{x(w_1,w_2)}D_{(w_1,w_2)}=\frac{1}{n!n!}(-1)^{x(w_1^s,w_2^s)}D_{(w_1^s,w_2^s)},
\]
where
$(w_1^s,w_2^s)=(\overrightarrow{1}\overrightarrow{2}\ldots\overrightarrow{n},2\downarrow3\downarrow\ldots
n\downarrow)$.

Now let's work out what term that standard gluing represents. We
can put it in a simplified form in the following way (taking the
$n=4$ case as a representative example):
\begin{multline*}
\phi_{(w_1^s,w_2^s)}\ =\  \frac{1}{4!4!}(+1)\raisebox{-12ex}{
\scalebox{0.16}{\includegraphics{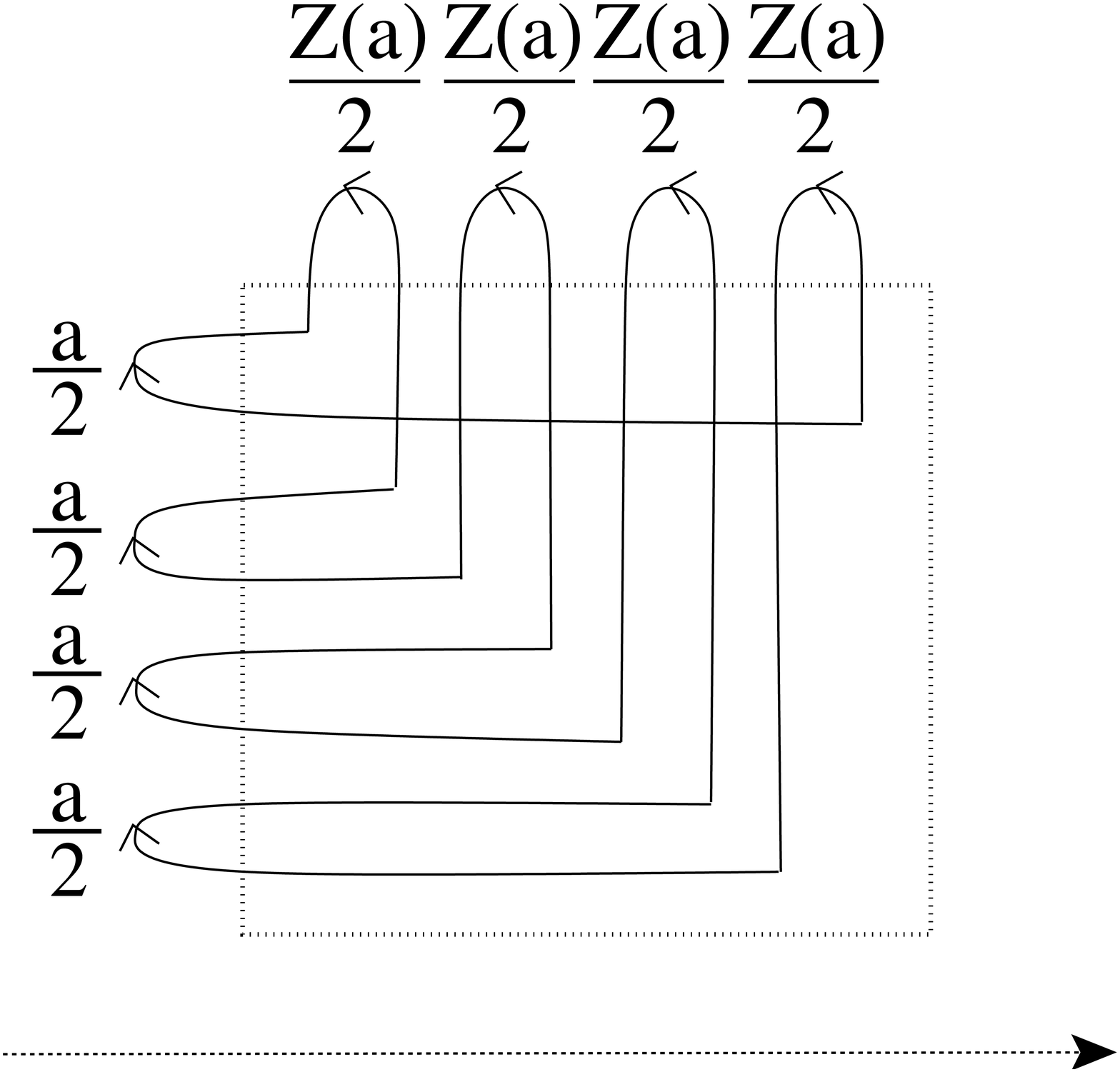}}} \\[0.3cm] =
\frac{1}{4!4!}(+1)\raisebox{-9ex}{
\scalebox{0.16}{\includegraphics{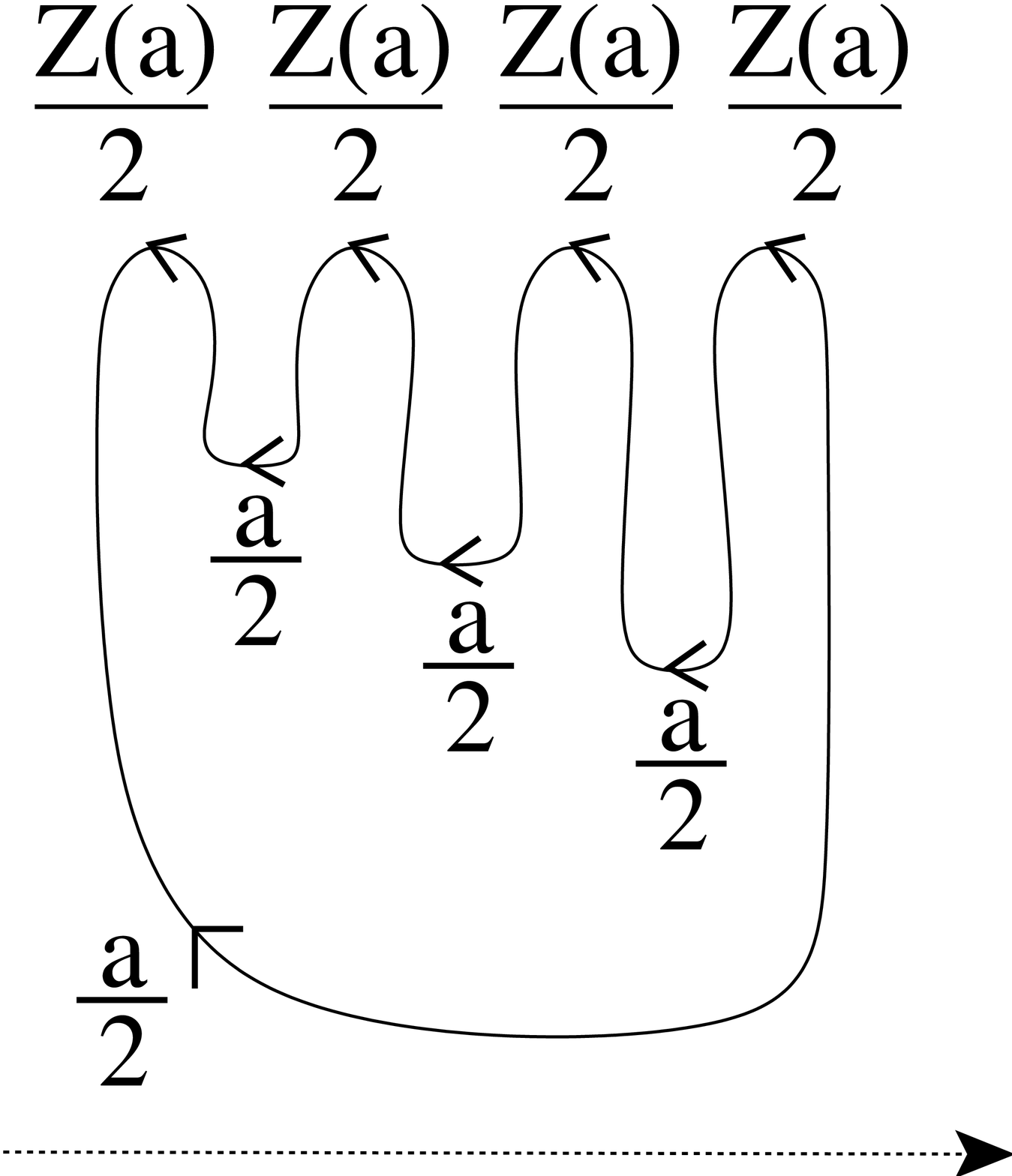}}}\ =\
\frac{1}{4!4!}(-1)\raisebox{-3ex}{\scalebox{0.175}{\includegraphics{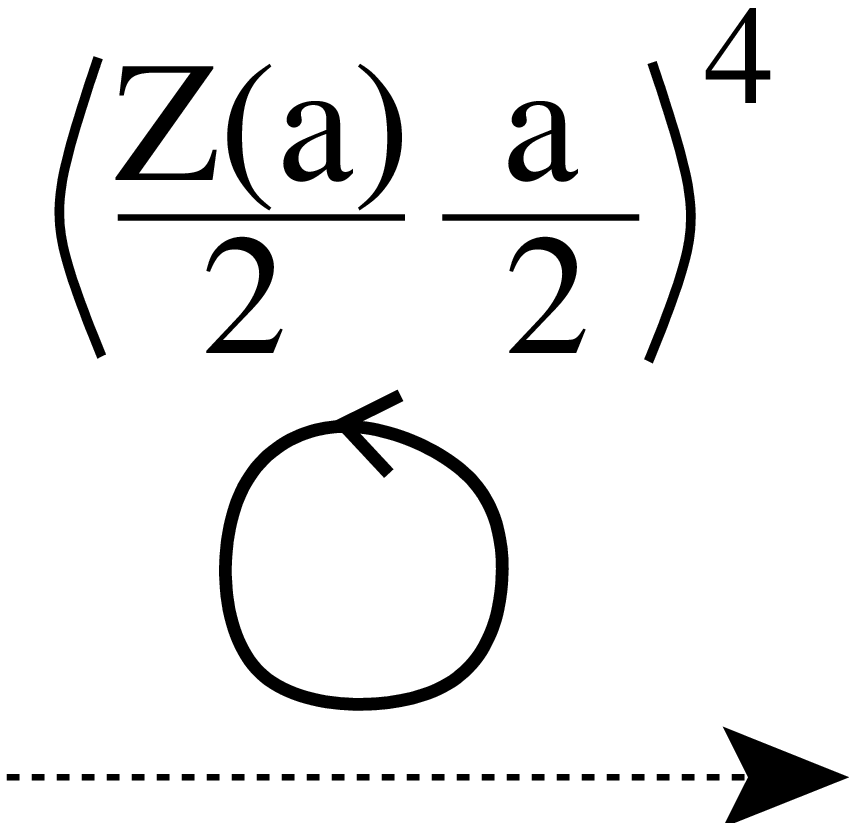}}}\
,
\end{multline*}
as required.
\begin{flushright}
$\Box$
\end{flushright}

\subsubsection{The contribution $C_2$.}
The computation of $C_2$ is closely analogous to the computation
of $C_0$, so we will only provide a sketch of it.

Consider some integer $n\geq 1$. The contributions to $C_2$ from
terms which use $n$ copies of
$\raisebox{-2ex}{\scalebox{0.125}{\includegraphics{arightZ}}}$ can
be indexed by a certain set $\overrightarrow{\Theta}_n$. An
element of the set $\overrightarrow{\Theta}_n$ is a pair of words
$(w_1,w_2)$ where:
\begin{itemize}
\item{$w_1$ is a word which uses each of the symbols
$\{1,\ldots,n+1\}$ precisely once. The last symbol of $w_1$ has
greater value than the first symbol of $w_1$.} \item{Every symbol
$s$ of $w_1$, except the first and last symbol, is decorated by
either an arrow pointing to the right $\overrightarrow{s}$ or an
arrow pointing to the left $\overleftarrow{s}$.} \item{The word
$w_2$ is a word using each of the symbols $\{1,2,\ldots,n\}$
exactly once.} \item{Every symbol $s$ of $w_2$ is decorated by
either an arrow pointing up $s\uparrow$ or an arrow pointing down
$s\downarrow$.}
\end{itemize}
To every element $(w_1,w_2)$ of $\overrightarrow{\Theta}_n$ there
correspond a contribution $\theta_{(w_1,w_2)}$ to $C_2$. Consider
the following example: \[
\raisebox{-9ex}{\scalebox{0.15}{\includegraphics{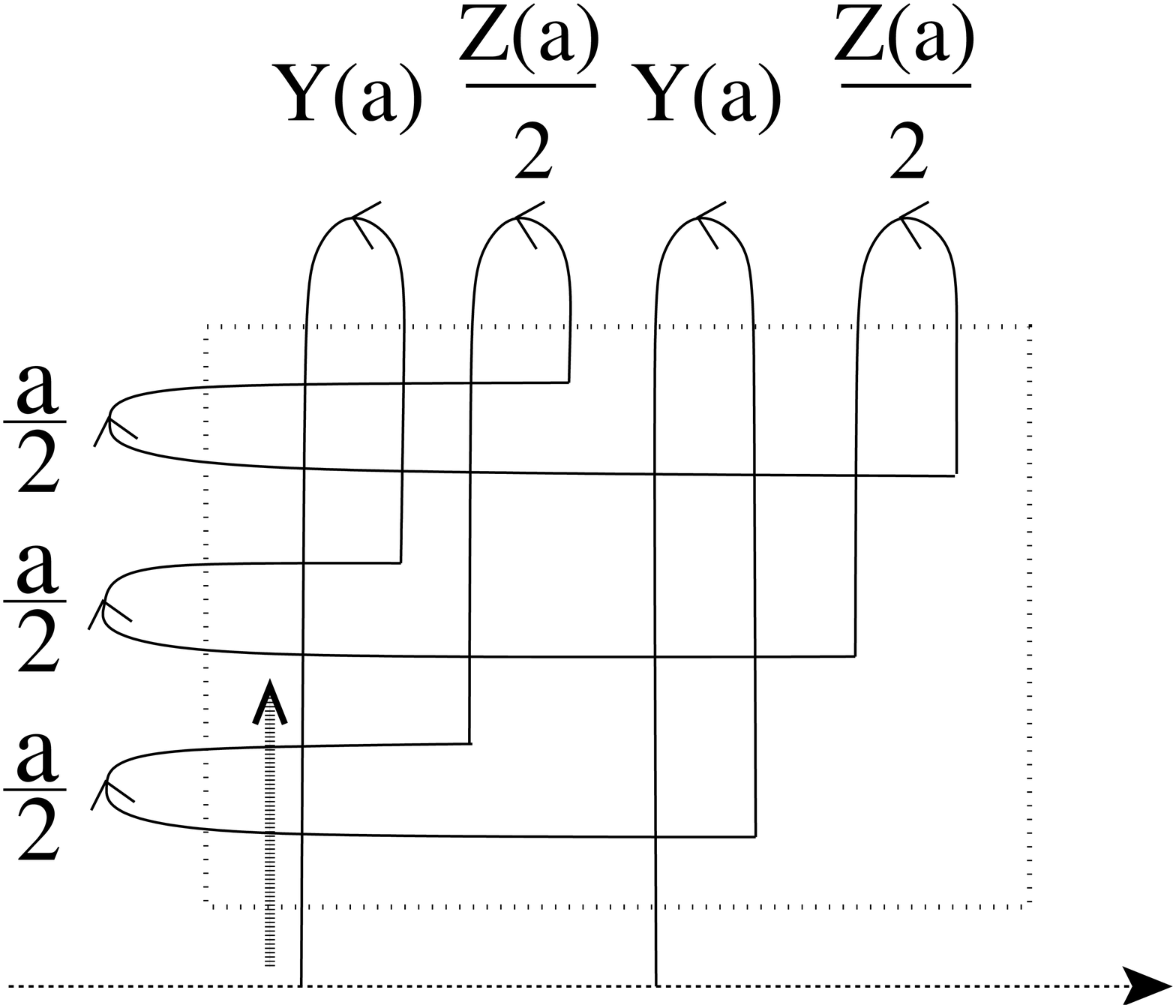}}}\ .
\]
To write down the pair $(w_1,w_2)$ that corresponds with this
gluing start at the base of the left-most of the 2 legs. Then
trace the diagram. The word $w_1$ records the order in which you
encounter the factors along the top of the diagram; the word $w_2$
records the order in which you encounter the factors written down
the left-hand side of the diagram. Thus:
\[
\theta_{\left(1\overrightarrow{4}\overleftarrow{2}3,2\downarrow1\uparrow3\downarrow\right)}=
(-1)^{17}\frac{1}{3!4!}\
\raisebox{-9ex}{\scalebox{0.15}{\includegraphics{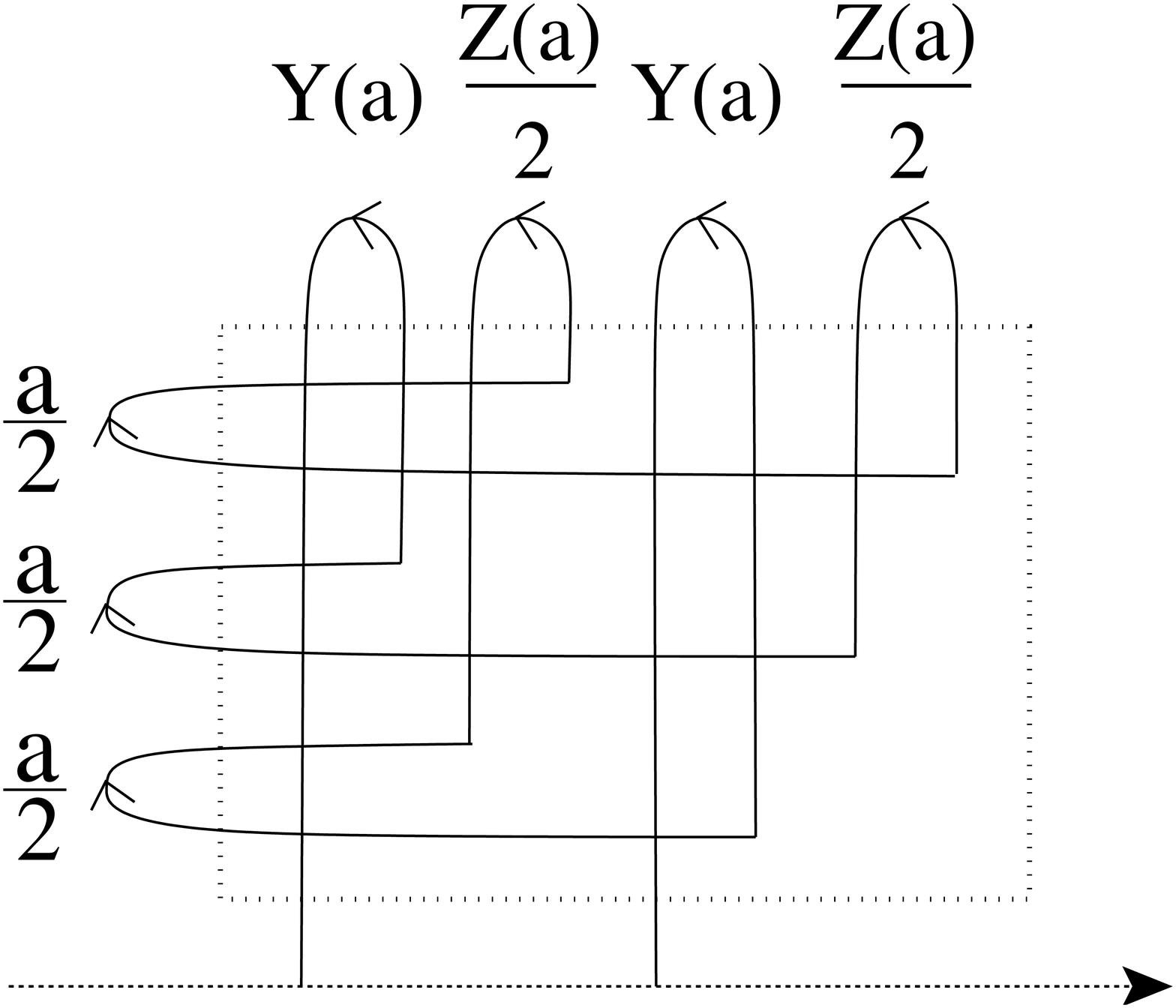}}}\ .
\]
The following lemma says that every contribution
$\theta_{(w_1,w_2)}$, for $(w_1,w_2)\in\overrightarrow{\Theta}_n$,
is equal.
\begin{lem}
Let $n$ be an integer $n\geq 1$ and let $(w_1,w_2)\in
\overrightarrow{\Theta}_n$. Then:
\[
\theta_{(w_1,w_2)} = -\frac{1}{n!(n+1)!}
\raisebox{-2ex}{\scalebox{0.18}{\includegraphics{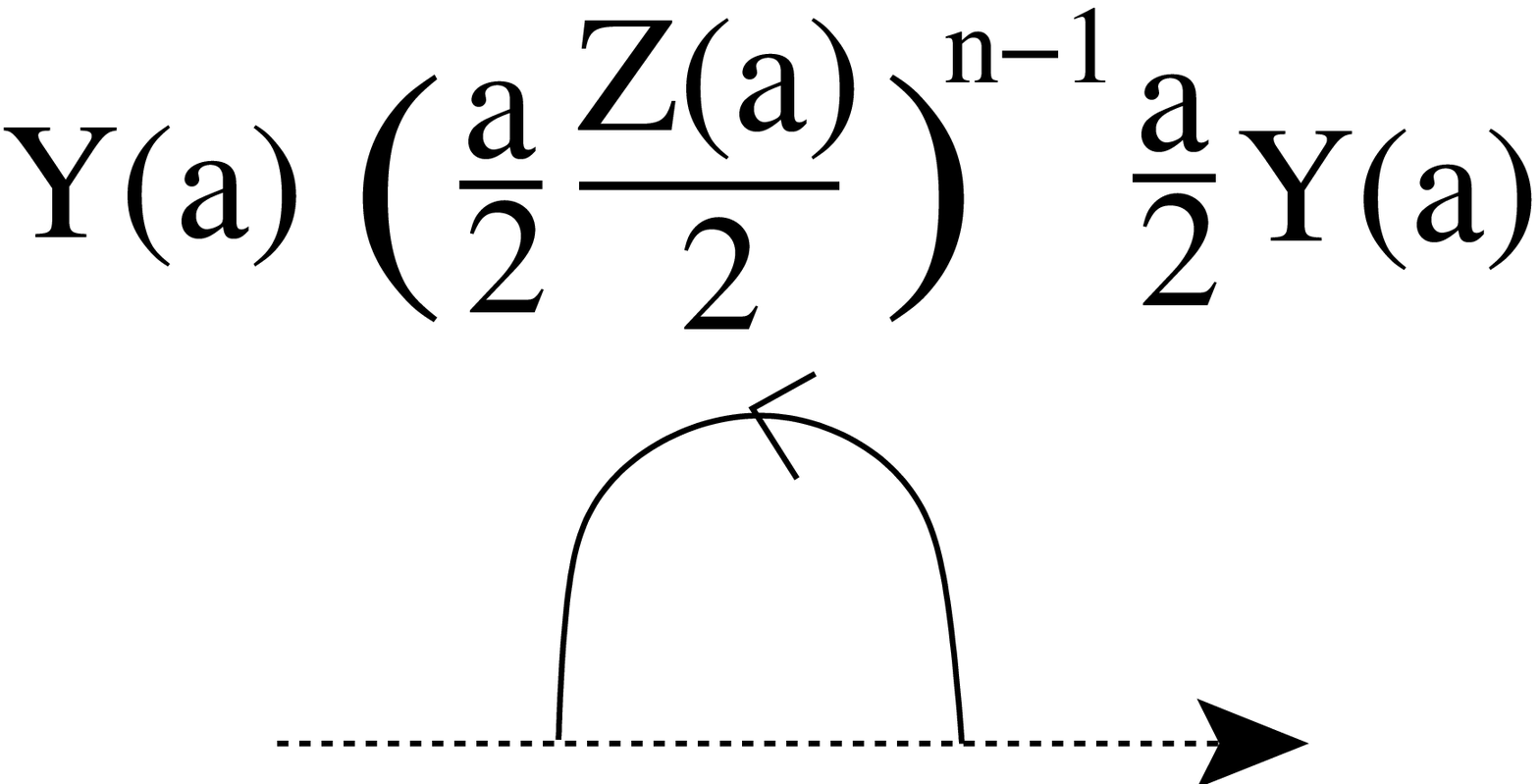}}}.
\]
\end{lem}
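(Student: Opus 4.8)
The plan is to repeat, essentially verbatim, the argument used to prove Lemma \ref{allthesame} for the contribution $C_0$, adapting the bookkeeping to the two new features of the present situation: the underlying diagram now carries two external legs, and the word $w_1$ ranges over $\{1,\dots,n+1\}$ rather than $\{1,\dots,n\}$, its first and last symbols being anchored to the two legs and hence left undecorated. First I would fix notation exactly as before: write $D_{(w_1,w_2)}$ for the series of diagrams in $\Whatwedge\abpow$ represented by the drawing obtained when the grid is wired up according to $(w_1,w_2)$, and $x(w_1,w_2)$ for the number of intersections displayed by that drawing, so that
\[
\theta_{(w_1,w_2)} = \frac{1}{n!(n+1)!}(-1)^{x(w_1,w_2)}D_{(w_1,w_2)}.
\]
Here the normalising factor $\frac{1}{n!(n+1)!}$ arises from the $\frac{1}{n!}$ attached to the $n$ copies of the operator factor placed down the side of the grid and the $\frac{1}{(n+1)!}$ attached to the $n+1$ copies of the $Y/Z$ factors placed along the top.

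Next I would introduce the same two moves on gluing data as in the proof of Lemma \ref{allthesame}: the R-move (transposition of two adjacent rows or two adjacent columns) and the W-move (a half-twist of a single row or column). The verification that each move preserves the signed contribution is identical to before, and is the analogue of Equation \ref{proofskeyproperty}: an R-move leaves $D_{(w_1,w_2)}$ unchanged and alters $x$ by an even number, while a W-move multiplies $D_{(w_1,w_2)}$ by $(-1)$ and alters $x$ by $\pm 1$, so in both cases the product $(-1)^x D$, and hence $\theta$, is unchanged. The only genuinely new point to check is that the two external legs do not interfere with these moves; since the legs sit at the two ends of the traversal and are attached to the first and last factors recorded by $w_1$, the R- and W-moves act only on the interior structure and the invariance goes through unaltered.

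I would then show that any $(w_1,w_2)\in\overrightarrow{\Theta}_n$ can be reduced to a single standard gluing by a sequence of these moves, in two steps mirroring the $C_0$ argument: first apply R-moves to sort the factors along the top and down the side into their natural order, and then apply W-moves so that, as the diagram is traversed from one leg to the other, every top factor is read left-to-right and every side factor top-to-bottom. Because $\theta$ is invariant under both moves, every $\theta_{(w_1,w_2)}$ equals the value of the standard gluing, establishing that all contributions are equal. Finally I would evaluate the standard gluing directly, as in the closing computation of Lemma \ref{allthesame}, reading off that it equals $-\frac{1}{n!(n+1)!}$ times the displayed two-legged diagram, the single overall minus sign being the one produced by the AS relation used to bring the diagram into its final form.

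The step I expect to be the main obstacle is the sign analysis at the endpoints. In the closed ($C_0$) case the traversal is a loop and every factor is treated symmetrically, but here the two distinguished legs break that symmetry: the first and last entries of $w_1$ carry no decoration, so I must check that the reduction to standard form, together with the final AS move, produces exactly one minus sign, rather than a word-dependent sign of the kind $(-1)^{d(w)}$ that appeared in Lemma \ref{standardformlemma}. Confirming that the endpoint contributions to the intersection count are word-independent modulo $2$, so that the descent of the underlying permutation genuinely cancels against the twists and leaves a constant answer, is where the argument requires the most care.
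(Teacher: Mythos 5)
Your proposal matches the paper's own (sketched) proof essentially verbatim: the paper likewise says the argument is analogous to Lemma~\ref{allthesame}, reducing any $(w_1,w_2)\in\overrightarrow{\Theta}_n$ by R-moves and W-moves to the standard gluing $(1\overrightarrow{2}\ldots\overrightarrow{n}\,n{+}1,\ 1\downarrow 2\downarrow \ldots n\downarrow)$ and then evaluating that standard contribution directly. The one point worth making explicit in your final evaluation --- and which the paper does flag --- is that the sign comes out word-independent because the AS relations used to move $a$-legs to the correct side of the oriented edges replace $a$ by $-a$, and $Y(a)$ was assumed to contain only even powers of $a$, so $Y(-a)=Y(a)$ and no extra sign is incurred at the two endpoint factors.
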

{\it Comments on the proof.} This proof is analogous to the proof
of Lemma \ref{allthesame}. In the present case, the ``standard
form" we wish to put the diagram into by means of column/row
transpositions (R-moves) and twists (W-moves) is
\[
(1\overrightarrow{2}\ldots\overrightarrow{n}\,n+1 , 1\downarrow
2\downarrow \ldots n\downarrow).
\]
For example, the standard contribution for $n=3$ is:
\[
\theta_{(1\overrightarrow{2}\overrightarrow{3}4,1\downarrow
2\downarrow 3\downarrow)}=\frac{1}{3!4!}(-1)^7\
\raisebox{-8ex}{\scalebox{0.13}{\includegraphics{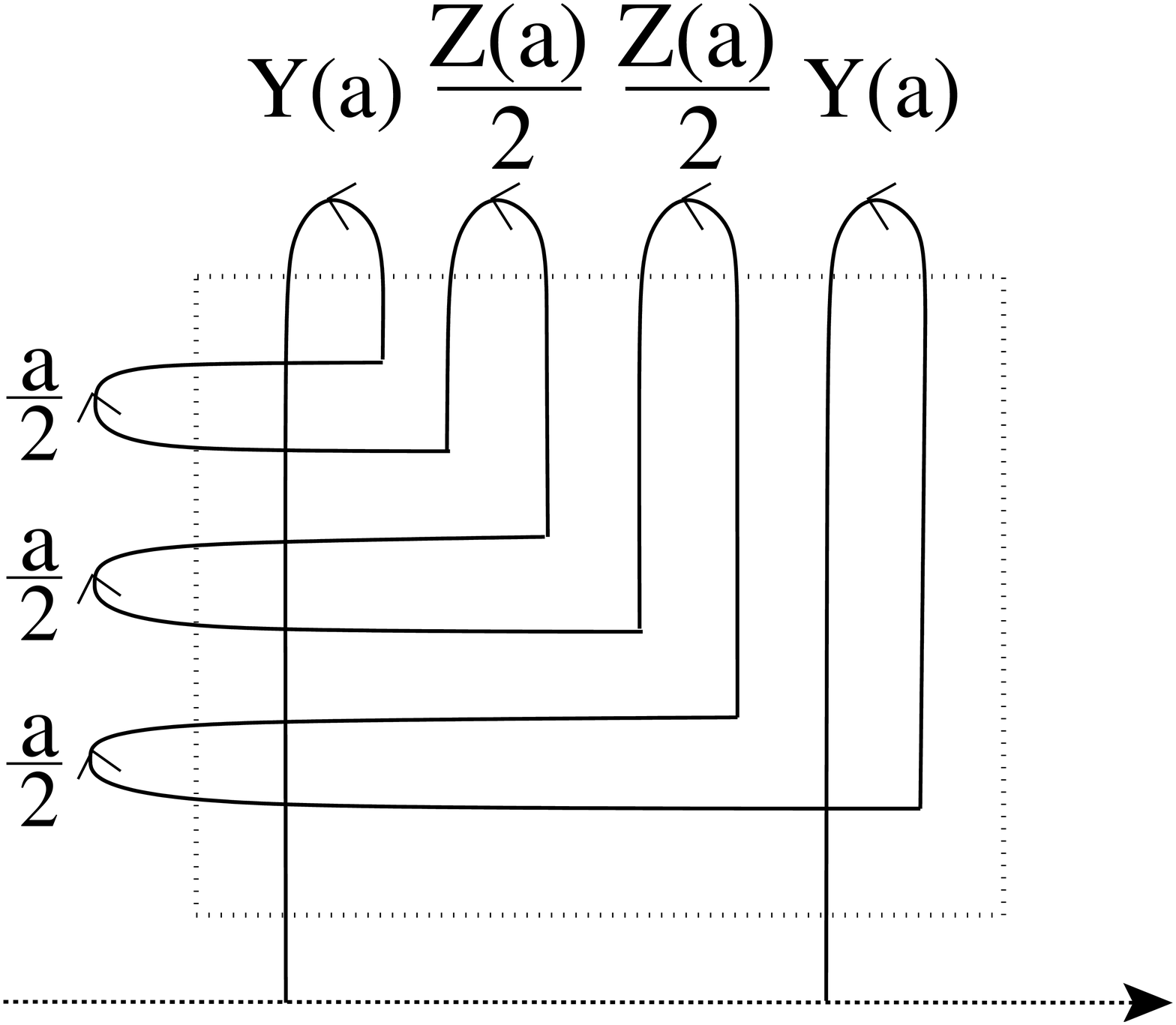}}}\ ,
\]
which is equal to
\[
\frac{1}{3!4!}(-1)\
\raisebox{-7ex}{\scalebox{0.13}{\includegraphics{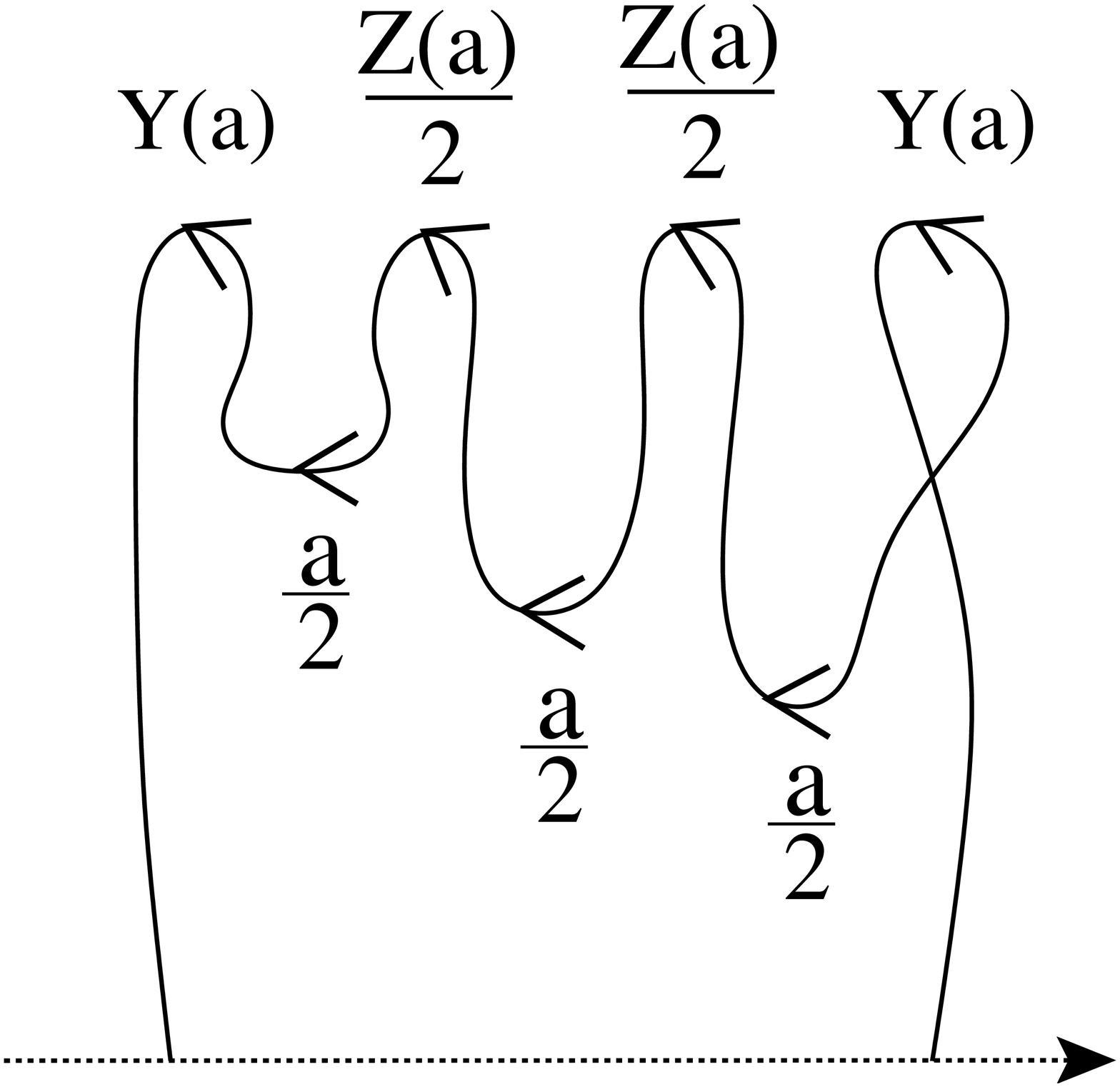}}}\ \
=\ \frac{1}{3!4!}(-1)\
\raisebox{-7ex}{\scalebox{0.13}{\includegraphics{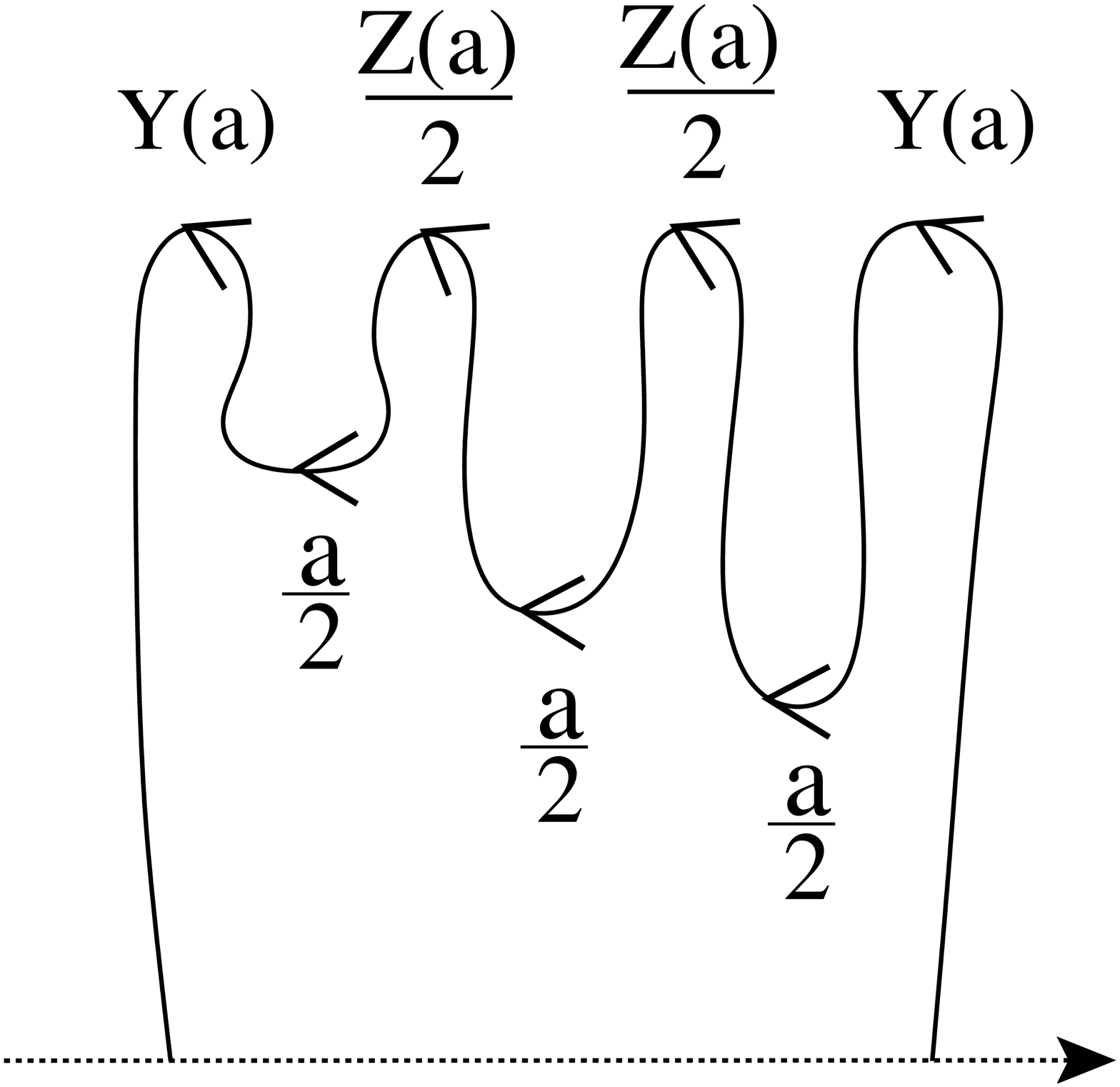}}}\
,\] as required. The final equality above used the fact that
$Y(a)$ was only assumed to have even powers of $a$.
\begin{flushright}
$\Box$
\end{flushright}

Finally, we can compute:
\begin{eqnarray*}
C_2 & = & \sum_{n=1}^{\infty}
\sum_{(w_1,w_2)\in\overrightarrow{\Theta}_n}\theta_{(w_1,w_2)} \\
& = & -\sum_{n=1}^{\infty}\left(
\left|\overrightarrow{\Theta}_n\right| \frac{1}{n!(n+1)!}
\raisebox{-2.5ex}{\scalebox{0.18}{\includegraphics{anstermsecond}}}\right)
\\
& = & -\sum_{n=1}^{\infty}\left(n!2^n\frac{(n+1)!}{2}2^{n-1}
\frac{1}{n!(n+1)!}
\raisebox{-2.5ex}{\scalebox{0.18}{\includegraphics{anstermsecond}}}\right)
\\ & = &
-\frac{1}{2}\sum_{n=1}^{\infty}\raisebox{-2ex}{\scalebox{0.18}{\includegraphics{anstermsecondB}}}\
.
\end{eqnarray*}
The ends the computation of $C_2$ and the proof of Proposition
\ref{opcompprop}.


\begin{thebibliography}{AAAA}
\bibitem[AM]{AleksMein} A. Alekseev and E. Meinrenken, \textit{The
non-commutative Weil algebra}, Invent. Math., 139 (2000),
135--172.
\bibitem[AM05]{AleksMein2} A. Alekseev and E. Meinrenken,
\textit{Lie theory and the Chern-Weil homomorphism}, Ann. sci. Ec.
norm. supér., 38 no2 (2005), 303--338.
\bibitem[B]{BarNatan} D. Bar-Natan, \textit{On the Vassiliev knot invariants},
Topology 34 (1995) 423--472.
\bibitem[BGRT]{BGRT} D. Bar-Natan, S. Garoufalidis, L. Rozansky
and D. Thurston, \textit{Wheels, wheeling, and the Kontsevich
integral of the unknot}, Israel J. Math. 119 (2000) 217--237.
\bibitem[BGRT02]{arhus} D. Bar-Natan, S. Garoufalidis, L. Rozansky
and D. Thurston, \textit{The {\AA}rhus integral of rational homology 3-spheres I: A highly non trivial flat connection on $S^3$},
Selecta. Math. 8 (2002) 315--339.
\bibitem[BLT]{BLT} D. Bar-Natan, T. Le and D. Thurston, \textit{Two applications of elementary knot theory to Lie algebras and Vassiliev
invariants}, Geom. Topol. 7 (2003), 1--31.
\bibitem[D]{D} M. Duflo, \textit{Op\'{e}rateurs diff\'{e}rentiels
bi-invariants sur un groupe de Lie}, Ann. Sci. \'{E}cole Norm.
Sup. 10 (1977), 265--288.
\bibitem[GK]{GK} S. Garoufalidis and A. Kricker, ``A rational
non-commutative invariant of boundary links", Geom. Topol. 8
(2004), 115--204.
\bibitem[K]{K} A. Kricker, ``Non-commutative Chern-Weil theory and the Combinatorics of Wheeling", preprint, 2006, \verb&arXiv:math/0612653&.
\bibitem[M]{Mein} E. Meinrenken, \textit{Clifford Algebras and
the Duflo isomorphism}, Proceedings of the ICM 2002, Vol. II
(Beijing, 2002), 637--642, Higher Ed. Press, Beijing, 2002.
\end{thebibliography}
\end{document}